\numberwithin{equation}{section}
\begin{document}
\title{Approche non-invariante de la correspondance de Jacquet-Langlands : analyse géométrique}
\author{Yan-Der LU}
\date{\today
}
{\selectlanguage{english}
\begin{abstract}
In this two-part series of articles, we present a new proof comparing the trace formula for a general linear group with that of one of its inner forms. Our methodology relies on the trace formula for Lie algebras, incorporating the notion of non-invariant transfer of test functions. In the appendix A, we provide a description of conjugacy classes of an inner form of a general linear group. In the appendix B, we provide explicit computations of Haar measures. This article focuses on the geometric side of the trace formula.
\end{abstract}}

\selectlanguage{french}
\maketitle

\tableofcontents

\section{Introduction}

\subsection{Préface}
La formule des traces est un des outils les plus puissants pour aborder les problèmes de fonctorialité, dont la correspondance de Jacquet-Langlands. Arthur a développé la formule des traces pour tout groupe réductif sur un corps de nombres, dont l'une des constructions cruciales est la déduction de la formule des traces invariante à partir de sa version non-invariante. En collaboration avec Clozel, il a  réussi à comparer la formule des traces invariante pour $\GL_n$ et celle pour une de ses formes intérieures (\cite{AC}). C'est en exploitant leurs résultats que Badulescu et Renard, en l'année 2010, ont pu établir la correspondance de Jacquet-Langlands  (\cite{Badu09}).

On s'interroge sur l'emploi de la formule des traces invariante ici : pourrait-elle être substituée par celle non-invariante ? La question n'est pas insignifiante, primo, cela évitera toute analyse harmonique difficile dans la déduction de la formule des traces invariante. Secundo, apparaissent de part et d'autre de la formule des traces non-invariante des objets purement géométriques ou purement spectraux, qui sont d'une certaine perspective plus naturels. La notion du transfert « non-invariante » d'une fonction test est pour la première fois abordée par Labesse (\cite{Lab95}) dans le cas du changement de base cyclique pour un groupe général linéaire, après généralisée par Chaudouard (\cite{Ch07}) dans le cas de l'espace tordu, au sens de Labesse, sous des certaines hypothèses supplémentaires. C'est ce qui nous motive à tenter une nouvelle approche de la correspondance de Jacquet-Langlands. 

\subsection{Résultats principaux}

Fournissons maintenant un bref aperçu de ce texte. Soit $E$ un corps de caractéristique 0. On prend $\overline{E}$ une clôture algébrique de $E$. Pour $H$ un groupe algébrique sur $E$, et $R$ une $E$-algèbre, on note $H_R$ le changement de base $H\times_F R$. Soient $\g^\ast=\mathfrak{gl}_{n,E}$ l'algèbre des matrices de taille $n$ sur $E$, et $\g$ une algèbre simple centrale de rang $n$ sur $E$. Il existe alors $\eta:\g_{\overline{E}}\to \g_{\overline{E}}^\ast$ un $\overline{E}$-isomorphisme d'algèbres associatives. On note $G$ (resp. $G^\ast$) le groupe des unités de $\g$ (resp. $\g^\ast$). L'algèbre de Lie de $G$ (resp. $G^\ast$) s'identifie naturellement à $\g$ (resp. $\g^\ast$). Pour un groupe algébrique $H$ on note par la même lettre en minuscule gothique, en l'occurrence $\mathfrak{h}$, son algèbre de Lie. On observe, au moyen du théorème de Skolem–Noether, que $\eta$ est automatiquement un torseur intérieur, i.e. pour tout $\sigma\in\Gal(\overline{E}/E)$ on ait que $\eta\sigma(\eta)^{-1}$ est l'automorphisme de $\g_{\overline{E}}^\ast$ défini par conjugaison par un élément de $G_{\overline{E}}^\ast$. Ici $\sigma(\eta):X\in \g_{\overline{E}}\mapsto \sigma(\eta(\sigma^{-1}(X)))\in \g_{\overline{E}}^\ast$ avec $\Gal(\overline{E}/E)$ agit sur $\g_{\overline{E}}=\g\otimes_E\overline{E}$ et $\g_{\overline{E}}^\ast=\g^\ast\otimes_E\overline{E}$ via la composante $\overline{E}$ du produit tensoriel.



Par un sous-groupe de Levi d'un groupe réductif on entend le facteur de Levi d'un sous-groupe parabolique. Fixons $M_0$ (resp. $M_{0^\ast}$) un sous-groupe de Levi minimal de $G$ (resp. $G^\ast$) défini sur $E$. On suppose que $M_{0^\ast,\overline{F}}\subseteq \eta(M_{0,\overline{F}})$. Nous pouvons définir la notion de transfert d'objets liés à $G$ et $G^\ast$ comme suit.

\begin{definition}~{} 
\begin{enumerate}
    \item Soit $L'$ (resp. $L$) un sous-groupe de Levi de $G^\ast$ (resp. $G$) défini sur $E$. Soit $Q'$ (resp. $L$) un sous-groupe parabolique de $G^\ast$ (resp. $G$) défini sur $E$ contenant $L'$ (resp. $L$). On dit que $(L',Q')$ se transfère en $(L,Q)$ si $\eta(L_{\overline{E}})=L_{\overline{E}}'$ et $\eta(Q_{\overline{E}})=Q_{\overline{E}}'$. On écrit $L'=L^\ast$ et $L\arr L^\ast$. Puis $Q'=Q^\ast$ et $Q\arr Q^\ast$. Un sous-groupe de Levi/parabolique de $G^\ast$ est dit ne pas se transférer s'il n'est pas une composante d'un couple  défini sur $E$ qui se transfère comme plus haut.
    \item Soit $L^\ast$ (resp. $L$) un sous-groupe de Levi de $G^\ast$ (resp. $G$) défini sur $E$, avec $L\arr L^\ast$. Soit $\mathfrak{o}'$ (resp. $\mathfrak{o}$) une classe de $L^\ast(E)$-conjugaison (resp. une classe de $L(E)$-conjugaison) dans $\mathfrak{l}^\ast(E)$ (resp. $\mathfrak{l}(E)$). On dit que $\mathfrak{o}'$ se transfère en $\mathfrak{o}$ si $\eta((\Ad G(\overline{E}))\mathfrak{o})=(\Ad G(\overline{E}))\mathfrak{o}'$ avec $\Ad$ l'action de conjugaison d'un groupe sur son algèbre de Lie. On écrit $\o'=\o^\ast$ et $\o\arr \o^\ast$. Une classe de $L^\ast(E)$-conjugaison dans $\mathfrak{l}^\ast(E)$ est dit ne pas se transférer si elle ne rencontre aucune image par $\eta$ d'une classe de $L(E)$-conjugaison dans $\mathfrak{l}(E)$.
\end{enumerate}    
\end{definition}

On fixe maintenant $F$ un corps de nombres. Soient $\g^\ast=\mathfrak{gl}_{n,F}$ et $\g$ une algèbre simple centrale de rang $n$ sur $F$ déployée aux places archimédiennes. Pour toute place $v$ de $F$, on fixe un plongement $\overline{F}\to\overline{F_v}$. Pour $H$ un groupe algébrique sur $F$, on abrège $H_{F_v}$ en $H_v$. Soit $\eta:\g_{\overline{F}}\to \g_{\overline{F}}^\ast$ un torseur intérieur. Pour toute place $v$ de $F$, on obtient $\eta_v:\g_{\overline{F_v}}\to \g_{\overline{F_v}}^\ast$ un torseur intérieur par l'extension des scalaires. Si $L'=L^\ast$ (resp. $L_v=L_v^\ast$) est un sous-groupe de Levi de $G^\ast$ (resp. $G_v^\ast$) qui se transfère, alors la restriction de $\eta$ (resp. $\eta_v$) définit un torseur intérieur $\eta|_{\l}:\l_{\overline{F}}\to \l_{\overline{F}}^\ast$ (resp. $\eta_{v,\l_v}:\l_{v,\overline{F_v}}\to \l_{v,\overline{F_v}}^\ast$).

Soit $S$ un ensemble fini de places de $F$. On note $F_S=\prod_{v\in S}F_v$ avec $F_v$ le complété local de $
F$ en $v$. Soient $L$ un sous-groupe de Levi de $G$, $Q\supseteq L$ un sous-groupe parabolique, $\o_S$ une classe de $L(F_S)$-conjugaison dans $\mathfrak{l}(F_S)$, et $f_S$ une fonction définie sur $\g(F_S)$. On note $J_L^Q(\o_S,f_S)$ l'intégrale orbitale pondérée semi-locale associée (numéro \ref{subsubsec:defIOP}).

Nous pouvons définir la notion de transfert non-invariant local des fonctions tests en utilisant des conditions de correspondance ou des conditions d'annulation sur les intégrales orbitales locales pondérées associées aux éléments semi-simples réguliers. Pour $V$ un espace vectoriel sur $F$, et $S$ un ensemble fini de places de $F$, on note $\S(V(F_S))$ (resp. $\S(V(\A_F))$) l'espace de Schwartz-Bruhat sur $V(F_S)$ (resp. $V(\A_F)$). Enfin les mesures sont normalisées comme au numéro \ref{subsec:normalisationmesuresfinales}.

\begin{theorem}[Chaudouard \cite{Ch07}, théorème \ref{prop:deftransfertfon}] Soit $v$ une place non-archimédienne de $F$. Pour toute fonction $f_v\in\S(\g(F_v))$ il existe une fonction $f_v^\ast\in\S(\g^\ast(F_v))$ telle que pour tout $L_v'$ sous-groupe de Levi de $G_v^\ast$, tout $Q_v'\supseteq L_v'$ sous-groupe parabolique, toute $\o_v'$ classe de $L_v'(F_v)$-conjugaison dans $\mathfrak{l}_v'(F_v)$ semi-simple et $G_v^\ast$-régulier, on ait
\begin{align*}
    J_{L_v'}^{Q_v'}(\mathfrak{o}_v',f_v^\ast)=\begin{cases*}
    J_{L_v}^{Q_v}(\mathfrak{o}_v,f_v) & \text{si $(L_v',Q_v',\mathfrak{o}_v')= (L_v^\ast,Q_v^\ast,\mathfrak{o}_v^\ast)$ ;} \\
    0 & \text{si $Q_v'$ ne se transfère pas.}
    \end{cases*}
    \end{align*} 
\end{theorem}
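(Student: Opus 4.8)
L'énoncé étant local, on fixe la place non-archimédienne $v$ et l'on écrit $\g_v=M_d(D_v)$ avec $D_v$ une algèbre à division centrale sur $F_v$ de degré $e$, $n=de$, et $\g_v^\ast=\mathfrak{gl}_n(F_v)$. Le plan est d'abord d'expliciter la géométrie sous-jacente, en s'appuyant sur la description des classes de conjugaison d'une forme intérieure : pour $\o_v'$ une classe semi-simple $G_v^\ast$-régulière dans $\mathfrak{l}_v'(F_v)$, la classe $\o_v'$ provient de $\g_v$ (au sens où elle rencontre l'image par $\eta_v$ d'une classe de $\g_v$) si et seulement si tous les facteurs irréductibles de son polynôme caractéristique réduit sont de degré divisible par $e$ ; dans ce cas l'unique $\o_v$ avec $\o_v\arr\o_v^\ast$ est déterminée par ce polynôme (Skolem--Noether), l'application $\o_v\mapsto\o_v^\ast$ est injective, et le lieu $\Omega^\ast\subseteq\g_v^\ast(F_v)$ des éléments semi-simples réguliers provenant de $\g_v$ est \emph{ouvert} (le type de factorisation est localement constant sur le lieu régulier semi-simple, par le lemme de Krasner). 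On en déduit le point combinatoire-clé : si $\o_v'\subseteq\mathfrak{l}_v'(F_v)$ provient de $\g_v$, alors chaque bloc de $L_v'=\prod_i\GL_{n_i}$ est de taille divisible par $e$, donc $L_v'$ se transfère ; contrapositivement, si $L_v'$ ne se transfère pas — ce qui est le cas pour tout $L_v'\subseteq Q_v'$ lorsque $Q_v'$ ne se transfère pas, puisque le facteur de Levi de $Q_v'$ a alors un bloc non divisible par $e$ — aucune classe semi-simple $G_v^\ast$-régulière de $\mathfrak{l}_v'(F_v)$ ne provient de $\g_v$.

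Ensuite, on construit $f_v^\ast$ à support compact dans l'ouvert $\Omega^\ast$. Le transfert classique des intégrales orbitales invariantes pour les formes intérieures de $\GL_n$, transposé aux algèbres de Lie (les quotients adjoints de $\g_v$ et $\g_v^\ast$ s'identifient tous deux à l'espace affine des coefficients du polynôme caractéristique, les discriminants se correspondent, et le support des données orbitales de $f_v$ est un compact de l'image de $\eta_v$, qui est contenue dans l'ouvert ci-dessus), fournit une fonction $f_v^\ast\in\S(\g_v^\ast(F_v))$ à support dans $\Omega^\ast$ dont les intégrales orbitales sur $\Omega^\ast$ coïncident, via $\o_v\mapsto\o_v^\ast$, avec celles de $f_v$ et qui s'annulent hors de $\Omega^\ast$. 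Avec une telle $f_v^\ast$, si $Q_v'$ ne se transfère pas, alors pour toute classe $\o_v'$ semi-simple $G_v^\ast$-régulière de $\mathfrak{l}_v'(F_v)$ le polynôme caractéristique de $\o_v'$ n'est pas de type de factorisation divisible par $e$ (point combinatoire du premier paragraphe), donc l'orbite de $\o_v'$ est disjointe du support compact de $f_v^\ast$, et $J_{L_v'}^{Q_v'}(\o_v',f_v^\ast)=0$ : la seconde ligne de l'énoncé est automatiquement satisfaite, indépendamment des poids.

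Reste l'égalité $J_{L_v^\ast}^{Q_v^\ast}(\o_v^\ast,f_v^\ast)=J_{L_v}^{Q_v}(\o_v,f_v)$ pour les triplets qui se transfèrent, et c'est là la difficulté principale : il faut raffiner le transfert invariant en un transfert des intégrales orbitales pondérées. L'ingrédient conceptuel est l'invariance par torseur intérieur des fonctions poids d'Arthur : pour $L_v\arr L_v^\ast$ et $\o_v\arr\o_v^\ast$, les centralisateurs $T_v$, $T_v^\ast$ sont des tores maximaux, donc canoniquement isomorphes (absence de forme intérieure pour les tores) de façon compatible à $\eta_v$ modulo conjugaison ; les espaces $\mathfrak{a}_{L_v}$, $\mathfrak{a}_{L_v^\ast}$ et les ensembles de sous-groupes paraboliques semi-standard entre $L_v$ et $Q_v$ d'une part, $L_v^\ast$ et $Q_v^\ast$ d'autre part, s'identifient (même donnée radicielle), de sorte que le poids $v_{L_v}^{Q_v}$ se transporte en $v_{L_v^\ast}^{Q_v^\ast}$ — c'est à ce niveau qu'interviennent le choix de sous-groupes compacts maximaux $K_v$, $K_v^\ast$ adaptés au torseur et les normalisations compatibles des mesures de Haar (appendice B). On conclut par un argument de type Paley--Wiener pour les intégrales orbitales pondérées d'éléments semi-simples réguliers, dont l'image dans l'espace des familles de fonctions sur les lieux réguliers est décrite par des relations de germes au bord des classes singulières et par les relations de scission et de descente issues des $(G,M)$-familles : la famille de données $\{J_{L_v}^{Q_v}(\o_v,f_v)\}$ poussée par $\eta_v$ sur les triplets transférables est admissible, car les relations de $(G,M)$-familles pour $G_v$ se transportent exactement sur celles entre sous-groupes de Levi transférables de $G_v^\ast$ ; complétée par $0$ sur les triplets où $Q_v'$ ne se transfère pas, elle reste admissible précisément parce que, dans les formules de scission et de descente, un parabolique non transférable n'apparaît qu'au travers de termes eux-mêmes attachés à des paraboliques non transférables (et les classes concernées ont de toute façon une orbite disjointe de tout support admissible). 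L'obstacle est donc entièrement dans cette dernière étape — réaliser effectivement par une fonction test la famille de données orbitales pondérées prescrite tout en respectant l'annulation imposée — le transport des poids lui-même étant formel une fois les données radicielles identifiées. Le recollement en une unique $f_v^\ast\in\S(\g_v^\ast(F_v))$ à support dans $\Omega^\ast$ achève alors la construction.
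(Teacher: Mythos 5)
Your proof outline diverges significantly from the paper's. The paper's proof is a two-step reduction: first it invokes Chaudouard's group-level theorem \cite[théorème~7.2]{Ch07} (translating Chaudouard's ``norme potentielle'' condition into the cohomological transfer condition of proposition~\ref{prop:transfertdef} via the bijectivity of abelianization over a $p$-adic field), and then it descends from $G(F_v)$ to $\g(F_v)$ by rescaling $f_v$ so that its support lies in a small $G(F_v)$-invariant neighbourhood of $0$ on which the exponential is a $G$-equivariant homeomorphism to a neighbourhood of $1$. You instead propose a direct Lie-algebra construction: confine $\supp f_v^\ast$ to the open locus $\Omega^\ast$ of transferable regular semisimple elements, observe that the non-transferring vanishing is then automatic because non-transferring $Q_v'$ forces non-transferring $L_v'$, hence a disjoint orbit, and finally realize the pushed-forward weighted data by an inverse Paley--Wiener argument.

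Your combinatorial analysis is correct and matches the appendix: for $G_v^\ast$-regular semisimple $\o_v'$, transferability is equivalent to every irreducible factor of the characteristic polynomial having degree divisible by the local index $e$, and this forces divisibility of every block of $L_v'$, so $L_v'$ transfers; contrapositively, a non-transferring $Q_v'$ has a non-transferring $M_{Q_v'}$, hence a non-transferring $L_v'$, hence contains no transferable regular semisimple class. The openness of $\Omega^\ast$ via Krasner is also right (cf.~corollaire~\ref{coro:transferopen}). But the remaining step is where the genuine content lies, and you have acknowledged but not filled the gap. Asserting that the pushed-forward family $\{J_{L_v}^{Q_v}(\o_v,f_v)\}$, extended by zero, is ``admissible'' and hence realized by a Schwartz--Bruhat function \emph{is} Chaudouard's theorem in its entirety; it is not formal. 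In particular, the claim that ``un parabolique non transférable n'apparaît qu'au travers de termes eux-mêmes attachés à des paraboliques non transférables'' in descent and splitting is itself a nontrivial ingredient — it is essentially the lemma of Chaudouard recorded in the paper as \cite[lemme~5.5]{Ch07} and its corollary~\ref{coro:artcoeff}, so your argument implicitly invokes part of the very machinery you would need to develop. Moreover, your construction imposes the stronger support constraint $\supp f_v^\ast\subseteq\Omega^\ast$, which yields vanishing on \emph{all} non-transferring orbits, not merely non-transferring parabolics; that stronger statement is exactly théorème~\ref{pro:corrloc}, whose proof requires additional work (germ expansions, descent), so you cannot assume this form of support for free. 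In short: the strategy is reasonable as a sketch of how one might reprove Chaudouard, but the decisive existence step is missing, whereas the paper sidesteps it entirely by citation and a short exponential-map descent.
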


Pour $f_v\in \S(\g(F_v))$ et $f_v^\ast \in \S(\g^\ast(F_v))$ deux fonctions vérifiant le théorème, on note $\S
(\g(F_v))\ni f_v\arr f_v^\ast \in \S(\g^\ast(F_v))$. On étend la comparaison ci-dessus :

\begin{theorem}[théorème \ref{pro:corrloc}] Soit $v$ une place non-archimédienne de $F$. Soit $\S(\g(F_v))\ni f_v\arr f_v^\ast \in \S(\g^\ast(F_v))$. Alors pour tout $L_v'$ sous-groupe de Levi de $G_v^\ast$, tout $Q_v'\supseteq L_v'$ sous-groupe parabolique, toute $\o_v'$ classe de $L_v'(F_v)$-conjugaison dans $\mathfrak{l}_v'(F_v)$, on ait 
\begin{align*}
    J_{L_v'}^{Q_v'}(\mathfrak{o}_v',f_v^\ast)=\begin{cases*}
    J_{L_v}^{Q_v}(\mathfrak{o}_v,f_v) & \text{si $(L_v',Q_v',\mathfrak{o}_v')=(L_v^\ast,Q_v^\ast,\mathfrak{o}_v^\ast)$  ($\mathfrak{o}_v'=\mathfrak{o}_v^\ast$ via $\eta|_{v,\l_v}$) ;} \\
    0 & \text{si $Q_v'$ ne se transfère pas ou si $L_v'$ se transfère et} \\
     & \text{$\o_v'$ ne se transfère pas (via $\eta|_{v,\l_v}$).}
\end{cases*}
    \end{align*}
\end{theorem}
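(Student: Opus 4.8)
The strategy is to bootstrap from the previous theorem (Chaudouard's result for semi-simple $G_v^\ast$-regular classes) to arbitrary conjugacy classes $\mathfrak{o}_v'$ by a density/induction argument on the dimension of the relevant Levi together with the theory of germs of weighted orbital integrals. First I would recall that any $\mathfrak{o}_v'$ has a semi-simple part, so we may write its contribution in terms of weighted orbital integrals attached to the unipotent (equivalently, nilpotent, since we are in the Lie algebra) classes in the centralizer of a semi-simple element; the key tool is Arthur's germ expansion for weighted orbital integrals on Lie algebras, which expresses $J_{L_v'}^{Q_v'}(\mathfrak{o}_v', f_v^\ast)$ near a semi-simple class as a finite linear combination, with coefficients the Shalika-type germs $g_{M_v'}^{L_v'}$, of the values $J_{M_v'}^{R_v'}(\sigma_v', f_v^\ast)$ on semi-simple $G_v^\ast$-regular classes $\sigma_v'$ approaching $\mathfrak{o}_v'$, summed over Levi subgroups $M_v'$ with $L_v' \subseteq M_v' \subseteq G_v^\ast$.

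Next I would observe that the transfer map $\eta$ is compatible with this germ expansion: on the one hand, since $\eta$ is an inner twist, it induces a bijection between Levi subgroups (over $\overline{F_v}$), between nilpotent orbits, and — crucially — it preserves the combinatorial data $(a_{M}^{L}, d_M^L, \ldots)$ entering the definition of the weight factors, so the germ functions on the two sides match under transfer for those Levi/parabolic pairs that transfer, and the germ contributions coming from non-transferring $M_v'$ must be shown to vanish. On the other hand, the right-hand side — the genuine object $J_{L_v}^{Q_v}(\mathfrak{o}_v, f_v)$ on $G_v$ — admits its own germ expansion in terms of semi-simple $G_v$-regular classes, and by the previous theorem the $G_v^\ast$-regular semi-simple integrals of $f_v^\ast$ are either equal to those of $f_v$ (when everything transfers) or zero. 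Substituting these identities into the two germ expansions and matching term by term — using that germs are intrinsic and transfer correctly — yields the desired formula for $\mathfrak{o}_v'$ arbitrary.

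Concretely the steps are: (1) fix a semi-simple element $\sigma_v' \in \mathfrak{o}_v'$, reduce to a neighbourhood of $\sigma_v'$ and invoke the germ expansion of Arthur for $J_{L_v'}^{Q_v'}(\cdot, f_v^\ast)$ around $\sigma_v'$; (2) analyse the behaviour of $\eta$ on the centralizer $G_{\sigma_v'}^\ast$ and its Levi/nilpotent data, checking that $\sigma_v'$ transfers iff $\mathfrak{o}_v'$ does and that the inner twist restricts to one on the centralizer; (3) match the germ functions $g_{M_v'}^{L_v'}$ on the two sides and prove vanishing of the germ contributions indexed by non-transferring parabolics, reducing by the argument of (2) and the previous theorem to the already-known $G_v^\ast$-regular case; (4) assemble, distinguishing the three cases in the statement. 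For the vanishing half of the "$0$" case — $L_v'$ transfers but $\mathfrak{o}_v'$ does not — I would argue that if $\mathfrak{o}_v'$ did meet the image of a $G_v(\overline{F_v})$-orbit then, tracking semi-simple parts and centralizers through $\eta$, it would force $\mathfrak{o}_v'$ itself to transfer, contradicting the hypothesis; combined with the vanishing already proved for $G_v^\ast$-regular classes and the linearity of germ expansions, every term drops out.

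**Main obstacle.** The delicate point is step (3): Arthur's germ expansion identifies the weighted orbital integral near a semi-simple point with a combination of regular ones only up to the (non-explicit) germ coefficients, and to transport the identity across $\eta$ one must know that these coefficients are the same on $G_v$ and $G_v^\ast$ for transferring data — i.e. that the germs are "purely geometric" invariants of the inner class — and that they are compatible with the passage to centralizers (where the inner twist may no longer be between a split group and its form, but between two general inner forms, so one genuinely needs the comparison for all inner forms of products of general linear groups, not just the split one). Handling the descent to centralizers — where the relevant groups are products $\prod \GL_{n_i}$ over division algebras of various degrees — and checking that the germ-matching is uniform there, is where the real work lies; everything else is bookkeeping with parabolic induction and the combinatorics of $(G, M)$-families.
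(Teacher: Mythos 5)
Your high-level plan — reduce to the regular semi-simple transfer of Theorem \ref{prop:deftransfertfon} via Shalika germ expansions near semi-simple points — is the right skeleton, and you correctly flag germ-matching across $\eta$ as the crux. But the resolution you sketch would not go through as written, and you are missing the two ingredients that actually carry the paper's proof.

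First, the germs do \emph{not} match "because $\eta$ is an inner twist and preserves the combinatorics of $(G,M)$-families." The germs on the two sides differ by the nontrivial Kottwitz sign $e^G(X) = \gamma_\psi(\g^\ast_{X_\ss^\ast})/\gamma_\psi(\g_{X_\ss})$, and establishing the precise relation $g_G^G(Y,X) = e^G(X)\, g_{G^\ast}^{G^\ast}(Y^\ast,X^\ast)$ (lemme \ref{lem:quotientofgermsShalika}) is itself a theorem: it rests on Waldspurger's fundamental identity $\gamma_\psi(\g)\widehat{j}_G^G(X,Y) = \gamma_\psi(\g^\ast)\widehat{j}_{G^\ast}^{G^\ast}(X^\ast,Y^\ast)$ for Fourier transforms of orbital integrals, combined with the germ expansion of $\widehat{j}$ and a homogeneity extraction. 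Nothing purely combinatorial about inner twists gives this, and without it "matching term by term" has no content. Moreover, the paper does not literally match all germ terms: it first uses the descent-of-induction formula to reduce to $X=X_\ss$ elliptic in some Levi $L$, then restricts to the case $G_X = M_X$ where the germ expansion collapses to a single Levi (point 2 of proposition \ref{prop:Shalika}) and one divides by a single germ, which must be shown nonzero (lemme \ref{lem:germenonnulell} plus a good choice of $Y^\ast$ via lemme \ref{lem:choixdeYstar}). The general case is then handled not by germ expansions but by the $(G,M)$-family recursion \eqref{YDLgeomeq:IOPdef} and the transfer of the $r$-functions (proposition \ref{prop:fonctionr}).

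Second, your treatment of the vanishing case "$L_v'$ transfers but $\o_v'$ does not" is on the wrong track. Germ linearity and "tracking semi-simple parts through $\eta$" does not produce the cancellation; the actual mechanism is cohomological. The paper writes $X'$ as $\Ind_{L'}^{M^\ast}(X_{1,\ss}')$ with $X_{1,\ss}'$ elliptic in a \emph{non-transferring} Levi $L'$ (lemme \ref{lem:nesetransfertpasalorsdansunLevinesetransfertpas}), applies the descent formula for induction to express $J_{M^\ast}^{Q^\ast}(X',f^\ast)$ as $\sum_{L_1'} d_{L'}^{M_{Q^\ast}^\ast}(L_1',M^\ast) J_{L'}^{Q_{L_1'}'}(X_{1,\ss}',f^\ast)$, and then observes that each summand must vanish: either $Q_{L_1'}'$ does not transfer and the orbital integral is zero by the already-established proposition \ref{pro:localvanishing}, or $L_1'$ transfers, in which case the cartesian square of $H_{\mathrm{ab}}^1$'s from \cite[lemme 5.5]{Ch07} (corollaire \ref{coro:artcoeff}) forces the Arthur coefficient $d_{L'}^{M_{Q^\ast}^\ast}(L_1^\ast, M^\ast)$ to be zero since $L'$ does not transfer. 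Without this fiber-product argument on abelianized Galois cohomology, the vanishing does not follow; this is a genuine missing idea in your proposal, not merely bookkeeping.
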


Nous étendons sans difficulté cette notion de transfert des fonctions tests locales pour englober les fonctions tests semi-locales (resp. globales) de type tenseur pur, notées $\S(\g(F_S))\ni f_S\arr f_S^\ast\in \S(\g(F_S))$ (resp. $\S(\g(\A_F))\ni f\arr f^\ast\in \S(\g(\A_F))$) (définition \ref{YDLgeomdef:transfertd'unefonctionglobale}).

En parallèle, nous établissons le développement fin du côté géométrique « raffiné » de la formule des traces non invariante. Pour tout groupe réductif $H$ sur $F$, nous notons $\O^{\mathfrak{h}}$ l'ensemble des classes de $H(F)$-conjugaison dans $\mathfrak{h}(F)$. \`{A} la différence de l'approche d'Arthur, où le côté géométrique est décomposé selon la partition de $\g(F)$ par les classes de « conjugaison semi-simples », notre approche va plus loin en décomposant le côté géométrique selon la partition de $\g(F)$ par les classes de $G(F)$-conjugaison usuelles. Cette approche est rendue possible grâce aux travaux de Chaudouard sur les algèbres de Lie \cite{Ch18} (ou grâce aux travaux de Finis-Lapid sur les groupes \cite{FiLa16}). Pour ce faire, nous nous appuyons sur la notion de l'induite de Lusztig-Spaltenstein généralisée (proposition \ref{prop:indprop}). Il s'agit d'une application à fibres finies $\Ind_M^G:\O^{\m}\to\O^{\g}$, où $M$ est un sous-groupe de Levi de $G$. On note dans la suite $J^\g$ la formule des traces pour l'algèbre de Lie $\g$, et $J_{\o}^\g$ la contribution due à $\o\in\O^\g$ (sous-section \ref{subsec:preludeTFdef}). On a $J^\g=\sum_{\o\in\O^\g}J_\o^\g$.

Pour $S$ un ensemble fini de places contenant les places archimédiennes, notons $\O^S$ le produit des anneaux des entiers des $F_v$ avec $v\not\in S$.

\begin{theorem}[théorème \ref{thm:devfin}] Pour tout $M$ sous-groupe de Levi d'une forme intérieure d'un groupe général linéaire, tout $S$ un ensemble fini de places assez grand, et tout $\o_M\in \O^{\m}$, il existe un unique nombre complexe $a^{M}(S,\o_M)$ vérifiant l'assertion suivante : pour tout $L$ sous-groupe de Levi d'une forme intérieure d'un groupe général linéaire, tout $\o\in \O^\l$, il existe $S_\o$ un ensemble fini de places de $F$, contenant les places archimédiennes, tel que pour tous $S\supseteq S_\o$ et $f_S\in\S(\l(F_S))$, on ait
\[J_\o^\l(f_S\otimes 1_{\l(\O^S)})=\sum_{M\in\L^L(M_0)}|W_0^M||W_0^L|^{-1}\sum_{\o_M\in \O^{\m} : \Ind_M^L(\o_M)=\o}a^M(S,\o_M)J_M^L(\o_M,f_S).\]
\end{theorem}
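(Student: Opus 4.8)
The plan is to deduce this refined fine expansion (théorème \ref{thm:devfin}) by induction on $\dim \g$ from Arthur's coarse fine geometric expansion, replacing Arthur's partition of $\g(F)$ into semisimple classes by the finer partition into ordinary $G(F)$-conjugacy classes. First I would recall that for $S$ large enough and $f_S\in\S(\l(F_S))$, Arthur's theory (here furnished for Lie algebras by Chaudouard \cite{Ch18}, or via Finis–Lapid \cite{FiLa16}) provides the existence and uniqueness of complex numbers $a^L(S,\o)$ for $\o$ in the \emph{semisimple} classes; the genuinely new input is to promote this to \emph{all} classes $\o\in\O^\l$. The key observation is that the generalized Lusztig–Spaltenstein induction $\Ind_M^L:\O^{\m}\to\O^{\l}$ of proposition \ref{prop:indprop} has finite fibers, so the sum over $\{\o_M : \Ind_M^L(\o_M)=\o\}$ is finite and the asserted identity is well-posed.

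The main body of the argument proceeds as follows. First, I would establish the splitting/descent formula for the weighted orbital integrals $J_L^Q(\o_S,f_S)$ with respect to the generalized induction, analogous to Arthur's descent formula for semisimple elements: for $\o\in\O^\l$ one writes $J_\o^\l(f)$ as a sum over Levis $M\in\L^L(M_0)$ and over the finite fiber $\Ind_M^L(\o_M)=\o$ of weighted orbital integrals $J_M^L(\o_M,\cdot)$ against coefficients to be determined. The combinatorial identity $\sum_M |W_0^M||W_0^L|^{-1}(\cdots)$ is forced by the usual $(G,M)$-family bookkeeping, exactly as in Arthur; the point is that the proof of that bookkeeping only uses the formal properties of weighted orbital integrals (their behavior under parabolic induction and the product formula for $(G,M)$-families), all of which hold verbatim on the Lie algebra. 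Then I would \emph{define} $a^M(S,\o_M)$ by descending induction on $\dim M$: when $\o_M$ is the unipotent-analogue (nilpotent) part sitting over a semisimple class, one recovers Arthur's $a^M(S,\cdot)$; in general $a^M(S,\o_M)$ is obtained by inverting the finite triangular system coming from the induction $\Ind$, which is possible precisely because $\Ind$ has finite fibers and is compatible with the semisimple-part stratification. Uniqueness follows from the same triangularity: if two families of coefficients satisfy the identity for all $L$, $\o$, and all large $S$ and all $f_S$, then testing against $f_S$ supported near a single class and using the linear independence of the distributions $f_S\mapsto J_M^L(\o_M,f_S)$ (for the finitely many $\o_M$ in a fixed fiber, a consequence of the theory of weighted orbital integrals) forces equality.

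Two points deserve care. The choice of $S_\o$: for a fixed class $\o$ one must take $S$ large enough that $\o$ meets $\l(\O^S)$ in a controlled way and that $1_{\l(\O^S)}$ behaves well under the descent, which is a finiteness statement about the support of $\o$ and about the places of bad reduction of $\g$; I would extract $S_\o$ from the corresponding statement for semisimple classes together with the (finite) data of the nilpotent orbits appearing in the fiber. The compatibility between the two Levi data (that $\Ind_M^L$ is defined over $F$ and interacts correctly with $J_M^L$) is handled by proposition \ref{prop:indprop}. The hard part will be proving the generalized descent formula for $J_\o^\l$ in terms of $\Ind_M^L$ — that is, showing that the contribution $J_\o^\l$ genuinely decomposes along the induction map with the stated combinatorial coefficients — since on the Lie algebra one must track both the semisimple part (handled by Arthur/Chaudouard) and the nilpotent part (handled by Lusztig–Spaltenstein-type induction) simultaneously, and verify that the two descents are compatible. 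Everything else is a bookkeeping exercise transcribing Arthur's group-case arguments, made legitimate by \cite{Ch18}, to the Lie-algebra setting with the finer partition.
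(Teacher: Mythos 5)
Your high-level outline correctly identifies the two structural pillars of the argument: a nilpotent base case and a reduction of the general case to the nilpotent case by semisimple descent. But there is a genuine misconception in your opening paragraph, and the actual mechanism for defining the coefficients is not the one you propose.

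First, you write that Chaudouard \cite{Ch18} or Finis--Lapid \cite{FiLa16} already furnish ``the existence and uniqueness of complex numbers $a^L(S,\o)$ for $\o$ in the semisimple classes'' and that ``the genuinely new input is to promote this to all classes.'' This is not correct: those references supply only the \emph{coarse} decomposition of the geometric side as a convergent sum $J^\g = \sum_{\o\in\O^\g} J_\o^\g$ (théorème \ref{thm:classconjdisJ}), not the fine expansion. There are no coefficients $a^L(S,\o)$ to import; théorème \ref{thm:devfin} is proving the fine expansion for Lie algebras from scratch, by redoing Arthur's group-case machinery from \cite{Art86}. Treating the semisimple-class coefficients as given would leave the entire content of the theorem unproved.

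Second, your suggestion to ``define $a^M(S,\o_M)$ by \ldots inverting the finite triangular system coming from the induction $\Ind$'' does not reflect what is needed. The coefficients are defined explicitly and constructively, not by inverting anything: one sets $a^M(S,X)\eqdef \epsilon^M(S,\sigma)\, a^{M_\sigma}(S,(\Ad M_\sigma(F))X_\nilp)$ with $\sigma = X_\ss$, where the nilpotent coefficient $a^{M_\sigma}(S,\cdot)$ is characterized (théorème \ref{thm:devlopfinnilp}) by the fact that the difference $T_\o^{G}$ between $J_\o^\g$ and the contributions from proper Levis is a $G(F_S)$-invariant distribution supported on the single orbit $\o(F_S)$, hence a scalar multiple of the orbital integral. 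The content is then to \emph{verify} that this explicit definition makes the expansion hold. That verification is the semisimple descent: a detailed manipulation of the truncated kernel $k_\o^{\g,T}$, passage through Arthur's combinatorial $\Gamma_R^G(\cdot,\mathcal{Y}_R)$ functions, and crucially the compactness lemmas \ref{lem:Kottcomp1} and \ref{lem:Kottcomp2}, which are what make the choice of $S_\o$ legitimate (conditions (iii) and (viii) in the proof). None of this is a ``bookkeeping exercise transcribing Arthur's group-case arguments'': the Jacobian factors from the group-to-Lie-algebra dictionary and the interaction between semisimple descent and nilpotent induction are exactly where the work lies, as you yourself flag at the end — but you propose no mechanism for handling them.
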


Nous analysons ensuite toutes les intégrales orbitales pondérées impliquées dans la formule des traces pour les algèbres de Lie. 

\begin{proposition}[proposition \ref{prop:semi-localIOPcomp}]
Il existe $\underline{S}$ un sous-ensemble fini de places, contenant les places archimédiennes, tel que pour tous $S\supseteq \underline{S}$ fini, $\S(\g(F_S))\ni f_S\underset{}{\arr} f_S^\ast \in \S(\g^\ast(F_S))$ de type tenseur pur, $L'$ un sous-groupe de Levi de $G^\ast$, $Q'\supseteq L'$ un sous-groupe parabolique, et $\o'\in \O^{\mathfrak{l}'}$, on ait
\begin{align*}
    J_{L'}^{Q'}(\mathfrak{o}',f_S^\ast)=\begin{cases*}
    J_L^Q(\mathfrak{o},f_S) & \text{si $(L',Q',\mathfrak{o}')= (L^\ast,Q^\ast,\mathfrak{o}^\ast)$ ;} \\
    0 & \text{sinon.}
    \end{cases*}
    \end{align*}
\end{proposition}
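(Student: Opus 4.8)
The plan is to bootstrap from the local comparison (théorème \ref{pro:corrloc}) to the semi-local statement, using the product structure of weighted orbital integrals and a descent/splitting argument. First I would recall how $J_{L'}^{Q'}(\o'_S, f_S^\ast)$ decomposes when $f_S^\ast = \bigotimes_{v\in S} f_v^\ast$ is of pure tensor type: Arthur's splitting formula expresses the semi-local weighted orbital integral as a sum over tuples $(L'_v)_{v\in S}$ of Levi subgroups with $L'_v \supseteq L'$, weighted by constants $d_{L'}^{G^\ast}((L'_v)_v)$, of products $\prod_{v\in S} J_{L'_v}^{Q'_v}(\o'_v, f_v^\ast)$ where $Q'_v \in \mathcal{P}(L'_v)$ is determined by the splitting data and $\o'_v$ is the $L'_v(F_v)$-class obtained by inducing/restricting $\o'_S$. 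The point is that the nonvanishing of $d_{L'}^{G^\ast}((L'_v)_v)$ forces the $F_v$-spans of the $\mathfrak{a}_{L'_v}$ to be in general position, and in particular the data is controlled place by place.

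Next I would feed the local comparison into each factor. For each place $v\in S$, théorème \ref{pro:corrloc} tells us that $J_{L'_v}^{Q'_v}(\o'_v, f_v^\ast)$ equals $J_{L_v}^{Q_v}(\o_v, f_v)$ when $(L'_v, Q'_v, \o'_v)$ transfers from $(L_v, Q_v, \o_v)$, and vanishes whenever $Q'_v$ does not transfer or $L'_v$ transfers but $\o'_v$ does not. So in the product $\prod_v J_{L'_v}^{Q'_v}(\o'_v, f_v^\ast)$, either every factor is a transfer — in which case each $L'_v, Q'_v, \o'_v$ is globally the base change of a transferable global object, because transferability is detected by $\eta$ over $\overline{F}$ and is insensitive to the place once $v\notin \underline{S}$ (this is where $\underline{S}$ enters: we enlarge it so that $\eta_v$ and all the relevant Levi/parabolic data are ``good'' at every $v\in S$, matching the integral models) — or the product is zero. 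I would then check that when all factors transfer, the transfer is \emph{uniform}: the local Levi subgroups $L'_v$ are all base changes of a single global $L' = L^\ast$, the paraboliques $Q'_v$ of a single global $Q' = Q^\ast$, and the classes $\o'_v$ assemble into $\o'_S = \o^\ast_S$ with $\o_S$ inducing to $\o$. The combinatorial constants $d_{L'}^{G^\ast}$ and $d_L^G$ must be matched under the bijection $L_v \leftrightarrow L'_v$ induced by $\eta$; since $\eta$ is an inner twist it induces isomorphisms $\mathfrak{a}_{L} \cong \mathfrak{a}_{L'}$ compatible with the $W_0$-actions and the lattices, so the two splitting formulas have termwise equal coefficients.

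Assembling these observations: if $Q'$ does not transfer, then no splitting datum can make all $Q'_v$ transfer (a local parabolic transfers iff it is the base change of a transferable global one, again using $\overline{F}$-descent and $v\notin\underline{S}$), so every term in the splitting formula vanishes and $J_{L'}^{Q'}(\o'_S, f_S^\ast) = 0$; similarly if $L'$ transfers but $\o'_S$ does not, at least one $\o'_v$ fails to transfer in every term, giving $0$; and if $(L', Q', \o'_S) = (L^\ast, Q^\ast, \o^\ast_S)$, the splitting formula for $f_S^\ast$ term-by-term equals that for $f_S$, yielding $J_L^Q(\o_S, f_S)$. The main obstacle I anticipate is the bookkeeping around $\underline{S}$ and the hyperspecial/integral models: one must be sure that outside $\underline{S}$ the local transferability of every Levi, parabolic, and semisimple-regular conjugacy class really is governed by the global $\eta$ (so that ``transfers at $v$'' for all $v\in S$ is equivalent to ``transfers globally''), and that the constants $d^{G}_{L}$ on the two sides truly coincide under $\eta$ — this requires a careful compatibility check between the inner twist on Lie algebras, the associated maps on the spaces $\mathfrak{a}_L$, and Arthur's normalization of the splitting coefficients, but it is structural rather than computational.
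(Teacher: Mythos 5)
There is a genuine gap in the vanishing argument. You claim that if $Q'$ does not transfer globally, then "no splitting datum can make all $Q'_v$ transfer," because "a local parabolic transfers iff it is the base change of a transferable global one." This is false. The local parabolics $Q'_v$ appearing in Arthur's splitting formula are not base changes of a single global parabolic: their Levi factors $L'_v$ vary from place to place. And even for base-changed objects, local transferability at a finite set of places gives no global information — at every place $v$ where $D$ is split (which is almost all $v$, and also for $v \notin \underline{S}$ by construction), \emph{every} local parabolic transfers, so the local condition is vacuous there. Hence it is entirely possible that $M'$ does not transfer globally, yet for some tuple $(L'_v)_{v\in S}$ in the splitting sum every $Q'_v$ transfers locally and the local factors $J_{M'_v}^{Q'_v}(X'_v,f^\ast_v)$ are nonzero. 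You offer no reason why such terms vanish.

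What the paper actually needs at this point is lemma \ref{lem:globalscindagetranfert}: if $(L_v^\ast)_{v\in S}$ is a tuple of locally transferable Levis containing $M'_v$ with $\bigoplus_{v\in S} a_{M'_v}^{L_v^\ast} \to a_{M'}^{G^\ast}$ an isomorphism — equivalently, with $d_{M'}^{G^\ast}((L'_v)_v)\neq 0$ — then $M'$ transfers globally. Contrapositively, every splitting tuple in which all $Q'_v$ transfer locally has \emph{vanishing Arthur coefficient}, not vanishing local factor. This is a nontrivial global input (Arthur's lemme 14.1 of \cite{Art81}; the paper gives an alternative proof via a combinatorial argument on block sizes and the fact that the Brauer invariants of $D$ sum to zero), and it cannot be replaced by the formal observation you make. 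A secondary point: in the matching case you assert term-by-term equality of local factors, but the local theorem produces a Kottwitz sign $e^{M_v}(X_v)$ at each nonarchimedean $v$; the semi-local identity holds only because $\prod_v e^{M_v}(X_v)=1$, a consequence of the product formula for Weil constants (equation \eqref{eq:formuleproduitWeil}) together with the fact that $e^{G_v}(X_v)=1$ when $G_v$ is quasi-split. This should be invoked explicitly.
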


Puis nous comparons les coefficients $a^M(S,\o_M)$ dans les développements fins. 

\begin{theorem}[théorème \ref{thm:identificcationcoeffaG(S,X)transfert}]\label{Introthm:identificcationcoeffaG(S,X)transfert}Soit $\O^\g\ni\o\arr\o^\ast\in\O^{\g^\ast}$. Il existe $S_{\o}'$ un ensemble fini de places de $F$, contenant les places archimédiennes, tel que pour tout $S\supseteq S_{\o}'$ fini on ait
\begin{equation}
a^G(S,\o)=a^{G^\ast}(S,\o^\ast).    
\end{equation}
\end{theorem}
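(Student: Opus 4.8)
The strategy is to leverage the uniqueness characterization of the coefficients $a^M(S,\mathfrak{o}_M)$ furnished by the fine expansion (théorème \ref{thm:devfin}), combined with the semi-local comparison of weighted orbital integrals (proposition \ref{prop:semi-localIOPcomp}), and to argue by induction on $\dim G$ (equivalently, on the rank $n$, or on the number of Levi subgroups of $G$). The point is that $a^{G^\ast}(S,\mathfrak{o}^\ast)$ is pinned down uniquely by the requirement that, for all Levi $L^\ast$ of $G^\ast$ and all $\mathfrak{o}'\in\O^{\mathfrak{l}^\ast}$, the identity
\[
J_{\mathfrak{o}'}^{\mathfrak{l}^\ast}(f_S^\ast\otimes 1_{\mathfrak{l}^\ast(\O^S)})=\sum_{M^\ast\in\L^{L^\ast}(M_{0^\ast})}|W_0^{M^\ast}||W_0^{L^\ast}|^{-1}\sum_{\mathfrak{o}_{M^\ast}:\Ind_{M^\ast}^{L^\ast}(\mathfrak{o}_{M^\ast})=\mathfrak{o}'}a^{M^\ast}(S,\mathfrak{o}_{M^\ast})J_{M^\ast}^{L^\ast}(\mathfrak{o}_{M^\ast},f_S^\ast)
\]
holds for $S$ large and $f_S^\ast$ arbitrary. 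So I would first take $f_S\arr f_S^\ast$ a transfer of pure-tensor type with $S\supseteq\underline S$ (and large enough for all the finiteness conditions $S_{\mathfrak o}$, $S_{\mathfrak o_M}$ to apply), apply the fine expansion for $J_{\mathfrak{o}}^{\g}(f_S\otimes 1_{\g(\O^S)})$ on the $\g$-side, and try to match it term-by-term with the $\g^\ast$-side expansion for $J_{\mathfrak{o}^\ast}^{\g^\ast}(f_S^\ast\otimes 1_{\g^\ast(\O^S)})$.

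The bridge between the two sides is twofold. On the automorphic/geometric input, one needs that the transfer is compatible with the trace formula distributions: $J^{\g}(f_S\otimes 1)=J^{\g^\ast}(f_S^\ast\otimes 1)$, or more precisely the refined statement that $J_{\mathfrak o}^{\g}(f_S\otimes 1)=J_{\mathfrak o^\ast}^{\g^\ast}(f_S^\ast\otimes 1)$ whenever $\mathfrak o\arr\mathfrak o^\ast$, and $J_{\mathfrak o^\ast}^{\g^\ast}(f_S^\ast\otimes 1)=0$ when $\mathfrak o^\ast$ does not transfer — this should itself follow from proposition \ref{prop:semi-localIOPcomp} applied with $L'=G^\ast$, $Q'=G^\ast$, together with the fine expansion collapsing to its $L=G$ piece, or can be extracted by running the same inductive bookkeeping at the level of the full geometric side. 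On the combinatorial input, one needs that the transfer map $L\arr L^\ast$ on Levi subgroups is a bijection between the Levi subgroups of $G$ that occur and the Levi subgroups of $G^\ast$ that transfer, that it preserves the Weyl-group indices $|W_0^M|$, $|W_0^L|$ (both are symmetric-group factors determined by the "block shape" of the Levi, unchanged under inner twisting), and — crucially — that the generalized Lusztig–Spaltenstein induction $\Ind_M^L$ is compatible with transfer: if $M\arr M^\ast$ and $\mathfrak o_M\arr\mathfrak o_{M^\ast}^\ast$ then $\Ind_M^L(\mathfrak o_M)\arr\Ind_{M^\ast}^{L^\ast}(\mathfrak o_{M^\ast}^\ast)$, and the fibers correspond. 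This last compatibility is something I would need to check against the construction in proposition \ref{prop:indprop}; since induction is defined over $\overline F$ by a geometric (saturation) procedure and $\eta$ is an $\overline E$-isomorphism of associative algebras, it should respect the construction, but the rationality bookkeeping (which $\overline F$-classes descend to $F$-classes on each side, and matching the finite fibers) is where care is needed.

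With these compatibilities in hand, the proof runs by strong induction on $\dim G$. For $G=G^\ast=\GL_n$ there is nothing to prove. In general, fix $\mathfrak o\arr\mathfrak o^\ast$, choose $S$ containing $\underline S$, $S_{\mathfrak o}$, $S_{\mathfrak o^\ast}'$, all the $S_{\mathfrak o_M}$ for $M\subsetneq G$ appearing in the fibers, and large enough that proposition \ref{prop:semi-localIOPcomp} applies; let $f_S\arr f_S^\ast$ be of pure-tensor type. Write the fine expansion for $J_{\mathfrak o}^{\g}(f_S\otimes 1)$ and for $J_{\mathfrak o^\ast}^{\g^\ast}(f_S^\ast\otimes 1)$, and equate the left-hand sides via the refined transfer identity for the geometric side. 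On the right-hand sides, split off the top term $M=G$ (resp. $M^\ast=G^\ast$), which equals $a^G(S,\mathfrak o)J_G^G(\mathfrak o,f_S)$ (resp. $a^{G^\ast}(S,\mathfrak o^\ast)J_{G^\ast}^{G^\ast}(\mathfrak o^\ast,f_S^\ast)$); note $J_G^G(\mathfrak o,f_S)$ is the plain invariant orbital integral, and proposition \ref{prop:semi-localIOPcomp} (with $L'=Q'=G^\ast$) gives $J_{G^\ast}^{G^\ast}(\mathfrak o^\ast,f_S^\ast)=J_G^G(\mathfrak o,f_S)$. For the remaining terms, $M$ ranges over proper Levi subgroups; those $M$ with $M\arr M^\ast$ contribute $a^M(S,\mathfrak o_M)J_M^G(\mathfrak o_M,f_S)$, which by proposition \ref{prop:semi-localIOPcomp} equals $a^M(S,\mathfrak o_M)J_{M^\ast}^{G^\ast}(\mathfrak o_{M^\ast}^\ast,f_S^\ast)$ when $\mathfrak o_M\arr\mathfrak o_{M^\ast}^\ast$, and by the induction hypothesis $a^M(S,\mathfrak o_M)=a^{M^\ast}(S,\mathfrak o_{M^\ast}^\ast)$; those $M$ not transferring, or with $\mathfrak o_M$ not transferring, contribute $0$ by proposition \ref{prop:semi-localIOPcomp}; and on the $G^\ast$ side, conversely, any $M^\ast$ or $\mathfrak o_{M^\ast}^\ast$ not coming from the $G$-side contributes $0$. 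Using the compatibility of $\Ind$ and of the Weyl factors with transfer, the proper-$M$ sums on the two sides cancel identically, leaving $a^G(S,\mathfrak o)J_G^G(\mathfrak o,f_S)=a^{G^\ast}(S,\mathfrak o^\ast)J_G^G(\mathfrak o,f_S)$. Since $f_S$ can be chosen so that $J_G^G(\mathfrak o,f_S)\neq 0$ (it is just an orbital integral along the orbit of $\mathfrak o$, which is nonvanishing for a suitable test function supported near that orbit), we conclude $a^G(S,\mathfrak o)=a^{G^\ast}(S,\mathfrak o^\ast)$, and by enlarging $S$ if necessary this holds for all $S\supseteq S_{\mathfrak o}'$.

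The main obstacle I anticipate is not the inductive skeleton — which is the standard Arthur-style descent — but rather establishing cleanly the refined transfer identity for the full geometric side, $J_{\mathfrak o}^{\g}(f_S\otimes 1)=J_{\mathfrak o^\ast}^{\g^\ast}(f_S^\ast\otimes 1)$ with vanishing for non-transferring classes, and in particular its interaction with the global fine expansion of théorème \ref{thm:devfin}: one must be sure that the two expansions can be matched class-by-class rather than only after summing over all $\mathfrak o\in\O^\g$. This requires knowing that the partition of $\g(F)$ into $G(F)$-conjugacy classes is compatible with transfer in the strong sense that $\eta$ induces a bijection between the transferring classes on the two sides, together with the fact (to be imported from \cite{Ch18} or \cite{FiLa16}, via proposition \ref{prop:indprop}) that $J_{\mathfrak o}^{\g}$ depends only on the data $(S,\mathfrak o)$ through the coefficients $a^M(S,\mathfrak o_M)$ and the weighted orbital integrals $J_M^G(\mathfrak o_M,\cdot)$, both of which are controlled by proposition \ref{prop:semi-localIOPcomp} and the induction. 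A secondary technical point is the uniformity of the finite set $S_{\mathfrak o}'$: since finitely many proper Levi subgroups and finitely many fiber classes $\mathfrak o_M$ are involved for a given $\mathfrak o$, one simply takes $S_{\mathfrak o}'$ to be the union of $\underline S$, $S_{\mathfrak o}$, and the relevant $S_{\mathfrak o_M}$, so this causes no real difficulty.
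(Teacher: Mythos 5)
There is a genuine gap at the anchor of the induction. Your skeleton — induction on $\dim G$, descent to the centralizer $G_{X_\ss}$ when $X_\ss\notin\mathfrak z(F)$, cancellation of proper Levi contributions using proposition \ref{prop:semi-localIOPcomp} and the induction hypothesis — reproduces the paper's outer structure. But your argument for the top term $M=G$ rests on the ``refined transfer identity for the full geometric side'' $J_{\mathfrak o}^{\g}(f_S\otimes 1)=J_{\mathfrak o^\ast}^{\g^\ast}(f_S^\ast\otimes 1)$, which you treat as available. It is not: that identity is théorème \ref{thm:finalres'}, and its proof in the paper \emph{uses} théorème \ref{thm:identificcationcoeffaG(S,X)transfert}. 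For a non-nilpotent class $\mathfrak o'$ (equivalently, $\o_\ss'\notin\mathfrak z^\ast(F)$), the identity $J_{\mathfrak o^\ast}(f^\ast)=J_{\mathfrak o}(f)$ does follow from the fine expansion, proposition \ref{prop:semi-localIOPcomp} and the induction hypothesis — because the $M=G$ coefficient $a^G(S,\mathfrak o)$ descends to $a^{G_{\o_\ss}}(S,\cdot)$ with $\dim G_{\o_\ss}<\dim G$. But for $\mathfrak o$ nilpotent (or $\o_\ss\in\mathfrak z(F)$), all you can deduce after cancelling the proper Levi terms is
\[
J_{\mathfrak o}^{\g}(f)-J_{\mathfrak o^\ast}^{\g^\ast}(f^\ast)=\bigl(a^G(S,\mathfrak o)-a^{G^\ast}(S,\mathfrak o^\ast)\bigr)J_G^G(\mathfrak o,f),
\]
and you have no independent handle on the left-hand side. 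The remark that the fine expansion ``collapses to its $L=G$ piece'' is false (the non-invariant weighted orbital integrals do not vanish for proper $M$), and proposition \ref{prop:semi-localIOPcomp} at $L'=Q'=G^\ast$ compares $J_G^G(\mathfrak o,f_S)$ with $J_{G^\ast}^{G^\ast}(\mathfrak o^\ast,f_S^\ast)$ — that is, it compares the two right-hand-side terms, not the global distributions on the left.

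What you are missing is the paper's mechanism for extracting the nilpotent coefficients: the function
\[
D_f(t)=\sum_{\pi_{\mathfrak z}(\mathfrak o)=0}J_{\mathfrak o}(f^{t})-\sum_{\pi_{\mathfrak z^\ast}(\mathfrak o')=0}J_{\mathfrak o'}(f^{\ast,t})
\]
is shown, on one hand, via the fine expansion, the semi-local comparison and the induction hypothesis, to equal $\sum_Y(a^G(S,Y)-a^{G^\ast}(S,Y^\ast))J_G^G(Y,f^t)$, whose homogeneity exponents in $|t|_{v_0}$ are $-\tfrac12\dim\g/\g_Y$; and on the other hand, via the trace formula with fixed central component (proposition \ref{prop:TFtracefixee}) and the compatibility of transfer with the partial Fourier transform (proposition \ref{prop:Fouierpcommutetrans}), to equal the same expression with $f$ replaced by $\widetilde f$, whose homogeneity exponents are $-\dim\g_\ad+\tfrac12\dim\g/\g_Y$. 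The inequality $\tfrac12\dim\g_{Y_1}+\tfrac12\dim\g_{Y_2}>\dim\mathfrak z$ (valid once $G$ is non-commutative) makes these two families of exponents disjoint, forcing every coefficient to vanish; linear independence of nilpotent orbital integrals (proposition \ref{prop:IOP}, point 7) then yields $a^G(S,Y)=a^{G^\ast}(S,Y^\ast)$. Without this Poisson/homogeneity comparison — or some equally strong independent input at the nilpotent level — your proposed cancellation cannot close, and the theorem does not follow.
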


On aboutit alors à la comparaison des côtés géométriques de la formule des traces. Notons $\O^{G}$ l'ensemble des classes de $G(F)$-conjugaison dans $G(F)$. Notons $J_{\geom}^G$ le côté géométrique de la formule des traces non-invariante pour le groupe $G$, et $J_{\o}^G$ la contribution due à $\o\in\O^G$ (sous-section \ref{subsec:comparaisonTFgroupLiealg}). On a $J_{\geom}^G=\sum_{\o\in\O^G}J_\o^G$.

\begin{theorem}[théorème \ref{thm:finalres'} pour (1) ; proposition \ref{prop:Lie=Grp} et théorème \ref{thm:finalres} pour (2)] Soit $\S(\g(\A_F))\ni f\arr f^\ast \in \S(\g^\ast(\A_F))$ de type tenseur pur. 
\begin{enumerate}
    \item On a \begin{align*}
    J_{\o'}^{\g^\ast}(f^\ast)=\begin{cases*}
    J_\o^\g(f) & \text{si $\o'=\o^\ast$ ;} \\
    0 & \text{si $\o'$ ne se transfère pas.}
    \end{cases*}
    \end{align*}
    \item L'inclusion $G\hookrightarrow \g$ donne une injection naturelle $\O^{G}\hookrightarrow \O^\g$. On a $J_{\o}^G=J_{\o}^\g$ pour tout $\o\in \O^G$. Pour $\o\in \O^G$ et $\o^\ast\in \O^{\g^\ast}$, si $\eta(\mathfrak{o})$ intersecte $\mathfrak{o}^\ast$ de manière non-triviale alors $\o^\ast\in \O^{G^\ast}$. Donc $J_{\geom}^G(f)=J_{\geom}^{G^\ast}(f^\ast)$. Cette dernière égalité est valable pour les normalisations expliquées dans la remarque \ref{rem:finalremarkonmeasures}.   
\end{enumerate}    
\end{theorem}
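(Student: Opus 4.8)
\emph{Démarche.} Le plan est d'établir d'abord l'assertion (1) en comparant les développements fins fournis par le théorème~\ref{thm:devfin} pour $\g$ et pour $\g^\ast$, puis d'en déduire (2) par sommation sur les classes de conjugaison. On se ramène d'abord à $S$ assez grand : puisque $f\arr f^\ast$ est de type tenseur pur, la définition du transfert global (définition~\ref{YDLgeomdef:transfertd'unefonctionglobale}) fournit, pour tout $S$ fini assez grand, une factorisation $f=f_S\otimes 1_{\g(\O^S)}$, $f^\ast=f_S^\ast\otimes 1_{\g^\ast(\O^S)}$ avec $\S(\g(F_S))\ni f_S\arr f_S^\ast\in\S(\g^\ast(F_S))$ de type tenseur pur ; fixant $\o\in\O^\g$, j'agrandirais $S$ pour qu'il contienne $\underline{S}$ (proposition~\ref{prop:semi-localIOPcomp}), les places archimédiennes, et tous les ensembles finis de places attachés par les théorèmes~\ref{thm:devfin} et~\ref{thm:identificcationcoeffaG(S,X)transfert} à $\o$, à $\o^\ast$ et aux classes $\o_M$, $\o_{M'}'$ figurant dans les développements fins de $J_\o^\g(f)$ et de $J_{\o^\ast}^{\g^\ast}(f^\ast)$, pour $G$, pour $G^\ast$ et pour leurs sous-groupes de Levi (lesquels sont eux-mêmes des produits de formes intérieures de groupes généraux linéaires, si bien que les énoncés cités s'appliquent). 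Il suffit alors de traiter ces $f$, $f^\ast$.

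\emph{Preuve de (1).} On écrit le développement fin de $J_{\o'}^{\g^\ast}(f^\ast)$ : une somme sur $M'\in\L^{G^\ast}(M_{0^\ast})$ et sur $\o_{M'}'\in\O^{\m'}$ avec $\Ind_{M'}^{G^\ast}(\o_{M'}')=\o'$, du terme $|W_0^{M'}||W_0^{G^\ast}|^{-1}a^{M'}(S,\o_{M'}')J_{M'}^{G^\ast}(\o_{M'}',f_S^\ast)$. La proposition~\ref{prop:semi-localIOPcomp}, appliquée avec le parabolique $Q'=G^\ast$ (qui se transfère), annule $J_{M'}^{G^\ast}(\o_{M'}',f_S^\ast)$ sauf si $M'$ se transfère, disons $M'=M^\ast$ pour un sous-groupe de Levi $M$ de $G$ défini sur $F$, et si $\o_{M'}'$ se transfère via $\eta|_{\m}$, disons $\o_{M'}'=\o_M^\ast$ avec $\o_M\in\O^{\m}$, auquel cas elle vaut $J_M^G(\o_M,f_S)$. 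Comme l'induite généralisée commute à l'isomorphisme $\eta$ (proposition~\ref{prop:indprop} et définitions du transfert), tout terme non nul vérifie $\o'=(\Ind_M^G(\o_M))^\ast$, donc $\o'$ se transfère ; par contraposée, si $\o'$ ne se transfère pas, la somme entière est nulle, ce qui est le second cas de (1). Si $\o'=\o^\ast$, l'injectivité du transfert sur $\O^\g$ entraîne $\Ind_M^G(\o_M)=\o$ ; en remplaçant $a^{M^\ast}(S,\o_M^\ast)$ par $a^M(S,\o_M)$ grâce au théorème~\ref{thm:identificcationcoeffaG(S,X)transfert} appliqué à $M$, on reconnaît le développement fin de $J_\o^\g(f)$, d'où $J_{\o^\ast}^{\g^\ast}(f^\ast)=J_\o^\g(f)$. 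Le point délicat sera l'identification des poids : pour un sous-groupe de Levi individuel, $|W_0^{M^\ast}||W_0^{G^\ast}|^{-1}\neq|W_0^M||W_0^G|^{-1}$ dès que $\g$ n'est pas déployée, mais après regroupement par classes de $G(F)$- et de $G^\ast(F)$-conjugaison (qui se correspondent sous le transfert) les sommes de ces quotients coïncident de part et d'autre, car multiplier les tailles de blocs d'un sous-groupe de Levi par l'indice de $\g$ ne change pas l'ordre du stabilisateur de la décomposition en blocs dans le groupe symétrique, à un facteur commun près ne dépendant que des tailles de blocs ; il faudra de même vérifier la compatibilité des fibres de $\Ind$ au transfert.

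\emph{Preuve de (2) et conclusion.} Pour l'injection : $G$ est l'ouvert des unités de $\g$, la conjugaison de $G(F)$ sur lui-même est la restriction de $\Ad$, et $G(F)$ est stable par $\Ad G(F)$ puisque l'inversibilité est invariante par conjugaison ; une classe de $G(F)$-conjugaison dans $G(F)$ est donc une classe de $G(F)$-conjugaison dans $\g(F)$ formée d'unités, d'où $\O^G\hookrightarrow\O^\g$, d'image les classes d'éléments inversibles. L'égalité $J_\o^G=J_\o^\g$ pour $\o\in\O^G$, de même que $J_{\o^\ast}^{G^\ast}=J_{\o^\ast}^{\g^\ast}$ pour $\o^\ast\in\O^{G^\ast}$, est la proposition~\ref{prop:Lie=Grp}. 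Comme $\eta$ est un isomorphisme d'algèbres associatives sur $\overline F$, il préserve l'inversibilité : si $\eta(\o)$ rencontre $\o^\ast$ non trivialement avec $\o\in\O^G$, alors $\o^\ast$ contient une unité, donc $\o^\ast\in\O^{G^\ast}$ ; de plus toute $\o\in\O^G$ se transfère, la classe géométrique Galois-stable $\eta((\Ad G(\overline{F}))\o)$ rencontrant $\mathfrak{gl}_n(F)$ (forme canonique rationnelle) en une unique classe, nécessairement dans $\O^{G^\ast}$, et — la conjugaison rationnelle coïncidant avec la conjugaison stable pour les formes intérieures de $\GL_n$, cf.\ appendice A — le transfert induit une bijection de $\O^G$ sur l'ensemble des classes de $\O^{G^\ast}$ qui se transfèrent. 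Combinant ceci avec (1), le développement $J_{\geom}^{G^\ast}(f^\ast)=\sum_{\o^\ast\in\O^{G^\ast}}J_{\o^\ast}^{G^\ast}(f^\ast)$ se réduit, les termes non transférés étant nuls, à $\sum_{\o\in\O^G}J_\o^G(f)=J_{\geom}^G(f)$. Les normalisations de mesures n'interviennent qu'à travers les constantes reliant les mesures sur $\g$, $\g^\ast$ et leurs sous-groupes de Levi (appendice B), d'où la restriction de la remarque~\ref{rem:finalremarkonmeasures}. L'obstacle principal est ainsi purement combinatoire : une fois acquis le développement fin, la comparaison semi-locale des intégrales orbitales pondérées et l'identification des coefficients, tout le travail consiste à réindexer correctement le développement fin de $J_{\o'}^{\g^\ast}$ par les sous-groupes de Levi et les classes de conjugaison qui se transfèrent, en contrôlant les poids de Weyl, les fibres de $\Ind$ et les normalisations de mesures.
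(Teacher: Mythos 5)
Your proposal takes essentially the same route as the paper: compare the fine expansions of $J_\o^\g$ and $J_{\o'}^{\g^\ast}$ (théorème~\ref{thm:devfin}) term by term using proposition~\ref{prop:semi-localIOPcomp} and théorème~\ref{thm:identificcationcoeffaG(S,X)transfert}, then deduce (2) from (1) together with proposition~\ref{prop:Lie=Grp}. The two points you flag as still ``à vérifier'' are in fact already available in the paper: the Weyl-factor mismatch is resolved by regrouping each sum over $W_0^G$- (resp.\ $W_0^{G^\ast}$-) orbits of Levis, where the relevant constant becomes $|W^G(M)|^{-1}=|W^{G^\ast}(M^\ast)|^{-1}$ by lemme~\ref{lem:Weylcomp} (exactly the ingredient the paper cites for this step), and the compatibility of the fibres of $\Ind$ with the transfer is proposition~\ref{prop:induitecommuteautransfert}.
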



\subsection{Plan de l'article}

Le contenu de l'article est organisé dans cet ordre : en section \ref{sec:preliminairesgeneraux} on introduit des notations fondamentales et on explique le contexte de la correspondance de Jacquet-Langlands, le cadre global et celui local traités de manière concomitante. Les sections \ref{sec:preliminaireslocaux} et  \ref{sec:analyselocale} sont dédiées à l'étude locale. En section \ref{sec:preliminaireslocaux} on introduit  des notations nécessaires puis en section \ref{sec:analyselocale} on définit la notion de transfert non-invariant d'une fonction test et on procède à la comparaison des intégrales orbitales pondérées locales. \`{A} compter de la section \ref{sec:formuledestracesI},  notre attention se tourne vers l'étude globale. Au sein des sections \ref{sec:formuledestracesI}, \ref{sec:formuledestracesII} et \ref{sec:formuledestracesIII}, se révèlent des différentes formules des traces indispensables à notre approche de la correspondance de Jacquet-Langlands. Puis dans la section \ref{sec:compatibilitélocal-global}, nous réduisons l'analyse des termes du côté géometrique, hormis les coefficients $a^M(S,\o_M)$, à une analyse locale. En dernier lieu dans la section \ref{sec:analyseglobale}, on compare lesdits coefficients $a^M(S,\o_M)$ et on termine la preuve de la comparaison des côtés géométriques de la formules des traces non-invariantes. Dans l'annexe \ref{sec:AppendixA} on décrit l'ensemble des classes de conjugaison d'une algèbre séparable sur un corps parfait via la théorie des diviseurs élémentaires, et on dépeint son lien avec l'induction de Lusztig-Spaltenstein généralisée. Enfin on explique le tranfert des classes de conjugaison d'une algèbre séparable aux celles de sa forme intérieure quasi-déployée. Dans l'annexe \ref{sec:AppendixB} on fournit des calculs explicites des measures de Haar. Nous avons choisi d'ajouter ces sections en annexes, car elles présentent un intérêt intrinsèque.

\subsection*{Remerciements}
L'auteur tient particulièrement à remercier Pierre-Henri Chaudouard, son directeur de thèse. Sans ses connaissances, conseils, temps et patience, cet article n'aurait pas été possible. L'auteur tient également à remercier l'école doctorale 386 Sciences Mathématiques de Paris Centre ainsi que l'université Paris-Cité pour avoir financé ce projet de thèse.

\section{Préliminaires généraux}\label{sec:preliminairesgeneraux}

Dans cette partie $F$ désigne un corps global ou local de caractéristique 0.

\subsection{Notations}
\subsubsection{Objets relativement à \texorpdfstring{$F$}{F}}
On fixe $\overline{F}$ une clôture algébrique de $F$, et on note par $\Gamma\eqdef\Gal(\overline{F}/F)$ le groupe de Galois absolu de $F$. Désignons par $\V_F$ (resp. $\V_{\fin}$ ; $\V_\infty$) l’ensemble des places (resp. places finies ; places archimédiennes) de $F$ lorsque ce dernier est un corps global. Si $F$ est un corps local, soit $\V_F$ le singleton où l'unique élément est la place dont $F$ est munie.

\subsubsection{Objets relativement à \texorpdfstring{$G$}{G}}
Soit $G$ un groupe algébrique connexe défini sur $F$. On note par la même lettre en minuscule gothique son algèbre de Lie, donc ici $\g=\Lie(G)$. L’action adjointe, appelée également la conjugaison, de $G$ (resp. $\g$) sur $\g$ est notée Ad (resp. ad). On note $G_\ad$ le groupe adjoint de $G$, à savoir $G_\ad=G/Z(G)$ avec $Z(G)$ le centre de $G$. On note $G_\der$ le sous-groupe dérivé de $G$. On note $A_G$ le sous-tore central $F$-déployé maximal dans $G$. On note $X^\ast(G)$ le groupe des caractères de $G$ définis sur $F$ et $a_G$ (resp. $a_G^\ast$) l’espace vectoriel réel $\Hom(X^\ast(G),\R)$ (resp. $X^\ast(G)\otimes_\Z\R$). Les espaces $a_G$ et $a_G^\ast$ sont canoniquement duaux l'un de l'autre. 

Sauf mention contraire, un sous-groupe de $G$ signifie un sous-groupe algébrique de $G$ défini sur $F$. Soit $H$ un sous-groupe de $G$ contenant $Z(G)$, on écrit $H_\AD$ pour le sous-groupe $H/Z(G)$ de $G_\ad$.

\'{E}crivons $\g=\mathfrak{z}\oplus \g_\ad$ avec $\mathfrak{z}=\Lie(Z(G))$ et $\g_\ad=[\g,\g]$. Remarquons que $\g_\ad$ est à la fois l'algèbre de Lie de $G_\der$ et celle de $G_\ad$, vu que $G_\der\twoheadrightarrow G_\ad$ est une isogénie.

Pour $S$ un sous-ensemble de $\g
$ ou de $G$, on note $\Cent(S,G)$ le centralisateur de $S$ dans $G$. Pour tout $X\in\g$, on note $H_X=\Cent(X,H)$, resp. $\g_X$, le stabilisateur de $X$ sous l’action d’un sous-groupe fermé $H$ de $G$, resp. sous l’action par adjonction de $\g$. Le sous-groupe $G_X$ sera toujours connexe lorsque $G$ est un groupe du type GL (définition \ref{def:groupetypeGL}).

On note $\g_\ss$ (resp. $\g_\nilp$) l'ensemble des éléments semi-simples (resp. nilpotents) de $\g$. Pour tout $X \in \g$, on dispose de la décomposition de Jordan $X = X_{\ss} + X_{\nilp}$ où $X_{\ss}$ et $X_{\nilp}$ sont respectivement des éléments de $\g_\ss$ et de $\g_\nilp$, qui commutent. Un élément $X$ de $\g(F)$ est dit ($F$-)elliptique s'il est semi-simple et $A_G=A_{G_X}$. Une classe de $G(F)$-conjugaison dans $\g(F)$ est dite elliptique si un (donc tout) élément dedans est elliptique. On note $\g_{\rss}$ l’ensemble des éléments semi-simples fortement réguliers de $\g$, c’est-à-dire les éléments semi-simples $X$ tels que $G_X$ soit un sous-tore maximal de $G$. On pose
\[D^{\g}(X)=\det(\text{ad}(X_\ss); \g/\g_{X_{\ss}}).\]

Pour toute $F$-algèbre $R$ on note $G_R$ le changement de base $G\times_F R$. Si $F$ est un corps global et $v$ et une place de $F$, on note $F_v$ le complété local de $F$ en $v$. On abrège aussi le changement de base $G_{F_v}$ en $G_v$.

Soit désormais $G$ un groupe réductif connexe sur $F$. On fixe $P_0$ un sous-groupe parabolique minimal de $G$, puis $M_0$ une composante de Levi de $P_0$.

On appelle sous-groupe de Levi semi-standard de $G$ un groupe $M$ contenant $M_0$ et qui est une composante de Levi d'un sous-groupe parabolique de $G$. On rappelle qu'avec nos conventions, tous ces sous-groupes sont définis sur $F$. Tout sous-groupe de Levi d'un groupe réductif connexe est aussi réductif connexe. Soient $ M \subseteq H$ des sous-groupes de $G$. On note $\L^H(M)$ l’ensemble des sous-groupes de Levi de $G$ inclus dans $H$ et contenant $M$ ; on note $\P^H(M)$ l’ensemble des sous-groupes paraboliques de $G$ inclus dans $H$, dont $M$ est un facteur de Levi ; on note $\F^H(M)$ l’ensemble des sous-groupes paraboliques de $G$ inclus dans $H$ et contenant $M$. Lorque $M = M_0$, on peut omettre $(M_0)$ dans la notation, par exemple $\L^H=\L^H(M_0)$.

Soit $M\in\L^G$ un sous-groupe de Levi semi-standard. Le groupe $A_M$ est la partie déployée du groupe de type multiplicatif $\Cent(M,G)=Z(M)$. \'{E}galement $M=\Cent(A_M,G)$. 

On définit le groupe de Weyl relatif de $(G,M)$ par
\[W^G(M)=W_M^G\eqdef\text{Norm}_{G(F)}(A_M)/M(F) =\text{Norm}_{G(F)}(M)/M(F),\]
avec $\text{Norm}_{G(F)}(A_M)$ (resp. $\text{Norm}_{G(F)}(M)$) le normalisateur de $A_M$ (resp. $M$) dans $G(F)$. Le groupe $W^G(M)$ agit par conjugaison sur $\L^G(M)$, $\P^G(M)$, et $\F^G(M)$.  On identifie le stabilisateur de $M\in \L^G$ dans $W_0^G=W_{M_0}^G$, quotienté par $W_0^M$, au groupe de Weyl de $(G,M)$.

\subsubsection{Objets relativement à un sous-groupe de Levi \texorpdfstring{$M$}{M} ou à un sous-groupe parabolique \texorpdfstring{$P$}{P}}

Soit $P\in\F^G(M)$. Notons $M_P$ l'unique facteur de Levi de $P$ contenant $M$. On note $N_P$ le radical unipotent de $P$. En particulier, $P = M_PN_P$ est une décomposition de Levi de $P$. \'{E}crivons $\overline{P}$ pour le sous-groupe parabolique opposé à $P$. 

On a $A_P= A_{M_P}$. Par restriction il y des isomorphismes canoniques de groupes $X^\ast(P)\simeq X^\ast(M_P)$, $a_P\simeq a_{M_P}$, et $a_{P}^\ast\simeq a_{M_P}^\ast$. 

Soit $H$ un sous-groupe de $G$ stable par conjugaison par $A_M$. On note $\Sigma(\mathfrak h; A_M)\subseteq a_M^\ast$ l’ensemble des racines de $A_M$ sur $\mathfrak{h}$. On a alors
\[\mathfrak{h}=\mathfrak{h}^{A_M}\oplus\bigoplus_{\alpha\in\Sigma(\mathfrak h;A_M)}\mathfrak{h}_\alpha\]
avec $\mathfrak{h}^{A_M}$ le sous-espace invariant et $\mathfrak{h}_\alpha$ le sous-espace propre relativement à $\alpha$ de $\mathfrak{h}$ sous l'action adjointe par $A_M$ :
\[\mathfrak{h}_\alpha=\{X\in\mathfrak{h}\mid \Ad(a)X=\alpha(a)X\,\,\,\,\forall a\in A_M\}.\]
Toutes les racines seront non-divisibles lorsque $G$ est un groupe de type GL (définition \ref{def:groupetypeGL}).

Soit de plus $Q\in\F^G(M)$ tel que $P\subseteq Q$. On note $\Delta_P^Q$ l’ensemble des racines simples dans $\Sigma(\m_Q\cap \n_P; A_{M_P})$, et $\rho_P^Q\eqdef \frac{1}{2}\sum_{\alpha\in\Sigma (\m_Q\cap \n_P; A_{M_P})}\left(\dim (\mathfrak{m}_Q\cap \mathfrak{n}_P)_\alpha\right)\alpha$. 

Les ensembles $\Sigma(\g; A_{M_P})$ et $\Sigma(\mathfrak{p}; A_{M_P})$ sont canoniquement inclus dans $a_{M_P}^\ast$. \`{A} chaque racine $\beta\in \Sigma(\g; A_{M})$ est associée une coracine $\beta^\vee\in a_{M}$. On définit alors dans $a_{M_P}$ le sous-ensemble $(\Delta_P^Q)^\vee$ des coracines des éléments de $\Delta_P^Q$.  Lorsque $Q = G$, on note simplement $\Delta_P = \Delta_P^G$, $\rho_P=\rho_P^G$, et $\Delta_P^\vee=(\Delta_P^G)^\vee$.  On a $\Delta_P^Q\subseteq \Delta_P$.

Soit $a_P^Q$ (resp. $(a_p^Q)^\ast$) le sous-espace vectoriel engendré par $(\Delta_P^Q)^\vee$ (resp. $\Delta_P^Q$). Notons que $a_{Q}$ (resp. $a_{Q}^\ast$) s’identifie canoniquement à un sous-espace de $a_{P}$ (resp. $a_{P}^\ast$).
On sait que 
\[a_P=a_P^Q\oplus a_Q\,\,\,\,\text{et}\,\,\,\,a_P^\ast=(a_P^Q)^\ast\oplus a_Q^\ast.\]
On note $a_{M_P}^{M_Q}=a_P^Q$ et $(a_{M_P}^{M_Q})^\ast=(a_P^Q)^\ast$.

\subsubsection{\texorpdfstring{$(G,M)$}{(G,M)}-familles}\label{subsubsec:(G,M)-familles} Le numéro présent résume \cite[sections 6, 7 et 11]{
Art81} et \cite[sections 7, 8 et 9]{Art88I}.

Pour tout espace vectoriel $V$ sur $\R$, on note $V_{\C}\eqdef V\otimes_{\R}\C$ sa complexification, et $iV\subseteq V_{\C}$ le sous-$\R$-espace évident où $i\in\C$ est l'unité imaginaire.

Soit $M$ un sous-groupe de Levi de $G$. Une $(G, M)$-famille est un ensemble d'applications lisses $\{c_P(\lambda)\mid P\in\P^G(M)\}$ de $\lambda\in ia_M^\ast$ vérifiant la propriété que si $P$ et $Q$ sont adjacents (i.e. $\Sigma(\mathfrak{p}; A_{M_0})\cap(-\Sigma(\mathfrak{q}; A_{M_0}))$ est un singleton) et si $\lambda$ appartient à l'hyperplan engendré par le mur en commun des chambres de $P$ et $Q$ dans $ia_M^\ast$ alors $c_P(\lambda) = c_{Q}(\lambda)$. 

Pour tout $P\in \P^G(M)$ on définit une fonction polynomiale homogène
\[\theta_P(\lambda)=\vol\left(a_M^G/\Z(\Delta_P^\vee)\right)^{-1}\cdot\prod_{\alpha\in \Delta_P}\lambda(\alpha^\vee),\,\,\,\,\lambda\in ia_M^\ast,\]
avec $\Z(\Delta_P^\vee)$ le réseau engrendré par $\Delta_P^\vee$ dans $a_M^G$ et $\vol\left(a_M^G/\Z(\Delta_P^\vee)\right)$ le volume de $a_M^G/\Z(\Delta_P^\vee)$. Le choix de la mesure sur $a_M^G/\Z(\Delta_P^\vee)$ sera précisé ultérieurement. Pour toute $(G,M)$-famille $(c_P)_{P\in\P^G(M)}$, la fonction
\[c_M(\lambda)=\sum_{P\in \P^G(M)}c_P(\lambda)\theta_P(\lambda)^{-1} \]
s'étend en une fonction lisse sur $ia_M^\ast$. On note $c_M$ la valeur $c_M(0)$.

Soit $(c_P)_{P\in\P^G(M)}$ une $(G,M)$-famille. D'une part pour tout $L \in \L^G(M)$, on peut définir une $(G, L)$-famille $(c_R)_{R\in \P^G(L)}$ par
\[c_R(\lambda)=c_P(\lambda),\,\,\,\,\lambda\in ia_L^\ast\]
où $P$ est un élément de $\P^G(M)$ qui vérifie $P\subseteq R$. Si l’on fixe d'autre part un élément $Q\in \F^G(M)$, on définit une $(M_Q,M)$-famille $(c_R^Q)_{R\in\P^{M_Q}(M)}$ par
\[c_R^Q(\lambda)=c_{Q(R)}(\lambda),\,\,\,\,\lambda\in ia_M^\ast\]
où $Q(R)$ est l’unique élément de $\P^G(M)$ qui est contenu dans $Q$ et dont l'intersection avec $M_Q$ vaut $R$. Pour calculer $c_M^Q$ on possède la formule, indépendante de $\lambda$ en position générale,
\begin{equation}\label{eq:GMcalcul}
c_M^Q=\frac{1}{p!}\sum_{R\in\P^{M_Q}(M)}\left(\lim_{t\to 0}\left(\frac{d}{dt}\right)^p c_R^Q(t\lambda)\right)\theta_R^{M_Q}(\lambda)^{-1},    
\end{equation}
où $t\in\R$ et $p=\dim(a_{M}^{M_Q})$.

Soit $(d_P)_{P\in\P^G(M)}$ une  $(G,M)$-famille. Pour tout élément $Q\in \F^G(M)$, il existe un
nombre complexe $d_Q'$ de sorte que pour toute $(G,M)$-famille $(c_P)_{P\in\P^G(M)}$, le produit s’écrive comme
\[(cd)_M=\sum_{Q\in\F^G(M)}c_M^Qd_Q'.\]
De plus, s'il existe pour tout $L\in \L^G(M)$ un nombre complexe $c_M^L$ tel que $c_M^Q= c_M^L$ pour tout $Q \in \P^G(L)$. Alors on a
\begin{equation}\label{YDLgeomeq:GMfamilyproductformula1}
(cd)_M=\sum_{L\in\L^G(M)}c_M^Ld_L.    
\end{equation}

Moyennant certains choix, on peux montrer qu’il existe des applications :
\begin{align*}
    d_M^G :\L^G(M)\times \L^G(M)&\rightarrow [0,+\infty[ \\
    s:\L^G(M)\times \L^G(M)&\rightarrow \F^G(M)\times \F^G(M)
\end{align*}
de sorte que, pour tout $(L_1,L_2)\in \L^G(M)\times \L^G(M)$, on ait
\begin{enumerate}
    \item si $s(L_1,L_2)=(Q_1,Q_2)$ alors $(Q_1,Q_2)\in \P^G(L_1)\times\P^G(L_2)$ ;
    \item si $s(L_1,L_2)=(Q_1,Q_2)$ alors $s(L_2,L_1)=(\overline{Q_2},\overline{Q_1})$ ;
    \item $d_M^G(L_1,L_2)\not =0$ si et seulement si l'une des flèches naturelles
    \[a_M^{L_1}\oplus a_M^{L_2}\longrightarrow a_M^G\]
    et
    \[a_{L_1}^G\oplus a_{L_2}^G\longrightarrow a_M^G\]
    est un isomorphisme, auquel cas les deux sont isomorphismes et $d_M^G(L_1,L_2)$ est le volume dans $a_M^G$ du parallélotope formé par les bases orthonormées de $a_M^{L_1}$ et de $a_M^{L_2}$ ;
    \item si $(c_P)_{P\in\P^G(M)}$ et $(d_P)_{P\in\P^G(M)}$ sont des $(G,M)$-familles, on a l'égalité
    \[(cd)_M=\sum_{(L_1,L_2)\in \L^G(M)\times \L^G(M)}d_M^G(L_1,L_2)c_M^{Q_1}d_M^{Q_2};\]
    \item Formule de descente : si $(c_P)_{P\in\P^G(M)}$ est une $(G,M)$-famille et $L\in \L^G(M)$, alors
    \[c_L=\sum_{L'\in \L^G(M)}d_M^G(L,L')c_M^{Q'}\]
    où l'on note $Q'$ la deuxième composante de $s(L,L')$.
\end{enumerate}

\subsection{Généralités sur les groupes du type GL et notion de transfert des objets}
Pour $S$ un anneau commutatif et $A$ une $S$-algèbre à gauche. On note $A^{\text{op}}$ l'algèbre opposée de $A$, qui est une $S$-algèbre à droite. Il y a une structure de $A^{\text{op}}\otimes_SA$-module à droite sur $A$ donnée par $a\cdot (b\otimes_S b')=bab'$ avec $a,b'\in A$ et $b\in A^{\text{op}}$. Une $S$-algèbre $A$ à gauche est dite séparable si $A$ est un module projectif sur $A^{\text{op}
}\otimes_SA$.
\begin{definition}\label{def:groupetypeGL}
On dit qu'un groupe réductif $G$ est du type GL (sur $F$) s'il est le groupe des unités d'une algèbre séparable sur $F$.

De façon équivalente (conséquence de \cite[Corollaire 10.7.b]{Pi82} et le théorème d'Artin-Wedderburn), un groupe réductif est du type GL (sur $F$) s'il est de la forme $\prod_{i\text{ fini}}\Res_{E_i/F}\GL_{n_i,D_i}$ avec $E_i$ une extension finie de $F$ et $D_i$ une algèbre à division de dimension finie sur son centre $E_i$.
\end{definition}

Plus précisément, si $A$ est une $F$-algèbre séparable, et $G$ est le groupe des unités de $A$, alors
\[G(B)=(A\otimes_F B)^\times\]
pour toute $F$-algèbre $B$, avec $(A\otimes_F B)^\times$ le groupe des unités de $A\otimes_F B$.


Les objets centraux dans cet article seront les groupes du type GL. Donnons des propriétés que nous utiliserons implicitement tout au long de l'article.
\begin{proposition}\label{pro:bontype}~{}
\begin{enumerate}
    \item Un produit fini de groupes du type GL l'est aussi.
    \item Tout sous-groupe de Levi d'un groupe du type GL l'est aussi.
    \item Toute forme intérieure d'un groupe du type GL l'est aussi.
    \item Toute extension des scalaires par un sur-corps d'un groupe du type GL l'est aussi (sur le sur-corps).
    \item Soient $G$ un groupe du type GL et $\g$ son algèbre de Lie. On dispose de $G \hookrightarrow \g$ une inclusion canonique $G$-équivariante ($G$ agit par conjugaison sur les deux espaces).
    \item Soient $G$ un groupe du type GL et $X\in \g_\ss(F)$, alors $G_X$ est un groupe du type GL.
    \item Soient $G$ un groupe du type GL et $X\in \g(F)$, alors $G_X$ est géométriquement connexe.
    \item Dans l'algèbre des $F$-points de l'algèbre de Lie d'un groupe $G$ du type GL, les notions de $G(F)$-conjugaison et de $G(\overline{F})$-conjugaison coïncident.  
    \item Dans $\g^\ast(\overline{F})$ l'espace des $\overline{F}$-points de l'algèbre de Lie d'un groupe du type GL quasi-déployé, toute classe de conjugaison définie sur $F$ contient un $F$-point.
\end{enumerate}
\end{proposition}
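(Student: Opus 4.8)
The plan is to reduce each of the nine assertions to the concrete model underlying the definition: $G$ is the group of units of a separable $F$-algebra $A$, equivalently $G\simeq\prod_i\Res_{E_i/F}\GL_{n_i,D_i}$; its Lie algebra $\g$ is canonically $A$ itself, with $\Ad(g)X=gXg^{-1}$ computed inside $A$; and $G$ sits inside the affine space $A$ as the open subscheme of invertible elements, that is, the non-vanishing locus of the determinant of left multiplication $L_a$. I also recall that in characteristic $0$ a finite-dimensional $F$-algebra is separable if and only if it is semisimple.

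First I would dispatch (1)--(5), which are formal. For (1) and (4): projectivity of $A$ over $A^{\mathrm{op}}\otimes_FA$ is stable under finite products and under ground-field extension, using $(\prod_iA_i)^{\mathrm{op}}\otimes_F(\prod_iA_i)=\prod_i(A_i^{\mathrm{op}}\otimes_FA_i)$, $(A\otimes_FF')^{\mathrm{op}}\otimes_{F'}(A\otimes_FF')=(A^{\mathrm{op}}\otimes_FA)\otimes_FF'$, together with $(\prod_iA_i)^\times=\prod_iA_i^\times$ and $(A\otimes_FF')^\times=G_{F'}$. For (2): reduce by (1) to $A=M_n(D)$ simple; a parabolic of $\GL_{n,D}$ is the stabilizer of a flag of $D$-submodules of $D^n$, and its Levi factors, stabilizing direct-sum decompositions, are of the form $\prod_j\GL_{m_j,D}$, i.e.\ unit groups of the semisimple algebra $\prod_jM_{m_j}(D)$. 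For (3): reduce again to $A=M_n(D)$ central simple over $E$; the inner forms of its unit group are classified by $H^1(F,G_{\ad})$ and are themselves unit groups of central simple $E$-algebras, still separable over $F$ since $E/F$ is separable, and inner twisting commutes with restriction of scalars. For (5): $g\mapsto g$ is an open immersion $G\hookrightarrow\g=A$ identifying $\Lie(G)$ with the tangent space $A$ at $1$, and it intertwines conjugation on $G$ with $\Ad$ on $\g$.

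Next, (6) and (7). For (6): if $X\in\g_\ss(F)$ then $X$ is diagonalizable over $\overline F$, so $F[X]\subseteq A$ is a separable commutative $F$-subalgebra; by the double-centralizer theorem for semisimple algebras, $Z_A(X)$ is again semisimple, hence separable, so $G_X=Z_A(X)^\times$ is of type GL. For (7): for arbitrary $X\in\g(F)$ the condition of commuting with $X$ is $F$-linear, so $G_X$ is the unit group scheme of the finite-dimensional $F$-algebra $R=Z_A(X)$; after base change, $G_{X,\overline F}$ is the complement in the affine space $R\otimes_F\overline F$ of the hypersurface where the determinant of left multiplication vanishes, a proper closed subset missing $1$, hence a non-empty open — thus irreducible, thus connected — subset of an irreducible variety. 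Therefore $G_X$ is geometrically connected.

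The real content lies in (8) and (9), which I would approach through the Jordan decomposition, reducing to the semisimple and nilpotent cases and invoking the elementary-divisor description of conjugacy classes of a separable algebra worked out in Appendix \ref{sec:AppendixA}. For (8): if $Y=\Ad(g)X$ with $g\in G(\overline F)$, then $Y_\ss=\Ad(g)X_\ss$ and $Y_\nilp=\Ad(g)X_\nilp$ by uniqueness of the Jordan decomposition. One first treats semisimple elements: two $G(\overline F)$-conjugate semisimple elements of $\g(F)$ share the same reduced characteristic polynomial, and the finer invariants pinning down their $G(F)$-conjugacy class — for $\mathfrak{gl}_n$ the Galois orbits of eigenvalues with multiplicities, in general the elementary divisors of Appendix \ref{sec:AppendixA} — are $\Gamma$-stable, hence already determined by the geometric class together with the $F$-structure; so the two elements are $G(F)$-conjugate. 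Replacing $Y$ by an $F$-conjugate, assume $X_\ss=Y_\ss=:S$; then $g\in G_S(\overline F)$, the group $G_S$ is of type GL and connected by (6)--(7), and $X_\nilp,Y_\nilp\in\g_S(F)$ are nilpotent and $G_S(\overline F)$-conjugate — but nilpotent conjugacy classes in a group of type GL are classified by field-independent partition data, so they are $G_S(F)$-conjugate, and (8) follows. For (9): a $\Gamma$-stable conjugacy class in $\g^\ast(\overline F)$ with $G^\ast=\prod_i\Res_{E_i/F}\GL_{n_i}$ quasi-split has reduced characteristic polynomial with coefficients in $F$ and $\Gamma$-stable Jordan type, and Appendix \ref{sec:AppendixA} produces an explicit $F$-rational representative (companion matrices over the fields $F[t]/(p)$ attached to the irreducible factors $p$ of that polynomial, with the prescribed Jordan block sizes). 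I expect the genuine obstacle to be exactly this — (8) and (9) in the presence of a non-split $D$, where "characteristic polynomial" must give way to the theory of elementary divisors over the non-commutative principal ideal domain $D[t]$, whose behaviour under a scalar extension splitting $D$ must be tracked — which is precisely what Appendix \ref{sec:AppendixA} is built to handle, and the proof of the proposition appeals to it.
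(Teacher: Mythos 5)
Your proposal is correct, and for the bulk of the statement it coincides with the paper's (very terse) proof: points (1)--(5) from the definition, (6) from the double-centralizer theorem, (9) via the elementary-divisor theory of Appendix~\ref{sec:AppendixA} (which the paper offers as its alternative to citing Kottwitz). Where you genuinely depart from the paper is at (7) and (8). For (7) the paper argues via the Levi decomposition of $G_X$: it is a semidirect product of a unipotent radical, geometrically connected because it is (as a scheme) a power of the additive group, by a Levi factor which is again of type GL hence geometrically connected by (4). Your route is more direct and in fact more elementary: you observe that $G_X$ is literally the unit-group scheme of the finite-dimensional $F$-algebra $R=Z_A(X)$, i.e.\ the complement in $\mathbb{A}(R)$ of the hypersurface $\{N_{R/F}=0\}$, so after base change to $\overline F$ it is a non-empty open of an irreducible variety. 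This buys you (7) for the unit group of \emph{any} finite-dimensional algebra, without invoking the Levi decomposition or points (4),(6). For (8) the paper's argument is cohomological: the transporter from $X$ to $Y$ is a torsor under $G_X$ defined over $F$, and $H^1(F,G_X)$ vanishes by the generalized Hilbert~90 for unit groups of finite-dimensional algebras combined with Shapiro's lemma for the Weil restriction factors; this is one line and uses nothing from the appendix. You instead go through the Jordan decomposition and the explicit classification of conjugacy classes by elementary-divisor data from Appendix~\ref{sec:AppendixA}, arguing that the rational invariants are already forced by the geometric class and the $F$-structure. This is sound (the corollary \ref{coro:classconjratpt} is proved independently and $\lambda_F$ is recoverable from $\lambda_{\overline F}$), and it is more concrete and avoids Galois cohomology, at the cost of being considerably longer and leaning on Appendix~\ref{sec:AppendixA} where the paper has a soft, self-contained argument.
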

\begin{proof}
Les résultats sont classiques. Les points 1,2,4 et 5 ressortent de la définition. Le point 3 vient de la théorie des corps de classes ; le point 6 vient de \cite[section 12.7]{Pi82} ; le point 7 vient du fait que $\g_X$ est un produit semi-directe d'un groupe isomorphe en tant que schéma à une puissance du groupe additif, par un groupe du type GL, puis une puissance du groupe additif est géométriquement connexe et un groupe du type GL est aussi géométriquement connexe, car son extension à $\overline{F}$ est un produit fini de groupes généraux linéaires sur $\overline{F}$ ; le point 8 est une conséquence du théorème de Hilbert 90, conjugué avec le lemme de Shapiro ; enfin le point 9 vient de \cite[théorème 4.2]{Kott82}, on donnera aussi une autre démonstration dans l'appendice A. 
\end{proof}

Soient $G$ un groupe du type GL sur $F$, $G^\ast$ sa forme intérieure quasi-déployée, en l'occurrence un produit de restrictions des scalaires de groupes généraux linéaires, et $\eta: \g_{\overline{F}} \rightarrow \g_{\overline{F}}^\ast$ un torseur intérieur. Pour rappel cela signifie que 
\begin{enumerate}
    \item $\eta$ est un $\overline{F}$-isomorphisme d'algèbres associatives ;
    \item pour tout $\sigma\in\Gal(\overline{F}/F)$ on ait que $\eta\sigma(\eta)^{-1}$ est l'automorphisme de $\g_{\overline{F}}^\ast$ défini par conjugaison par un élément de $G_{\overline{F}}^\ast$, ici $\sigma(\eta):X\in \g_{\overline{F}}\mapsto \sigma(\eta(\sigma^{-1}(X)))\in \g_{\overline{F}}^\ast$ avec $\Gal(\overline{F}/F)$ agit de façon évidente sur $\g_{\overline{F}}$ et $\g_{\overline{F}}^\ast$.
\end{enumerate}
La deuxième condition est automatique selon le théorème de Skolem–Noether. Pour tout $\overline{F}$-isomorphisme d'algèbres associatives $\eta': \g_{\overline{F}} \rightarrow \g_{\overline{F}}^\ast$, on dit que $\eta$ et $\eta'$ sont dits équivalents. Il existe un 1-cocycle $u$ à valeurs dans $G_{\ad}^\ast(\overline{F})$, tel que pour tout $\sigma\in\Gal(\overline{F}/F)$ on ait $\eta\sigma(\eta)^{-1}=\Ad(u_\sigma)$.   La classe de $(u_\sigma)$ dans $H^1(F,G_{\ad}^\ast)$ ne dépend pas du choix de $\eta$ dans sa classe d'équivalence. Par abus de notation $\eta$ peut aussi désigner sa classe d'équivalence. 
On a un diagramme commutatif équivariant par $G_{\overline{F}}$ et $G^\ast_{\overline{F}}$ :
\[
    \begin{tikzcd}
    G_{\overline{F}} \arrow{r}{\eta\mid_{G_{\overline{F}}}}\arrow[hookrightarrow]{d}{}& G_{\overline{F}}^\ast \arrow[hookrightarrow]{d}{}\\
    \g_{\overline{F}} \arrow{r}{\eta} & \g_{\overline{F}}^\ast
    \end{tikzcd}
    \]
Le morphisme $\eta$ est aussi la différentielle de $\eta|_{G_{\overline{F}}}$.     
    
On peut omettre $\overline{F}$ dans l'indice de $ \g_{\overline{F}}\to \g_{\overline{F}}^\ast$, $G_{\overline{F}}\to G_{\overline{F}}^\ast$ etc lorsqu'on parle d'un torseur intérieur, avec pour finalité d'alléger l'écriture.

\begin{lemma}\label{lem:AMisFiso}
La restriction $\eta|_{A_G}:A_G\rightarrow A_{G^\ast}$ est un $F$-isomorphisme.
\end{lemma}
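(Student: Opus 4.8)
The plan is to deduce the lemma from the fact that an inner twist induces a canonical $F$-isomorphism of centers, and then to pass to maximal $F$-split subtori.

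\emph{Step 1: $\eta$ carries center to center.} Since $\eta:\g_{\overline F}\to\g_{\overline F}^\ast$ is an isomorphism of associative algebras, it maps the center $\mathfrak z_{\overline F}=\Lie(Z(G))_{\overline F}$ onto $\mathfrak z_{\overline F}^\ast$; correspondingly, the group isomorphism $\eta|_{G_{\overline F}}:G_{\overline F}\to G_{\overline F}^\ast$ (whose differential is $\eta$, and which is compatible with the inclusions $Z(G)\hookrightarrow G$ and $Z(G^\ast)\hookrightarrow G^\ast$ via the commutative square recalled above) maps $Z(G)_{\overline F}$ onto $Z(G^\ast)_{\overline F}$. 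In particular $\eta$ restricts to an $\overline F$-isomorphism $\eta|_{Z(G)}:Z(G)_{\overline F}\to Z(G^\ast)_{\overline F}$ of $\overline F$-groups of multiplicative type, a priori defined only over $\overline F$; and since $A_{G}\subseteq Z(G)$, the restriction $\eta|_{A_G}$ at least makes sense.

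\emph{Step 2: Galois descent.} By definition of an inner twist there is a $1$-cocycle $(u_\sigma)$ with values in $G_{\ad}^\ast(\overline F)$ such that $\eta\,\sigma(\eta)^{-1}=\Ad(u_\sigma)$ for all $\sigma\in\Gamma$. The adjoint group $G_{\ad}^\ast$ acts trivially by conjugation on its center $Z(G^\ast)$ (and, at the Lie algebra level, $\Ad(u_\sigma)$ acts trivially on $\mathfrak z^\ast$), so $\Ad(u_\sigma)$ restricts to the identity on $Z(G^\ast)_{\overline F}$. Combined with Step 1, this gives $\sigma(\eta|_{Z(G)})=\eta|_{Z(G)}$ for every $\sigma\in\Gamma$, i.e. $\eta|_{Z(G)}$ is $\Gamma$-equivariant and hence descends to an $F$-isomorphism $Z(G)\xrightarrow{\sim}Z(G^\ast)$ of $F$-groups of multiplicative type.

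\emph{Step 3: conclusion.} An $F$-isomorphism between $F$-groups of multiplicative type carries the maximal $F$-split subtorus of the source onto the maximal $F$-split subtorus of the target (being $F$-split, and being maximal among $F$-split subtori, are both preserved by an $F$-isomorphism). Since $A_G$ (resp. $A_{G^\ast}$) is by definition the maximal $F$-split subtorus of $Z(G)$ (resp. $Z(G^\ast)$), the $F$-isomorphism of Step 2 restricts to an $F$-isomorphism $A_G\xrightarrow{\sim}A_{G^\ast}$, which is precisely $\eta|_{A_G}$. I do not expect a genuine obstacle; the only points deserving a word of care are the triviality of the conjugation action of $G_{\ad}^\ast$ on its center and the compatibility of $\eta$ with the inclusions of centers, both of which are immediate from the set-up since $G$ and $G^\ast$ are the unit groups of associative $F$-algebras.
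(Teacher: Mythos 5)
Your proof is correct and follows essentially the same route as the paper's: the paper's proof asserts in one line that an inner twist induces an $F$-isomorphism $Z(G)\to Z(G^\ast)$ and then passes to the maximal $F$-split subtorus; you have simply unpacked the first assertion (your Steps 1--2, using that $\Ad(u_\sigma)$ is trivial on the center, so $\eta|_{Z(G)}$ is $\Gamma$-equivariant and descends) and the second (your Step 3). No discrepancy to report.
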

\begin{proof}
Comme $\eta$ est un torseur intérieur nous avons un $F$-isomorphisme $\eta|_{Z(G)}:Z(G)\rightarrow Z(G^\ast)$. Or $A_G$ est la partie déployée du tore $Z(G)$,
nous obtenons ainsi un $F$-isomorphisme $\eta|_{A_G}:A_G\rightarrow A_{G^\ast}$.
\end{proof}

Soit $\o$ une classe de $G(F)$-conjugaison dans $\g(F)$. On a, pour tout $\sigma\in\Gal(\overline{F}/F)$,
\[\sigma(\eta(\o))=\sigma(\eta)\left(\sigma(\o)\right)=\Ad(u_\sigma)^{-1}(\eta(\o)).\]
Il vient que la classe de $G^\ast(\overline{F})$-conjugaison de $\eta(\o)$ est définie sur $F$. 

\begin{definition}[Transfert d'une classe de conjugaison/d'un élément]~{}
\begin{enumerate}
    \item Soit $\o$ (resp. $\o'$) une classe de $G(F)$-conjugaison (resp. $G^\ast(F)$-conjugaison) dans $\g(F)$ (resp. $\g^\ast(F)$). On dit que $\mathfrak{o}'$ se transfère en $\mathfrak{o}$ si $\eta((\Ad G(\overline{F}))\mathfrak{o})=(\Ad G^\ast(\overline{F}))\mathfrak{o}'$. On écrit $\o'=\o^\ast$ et $\o\arr \o^\ast$, et on dit que $\o^\ast$ se transfère à $G$.
    \item Soient $X\in \g(F)$ et $X'\in \g^\ast(F)$. On dit que $X$ est un transfert de $X'$ s'il existe $\o$ une classe de $G(F)$-conjugaison dans $\g(F)$ tel que $X\in \o$ et $X'\in \o^\ast$ avec $\o^\ast$ se transfère en $\o$. On écrit $X'=X^\ast$ et $X\arr X^\ast$, et on dit que $X^\ast$ se transfère à $G$.
\end{enumerate}
\end{definition}

Soient $X\in \g_{\ss}(F)$ et $X^\ast\in \g_{\ss}^\ast(F)$ avec $X$ un transfert de $X^\ast$. Soit $x'\in G^\ast(\overline{F})$ tel que $(\Ad x)\eta(X)=X^\ast$. On obtient, par la restriction de $(\Ad x')\eta$, un morphisme $\eta_X:\g_X\rightarrow \g_{X^\ast}^\ast$. 

\begin{proposition}\label{prop:GXtorseurint}
Si $X\in \g_{\ss}(F)$ est un transfert de $X^\ast\in \g_{\ss}^\ast(F)$ alors $G_{X^\ast}^\ast$ est la forme intérieure quasi-déployée de $G_X$. 
\end{proposition}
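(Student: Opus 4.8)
The plan is to verify the two defining properties separately: that $\eta_X$ exhibits $G^\ast_{X^\ast}$ as an \emph{inner form} of $G_X$, and that $G^\ast_{X^\ast}$ is \emph{quasi-split}.

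For the first point, write $\beta\eqdef(\Ad x')\circ\eta\colon\g_{\overline F}\to\g^\ast_{\overline F}$. This is an isomorphism of associative $\overline F$-algebras with $\beta(X)=X^\ast$, so it carries the centralizer of $X$ onto the centralizer of $X^\ast$; since in a separable algebra the Lie-theoretic and associative centralizers coincide, $\beta$ restricts to an isomorphism $(\g_X)_{\overline F}\xrightarrow{\sim}(\g^\ast_{X^\ast})_{\overline F}$, i.e.\ to $\eta_X$. By Proposition~\ref{pro:bontype} the groups $G_X$ and $G^\ast_{X^\ast}$ are connected of type GL, hence coincide with the unit-group schemes of $\g_X$ and $\g^\ast_{X^\ast}$; therefore $\eta_X$ is the differential of an $\overline F$-isomorphism of algebraic groups $\eta_X|_{G_X}\colon G_{X,\overline F}\xrightarrow{\sim}G^\ast_{X^\ast,\overline F}$. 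It remains to see that this is an inner twist.

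Fix $\sigma\in\Gamma$. Using $X\in\g(F)$, $X^\ast\in\g^\ast(F)$ and $\eta(X)=\Ad(x')^{-1}(X^\ast)$, I would first check that $\sigma(\beta)=\Ad(\sigma(x'))\circ\sigma(\eta)$ also sends $X$ to $X^\ast$; hence $\sigma(\eta_X)=\sigma(\beta)|_{(\g_X)_{\overline F}}$ is again an isomorphism $(\g_X)_{\overline F}\xrightarrow{\sim}(\g^\ast_{X^\ast})_{\overline F}$, so $\phi_\sigma\eqdef\eta_X\circ\sigma(\eta_X)^{-1}$ is an automorphism of the associative $\overline F$-algebra $(\g^\ast_{X^\ast})_{\overline F}$, and it \emph{fixes} $X^\ast$ (because $\sigma(\eta_X)^{-1}(X^\ast)=X$ and $\eta_X(X)=X^\ast$). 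Now write $\eta\,\sigma(\eta)^{-1}=\Ad(g_\sigma)$ with $g_\sigma\in G^\ast(\overline F)$ (the inner-twist property of $\eta$, automatic by Skolem--Noether); then $\beta\,\sigma(\beta)^{-1}=\Ad(y_\sigma)$ with $y_\sigma\eqdef x'\,g_\sigma\,\sigma(x')^{-1}\in G^\ast(\overline F)$, and restricting gives $\phi_\sigma=\Ad(y_\sigma)|_{(\g^\ast_{X^\ast})_{\overline F}}$. Since $\phi_\sigma$ fixes $X^\ast$, the unit $y_\sigma$ commutes with $X^\ast$, i.e.\ $y_\sigma\in G^\ast_{X^\ast}(\overline F)$. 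Thus $\eta_X|_{G_X}$ is an inner twist and $G^\ast_{X^\ast}$ is an inner form of $G_X$.

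For the second point I would compute the centralizer directly. Writing the quasi-split group $G^\ast=\prod_i\Res_{E_i/F}\GL_{n_i}$, so that $\g^\ast(F)=\prod_i M_{n_i}(E_i)$, and decomposing the étale $E_i$-algebra generated by the $i$-th component $X^\ast_i$ of $X^\ast$ as $E_i[X^\ast_i]=\prod_j K_{ij}$ with $K_{ij}/E_i$ a finite (separable) field extension, each $E_i^{n_i}$, regarded as a module over $\prod_j K_{ij}$, decomposes into isotypic pieces that are vector spaces over the respective fields $K_{ij}$; hence the centralizer algebra is $\g^\ast_{X^\ast}(F)\cong\prod_{i,j}M_{m_{ij}}(K_{ij})$, involving no nontrivial division algebra, so $G^\ast_{X^\ast}\cong\prod_{i,j}\Res_{K_{ij}/F}\GL_{m_{ij}}$ is quasi-split. (Equivalently, this is a special case of the description of the conjugacy classes of a separable algebra via elementary divisors in Appendix~\ref{sec:AppendixA}.) Together with the previous step, $G^\ast_{X^\ast}$ is the quasi-split inner form of $G_X$.

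The computations are all routine; the one point requiring care is the identification $\phi_\sigma=\Ad(y_\sigma)$ with $y_\sigma$ lying \emph{in $G^\ast_{X^\ast}(\overline F)$} --- it is essential that $\phi_\sigma$ not merely be an algebra automorphism of the centralizer but actually fix $X^\ast$, as this is precisely what forces the conjugating unit to centralize $X^\ast$.
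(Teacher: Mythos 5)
Your proof is correct and follows the same two-part outline as the paper's: show $\eta_X$ is an inner twist, then observe directly that $G^\ast_{X^\ast}$ is quasi-split. The paper's version is terser — for the first part it just says that $\eta_X$ is an $\overline F$-isomorphism of associative algebras and hence an inner twist, silently re-invoking the Skolem--Noether observation made at the start of section~2.2. Your version unpacks precisely what that invocation is hiding: for the cocycle $\eta_X\sigma(\eta_X)^{-1}$ to be \emph{inner} on $(\g^\ast_{X^\ast})_{\overline F}$ (a product of matrix algebras, not a central simple algebra) one needs it to fix the center pointwise, and that is exactly what your observation that $\phi_\sigma$ fixes $X^\ast$ — and hence that $y_\sigma$ lies in $G^\ast_{X^\ast}(\overline F)$ rather than merely in $G^\ast(\overline F)$ — delivers. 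Your final remark correctly identifies this as the nontrivial point. The second part (explicit centralizer computation via $E_i[X^\ast_i]=\prod_j K_{ij}$, or Appendix~\ref{sec:AppendixA}) matches what the paper gestures at when it writes $G^\ast_{X^\ast}=\prod_i\Res_{E_i/F}\GL_{n_i,E_i}$. Net effect: same argument, but your write-up supplies the Galois-cohomological bookkeeping that the paper omits, and that bookkeeping is genuinely worth having on record.
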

\begin{proof}
Comme $\eta_X$ est un $\overline{F}$-isomorphisme d'algèbres associatives, c'est un torseur intérieur. 
Pour finir $G_{X^\ast}^\ast$ est quasi-déployé car il est de la forme $\prod_i\Res_{E_i/F}\GL_{n_i,E_i}$ avec $E_i$ des extensions finies de $F$. 
\end{proof}

\begin{proposition} Si $X\in \g(F)$ est un transfert de $X^\ast\in \g^\ast(F)$, alors  $X_\ss$ est un transfert de $X_\ss^\ast$ via $\eta$, et $X_\nilp$ est un transfert de $X_\nilp^\ast$ via $\eta_X$.
\end{proposition}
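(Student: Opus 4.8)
The plan is to unwind the definition of transfer into a statement about $G^\ast(\overline{F})$-conjugacy and then exploit that $\eta$, being an isomorphism of associative $\overline{F}$-algebras, is automatically compatible with the Jordan decomposition. By definition of transfert (together with Proposition~\ref{pro:bontype}(8)), the hypothesis that $X$ is a transfert of $X^\ast$ says exactly that $\eta(X)$ and $X^\ast$ lie in the same $G^\ast(\overline{F})$-conjugacy class in $\g^\ast(\overline{F})$; so fix once and for all an element $g\in G^\ast(\overline{F})$ with $(\Ad g)\eta(X)=X^\ast$.

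Now comes the only substantive remark. Since $\eta\colon\g_{\overline{F}}\to\g_{\overline{F}}^\ast$ is an isomorphism of associative $\overline{F}$-algebras, it sends semisimple elements to semisimple elements, nilpotent elements to nilpotent elements, and commuting pairs to commuting pairs — these properties are intrinsic to the associative-algebra structure, and for groups of type GL they coincide with the Lie-theoretic notions $\g_\ss,\g_\nilp$ of the preliminaries. Hence $\eta(X)=\eta(X_\ss)+\eta(X_\nilp)$ is \emph{the} Jordan decomposition of $\eta(X)$. Applying the algebra automorphism $\Ad g$ of $\g_{\overline{F}}^\ast$ and invoking uniqueness of the Jordan decomposition of $X^\ast=(\Ad g)\eta(X_\ss)+(\Ad g)\eta(X_\nilp)$, we obtain
\[
X_\ss^\ast=(\Ad g)\eta(X_\ss),\qquad X_\nilp^\ast=(\Ad g)\eta(X_\nilp).
\]

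The first equality says that $\eta(X_\ss)$ and $X_\ss^\ast$ are $G^\ast(\overline{F})$-conjugate; as $X_\ss\in\g_\ss(F)$ and $X_\ss^\ast\in\g_\ss^\ast(F)$, this is precisely the assertion that $X_\ss$ is a transfert of $X_\ss^\ast$ via $\eta$. For the nilpotent part, observe that the fixed $g$ already satisfies $(\Ad g)\eta(X_\ss)=X_\ss^\ast$, so we may take $x'=g$ in the construction of $\eta_{X_\ss}$ (denoted $\eta_X$ in the statement); then $\eta_{X_\ss}$ is the restriction of $(\Ad g)\eta$ to $\g_{X_\ss}$, using that $\eta$ carries the associative-algebra centralizer of $X_\ss$ onto that of $\eta(X_\ss)$, whence $(\Ad g)\eta(\g_{X_\ss})=\g_{X_\ss^\ast}^\ast$. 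Since $X_\nilp$ commutes with $X_\ss$ one has $X_\nilp\in\g_{X_\ss}(F)$, and likewise $X_\nilp^\ast\in\g_{X_\ss^\ast}^\ast(F)$; the second equality above then reads $\eta_{X_\ss}(X_\nilp)=X_\nilp^\ast$, which a fortiori makes $\eta_{X_\ss}(X_\nilp)$ be $G_{X_\ss^\ast}^\ast(\overline{F})$-conjugate to $X_\nilp^\ast$. As $G_{X_\ss}$ is of type GL (Proposition~\ref{pro:bontype}(6)), $G_{X_\ss^\ast}^\ast$ is its quasi-split inner form (Proposition~\ref{prop:GXtorseurint}), and $\eta_{X_\ss}$ an inner twist, this is exactly the statement that $X_\nilp$ is a transfert of $X_\nilp^\ast$ via $\eta_{X_\ss}$; and it is independent of the auxiliary choice entering $\eta_{X_\ss}$, since two admissible choices of $x'$ differ by an element of $G_{X_\ss^\ast}^\ast(\overline{F})$ and conjugation by such an element preserves $G_{X_\ss^\ast}^\ast(\overline{F})$-conjugacy classes.

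There is no serious obstacle: the two points deserving a word of care are the compatibility between the Jordan decomposition in the Lie algebra $\g$ and the one in the underlying separable associative algebra (so that ``$\eta$ preserves Jordan decomposition'' is both meaningful and true), and the bookkeeping ensuring that a single element $g$ simultaneously witnesses $X\arr X^\ast$, witnesses $X_\ss\arr X_\ss^\ast$, and produces an instance of $\eta_{X_\ss}$ sending $X_\nilp$ exactly to $X_\nilp^\ast$.
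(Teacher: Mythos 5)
Your proof is correct and takes essentially the same route as the paper's: the key point in both is that $\eta$, being an isomorphism of associative $\overline{F}$-algebras, preserves the Jordan decomposition, so that fixing one witness $g\in G^\ast(\overline{F})$ with $(\Ad g)\eta(X)=X^\ast$ simultaneously gives $(\Ad g)\eta(X_\ss)=X_\ss^\ast$ and $(\Ad g)\eta(X_\nilp)=X_\nilp^\ast$, from which both transfers (via $\eta$ and via $\eta_{X_\ss}$) follow by unwinding the definitions. The only stylistic difference is that the paper, instead of tracking the single chosen $g$ through the computation, first notes that $X\arr X^\ast$ holds already via the inner twist $\eta_X\colon\g_{X_\ss}\to\g_{X_\ss^\ast}^\ast$, then uses that an inner twist restricts to an $F$-isomorphism on centers sending $X_\ss\mapsto X_\ss^\ast$, and concludes by subtracting the central part; your more hands-on version, including the remark that the definition of $\eta_{X_\ss}$ is independent of the auxiliary $x'$ up to $G_{X_\ss^\ast}^\ast(\overline{F})$-conjugacy, covers the same ground.
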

\begin{proof}
La première partie de l'énoncé est prouvée. Puis par définition on voit que $X$ est un transfert de $X^\ast$ via $\eta_X$. Or $\eta_X$ est un torseur intérieur, il induit ainsi un $F$-isomorphisme du centre de $\g_{X_\ss}$ sur le centre de $\g_{X_\ss^\ast}^\ast$. Ce $F$-isomorphisme envoie $X_\ss$ sur $X_\ss^\ast$. De ce fait $X_\nilp=X-X_\ss$ via $\eta_X$ est un transfert de $X_\nilp^\ast=X^\ast-X_\ss^\ast$.
\end{proof}

On a défini la correspondance des éléments des algèbres de Lie. Il est aussi nécessaire de définir la correspondance des sous-groupes de Levi et de sous-groupes paraboliques. 

\begin{definition}[Transfert d'un sous-groupe de Levi et d'un sous-groupe parabolique] 
Soient $M'$ un sous-groupe de Levi de $G^\ast$ et $P'$ un sous-groupe parabolique de $G^\ast$ contenant $M'$. On dit que $(M',P')$ se transfère à $G$ s'il existe $M$ un sous-groupe de Levi de $G$, $P$ un sous-groupe parabolique de $G$ contenant $M$, et $\eta_1$ dans la classe de $\eta$ tels que $\eta_1(M_{\overline{F}},P_{\overline{F}})=(M_{\overline{F}}',P_{\overline{F}}')$.

On dit que $M'$ un sous-groupe de Levi de $G^\ast$ se transfère à $G$ s'il existe $P'$ un sous-groupe parabolique de $G^\ast$ contenant $M'$ de sorte que $(M',P')$ se transfère à $G$.
\end{definition}

\begin{proposition}\label{prop:transfertdef}
Soient $M'$ un sous-groupe de Levi de $G^\ast$ et $P'$ un sous-groupe parabolique de $G^\ast$. Les assertions suivantes sont équivalentes :
\begin{enumerate} 
    \item Il existe $\eta_1$ dans la classe de $\eta$ tel que $\eta_1\sigma(\eta_1)^{-1}=\Ad(u_{\sigma})$ avec $u_\sigma\in M_{\AD}'(\overline{F})$ pour tout $\sigma\in \Gal(\overline{F}/F)$.
    \item $(M',P')$ se transfère.
    \item La classe $\eta\in H^1(F,G_{\ad}^\ast)$ est dans l'image de la flèche $H^1(F,M_{\AD}')\rightarrow H^1(F,G_{\ad}^\ast)$.
\end{enumerate}
\end{proposition}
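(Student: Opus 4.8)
The plan is to run the cycle of implications $(1)\Rightarrow(2)\Rightarrow(3)\Rightarrow(1)$. The one structural input used repeatedly is that, $\overline{F}$ being algebraically closed, the projections $G^\ast(\overline{F})\to G_{\ad}^\ast(\overline{F})$ and $M'(\overline{F})\to M_{\AD}'(\overline{F})$ are surjective (the obstruction $H^1(\overline{F},Z(G^\ast))$ vanishes); hence for $u\in M_{\AD}'(\overline{F})$ the automorphism $\Ad(u)$ of $G_{\overline{F}}^\ast$ is conjugation by some $m\in M'(\overline{F})\subseteq P'(\overline{F})$, and so stabilises both $M_{\overline{F}}'$ and $P_{\overline{F}}'$. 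I will also use that, after unwinding the definition of $\sigma(\eta_1)$, the relation $\eta_1\sigma(\eta_1)^{-1}=\Ad(u_\sigma)$ is the same as the identity $\eta_1\circ\sigma=\Ad(u_\sigma)\circ\sigma\circ\eta_1$ between maps on $\overline{F}$-points.

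For $(1)\Rightarrow(2)$, take $\eta_1$ as in (1), with cocycle $u_\sigma\in M_{\AD}'(\overline{F})$, and set $M:=\eta_1^{-1}(M_{\overline{F}}')$ and $P:=\eta_1^{-1}(P_{\overline{F}}')$, regarded as closed subgroups of $G_{\overline{F}}$ through the group isomorphism $\eta_1|_{G_{\overline{F}}}:G_{\overline{F}}\to G_{\overline{F}}^\ast$. From the identity above, $\eta_1(\sigma(M))=\Ad(u_\sigma)(\sigma(M_{\overline{F}}'))=M_{\overline{F}}'$, whence $\sigma(M)=M$, and likewise $\sigma(P)=P$; so $M$ and $P$ are defined over $F$ by Galois descent. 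An isomorphism of reductive groups carries parabolic subgroups to parabolic subgroups and Levi factors to Levi factors, so $P$ is a parabolic subgroup of $G$ with Levi factor $M$; since moreover $\eta_1(M_{\overline{F}},P_{\overline{F}})=(M_{\overline{F}}',P_{\overline{F}}')$ with $\eta_1$ in the class of $\eta$, this is exactly assertion (2).

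For $(2)\Rightarrow(3)$, let $\eta_1,M,P$ witness the transfer and let $u_\sigma\in G_{\ad}^\ast(\overline{F})$ be the cocycle of $\eta_1$, so that $[\eta]=[(u_\sigma)]$ in $H^1(F,G_{\ad}^\ast)$. Since $M$ and $P$ are defined over $F$, the same identity shows that $\Ad(u_\sigma)$ stabilises $M_{\overline{F}}'$ and $P_{\overline{F}}'$. Lifting $u_\sigma$ to $g\in G^\ast(\overline{F})$: conjugation by $g$ preserves the parabolic subgroup $P_{\overline{F}}'$, which is self-normalising, so $g\in P'(\overline{F})$; writing $g=mn$ in the Levi decomposition $P'=M'N_{P'}$, the relation $gM_{\overline{F}}'g^{-1}=M_{\overline{F}}'$ forces $n$ to normalise $M_{\overline{F}}'$, and the intersection of $N_{P'}$ with the normaliser of $M_{\overline{F}}'$ is a finite subgroup of a unipotent group, hence trivial in characteristic $0$; thus $n=1$ and $g\in M'(\overline{F})$. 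Therefore the cocycle $(u_\sigma)$ takes values in $M_{\AD}'(\overline{F})$, so $[\eta]$ lies in the image of $H^1(F,M_{\AD}')\to H^1(F,G_{\ad}^\ast)$.

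For $(3)\Rightarrow(1)$, fix any $\eta_0$ in the class of $\eta$, with cocycle $u_\sigma$, together with a cocycle $(v_\sigma)$ with values in $M_{\AD}'(\overline{F})$ whose image in $H^1(F,G_{\ad}^\ast)$ is $[\eta]$; choose $g\in G_{\ad}^\ast(\overline{F})$ with $v_\sigma=g^{-1}u_\sigma\,\sigma(g)$ for all $\sigma$, lift $g$ to $\tilde g\in G^\ast(\overline{F})$, and put $\eta_1:=\Ad(\tilde g)^{-1}\circ\eta_0$, again an $\overline{F}$-isomorphism of associative algebras $\g_{\overline{F}}\to\g_{\overline{F}}^\ast$ and hence in the class of $\eta$. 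Using $\sigma(\Ad(\tilde g)^{-1}\circ\eta_0)=\Ad(\sigma(\tilde g))^{-1}\circ\sigma(\eta_0)$, a short computation gives $\eta_1\sigma(\eta_1)^{-1}=\Ad(\tilde g)^{-1}\Ad(u_\sigma)\Ad(\sigma(\tilde g))=\Ad(g^{-1}u_\sigma\,\sigma(g))=\Ad(v_\sigma)$ with $v_\sigma\in M_{\AD}'(\overline{F})$, which is assertion (1). The only step I expect to require genuine care is the normaliser argument inside $(2)\Rightarrow(3)$ — that an $\overline{F}$-automorphism of $G^\ast$ stabilising a parabolic subgroup and one of its Levi factors is conjugation by an element of that Levi factor's $\overline{F}$-points — together with the routine but error-prone bookkeeping of the twisted Galois action $\sigma(\eta)$; the rest is formal.
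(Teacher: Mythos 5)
Your proof is correct and follows essentially the same route as the paper's: a cycle $(1)\Rightarrow(2)\Rightarrow(3)\Rightarrow(1)$, with Galois descent for $(1)\Rightarrow(2)$ and the standard cocycle/coboundary manipulation for $(3)\Rightarrow(1)$. Two small points of comparison. In $(1)\Rightarrow(2)$ the paper first descends the torus $A_{M,\overline{F}}:=\eta_1^{-1}(A_{M',\overline{F}})$, observes it is $F$-split (because $\Ad(u_\sigma)$ acts trivially on $A_{M'}$), and then takes $M=\text{Cent}(A_M,G)$, whereas you descend $M=\eta_1^{-1}(M_{\overline{F}}')$ and $P=\eta_1^{-1}(P_{\overline{F}}')$ directly; both produce the same pair $(M,P)$ over $F$, and both are valid. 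In $(2)\Rightarrow(3)$ the paper simply asserts that stabilising both $M_{\overline{F}}'$ and $P_{\overline{F}}'$ forces $u_\sigma\in M_{\AD}'(\overline{F})$; you supply the justification (lift to $G^\ast(\overline{F})$, land in $P'$ by self-normalisation, and show the unipotent part is trivial because $N_{P'}\cap\text{Norm}_{G^\ast}(M')$ injects into the finite group $\text{Norm}_{G^\ast}(M')/M'$ and a unipotent group in characteristic $0$ has no nontrivial finite subgroup), which is a correct and welcome filling-in of the paper's gap.
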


\begin{remark}
 La flèche $H^1(F,M_{\AD}')\rightarrow H^1(F,G_{\ad}^\ast)$ du point 3 est injective, c.f. \cite[lemme 1.1]{Art99}.   
\end{remark}

\begin{proof}

1 $\Rightarrow$ 2 : posons $A_{M,\overline{F}}\eqdef\eta_1^{-1}(A_{M',\overline{F}})$, c'est un sous-tore de $G$ défini sur $F$. Puis $A_{M}$ est $F$-déployé. Soit ensuite $M=\Cent(A_M,G)$. C'est un sous-groupe de Levi défini sur $F$ car c'est le centralisateur d'un sous-tore déployé. Par restriction on a un $\overline{F}$-isomorphisme $\eta_1:M_{\overline{F}}\rightarrow M_{\overline{F}}'$. Enfin $\eta_1^{-1}(P_{\overline{F}}')$ est un sous-groupe parabolique défini sur $F$, et la restriction de $\eta_1$ à $\eta_1^{-1}(P_{\overline{F}}')$ nous gratifie de $\eta_1^{-1}(P_{\overline{F}}')\rightarrow P_{\overline{F}}'$.

2 $\Rightarrow$ 3 :  les conditions $\Ad(u_\sigma)P_{\overline{F}}'=P_{\overline{F}}'$ et $\Ad(u_\sigma)M_{\overline{F}}'=M_{\overline{F}}'$ impliquent $u_\sigma\in M_\AD'(\overline{F})$. Ainsi la classe de  $(u_\sigma)$ est dans l'image de la flèche $H^1(F,M_{\AD}')\rightarrow H^1(F,G_{\ad}^\ast)$. 

3 $\Rightarrow$ 1 : s'il existe $g'\in G^\ast(\overline{F})$ tel que $(\Ad g')\eta\circ\sigma((\Ad g')\eta)^{-1}$ est à valeur dans $M_{\AD}'(\overline{F})$ alors on pose $\eta_1=(\Ad g')\eta$.
\end{proof}

Nous nous préoccupons essentiellement du transfert des couples semi-standard. Soit $M_0$ (resp. $M_{0^\ast}$) un sous-groupes de Levi minimal de $G$ (resp. $G^\ast$). Si $M'\in\L^{G^\ast}(M_{0^\ast})$ et $P'\in\P^{G^\ast}(M')$ sont tels que $(M',P')$ se transfère, alors il existe bien  $M\in\L^G$, $P\in \F^G(M)$ et $g'\in G^\ast(F)$ tels que $(M',P')=(\Ad g')\eta(M,P)$. \`{A} l'inverse fixons pour la suite $\eta$ un élément dans sa classe qui vérifie : 
\begin{equation}
    M_{0^\ast,\overline{F}} \subseteq \eta(M_{0,\overline{F}}).
\end{equation}
Soient maintenant $M\in \L^G(M_0)$ et $P\in \P^G(M)$. On prend $P_{0}$ (resp. $P_{0^\ast}$) un sous-groupe parabolique minimal de $G$ (resp. $G^\ast$) contenant $M_0$ (resp. $M_{0^\ast}$), tels que $P_0\subseteq P$ et $P_{0^\ast,\overline{F}} \subseteq \eta(P_{0,\overline{F}})$. Le couple $\eta(M_{0,\overline{F}},P_{0,\overline{F}})$ étant standard par rapport à $(M_{0^\ast,\overline{F}},P_{0^\ast,\overline{F}})$, il est défini sur $F$. Ainsi, la restriction de $\eta$ définit un torseur intérieur
\[\eta: \m\rightarrow \m^\ast.\]
avec $M^\ast\in \L^{G^\ast}(M_{0^\ast})$ le sous-groupe de Levi sur $F$ obtenu via la descente galoisienne de $\eta(M_{\overline{F}})$. On écrit aussi $P^\ast\in \P^{G^\ast}(M^\ast)$ le sous-groupe parabolique sur $F$ obtenu via la descente galoisienne de $\eta(P_{\overline{F}})$. Par abus de notation on notera aussi $M^\ast=\eta(M)$ et $P^\ast=\eta(P)$. De la discussion on en déduit que cet élément $\eta$ induit une injection $\L^G(M_0)\rightarrow \L^{G^\ast}(M_{0^\ast})$, et pour tout $M\in\L^G(M_0)$ des bijections $\L^G(M)\rightarrow \L^{G^\ast}(M^\ast)$, $\P^G(M)\rightarrow \P^{G^\ast}(M^\ast)$, et $\F^G(M)\rightarrow \F^{G^\ast}(M^\ast)$.


On a $\eta(\mathfrak{z})=\mathfrak{z}^\ast$, et $\eta(\mathfrak{z}(F))=\mathfrak{z}^\ast(F)$ car l'action de $G_{\overline{F}}^\ast$-conjugaison sur $\mathfrak{z}_{\overline{F}}^\ast$ est triviale donc la restriction de $\eta$ sur $\mathfrak{z}_{\overline{F}}$ est definie sur $F$.

\begin{lemma}\label{lem:Weylcomp}Pour tout $M\in\L^G(M_0)$, $\eta$ induit une bijection entre les groupes de Weyl relatifs $W^G(M)$ et $W^{G^\ast}(M^\ast)$.
\end{lemma}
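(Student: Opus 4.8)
The plan is to descend the evident isomorphism defined over $\overline{F}$. Set $\widetilde{W}^G(M)=\text{Norm}_{G(\overline{F})}(M_{\overline{F}})/M(\overline{F})$ and likewise $\widetilde{W}^{G^\ast}(M^\ast)=\text{Norm}_{G^\ast(\overline{F})}(M^\ast_{\overline{F}})/M^\ast(\overline{F})$; since $M$ (resp. $M^\ast$) is a Levi subgroup, the identity component of its normalizer is $M$ (resp. $M^\ast$), so these are the $\overline{F}$-points of finite étale $F$-group schemes carrying the natural $\Gamma$-action, and $\widetilde{W}^G(M)^\Gamma=(\text{Norm}_G(M)/M)(F)$, similarly for $G^\ast$. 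The isomorphism of $\overline{F}$-algebraic groups $\eta|_{G_{\overline{F}}}:G_{\overline{F}}\to G^\ast_{\overline{F}}$ sends $M_{\overline{F}}$ onto $M^\ast_{\overline{F}}$ (by the very definition of $M^\ast$ as the Galois descent of $\eta(M_{\overline{F}})$, which is moreover defined over $F$ thanks to the ``standard'' choice of $\eta$), hence induces a group isomorphism $\widetilde{\eta}:\widetilde{W}^G(M)\xrightarrow{\ \sim\ }\widetilde{W}^{G^\ast}(M^\ast)$. I would then establish two points: (i) $\widetilde{\eta}$ is $\Gamma$-equivariant; (ii) $W^G(M)=\widetilde{W}^G(M)^\Gamma$ and $W^{G^\ast}(M^\ast)=\widetilde{W}^{G^\ast}(M^\ast)^\Gamma$. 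Granting both, $\widetilde{\eta}$ restricts to the desired bijection on $\Gamma$-fixed points.

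For (i): fix $\sigma\in\Gamma$ and a representative $n\in\text{Norm}_{G(\overline{F})}(M_{\overline{F}})$ of $w\in\widetilde{W}^G(M)$. From $\eta\,\sigma(\eta)^{-1}=\Ad(u_\sigma)$ one gets $\sigma(\eta)=\Ad(u_\sigma)^{-1}\circ\eta$, hence $\sigma(\eta(x))=\sigma(\eta)(\sigma(x))=u_\sigma^{-1}\,\eta(\sigma(x))\,u_\sigma$ for all $x\in G(\overline{F})$. Thus the class of $\sigma(\eta(n))=u_\sigma^{-1}\eta(\sigma(n))u_\sigma$ in $\widetilde{W}^{G^\ast}(M^\ast)$ is $\sigma(\widetilde{\eta}(w))$, while the class of $\eta(\sigma(n))$ is $\widetilde{\eta}(\sigma w)$; so it remains to see that conjugation by $u_\sigma$ acts trivially on $\widetilde{W}^{G^\ast}(M^\ast)$. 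Now $\Ad(u_\sigma)=\eta\,\sigma(\eta)^{-1}$ stabilizes $\eta(M_{\overline{F}})=M^\ast_{\overline{F}}$ and $\eta(P_{\overline{F}})=P^\ast_{\overline{F}}$ — both defined over $F$, hence $\sigma$-stable — so $u_\sigma\in M^\ast_\AD(\overline{F})$ (same argument as in the proof of Proposition \ref{prop:transfertdef}, 2$\Rightarrow$3). Lifting $u_\sigma$ to $\widetilde{u}_\sigma\in M^\ast(\overline{F})$ (possible since $H^1(\overline{F},Z(G^\ast))=1$), conjugation by $u_\sigma$ on $G^\ast(\overline{F})$ equals conjugation by $\widetilde{u}_\sigma$; and for $n'\in\text{Norm}_{G^\ast(\overline{F})}(M^\ast_{\overline{F}})$ one has $\widetilde{u}_\sigma^{-1}n'\widetilde{u}_\sigma=n'\cdot(n'^{-1}\widetilde{u}_\sigma^{-1}n'\widetilde{u}_\sigma)\in n'\,M^\ast(\overline{F})$ since $n'$ normalizes $M^\ast_{\overline{F}}$. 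Hence conjugation by $u_\sigma$ is trivial modulo $M^\ast(\overline{F})$, proving the $\Gamma$-equivariance of $\widetilde{\eta}$.

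For (ii): the exact sequence of $F$-groups $1\to M\to\text{Norm}_G(M)\to\text{Norm}_G(M)/M\to 1$ yields on $F$-points an injection $W^G(M)=\text{Norm}_{G(F)}(M)/M(F)\hookrightarrow(\text{Norm}_G(M)/M)(F)=\widetilde{W}^G(M)^\Gamma$ whose cokernel embeds into $H^1(F,M)$. But $M$ is a Levi subgroup of a group of type GL, hence itself of type GL (Proposition \ref{pro:bontype}(2)), so $M\simeq\prod_i\Res_{E_i/F}\GL_{n_i,D_i}$ and $H^1(F,M)\simeq\prod_i H^1(E_i,\GL_{n_i,D_i})=1$ by Hilbert 90 for central simple algebras; thus $W^G(M)=\widetilde{W}^G(M)^\Gamma$, and likewise $W^{G^\ast}(M^\ast)=\widetilde{W}^{G^\ast}(M^\ast)^\Gamma$. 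Since $\widetilde{\eta}$ is a $\Gamma$-equivariant group isomorphism, it restricts to a group isomorphism on $\Gamma$-invariants, which is the announced bijection $W^G(M)\xrightarrow{\ \sim\ }W^{G^\ast}(M^\ast)$.

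I expect the crux to be step (i), specifically the claim $u_\sigma\in M^\ast_\AD(\overline{F})$: one must exploit that the fixed $\eta$ is ``standard'' in the sense of the paragraph preceding the statement (i.e. $M_{0^\ast,\overline{F}}\subseteq\eta(M_{0,\overline{F}})$ and $P_{0^\ast,\overline{F}}\subseteq\eta(P_{0,\overline{F}})$), which is precisely what guarantees that $\eta(M_{\overline{F}})$ and $\eta(P_{\overline{F}})$ are defined over $F$, hence $\sigma$-stable; the rest (the cohomology exact sequence, Hilbert 90) is routine.
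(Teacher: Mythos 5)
Your proof is correct, and its overall architecture (work over $\overline{F}$, establish $\Gamma$-equivariance, descend to $F$-points) matches the paper's. The genuine divergence is in the last step, the surjectivity of the canonical injection $W^G(M)\hookrightarrow\widetilde W^G(M)^\Gamma$. The paper proves this by a structural argument that is valid for any connected reductive group: given $x\in\widetilde W^G(M)^\Gamma$ and $P\in\P^G(M)$, the pair $(M,\Ad(x)P)$ is defined over $F$ (because $M$ normalizes $P$ and $\sigma(x)\equiv x\pmod M$), hence is $G(F)$-conjugate to the standard pair $(M,P)$; uniqueness of standard pairs within a conjugacy class then forces $gx\in M$ for some $g\in G(F)$, so $x\in G(F)M$. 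You instead invoke the exact sequence of pointed sets associated to $1\to M\to\text{Norm}_G(M)\to\text{Norm}_G(M)/M\to 1$ and conclude from $H^1(F,M)=1$ (Hilbert 90 for central simple algebras), which is specific to groups of type GL; the paper's argument is more elementary and more general, yours is shorter given the standing hypotheses of the paper. Your treatment of the $\Gamma$-equivariance is a careful unpacking of what the paper states in one line ("$\eta\sigma(\eta)^{-1}$ est défini par un automorphisme intérieur de $M^\ast$"), and the key observation $u_\sigma\in M^\ast_\AD(\overline{F})$ is correct and for exactly the reason you give — both $M^\ast_{\overline{F}}=\eta(M_{\overline{F}})$ and $P^\ast_{\overline{F}}=\eta(P_{\overline{F}})$ are Galois-stable by the standard choice of $\eta$, and any element of $G_\ad^\ast(\overline{F})$ stabilizing a Levi–parabolic pair lies in $M^\ast_\AD(\overline{F})$.
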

\begin{proof}
D'abord $\eta$ induit une bijection entre $\text{Norm}_{G}(A_M)/M$ et $\text{Norm}_{G^\ast}(A_{M^\ast})/M^\ast$. Comme $\eta\sigma(\eta)^{-1}$ est définit par un automorphisme intérieur de $M^\ast$, $\eta$ induit alors une bijection entre $\left(\text{Norm}_{G}(A_M)/M\right)^\Gamma$ et $\left(\text{Norm}_{G^\ast}(A_{M^\ast})/M^\ast\right)^\Gamma$. Il reste à voir que cela descend au niveau des $F$-points. Prouvons que l’injection canonique $W^G(M)\rightarrow \left(\text{Norm}_{G}(A_M)/M\right)^\Gamma$ est bijective : soit $x\in\left(\text{Norm}_{G}(A_M)/M\right)^\Gamma$. Soit $P\in\P^G(M)$. Le couple $(M,P)$ est standard pour un couple $(M_0,P_0')$ constituée d’un sous-groupe parabolique minimal $P_0'$ de composante de Levi $M_0$. Maintenant le couple $(\Ad(x)M,\Ad(x)P)=(M,\Ad(x)P)$ est définie sur $F$, il existe donc $g\in G(F)$ tel que  $(\Ad(gx)M,\Ad(gx)P)$ soit standard par rapport à $(M_0,P_0')$, du coup $gx\in M$, ce qui implique $x\in G(F)M$.
\end{proof}

\subsubsection{Orbites induites}\label{subsubsec:orbind}

Pour toute partie $S\subseteq \g$, on note par $S_{G-\reg}$ le lieu régulier pour l'action adjointe de $G$, autrement dit
\begin{equation}\label{eq:deflieuG-reg}
S_{G-\reg}\eqdef\{X\in S\mid \dim (\Ad G)X=\max_{Y\in S}\left( \dim (\Ad G)Y\right)\}.    
\end{equation} 
On souligne le fait qu'en général $S_{G-\reg}\not \subseteq \g_{G-\reg}$.

\begin{proposition}[\cite{YDL23a}]\label{prop:indprop}Soient $M$ un sous-groupe de Levi de $G$ et $X\in \m(F)$. On note $\o= (\Ad M)X$.
\begin{enumerate}
    \item Il existe une unique orbite $\Ind_M^G(\o)=\Ind_M^G(X)$ dans $\g$ pour l’action adjointe de $G$ telle que l’intersection
    \begin{equation}\label{YDLgeomeq:indorbdef}
    \Ind_M^G(\o)\cap \left(\o+\mathfrak{n}_{P}\right)    
    \end{equation}
    soit un ouvert Zariski dense dans $\o+\mathfrak{n}_{P}$ pour tout $P$ sous-groupe parabolique de $G$ ayant $M$ comme facteur de Levi ;
    \item l'orbite induite commute à la décomposition de Jordan, i.e.
    \begin{equation}\label{YDLgeomeq:indJordan}
    \Ind_M^G(X)=\Ad(G)(\Ind_{M_{X_{\ss}}}^{G_{X_{\ss}}}(X))=\Ad(G)(X_{\ss}+\Ind_{M_{X_{\ss}}}^{G_{X_{\ss}}}(X_{\nilp})) ;   
    \end{equation}
    \item $\text{codim}_{\mathfrak{m}}(\o)=\text{codim}_{\g}(\Ind_M^G(\o))$ ;
    \item l'intersection \eqref{YDLgeomeq:indorbdef} est également le lieu régulier de $\o+\mathfrak{n}_{P}$ ;
    \item soit $P$ un sous-groupe parabolique de $G$ ayant $M$ comme facteur de Levi, alors $\Ind_M^G(\o)\cap \left(\o+\mathfrak{n}_{P}\right)$ est une orbite dans $\p$ pour l'action adjointe de $P$ ;
    \item soit $P$ un sous-groupe parabolique de $G$ ayant $M$ comme facteur de Levi, soit $Y\in \Ind_M^G(\o)\cap (\o+\mathfrak{n}_{P})$, alors $G_{Y}\subseteq P$ ;
    \item soit $L$ un sous-group de Levi de $H$ contenant $M$, alors
    \[\Ind_{L}^G(\Ind_{M}^L(\o))=\Ind_{M}^G(\o).\]
\end{enumerate}
\end{proposition}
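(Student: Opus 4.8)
Le plan est de ramener l'énoncé au cas nilpotent — l'induction de Lusztig--Spaltenstein — au moyen de la décomposition de Jordan. Pour $\o$ nilpotente dans $\m$ et $P\in\P^G(M)$, on admet que $\o+\n_P$ possède une orbite $P$-dense, que sa $G$-saturation $\Ind_M^G(\o)$ est une orbite nilpotente indépendante de $P$, que cette orbite $P$-dense est le lieu $G$-régulier de $\o+\n_P$, qu'elle vérifie $G_Y\subseteq P$ pour $Y$ dedans, et que $\text{codim}_\g\,\Ind_M^G(\o)=\text{codim}_\m(\o)$ ; pour un groupe du type GL tout ceci se lit sur la combinatoire des diviseurs élémentaires (annexe \ref{sec:AppendixA}). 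Pour $X\in\m(F)$ quelconque, de partie semi-simple $S\eqdef X_\ss$, on pose $M_S\eqdef M_{X_\ss}=\Cent(S,M)$ et $G_S\eqdef G_{X_\ss}$ : par la proposition \ref{pro:bontype}, ce sont des groupes du type GL connexes, $M_S$ est un sous-groupe de Levi de $G_S$, et $X_\nilp\in\m_S(F)$ y est nilpotent. On \emph{définit}
\[\Ind_M^G(X)\eqdef\Ad(G)\bigl(X_\ss+\Ind_{M_S}^{G_S}(X_\nilp)\bigr).\]
Comme $X_\ss$ est central dans $\g_S$ on a $\Ind_{M_S}^{G_S}(X)=X_\ss+\Ind_{M_S}^{G_S}(X_\nilp)$, ce qui établira la formule (2) une fois (1) acquis.

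Pour (1), on vérifie la propriété caractéristique pour un $P\in\P^G(M)$ fixé. Comme $S\in\m$ commute au cocaractère définissant $P$ (lequel est à valeurs dans $A_M$), l'algèbre $\g_S$ est graduée par ce cocaractère, et $P\cap G_S$ est un sous-groupe parabolique de $G_S$ de facteur de Levi $M_S$ et de radical unipotent d'algèbre de Lie $\n_P\cap\g_S$. On choisit $Z$ dans l'ouvert dense $\Ind_{M_S}^{G_S}(X_\nilp)\cap\bigl(X_\nilp+(\n_P\cap\g_S)\bigr)$ et on pose $Y_0\eqdef S+Z$, élément de $X+(\n_P\cap\g_S)\subseteq\o+\n_P$. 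Sa décomposition de Jordan étant $S+Z$, on a $G_{Y_0}=(G_S)_Z$, et le cas nilpotent appliqué dans $G_S$ donne $(G_S)_Z\subseteq P\cap G_S$, d'où $P_{Y_0}=G_{Y_0}\cap P=(G_S)_Z$. En utilisant la conservation de la codimension dans $G_S$ et l'égalité $M_X=(M_S)_{X_\nilp}$, qui donnent $\dim(G_S)_Z=\dim M_X$, on obtient
\[\dim(P\cdot Y_0)=\dim P-\dim(G_S)_Z=\dim\m+\dim\n_P-\dim M_X=\dim\o+\dim\n_P=\dim(\o+\n_P),\]
donc $P\cdot Y_0$, et a fortiori $\Ind_M^G(X)$, est dense dans l'irréductible $\o+\n_P$ ; l'unicité d'une telle orbite est alors claire. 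Enfin $\text{codim}_\g\,\Ind_M^G(X)=\dim G_{Y_0}=\dim M_X=\text{codim}_\m(\o)$ donne (3).

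Les points (4), (5) et (6) se déduisent uniformément de ce comptage. Soit $Y\in\Ind_M^G(\o)\cap(\o+\n_P)$ : par (3), $\dim G_Y=\dim M_X$ ; comme $P\cdot Y\subseteq\o+\n_P$ on a $\dim P_Y\geq\dim M_X$, alors que $P_Y=G_Y\cap P\subseteq G_Y$ force $\dim P_Y=\dim G_Y$. Le centralisateur $G_Y$ étant connexe (proposition \ref{pro:bontype}) et $P_Y$ un de ses sous-groupes fermés de même dimension, on a $G_Y=P_Y\subseteq P$, c'est (6). Il suit $\dim(P\cdot Y)=\dim(\o+\n_P)$, donc $P\cdot Y=P\cdot Y_0$ : on obtient (5) et l'identité $\Ind_M^G(\o)\cap(\o+\n_P)=P\cdot Y_0$. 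Pour (4), tout $Y'\in\o+\n_P$ vérifie $\dim P_{Y'}\geq\dim M_X$ (car $P\cdot Y'\subseteq\o+\n_P$), d'où $\dim(\Ad G)Y'\leq\dim G-\dim M_X=\dim\Ind_M^G(\o)$, et l'égalité force $Y'\in P\cdot Y_0$ par ce qui précède ; le lieu $G$-régulier de $\o+\n_P$ est donc exactement $\Ind_M^G(\o)\cap(\o+\n_P)$.

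Pour la transitivité (7), le comptage ci-dessus montre que le fibré associé $G\times^P(\o+\n_P)$, irréductible de dimension $\dim\g-\dim M_X$, contient l'ouvert dense $G\times^P(P\cdot Y_0)$, lequel est envoyé de manière dominante sur $\Ind_M^G(\o)$ par $(g,Y)\mapsto\Ad(g)Y$ ; d'où $\overline{\Ad(G)(\o+\n_P)}=\overline{\Ind_M^G(\o)}$ pour tout $P\in\P^G(M)$. Soient alors $M\subseteq L$ des sous-groupes de Levi de $G$ ; prenons $Q\in\P^G(L)$, $P_L\in\P^L(M)$ et $P\in\P^G(M)$ avec $P\subseteq Q$, $P\cap L=P_L$, de sorte que $\n_P=\n_{P_L}\oplus\n_Q$. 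En posant $\o_L=\Ind_M^L(\o)$, l'inclusion $\o+\n_{P_L}\subseteq\overline{\o_L}$ (définition de $\Ind_M^L$) donne $\o+\n_P=(\o+\n_{P_L})+\n_Q\subseteq\overline{\o_L}+\n_Q\subseteq\overline{\o_L+\n_Q}$, d'où $\overline{\Ind_M^G(\o)}=\overline{\Ad(G)(\o+\n_P)}\subseteq\overline{\Ad(G)(\o_L+\n_Q)}=\overline{\Ind_L^G(\o_L)}$. Ces deux adhérences étant irréductibles et de même codimension dans $\g$ (point (3)), elles sont égales, et comme une orbite est ouverte dans son adhérence, $\Ind_M^G(\o)=\Ind_L^G(\o_L)$. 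L'obstacle principal est le cas nilpotent de base — notamment son indépendance en $P$ et les raffinements (4)--(6) —, qui en type GL repose sur la classification explicite des orbites nilpotentes et de leurs centralisateurs ; le passage de ce cas au cas général n'est ensuite qu'une comptabilité dimensionnelle articulée autour de la décomposition de Jordan et de la connexité des centralisateurs.
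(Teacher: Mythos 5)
La proposition \ref{prop:indprop} est citée de \cite{YDL23a} ; le présent article n'en donne pas de démonstration, de sorte qu'il n'y a pas de preuve à laquelle comparer votre proposition. Jugée de façon autonome, votre esquisse est correcte : vous définissez $\Ind_M^G(X)$ par descente au centralisateur semi-simple $G_S$ et induction nilpotente dans $G_S$, puis vous établissez le point (1) pour chaque $P\in\P^G(M)$ en exhibant $Y_0=S+Z$ avec $Z$ choisi dans l'orbite $P\cap G_S$-dense du cas nilpotent. Le comptage $\dim(G_S)_Z=\dim M_X$ (via la conservation de la codimension dans $G_S$ et l'identité $M_X=(M_S)_{X_\nilp}$) donne aussitôt (3), et le même comptage combiné à la connexité des centralisateurs (proposition \ref{pro:bontype}) fournit (4), (5), (6) de façon uniforme. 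L'argument de transitivité par comparaison d'adhérences irréductibles de même codimension est standard et juste, y compris l'inclusion $\overline{\o_L}+\n_Q\subseteq\overline{\o_L+\n_Q}$ que vous utilisez implicitement.

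Une réserve mineure : vous affirmez que « pour un groupe du type GL tout ceci se lit sur la combinatoire des diviseurs élémentaires » pour le cas nilpotent de base. Or l'annexe \ref{sec:AppendixA} donne bien la formule explicite de l'induite (théorème \ref{thm:inddes}) et la conservation de la codimension via les partitions, mais elle ne démontre pas combinatoirement les raffinements du cas nilpotent (ouverture de l'orbite $P$-dense, lieu régulier, inclusion $G_Y\subseteq P$), qu'elle renvoie à la théorie générale de Lusztig–Spaltenstein (\cite[chapitre 7]{CM93}). Puisque vous admettez explicitement ces énoncés, cela n'entame pas la validité de l'argument, mais la remarque est légèrement trop forte.
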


Si $\o$ contient un $F$-point, l'orbite induite $\Ind_M^G(\o)$ contient aussi un $F$-point comme $\n_P(F)$ est Zariski dense dans $\n_P$. On sait que $\o(F)$ est exactement une classe de $G(F)$-conjugaison (cf. point 8 de la proposition \ref{pro:bontype}).

\subsection{Espace de Schwartz-Bruhat}\label{subsec:EspaceS-B}

Soit $H$ un groupe abélien localement compact et séparé. On désigne par $\S(H)$ l'espace de Schwartz-Bruhat de $H$, muni de sa topologie usuelle (cf. \cite[page 61]{Bruhat1961}).

\begin{proposition}[{{\cite[proposition 11. a]{Bruhat1961}}}]\label{prop:topopropertiesS-Bspace}~{}
L'espace $\S(H)$ est un espace de Montel nucléaire.
\end{proposition}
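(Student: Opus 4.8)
Le plan est de se ramener, via la structure des groupes abéliens localement compacts, à des propriétés de stabilité des classes d'espaces nucléaires, tonnelés et quasi-complets. Rappelons qu'un espace localement convexe séparé est de Montel s'il est tonnelé et si toute partie bornée fermée y est compacte ; or dans un espace nucléaire toute partie bornée est précompacte, de sorte qu'un espace nucléaire, tonnelé et quasi-complet est automatiquement de Montel — une partie bornée fermée y est à la fois précompacte et complète, donc compacte. Je prouverais donc séparément que $\S(H)$ est nucléaire, tonnelé et quasi-complet.

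Je commencerais par les groupes élémentaires. Si $H\cong \R^a\times\mathbb{T}^b\times\Z^c\times F$ avec $F$ fini, alors $\S(H)$ s'identifie au produit tensoriel projectif complété $\S(\R^a)\mathbin{\widehat{\otimes}}C^\infty(\mathbb{T}^b)\mathbin{\widehat{\otimes}}\S(\Z^c)\mathbin{\widehat{\otimes}}\C^{|F|}$, où $\S(\R^a)$ est l'espace de Schwartz usuel et $\S(\Z^c)$ l'espace des fonctions à décroissance rapide sur $\Z^c$. Chacun de ces facteurs est un espace de Fréchet nucléaire (théorèmes classiques de Grothendieck ; $\mathbb{T}^b$ étant une variété compacte, $C^\infty(\mathbb{T}^b)$ est de Fréchet nucléaire), donc leur produit tensoriel est de Fréchet nucléaire, partant de Montel, et en particulier nucléaire, tonnelé et complet. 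Sur les composantes élémentaires totalement discontinues (du type $\mathbb{Z}_p$ ou $\mathbb{Q}_p$), $\S(H)$ est au contraire une limite inductive stricte d'espaces de dimension finie (ou, dans les cas plus généraux, d'espaces de Fréchet nucléaires) : un espace de type LF, donc encore nucléaire, tonnelé et complet.

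Je passerais ensuite au cas général par la construction de Bruhat, qui assemble $\S(H)$ à partir des briques élémentaires précédentes au moyen d'un nombre fini d'opérations : produits tensoriels projectifs complétés (pour une décomposition $H\cong\R^n\times H_1$, où $H_1$ possède un sous-groupe compact ouvert), limites projectives dénombrables d'espaces de Fréchet et/ou limites inductives le long de plongements fermés (pour les parties compactes et les facteurs du type $\mathbb{Q}_p$), et sommes directes localement convexes sur l'espace — éventuellement non dénombrable — des classes modulo un sous-groupe compact ouvert. Il resterait alors à invoquer les stabilités voulues : la classe des espaces nucléaires est stable par sous-espaces, quotients séparés, limites projectives dénombrables, sommes directes, limites inductives et produit tensoriel projectif complété ; la tonnélité est préservée par limites inductives, sommes directes et limites projectives dénombrables d'espaces de Fréchet ; et la quasi-complétude est préservée par produit tensoriel complété, sommes directes et limites projectives réduites d'espaces complets, les espaces de Fréchet et LF en jeu étant complets. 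On conclut que $\S(H)$ est nucléaire, tonnelé et quasi-complet, donc de Montel.

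\textbf{Le point délicat} sera le contrôle des limites inductives intervenant dans la construction de $\S(H)$ : il faudra vérifier qu'elles sont régulières — de façon équivalente, que les morphismes de transition sont des plongements fermés pour la topologie induite — afin que la complétude s'y transmette effectivement (la tonnélité, elle, passe à n'importe quelle limite inductive). C'est précisément là qu'interviendra le fait qu'on travaille avec de vrais groupes abéliens localement compacts : existence d'un sous-groupe ouvert de la forme $\R^n\times K$ avec $K$ compact, $K$ étant limite projective de groupes de Lie compacts, et bon comportement des systèmes cofinaux en jeu. Ce point structurel acquis, le reste n'est qu'une chaîne de vérifications de routine reposant sur les stabilités rappelées.
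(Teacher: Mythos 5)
The paper provides no proof of this statement: it is cited directly from Bruhat's 1961 article (proposition 11.a), which establishes the basic topological properties of $\S(H)$ as part of its foundational setup. Your sketch is a sound reconstruction of the argument underlying Bruhat's result: reduce via the structure theory of locally compact abelian groups, identify the elementary pieces as nuclear Fr\'echet or strict LF-spaces, and propagate nuclearity, barrelledness and quasi-completeness through the operations (completed projective tensor products, countable projective limits, direct sums, strict inductive limits) by which $\S(H)$ is built; the implication ``nuclear $+$ barrelled $+$ quasi-complete $\Rightarrow$ Montel'' is correct and standard. The delicate point you single out --- regularity of the inductive limits, so that completeness actually passes --- is indeed the crux; it is resolved in the LCA setting precisely because the inductive systems involved are strict, indexed by a cofinal family of compact open subgroups with closed embeddings as transition maps, so the limits are genuine LF-type spaces and hence complete. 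Since the paper offers no argument of its own, there is nothing to contrast: your route is the one implicit in Bruhat's reference.
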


Soient $H_1$ et $H_2$ deux groupes abéliens localement compacts et séparés. Alors $\S(H_1\times H_2)=\S(H_1)\widehat{\otimes}\S(H_2)$, avec $\otimes$ le produit tensoriel d'espaces vectoriels topologiques, muni de sa topologie usuelle (cf. \cite[définition 43.1 ou 43.2]{BookTreve}, $\S(H_1)$ et $\S(H_2)$ étant nucléaires, la $\epsilon$-topologie et la $\pi$-topologie sur $\S(H_1)\otimes\S(H_2)$ sont donc la même), puis $\widehat{\otimes}$ le complété pour cette topologie. 

Soit $V$ un espace vectoriel sur $F$ supposé ici un corps local de caractéristique 0. Alors, pour $F$ non-archimédien, $\S(V(F))$ est l'espace des fonctions localement constantes à support compact sur $V(F)$, muni de la topologie discrète ; pour $F$ archimédien, $\S(V(F))$ est l'espace des fonctions de classe Schwartz usuelle sur $V(F)$, muni de la topologie engendrée par les semi-normes $\|-\|_{P,Q}$ pour $P$ et $Q$ polynômes à $\dim_\R V(F)$ variables et à coefficients complexes, avec $\|f\|_{P,Q}=\sup_{x\in V(F)}|P(x)Q(\frac{\partial}{\partial x})f(x)|$.

\subsection{Conventions}
On note, en ajoutant une étoile $^\ast$, en exposant les objets relativement à $G^\ast$ qui se transfèrent à $G$, et on veille à ne pas confondre $M_{0^\ast}$, qui est le sous-groupe de Levi standard minimal de $G^\ast$, avec le  sous-groupe de Levi qui se transfère en $M_0$, noté $M_0^\ast$. Pour les objets généraux relativement à $G^\ast$ on les note en ajoutant un signe prime $'$ en exposant. Par exemple en écrivant $X^\ast\in \g^\ast(F)$ il est sous-entendu qu'il existe $X\in\g(F)$ tel que $X$ est un transfert de $X^\ast$. Alors que l'écriture $X'\in \g^\ast(F)$ n'impose pas de condition particulière sur $X'$. Le contexte devrait permettre d’enlever toute ambiguïté.

\section{Préliminaires locaux}\label{sec:preliminaireslocaux}
Dans cette partie $F$ désigne un corps local de caractéristique 0. On note $\O_F$ l'anneau des entiers de $F$ si $F$ est non-archimédien. On note $|\cdot|_F=|\cdot|$ la valeur absolue normalisée sur $F$.

Fixons $G$ un groupe du type GL sur $F$, $G^\ast$ sa forme intérieure quasi-déployée. Prenons $M_0$, resp. $M_{0^\ast}$, un sous-groupe de Lévi minimal de $G$, resp. de $G^\ast$. Ils sont bien sûr définis sur $F$. Fixons $\eta:\g\rightarrow\g^\ast$ un torseur intérieur tel que $\eta(M_0)$ contient $M_{0^\ast}$, et que la restriction de $\eta$ sur $A_{M_{0}}$ est définie sur $F$. Comme il est loisible, on choisit les sections $L\mapsto Q_{L}$ sur $G$ et $L'\mapsto Q_{L'}$ sur $G^\ast$ de sorte que
\[(Q_L)^\ast=Q_{L^\ast}.\]

Nous allons fixer $K$ un sous-groupe compact maximal de $G(F)$. Lorsque $F$ est un corps local archimédien on prend $K$ tel que les algèbres de Lie de $A_{M_0}(\R)$ et de $K$ sont orthogonales par rapport à la forme de Killing de $G(F)$. Lorsque $F$ est un corps local non-archimédien, on fixe un sommet spécial dans l’appartement associé à $A_{M_0}$ de l’immeuble de Bruhat-Tits de $G$. On en déduit un sous-groupe compact maximal $K$ de $G(F)$, qui est le fixateur dans $G(F)$ de ce sommet. Un sous-groupe $K$ fixé ainsi est dit en bonne position par rapport à $M_0$. Signalons que pour tout $M\in\L^G(M_0)$ le sous-groupe compact maximal $K\cap M(F)$ de $M(F)$ est en bonne position par rapport à $M_0$. Le procédé ci-dessus vaut pour tout groupe rédutif $G$, et on voit que le sous-groupe compact maximal $KZ(G)(F)/Z(G)(F)=K/K\cap Z(G)(F)$ de $G_\ad(F)$ est en bonne position par rapport à $M_{0,\AD}$.

\subsection{Transformée de Fourier sur les algèbres de Lie} 

Notons $\tau_\g$ la trace réduite de l'algèbre séparable sous-jacente, elle envoie alors un élément de $\g(F)$ sur un élément de $F$. Soit $\langle X,Y\rangle =\tau_\g(XY)$, c'est une forme bilinéaire sur $\g(F)$ non-dégénérée et invariante par adjonction. Nous appellerons $\langle-,-\rangle$ la forme bilinéaire canonique de $\g$ dans cet article. Remarquons que pour $X\in\g_\ss(F)$, la forme bilinéaire canonique de $\g$ restreinte à $\g_X$ est la forme bilinéaire canonique de $\g_X$. 

On se donne un caractère non-trivial $\psi$ de $F$. On définit la transformée de Fourier d'une fonction $f \in\S(\g(F))$ de classe Schwartz-Bruhat (numéro \ref{subsec:EspaceS-B}) par
\[\widehat{f}^\g(Y)=\int_{\g(F)}f(X)\psi(\langle X,Y\rangle)\,dX,\,\,\,\,\forall Y\in \g(F),\] 
où $\g(F)$ est muni de la mesure de Haar auto-duale relativement à la transformée de Fourier. On va abréger $\widehat{f}^\g$ en $\widehat{f}$ si le contexte nous le permet. 

Au bicaractère $\psi(\langle-,-\rangle)$ de $\g(F)$ est associée la constante de Weil : on note $F_\psi$ la fonction complexe sur $\g(F)$ définie pour tout $X\in\g(F)$ par $F_\psi(X)=\psi\left(\langle X,X\rangle/2\right)$. Dans ce cas, il existe une racine huitième de l'unité, notée $\gamma_\psi(\g)$, telle que pour tout $f
\in\S(\g(F))$
\[\widehat{f\ast F_\psi}=\gamma_{\psi}(\g)\widehat{f}\cdot F_{-\psi}
\]
(cf. \cite[théorème 2 et section 26]{Weil64}), on a noté $\ast$ le produit de convolution sur $\g(F)$. 

On remarque que la décomposition $\g=\mathfrak{z}\oplus\g_{\ad}$ est orthogonale par rapport à la forme bilinéaire canonique $\langle-,-\rangle$.

\subsection{Normalisations des mesures de la section \ref{sec:preliminaireslocaux}}\label{subsec:localnormalisationsdemesures}
On fixe à présent les normalisations des mesures. On se concentrera sur les objets relativement à $G$ dans la mesure où il y a des analogues évidents pour $G^\ast$.

Le groupe $W_0^G$ agit naturellement sur $a_{M_0}$. On fixe alors un produit scalaire sur $a_{M_0}$ invariant par $W_0^G$. On prend sur tout sous-espace de $a_{M_0}$ (resp. $a_{M_v,0}$) la mesure engendrée par la restriction de ce produit scalaire. On remarque qu'il existe un isomorphisme naturel $a_M^G\simeq a_{M_{\AD}}=a_{M_{\AD}}^{G_
{\ad}}$, il nous procure une mesure sur $a_{M_{\AD}}$.

On écrit $dh$ pour la mesure sur $a_P
$ et $d\lambda$ pour la mesure duale sur $ia_P^\ast$, dans le sens où
\[\int_{ia_P^\ast}\int_{a_P}f(h)e^{-\lambda(h)}\,dh\,d\lambda=f(0)\]
pour tout $f\in L^1(a_P)$.

Pour $\mathfrak{h}$ une sous-algèbre séparable de $\g$ de la forme $G_X$ avec $X\in \g_{\ss}(F)$, la forme bilinéaire $\langle-,-\rangle$ est non-dégénéré sur $\mathfrak{h}(F)$, et on prend la mesure de Haar auto-duale sur $\mathfrak{h}(F)$ relativement à sa transformée de Fourier. Puis on note $H$ le groupe des unités de $\mathfrak{h}$, et on prend sur $H(F)$ la mesure de Haar 
\begin{equation}\label{YDLgeomeq:Haarmeasureongrp}
dh=\frac{dH}{|\text{Norm}_{\mathfrak{h}}(h)|},    
\end{equation}
où $dH$ est la mesure sur $\mathfrak{h}(F)$ et $\text{Norm}_{\mathfrak{h}}$ est la norme (et non la norme réduite) sur $F$ de l'algèbre séparable $\mathfrak{h}$. On remarque cette mesure est également celle induite via l’exponentielle de la mesure sur un voisinage de $0$ de $\mathfrak{h}(F)$
, i.e. 
\[\int_{V} f(\exp H)|\det(d\exp)_H|\,dH=\int_{\exp V}f(h)\,dh\]
pour tout $V$ voisinage de $0$ de $\mathfrak{h}(F)$ tel que $\exp :V \to \exp(V)$ soit bijectif et tout $f\in L^1(\exp V)$. Tout sous-groupe de Levi, tout sous-tore maximal d'un sous-groupe de Levi de $G$, et tout sous-groupe de $G$ de la forme $M_Y$ avec $M$ un sous-groupe de Levi et $Y\in\m_\ss(F)$, est de la forme $G_X$ avec $X\in \g_\ss(F)$. Remarquons que pour tout $H$ sous-groupe de $G$ comme plus haut et $g\in G(F)$, la mesure sur $(\Ad g )H(F)$ correspond à la mesure sur $H(F)$ par conjugaison par $g$.

La mesure sur un espace quotient est la mesure quotient. On obtient alors une mesure sur $G_X(F)\backslash G(F)$ pour tout $X\in \g_\ss(F)$.

Soit $M$ un sous-groupe de Levi. Soit $\o$ une $M$-orbite dans $\m$ pour l'action adjointe, qui contient un $F$-point. On pose 
\begin{equation}\label{YDLiopeq:defaMoG-reg}
\a_{M,\o,G-\reg}\eqdef \{A\in\a_M\mid A+Y\in \Ind_M^G(A+Y),\forall Y\in \o\}.    
\end{equation}
Puis, pour rappel, pour toute partie $S\subseteq \g$, on note par $S_{G-\reg}$ le lieu régulier pour l'action adjointe de $G$.

\begin{proposition}[{{\cite{YDL23a}}}]\label{YDLiopprop:whatisaMoG-reg}~{}
\begin{enumerate}
    \item $\a_{M,\o,G-\reg}$ est un ouvert dense de $\a_{M}$ défini sur $F$.
    \item L'intersection $\a_{M,\o,G-\reg}\cap \a_{M,G-\reg}$ contient un voisinage de $0$ dans $\a_{M,G-\reg}$, avec $\a_{M,G-\reg}$ le lieu régulier de $\a_{M}$ pour la $G$-conjugaison (équation \eqref{eq:deflieuG-reg}).
    \item $\a_{M,\o,G-\reg}=\a_{M,\o_\ss,G-\reg}$, où $\o_\ss\eqdef\{Y_\ss\mid Y\in \o\}$ est une $M$-orbite semi-simple dans $\m$.
    \item Pour tout $Y\in \o$, on a 
    \[\a_{M,\o,G-\reg}\subseteq \a_{M_{Y_\ss},G_{Y_\ss}-\reg}.\]

    \item Si $\o$ est nilpotent, on a 
     \[\a_{M,\o,G-\reg}=\a_{M,G-\reg}.\]
\end{enumerate}
\end{proposition}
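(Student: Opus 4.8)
The plan is to reduce all five assertions to one reformulation of the defining condition together with a single geometric lemma on Lusztig--Spaltenstein induction, the remainder being routine manipulation of the Jordan decomposition. The cornerstone is the criterion: for $A\in\a_M$ and $Y\in\m$,
\[A+Y\in\Ind_M^G(A+Y)\iff\g_{A+Y}\subseteq\m.\]
Since $A$ is semisimple and central in $\m$, it commutes with $Y$, so $(A+Y)_\ss=A+Y_\ss$, $(A+Y)_\nilp=Y_\nilp$, hence $\g_{A+Y}=\g_A\cap\g_Y$, and $(\Ad M)(A+Y)=A+\o$ with $\o=(\Ad M)Y$. As $A+Y$ lies in $(A+\o)+\n_P$ with zero $\n_P$-component, Proposition~\ref{prop:indprop}(4) gives that $A+Y\in\Ind_M^G(A+Y)$ iff $A+Y$ lies in the $G$-regular locus of $(A+\o)+\n_P$, i.e. iff $\dim(\Ad G)(A+Y)=\dim\Ind_M^G(A+\o)$; by Proposition~\ref{prop:indprop}(3) the latter equals $\dim\g-\text{codim}_\m(A+\o)=\dim\g-\dim\m_{A+Y}$, while the former is $\dim\g-\dim\g_{A+Y}$, so the condition reads $\dim\g_{A+Y}=\dim\m_{A+Y}$, equivalently $\g_{A+Y}\subseteq\m$ since $\m_{A+Y}=\m\cap\g_{A+Y}$. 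Thus $\a_{M,\o,G-\reg}=\{A\in\a_M:\g_{A+Y}\subseteq\m\ \forall Y\in\o\}$, and by $\Ad M$-equivariance a single $Y\in\o$ suffices to test membership.

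Assertions (1) and (2) are then immediate. For (1): fixing $Y_0\in\o(F)$ and using that $\m_{A+Y_0}=\m_{Y_0}$ is independent of $A$, we have $\a_{M,\o,G-\reg}=\{A:\dim\g_{A+Y_0}\le\dim\m_{Y_0}\}$, which is Zariski-open and defined over $F$ by upper semicontinuity of $A\mapsto\dim\g_{A+Y_0}$; and if $A$ is $G$-regular in $\a_M$ then $\g_A=\m$ (no $A_M$-root on $\g$ vanishes there and $\m=\g^{A_M}$), whence $\g_{A+Y}=\g_A\cap\g_Y=\m_Y\subseteq\m$ for all $Y$, so $\a_{M,G-\reg}\subseteq\a_{M,\o,G-\reg}$ and the latter is dense in the irreducible $\a_M$. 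This last inclusion also gives (2), since then $\a_{M,\o,G-\reg}\cap\a_{M,G-\reg}=\a_{M,G-\reg}$.

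The substance is the lemma: \emph{if $L\subsetneq H$ is a proper Levi subgroup of a type-GL reductive group and $\nu\subseteq\l=\Lie(L)$ is a nilpotent $L$-orbit, then $\nu$ meets no point of $\Ind_L^H(\nu)$} (equivalently, by the criterion applied in $H$ with ``$A=0$'', no $Y_0\in\nu$ lies in its own induction). Indeed, if $Y_0\in\nu\cap\Ind_L^H(\nu)$ and $P$ is a parabolic of $H$ with Levi $L$, then $Y_0$ lies in $\Ind_L^H(\nu)\cap(\nu+\n_P)$, which by Proposition~\ref{prop:indprop}(5),(1) is a single $P$-orbit dense in $\nu+\n_P$; so $\dim(\Ad P)Y_0=\dim\nu+\dim\n_P$, forcing $\dim P_{Y_0}=\dim\l_{Y_0}$, and since $\ad Y_0$ preserves $\Lie(P)=\l\oplus\n_P$ this forces $(\n_P)_{Y_0}=0$, impossible for a nilpotent $\ad Y_0$ on $\n_P\neq 0$. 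Granting this, (3) goes as follows: $\a_{M,\o_\ss,G-\reg}\subseteq\a_{M,\o,G-\reg}$ because $\g_{A+Y}\subseteq\g_{A+Y_\ss}$; conversely, for $A\in\a_{M,\o,G-\reg}$ and $Y\in\o$ set $H=G_{A+Y_\ss}$, a connected type-GL group with $M\cap H=M_{Y_\ss}=\Cent(A_M,H)$ a Levi subgroup ($A_M\subseteq H$ as it fixes $Y_\ss\in\m$ and $A$). Then $Y_\nilp\in\Lie(H)=\g_{A+Y_\ss}$, its centralizer there is $\g_{A+Y_\ss}\cap\g_{Y_\nilp}=\g_{A+Y}\subseteq\m$, hence $\subseteq\m_{Y_\ss}=\Lie(M_{Y_\ss})$; by the criterion inside $H$ this says $Y_\nilp$ lies in its $\Ind_{M_{Y_\ss}}^H$-induction, so by the Lemma $M_{Y_\ss}=H$, i.e. $\g_{A+Y_\ss}=\m_{Y_\ss}\subseteq\m$, i.e. $A\in\a_{M,\o_\ss,G-\reg}$. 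Then (4) follows: by (3), the centralizer of $A$ in $\g_{Y_\ss}$ is $\g_A\cap\g_{Y_\ss}=\g_{A+Y_\ss}\subseteq\m$, hence equals $\m\cap\g_{Y_\ss}=\m_{Y_\ss}$, which together with $\a_M\subseteq\a_{M_{Y_\ss}}$ is exactly $A\in\a_{M_{Y_\ss},G_{Y_\ss}-\reg}$. And (5) is the case $Y_\ss=0$ of the same argument (run with $H=G_A$, $L=M$, $\nu=\o$), giving $A\in\a_{M,\o,G-\reg}\Rightarrow\g_A=\m\Rightarrow A\in\a_{M,G-\reg}$; the reverse inclusion was shown above.

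The main obstacle is the induction lemma and, above all, the reduction to it in (3): one must descend to the centralizer $G_{A+Y_\ss}$ of the semisimple part and be sure that it is again a group of type GL in which $M_{Y_\ss}$ appears as a genuine Levi subgroup — this is where connectedness of centralizers in type GL and the description of centralizers of semisimple elements are needed — and then re-apply the criterion and the lemma inside it. Openness, density and (4) are comparatively mechanical.
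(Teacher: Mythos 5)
Your cornerstone criterion
\[A+Y\in\Ind_M^G(A+Y)\iff\g_{A+Y}\subseteq\m\]
is correctly derived from Proposition~\ref{prop:indprop}~(3) and~(4), and the vanishing lemma (a nilpotent $L$-orbit $\nu$ in a \emph{proper} Levi $L\subsetneq H$ of type GL meets no point of $\Ind_L^H(\nu)$, proved via $\p_{Y_0}=\l_{Y_0}\oplus(\n_P)_{Y_0}$ and nilpotence of $\ad Y_0$ on $\n_P\neq 0$) is also sound. The descent in~(3) to $H=G_{A+Y_\ss}$ with Levi $M_{Y_\ss}=\Cent(A_M,H)$, and its consequences~(4) and the forward half of~(5), are correct.

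The gap is the intermediate identity $\g_{A+Y}=\g_A\cap\g_Y$, which you state as a consequence of $A$ being central semisimple in $\m$. This reduces to $\g_{A+Y_\ss}=\g_A\cap\g_{Y_\ss}$, and only the inclusion $\supseteq$ holds in general: eigenvalues of $A$ and of $Y_\ss$ can \emph{collide} after addition. Concretely, take $G=\GL_3$, $M=T$ the diagonal torus, $\o=\{Y_0\}$ with $Y_0=\mathrm{diag}(0,1,0)$, and $A=\mathrm{diag}(1,0,2)$. Then $\g_A=\mathfrak{t}=\m$ so $A\in\a_{M,G-\reg}$, yet $A+Y_0=\mathrm{diag}(1,1,2)$ has centralizer $\gl_2\times\gl_1\not\subseteq\mathfrak{t}$, so $A\notin\a_{M,\o,G-\reg}$. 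Thus the inclusion $\a_{M,G-\reg}\subseteq\a_{M,\o,G-\reg}$ that you invoke is false, and with it your density argument for~(1) and your entire argument for~(2), where you in fact claim the (false) \emph{equality} $\a_{M,\o,G-\reg}\cap\a_{M,G-\reg}=\a_{M,G-\reg}$, a statement strictly stronger than assertion~(2) itself.

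Both can be repaired, but require a genuinely different input. For density in~(1), one must show $\a_{M,\o,G-\reg}\neq\emptyset$ (openness over $F$ is correctly handled by semicontinuity); a generic-position argument works: choose $A$ whose $A_M$-eigenvalues are in general position relative to those of $Y_\ss$, so that over $\overline{F}$ no new coincidences $a_i+b_i=a_j+b_j$ occur beyond $a_i=a_j,\,b_i=b_j$, forcing $\g_{A+Y_\ss}=\g_A\cap\g_{Y_\ss}=\m_{Y_\ss}$. For~(2), the same collision analysis but \emph{local}: for $A$ in a sufficiently small neighbourhood of $0$ within $\a_{M,G-\reg}$, the finitely many differences $b_j-b_i\neq 0$ of eigenvalues of $Y_\ss$ are bounded away from $0$, so $a_i-a_j=b_j-b_i$ cannot occur, hence again $\g_{A+Y_\ss}=\m_{Y_\ss}\subseteq\m$. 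Note this is precisely why the statement of~(2) only asserts a \emph{neighbourhood of $0$} rather than the whole of $\a_{M,G-\reg}$. Only the backward inclusion in~(5), where $Y$ is nilpotent so $(A+Y)_\ss=A$ and $\g_{A+Y}=\g_A\cap\g_Y$ does hold, survives as you wrote it.
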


Pour $Y\in \m(F)$, on pose
\begin{equation}\label{YDLgeomeq:defaMYG-reg}
a_{M,Y,G-\reg}\eqdef a_{M,(\Ad M)Y,G-\reg}.    
\end{equation}

Soit maintenant $X\in \g(F)$ quelconque, on prend d'abord $M$ un sous-groupe de Levi tel que $X_\ss \in\m(F)$ elliptique et $X\in \Ind_M^G(X_\ss)$ (proposition \ref{prop:suppellitique}). Puis on prend sur $G_{X}(F)$ l'unique mesure de Haar telle que 
\begin{align}\label{eq:defmeasureonanyorb}
|D^\g(X)|_F^{1/2}&\int_{G_{X}(F)\backslash G(F)}f(g^{-1}Xg)\,dg\\
&=\lim_{\substack{A\to 0\\A\in\a_{M,X_\ss,G-\reg}(F)}}|D^\g(A+X)|_F^{1/2}\int_{M_{X_\ss}(F)\backslash G(F)}f(g^{-1}(A+X_\ss)g)\,dg    
\end{align}
pour tout $f\in C_c^\infty(G(F))$. On doit vérifier que ce procédé donne une mesure bien définie, c'est-à-dire :
\begin{proposition}[{{\cite{YDL23a}}}]On a les compatibilités suivantes :    
\begin{enumerate}
    \item la limite du membre de droite de l'égalité \eqref{eq:defmeasureonanyorb} existe, et elle ne dépend pas du choix de $M$ ;
    \item si $X$ est semi-simple, alors la mesure sur $G_X(F)$ définie par l'équation \eqref{eq:defmeasureonanyorb} coïncide avec la mesure définie par l'équation \eqref
{YDLgeomeq:Haarmeasureongrp} ;
    \item si $L$ est un sous-groupe de Levi de $G$ dont l'algèbre de Lie contient $X$ tel que $L_X=G_X$, alors la mesure sur $G_X(F)$ définie par l'équation \eqref{eq:defmeasureonanyorb} en voyant $X$ comme élément de $\g(F)$ coïncide avec la mesure sur $L_X$ définie par la même équation en voyant $X$ comme élément de $\mathfrak{l}(F)$ ;
    \item la mesure sur $G_X(F)$ définie par l'équation \eqref{eq:defmeasureonanyorb} en voyant $X$ comme élément de $\g(F)$ coïncide avec la mesure sur $(G_{X_\ss})_{X_\nilp}(F)$ définie par la même équation en voyant $X_\nilp$ comme élément de $\g_{X_\ss}(F)$ ;
    \item la mesure sur $G_{X+Z}(F)$ coïncide avec la mesure sur $G_X(F)$ pour tous $X\in \g(F)$ et $Z\in\mathfrak{z}(F)$.
\end{enumerate}
\end{proposition}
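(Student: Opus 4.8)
The plan is to isolate one piece of local harmonic analysis and deduce everything else by transporting the defining equation \eqref{eq:defmeasureonanyorb} from one datum to another. Throughout write $O^G_X(f)=\int_{G_X(F)\backslash G(F)}f(g^{-1}Xg)\,dg$ for the orbital integral taken with the measure on $G_X(F)$ currently under discussion. The two structural tools I would first put in place, by the usual Iwasawa-coordinate computations, are: \emph{semisimple descent} (Harish-Chandra) --- for $S\in\g_\ss(F)$, a surjective linear map $f\mapsto f^{S}$ from $\S(\g(F))$ onto $\S(\g_S(F))$ such that $|D^\g(S+Y)|^{1/2}O^G_{S+Y}(f)=|D^\g(S)|^{1/2}|D^{\g_S}(Y)|^{1/2}O^{G_S}_Y(f^{S})$ for every $Y\in\g_S(F)$ with $\g_{S+Y}=(\g_S)_Y$ (in particular for $Y$ small, and for $Y=X_\nilp$ when $S=X_\ss$), the measure on $G_{S+Y}(F)=(G_S)_Y(F)$ being the same on both sides --- here one uses the elementary identity $D^\g(S+Y)=D^\g(S)\,D^{\g_S}(Y)$ coming from the Jordan decomposition; and \emph{parabolic descent} --- for $L\in\L^G$, $P=LN_P\in\P^G(L)$ and $Y\in\l(F)$ with $L_Y=G_Y$, a surjection $f\mapsto f_P$ from $\S(\g(F))$ onto $\S(\l(F))$ with $|D^\g(Y)|^{1/2}O^G_Y(f)=|D^{\l}(Y)|^{1/2}O^L_Y(f_P)$, again with matching measure on $G_Y(F)=L_Y(F)$.

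The genuinely analytic input, and the main obstacle, is the nilpotent case of (1): for $X\in\g_\nilp(F)$ and $M\in\L^G$ with $X\in\Ind_M^G(0)$ (so $X$ is Richardson for $M$), the limit $\lim_{A\to 0,\,A\in\a_{M,G-\reg}(F)}|D^\g(A)|^{1/2}O^G_A(f)$ exists and equals a strictly positive multiple of $O^G_X(f)$ for a fixed Haar measure on $G_X(F)$. The heart of it is that the normalized orbital integral $|D^\g(A)|^{1/2}O^G_A(f)$ admits, as $A\to 0$ along $\a_{M,G-\reg}$, an asymptotic expansion into homogeneous germ terms; establishing such an expansion is the delicate local analysis carried over from \cite{YDL23a} (Shalika/Harish-Chandra in the non-archimedean case, Bouaziz/Harish-Chandra in the archimedean case). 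Since $\dim\Ind_M^G(0)=\dim\g-\dim\m$, the germ attached to the Richardson orbit is homogeneous of degree $0$, hence a constant $\gamma_M>0$, while every smaller nilpotent orbit in $\overline{\Ind_M^G(0)}$ contributes a germ of strictly positive homogeneity, which drops out in the limit; so the limit is $\gamma_M\,O^G_X(f)$. Equivalently --- this is the argument I would actually write --- the limiting functional is a positive $G$-invariant distribution supported on $\overline{\Ind_M^G(0)}$, homogeneous under the dilations $f\mapsto f(\cdot/t)$, hence a positive multiple of the invariant measure of the Richardson orbit, the strictly smaller orbits being excluded by their differing homogeneity and positivity of the multiple following from a lower bound against a suitable $f\ge 0$.

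Granting the nilpotent case, (1) in general --- existence of the limit and its independence of $M$, so that \eqref{eq:defmeasureonanyorb} does define a Haar measure $\mu_G$ on $G_X(F)$ --- follows by semisimple descent, and the very same computation is assertion (4): with $M$ as in \eqref{eq:defmeasureonanyorb}, Proposition \ref{prop:indprop}(2) gives $X_\nilp\in\Ind_{M_{X_\ss}}^{G_{X_\ss}}(0)$, and $A\in\a_M$ lies in $\g_{X_\ss}$ for $A$ small; applying $f\mapsto f^{X_\ss}$ turns the right-hand side of \eqref{eq:defmeasureonanyorb} into $|D^\g(X_\ss)|^{1/2}\lim_{A\to 0}|D^{\g_{X_\ss}}(A)|^{1/2}O^{G_{X_\ss}}_A(f^{X_\ss})$ and its left-hand side into $|D^\g(X_\ss)|^{1/2}O^{G_{X_\ss}}_{X_\nilp}(f^{X_\ss})$, so that \eqref{eq:defmeasureonanyorb} for $(G,X)$ with a measure $\mu$ on $G_X(F)$ becomes, verbatim, \eqref{eq:defmeasureonanyorb} for $(G_{X_\ss},X_\nilp)$ with the same $\mu$ on $(G_{X_\ss})_{X_\nilp}(F)=G_X(F)$, applied to $f^{X_\ss}$. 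Since the descended datum $(M_{X_\ss},X_\nilp)$ still has $X_\nilp$ Richardson, the nilpotent case supplies existence and $M$-independence; and surjectivity of $f\mapsto f^{X_\ss}$, together with the fact that a Haar measure on $G_X(F)$ is determined by the corresponding orbital integrals, gives $\mu_G=\mu_{G_{X_\ss}}$, which is (4).

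It remains to get (2), (3), (5) by the same bookkeeping. For (2), where $X=X_\ss$, I take $M=\Cent(A_{G_X},G)$; then $M_X=G_X$ and $\a_M=\a_{G_X}$, whence $G_{A+X}=G_X$ for every $A\in\a_{M,X,G-\reg}(F)$, so the right-hand side of \eqref{eq:defmeasureonanyorb} reduces to the ordinary limit $|D^\g(A+X)|^{1/2}O^G_{A+X}(f)\to|D^\g(X)|^{1/2}O^G_X(f)$ along the central split direction of $G_X$, the measure \eqref{YDLgeomeq:Haarmeasureongrp} being held fixed (continuity of a semisimple orbital integral over a family with constant centralizer); uniqueness then identifies the measure of \eqref{eq:defmeasureonanyorb} with that of \eqref{YDLgeomeq:Haarmeasureongrp}. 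For (3), $L_X=G_X$ forces $X\in\Ind_L^G(X)$, so by Proposition \ref{prop:suppellitique} applied to $L$ and transitivity of induction (Proposition \ref{prop:indprop}) one may choose $M\subseteq L$ with $X_\ss\in\m$ elliptic, $X\in\Ind_M^L(X_\ss)$ and hence $X\in\Ind_M^G(X_\ss)$; parabolic descent then converts \eqref{eq:defmeasureonanyorb} for $(G,X)$ and $f$ into \eqref{eq:defmeasureonanyorb} for $(L,X)$ and $f_P$ with the same measure on $G_X(F)=L_X(F)$, and surjectivity plus uniqueness give $\mu_G=\mu_L$. For (5), replacing $X$ by $X+Z$ with $Z\in\mathfrak{z}(F)$ leaves the centralizers, $D^\g$, the induced orbits and the sets $\a_{M,X_\ss,G-\reg}$ unchanged and merely translates every argument by $Z$, so \eqref{eq:defmeasureonanyorb} for $(G,X+Z)$ and $f$ is \eqref{eq:defmeasureonanyorb} for $(G,X)$ and the translate $Y\mapsto f(Y+Z)$; as this translation is a bijection of $\S(\g(F))$, uniqueness forces the two measures on $G_{X+Z}(F)=G_X(F)$ to coincide.
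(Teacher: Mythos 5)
The paper itself contains no proof of this proposition: it is quoted from \cite{YDL23a}. The calculation carried out in Appendix~\ref{sec:AppendixB} immediately after equation \eqref{eqappB:defmeasureonanyorb}, however, shows what the reference's argument for the existence claim~(1) is, and it differs from yours. There the right-hand side of \eqref{eq:defmeasureonanyorb} is rewritten via Iwasawa and the substitution $n\mapsto\Ad(n^{-1})\bigl(l^{-1}(A+X_\ss)l\bigr)-l^{-1}(A+X_\ss)l$ on $N_Q(F)$, whose Jacobian is the \emph{constant} $|\det(\ad(A+X_\ss);\n_Q(F))|^{-1}=|D^\g(A+X)|^{-1/2}|D^\l(X_\ss)|^{1/2}$; the normalizing factor $|D^\g(A+X)|^{1/2}$ therefore cancels, leaving $\gamma(Q)\,|D^\l(X_\ss)|^{1/2}\int_{L_{X_\ss}\backslash L}\int_{\n_Q}\int_K f\bigl(k^{-1}(l^{-1}(A+X_\ss)l+U)k\bigr)$, whose limit as $A\to 0$ exists at once by dominated convergence for Schwartz $f$. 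Existence thus requires only Iwasawa bookkeeping, not germ expansions, whereas your argument buys a cleaner \emph{identification} of the limit with the Richardson orbital integral via homogeneity once existence is granted.

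Two points in your argument deserve tightening. First, the germ expansion you invoke is for $|D^\g(A)|^{1/2}\int_{M\backslash G}f(g^{-1}Ag)\,dg$ with $A\in\a_{M,G-\reg}$ and $G_A=M$ a proper Levi, \emph{not} a maximal torus; this is not the classical Shalika/Harish-Chandra statement and is usually itself derived from the very limit-and-descent machinery that this proposition is setting up, so there is a circularity risk, and your ``equivalent'' reformulation (the limit is an invariant, homogeneous, positive distribution on the nilcone, hence a positive multiple of the Richardson orbital integral) is only an \emph{a posteriori} identification: it presupposes that the limit exists, which is the point at issue. Second, independence of the choice of $M$ in (1) is asserted but not argued; it follows from Proposition~\ref{prop:suppellitique}, which shows that any two admissible pairs $(M,X_\ss)$ are conjugate under $G(F)$, together with the $G(F)$-conjugation covariance built into all the measure normalizations entering \eqref{eq:defmeasureonanyorb} --- this should be stated. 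The reductions of (2)--(5) to the nilpotent case via semisimple and parabolic descent and central translation are sound, with the usual caveat that the factorization $|D^\g(S+Y)|=|D^\g(S)|\,|D^{\g_S}(Y)|$ you use holds on the nose only for $Y$ small at a non-archimedean place and only asymptotically at an archimedean one.
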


Remarquons que pour tous $X\in \g(F)$ et $g\in G(F)$, la mesure sur $G_{(\Ad g)X}(F)$ correspond à la mesure sur $G_X(F)$ par conjugaison par $g$.

Soit $P\in \F^G(M_0)$, on prend sur $\n_P(F)$ une mesure de Haar $dN$. On munit ensuite $N_P(F)$ de la mesure $dn$ qui est compatible à $dN$ via l'isomorphisme de variétés $\text{Id}+\cdot :N\in\mathfrak{n}_P(F)\mapsto \text{Id}+N\in N_P(F)$, i.e.
    \[\int_{\mathfrak{n}_P(F)} f(\text{Id}+N)\,dN=\int_{N_P(F)}f(n)\,dn\]
    pour tout $f\in L^1(N_P(F))$.

Soit $P\in\F^G(M_0)$. On définit la fonction module pour tout $p \in P(F)$ par
\[\delta_P (p)=e^{2\rho_P(H_{M_0}(p))} = |\det(\Ad(p);\mathfrak{n}_P (F))|.\]

On munit $K$ d'une mesure de Haar. Pour tout $P\in \F^G(M_0)$ on écrit $\gamma(P)=\gamma^G(P,K)>0$ la constante qui est de telle sorte que pour toute fonction $f\in L^1(G(F))$, on ait
\begin{equation}\label{eq:locmeasureiwasawa}
\begin{split}
\int_{G(F)} f(x)\,dx &=\gamma(P)\int_{M_P(F)}\int_{N_P(F)}\int_{K} f(mnk)\,dk\,dn\,dm\\
&=\gamma(P)\int_{N_P(F)}\int_{M_P(F)}\int_{K} f(nmk)e^{-2\rho_P(H_{M_0}(m))}\,dk\,dm\,dn.    
\end{split}
\end{equation}

Le groupe $Z(G)$ est du type $\GL$. On fixe alors une mesure de Haar sur $Z(G)(F)$ par l'équation \eqref{YDLgeomeq:Haarmeasureongrp}. On obtient en particulier une mesure sur $G_\ad(F)$, et aussi une mesure sur $T_\AD(F)$ pour tout tore maximal $T$ de $G$.

On définit les mesures sur les mêmes groupes relativement à $G^\ast$, en suivant les mêmes démarches et en utilisant le même caractère $\psi$ et la forme bilinéaire canonique de $\g^\ast$. Remarquons que la forme bilinéaire canonique de $\g^\ast$ est « déduit » de celle de $\g$ via le torseur intérieur $\eta$, au sens de \cite[section VIII.6]{Walds95}. On voit que (cf. \cite[sections 2.5 et 4.4]{Walds97}), si $T$ est un tore maximal de $G$ et $T^\ast$ un tore maximal de $G^\ast$, et $y\in G^\ast(\overline{F})$ est tel que $(\Ad y)\circ\eta|_T$ soit un isomorphisme défini sur $F$ de $T$ sur $T^\ast$, alors les mesures sur $T(F)$ et $T^\ast(F)$ se correspondent par cet isomorphisme. On observe que les mesures sur $T_\AD(F)$ et $T_\AD^\ast(F)$ se correspondent.

Enfin on demande la compatibilité suivante vis-à-vis du transfert : 
\begin{itemize}
    \item pour tout $M\in\L^G(M_0)$, le torseur intérieur $\eta$ induit un isomorphisme d'espaces  vectoriels réels $a_M\xrightarrow{\sim} a_{M^\ast}$, on veut que les mesures sur $a_M$ et $a_{M^\ast}$ se correspondent par cet isomorphisme. 
\end{itemize}
Ce point est cohérent avec la compatibilité envers le groupe de Weyl relatif exigée plus haut par le biais du lemme \ref{lem:Weylcomp}, il nous donne aussi l'égalité suivante pour tous $L_1,L_2\in \L^{G}(M)$ :
\[d_M^G(L_1,L_2)=d_{M^\ast}^{G^\ast}(L_1^\ast,L_2^\ast).\]



\subsection{Notations locales}

\subsubsection{Application \texorpdfstring{$H_P$}{HP}}~{}

Pour $M\in \L^G(M_0)$, on définit une application $H_M : M(F)\rightarrow a_{M}$ par $e^{\langle H_M(m),\chi\rangle}=|\chi(m)|_{F}$. Puis si $P\in\F^G(M_0)$, on définit $H_P : G(F)\rightarrow a_{P}$ par $H_P(mnk)\eqdef H_{M_P}(m)$, suivant la décomposition d'Iwasawa $G(F)=M_P(F)N_P(F)K$.

\subsubsection{Intégrales orbitales pondérées sur les algèbres de Lie}\label{subsubsec:defIOP}

Uniquement dans cette sous-sous-section $F$ désigne un corps global ou local de caractéristique 0. Soit $S$ un sous-ensemble fini non-vide de $\V_F$. On note $F_S=\prod_{v\in S}F_v$ (quand $F$ est un corps local, $S$ est un singleton et $F_S=F$). Rappelons que $\S(\g(F_S))=\widehat{\bigotimes}_{v\in S}\S(\g(F_v))$ (numéro \ref{subsec:EspaceS-B}). Une fonctionnelle sur $\g(F_S)$ est dite tempérée si elle est une distribution sur $\S(\g(F_S))$. 

Nous nous conformons à la règle suivante : soit $H$ un groupe algébrique sur $F$, si la mesure sur $H(F_v)$ est fixée pour tout $v\in S$, on prend sur $H(F_S)$ la mesure produit. 

Soit $f\in \S(\g(F_S))$ une fonction de classe Schwartz-Bruhat. On définit les intégrales orbitales pondérées sur l'algèbre de Lie en suivant \cite{YDL23a} : on fixe $K_S\eqdef \prod_{v\in S}K_v$. Soient $M\in \L^G(M_0)$, $Q\in \F^G(M)$ et $X\in \mathfrak{m}(F_S)$. Si $M_X=G_X$ alors on pose 
\begin{equation}\label{YDLgeomeq:IOPdefcaseequising}
J_M^Q(X,f)=|D^\g(X)|_{S}^{1/2}\int_{ G_X(F_S)\backslash G(F_S)}f\left(\Ad (g^{-1})X\right)v_M^Q(g)\,dg
\end{equation}
avec $|-|_{S}\eqdef\prod_{v\in S}|-|_{F_v}$ et $v_M^Q(g)$ le poids provenant de la $(G,M)$-famille $(v_P(g))_{P\in \P^G(M)}$ donnée par $v_P(\lambda,g)\eqdef e^{-\lambda(H_P(g))}$.

Dans le cas général, on définit, en généralisant l'approche d'Arthur, une  $(G,M)$-famille $(r_P(A,X))_{P\in \P^G(M)}$, avec $X\in \m(F_S)$ et $A
\in \a_{M,X_\ss,G-\reg}(F_S)$. Pour tout $Q \in \F^G(M)$, on en déduit une $(M_Q,M)$-famille $(r_R^Q(A,X))_{R\in\P^{M_Q}(M)}$ via le procédé du numéro \ref{subsubsec:(G,M)-familles}, elle ne dépend pas de $Q$ mais uniquement de $M_Q$ : $r_R^Q(A,X)=r_R^{M_Q}(A,X)$. On pose
\begin{equation}\label{YDLgeomeq:IOPdef}
J_M^Q(X,f)=\lim_{\substack{A\to 0\\A\in\a_{M,X_\ss,G-\reg}(F)}}\sum_{L\in\L^{M_Q}(M)}r_M^L(A,X)J_L^Q(X+A,f).    
\end{equation}
Notons que si $M_X=G_X$ alors
\[r_M^L(A,Y)=\begin{cases}
        1, & \text{si }L=M\\ 0,& \text{si } L\not=M,
\end{cases}\]
ce qui justifie l'écriture de $J_M^Q(X,f)$. La fonction $r_M^L(A,Y)$ dépend uniquement de la mesure sur $a_M^L$.

On appelle $J_M^Q(X,f)$ une intégrale orbitale pondérée semi-locale. Quand $S$ est un singleton, on appelle $J_M^Q(X,f)$ une intégrale orbitale pondérée locale. 

Le nombre $J_M^Q(X,f)$ ne dépend que de la classe de $M(F_S)$-conjugaison de $X$. On peut donc aussi écrire $J_M^Q(\o,f)$ à la place de $J_M^Q(X,f)$, où $\o=(\Ad M(F_S))X$.

\begin{proposition}[{{\cite{YDL23a}}} pour (1), (2), (3), (4) et (5) ; {{\cite[sections 17.2, 17.5]{Kottbook}}} pour (6) et (7)]\label{prop:IOP} Soient  $L\in\L^G(M_0),Q\in\F^G(L),X\in \mathfrak{l}(F_S)$ et $f\in\S(\g(F_S))$. Pour tout $x\in G(F_S)$, on notera $(\Ad x)f$ la fonction $(\Ad x)f(Y)=f((\Ad x^{-1})Y)$. On note $\gamma_S(Q)\eqdef\prod_{v\in S}\gamma_v(Q_v)$.
\begin{enumerate}
    \item La distribution $J_L^Q(X,-)$ est tempérée. C'est une mesure signée absolument continue par rapport à la mesure invariante sur l'orbite $\Ind_L^G(X)(F_S)$ dans $\g(F_S)$. 
    \item Pour tous $l\in L(F_S)$, $w\in \text{Norm}_{G(F)}(M_0)$ et $k\in K_S$,
    \[J_{(\Ad w)L}^{(\Ad w)Q}((\Ad wl)X,(\Ad  k)f)=J_L^Q(X,f).\]
    \item Pour tout $x\in G(F_S)$, on a
    \[J_L^Q\left(X,(\Ad x^{-1})f\right)=\sum_{P\in\F^Q(L)}J_L^{L_P}\left(X,f_{P,x}^Q\right)\]
    avec $f_{P,x}^Q$ la fonction appartenant à $\S(\mathfrak{l}_P(F_S))$ définie par
    \[f_{P,x}^Q(Z)=\gamma_S(P)\int_{K_S}\int_{\mathfrak{n}_P(F_S)}f\left((\Ad  k^{-1})(Z+U)\right)(v_P^Q)'(kx)\,dU\,dk,\,\,\,\,\forall Z\in\l_P(F_S).\]
    \item Formule de descente de l'induction : soient $M\in\L^L(M_0)$ avec $Y\in\m(F_S)$. On a 
    \[J_L^Q(\Ind_M^L(Y),f)=\sum_{M_1\in\L^{L_Q}(M)}d_M^{L_Q}(L,M_1)J_M^{Q_{M_1}}(Y,f).\]
    Ici $M_1\mapsto Q_{M_1}$ est la section dans la formule de descente pour les $(L_Q,M)$-familles.
    \item Formule de descente parabolique : on a,
    \[J_L^Q(X,f)=J_L^{L_Q}(X,f_Q)\]
    avec $f_Q$ la fonction appartenant à $\S(\mathfrak{l}_Q(F_S))$ définie par
    \[f_Q(Z)=\gamma_{S}(Q)\int_{K_S}\int_{\mathfrak{n}_Q(F_S)}f\left((\Ad  k^{-1})(Z+U)\right)\,dU\,dk,\,\,\,\,\forall Z\in\mathfrak{l}_Q(F_S).\]
    \item Homogénéité nilpotente : supposons que $X$ est nilpotent, et $L=Q=G$. On note $f^{ t}$ la fonction $f^{t}(Y)=f(tY)$ pour $t\in F_S^\times$. Alors
    \[J_G^G(X,f^{t})=\left(\prod_{v\in S}|t_v|_v^{-\frac{1}{2}\dim (\Ad G_v)X_v}\right)J_G^G(X,f)\]
    \item Indépendance linéaire des intégrales orbitales nilpotentes : notons $(\mathcal{N}_G(F_S))_S$ l'ensemble des $G(F_S)$-orbites nilpotentes dans $\g(F_S)$, alors la famille des distributions $(J_G^G(\nu,-))_{\nu\in (\mathcal{N}_G(F_S))_S}$ est linéairement indépendante sur $\S(\g(F_S))$.
\end{enumerate}
\end{proposition}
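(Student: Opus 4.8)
For the seven assertions I would proceed according to their nature. Parts (1)--(5) are the foundational theory of weighted orbital integrals on a Lie algebra, and here I would simply follow the development of \cite{YDL23a}; let me indicate the architecture. Everything reduces, through the defining formula \eqref{YDLgeomeq:IOPdef}, to the equisingular case \eqref{YDLgeomeq:IOPdefcaseequising} together with the assertion that the limit in \eqref{YDLgeomeq:IOPdef} exists. In the equisingular case, the temperedness and continuity in $f\in\S(\g(F_S))$ of (1) are the classical estimate: $v_M^Q(g)$ grows at most polynomially in a height function on $G(F_S)$, while $|D^\g(X)|_S^{1/2}$ times the plain orbital integral over $G_X(F_S)\backslash G(F_S)$ converges and is continuous in $f$; its support is $(\Ad G(F_S))X$, which Proposition \ref{prop:indprop} identifies with $\Ind_L^G(X)(F_S)$, yielding the absolute-continuity statement. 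For general $X$ I would use that $\a_{M,X_\ss,G-\reg}(F_S)$ is a dense open (Proposition \ref{YDLiopprop:whatisaMoG-reg}) and that the singularities of $A\mapsto J_L^Q(X+A,f)$ as $A\to 0$ inside it are exactly matched by the $(G,M)$-family $(r_P(A,X))_P$; granted that, \eqref{YDLgeomeq:IOPdef} is a finite combination of equisingular integrals, so temperedness, continuity in $f$, and the support statement propagate (using that $\Ind$ commutes with Jordan decomposition, Proposition \ref{prop:indprop}(2)).

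Assertion (2) is immediate: translating $X$ inside $L(F_S)$, conjugating $(L,Q)$ by $w\in\text{Norm}_{G(F)}(M_0)$, and right-translating $f$ by $k\in K_S$ leave \eqref{YDLgeomeq:IOPdefcaseequising}, hence \eqref{YDLgeomeq:IOPdef}, unchanged. For the descent identities (3), (4), (5) the plan is the standard $(G,M)$-family bookkeeping of \S\ref{subsubsec:(G,M)-familles}. For (5): decompose $G(F_S)=M_Q(F_S)N_Q(F_S)K_S$ by Iwasawa, fold the $N_Q$ and $K_S$ integrals into $f_Q\in\S(\l_Q(F_S))$ (the constant $\gamma_S(Q)$ being the one from \eqref{eq:locmeasureiwasawa}), and note $v_L^Q$ factors through $a_{M}^{M_Q}$. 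For (3): substitute $h=gx^{-1}$, write $v_P(\lambda,hx)$ as the product of the $(G,L)$-family $v_P(\lambda,h)$ with an ``$x$-factor'', and apply the product expansion $(cd)_L=\sum_{P\in\F^G(L)}c_L^P d_P'$ of \S\ref{subsubsec:(G,M)-familles}; the weights $(v_P^Q)'$ are the $d_P'$, and repackaging the $x$-factor over $\mathfrak{n}_P$ and $K_S$ produces $f_{P,x}^Q$. For (4): combine transitivity of induction (Proposition \ref{prop:indprop}(7)) with the descent formula for $(G,M)$-families (item (5) of \S\ref{subsubsec:(G,M)-familles}), which yields precisely the coefficients $d_M^{L_Q}(L,M_1)$; choosing a representative $Y'\in Y+\mathfrak{n}_{P\cap L}$ and using $L_{Y'}\subseteq P\cap L$ (Proposition \ref{prop:indprop}(6)) reduces the relevant orbital integral. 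In each case one must check the identity is compatible with the $r_P$-families so that it survives the limit \eqref{YDLgeomeq:IOPdef}.

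For (6), take $X$ nilpotent and $L=Q=G$; then $D^\g(X)=1$ and $v_G^G\equiv 1$, and choosing $M$ with $X\in\Ind_M^G(0)$ (Proposition \ref{prop:suppellitique}), the normalization \eqref{eq:defmeasureonanyorb} of the measure on $G_X(F_S)$ gives
\[J_G^G(X,f)=\int_{G_X(F_S)\backslash G(F_S)}f(\Ad(g^{-1})X)\,dg=\lim_{\substack{A\to0\\ A\in\a_{M,G-\reg}(F_S)}}|D^\g(A)|_S^{1/2}\int_{M(F_S)\backslash G(F_S)}f(\Ad(g^{-1})A)\,dg.\]
Applying this to $f^t$, using $\Ad(g^{-1})(tA)=t\,\Ad(g^{-1})A$, and then replacing $A$ by $A/t$ in the integrand (legitimate, the limit being over a dilation-invariant set), I would extract the scalar $|D^\g(A/t)|_S^{1/2}=\bigl(\prod_v|t_v|_v^{-\deg(D^\g|_{\a_M})}\bigr)^{1/2}|D^\g(A)|_S^{1/2}$ since $D^\g|_{\a_M}$ is a homogeneous polynomial, and $\deg(D^\g|_{\a_M})=\dim\g/\mathfrak m=\dim(\Ad G)X$ by Proposition \ref{prop:indprop}(3); this is exactly the claimed factor $\prod_v|t_v|_v^{-\frac12\dim(\Ad G_v)X_v}$. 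Finally (7) is \cite[sections 17.2, 17.5]{Kottbook}: one filters the nilpotent orbits by the closure order, observes that a test function supported near a given orbit detects that orbit and nothing strictly below it, and runs the germ-expansion separation argument.

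The genuinely hard part — and the one needing real work, carried out in \cite{YDL23a} — is the existence and temperedness of the limit in \eqref{YDLgeomeq:IOPdef}, that is, the asymptotic analysis showing that $(r_P(A,X))_P$ cancels the singularities of $A\mapsto J_L^Q(X+A,f)$ as $A$ degenerates inside $\a_{M,X_\ss,G-\reg}(F_S)$; once this is granted, parts (1)--(5) are either formal or a faithful transcription of Arthur's $(G,M)$-family formalism, and (6)--(7) go as above.
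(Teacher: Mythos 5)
Your proposal matches the paper's approach: the paper cites \cite{YDL23a} for (1)--(5) and \cite[sections 17.2, 17.5]{Kottbook} for (6)--(7) in lieu of a proof, and your sketches are a faithful outline of what those references establish --- in particular the reduction of (1)--(5) to the equisingular case \eqref{YDLgeomeq:IOPdefcaseequising} together with the existence of the limit in \eqref{YDLgeomeq:IOPdef}, and the standard $(G,M)$-family bookkeeping for the descent identities. Your derivation of the homogeneity degree in (6) directly from the paper's measure normalization \eqref{eq:defmeasureonanyorb}, together with the equality $\deg(D^\g|_{\a_M})=\dim\g-\dim\m=\dim(\Ad G)X$ from Proposition \ref{prop:indprop}(3), is the right mechanism and is well-adapted to the normalization actually used here (which, as the paper notes in Appendix B, differs from the Kirillov--Kostant--Souriau measure). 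One small adjustment in (6): for a general $X\in\g(F_S)$ the components $X_v$ may lie in distinct geometric nilpotent orbits, so a single Levi $M$ with $X\in\Ind_M^G(0)$ --- supplied by Proposition \ref{prop:suppellitique}, which is stated over a single field --- need not exist. Since $L=Q=G$ carries no weight factor, one should first factor $J_G^G(X,f)=\prod_{v\in S}J_{G_v}^{G_v}(X_v,f_v)$ over pure tensors (extending to $\S(\g(F_S))$ by density) and then apply your scaling argument place-by-place with a Levi $M_v\subseteq G_v$ chosen per place; this also accounts for the per-place exponents $\dim(\Ad G_v)X_v$ appearing in the statement. With this adjustment your sketch is correct.
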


\begin{proposition}\label{prop:fonctionr}
Pour tous $M\in \L^G(M_0)$, $L\in \L^G(M)$, $A\in \a_M(F_S)$ et $X\in \m(F_S)$ on a 
\[r_M^L(A,X)=r_{M^\ast}^{L^\ast}(A^\ast,X^\ast).\]
\end{proposition}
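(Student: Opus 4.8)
The plan is to reduce the statement to transport of structure under $\eta$ for the $(G,M)$-family that underlies $r_M^L$. Recall from \cite{YDL23a} and \eqref{YDLgeomeq:IOPdef} that $r_M^L(A,X)$ is extracted from a $(G,M)$-family $(r_P(\lambda,A,X))_{P\in\P^G(M)}$: one forms the induced $(M_Q,M)$-family with $M_Q=L$ and then applies the operation $c\mapsto c_M$ of le numéro \ref{subsubsec:(G,M)-familles}, which is computed explicitly by \eqref{eq:GMcalcul}. So the first thing I would do is write out the definition of $(r_P(\lambda,A,X))_P$ from \cite{YDL23a} and isolate its ingredients; these should be only the root system $\Sigma(\g;A_M)$ of $A_M$ on $\g$, the combinatorial sets $\P^G(M),\L^G(M)$, the point $A\in\a_M(F_S)$, the $M(F_S)$-conjugacy class of $X$ (in practice only that of $X_\ss$, equivalently which roots of $\Sigma(\g;A_M)$ extend to $\g_{X_\ss}$), and the normalized absolute value $|\cdot|_{F_S}$, which is intrinsic to $F_S$.

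Next I would check that $\eta$ carries each ingredient to its $(G^\ast,M^\ast)$-counterpart. Since $A_{M^\ast}\subseteq Z(M^\ast)$, the cocycle $u_\sigma$ with $\eta\sigma(\eta)^{-1}=\Ad(u_\sigma)$ acts trivially on $A_{M^\ast}$ and hence on $\a_{M^\ast}$; so $\eta|_{A_M}\colon A_M\xrightarrow{\sim}A_{M^\ast}$ and $\eta|_{\a_M}\colon\a_M\xrightarrow{\sim}\a_{M^\ast}$ are defined over $F$ (as in le lemme \ref{lem:AMisFiso}), they identify $\Sigma(\g;A_M)$ with $\Sigma(\g^\ast;A_{M^\ast})$ compatibly with the bijections $\P^G(M)\to\P^{G^\ast}(M^\ast)$, $\L^G(M)\to\L^{G^\ast}(M^\ast)$ constructed earlier, and $\eta$ sends $A$ to $A^\ast\eqdef\eta(A)\in\a_{M^\ast}(F_S)$ and the $M(F_S)$-class of $X$ to that of $X^\ast$ — the latter being well-defined place by place by les points 8 et 9 de la proposition \ref{pro:bontype}. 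I would then conclude $r_P(\lambda,A,X)=r_{P^\ast}(\lambda^\ast,A^\ast,X^\ast)$ for all $P\in\P^G(M)$ and all $\lambda$, where $\lambda^\ast$ corresponds to $\lambda$ under $\eta$; that is, the two families agree term by term.

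The last step is a formal computation with $(G,M)$-families. Pick $Q\in\F^G(M)$ with $M_Q=L$, so that $M_{Q^\ast}=L^\ast$; the passage to the induced $(M_Q,M)$-family commutes with the identification above, and $r_M^L(A,X)$ is then the value of $c_M$ for this $(L,M)$-family, given by \eqref{eq:GMcalcul} in terms of $\P^L(M)$, the homogeneous polynomials $\theta_R^L$, the integer $p=\dim\a_M^L$, and derivatives in a real parameter. Each of these matches the corresponding object for $(L^\ast,M^\ast)$; the only point that is not purely combinatorial is $\theta_R^L=\theta_{R^\ast}^{L^\ast}$, which holds because $(\Delta_R^L)^\vee$ corresponds to $(\Delta_{R^\ast}^{L^\ast})^\vee$ and $\vol(\a_M^L/\Z((\Delta_R^L)^\vee))$ is preserved — true precisely because the Haar measures on $\a_M$ and $\a_{M^\ast}$ have been normalized to correspond under $\eta$ (sous-section \ref{subsec:localnormalisationsdemesures}), which is exactly the assertion that $r_M^L(A,X)$ depends only on the measure on $\a_M^L$. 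As the right-hand side of \eqref{eq:GMcalcul} is independent of $\lambda$ in general position, choosing $\lambda$ generic yields $r_M^L(A,X)=r_{M^\ast}^{L^\ast}(A^\ast,X^\ast)$.

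I expect the main obstacle to be the first step: extracting from \cite{YDL23a} a description of $(r_P(\lambda,A,X))_P$ precise enough to be certain it is ``transport-of-structure compatible'' — in particular, that wherever roots are evaluated on $A$ or the position of $X_\ss$ relative to $\Sigma(\g;A_M)$ enters, these data are preserved by $\eta$, and that no choice of Haar measure other than the one on $\a_M^L$ intervenes. Once that is settled, the identification of the two $(G,M)$-families and the descent via \eqref{eq:GMcalcul} are routine.
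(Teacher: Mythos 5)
Your high-level strategy — identify $(r_P(\lambda,A,X))_P$ with $(r_{P^\ast}(\lambda^\ast,A^\ast,X^\ast))_{P^\ast}$ term by term via transport of structure under $\eta$, then descend through \eqref{eq:GMcalcul} — is the right shape, and your handling of $\theta_R^L$ and of the measure normalisation on $\a_M^L$ is correct. But there is a genuine gap where you parenthetically claim the dependence on $X$ is ``in practice only that of $X_\ss$, equivalently which roots of $\Sigma(\g;A_M)$ extend to $\g_{X_\ss}$.'' That is false: the $(G,M)$-family $r_P(\lambda,A,X)$ is built from factors of the form $r_\alpha(\lambda,A,\o)=|\alpha(A)|^{\rho(\alpha,\o)\langle\lambda,\alpha^\vee\rangle}$, where the real exponent $\rho(\alpha,\o)$ depends on the full orbit $\o$ of $X$, including its nilpotent part. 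Since $\o$ and $\o^\ast$ live in genuinely different groups (one possibly non-split), the equality $\rho(\alpha,\o)=\rho(\alpha^\ast,\o^\ast)$ is not an automatic ``combinatorial'' consequence of $\eta$ identifying the root data; this is precisely the point you flag as a worry, and it is where the real content lies.

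The paper resolves it concretely: after splitting to a single place and reducing to $L=G$ with $X$ nilpotent, it writes $\o=\Ind_{M_1}^G(0)$, invokes the formula $\rho(\alpha,\o)\alpha^\vee=\sum_{\beta:\beta|_{A_M}=\alpha}\rho(\beta,0)\beta_M^\vee$ from \cite{MW16}, uses proposition \ref{prop:induitecommuteautransfert} (the induite commutes with transfer) to ensure that $\o^\ast=\Ind_{M_1^\ast}^{G^\ast}(0)$ with $M_1\arr M_1^\ast$, and finally appeals to the explicit evaluation of $\rho(\beta,0)$ in \cite{YDL23a} to conclude $\rho(\beta,0)=\rho(\beta^\ast,0)$. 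So your argument is missing exactly these three ingredients; ``transport of structure'' alone does not close the loop because $\rho(\alpha,\o)$ is an analytically defined real number, not an abstract combinatorial datum obviously preserved by an $\overline{F}$-isomorphism.
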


\begin{proof}Il suffit de traiter le cas où $S$ est un singleton selon la formule de scindage (rappelée plus tard dans cet article, cf. l'équation \eqref{YDLgeomeq:formuledescindage}). On suppose donc dans la suite que $F=F_S$ est un corps local. On se réfère à \cite{YDL23a} pour la définition des fonctions $r_\alpha(\lambda,A,\o)$ et $\rho(\alpha,\o)$ suivantes. On peut et on va supposer que $L=G$, puis $X$ est nilpotent. Notons $\o$ la $G$-orbite de $X$. Soit $M_1\in\L^M(M_0)$ tel que $\o=\Ind_{M_1}^G(0)$.

Le torseur intérieur $\eta$ induit une bijection $\P^G(M)\to\P^{G^\ast}(M^\ast)$, puis une bijection $\Delta_P^{G,\vee}\to \Delta_{P^\ast}^{G^\ast,\vee}$. D'après nos normalisations des mesures nous avons $\theta_P(\lambda)=\theta_{P^\ast}(\lambda^\ast)$, avec bien sûr $\lambda^\ast$ l'image de $\lambda$ sous l'isomorphisme $\eta:a_M^G\to a_{M^\ast}^{G^\ast}$. Or $\eta$ induit également une bijection $\Sigma(\g;A_M)\to\Sigma(\g^\ast;A_{M^\ast})$, il nous suffit de prouver que 
$r_\alpha(\lambda,A,\o)=r_{\alpha^\ast}(\lambda^\ast,A^\ast,\o^\ast)$ pour tout $\alpha\in\Sigma(\g;A_M)$, i.e.
\[|\alpha(A)|^{\rho(\alpha,\o)\langle\lambda,\alpha^\vee\rangle}=|\alpha^\ast(A^\ast)|^{\rho(\alpha^\ast,\o^\ast)\langle\lambda^\ast,\alpha^{\ast,\vee}\rangle}.\]
Clairement $|\alpha(A)|=|\alpha^\ast(A^\ast)|$ et $\langle\lambda,\alpha^\vee\rangle=\langle\lambda^\ast,\alpha^{\ast,\vee}\rangle$. Or $\rho(\alpha,\o)\alpha^\vee=\sum_{\beta\in \Sigma(\g;A_{M_1}):\beta|_{A_M}=\alpha}\rho(\beta,0)\beta_{M}^\vee$ (cf. \cite[équation (4) p.193]{MW16}), ici on considère $\beta$ d'abord comme morphisme $A_{M_1}\to \GL_{1,F}$ puis comme élément de $a_{M_1}^\ast$, $\beta_M^\vee$ est la projection orthogonale de $\beta^\vee$ sur $a_M$. Grâce à la proposition \ref{prop:induitecommuteautransfert}, il nous suffit de prouver $\rho(\beta,0)=\rho(\beta^\ast,0)$. Ce nombre réel positif $\rho(\beta,0)$ est explicitement évalué dans \cite{YDL23a}, on en déduit l'égalité voulue. 
\end{proof}

\section{Transfert local étendu}\label{sec:analyselocale}
Tout au long de cette section, $F$ désigne un corps $p$-adique.

Partant du transfert régulier semi-simple (théorème \ref{prop:deftransfertfon}), nous montrerons le transfert de toutes les intégrales orbitales pondérées locales requises dans la formule des traces (théorème \ref{pro:corrloc}).

\subsection{Normalisations des mesures de la section \ref{sec:analyselocale}}
On suit les mêmes consignes données au numéro \ref{subsec:localnormalisationsdemesures} pour les normalisations des mesures. 

\subsection{Transformée de Fourier des intégrales orbitales}

On définit par dualité la transformée de Fourier $\widehat{D}$ d’une distribution $D$ sur $\S(\g(F))$. Soit $X\in \g(F)$, Harish-Chandra a montré que la distribution $\widehat{J}_G^G(X,\cdot)$ est localement intégrable (\cite[théorème 4.4]{HCbook}) : il existe une fonction 
\[\widehat{j}_G^G:\g(F)\times \g_\rss(F)\rightarrow \C,\] 
localement constante et invariante par $G(F)$-conjugaison sur la première variable telle que pour tout $f\in\S(\g(F))$
\[\widehat{J}_G^G(X,f)=\int_{\g(F)}|D^\g(Y)|^{-1/2}f(Y)\widehat{j}_G^G(X,Y)\,dY.\]


\begin{lemma}Pour tous $A,B\in\g_\ad(F)$ et $Z,W\in \mathfrak{z}(F)$, on a
\begin{equation}\label{eq:widehatjcalcul}
\widehat{j}_G^G(A+Z,B+W)=\psi(\langle Z,W\rangle)\widehat{j}_{G}^{G}(A,B). 
\end{equation}    
\end{lemma}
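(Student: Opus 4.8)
The plan is to reduce the identity to the analogous statement for the Fourier transform of a weighted orbital integral on the adjoint quotient, using the orthogonal decomposition $\g = \mathfrak{z} \oplus \g_{\ad}$ for the canonical bilinear form, which was noted just above. First I would write $X = A + Z$ and $Y = B + W$ with $A, B \in \g_{\ad}(F)$ and $Z, W \in \mathfrak{z}(F)$, and observe that the canonical form splits as $\langle X, Y\rangle = \langle A, B\rangle + \langle Z, W\rangle$. The key point is that translating an element of $\g(F)$ by a central element $Z \in \mathfrak{z}(F)$ does not change its $G(F)$-orbit structure in any way that the weighted orbital integral sees: $G_{A+Z} = G_A$, the weight $v_G^G(g)$ is trivial (since $L = Q = G$ here the weight is $1$), and $D^\g(A+Z) = D^\g(A)$ because $\mathrm{ad}(A+Z)$ acts as $\mathrm{ad}(A)$ on $\g/\g_{A}$. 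Hence for $f \in \S(\g(F))$ one has the elementary identity $J_G^G(A+Z, f) = J_G^G(A, f_Z)$ where $f_Z$ is the translate $f_Z(B) = f(B+Z)$, viewing things appropriately; more precisely the orbital integral of $f$ over the orbit of $A+Z$ equals the orbital integral over the orbit of $A$ of the function $Y \mapsto f(Y+Z)$.

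Next I would dualize. The Fourier transform intertwines translation by $Z$ with multiplication by the character $Y \mapsto \psi(\langle Z, Y\rangle)$: if $g(Y) = f(Y+Z)$ then $\widehat{g}(Y) = \psi(-\langle Z,Y\rangle)\widehat{f}(Y)$ (or with the opposite sign, depending on the normalization convention fixed in the paper — I would track this carefully). Feeding this through the defining formula
\[
\widehat{J}_G^G(X,f) = \int_{\g(F)} |D^\g(Y)|^{-1/2} f(Y)\, \widehat{j}_G^G(X,Y)\, dY
\]
together with the orbit-level identity from the first step, and using the $\mathfrak{z}$-invariance of the orbital integral together with the autoduality of the Haar measure on $\g(F)$, one matches the two sides and reads off that $\widehat{j}_G^G(A+Z, B+W)$ must differ from $\widehat{j}_G^G(A, B+W)$ by the factor $\psi(\langle Z, B+W\rangle) = \psi(\langle Z, W\rangle)$ (the cross term $\langle Z, B\rangle$ vanishes by orthogonality of $\mathfrak{z}$ and $\g_{\ad}$). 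A symmetric argument — or invoking the fact that $\widehat{J}_G^G(X,\cdot)$ is, up to the Weil constant, essentially self-dual under the involution $X \leftrightarrow Y$ in the sense used by Harish-Chandra — handles the shift of the second variable $B \mapsto B+W$ on the other slot, contributing nothing new since $\widehat{j}_G^G(A+Z, \cdot)$ is already $\mathfrak{z}$-equivariant and one checks $\widehat{j}_G^G(A+Z,B) = \widehat{j}_G^G(A,B)$ first. Assembling these gives exactly \eqref{eq:widehatjcalcul}.

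The main obstacle I anticipate is purely bookkeeping rather than conceptual: pinning down the precise sign and normalization in the Fourier transform of a translate (the paper's convention is $\widehat{f}^\g(Y) = \int f(X)\psi(\langle X,Y\rangle)\,dX$ with autodual measure), and making sure that the manipulation of the defining integral for $\widehat{J}_G^G(X,f)$ — which a priori only holds as an identity of distributions against all $f \in \S(\g(F))$ — is legitimately rearranged into an identity of the locally integrable kernels $\widehat{j}_G^G$ almost everywhere, then upgraded to everywhere using the local constancy of $\widehat{j}_G^G$ in the first variable and continuity considerations. One should also double-check that the density argument is not circular, i.e. that the orbit-level translation identity $J_G^G(A+Z,f) = J_G^G(A, Y \mapsto f(Y+Z))$ genuinely holds for all Schwartz–Bruhat $f$ and not merely for $f \in C_c^\infty$, which follows from the temperedness of $J_G^G(X,\cdot)$ established in Proposition~\ref{prop:IOP}(1).
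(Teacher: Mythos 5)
The general strategy — exploit the orthogonal splitting $\g = \mathfrak{z}\oplus\g_\ad$, the translation invariance of the orbital integral along the center, and the defining property of the kernel $\widehat{j}_G^G$ — is the right one, and your first step is sound. From $J_G^G(A+Z, h) = J_G^G(A, h(\cdot + Z))$ and $\widehat f(\cdot+Z) = \widehat{f\cdot\psi(\langle\cdot,Z\rangle)}$ one does get, for all $f$,
\[
\int |D^\g(Y)|^{-1/2} f(Y)\,\widehat{j}_G^G(A+Z,Y)\,dY
= \int |D^\g(Y)|^{-1/2} f(Y)\,\psi(\langle Y,Z\rangle)\,\widehat{j}_G^G(A,Y)\,dY,
\]
hence $\widehat{j}_G^G(A+Z,B+W)=\psi(\langle W,Z\rangle)\,\widehat{j}_G^G(A,B+W)$ after using $\langle B,Z\rangle=0$ and the local constancy of $\widehat{j}$ in the first variable.

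The gap is in your treatment of the remaining reduction $\widehat{j}_G^G(A,B+W)=\widehat{j}_G^G(A,B)$. Neither route you sketch actually closes it. A ``symmetric argument'' would mean translating the \emph{second} slot of $\widehat{j}$, but a translation in the dual variable of a distribution corresponds on the primal side to multiplication of the test function by a character — not to a translation of the conjugation parameter — and the resulting identity $\widehat{J}_G^G(A, f\psi(\langle\cdot,W\rangle)) = J_G^G(A, \widehat f(\cdot+W))$ does not by itself give translation invariance in the second slot, because $\widehat f(\cdot+W)$ restricted to the orbit of $A$ is genuinely different from $\widehat f$. The self-duality of $\widehat{j}$ under $X\leftrightarrow Y$ requires both arguments to be regular semisimple, while $A$ (and a fortiori $A+Z$) is arbitrary here, so it cannot be invoked.

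The missing ingredient, which the paper's proof builds in from the outset by taking $f(A+Z)=g(A)h(Z)$ with $g\in\S(\g_\ad(F))$ and $h\in\S(\mathfrak z(F))$, is the observation that for $A\in\g_\ad(F)$ the orbit $(\Ad G(F))A$ lies entirely in $\g_\ad(F)$, so $J_G^G(A,-)$ only sees the restriction of its argument to $\g_\ad(F)$; combined with the fact that $\widehat f|_{\g_\ad(F)}$ is the $\g_\ad$-Fourier transform of the $\mathfrak z$-average $B\mapsto\int_{\mathfrak z(F)}f(Z+B)\,dZ$, this forces $\widehat{j}_G^G(A,B+W)$ to be independent of $W$ and in fact equal to $\widehat{j}_{G_\ad}^{G_\ad}(A,B)$. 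The paper's product-test-function calculation captures both variables in a single two-line computation and delivers this stronger identification for free; your version handles one variable but leaves the other hanging. Concretely, you should replace the ``symmetric argument / Harish-Chandra self-duality'' step by the observation just described (or, equivalently, specialize to $f=g\otimes h$ as the paper does).
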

\begin{proof}
Soient $g\in \S(\g_\ad(F))$ et $h\in \S(\mathfrak{z}(F))$. Soit $f\in \S(\g(F))$ définie par $f(A+Z)=g(A)h(Z)$ pour tous $A\in\g_\ad(F)$ et $Z\in\mathfrak{z}(F)$. On a, sachant que $
\g=\mathfrak{z}\oplus\g_\ad$ est orthogonale, $\widehat{f}=\widehat{g}^{\g_\ad} \widehat{h}^{\mathfrak{z}}$. D'où, pour tous $A\in\g_\ad(F)$ et $Z\in\mathfrak{z}(F)$,
\[\widehat{J}_G^G(A+Z,f)=\int_{(B,W)\in \g_\ad(F)\oplus \mathfrak{z}(F)}|D^\g(B)|^{-1/2}g(B)h(W)\widehat{j}_G^G(A+Z,B+W)\,dB\,dW\]
et
\begin{align*}
\widehat{J}_G^G(A+Z,f)=J_G^G(A+Z,\widehat{f})=J_{G_\ad}^{G_\ad}(A,\widehat{g}^{\g_\ad})\widehat{h}^{\mathfrak{z}}(Z)=  \widehat{J}_{G_\ad}^{G_\ad}(A,g)\widehat{h}^{\mathfrak{z}}(Z)\\
=\int_{\g_\ad(F)}|D^\g(B)|^{-1/2}g(B)\widehat{j}_{G_\ad}^{G_\ad}(A,B)\,dB \int_{\mathfrak{z}(F)}h(W)\psi(\langle W,Z\rangle)\,dW.
\end{align*}
On en conclut que pour tous $A,B\in\g_\ad(F)$ et $Z,W\in \mathfrak{z}(F)$, il y a l'égalité $\widehat{j}_G^G(A+Z,B+W)=\psi(\langle Z,W\rangle)\widehat{j}_{G_\ad}^{G_\ad}(A,B)$. En particulier $\widehat{j}_G^G(A,B)=\widehat{j}_{G_\ad}^{G_\ad}(A,B)$.
\end{proof}

\subsection{Développement en germes de Shalika}

Pour toute sous-algèbre de Lie $\mathfrak{h}$ de $\g$, on note $\mathfrak{h}_{G-\rss}\eqdef \mathfrak{h}\cap\g_{\rss}$. 

On note $\mathcal{N}_G(F)$ l'ensemble des éléments nilpotents de $\g(F)$ et l'on fixe un ensemble $(\mathcal{N}_G(F))$ des représentants des orbites pour l'action adjointe de $G$ dans $\mathcal{N}_G(F)$. Dans l'énoncé ci-dessous, on raisonne par récurrence sur la dimension de $G$. On suppose donc que pour tout $L\in\L^G(M)$, $L \not =G$, on a défini des objets analogues à ceux que l'on va définir et qu'ils vérifient des propriétés analogues à celles que l'on va démontrer.

\begin{proposition}[Germes de Shalika]\label{prop:Shalika}~{}
\begin{enumerate}
    \item (\cite[III.7]{Walds95}) Soient $M\in \L^G,$ $Q\in\F^G(M)$ et $M_Q$ l'unique l’unique facteur de Levi de $Q$ contenant $M$. Soit $X$ un élément semi-simple de $\m(F)$. Pour tout $\nu\in (\mathcal{N}_{(M_Q)_X}(F))$ il existe une fonction $g_M^{M_Q}(\cdot,X+\nu)$ définie sur $\{Y\in \m_X(F): X+ Y\in \m_{G-\rss}(F)\}$, dont le germe en $0$ est uniquement determiné, de sorte que pour toute $f\in\S(\mathfrak{g}(F))$, il existe un voisinage $V_f$ de $0$ dans $\m_X(F)$ tel que pour tout $Y\in V_f$ vérifiant $X +Y\in\m_{G-\rss}(F)$, on ait l'égalité
    \[J_M^Q(X+Y,f)=\sum_{L\in \L^{M_Q}(M)}\sum_{\nu\in (\mathcal{N}_{L_X}(F))}g_M^L(Y,X+\nu)J_L^Q(X+\nu,f).\]
    \item (\cite[III.8]{Walds95}) Reprenons les notations du point 1. Supposons $M_X=M_{Q,X}$, alors pour tout $\nu \in (\mathcal{N}_{M_{Q,X}}(F))$, le germe $g_M^{M_Q}(\cdot,X+\nu)$ au voisinage de 0 est nul si $M\not =M_Q$. 
    \item (\cite[III.8]{Walds95}, \cite[théorème 27.5]{Kottbook}) Indépendance linéaire des germes : reprenons les notations du point 1, 
    la famille $(g_M^M(\cdot,X+\nu))_{\nu\in (\mathcal{N}_{M_{X}}(F))}$ est linéairement indépendente sur $\{Y\in \m_X(F): X+ Y\in \m_{G-\rss}(F)\}$.

    \noindent (\cite[III.8]{Walds95}, \cite[théorème 27.5]{Kottbook}) Soit $\nu \in (\mathcal{N}_{M_{X}}(F))$, 
    alors $g_M^{M}(\cdot,X+\nu)=g_{M_X}^{M_X}(\cdot,\nu)$
    \item (\cite[III.9]{Walds95}) Homogénéité : reprenons les notations du point 1 et supposons que $X=0$. On écrit $\rk G$ pour le rang de $G$. On note $d^G(u)=\frac{1}{2}(\dim G_u-\rk G)$. Pour $d\in \mathbb Z$ et $N\in\mathbb N$, notons $\mathfrak{X}^{d,N}(\g_\rss(F))$ l'ensemble des fonctions $\phi:\g_\rss(F)\rightarrow \C$ telles qu'il existe pour tout $i\in \{0,\dots,N\}$ une fonction $\phi_i:\g_\rss(F)\rightarrow \C$ de sorte que pour tous $Y\in \g_\rss(F)$ et $t\in F^\times$, on ait l'égalité 
    \[\phi(tY)=|t|^d\sum_{i=0}^N\phi_i(Y)(\log |t|)^i.\]
    Alors pour tous $M\in\L^G(M_0)$ et $\nu\in (\mathcal{N}_G(F))$, il existe un unique élément de $\mathfrak{X}^{d^{G}(\nu),\dim (a_M^G)}(\g_\rss(F))$ ayant même germe au voisinage de $0$ que $g_M^{G}(\cdot,X+\nu)$.
    
\end{enumerate}
\end{proposition}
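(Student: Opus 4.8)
The plan is to reduce the full statement to the nilpotent case $X=0$ and then argue by induction on $\dim G$, the essential inputs being Harish-Chandra's local germ expansion for ordinary orbital integrals (\cite{HCbook}) together with the $(G,M)$-family formalism recalled above. First I would invoke the parabolic descent formula, Proposition \ref{prop:IOP}(5), in the form $J_M^Q(X+Y,f)=J_M^{M_Q}(X+Y,f_Q)$, to reduce to $Q=G$; it then remains to expand $Y\mapsto J_M^G(X+Y,f)$ for $Y\in\m_X(F)$ near $0$ with $X+Y\in\m_{G-\rss}(F)$. By the induction hypothesis the analogous objects and properties are available for all proper Levi subgroups of $G$.

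For the existence assertion (part 1) I would carry out a descent around the semisimple point $X$: fixing $f$ and regarding $Y\mapsto J_M^G(X+Y,f)$ as a function on a punctured neighbourhood of $0$ in $\m_X(F)$, one rewrites the weight $v_M^G(g)$ near the orbit of $X+Y$ through the $(G,M)$-family machinery as a sum over $L\in\L^G(M)$ of products of an ``inner'' $(G_X,M_X)$-type weight on the centralizer with ``outer'' pieces attached to $L$. The inner integral over $G_{X+Y}(F)\backslash G_X(F)$ tends, as $Y\to 0$, to a (weighted) orbital integral on $G_X$, whose Harish-Chandra/Shalika expansion on $G_X$ introduces the nilpotent orbits $\nu$ of the Lie algebra of $L_X$; collecting the outer pieces reconstitutes the weighted orbital integrals $J_L^G(X+\nu,f)$. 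One obtains in this way an identity $J_M^G(X+Y,f)=\sum_{L\in\L^G(M)}\sum_{\nu\in(\mathcal{N}_{L_X}(F))}g_M^L(Y,X+\nu)\,J_L^G(X+\nu,f)$ valid on a (possibly $f$-dependent) neighbourhood of $0$, in which the coefficient functions $g_M^L(\cdot,X+\nu)$ are built from the $(G,M)$-family data alone and hence do not depend on $f$; this is the point I would model on \cite[III.7]{Walds95}.

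Uniqueness of the germs at $0$, hence the well-definedness claimed in parts 1 and 3, comes from the linear independence of nilpotent orbital integrals, Proposition \ref{prop:IOP}(7), applied on each $L$: if two such families of germs represent the same function near $0$, subtracting and isolating leading terms forces the coefficients to coincide. Then part 3's relation $g_M^M(\cdot,X+\nu)=g_{M_X}^{M_X}(\cdot,\nu)$ is immediate, since for $L=M$ the relevant $(G,M)$-family is that of a point, so $J_M^M(X+Y,f)$ is an ordinary orbital integral on $M$ whose expansion around $X$ is Harish-Chandra's and descends to $M_X$. For part 2, the hypothesis $M_X=M_{Q,X}$ makes the descent of $J_M^{M_Q}(X+Y,\cdot)$ around $X$ an ordinary orbital integral on the centralizer $M_{Q,X}$ (the descended $(M_{Q,X},M_X)$-family being that of a single point), whose Shalika expansion carries only its top term; consequently $g_M^{M_Q}(\cdot,X+\nu)$, which records the genuinely weighted contributions with $M\neq M_Q$, vanishes near $0$.

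It remains to treat homogeneity (part 4): take $X=0$ and $\nu$ nilpotent, and follow the scaling $Y\mapsto tY$, $t\in F^\times$. The orbital-integral factor contributes the homogeneity degree $d^G(\nu)=\frac{1}{2}(\dim G_\nu-\rk G)$, in line with the nilpotent homogeneity of Proposition \ref{prop:IOP}(6), while the $(G,M)$-family $(r_P(A,\cdot))_{P\in\P^G(M)}$ scales with corrections polynomial in $\log|t|$ of degree at most $\dim(a_M^G)$, the number of factors in the family. Combining the two, $g_M^G(\cdot,\nu)$ has the same germ at $0$ as a unique element of $\mathfrak{X}^{d^G(\nu),\dim(a_M^G)}(\g_{\rss}(F))$, uniqueness following from the linear independence in part 3. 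The main obstacle is precisely the existence step: making the descent of the weight around the semisimple point $X$ rigorous and showing that the resulting asymptotic expansion terminates after finitely many nilpotent terms with $f$-independent germs. This is where Harish-Chandra's local integrability theorems and the $(G,M)$-family bookkeeping must be interlocked with care, and it is the same bookkeeping that produces the $\log|t|$-degree bound $\dim(a_M^G)$ in part 4.
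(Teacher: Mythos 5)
The paper does not prove this proposition at all: each of the four points is stated with an explicit citation to \cite[III.7]{Walds95}, \cite[III.8]{Walds95}, \cite[III.9]{Walds95}, and \cite[théorème 27.5]{Kottbook}, and the author treats them as established black boxes to be invoked later (e.g.\ in Proposition \ref{pro:localvanishing} and Theorem \ref{pro:corrloc}). There is therefore no ``paper's own proof'' with which to compare your sketch; what you have produced is a précis of the argument one would find in Waldspurger's memoir and Kottwitz's notes, and as such it is broadly on track, but it should be judged against those sources rather than against anything in this paper.

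Taken on those terms, your outline correctly isolates the three main mechanisms of Waldspurger's proof --- semisimple descent of the weight via the $(G,M)$-family product formula, the induction on $\dim G$, and the linear independence of nilpotent (weighted) orbital integrals for uniqueness --- and your reading of the homogeneity statement (the factor $|t|^{d^G(\nu)}$ from Proposition \ref{prop:IOP}(6) interlocked with the $\log|t|$-polynomial of degree $\le\dim a_M^G$ coming from the scaling of the $(G,M)$-family) is the right picture. There are, however, two places where your phrasing would not survive being turned into a proof. First, the assertion that ``the coefficient functions $g_M^L(\cdot,X+\nu)$ are built from the $(G,M)$-family data alone and hence do not depend on $f$'' begs the question: the $f$-independence of the germs is exactly what the linear independence argument establishes a posteriori, not something read off the descent. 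Second, your justification of $g_M^M(\cdot,X+\nu)=g_{M_X}^{M_X}(\cdot,\nu)$ in part 3 refers to ``$J_M^M(X+Y,f)$'' being an ordinary orbital integral on $M$, but the germ $g_M^M$ in the expansion of $J_M^Q$ is not extracted by specialising to $Q=M$; it is the coefficient that appears when the \emph{inner} (unweighted) orbital integral on $G_X$, after descent, is itself expanded by Harish-Chandra's theorem on $M_X\subseteq G_X$. The conclusion is the same, but the route is the descent of the weight, not a change of $Q$. If you intend to flesh out this sketch into an actual proof rather than cite Waldspurger and Kottwitz as the paper does, both of these points, together with the convergence and termination of the expansion (finiteness of the nilpotent contributions and existence of the limit as $Y\to0$), are the places where the real work lies.
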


\subsection{Intégrales orbitales pondérées}
\begin{definition}[Triplet géométrique]\label{def:trplet} Pour tous $M\in \L^G$, $Q\in \F^G(M)$ et $X\in \m_{G-\rss}(F)$, on appelle $(M,Q,X)$ un triplet géométrique de $G$. De même, pour tous $M'\in \L^{G^\ast}$, $Q'\in \F^{G^\ast}(M')$ et $X'\in \m_{G^\ast-\rss}'(F)$, on appelle $(M',Q',X')$ un triplet géométrique de $G^\ast$. 
\end{definition}

On affuble $G$ d'un chapeau lorsqu'on parle de son dual de Langlands (\cite[p.29]{Bor79}), i.e. le groupe des points complexes du groupe réductif sur $\overline{F}$ dont la donnée radicielle est duale de celle de $G$, muni de son action usuelle de $\Gamma\eqdef\Gal(\overline{F}/F)$. Pour $A$ un groupe admettant une action de $\Gamma$ par automorphismes, on note $A^\Gamma$ le sous-groupe des points fixes. On note $H_\text{ab}^1(F,H)$ le premier groupe de cohomologie abélianisée d'un groupe réductif connexe $H$ (\cite[définition 2.2]{Boro98}). La cohomologie abélianisée du groupe $G/A$, avec $G$ un groupe du type GL et $A\subseteq Z(G)$, est l'hypercohomologie galoisienne du complexe $[Z(G_\der)(\overline{F})\to (Z(G)/A)(\overline{F})]$ avec $Z(G_\der)(\overline{F})$ en degré $-1$ et $(Z(G)/A)(\overline{F})$ en degré $0$.

On dispose d'applications fonctorielles d'abélianisation 
\[\text{ab}_H^i:H^i(F,H)\rightarrow H_\text{ab}^i(F,H)\]
pour $i=0,1$ et d'une application canonique 
\[\alpha_H:H_\text{ab}^1(F,H)\rightarrow\pi_0(Z(\widehat{H})^\Gamma)^D,\]
avec $\pi_0$ le groupe des composantes connexes et $D$ le dual de Pontryagin. Les applications $\text{ab}_H^1$ et $\alpha_H$ sont bijectives au motif que $F$ est local non-archimédien (\cite[corollaire 5.4.1]{Boro98}). 

\begin{theorem}[Transfert géométrique d'une fonction {{\cite[théorème 7.2]{Ch07}}}]\label{prop:deftransfertfon}  Pour toute fonction $f\in\S(\g(F))$ il existe une fonction $f^\ast\in\S(\g^\ast(F))$ telle que pour tout triplet géométrique $(M',Q',X')$ de $G^\ast$ on ait
\begin{align*}
    J_{M'}^{Q'}(X',f^\ast)=\begin{cases*}
    J_M^Q(X,f) & \text{si $(M',Q',X')= (M^\ast,Q^\ast,X^\ast)$ ;} \\
    0 & \text{si $Q'$ ne se transfère pas à $G$.}
\end{cases*}
    \end{align*}
Dans la première ligne, la relation $X\arr X^\ast$ est définie via la restriction $\eta|_{\m}:\m\to \m^\ast$.
\end{theorem}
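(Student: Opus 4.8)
Modulo the translation into the Lie-algebra language, the statement is Chaudouard's transfer theorem \cite[théorème 7.2]{Ch07} for Labesse twisted spaces, specialized to the (particularly simple) twisted space attached to an inner form of $\mathrm{GL}_n$. Accordingly the plan is in two movements: first, exhibit our situation as an instance of that framework by recording the relevant $\eta$-equivariant dictionary; second, transcribe the construction of $f^\ast$. Note also that here $X'$ is assumed regular semisimple, so only two of the a priori three cases occur — the missing case ($Q'$ transferable, $X'$ not a transfer) is the one handled later in théorème \ref{pro:corrloc}; this is why it is legitimate, in the construction below, to impose on $f^\ast$ only the two listed conditions and to read the rest off from compatibility.

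\textbf{The dictionary.} I would first check that all the combinatorial and measure-theoretic data entering the weighted orbital integrals are transported by the inner twist. The bijections $\mathcal{L}^G(M)\simeq\mathcal{L}^{G^\ast}(M^\ast)$, $\mathcal{P}^G(M)\simeq\mathcal{P}^{G^\ast}(M^\ast)$, $\mathcal{F}^G(M)\simeq\mathcal{F}^{G^\ast}(M^\ast)$ induced by $\eta$, the identification of relative Weyl groups (lemme \ref{lem:Weylcomp}), the normalization of measures on the spaces $a_M$ and on maximal tori (numéro \ref{subsec:localnormalisationsdemesures}), and the matching of the smoothing functions $r_M^L(A,X)=r_{M^\ast}^{L^\ast}(A^\ast,X^\ast)$ (proposition \ref{prop:fonctionr}) together make the entire $(G,M)$-family machinery of numéro \ref{subsubsec:(G,M)-familles} transport verbatim. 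One also uses that "transferable parabolic/Levi" is the cohomological notion of proposition \ref{prop:transfertdef}, controlled through the abelianization maps $\mathrm{ab}_H^1$ and $\alpha_H$ recalled above, and that the generalized Lusztig–Spaltenstein induction commutes with transfer (proposition \ref{prop:induitecommuteautransfert}). With these identifications, Chaudouard's hypotheses are met in our case.

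\textbf{Construction of $f^\ast$.} This I would carry out exactly as in \cite{Ch07}, by an induction on $\dim G$ and a characterization of the image of the weighted orbital integral map. Step (i): a collection $(M',Q',X')\mapsto c_{M'}^{Q'}(X')$, with $X'$ ranging over the regular semisimple locus, equals $\big(J_{M'}^{Q'}(X',f^\ast)\big)$ for some $f^\ast\in\mathcal{S}(\mathfrak{g}^\ast(F))$ if and only if each component is a Schwartz-type function on $\mathfrak{m}'_{G^\ast\text{-}\mathrm{rss}}(F)$ with the appropriate decay, is equivariant under $G^\ast(F)$-conjugacy and the relative Weyl groups (proposition \ref{prop:IOP}(2)), is compatible with parabolic descent (proposition \ref{prop:IOP}(5)), and — the essential condition — is compatible at every singular semisimple point $X'_0$ with the Shalika germ expansion of proposition \ref{prop:Shalika}, whose ingredients $g_{M'}^{L'}(\cdot,X'_0+\nu)$ and $J_{L'}^{Q'}(X'_0+\nu,\cdot)$ live on the strictly smaller type-GL group $(M'_{Q'})_{X'_0}$. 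The surjectivity onto this characterized image is the $p$-adic Lie-algebra analogue of Arthur's theorem on weighted orbital integrals, obtained by Harish-Chandra descent and a partition of $\mathfrak{g}^\ast(F)$ according to conjugacy-class type. Step (ii): define the target collection to be $J_M^Q(X,f)$ on transferable triples (via $\eta|_{\mathfrak{m}}$) and $0$ on triples with non-transferable $Q'$, extended by $0$ on the triples with transferable $M'$ but non-transferable $X'$. By the inductive hypothesis applied to $(M'_{Q'})_{X'_0}$, all the germ ingredients of the transferable part are themselves transfers; combined with the matching of nilpotent orbital integrals (proposition \ref{prop:IOP}(6),(7)), one checks the target collection satisfies all the characterizing conditions, and any preimage $f^\ast$ — automatically Schwartz–Bruhat — works.

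\textbf{Main obstacle.} The delicate point is the germ-consistency of step (ii) across the frontier between the transferable and non-transferable loci: one must verify that setting the collection to $0$ on the non-transferable triples does not contradict the germ expansion propagated from a nearby \emph{transferable} singular semisimple point, and conversely. The resolution rests on the explicit description of which semisimple classes transfer — the elementary-divisor combinatorics of l'annexe \ref{sec:AppendixA} together with Kottwitz's abelianized cohomology — from which one extracts: if $G^\ast_{X'_0}$ does not transfer, then no maximal torus through a regular semisimple point near $X'_0$ transfers either, so $c$ vanishes on the whole relevant germ-neighbourhood and the germ relations there are vacuous; whereas if $X'_0$ is transferable, the germ ingredients only involve the smaller group $(M'_{Q'})_{X'_0}$, to which the induction on $\dim G$ applies. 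Everything else is bookkeeping with the $\eta$-equivariant structures listed above.
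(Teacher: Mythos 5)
Your proposal takes a genuinely different route from the paper and also leaves a real gap. The paper's proof is a two-step \emph{reduction}, not a re-derivation: (a) it identifies the group analogue verbatim with \cite[théorème 7.2]{Ch07}, the only remark needed being that Chaudouard's condition « $X'$ est potentiellement une norme de $M_{Q'}$ » (\textit{Ibid.} définition 5.2) is the same as the image condition $\eta\in\mathrm{Im}\bigl(H^1_{\mathrm{ab}}(F,M'_{\mathrm{AD}})\to H^1_{\mathrm{ab}}(F,G^\ast_{\mathrm{ad}})\bigr)$, which by bijectivity of $\mathrm{ab}^1$ over a $p$-adic field coincides with the transferability criterion of proposition \ref{prop:transfertdef}; (b) it then pulls the Lie-algebra statement out of the group statement by a rescaling-and-exponentiation argument: replace $f$ by $f(t\cdot)$ with $|t|$ large so that $\mathrm{supp}(f)\subseteq\mathcal{U}$, a $G(F)$-stable closed neighbourhood of $0$ on which $\exp$ is a homeomorphism onto a $G(F)$-stable closed neighbourhood $\mathcal{V}$ of $1$; take $\phi\in C_c^\infty(G(F))$ with $f=\phi\circ\exp$ on $\mathcal{U}$; apply Chaudouard to $\phi$; truncate the resulting $\phi^\ast$ to the analogous $\mathcal{V}^\ast$; and push back through $\exp$ to get $f^\ast$.

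Your sketch never addresses step (b), which is the actual piece of work in the paper's proof: Chaudouard's théorème 7.2 is a statement about twisted \emph{groups}, while the statement to be proved here lives on the Lie algebra, and your ``dictionary'' paragraph sets up only $\eta$-equivariant bookkeeping, not the group–to–Lie-algebra passage. Instead, your ``Construction of $f^\ast$'' paragraph re-opens Chaudouard's own argument — the characterization of the image of the weighted-orbital-integral map via equivariance, parabolic descent, Shalika germs, and induction on $\dim G$ — transported to $\mathfrak{g}^\ast$. That route is not wrong in principle, and is arguably more self-contained, but it is considerably heavier than the paper's citation and silently assumes a Lie-algebra analogue of the image characterization that Chaudouard establishes on the group; an $\exp$-reduction would then have to reappear inside your step (i) anyway. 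Your observation that the third case ($M'$ transferable, $X'$ not a transfer) is postponed to théorème \ref{pro:corrloc} is correct and consistent with the paper's architecture: this theorem is deliberately a minimal defining condition on $f^\ast$, and its further consequences on non-transferring $X'$ are later extracted from the germ expansion (proposition \ref{pro:localvanishing}).
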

Pour les fonctions $f$ et $f^\ast$ vérifiant le théorème, on note $f\underset{\geom}{\arr}f^\ast$, ou plus simplement $f\arr f^\ast$.
\begin{remark}
Si $\S(\g(F))\ni f\arr f^\ast\in\S(\g^\ast(F))$ alors $\S(\m(F))\ni f_P\arr f_{P^\ast}^\ast\in\S(\m^\ast(F))$ pour tous $M\in \L^G$ et $P\in \P^G(M)$.   
\end{remark}
\begin{proof}
La version groupe du théorème est une conséquence de \cite[théorème 7.2]{Ch07} : il suffit de noter que $X'$ est potentiellement une norme de $M_{Q'}$ (\textit{Ibid.} définition 5.2) revient à dire que la classe de $\eta$ appartient à l'image de
\[H_{\text{ab}}^1(F,M_{\AD}')\rightarrow H_{\text{ab}}^1(F,G_{\ad}^\ast).\]
Or l'application d'abélianisation $H^1\rightarrow H_{\text{ab}}^1$ est une bijection ici, on termine la preuve grâce à la proposition \ref{prop:transfertdef}.

On explique rapidement comment déduire l'énoncé sur l'algèbre de Lie de celui sur le groupe. Fixons un $\mathcal{U}$ voisinage fermé de $0$ dans $\g(F)$ stable par $G(F)$ et un $\mathcal{V}$ voisinage fermé de $1$ dans $G(F)$ stable par $G(F)$, tels que la fonction exponentielle $\exp$ induit un homéomorphisme entre $\mathcal{U}$ et $\mathcal{V}$ (\cite[lemme 2.6]{HCbook}). Prenons $\mathcal{U}^\ast$ un voisinage fermé de $0$ dans $\g^\ast(F)$ stable par $G^\ast(F)$ contenant
\[\{X^\ast\in \g^\ast(F)\text{ se transfère à un élément de }\mathcal{U}\},\] 
et $\mathcal{V}^\ast$ un voisinage fermé de $1$ dans $G^\ast(F)$  stable par $G^\ast(F)$ contenant
\[\mathcal{V}^\ast=\{x^\ast\in G^\ast(F)\text{ se transfère à un élément de }\mathcal{V}\}.\]
Quitte à remplacer $f(\cdot)$ par $f(t\cdot)$ avec $t\in F^\ast$ de valeur absolue assez grande, on peut supposer que le support de $f$ est inclus dans $\mathcal{U}$. Il existe alors $\phi\in C_c^\infty(G(F))$ tel que $f(\cdot)=\phi(\exp(\cdot))$ sur $\mathcal{U}$, le support de $\phi$ est inclus dans $\mathcal{V}$. Il existe alors $\phi^\ast\in C_c^\infty(G^\ast(F))$ vérifiant les égalités en question pour le groupe. Quitte à multiplier $\phi^\ast$ par la fonction caractéristique de $\mathcal{V}^\ast$ on peut supposer que le support de $\phi^\ast$ est dans $\mathcal{V}^\ast$. Il existe alors une fonction $f^\ast\in \S(\g(F))$ voulue.
\end{proof}

Rappelons qu'un élément $X$ de $\g(F)$ est dit ($F$-)elliptique s'il est semi-simple et $A_G = A_{G_X}$. Une classe de $G(F)$-conjugaison dans $\g(F)$ est dite ($F$-)elliptique si un (donc tout) élément dedans est ($F$-)elliptique.

\begin{lemma}\label{lem:elltrans}
Si $X'\in\g_\ss^\ast(F)$ est elliptique, alors $X'$ se transfère à $G$.     
\end{lemma}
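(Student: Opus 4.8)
The statement to prove is Lemma \ref{lem:elltrans}: if $X' \in \g_\ss^\ast(F)$ is $F$-elliptic, then $X'$ transfers to $G$. Recall from Proposition \ref{prop:transfertdef} that a Levi $M'$ of $G^\ast$ transfers precisely when the class of $\eta$ in $H^1(F, G_\ad^\ast)$ lies in the image of $H^1(F, M_\AD') \to H^1(F, G_\ad^\ast)$; combined with Proposition \ref{prop:GXtorseurint} and the discussion on transfer of semisimple elements, showing $X'$ transfers amounts to showing the semisimple centralizer $G_{X'}^\ast$ (which is a Levi of $G^\ast$ when $X'$ is regular, but in general an inner form of type GL) ``captures'' the cocycle class of $\eta$ in the appropriate sense. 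The elliptic hypothesis $A_{G^\ast} = A_{G^\ast_{X'}}$ is exactly what forces the relevant cohomological obstruction to vanish.

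First I would reduce to the centralizer. Since $X' \in \g^\ast_\ss(F)$, the centralizer $H' \eqdef G^\ast_{X'}$ is a connected group of type GL defined over $F$ (Proposition \ref{pro:bontype}(6),(7)), and $X'$ lies in its center $\mathfrak{z}(\mathfrak{h}')(F)$. The claim that $X'$ transfers to $G$ is equivalent, via the transfer of semisimple elements compatible with $\eta$, to the claim that there exists an inner twist $\eta_1$ in the class of $\eta$ carrying a Levi-type subgroup onto $H'$ over $\overline F$; and by the argument of Proposition \ref{prop:transfertdef}(1)$\Leftrightarrow$(3), this holds iff the class $[\eta] \in H^1(F, G_\ad^\ast)$ lies in the image of $H^1(F, H'_\AD) \to H^1(F, G_\ad^\ast)$. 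So the task is to produce such a preimage.

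Next I would exploit ellipticity through abelianized cohomology. Since $G^\ast$ is a product of restrictions of scalars of general linear groups, $H^1(F, G^\ast) = 1$ by Hilbert 90 and Shapiro's lemma (this is Proposition \ref{pro:bontype}(8)-(9) in spirit), and more usefully the abelianization maps $\mathrm{ab}^1$ are bijections at the non-archimedean place, with $H_\ab^1(F, H) \cong \pi_0(Z(\widehat H)^\Gamma)^D$ as recalled before Theorem \ref{prop:deftransfertfon}. For $H'$ of type GL, $Z(\widehat{H'})$ is an induced torus, and $H'_\AD = H'/Z(G^\ast)$ — here one should be careful: the relevant quotient in the transfer criterion is by $Z(G^\ast)$, so $H'_\AD$ is $H'$ modulo the central torus $Z(G^\ast)$, whose connected component $A_{G^\ast}$ equals $A_{H'}$ by the elliptic hypothesis. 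This equality $A_{G^\ast} = A_{H'}$ is what makes $Z(\widehat{H'_\AD})^\Gamma$ surject onto $Z(\widehat{G_\ad^\ast})^\Gamma$ after taking $\pi_0$: the map on cocharacter lattices $X_\ast(A_{H'}) \to X_\ast(A_{G^\ast})$ is an isomorphism, so the induced map $H_\ab^1(F, H'_\AD) \to H_\ab^1(F, G_\ad^\ast)$ is surjective. Chasing $[\eta]$ through the commutative square relating $H^1$ and $H_\ab^1$ (using that $\mathrm{ab}^1$ is bijective over a $p$-adic field), one obtains a preimage of $[\eta]$ in $H^1(F, H'_\AD)$, which is exactly the transfer criterion.

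The main obstacle I anticipate is the bookkeeping around \emph{which} central subgroup one quotients by: the definition of transfer of Levi subgroups uses $M_\AD' = M'/Z(G^\ast)$, and for a general elliptic $X'$ the centralizer $H'$ need not be a Levi, so one must either first handle the regular case (where $H'$ is a maximal torus, elliptic, hence a Levi, and the statement is classical — e.g. every elliptic maximal torus of $\mathrm{GL}_n$ transfers to any inner form, by local class field theory / the computation of $H^1(F, \mathrm{PGL}_n)$ via Brauer groups) and then pass to the general case by observing $X'$ lies in the center of $H'$ and invoking that $H'$ contains elliptic maximal tori of $G^\ast$ which already transfer. In fact the cleanest route is: pick a maximal torus $T' \subseteq H'$ that is elliptic in $G^\ast$ (possible since $H'$ is elliptic, i.e. $A_{H'} = A_{G^\ast}$, so $T'$ elliptic in $H'$ is elliptic in $G^\ast$); the regular semisimple case of Theorem \ref{prop:deftransfertfon} together with Proposition \ref{prop:transfertdef} shows $T'$ transfers; since $T' \subseteq H' \subseteq G^\ast$ and transfer of the smaller subgroup implies the cocycle lies in the image from $H^1(F, T'_\AD)$, which factors through $H^1(F, H'_\AD)$, we conclude $H'$ transfers, hence $X'$ transfers. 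The remaining point — existence of an elliptic maximal torus in $H'$ defined over $F$ — is standard for reductive groups over $p$-adic fields.
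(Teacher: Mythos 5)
Your argument takes a genuinely different route from the paper's. The paper proves Lemma \ref{lem:elltrans} by a forward reference to Proposition \ref{prop:elltransproof} in Appendix A, which works via elementary-divisor theory: an elliptic semisimple class in $\g^\ast(F)$ is supported on a single $p\in\Irr_F$ with $\lambda'(p)=\frac{md}{\deg p}\times(1)$, so the transfer condition becomes $\deg_{F_p}(D_p)\mid \frac{md}{\deg p}$, and local class field theory ($\deg_{F_p}(D_p)=d/\text{pgcd}(d,\deg p)$) reduces this to the hypothesis $\deg p\mid md$. Your cohomological argument --- translating transfer of $X'$ into $[\eta]\in\text{Im}\bigl(H^1(F,G^\ast_{X',\AD})\to H^1(F,G_\ad^\ast)\bigr)$ and showing this image is everything when $G^\ast_{X'}$ is elliptic --- is instead in the spirit of the alternative the paper merely cites (Kottwitz, lemme 10.1). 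It is valid in outline, and the elliptic-maximal-torus reduction you suggest is indeed the cleanest way to run it.

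Two steps need repair. First, the inference ``the map on cocharacter lattices $X_\ast(A_{H'})\to X_\ast(A_{G^\ast})$ is an isomorphism, so $H_\text{ab}^1(F,H'_\AD)\to H_\text{ab}^1(F,G_\ad^\ast)$ is surjective'' (with $H'=G^\ast_{X'}$) skips a real computation: over a $p$-adic field $H_\text{ab}^1(F,H'_\AD)$ is dual to $\pi_0(Z(\widehat{H'_\AD})^\Gamma)$, which depends on the full fundamental-group data of $H'_\AD$ rather than on its split central torus alone. One must verify that, in the elliptic case, $\pi_0(Z(\widehat{H'_\AD})^\Gamma)\cong\mu_n\cong\pi_0(Z(\widehat{G_\ad^\ast})^\Gamma)$ and that the comparison map is an isomorphism --- exactly the kind of computation the paper carries out in the proof of Proposition \ref{prop:local-globalprincipalelement}. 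Second, Theorem \ref{prop:deftransfertfon} cannot be cited to show that an elliptic maximal torus $T'$ transfers: it is an existence statement about transfers of test functions and carries no information about which tori or elements transfer. The correct input --- which you do also mention --- is the local class field theory fact that $H^1(F,T'_\AD)\to H^1(F,G_\ad^\ast)$ is an isomorphism for $T'$ elliptic maximal, equivalently that any degree-$n$ field extension of $F$ embeds as a maximal subfield of any central simple $F$-algebra of degree $n$. With these two repairs both branches of your argument go through.
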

\begin{proof}
Cela vient de la proposition \ref{prop:elltransproof}. Sinon pour un argument plus général on peut également se référer à \cite[10.1. lemme]{Kott86}.
\end{proof}

\begin{lemma}\label{lem:nesetransfertpasalorsdansunLevinesetransfertpas}
Pour tout élément $X'\in \g^\ast(F)$ qui ne se transfère pas à $G$, il existe $X_1'\in (\Ad G^\ast(F))X'$ et $L'\in \L^G(M_0)$ ne se transférant pas à $G$ tels que $X_{1,\ss}'$ est un élément elliptique de $ \mathfrak{l}$.    
\end{lemma}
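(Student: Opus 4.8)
The idea is to reduce to the semisimple case by Jordan decomposition, and then within the semisimple world to pass from an arbitrary semisimple element to an elliptic one inside a Levi subgroup, keeping track at each step of whether we remain in the locus of elements that fail to transfer.

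First I would write $X' = X'_\ss + X'_\nilp$. If $X'$ does not transfer to $G$, then by the proposition on Jordan decomposition and transfer (the one stating $X$ is a transfer of $X^\ast$ iff $X_\ss$ is a transfer of $X_\ss^\ast$ via $\eta$ and $X_\nilp$ is a transfer via $\eta_X$), at least one of two things fails: either $X'_\ss$ itself does not transfer to $G$, or $X'_\ss$ transfers, say $X'_\ss = Y^\ast$ for some semisimple $Y\in\g(F)$ with $\eta_{Y}\colon \g_Y\to\g^\ast_{X'_\ss}$ a torseur intérieur, but $X'_\nilp\in\g^\ast_{X'_\ss}(F)$ does not lie in the $\eta_Y$-image of any nilpotent $G_Y(F)$-orbit in $\g_Y(F)$. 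In the first case, the whole problem is already about a semisimple element, so it suffices to handle the semisimple case. In the second case, the failure is ``localized'' at the Levi $\Cent(A_{\g^\ast_{X'_\ss}}, G^\ast)$ — but here one must be careful: a nilpotent element of a group of type GL over a $p$-adic field is always a transfer from any inner form over the \emph{same centralizer}, since the centralizer $G_Y$ is itself of type GL with quasi-split inner form $G^\ast_{X'_\ss}$, and nilpotent orbits over a $p$-adic field of a group of type GL are classified by partitions, hence geometric; so in fact the nilpotent part \emph{always} transfers once the semisimple part does. Therefore the second case cannot occur, and the failure of transfer of $X'$ is \emph{equivalent} to the failure of transfer of $X'_\ss$. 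This already reduces us to $X'$ semisimple.

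So assume $X'\in\g^\ast_\ss(F)$ does not transfer to $G$. Let $M' = \Cent(A_{G^\ast_{X'}}, G^\ast)$, a semistandard Levi of $G^\ast$ (after conjugating $X'$ by $G^\ast(F)$ we may assume $M'\supseteq M_{0^\ast}$); then $X'$ is an $F$-elliptic element of $\mathfrak{m}'$ by construction. I claim $M'$ does not transfer to $G$: if it did, then $M' = L^\ast$ for some Levi $L$ of $G$, the restriction $\eta|_{\mathfrak l}\colon\mathfrak l\to\mathfrak l^\ast = \mathfrak m'$ is a torseur intérieur, and $G^\ast_{X'} = (L^\ast)_{X'}$ since $X'$ is $L^\ast$-elliptic and $L^\ast$-regular-free issues aside, $G^\ast_{X'} \subseteq M'$ by construction of $M'$ as $\Cent(A_{G^\ast_{X'}},G^\ast)$; now $X'$ is an elliptic semisimple element of $\mathfrak l^\ast(F)$, so by Lemma~\ref{lem:elltrans} applied to the torseur intérieur $\eta|_{\mathfrak l}$ between $L$ and its quasi-split inner form $L^\ast$, the element $X'$ is a transfer to $L$ — hence to $G$, contradiction. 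Thus with $X_1' = X'$ and $L' = M'$ we are done, \emph{provided} we have pinned down the one subtle point.

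\textbf{Main obstacle.} The delicate step is the claim that $L' = M'$ does not transfer. The argument above needs the \emph{relative} ellipticity version of Lemma~\ref{lem:elltrans}: Lemma~\ref{lem:elltrans} is stated for $G^\ast$ itself, but I need it for the Levi $L^\ast$ and its quasi-split form — this is fine because $L$ is again a group of type GL (Proposition~\ref{pro:bontype}(2)), $L^\ast$ is its quasi-split inner form (it is a product of restrictions of scalars of general linear groups), and $\eta|_{\mathfrak l}$ is the corresponding torseur intérieur; all the ingredients are inner-form-intrinsic, so the cited Proposition~\ref{prop:elltransproof} (or Kottwitz's lemma) applies verbatim. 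The other point requiring care is making sure that after conjugating $X'$ into $\mathfrak m'(F)$ with $M'$ semistandard, $X'$ really is $F$-elliptic in $\mathfrak m'$ — this is immediate from $M' = \Cent(A_{G^\ast_{X'}}, G^\ast)$, which forces $A_{M'} = A_{G^\ast_{X'}} = A_{(M')_{X'}}$. I expect the nilpotent-always-transfers observation (reducing to the semisimple case) and the contradiction argument for $M'$ to be the two places where one must invoke the type-GL hypothesis essentially; neither is hard, but both must be spelled out.
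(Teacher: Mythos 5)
Your reduction to the semisimple case is where the argument breaks, and it breaks at exactly the point you half-sense as the ``main obstacle'' but then dismiss. The claim that ``the nilpotent part always transfers once the semisimple part does'' is false. Nilpotent orbits of $\GL_{m,D}(F)$ are parametrized by partitions of $m$ (not of $md$), and under $\eta_Y$ (equivalently, after extending scalars to $\overline{F}$) the partition $\lambda$ is sent to $d\times\lambda$, where $d$ is the degree of $D$ — see the analysis in subsection~\ref{subsec:appendiceAlienavecletransfert}. So only those nilpotent orbits of $\g^\ast_{X_\ss'}(F)$ whose partitions on each factor are of the divisible shape $d_i\times\lambda_i$ transfer, and this is a genuine constraint. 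Concretely, take $G^\ast=\GL_4$, $G=\GL_{2,D}$ with $D$ a quaternion division algebra, and $X'$ a regular nilpotent element of $\g^\ast(F)$: then $X_\ss'=0$ transfers trivially, yet $X'$ does not, since the nilpotent orbits of $\gl_{2,D}$ correspond geometrically to the partitions $(2,2)$ and $(1,1,1,1)$ of $4$, never $(4)$. So your ``second case'' does occur. Worse, in that case your recipe $M'=\Cent(A_{G^\ast_{X_\ss'}},G^\ast)$ yields $M'=G^\ast$ itself, which transfers — the opposite of what the lemma demands.

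The paper circumvents this precisely by \emph{not} reducing to the semisimple case. It invokes Proposition~\ref{prop:suppellitique}, which produces a Levi $L'$ carrying $X_{1,\ss}'$ as an $F$-elliptic element together with the crucial additional property that the entire orbit of $X'$ equals $\Ind_{L'}^{G^\ast}(X_{1,\ss}')$. In the example above this gives $L'=M_{0^\ast}\simeq\GL_1^4$, which does not transfer to $\GL_{2,D}$ (the Levis of $\GL_{2,D}$ correspond only to $(4)$ and $(2,2)$). The contradiction then runs as in your semisimple discussion — if $L'=L^\ast$ transferred, the elliptic semisimple class of $X_{1,\ss}'$ in $\mathfrak{l}^\ast(F)$ would transfer to $L$ by the Levi form of Lemma~\ref{lem:elltrans}, which you correctly justify — but one must then use that induction commutes with transfer (Proposition~\ref{prop:induitecommuteautransfert}) to deduce that $\Ind_{L^\ast}^{G^\ast}(X_{1,\ss}')=(\Ad G^\ast)X'$ transfers to $G$. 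Your handling of the semisimple case is essentially the paper's argument; what is missing is the replacement of $\Cent(A_{G^\ast_{X_\ss'}},G^\ast)$ by the elliptic-support Levi of Proposition~\ref{prop:suppellitique}, and the appeal to Proposition~\ref{prop:induitecommuteautransfert} to push the transfer from the Levi to $G$.
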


\begin{proof}
On prend $X_1'\in (\Ad G^\ast(F))X'$ et $L'\in \L^G(M_0)$ tels que $X_{1,\ss}'$ soit elliptique dans $\mathfrak{l}'(F)$ et $X_1'\in \Ind_{L'}^{G^\ast}(X_{1,\ss}')$ selon la proposition \ref{prop:suppellitique}. Si $L'=L^\ast$ se transfère alors $(\Ad L^\ast(F))X_1'$ se transfère à $L$ via $\eta|_{\mathfrak{l}}$ puisque c'est une classe de conjugaison elliptique, donc $(\Ad G^\ast(F))X_1'=\Ind_{L^\ast}^{G^\ast}(X_{1,\ss}')$ se transfère à $G$, une contradiction.
\end{proof}

\begin{proposition}\label{pro:localvanishing} Soit $\S(\g(F))\ni f\arr f^\ast\in \S(\g^\ast(F))$. 
\begin{enumerate}
    \item Soit $X'\in \g_{\rss}^\ast(F)$ un élément qui ne se transfère pas à $G$. Alors pour tous $M'\in \L^{G^\ast}$ et $P'\in\P^{G^\ast}(M')$ tels que $X'\in \m'(F)$, on a 
    \[J_{M'}^{P'}(X',f^\ast)=J_{G^\ast}^{G^\ast}(X',f^\ast)=0.\]
    \item Soit $M'\in \L^{G^\ast}$. Soit $Q'\in \F^{G^\ast}(M')$ un sous-groupe parabolique qui ne se transfère pas à $G$. Alors
    \[J_{M'}^{Q'}(X',f^\ast)=0,\,\,\,\,\forall X'\in\m'(F).\]
\end{enumerate}
\end{proposition}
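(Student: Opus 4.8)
The strategy is to reduce both vanishing statements to the regular semisimple transfer result of Theorem \ref{prop:deftransfertfon} by peeling off the weight and the parabolic data. For part (1), I would start from the Shalika germ expansion (Proposition \ref{prop:Shalika}, point 1) applied to the semisimple element $X'_{\mathrm{ss}}$: writing $X' = X'_{\mathrm{ss}} + Y'$ with $Y'$ small and $G^\ast$-regular, the quantity $J_{M'}^{P'}(X',f^\ast)$ is a finite linear combination of the form $\sum_{L'}\sum_{\nu'} g_{M'}^{L'}(Y', X'_{\mathrm{ss}}+\nu')\, J_{L'}^{P'}(X'_{\mathrm{ss}}+\nu', f^\ast)$. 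But each $X'_{\mathrm{ss}}+\nu'$ appearing here has the same semisimple part $X'_{\mathrm{ss}}$, which by hypothesis does not transfer to $G$ (since $X'$ doesn't transfer, neither does its semisimple part, by the proposition on Jordan decomposition and transfer). Thus it suffices to show $J_{L'}^{P'}(X'_{\mathrm{ss}}+\nu', f^\ast)=0$ for every non-transferring semisimple element and every nilpotent $\nu'$ in the centralizer — and for these one can again use the Shalika machinery (via the parabolic descent formula of Proposition \ref{prop:IOP}, point 5, to reduce $J_{L'}^{P'}$ to $J_{L'}^{M'_{P'}}$) together with an induction on $\dim G$. The base case is precisely the regular semisimple statement: if $X'$ is $G^\ast$-regular and does not transfer, then by Theorem \ref{prop:deftransfertfon} all the values $J_{M'}^{Q'}(X',f^\ast)$ vanish, because any triplet $(M',Q',X')$ with $X'$ regular and not a transfer cannot equal $(M^\ast, Q^\ast, X^\ast)$, and moreover one handles the case where $Q'$ does transfer but $X'$ inside it does not by Lemma \ref{lem:nesetransfertpasalorsdansunLevinesetransfertpas}: conjugating $X'$ so that its semisimple (here, regular) part becomes elliptic in some Levi $\mathfrak{l}'$, that Levi cannot transfer (else $X'$ would, by Lemma \ref{lem:elltrans} applied inside $L'$), and then one is in the ``$Q'$ ne se transfère pas'' case of the theorem.

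For part (2), the parabolic $Q'$ does not transfer. I would invoke the parabolic descent formula $J_{M'}^{Q'}(X',f^\ast) = J_{M'}^{M'_{Q'}}(X',f^\ast_{Q'})$ from Proposition \ref{prop:IOP}, point 5. The key observation is that $f^\ast_{Q'} \in \S(\mathfrak{m}'_{Q'}(F))$ and the weighted orbital integral $J_{M'}^{M'_{Q'}}$ depends on $f^\ast$ only through $f^\ast_{Q'}$; so it is enough to show $f^\ast_{Q'} = 0$, or at least that all its relevant weighted orbital integrals vanish. Here one uses that $f^\ast$ is a transfer: since $Q'$ does not transfer, the full parabolic $M'_{Q'}N_{Q'}$ — and hence the truncation/integration over $N_{Q'}$ defining $f^\ast_{Q'}$ — sees no geometric contribution from the $G$-side. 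Concretely, I would argue that for every $M'' \subseteq M'_{Q'}$ and every regular semisimple $Z' \in \mathfrak{m}''(F)$, the integral $J_{M''}^{Q''}(Z', f^\ast_{Q'})$ can be re-expressed (via Proposition \ref{prop:IOP}, point 3 or point 5 in reverse) as a sum of terms $J_{M''}^{\bullet}(Z', f^\ast)$ attached to parabolics all contained in $Q'$; since $Q'$ itself does not transfer, none of these parabolics transfers either, so by Theorem \ref{prop:deftransfertfon} each term vanishes. Then, running the Shalika germ expansion on $\mathfrak{m}'_{Q'}$ as in part (1), all weighted orbital integrals of $f^\ast_{Q'}$ vanish, giving $J_{M'}^{Q'}(X',f^\ast)=0$ for arbitrary $X'$.

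The main obstacle I anticipate is the bookkeeping in the inductive step of part (1): one must carefully track that in the Shalika expansion of $J_{M'}^{P'}(X'_{\mathrm{ss}}+\nu',f^\ast)$ the ``smaller'' groups that appear are centralizers $G^\ast_{X'_{\mathrm{ss}}}$ and its Levi subgroups, that these are again groups of type GL (Proposition \ref{pro:bontype}, point 6) equipped with an induced inner torsor (Proposition \ref{prop:GXtorseurint}), and that ``$X'_{\mathrm{ss}}$ does not transfer'' translates into a genuine non-transfer statement on the level of these centralizers so that the inductive hypothesis applies. One also needs the linear independence of nilpotent orbital integrals (Proposition \ref{prop:IOP}, point 7) and of Shalika germs (Proposition \ref{prop:Shalika}, point 3) to conclude that the vanishing of $J_{M'}^{P'}$ for all small regular $Y'$ forces the vanishing of each germ-coefficient, and ultimately of $J_{G^\ast}^{G^\ast}(X',f^\ast)$ itself. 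The other delicate point is ensuring, in part (2), that the reduction to $f^\ast_{Q'}$ genuinely isolates the non-transferring parabolic; this is where the compatibility of the chosen sections $L \mapsto Q_L$ with transfer (fixed in Section \ref{sec:preliminaireslocaux}) is used.
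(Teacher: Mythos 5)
Your outline has the right ingredients (Theorem~\ref{prop:deftransfertfon}, Lemma~\ref{lem:nesetransfertpasalorsdansunLevinesetransfertpas}, Shalika germs, their linear independence) but the argument is misstructured and contains a concrete error.

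For part~(1), note that the hypothesis is $X' \in \g^\ast_{\rss}(F)$: the element is \emph{already} regular semisimple, so $X' = X'_{\ss}$, the centralizer is a torus, and a Shalika germ expansion around $X'_{\ss}$ is vacuous (the only nilpotent in a torus is $0$). Your plan of ``peeling off'' a nilpotent part is therefore solving a problem that isn't there. The paper's argument is much shorter: the first equality $J_{M'}^{P'}(X',f^\ast) = J_{G^\ast}^{G^\ast}(X',f^\ast)$ is immediate from the definition (the $(M_{P'},M')$-weight is trivial when $M' = M_{P'}$); then one uses Lemma~\ref{lem:nesetransfertpasalorsdansunLevinesetransfertpas} to conjugate $X'$ to an $X'_1 \in \mathfrak{l}'(F)$ with $L'$ a non-transferring Levi, picks $P' \in \P^{G^\ast}(L')$ (which then cannot transfer), and applies Theorem~\ref{prop:deftransfertfon} directly. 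You do cite the right lemma, but it is buried under an unnecessary germ expansion. Moreover, the parenthetical ``since $X'$ doesn't transfer, neither does its semisimple part'' is false in general: the proposition on Jordan decomposition goes in the opposite direction (``$X$ transfers $\Rightarrow$ $X_{\ss}$ transfers''), so its contrapositive is ``$X_{\ss}$ does not transfer $\Rightarrow$ $X$ does not transfer''; the nilpotent part may be the obstruction even if $X_{\ss}$ transfers. In part~(1) this is harmless since $X'=X'_{\ss}$, but as a general statement it is wrong and would poison any argument that relied on it.

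For part~(2), your route via the constant term $f^\ast_{Q'}$ is genuinely different from the paper's and is a reasonable idea: descend by point~5 of Proposition~\ref{prop:IOP}, observe that $J_{M''}^{Q''}(Z',f^\ast_{Q'}) = J_{M''}^{Q''N_{Q'}}(Z',f^\ast)$ by transitivity of constant terms, and that any parabolic of $G^\ast$ contained in $Q'$ also fails to transfer (correct, via the $H^1$ picture of Proposition~\ref{prop:transfertdef}), so the regular-semisimple weighted orbital integrals of $f^\ast_{Q'}$ all vanish by Theorem~\ref{prop:deftransfertfon}. But the final step --- passing from vanishing on rss elements to vanishing for all $X'$ --- is the real content, and your appeal to ``running the Shalika germ expansion on $\mathfrak{m}'_{Q'}$ as in part~(1)'' does not supply it: part~(1)'s germ expansion was vacuous, and here the expansion of $J_{M'}^{M_{Q'}}(X'_{\ss}+Y',f^\ast_{Q'})$ involves cross terms $J_{L'}^{M_{Q'}}(\cdots)$ for all $L' \in \L^{M_{Q'}}(M')$ with $L'\neq M'$, which must be controlled before linear independence of germs can be invoked. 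Controlling them is exactly what the paper's induction on $\dim M_{Q'} - \dim M'$ achieves (the inductive hypothesis kills all $L'\neq M'$ terms, leaving only the $L'=M'$ germ). You cannot instead invoke the corollary ``rss vanishing $\Rightarrow$ total vanishing'' stated after Theorem~\ref{pro:corrloc}, since that corollary is proved \emph{from} the present proposition and would make the argument circular. So the gap is precisely the missing induction; once you insert it, your approach and the paper's converge.
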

\begin{proof}~{}
\begin{enumerate}
    \item La première égalité résulte de la définition (équation \eqref{YDLgeomeq:IOPdefcaseequising}). Il nous suffit d'établir l'annulation $J_{G^\ast}^{G^\ast}(X',f^\ast)=0.$ Comme $X'$ ne se transfère pas à $G$, il existe $X_1'\in (\Ad G^\ast(F))X'$ et $L'\in \L^G(M_0)$ ne se transférant pas à $G$ tels que $X_1'\in \mathfrak{l}'(F)$. Soit $P'\in\P^{G^\ast}(L')$. Alors $P'$ ne se transfère pas. Par le biais du point 2 du théorème \ref{prop:deftransfertfon} on obtient $J_{L'}^{P'}(X_1',f^\ast)=J_{G^\ast}^{G^\ast}(X_1',f^\ast)=0.$
    
    \item Procédons par récurrence sur $\dim M_{Q'}-\dim M'$ où $M_{Q'}$ est l’unique facteur de Lévi de $Q'$ contenant $M'$. Nous allons d'abord montrer l'annulation voulue pour tout $G$ du type GL, $G^\ast$ sa forme intérieure quasi-déployée et $Q',M'$ tels que $\dim M_{Q'}-\dim M'=0$ : soit $X'\in \m'(F)$, écrivons $X'=X_{\ss}'+X_\nilp'$ sa décomposition de Jordan. Par le développement en germes de Shalika nous avons
    \[J_{M'}^{Q'}(X_{\ss}'+Y',f')=\sum_{\nu'\in (\mathcal{N}_{M_{X_{\ss}'}'}(F))}g_{M'}^{M'}(Y',X_{\ss}'+\nu')J_{M'}^{Q'}(X_{\ss}'+\nu',f')\]
    pour tout $Y'$ dans $V_f$ un voisinage de $0$ dans $\m'_{X_{\ss}'}(F)$ vérifiant $X_{\ss}' +Y'\in(\m_{Q'})_{G^\ast-\rss}(F)$. On sait, selon le point précédent, que l'intégrale orbitale de gauche s'annule, puis par l'indépendance linéaire des germes (point 3 de la proposition \ref{prop:Shalika}) on en déduit que $J_{M'}^{Q'}(X',f^\ast)=J_{M'}^{Q'}(X_{\ss}'+X_\nilp',f')=0$.

    Soit ensuite $n$ un entier strictement positif et supposons que l'annulation ait lieu pour tous $Q',M'$ tels que $Q'$  ne se transfère pas et $\dim M_{Q'}-\dim M'<n$. Maintenant si $Q',M'$ sont tels que $Q'$ ne se transfère pas et $\dim M_{Q'}-\dim M'=n$ et $X'\in\m'(F)$, on écrit
    \[J_{M'}^{Q'}(X_{\ss}'+Y',f^\ast)=\sum_{L'\in \L^{M_{Q'}}(M')}\sum_{\nu'\in (\mathcal{N}_{L_{X_{\ss}'}'}(F))}g_{M'}^{L'}(Y',X_{\ss}'+\nu')J_{L'}^{Q'}(X_{\ss}'+\nu',f^\ast)\]
    pour $Y'$ dans $V_f$ un voisinage de $0$ dans $\m'_{X_{\ss}'}(F)$ vérifiant $X_{\ss}' +Y'\in(\m_{Q'})_{G^\ast-\rss}(F)$.
    Alors $J_{L'}^{Q'}(X_{\ss}'+\nu,f^\ast)=0$ pour tout $L'\not= M'$ par l'hypothèse de récurrence, d'où
    \[J_{M'}^{Q'}(X_{\ss}'+Y',f^\ast)=\sum_{\nu'\in (\mathcal{N}_{M_{X_{\ss}'}'}(F))}g_{M'}^{M'}(Y',X_{\ss}'+\nu')J_{M'}^{Q'}(X_{\ss}'+\nu',f^\ast).\]
    On peut donc conclure la preuve par l'argument précédent.\qedhere
\end{enumerate}
\end{proof}

\begin{lemma}\label{lem:polyproche}
On dit qu'un polynôme $P\in F[T]$ est de type de décomposition $(n_1,\dots,n_r)$ s'il se décompose en un produit de polynômes irréductibles de degrés $n_1,\dots,n_r$. 

Soit $P\in F[T]$ sans racine multiple. Tout polynôme suffisamment proche de $P$ est de même type de décomposition que $P$. Ici « suffisamment proche » porte le sens suivant : l'espace des polynômes de degré $\leq \text{deg}(P)$ étant un espace vectoriel de dimension finie sur $F$, il est muni naturellement d'une norme issue de celle de $F$.
\end{lemma}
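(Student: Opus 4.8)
\emph{Proposition de preuve.} L'approche consiste à contrôler les racines de $P$ dans $\overline{F}$ et l'action du groupe de Galois $\Gamma=\Gal(\overline{F}/F)$ sur celles-ci. Le fait clé est que les automorphismes de $\Gamma$ sont des isométries de $\overline{F}$, de sorte qu'ils préservent l'appariement des racines d'un polynôme proche avec celles de $P$.

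Première étape : réduction au cas unitaire. Comme $P$ est sans racine multiple il est non nul ; notons $d=\deg(P)$ et $c\neq 0$ son coefficient dominant. L'espace des polynômes de degré $\leq d$ étant de dimension finie sur $F$, toutes les normes y sont équivalentes, et l'on peut supposer que $\|Q-P\|$ mesure le maximum des valeurs absolues des coefficients de $Q-P$. Alors tout $Q$ assez proche de $P$ a son coefficient de $T^d$ proche de $c$, donc non nul : $Q$ est de degré exactement $d$, et $Q$ divisé par son coefficient dominant reste proche de $P/c$ (la division par un scalaire inversible est continue au voisinage de $c$) ; on se ramène ainsi à $P$ et $Q$ unitaires. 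De plus le discriminant est une fonction polynomiale, donc continue, des coefficients et $\operatorname{disc}(P)\neq 0$ ; donc $\operatorname{disc}(Q)\neq 0$ pour $Q$ assez proche de $P$, c'est-à-dire $Q$ est lui aussi sans racine multiple. Enfin le type de décomposition ne change pas si l'on multiplie par une unité, donc travailler avec les polynômes unitaires associés ne coûte rien.

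Deuxième étape : appariement des racines. Soient $\alpha_1,\dots,\alpha_d\in\overline{F}$ les racines, deux à deux distinctes, de $P$, soit $|\cdot|$ l'unique valeur absolue de $\overline{F}$ prolongeant $|\cdot|_F$ (unicité car $F$ est complet), et fixons $\epsilon>0$ tel que $2\epsilon<\min_{i\neq j}|\alpha_i-\alpha_j|$. Pour $\|Q-P\|$ assez petit on a, pour chaque $i$, $|Q(\alpha_i)|=|(Q-P)(\alpha_i)|\leq \|Q-P\|\cdot\max_k\max(1,|\alpha_k|)^{d}<\epsilon^{d}$ ; or $Q(\alpha_i)=\prod_j(\alpha_i-\beta_j)$ où $\beta_1,\dots,\beta_d$ sont les racines (distinctes, par la première étape) de $Q$, donc l'un des $\beta_j$ appartient à la boule $B(\alpha_i,\epsilon)=\{x\mid |x-\alpha_i|<\epsilon\}$. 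Ces boules étant deux à deux disjointes et les $\beta_j$ étant au nombre de $d$, on obtient après réindexation $|\alpha_i-\beta_i|<\epsilon$ pour tout $i$.

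Troisième étape : comparaison des orbites galoisiennes et conclusion. Pour $\sigma\in\Gamma$, comme $P,Q\in F[T]$, $\sigma$ permute les $\alpha_i$ et les $\beta_i$, d'où des permutations $\tau_\sigma,\pi_\sigma$ de $\{1,\dots,d\}$ avec $\sigma(\alpha_i)=\alpha_{\tau_\sigma(i)}$ et $\sigma(\beta_i)=\beta_{\pi_\sigma(i)}$ (bien définies car les racines sont distinctes). Puisque $\sigma$ est une isométrie de $\overline{F}$, on a $|\alpha_{\tau_\sigma(i)}-\beta_{\pi_\sigma(i)}|=|\sigma(\alpha_i)-\sigma(\beta_i)|=|\alpha_i-\beta_i|<\epsilon$, ce qui joint à $|\alpha_{\pi_\sigma(i)}-\beta_{\pi_\sigma(i)}|<\epsilon$ donne $|\alpha_{\tau_\sigma(i)}-\alpha_{\pi_\sigma(i)}|<2\epsilon$, donc $\tau_\sigma=\pi_\sigma$. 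Ainsi les familles $(\alpha_i)_i$ et $(\beta_i)_i$ définissent exactement la même partition de $\{1,\dots,d\}$ en $\Gamma$-orbites ; comme $P$ et $Q$ sont sans racine multiple, les cardinaux de ces orbites sont précisément les degrés des facteurs irréductibles de $P$, respectivement de $Q$ (chaque orbite est l'ensemble des racines d'un unique facteur irréductible). Par conséquent $P$ et $Q$ ont le même type de décomposition.

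Le point qui demande le plus de soin est l'énoncé de continuité des racines de la deuxième étape, c'est-à-dire l'estimée explicite reliant $\|Q-P\|$ à $\max_k\max(1,|\alpha_k|)$ et à $d$ ; une fois celle-ci acquise, l'argument est purement formel. On pourrait alternativement invoquer le lemme de Krasner, qui fournit $F(\alpha_i)=F(\beta_i)$ pour chaque $i$ dès que $\|Q-P\|$ est assez petit, puis reconstituer le type de décomposition à partir du multiensemble des $[F(\alpha_i):F]$ ; mais l'argument galoisien ci-dessus est autonome et donne directement l'égalité des types de décomposition.
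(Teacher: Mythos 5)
Your proof is correct and takes essentially the same approach as the paper, which deduces the lemma from continuity of roots in $\overline{F}$ together with Krasner's lemma. Your Galois-isometry argument in the third step is an inline proof of the Krasner-type statement needed, and you acknowledge the equivalence yourself at the end of the proposal.
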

\begin{proof}
Cela résulte de la continuité des racines dans une clôture algébrique et du lemme de Krasner. 
\end{proof}

\begin{corollary}\label{coro:transferopen}
L'ensemble des éléments de $\g_{\rss}^\ast(F)$ qui se tranfèrent à $G$ est ouvert. De même, l'ensemble des éléments de $\g_{\rss}^\ast(F)$ qui ne se tranfèrent pas à $G$ est aussi ouvert.
\end{corollary}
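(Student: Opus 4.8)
On se propose de ramener la transférabilité d'un élément semi-simple régulier à une propriété de l'algèbre étale qui lui est associée, puis d'invoquer le lemme \ref{lem:polyproche} (c'est-à-dire le lemme de Krasner) pour en déduire sa locale constance sur $\g_{\rss}^\ast(F)$.

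D'abord il faut expliciter le critère de transfert pour les éléments semi-simples réguliers. Comme $\g^\ast=\mathfrak{gl}_{n,F}$ et $\g$ est une algèbre simple centrale sur $F$, l'isomorphisme $\eta$ identifie $\g_{\overline{F}}$ à $\g_{\overline{F}}^\ast$, et l'orbite sous $G^\ast(\overline{F})$ d'un élément semi-simple régulier est déterminée par son polynôme caractéristique (réduit). Ainsi, pour $X'\in\g_{\rss}^\ast(F)$ de polynôme caractéristique $P_{X'}$, sans racine multiple, l'élément $X'$ se transfère à $G$ si et seulement s'il existe $X\in\g_{\rss}(F)$ de même polynôme caractéristique réduit, ce qui équivaut à dire que la $F$-algèbre étale $F[T]/(P_{X'})\simeq F[X']$ se plonge comme $F$-sous-algèbre dans $\g(F)$ : étant donné un tel plongement, l'image de $T$ engendre une sous-algèbre étale maximale de $\g(F)$, donc un élément $X\in\g_{\rss}(F)$ de polynôme caractéristique réduit $P_{X'}$ ; réciproquement $F[X]\simeq F[T]/(P_X)$. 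En particulier cette propriété ne dépend que de la classe d'isomorphisme sur $F$ de la $F$-algèbre $F[X']$. C'est aussi le point de vue de l'annexe \ref{sec:AppendixA}, que l'on pourra citer à la place de ce paragraphe.

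Ensuite, fixant $X_0'\in\g_{\rss}^\ast(F)$, on utilisera le raisonnement qui prouve le lemme \ref{lem:polyproche} — continuité des racines dans $\overline{F}$ et lemme de Krasner — sous la forme légèrement renforcée suivante : il existe un voisinage $U$ de $X_0'$ dans $\g_{\rss}^\ast(F)$ tel que, pour tout $X'\in U$, les facteurs irréductibles de $P_{X'}$ engendrent, facteur par facteur, les mêmes extensions de corps que ceux de $P_{X_0'}$, de sorte que $F[X']\simeq F[X_0']$ en tant que $F$-algèbres. Par le paragraphe précédent, ou bien tout élément de $U$ se transfère à $G$, ou bien aucun ne se transfère. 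Donc tout point du lieu des transferts, resp. de son complémentaire dans $\g_{\rss}^\ast(F)$, possède un voisinage contenu dans ce lieu, ce qui établit les deux assertions ; ces deux ensembles sont d'ailleurs ouverts et fermés dans $\g_{\rss}^\ast(F)$.

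Je ne prévois pas de réel obstacle ici : le seul point demandant un peu de soin est le critère de transfert du deuxième paragraphe — vérifier qu'un $F$-plongement de $F[X']$ dans $\g(F)$ fournit bien un élément semi-simple régulier de polynôme caractéristique réduit prescrit, ce qui résulte de ce qu'une sous-algèbre étale de dimension maximale de $\g(F)$ est son propre centralisateur — la locale constance en découlant alors immédiatement via le lemme \ref{lem:polyproche}.
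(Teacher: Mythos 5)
Votre preuve est correcte et suit essentiellement la même voie que l'article : dans les deux cas on ramène la transférabilité d'un élément semi-simple régulier $X'$ à une propriété de son polynôme caractéristique $P_{X'}$, puis on invoque la continuité des racines et le lemme de Krasner (lemme \ref{lem:polyproche}) pour la locale constance. La seule différence notable est intermédiaire : l'article cite le théorème \ref{thm:chara} (et implicitement la remarque qui le suit, laquelle réduit, sur un corps local, le critère de transfert à la seule condition $d\mid\deg p$ pour chaque facteur irréductible $p$), de sorte que la transférabilité ne dépend que du \emph{type de décomposition} de $P_{X'}$ — et le lemme \ref{lem:polyproche} tel quel suffit ; vous préférez reformuler le critère en termes de plongement de l'algèbre étale $F[X']$ dans $\g(F)$, ce qui vous dispense du théorème \ref{thm:chara} mais exige la version renforcée de Krasner que vous énoncez (à savoir que les extensions elles-mêmes, et pas seulement leurs degrés, sont localement constantes) — renforcement qui découle bien du même argument, comme vous le signalez. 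Les deux chemins aboutissent ; le vôtre est un peu plus conceptuel mais un peu moins économique sachant que l'article a déjà en main le théorème \ref{thm:chara} et sa remarque.
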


\begin{proof}
Une classe de conjugaison semi-simple régulière est complètement caractérisée par son polynôme caractéristique, la propriété souhaitée s'ensuit de ce fait à la lumière du lemme précédent et du théorème \ref{thm:chara}.
\end{proof}

\begin{lemma}[{{\cite[lemme 5.5]{Ch07}}}]
Soient $M'\in\L^{G^\ast}(M_{0^\ast})$ et $L_1',L_2'\in\L^{G^\ast}(M')$ tels que le coefficient d'Arthur
\[d_{M'}^{G^\ast}(L_1',L_2')\]
soit non nul. Alors le diagramme commutatif suivant est cartésien
\[
    \begin{tikzcd}
    H_{\mathrm{ab}}^1(F,M_{\AD}') \arrow[rightarrow]{r}{}\arrow[rightarrow]{d}{}& H_{\mathrm{ab}}^1(F,L_{1,\AD}')\arrow[rightarrow]{d}{}\\
    H_{\mathrm{ab}}^1(F,L_{2,\AD}') \arrow[rightarrow]{r}{} & H_{\mathrm{ab}}^1(F,G_{\ad}^\ast)
    \end{tikzcd}
\]
\end{lemma}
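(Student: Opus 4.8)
Since $F$ is $p$-adic, the bijectivity of $\alpha_H$ (recalled above) together with the standard identification $X^\ast(Z(\widehat H))=\pi_1(H)$ of the character lattice of $Z(\widehat H)$ with Kottwitz's algebraic fundamental group $\pi_1(H)=X_\ast(T)/\langle\text{coracines}\rangle$ gives, functorially in the connected reductive group $H/F$, a natural isomorphism
\[H_{\mathrm{ab}}^1(F,H)\;\cong\;\bigl(\pi_1(H)_\Gamma\bigr)_{\mathrm{tors}},\]
the torsion subgroup of the $\Gamma$-coinvariants: indeed $Z(\widehat H)^\Gamma=\operatorname{Hom}(\pi_1(H)_\Gamma,\mathbb C^\times)$, whose group of connected components is the Pontryagin dual of $(\pi_1(H)_\Gamma)_{\mathrm{tors}}$, while $H^1_{\mathrm{ab}}=\pi_0(Z(\widehat H)^\Gamma)^D$. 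Under this identification the square to be shown cartesian is the square of torsion subgroups of the $\Gamma$-coinvariants of $\pi_1(M_{\AD}')$, $\pi_1(L_{1,\AD}')$, $\pi_1(L_{2,\AD}')$, $\pi_1(G_{\ad}^\ast)$. The plan is first to produce a short exact sequence of $\Gamma$-modules
\[0\longrightarrow\pi_1(M_{\AD}')\longrightarrow\pi_1(L_{1,\AD}')\oplus\pi_1(L_{2,\AD}')\longrightarrow\pi_1(G_{\ad}^\ast)\longrightarrow0,\]
and then to deduce the statement from the associated long exact sequence.

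For the first part, fix a maximal torus $T$ of $G^\ast$ containing $M_{0^\ast}$; then $M'$, $L_1'$, $L_2'$ and $G^\ast$ all admit $T$ as a maximal torus. Write $X=X_\ast(T)$, write $Z=X_\ast(Z(G^\ast))\subseteq X$ (a primitive sublattice, since $Z(G^\ast)$ is a subtorus of $T$), and for each of our four groups $H$ write $R_H^\vee\subseteq X$ for its coroot lattice, so that $R_{M'}^\vee\subseteq R_{L_i'}^\vee\subseteq R_{G^\ast}^\vee$. Since $\AD$ means ``${}/Z(G^\ast)$'' and $Z(G^\ast)$ is a central torus, one has $\pi_1(H_{\AD})=X/(Z+R_H^\vee)$ as $\Gamma$-modules. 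The displayed sequence (with the evident diagonal and difference maps) is therefore exact as soon as one knows the two lattice identities
\[R_{M'}^\vee=R_{L_1'}^\vee\cap R_{L_2'}^\vee,\qquad R_{G^\ast}^\vee=R_{L_1'}^\vee+R_{L_2'}^\vee\quad\text{in }X,\]
together with $Z\cap R_{G^\ast}^\vee=0$, which holds for any group of type GL: these give $Z+R_{M'}^\vee=(Z+R_{L_1'}^\vee)\cap(Z+R_{L_2'}^\vee)$ and $(Z+R_{L_1'}^\vee)+(Z+R_{L_2'}^\vee)=Z+R_{G^\ast}^\vee$, whence exactness of the sequence of quotients by a direct verification.

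This is the step that uses $d_{M'}^{G^\ast}(L_1',L_2')\neq0$. Unwinding the definition of $d_{M'}^{G^\ast}$, its non-vanishing is equivalent to the two relations $a_{L_1'}+a_{L_2'}=a_{M'}$ and $a_{L_1'}\cap a_{L_2'}=a_{G^\ast}$ inside $a_{M'}$ (take orthogonal complements in $a_{M'}$ of $a_{M'}^{L_1'}\cap a_{M'}^{L_2'}=0$ and of $a_{M'}^{L_1'}+a_{M'}^{L_2'}=a_{M'}^{G^\ast}$). Since $G^\ast$ is a product of factors $\Res_{E_i/F}\GL_{n_i}$, its semistandard Levi subgroups correspond to partitions $(\mathcal P_{H,i})_i$ of the coordinate sets, the modules $X$, $Z$, $R_H^\vee$ split off a free permutation $\Gamma$-module $\mathbb Z[\Gamma/\Gamma_{E_i}]$ per factor, and the two lattice identities reduce, factor by factor and after tensoring out that permutation module, to combinatorial identities about the ``sum-zero-on-each-block'' primitive sublattices $R_{\mathcal P}^\vee\subseteq\mathbb Z^{n_i}$ attached to the partitions. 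The relations above translate into: $\mathcal P_{M',i}=\mathcal P_{L_1',i}\wedge\mathcal P_{L_2',i}$, the join $\mathcal P_{L_1',i}\vee\mathcal P_{L_2',i}$ is the one-block partition, and $|\mathcal P_{L_1',i}|+|\mathcal P_{L_2',i}|=|\mathcal P_{M',i}|+1$ (the last forced by semimodularity of the partition lattice and the global rank identity). Then $R_{M'}^\vee=R_{L_1'}^\vee\cap R_{L_2'}^\vee$ follows because $R_{M'}^\vee$ is primitive in $\mathbb Z^{n_i}$ and the identity holds after $\otimes_{\mathbb Z}\mathbb R$ (where it is the transverse-intersection statement for the $a$-spaces of the factor $\GL_{n_i}$); and $R_{G^\ast}^\vee=R_{L_1'}^\vee+R_{L_2'}^\vee$ holds after $\otimes_{\mathbb Z}\mathbb R$ for the analogous reason, and integrally because the join condition makes the incidence graph between $\mathcal P_{L_1',i}$- and $\mathcal P_{L_2',i}$-blocks connected, which with the rank identity forces the sum to be saturated in $R_{G^\ast}^\vee$ (a short Smith-normal-form argument using $R_{L_1'}^\vee\cap R_{L_2'}^\vee=R_{M'}^\vee$).

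Granting the short exact sequence of $\pi_1$'s, one concludes by a diagram chase: it exhibits the square of $\pi_1$'s as bicartesian, and applying $H_{\mathrm{ab}}^\bullet(F,-)$ — which fits into a long exact Mayer--Vietoris-type sequence for such a configuration, the neighbouring terms $H_{\mathrm{ab}}^0$ and $H_{\mathrm{ab}}^2$ being controlled over a $p$-adic field — yields that $H^1_{\mathrm{ab}}(F,M_{\AD}')\to H^1_{\mathrm{ab}}(F,L_{1,\AD}')\times_{H^1_{\mathrm{ab}}(F,G_{\ad}^\ast)}H^1_{\mathrm{ab}}(F,L_{2,\AD}')$ is an isomorphism (surjectivity is formal; injectivity uses that a torsion element of a fibre product has torsion projections, once the relevant connecting map is seen to vanish). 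I expect the main obstacle to be the middle step — pinning down exactly how $d_{M'}^{G^\ast}(L_1',L_2')\neq0$ forces the two integral lattice identities, and verifying that passing to the $Z(G^\ast)$-quotients and then to $\Gamma$-coinvariants does not disturb them: since coinvariants is only right exact it is essential to establish the identities integrally in $X$ first and only afterwards pass to coinvariants, handling the residual $H^1$-level connecting map through the long exact abelianized cohomology sequence. Everything else is formal.
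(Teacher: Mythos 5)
The paper gives no proof of this lemma — it is cited from \cite[lemme 5.5]{Ch07} — so there is no in-paper argument to compare yours against. Your plan is the natural one and it does work: over a $p$-adic field the bijectivity of $\alpha_H$ gives $H^1_{\mathrm{ab}}(F,H)\cong(\pi_1(H)_\Gamma)_{\mathrm{tors}}$, and for GL-type groups $\pi_1(H_{\AD})=X_\ast(T)/(X_\ast(Z(G^\ast))+R_H^\vee)$ with $R_H^\vee$ the coroot lattice; using $X_\ast(Z(G^\ast))\cap R_{G^\ast}^\vee=0$, the two integral lattice identities you isolate do produce the short exact sequence of $\Gamma$-modules, and the result follows. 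There are, however, two spots where what you wrote does not yet constitute a complete argument, and since you flagged them yourself, here is how to close them.

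For $R_{G^\ast}^\vee=R_{L_1'}^\vee+R_{L_2'}^\vee$, a Smith-normal-form detour is not needed; instead observe that the (finite) quotient $R_{G^\ast}^\vee/(R_{L_1'}^\vee+R_{L_2'}^\vee)$ has Pontryagin dual the group of tuples $(a_i)\in(\Q/\Z)^{n}$ constant on the blocks of both partitions, taken modulo the diagonal $\Q/\Z$; being constant on both partitions means being constant on their join, and the relation $a_{L_1'}\cap a_{L_2'}=a_{G^\ast}$, which is one of the two real identities equivalent to $d_{M'}^{G^\ast}(L_1',L_2')\neq0$, says precisely that this join is the one-block partition, so the dual group is zero. (Your ``incidence graph connected'' is exactly this.) For the concluding diagram chase, you do \emph{not} need the connecting map $H_1(\Gamma,\pi_1(G^\ast_{\ad}))\to\pi_1(M'_{\AD})_\Gamma$ to vanish — and it generally will not — only that its image be torsion, which is automatic because $\pi_1(G^\ast_{\ad})$ is a finite $\Gamma$-module. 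Then, given torsion classes $b_i\in\pi_1(L_{i,\AD}')_\Gamma$ agreeing in $\pi_1(G^\ast_{\ad})_\Gamma$, exactness produces $a\in\pi_1(M'_{\AD})_\Gamma$ mapping to $(b_1,b_2)$; a suitable multiple $na$ lies in the torsion image of the connecting map, so $a$ itself is torsion, giving surjectivity onto the fibre product; injectivity on torsion is exactly the remark after the lemma in the paper, via \cite[lemme 1.1]{Art99}. With these two corrections your proposal is a complete and correct proof.
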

\begin{corollary}\label{coro:artcoeff}
Soient $M'\in\L^{G^\ast}(M_{0^\ast})$ et $L_1^\ast
,L_2^\ast\in\L^{G^\ast}(M')$ deux sous-groupes de Levi qui se transfèrent, tels que $d_{M'}^{G^\ast}(L_1^\ast,L_2^\ast)\not =0$. Alors $M'$ se transfère.
\end{corollary}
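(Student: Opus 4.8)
The plan is to run the cohomological criterion for transfer (Proposition \ref{prop:transfertdef}) in both directions, bridging the two via the cartesian-square lemma stated just above. Throughout I use that, $F$ being $p$-adique, the abelianization map $H^1(F,-)\to H_{\mathrm{ab}}^1(F,-)$ is bijective; passing to abelianized cohomology is what makes the argument work, since $H_{\mathrm{ab}}^1(F,-)$ is an honest abelian group and fibre products of such groups are well behaved.

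First I would record what the hypotheses say. By Proposition \ref{prop:transfertdef} together with the abelianization bijection, the assumption that $L_1^\ast$ (resp. $L_2^\ast$) transfers to $G$ is equivalent to saying that the class $[\eta]\in H_{\mathrm{ab}}^1(F,G_{\ad}^\ast)$ lies in the image of $H_{\mathrm{ab}}^1(F,L_{1,\AD}^\ast)\to H_{\mathrm{ab}}^1(F,G_{\ad}^\ast)$ (resp. of $H_{\mathrm{ab}}^1(F,L_{2,\AD}^\ast)\to H_{\mathrm{ab}}^1(F,G_{\ad}^\ast)$). Hence I may choose classes $x_i\in H_{\mathrm{ab}}^1(F,L_{i,\AD}^\ast)$ for $i=1,2$ with both $x_1$ and $x_2$ mapping to the \emph{same} class $[\eta]$ in $H_{\mathrm{ab}}^1(F,G_{\ad}^\ast)$; this last point is the only place one must be a little careful, and it is exactly the content of Proposition \ref{prop:transfertdef}: transfer of $L_i^\ast$ means that $[\eta]$ \emph{itself}, not merely some twist, lifts.

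Next I would invoke the preceding lemma: since $d_{M'}^{G^\ast}(L_1^\ast,L_2^\ast)\neq 0$, the commutative square of abelianized cohomology is cartesian, i.e. $H_{\mathrm{ab}}^1(F,M_{\AD}')$ is identified with the fibre product $H_{\mathrm{ab}}^1(F,L_{1,\AD}^\ast)\times_{H_{\mathrm{ab}}^1(F,G_{\ad}^\ast)}H_{\mathrm{ab}}^1(F,L_{2,\AD}^\ast)$. Because $x_1$ and $x_2$ have the same image $[\eta]$ downstairs, the pair $(x_1,x_2)$ defines a class $m\in H_{\mathrm{ab}}^1(F,M_{\AD}')$, and by commutativity of the square the image of $m$ in $H_{\mathrm{ab}}^1(F,G_{\ad}^\ast)$ is $[\eta]$. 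Thus $[\eta]$ lies in the image of $H_{\mathrm{ab}}^1(F,M_{\AD}')\to H_{\mathrm{ab}}^1(F,G_{\ad}^\ast)$, hence, using the abelianization bijection once more, in the image of $H^1(F,M_{\AD}')\to H^1(F,G_{\ad}^\ast)$; Proposition \ref{prop:transfertdef} then gives that $(M',P')$ transfers for a suitable parabolic $P'$, so $M'$ transfers. Once the cartesian-square lemma is granted this is a short diagram chase, so I do not expect a serious obstacle; the work, such as it is, lies entirely in the translation between the two cohomological descriptions and in keeping track that the two chosen lifts $x_1,x_2$ sit over one and the same point of $H_{\mathrm{ab}}^1(F,G_{\ad}^\ast)$.
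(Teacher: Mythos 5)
Your proof is correct and follows the paper's own argument: invoke Proposition \ref{prop:transfertdef} to express transfer of the $L_i^\ast$ cohomologically, use the lemma of Chaudouard stating that the square of abelianized $H^1$'s is cartesian when $d_{M'}^{G^\ast}(L_1^\ast,L_2^\ast)\neq 0$, and run the fibre-product diagram chase to lift $[\eta]$ to $H^1(F,M_{\AD}')$. The paper compresses the chase into a single phrase ("on évoque la définition du produit fibré"), while you helpfully spell out the passage between $H^1$ and $H_{\mathrm{ab}}^1$ via the non-archimedean bijectivity of $\text{ab}_H^1$, but the underlying route is identical.
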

\begin{proof}
D'après la proposition \ref{prop:transfertdef}, la classe $\eta$ est dans les images des flèches $H^1(F,L_{i,\AD}^\ast)\rightarrow H^1(F,G_{\ad}^\ast)$. On évoque la définition du produit fibré de groupes pour conclure que $\eta$ est pareillement dans l'image de $H^1(F,M_{\AD}')\rightarrow H^1(F,G_{\ad}^\ast)$.
\end{proof}

Pour $X\in\g(F)$ un élément dans l'algèbre de Lie. La restriction de $\langle -,-\rangle$ à $\g_{X_\ss}(F)$ est la forme bilinéaire canonique de $\g_{X_\ss}(F)$. Posons 
\begin{equation}\label{eq:signe}
e^G(X)\eqdef
\frac{\gamma_{\psi}(\g_{X_\ss^\ast}^\ast)}{\gamma_\psi(\g_{X_\ss})}.    
\end{equation}

\begin{lemma}\label{lem:quotientofgermsShalika}Soient $\g(F)\ni X\arr X^\ast\in\g^\ast(F)$ deux éléments semi-simples. Alors pour tous $\g_X(F)\ni Y\arr Y^\ast\in\g_{X^\ast}^\ast(F)$ (défini par le torseur intérieur $\eta_X:\g_X\to \g_{X^\ast}^\ast$) proches de 0 avec $X+tY\in\g_\rss(F)$ et $X^\ast+tY^\ast\in \g_\rss^\ast(F)$ pour tout scalaire $t$ satisfaisant $0<|t|\leq 1$ on a
\begin{equation}\label{eq:quotientgermsShalika}
g_G^G(Y,X)=e^G(X)g_{G^\ast}^{G^\ast}(Y^\ast,X^\ast).    
\end{equation}
\end{lemma}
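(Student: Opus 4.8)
The goal is to compare the Shalika germs $g_G^G(\cdot,X)$ on $\g$ and $g_{G^\ast}^{G^\ast}(\cdot,X^\ast)$ on $\g^\ast$ at a transferred semisimple pair $X\arr X^\ast$, and to show they differ exactly by the Weil-constant ratio $e^G(X)$. The natural strategy is to reduce the germ comparison to the already-established transfer statement for regular semisimple weighted orbital integrals (théorème \ref{prop:deftransfertfon}), via the germ expansion (proposition \ref{prop:Shalika}), and then to account for the fact that the transfer $f\arr f^\ast$ is \emph{not} compatible with Fourier transform on the nose — it is compatible only up to the factor $e^G(X)$ coming from the formula $\widehat{f\ast F_\psi}=\gamma_\psi(\g)\widehat{f}\cdot F_{-\psi}$ recorded in Section \ref{sec:preliminaireslocaux}. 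In fact I expect this lemma is stated precisely so that the Fourier transform of the local transfer (to be proved afterwards) works out, and its proof should be essentially a bookkeeping of Weil constants through a Fourier-transform argument; I will organize it that way.

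\textbf{Step 1: reduce to the centralizers.} By proposition \ref{prop:Shalika}(3), $g_G^G(\cdot,X)=g_{G_{X}}^{G_X}(\cdot,0)$ and likewise $g_{G^\ast}^{G^\ast}(\cdot,X^\ast)=g_{G^\ast_{X^\ast}}^{G^\ast_{X^\ast}}(\cdot,0)$, where $Y\in\g_X(F)$ is small with $X+tY\in\g_\rss$. By proposition \ref{prop:GXtorseurint}, $\eta_X:\g_X\to\g^\ast_{X^\ast}$ is an inner twist with $G^\ast_{X^\ast}$ quasi-split, and the canonical bilinear form of $\g_{X_\ss}$ restricts correctly (as noted just before the statement). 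Moreover $e^G(X)=\gamma_\psi(\g^\ast_{X^\ast})/\gamma_\psi(\g_X)$ is exactly the ratio attached to this smaller inner twist. So it suffices to prove the statement when $X=0$, i.e. to compare the principal nilpotent-germ of $g_G^G(\cdot,0)$ with that of $g_{G^\ast}^{G^\ast}(\cdot,0)$ for an inner twist $\eta:\g\to\g^\ast$ with $G^\ast$ quasi-split, obtaining the factor $\gamma_\psi(\g^\ast)/\gamma_\psi(\g)$.

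\textbf{Step 2: the Fourier-transform argument for $X=0$.} The germ $g_G^G(\cdot,0)$ governs the behaviour of $J_G^G(Y,f)$ for $Y\to 0$ regular semisimple; dually, by Harish-Chandra (as recalled in the lemma before proposition \ref{prop:Shalika}), one has the expansion $\widehat J_G^G(0,f)=\int |D^\g(Y)|^{-1/2}f(Y)\,\widehat j_G^G(0,Y)\,dY$, and the germ $g_G^G(\cdot,0)$ is, up to normalization, the restriction of $\widehat j_G^G(0,\cdot)$ — more precisely the principal nilpotent germ expansion dualizes the nilpotent germ expansion of $\widehat J_G^G(\nu,\cdot)$. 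Now take $f\arr f^\ast$ as in théorème \ref{prop:deftransfertfon}. Applying Fourier transform, I want to show $\widehat f\arr \widehat{f^\ast}$ up to the factor $\gamma_\psi(\g)/\gamma_\psi(\g^\ast)$; this is where the identity $\widehat{f\ast F_\psi}=\gamma_\psi(\g)\widehat f\cdot F_{-\psi}$ enters, because $F_\psi$ transfers to $F_\psi^\ast$ under $\eta$ (the canonical forms correspond) and convolution by $F_\psi$ is compatible with $\arr$ in an elementary way. Feeding this into the dual germ expansion and using the linear independence of nilpotent orbital integrals (proposition \ref{prop:IOP}(7)) together with the fact that $\eta$ identifies the regular nilpotent orbit of $\g$ with that of $\g^\ast$, one isolates the coefficient of the regular-nilpotent term and reads off exactly $g_G^G(Y,0)=\bigl(\gamma_\psi(\g^\ast)/\gamma_\psi(\g)\bigr)\,g_{G^\ast}^{G^\ast}(Y^\ast,0)$.

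\textbf{Main obstacle.} The delicate point is the bookkeeping in Step 2: matching the normalization of the germ $g_M^M(\cdot,0)$ with the dual object $\widehat j_G^G(0,\cdot)$ — the homogeneity statement proposition \ref{prop:Shalika}(4) and the homogénéité nilpotente proposition \ref{prop:IOP}(6) are needed to pin down that the relevant germ coefficient is the one attached to the regular nilpotent orbit and that it transforms with the correct power of $|t|$ so that no extra scalar beyond $e^G(X)$ appears. One must also check that the transfer $f\arr f^\ast$ can be arranged to have small support (already done in the proof of théorème \ref{prop:deftransfertfon} by the scaling $f(\cdot)\mapsto f(t\cdot)$), so that the germ expansions are valid on a common neighbourhood of $0$ simultaneously for $f$ and $f^\ast$; the homogeneity of both germs then propagates the identity from a neighbourhood of $0$ to all $t$ with $0<|t|\le 1$ as in the statement. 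I expect everything else — the reduction to centralizers, the behaviour of $F_\psi$ under $\eta$, the linear independence arguments — to be routine given the results already available in the paper.
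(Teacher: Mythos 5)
Your broad strategy (reduce to centralizers via proposition \ref{prop:Shalika}(3), then compare Weil constants) is correct, and the conclusion you aim for has the right shape. But the middle of the argument has several real gaps, and the route you sketch is not the one the paper takes.

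The paper argues entirely at the level of the Harish--Chandra kernel $\widehat{j}_G^G$, never at the level of functions $f$. The two inputs are: Waldspurger's germ expansion of $\widehat{j}_G^G(X+Y,Z)$ (the lemma quoted from \cite[prop.~VI.4]{Walds95}, a pointwise identity of functions), and Waldspurger's transfer identity $\gamma_\psi(\g)\widehat{j}_G^G(X,Y)=\gamma_\psi(\g^\ast)\widehat{j}_{G^\ast}^{G^\ast}(X^\ast,Y^\ast)$ on matching regular semisimple pairs (\cite[11.3]{Walds97}). Plugging one into the other, one reads off the coefficient of the unique term with exponent $d^G(0)=d^{G^\ast}(0)$, and then evaluates $\widehat{j}_G^G(X,Z)=\widehat{j}_G^G(0,Z)=1$ using equation~\eqref{eq:widehatjcalcul} and the fact that $\widehat{j}_G^G(0,\cdot)$ is the Fourier transform of the Dirac at $0$. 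No test functions, no transfer $f\arr f^\ast$, no linear independence of nilpotent orbital integrals. This is cleaner precisely because a single deep functional identity does the whole job.

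Your proposal has three concrete problems. First, the claim that the Fourier-compatibility of transfer ``$\gamma_\psi(\g)\widehat f\arr\gamma_\psi(\g^\ast)\widehat{f^\ast}$'' follows elementarily from the Weil identity $\widehat{f\ast F_\psi}=\gamma_\psi(\g)\widehat f\cdot F_{-\psi}$ and an ``elementary'' compatibility of $\arr$ with convolution by $F_\psi$ is not tenable: $F_\psi$ is not a Schwartz--Bruhat function (it is bounded and oscillating), so $f\ast F_\psi$ leaves $\S(\g(F))$ and $\arr$ is not even defined on it; the actual Fourier-compatibility is Chaudouard's \cite[th.~5.2]{Ch05}, a deep result of the same stripe as Waldspurger's identity, not something you can substitute for it cheaply. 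Second, the statement that ``$\eta$ identifies the regular nilpotent orbit of $\g$ with that of $\g^\ast$'' is false when $G$ is not quasi-split: for $\g=\Mat_m(D)$ with $\deg D=d>1$, the regular nilpotent of $\g$ (partition $(m)$) transfers to the partition $(m,\dots,m)$ ($d$ copies) in $\g^\ast=\Mat_{md}(F)$, while the regular nilpotent of $\g^\ast$ is $(md)$; they match only if $d=1$. Third, even granting Chaudouard's theorem, the dual germ expansion
\[
\gamma_\psi(\g)\sum_{\nu}|t|^{d^G(\nu)}g_G^G(Y,\nu)\,\widehat J_G^G(\nu,f)
=\gamma_\psi(\g^\ast)\sum_{\nu'}|t|^{d^{G^\ast}(\nu')}g_{G^\ast}^{G^\ast}(Y^\ast,\nu')\,\widehat J_{G^\ast}^{G^\ast}(\nu',f^\ast)
\]
does not let you isolate the germ you want: the coefficients $\widehat J_G^G(\nu,f)$ and $\widehat J_{G^\ast}^{G^\ast}(\nu',f^\ast)$ are Fourier transforms of nilpotent orbital integrals over two different sets of nilpotent orbits, and the definition of $f\arr f^\ast$ gives you no a priori comparison of them. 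Even for the unique maximal exponent $\nu=\nu'=0$ you are left with $\widehat f(0)=\int f$ on one side and $\widehat{f^\ast}(0)=\int f^\ast$ on the other, and nothing in the transfer hypothesis tells you these are equal. The linear independence of nilpotent orbital integrals (proposition \ref{prop:IOP}(7)) is of no help here because it is an assertion about a single group, not a comparison between $\g$ and $\g^\ast$. The kernel-level argument in the paper avoids all of this.
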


\begin{proof}On parvient au lemme en maniant des propriétés classiques. On prouve le résultat suivant en manipulant le théorème de finitude de Howe :

\begin{lemma}[{{\cite[proposition VI. 4.]{Walds95}}}]
Soit $X\in\g_{\ss}(F)$. Soit $\mathfrak{r}$ un sous-groupe ouvert compact de $\g$. II existe un voisinage $V$ de $0$ dans $\mathfrak{m}_X$ tel que pour tout $Y\in V$ vérifiant $X+Y\in\g_\rss(F)$ et tout $Z\in \mathfrak{r}\cap \g_\rss(F)$, on ait l'égalité
\[\widehat{j}_G^G(X+Y,Z)=\sum_{\nu\in (\mathcal{N}_{G_{X}}(F))}g_G^G(Y,X+\nu)\widehat{j}_G^G(X+\nu,Z).\]
\end{lemma}
On rappelle l'identité fondamentale du transfert dans notre cadre :
\begin{theorem}[{{\cite[11.3. théorème]{Walds97}}}] Si $X\arr X^\ast$ et $Y\arr Y^\ast$ sont réguliers semi-simples, alors
\[\gamma_{\psi}(\g)\widehat{j}_G^G(X,Y)=\gamma_{\psi}(\g^\ast)\widehat{j}_{G^\ast}^{G^\ast}(X^\ast,Y^\ast).\]
\end{theorem}

On peut maintenant regarder l'égalité recherchée \eqref{eq:quotientgermsShalika}. D'après le point 3 de la proposition \ref{prop:Shalika} et nos normalisations des mesures on peut remplacer $G$ par $G_X$ et supposer ainsi que $X\in \mathfrak{z}(F)$. Nous avons des développements en germes 
\begin{align*}
\widehat{j}_G^G(X+tY,Z)&=\sum_{\nu\in (\mathcal{N}_{G}(F))}g_G^G(tY,\nu)\widehat{j}_G^G(X+\nu,Z) \\   \widehat{j}_{G^\ast}^{G^\ast}(X^\ast+tY^\ast,Z^\ast)&=\sum_{\nu'\in (\mathcal{N}_{G^\ast}(F))}g_{G^\ast}^{G^\ast}(tY^\ast,\nu')\widehat{j}_{G^\ast}^{G^\ast}(X^\ast+\nu',Z^\ast)
\end{align*}
pour tout scalaire $t\in F$ avec $0<|t|\leq 1$. Il est clair que $(tY)^\ast=tY^\ast$ et donc $(X+tY)^\ast=X^\ast+tY^\ast$. En multipliant par les facteurs $\gamma_{\psi}(\g)$ et $\gamma_{\psi}(\g^\ast)$ on peut donc mettre une égalité entre les membres de gauche, puis en regardant les termes de degré 0 dans la relation d'homogénéité des germes (proposition \ref{prop:Shalika}) on en déduit
\[\gamma_{\psi}(\g)g_G^G(Y,0)\widehat{j}_G^G(X,Z) =\gamma_{\psi}(\g^\ast)g_{G^\ast}^{G^\ast}(Y^\ast,0)\widehat{j}_{G^\ast}^{G^\ast}(X^\ast,Z^\ast).\]
L'égalité étant valable pour tous $Z\arr Z^\ast$, on peut prendre $Z\in \g_{\ad,G-\rss}(F)$ pour voir que $\widehat{j}_G^G(X,Z)=\widehat{j}_G^G(0,Z)=1$, ici la première égalité résulte de l'équation \eqref{eq:widehatjcalcul} puis la deuxième s'ensuit du fait que la transformée de Fourier du Dirac en 0 est la fonction constante 1. De même $\widehat{j}_{G^\ast}^{G^\ast}(X^\ast,Z^\ast)=1$. Il vient ainsi $\gamma_{\psi}(\g)g_G^G(Y,0) =\gamma_{\psi}(\g^\ast)g_{G^\ast}^{G^\ast}(Y^\ast,0)$, soit $g_G^G(Y,0)=e^G(X)g_{G^\ast}^{G^\ast}(Y^\ast,0)$. 
\end{proof}

\begin{remark}Le rapport $e^G(X)$ coïncide avec le signe $e(G_{X_\ss})=\pm 1$ introduit par Kottwitz en \cite[p.292]{Kott83}. Pour une preuve on pourrait se référer à \cite[remarque après lemme 2.14]{KazhPo03}, où le rapport des constantes de Weil en question est calculé pour tout groupe réductif $G$ à l'aide de la théorie des formes quadratiques.
\end{remark}

\begin{lemma}[{{\cite[lemme 9.6]{HCbook}}}]\label{lem:germenonnulell}
Soit $Y^\ast\in \g_\rss^\ast(F)$ elliptique proche de 0, alors $g_{G^\ast}^{G^\ast}(Y^\ast,0)\not=0$.     
\end{lemma}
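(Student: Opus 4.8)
The plan is to recover Harish-Chandra's lemma \cite[lemme 9.6]{HCbook}; I sketch the argument I would give. First I would reduce by homogeneity: applying Proposition \ref{prop:Shalika}(4) with $M = G^\ast$ --- so that $\dim a_{G^\ast}^{G^\ast} = 0$ and no logarithmic terms occur --- shows that $g_{G^\ast}^{G^\ast}(\cdot, 0)$ agrees near $0$ with a function homogeneous of degree $d := \tfrac12(\dim G^\ast - \rk G^\ast)$, whence $g_{G^\ast}^{G^\ast}(tY^\ast, 0) = |t|^{d}\, g_{G^\ast}^{G^\ast}(Y^\ast, 0)$ for small scalars $t$. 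So the non-vanishing of $g_{G^\ast}^{G^\ast}(Y^\ast, 0)$ depends only on the ray of $Y^\ast$, and it suffices to prove it for one --- equivalently every --- elliptic regular semisimple representative on each ray.

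Next I would linearise the problem. Since $G^\ast$ is $p$-adic it has finitely many nilpotent orbits, and their orbital integrals are linearly independent (Proposition \ref{prop:IOP}(7)); as each nilpotent orbit is a cone, the homogeneity of nilpotent orbital integrals (Proposition \ref{prop:IOP}(6)) shows this independence survives restriction to $C_c^\infty(U)$ for any neighbourhood $U$ of $0$. Since $J_{G^\ast}^{G^\ast}(0, \cdot)$ is evaluation at $0$, there is then a fixed $f_0 \in C_c^\infty(\g^\ast(F))$ supported near $0$ with $J_{G^\ast}^{G^\ast}(\nu, f_0) = 1$ for the trivial orbit and $0$ for every other nilpotent orbit; in particular $f_0(0) = 1$. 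Feeding $f_0$ into the Shalika germ expansion (Proposition \ref{prop:Shalika}(1) with $X = 0$ and $M = Q = G^\ast$, for which $\L^{G^\ast}(G^\ast) = \{G^\ast\}$) gives, for $Y^\ast$ regular semisimple in a suitable neighbourhood $V$ of $0$,
\[
g_{G^\ast}^{G^\ast}(Y^\ast, 0) = J_{G^\ast}^{G^\ast}(Y^\ast, f_0) = |D^{\g^\ast}(Y^\ast)|^{1/2}\int_{G^\ast_{Y^\ast}(F)\backslash G^\ast(F)} f_0\!\left(\Ad(g^{-1})Y^\ast\right)\, dg ,
\]
so the statement reduces to the non-vanishing of this orbital integral of the fixed function $f_0$ at elliptic regular semisimple points near $0$.

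Finally I would use ellipticity: if $Y^\ast$ is elliptic then $T^\ast := G^\ast_{Y^\ast}$ is an elliptic maximal torus, so $T^\ast(F)/A_{G^\ast}(F)$ is compact, equivalently the orbit map $T^\ast(F)\backslash G^\ast(F) \to \g^\ast(F)$, $g \mapsto \Ad(g^{-1})Y^\ast$, is proper. I would pass to the Fourier side and use this properness to control $\widehat{J}_{G^\ast}^{G^\ast}(Y^\ast, \cdot)$ near $0$, then combine it with the germ expansion of the kernels $\widehat{j}_{G^\ast}^{G^\ast}$ (the expansion of Waldspurger recalled in the proof of Lemma \ref{lem:quotientofgermsShalika}, with $\widehat{j}_{G^\ast}^{G^\ast}(0, \cdot) \equiv 1$), the symmetry $\widehat{j}_{G^\ast}^{G^\ast}(X^\ast, Z^\ast) = \widehat{j}_{G^\ast}^{G^\ast}(Z^\ast, X^\ast)$ of Harish-Chandra, and the linear independence of the germs $g_{G^\ast}^{G^\ast}(\cdot, \nu)$ (Proposition \ref{prop:Shalika}(3)) to force the trivial-orbit coefficient $g_{G^\ast}^{G^\ast}(Y^\ast, 0)$ to be non-zero. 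The hard part will be exactly this last step --- showing the trivial-orbit Shalika germ does not vanish at elliptic regular semisimple elements --- and I do not expect to circumvent Harish-Chandra's Fourier-analytic argument in \cite{HCbook}.
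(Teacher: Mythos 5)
The paper does not prove this lemma; it cites it as \cite[lemme~9.6]{HCbook} (a result of Harish-Chandra), so there is no internal argument to compare yours against. Your homogeneity reduction is sound, and the linear-independence argument for producing an $f_0 \in C_c^\infty(\g^\ast(F))$ with $J_{G^\ast}^{G^\ast}(0,f_0)=1$ and $J_{G^\ast}^{G^\ast}(\nu,f_0)=0$ for $\nu\neq 0$ nilpotent is correct, including the point that homogeneity forces the independence to survive restriction to a small neighbourhood. But observe that the resulting identity $g_{G^\ast}^{G^\ast}(Y^\ast,0)=J_{G^\ast}^{G^\ast}(Y^\ast,f_0)$ is, given the germ expansion, a tautological restatement of the quantity you want to show is non-zero: $f_0$ carries no positivity, so the orbital integral on the right is not visibly non-vanishing, and no genuine reduction has occurred. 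A small side remark: properness of the orbit map $g\mapsto \Ad(g^{-1})Y^\ast$ holds for any regular semisimple $Y^\ast$ (the orbit is Zariski-closed), and is not equivalent to compactness of $T^\ast(F)/A_{G^\ast}(F)$; the latter is the genuinely elliptic-specific input.

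The substantive step --- deducing $g_{G^\ast}^{G^\ast}(Y^\ast,0)\neq 0$ for elliptic $Y^\ast$ near $0$ from the Fourier-side data $\widehat{j}_{G^\ast}^{G^\ast}$, its symmetry, its germ expansion, and $\widehat{j}_{G^\ast}^{G^\ast}(0,\cdot)\equiv 1$ --- is the entire content of the lemma, and you explicitly decline to carry it out (``I do not expect to circumvent Harish-Chandra's Fourier-analytic argument''). As written, then, the proposal reorganises the landscape around the statement without proving it; it remains a pointer to \cite{HCbook}, which is precisely what the paper itself gives. If you want the argument on the record you would need to reproduce the actual Fourier-analytic non-vanishing proof rather than gesture at its ingredients.
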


\begin{lemma}\label{lem:choixdeYstar}Soient $M^\ast\in \L^{G^\ast}$ et $X^\ast\in\m_{\ss}^\ast(F)$. Alors il existe $Y^\ast\in\m_{X^\ast,\ss}^\ast(F)$ proche de 0, $M_{X^\ast}^\ast$-régulier et elliptique dans $\m_{X^\ast}^\ast(F)$, tel que $X^\ast+tY^\ast\in \m_{G-\rss}^\ast(F)$ pour tout scalaire $t$ satisfaisant $0<|t|\leq 1$.
\end{lemma}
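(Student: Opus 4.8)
The plan is to construct $Y^\ast$ inside the Lie algebra of a well-chosen elliptic maximal torus and then to verify the regularity of $X^\ast+tY^\ast$ by a direct eigenvalue computation after base change to $\overline{F}$, the non-archimedean nature of $F$ being what makes the estimate uniform in $t$.

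First I would set $N^\ast\eqdef M^\ast_{X^\ast}=\Cent(X^\ast,M^\ast)$. Since $X^\ast$ is semi-simple, $N^\ast$ is a group of type GL (proposition \ref{pro:bontype}), and as $A_{M^\ast}\subseteq Z(M^\ast)$ acts trivially on $\m^\ast$ it centralizes $X^\ast$, so $N^\ast=\Cent(A_{M^\ast},G^\ast_{X^\ast})$ is a sous-groupe de Levi of $G^\ast_{X^\ast}$; in particular a tore maximal of $N^\ast$ is a tore maximal of $G^\ast_{X^\ast}$. Because $G^\ast$ is quasi-déployé, $G^\ast_{X^\ast}$ is quasi-déployé of type GL — a product of restrictions des scalaires de groupes généraux linéaires, cf. la preuve de la proposition \ref{prop:GXtorseurint} — and hence so is its Levi $N^\ast$. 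As $F$ is $p$-adique, $N^\ast$ therefore admits a tore maximal elliptique $T$ (for instance a product de $\Res_{L/F}\mathbb{G}_{m,L}$ avec $[L:F]$ des degrés convenables), i.e. $A_{N^\ast}=A_{T}$; moreover $T$ is a tore maximal of $G^\ast_{X^\ast}$ by the remark above.

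Second, I would pick $Y^\ast\in\mathfrak{t}(F)$ that is in addition \emph{fortement régulier dans $G^\ast_{X^\ast}$}, meaning $(G^\ast_{X^\ast})_{Y^\ast}=T$. The locus of such elements is the complement in $\mathfrak{t}$ of the finitely many hyperplans noyaux des racines de $T$ dans $\g^\ast_{X^\ast}$, hence a nonempty ouvert de Zariski defined over $F$, which has an $F$-point because $F$ is infinite. Such a $Y^\ast$ is a fortiori $M^\ast_{X^\ast}$-régulier, with $(M^\ast_{X^\ast})_{Y^\ast}=T$ elliptique dans $M^\ast_{X^\ast}$; and replacing $Y^\ast$ by $\varpi^n Y^\ast$ ($\varpi$ une uniformisante, $n$ grand) preserves all of these properties while making $Y^\ast$ arbitrarily proche de $0$.

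Third, it remains to check that $X^\ast+tY^\ast\in\m^\ast_{G-\rss}(F)$ for every $t$ with $0<|t|\le 1$. The elements $X^\ast$ and $Y^\ast$ commute and are semi-simples, so $X^\ast+tY^\ast$ is semi-simple and lies in $\m^\ast(F)$; only strong regularity in $\g^\ast$ is at stake. Base changing to $\overline{F}$, I would fix an isomorphisme $\g^\ast_{\overline{F}}\cong\prod_k\mathfrak{gl}_{N_k,\overline{F}}$ together with a common eigenbasis of $X^\ast$ and $Y^\ast$, and write $\lambda_i,\mu_i$ for the eigenvalues on the $i$-th vector and $k(i)$ for its general-linear factor; then $X^\ast+tY^\ast$ is fortement régulier semi-simple if and only if $\lambda_i+t\mu_i\neq\lambda_j+t\mu_j$ for all $i\neq j$ with $k(i)=k(j)$. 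When $\lambda_i=\lambda_j$ the vectors $e_i,e_j$ lie in the same sous-espace propre de $X^\ast$, and the $G^\ast_{X^\ast}$-régularité of $Y^\ast$ forces $\mu_i\neq\mu_j$, so the difference is $t(\mu_i-\mu_j)\neq 0$ for $t\neq 0$. When $\lambda_i\neq\lambda_j$, I would first shrink $Y^\ast$ (by the uniformizer trick) so that $|\mu_i-\mu_j|<\delta\eqdef\min\{\,|\lambda_i-\lambda_j|:\lambda_i\neq\lambda_j\,\}$ for all $i,j$; then for $|t|\le 1$ the inégalité ultramétrique gives $|(\lambda_i-\lambda_j)+t(\mu_i-\mu_j)|=|\lambda_i-\lambda_j|\neq 0$. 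This last step is the crux: a single, sufficiently small $Y^\ast$ must work for the whole punctured disque $0<|t|\le 1$ at once, and it is precisely the non-archimedean estimate — making the "eigenvalue spread" of $Y^\ast$ smaller than that of $X^\ast$ — that pushes the finitely many collision values of $t$ (those coming from distinct $X^\ast$-eigenvalues) out to absolute value $>1$, where they no longer threaten the conclusion.
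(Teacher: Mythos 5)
Your argument is correct, and it reaches the conclusion by a genuinely different route from the paper. The paper reduces (WLOG) to $G^\ast=\GL_{n,F}$ with $M^\ast_{X^\ast}=\prod_i\Res_{E_i/F}\GL_{m_i,E_i}$ and then applies the lemma \ref{lem:polyproche} (essentially Krasner + continuity of roots) to produce $Y^\ast$ whose \emph{characteristic polynomial} on each component is irreducible of full degree and such that the characteristic polynomials of $X^\ast+tY^\ast$ remain irreducible and pairwise coprime uniformly for $0<|t|\le 1$; regularity and ellipticity are then read off from the factorisation type of the polynomial. You instead construct $Y^\ast$ directly in the Lie algebra of an elliptic maximal torus $T$ of $M^\ast_{X^\ast}$ — which you correctly observe is also a maximal torus of $G^\ast_{X^\ast}$ since $M^\ast_{X^\ast}=\Cent(A_{M^\ast},G^\ast_{X^\ast})$ is a Levi thereof — take $Y^\ast$ strongly $G^\ast_{X^\ast}$-regular, and check regularity of $X^\ast+tY^\ast$ by an eigenvalue computation over $\overline{F}$, splitting into the case $\lambda_i=\lambda_j$ (handled by regularity of $Y^\ast$ in $G^\ast_{X^\ast}$, giving $\mu_i\neq\mu_j$) and the case $\lambda_i\neq\lambda_j$ (handled by scaling $Y^\ast$ so that $|\mu_i-\mu_j|<\min|\lambda_i-\lambda_j|$ and invoking the ultrametric inequality). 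The two proofs rest on the same non-archimedean phenomenon — the ``collision values'' of $t$ coming from distinct $X^\ast$-eigenvalues are pushed to $|t|>1$ — but your eigenvalue analysis makes this mechanism explicit and self-contained where the paper delegates it to \ref{lem:polyproche}, and you additionally extract the slightly stronger conclusion that $Y^\ast$ can be taken $G^\ast_{X^\ast}$-regular, not merely $M^\ast_{X^\ast}$-regular. Both approaches are sound; yours is arguably more transparent.
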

\begin{proof}
Sans perte de généralité on peut supposer que $G^\ast=\GL_{n,F}$ et $M_{X^\ast}^\ast=\prod_i\Res_{E_i/F}\GL_{m_i,E_i}$ avec $E_i/F$ une extension finie et $n=\sum_i m_i[E_i:F]$. On peut trouver $Y^\ast\in \m_{X^\ast,\ss}^{\ast}(F)$ proche de 0, grâce au lemme \ref{lem:polyproche}, tel que 
\begin{enumerate}
    \item le polynôme caractéristique de $Y^\ast$ sur chaque composante $\Res_{E_i/F}\GL_{m_i,E_i}$ est un polynôme irréductible de degré $m_i[E_i:F]$, et ces polynômes caractéristiques sont premiers entre eux ; et
    \item pour tout $t$ avec $0<|t|\leq 1$, le polynôme caractéristique de $X^\ast+tY^\ast$ sur chaque composante $\Res_{E_i/F}\GL_{m_i,E_i}$ est un polynôme irréductible de degré $m_i[E_i:F]$, et ces polynômes caractéristiques sont premiers entre eux.
\end{enumerate}
L'élément $Y^\ast$ est régulier et elliptique dans $\m_{X^\ast}^\ast(F)$ d'après la première condition. Puis $X^\ast+tY^\ast$ est $G^\ast$-régulier d'après la seconde condition et le fait que $n=\sum_i m_i[E_i:F]$.
\end{proof}

Pour tous $M\in\L^G$ et $X\in \m(F)$, on a que $\langle -,-\rangle|_{\g_{X_\ss}(F)}$ est la somme orthogonale de $\langle -,-\rangle|_{\mathfrak{m}_{X_\ss}(F)}$ et d'une forme quadratique déployée, d'où $\gamma_\psi(\g_{X_\ss})=\gamma_\psi(\mathfrak{m}_{X_\ss})$. De même l'égalité analogue évidente pour $G^\ast,M^\ast$ et $X_\ss^\ast$ a lieu. On en déduit $e^M(X)=e^G(X)$. 

\begin{theorem}[Correspondance locale]\label{pro:corrloc} Soit $f\arr f^\ast$. Alors pour tout $M'\in \L^{G^\ast}$, tout $Q'\in \F^{G^\ast}(M')$, et tout $X'\in \m'(F)$, on a 
\begin{align*}
    J_{M'}^{Q'}(X',f^\ast)=\begin{cases*}
    J_M^Q(X,f) & \text{si $M'=M^\ast$, $Q'=Q^\ast$, et $X'=X^\ast$  (via $\eta|_\m$) ;} \\
    0 & \text{si $Q'$ ne se transfère pas ou si $M'$ se transfère et} \\
     & \text{$X'$ ne se transfère pas (via $\eta|_\m$).}
\end{cases*}
    \end{align*}
\end{theorem}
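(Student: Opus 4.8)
The plan is to reduce the general weighted orbital integral $J_{M'}^{Q'}(X',f^\ast)$ to the regular semisimple case handled by Theorem~\ref{prop:deftransfertfon}, using the Shalika germ expansion (Proposition~\ref{prop:Shalika}) together with the transfer properties already established for regular semisimple elements. First I would dispose of the vanishing statements: if $Q'$ does not transfer, then $J_{M'}^{Q'}(X',f^\ast)=0$ for all $X'\in\m'(F)$ is exactly part (2) of Proposition~\ref{pro:localvanishing}, so nothing remains to do there. The genuinely new content is the case where $M'=M^\ast$ transfers but $X'$ does not transfer (giving $0$), and the case $(M',Q',X')=(M^\ast,Q^\ast,X^\ast)$ (giving $J_M^Q(X,f)$). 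In both situations I may assume $M'$ transfers, write $M'=M^\ast$, and I would work inside $\m^\ast$ via the inner twist $\eta|_{\m}:\m\to\m^\ast$; by the descent formula (Proposition~\ref{prop:IOP}(5)) and the remark after Theorem~\ref{prop:deftransfertfon} that $f_P\arr f_{P^\ast}^\ast$, it suffices to treat $M'=M^\ast=G^\ast$, i.e. to compute $J_{G^\ast}^{G^\ast}(X',f^\ast)$ for arbitrary $X'\in\g^\ast(F)$.

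Next I would apply the Shalika germ expansion around the semisimple part $X_\ss'$ of $X'$: there is a neighbourhood of $0$ in $\g^\ast_{X_\ss'}(F)$ on which
\[
J_{G^\ast}^{G^\ast}(X_\ss'+Y',f^\ast)=\sum_{\nu'\in(\mathcal N_{G^\ast_{X_\ss'}}(F))}g_{G^\ast}^{G^\ast}(Y',X_\ss'+\nu')\,J_{G^\ast}^{G^\ast}(X_\ss'+\nu',f^\ast),
\]
the left-hand side being a regular semisimple integral. If $X'$ does not transfer, then $X_\ss'$ does not transfer either (Proposition after Proposition~\ref{prop:GXtorseurint}), hence $X_\ss'+Y'$ does not transfer for $Y'$ small by Corollary~\ref{coro:transferopen} applied near $X_\ss'$ — here I would need that non-transfer is an open condition in a neighbourhood of a non-transferring semisimple point, which follows from the characteristic-polynomial description and Lemma~\ref{lem:polyproche}. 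Then Proposition~\ref{pro:localvanishing}(1) kills the left-hand side, and the linear independence of Shalika germs (Proposition~\ref{prop:Shalika}(3)) forces every coefficient, in particular the one attached to $\nu'=X_\nilp'$, to vanish; this gives $J_{G^\ast}^{G^\ast}(X',f^\ast)=0$, as wanted. For the transferring case $X'=X^\ast$, I would run the same germ expansion on both sides — for $G$ with $X$ and for $G^\ast$ with $X^\ast$ — match the regular semisimple left-hand sides using Theorem~\ref{prop:deftransfertfon}, match the nilpotent orbits of $G_{X_\ss}$ with those of $G^\ast_{X_\ss^\ast}$ through the inner twist $\eta_X$ (Proposition~\ref{prop:GXtorseurint}), and match the germs themselves via Lemma~\ref{lem:quotientofgermsShalika}, which introduces the Weil-constant ratio $e^{G}(X_\ss)=e^{M}(X_\ss)$. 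The point is that this same sign $e^G(X_\ss)$ relates $J_{G^\ast}^{G^\ast}(X^\ast+\nu^\ast,f^\ast)$ to $J_G^G(X+\nu,f)$ — either because the transfer $f\arr f^\ast$ is already set up to absorb it, or by an independent homogeneity/Fourier-transform argument — so that the $e^G$ factors from the germ side and the integral side cancel, leaving $J_{G^\ast}^{G^\ast}(X^\ast,f^\ast)=J_G^G(X,f)$.

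To make the germ-matching argument actually pin down all coefficients rather than just one combination, I would invoke Lemma~\ref{lem:choixdeYstar}: choose $Y^\ast\in\g^\ast_{X_\ss^\ast}(F)$ elliptic and $M_{X^\ast}^\ast$-regular with $X_\ss^\ast+tY^\ast\in\g^\ast_{G-\rss}(F)$ for all $0<|t|\le 1$, scale by $t$, and use the homogeneity of Shalika germs (Proposition~\ref{prop:Shalika}(4)) to separate the contributions of nilpotent orbits of different dimension by their degree in $|t|$ and powers of $\log|t|$; Lemma~\ref{lem:germenonnulell} guarantees the relevant elliptic germ $g_{G^\ast}^{G^\ast}(Y^\ast,0)$ is nonzero, so the matching of the scaled left-hand sides is not vacuous. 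The main obstacle, I expect, is precisely this bookkeeping of the Weil-constant sign $e^G$: one must verify that the sign produced by Lemma~\ref{lem:quotientofgermsShalika} on the germ side is exactly compensated by the corresponding sign relating the nilpotent weighted orbital integrals $J_{G^\ast}^{G^\ast}(\nu^\ast,f^\ast)$ and $J_G^G(\nu,f)$, so that the final identity comes out with coefficient $1$ and not $e^G(X)$. Establishing that compensation — tracking how $\gamma_\psi(\g)$ versus $\gamma_\psi(\g^\ast)$ enters both the definition of the transfer and the germ identity, and checking it is consistent with the measure normalisations of \ref{subsec:localnormalisationsdemesures} — is where the real care is needed; everything else is a formal consequence of the regular semisimple transfer, the vanishing proposition, and linear independence of germs.
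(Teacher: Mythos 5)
Your proposal correctly identifies the core tools (Shalika germ expansion, germ vanishing, linear independence, homogeneity, Lemma~\ref{lem:choixdeYstar}, Lemma~\ref{lem:germenonnulell}, Lemma~\ref{lem:quotientofgermsShalika}), and for the matching case $(M',Q',X')=(M^\ast,Q^\ast,X^\ast)$ your sketch is close in spirit to the paper's, which works with $X=X_\ss$ elliptic (obtained via Proposition~\ref{prop:suppellitique} and the \emph{induction} descent formula, Proposition~\ref{prop:IOP}(4)) and then runs the germ expansion, case-splitting on whether $G_X=M_X$. However, your preliminary reduction is not justified: Proposition~\ref{prop:IOP}(5) (parabolic descent) replaces the ambient group $G^\ast$ by $M_{Q'}'$, i.e.\ it reduces $Q'$ to the top, but leaves $M'$ a possibly proper Levi; it does not let you pass to ``$M'=M^\ast=G^\ast$''. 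In particular your sketch entirely bypasses the case $G_X\neq M_X$, where $J_M^Q(X,f)$ is \emph{defined} by the limit in equation~\eqref{YDLgeomeq:IOPdef} involving the $(G,M)$-family $r_M^L(A,X)$, and one needs Proposition~\ref{prop:fonctionr} to match those $r$-functions across $\eta$ before the germ argument even applies.

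The more serious gap is in the vanishing case ``$M'=M^\ast$ transfers but $X'$ does not''. You assert ``if $X'$ does not transfer then $X_\ss'$ does not transfer either'', citing the proposition after Proposition~\ref{prop:GXtorseurint}; but that proposition gives the \emph{opposite} implication ($X'$ transfers $\Rightarrow$ $X_\ss'$ transfers). It is perfectly possible that $X_\ss'$ transfers while $X_\nilp'$ does not transfer to the inner form $G_{X_\ss}$ of $G_{X_\ss'}^\ast$ (e.g.\ $G=D^\times$ for $D$ a quaternion division algebra, $G^\ast=\GL_2$, $X'$ a regular nilpotent). In that situation the regular semisimple elements $X_\ss'+Y'$ near $X_\ss'$ are a mixture of transferring and non-transferring points (Corollary~\ref{coro:transferopen} says each of these two sets is open, not that one is empty near $X_\ss'$), so the left side of your germ expansion does not vanish, and the linear-independence argument cannot isolate the non-transferring nilpotent coefficient. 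The paper's treatment of this case is genuinely different: Lemma~\ref{lem:nesetransfertpasalorsdansunLevinesetransfertpas} produces $X_1'\in(\Ad M^\ast(F))X'$ and a \emph{non-transferring} Levi $L'\in\L^{M^\ast}$ with $X_{1,\ss}'$ elliptic in $\mathfrak l'$ and $X'=\Ind_{L'}^{M^\ast}(X_{1,\ss}')$; then the induction descent formula writes $J_{M^\ast}^{Q^\ast}(X',f^\ast)$ as $\sum_{L_1'}d_{L'}^{M_{Q^\ast}^\ast}(L_1',M^\ast)J_{L'}^{Q_{L_1'}'}(X_{1,\ss}',f^\ast)$, Proposition~\ref{pro:localvanishing}(2) kills each term whose parabolic does not transfer, and Corollary~\ref{coro:artcoeff} (the Cartesian-diagram lemma for Arthur's coefficients $d_{M'}^{G^\ast}$) kills the remaining ones by forcing $d_{L'}^{M_{Q^\ast}^\ast}(L_1^\ast,M^\ast)=0$. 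None of this appears in your sketch, and without it the vanishing claim is not proved.
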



\begin{proof}On prouve d'abord la première ligne puis la deuxième ligne.
\begin{enumerate}
    \item Soient $\m(F)\ni X\arr X^\ast\in\m^\ast(F)$ via la restriction $\eta|_\m :\m\rightarrow \m^\ast$. Soient $L\arr L^\ast$ tels que $X=\Ind_L^M(X_{\ss})$,  $X^\ast=\Ind_{L^\ast}^{M^\ast}(X_{\ss}^\ast)$, et $X_\ss$ est elliptique dans $\mathfrak{l}$ (proposition \ref{prop:suppellitique}). Puisque $L_{X_\ss^\ast}^\ast$ est la forme intérieure quasi-déployée de $L_{X_\ss}$ par restriction de $\eta$, on a $A_{L^\ast}=A_L=A_{L_{X_\ss}}=A_{L_{X_\ss^\ast}^\ast}$, i.e. $X_{\ss}^\ast$ est elliptique dans $\mathfrak{l}^\ast$. La formule de descente de l'induction (proposition \ref{prop:IOP}) nous assure qu'il suffit de prouver la proposition pour $X=X_\ss$ elliptique dans $\m^\ast(F)$. On suppose donc dans la suite que $X=X_\ss$, et que cet élément est elliptique dans $\m^\ast(F)$.

    On regarde dans un premier temps le cas où $G_{X}=M_{X}$ : on a alors $G_{X^\ast}^\ast=M_{X^\ast}^\ast$. On a
    $L_{X}=M_{X}$ pour tout Levi $L\in\L^{G}(M)$, aussi $L_{X^\ast}^\ast=M_{X^\ast}^\ast$ pour tout $L^\ast\in\L^{G^{\ast}}(M^\ast)$. On écrit, à l'aide de l'annulation de germes (point 2 de la proposition \ref{prop:Shalika}), 
    \begin{align*}
    J_{M}^{Q}(X+tY,f)&=\sum_{\nu\in (\mathcal{N}_{M_{X}}(F))}g_{M}^{M}(tY,X+\nu)J_{M}^{Q}(X+\nu,f) \\   
    J_{M^\ast}^{Q^\ast}(X^\ast+tY^\ast,f^\ast)&=\sum_{\nu'\in (\mathcal{N}_{M_{X^\ast}^\ast}(F))}g_{M^\ast}^{M^\ast}(tY^\ast,X^\ast+\nu')J_{M^\ast}^{Q^\ast}(X^\ast+\nu',f^\ast).
    \end{align*}
    pour tous $\m_X(F)\ni Y\arr Y^\ast\in \m_{X^\ast}^\ast(F)$ et $t\in F$ avec $0<|t|\leq 1$ pourvu que $X+tY\in \m_{G-\rss}(F)$ et $X^\ast+tY^\ast\in \m_{G^\ast-\rss}^\ast(F)$. Il est clair que $(tY)^\ast=tY^\ast$, on peut donc mettre une égalité aux membres de gauche. Puis en regardant les termes de degré 0 par dans les formules d'homogénéité des germes on en déduit
    \[g_M^M(Y,X)J_M^Q(X,f) =g_{M^\ast}^{M^\ast}(Y^\ast,X^\ast)J_{M^\ast}^{Q^\ast}(X^\ast,f^\ast).\]
    On prend ensuite $Y^\ast$ comme dans le lemme \ref{lem:choixdeYstar}. Alors avec l'appui du lemme \ref{lem:germenonnulell} et du point 3 de la proposition \ref{prop:Shalika} on voit que $g_{M^\ast}^{M^\ast}(Y^\ast,X^\ast)=g_{M_{X^\ast}^\ast}^{M_{X^\ast}^\ast}(Y^\ast,0)\not =0$. D'où, selon le lemme \ref{lem:quotientofgermsShalika},
    \[J_{M^\ast}^{Q^\ast}(X^\ast,f^\ast)=e^{M}(X)J_M^Q(X,f).\]
    
    Abandonnons l'hypothèse que $G_{X}=M_{X}$. L'énoncé est démontré par l'équation \ref{YDLgeomeq:IOPdef} et la proposition \ref{prop:fonctionr}. 
    
    \item Si $Q'$ ne se transfère pas on tombe sur le point 2 de la proposition \ref{pro:localvanishing}. Sinon on suppose que $(M',Q')=(M^\ast,Q^\ast)$ et que $X'$ ne se transfère pas. On prend $X_{1}'\in (\Ad M^\ast(F))X'$ et $L'\in\L^{M^\ast}(M_{0^\ast})$ qui ne se transfère pas tels que $X'=\Ind_{L'}^{M^\ast}(X_{1,\ss}')$ avec $X_{1,\ss}'$ elliptique dans $\mathfrak{l}'$ au moyen du lemme \ref{lem:nesetransfertpasalorsdansunLevinesetransfertpas}. La formule de descente de l'induction nous dit que
    \begin{align*}
    J_{M^\ast}^{Q^\ast}(X',f^\ast)&=J_{M^\ast}^{Q^\ast}(\Ind_{L'}^{M^\ast}(X_{1,\ss}'),f^\ast)\\
    &=\sum_{L_1'\in\L^{M_{Q^\ast}^\ast}(L')}d_{L'}^{M_{Q^\ast}^\ast}(L_1',M^\ast)J_{L'}^{Q_{L_1'}'}(X_{1,\ss}',f^\ast).    
    \end{align*}
    Soit $L_1'\in\L^{M_{Q^\ast}^\ast}(L')$. Si $J_{L'}^{Q_{L_1'}'}(X_{1,\ss}',f^\ast)\not =0$, alors le point 2 de la proposition \ref{pro:localvanishing} implique que $Q_{L_1'}'$ se transfère. Donc $L_1'=L_1^\ast$ se transfère. Le corollaire \ref{coro:artcoeff} nous assure de ce fait $d_{L'}^{M_{Q^\ast}^\ast}(L_1^\ast,M^\ast)=0$. En conclusion $J_{M^\ast}^{Q^\ast}(X',f^\ast)=0$. 
    \qedhere
\end{enumerate}
\end{proof}

\begin{corollary}
Soit $f\in \S(\g(F))$. Si $J_M^Q(X,f)=0$ pour tous $M\in \L^G(M)$, $Q\in\F^G(M)$ et $X\in\m_{G-\rss}(F)$, alors $J_M^Q(X,f)=0$ pour tous $M\in \L^G(M)$, $Q\in\F^G(M)$ et $X\in\m(F)$.   
\end{corollary}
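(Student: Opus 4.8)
The plan is to mimic, almost verbatim, the argument used in the proof of Proposition~\ref{pro:localvanishing}(2), feeding in the vanishing hypothesis on $f$ in place of the ``$X'$ ne se transfère pas'' input there. Set $d(M,Q)\eqdef\dim M_Q-\dim M$ for a pair $(M,Q)$ with $M\in\L^G(M_0)$ and $Q\in\F^G(M)$. I would prove by induction on $n=d(M,Q)$ the assertion: for every such pair with $d(M,Q)=n$ and every $X\in\m(F)$ one has $J_M^Q(X,f)=0$. Since $d(M,Q)\le\dim G-\dim M_0$ always, this covers all cases. At the start of each stage one reduces to semi-simple base points via the Jordan decomposition: writing $X=X_\ss+X_\nilp$, the element $X_\nilp$ is nilpotent in $\m_{X_\ss}(F)$, so $X$ is $M(F)$-conjugate to $X_\ss+\nu$ for some $\nu\in(\mathcal N_{M_{X_\ss}}(F))$, and $J_M^Q$ depends only on the $M(F)$-conjugacy class; hence it suffices to show $J_M^Q(X_\ss+\nu,f)=0$ for every such $\nu$.

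For the base case $n=0$ we have $M_Q=M$, so $\L^{M_Q}(M)=\{M\}$ and the Shalika germ expansion (Proposition~\ref{prop:Shalika}(1)) collapses to
\[J_M^Q(X_\ss+Y,f)=\sum_{\nu\in(\mathcal N_{M_{X_\ss}}(F))}g_M^M(Y,X_\ss+\nu)\,J_M^Q(X_\ss+\nu,f)\]
for $Y$ near $0$ in $\m_{X_\ss}(F)$ with $X_\ss+Y\in\m_{G-\rss}(F)$. The left-hand side vanishes by hypothesis, so linear independence of the germs $(g_M^M(\cdot,X_\ss+\nu))_\nu$ (Proposition~\ref{prop:Shalika}(3)) forces $J_M^Q(X_\ss+\nu,f)=0$ for all $\nu$, which is exactly what is needed.

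For the inductive step, assume the assertion for all pairs with $d<n$, and let $d(M,Q)=n$ and $X=X_\ss+X_\nilp\in\m(F)$. The full germ expansion gives
\[J_M^Q(X_\ss+Y,f)=\sum_{L\in\L^{M_Q}(M)}\sum_{\nu\in(\mathcal N_{L_{X_\ss}}(F))}g_M^L(Y,X_\ss+\nu)\,J_L^Q(X_\ss+\nu,f),\]
with $Y$ as above. For $L\ne M$ one has $d(L,Q)=\dim M_Q-\dim L<n$, so $J_L^Q(X_\ss+\nu,f)=0$ by the induction hypothesis applied to the element $X_\ss+\nu\in\l(F)$; only the $L=M$ terms survive, and one concludes exactly as in the base case that $J_M^Q(X_\ss+\nu,f)=0$ for all $\nu$, hence $J_M^Q(X,f)=0$.

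I do not expect a genuine obstacle here. The only points requiring attention are the choice of the induction parameter $d(M,Q)$ so that the off-diagonal germ contributions fall under the induction hypothesis, and the reduction to semi-simple $X_\ss$ via the Jordan decomposition so that the germ expansion is available; both are handled precisely as in Proposition~\ref{pro:localvanishing}(2). One may also note that suitable $Y$ (semi-simple, strongly $G$-regular, arbitrarily close to $0$ in $\m_{X_\ss}(F)$) exist, e.g. by Lemma~\ref{lem:choixdeYstar}, although strictly speaking both the germ identity and the linear-independence statement are already formulated over the set of such $Y$, so no separate existence argument is needed.
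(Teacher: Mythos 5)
Your proof is correct, but it takes a genuinely different route from the paper's. The paper observes that the vanishing hypothesis on the weighted orbital integrals of $G$-regular semi-simple elements is precisely the statement that the zero function is a valid non-invariant transfer of $f$, i.e.\ $f\arr 0$: all the defining conditions of $f\arr f^\ast$ in Theorem~\ref{prop:deftransfertfon} are equalities of regular semi-simple weighted orbital integrals, and with $f^\ast=0$ both sides vanish. Applying Theorem~\ref{pro:corrloc} (which has already been proved) then immediately gives $J_M^Q(X,f)=J_{M^\ast}^{Q^\ast}(X^\ast,0)=0$ for every $X\in\m(F)$, since every conjugacy class of $\g(F)$ transfers to the quasi-split form. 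What you do instead is re-run the germ-expansion induction of Proposition~\ref{pro:localvanishing}(2) directly on $f$, with the vanishing hypothesis replacing the ``does not transfer'' input. Your induction parameter $d(M,Q)=\dim M_Q-\dim M$, the reduction to a semi-simple base via the Jordan decomposition, and the use of Proposition~\ref{prop:Shalika}(1) and (3) are all in order; the off-diagonal terms $L\ne M$ fall under the induction hypothesis exactly as you say. The trade-off is that the paper's argument is a one-liner packaging the germ/induction content already buried in Theorem~\ref{pro:corrloc}, whereas yours is self-contained, bypasses the transfer formalism entirely, and would carry over verbatim to any reductive group once the germ machinery (Shalika expansion, linear independence of germs) is available.
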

\begin{proof}
On a $f\arr 0$. On conclut la preuve au moyen de la proposition précédente.     
\end{proof}

\begin{corollary}
Soit $M\arr M^\ast$. Pour tous $\m_{\ss}(F)\ni X\arr X^\ast\in\m_{\ss}^\ast(F)$,  $(\mathcal{N}_{G_{X}}(F))\ni \nu\arr \nu^\ast\in(\mathcal{N}_{G_{X^\ast}^\ast}(F))$, et $\m_X(F)\ni Y\arr Y^\ast\in \m_{X^\ast}^\ast(F)$ avec $X+Y\in \g_\rss(F)$ et  $X^\ast+Y^\ast\in \g_\rss^\ast(F)$, on a l'égalité des germes 
\[e^{G}(X)g_{M^\ast}^{G^\ast}(Y^\ast,X^\ast+\nu^\ast)=g_M^{G}(Y,X+\nu).\]
\end{corollary}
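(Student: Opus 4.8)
On suit la trame de la preuve du théorème~\ref{pro:corrloc}, en l'étendant par une récurrence sur $\dim G$. Fixons $f\in\S(\g(F))$ et $f^\ast$ tel que $f\arr f^\ast$. D'après la proposition~\ref{prop:Shalika}, point~(1), appliquée avec $Q=G$ du côté de $G$ et à $J_{M^\ast}^{G^\ast}(X^\ast+Y^\ast,f^\ast)$ du côté de $G^\ast$, on a pour $Y$ dans un voisinage de $0$ dans $\m_X(F)$ vérifiant $X+Y\in\m_{G-\rss}(F)$ (et de même $X^\ast+Y^\ast$),
\[J_M^G(X+Y,f)=\sum_{L\in\L^G(M)}\sum_{\nu\in(\mathcal{N}_{L_X}(F))}g_M^L(Y,X+\nu)\,J_L^G(X+\nu,f),\]
\[J_{M^\ast}^{G^\ast}(X^\ast+Y^\ast,f^\ast)=\sum_{L'\in\L^{G^\ast}(M^\ast)}\sum_{\nu'\in(\mathcal{N}_{L'_{X^\ast}}(F))}g_{M^\ast}^{L'}(Y^\ast,X^\ast+\nu')\,J_{L'}^{G^\ast}(X^\ast+\nu',f^\ast).\]
Comme $X+Y$ et $X^\ast+Y^\ast$ sont réguliers semi-simples et se correspondent, le théorème~\ref{pro:corrloc} identifie les deux membres de gauche.

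J'examine ensuite le membre de droite du côté de $G^\ast$. Puisque $M^\ast$ se transfère, la proposition~\ref{prop:transfertdef}, point~3, jointe à la fonctorialité des flèches $H^1(F,M^\ast_{\AD})\to H^1(F,L'_{\AD})\to H^1(F,G^\ast_{\ad})$, montre que \emph{tout} $L'\in\L^{G^\ast}(M^\ast)$ se transfère, de sorte que $L'=L^\ast$ pour un unique $L\in\L^G(M)$ (via la bijection $\L^G(M)\to\L^{G^\ast}(M^\ast)$ attachée à $\eta$) et $L^\ast_{X^\ast}$ est la forme intérieure quasi-déployée de $L_X$ (proposition~\ref{prop:GXtorseurint}). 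La compatibilité du transfert à la décomposition de Jordan montre que, pour $\nu'\in(\mathcal{N}_{L^\ast_{X^\ast}}(F))$, l'élément $X^\ast+\nu'$ se transfère en un élément de $\mathfrak{l}(F)$ si et seulement si $\nu'$ est dans l'image de l'application de transfert injective $(\mathcal{N}_{L_X}(F))\to(\mathcal{N}_{L^\ast_{X^\ast}}(F))$ induite par $\eta_X$, auquel cas $\nu'=\nu^\ast$ et $X+\nu\in\mathfrak{l}(F)$ se transfère en $X^\ast+\nu'$ via $\eta|_{\mathfrak{l}}$. Le théorème~\ref{pro:corrloc} donne alors $J_{L^\ast}^{G^\ast}(X^\ast+\nu',f^\ast)=0$ lorsque $X^\ast+\nu'$ ne se transfère pas, et $J_{L^\ast}^{G^\ast}(X^\ast+\nu^\ast,f^\ast)=e^G(X)\,J_L^G(X+\nu,f)$ sinon — le signe $e^G(X)$ (qui vaut $e^L(X)$) provenant du lemme~\ref{lem:quotientofgermsShalika}, exactement comme dans la preuve du théorème~\ref{pro:corrloc}. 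En reportant et en soustrayant les deux développements, il vient, pour toute $f$ et tout $Y$ comme ci-dessus,
\[\sum_{L\in\L^G(M)}\sum_{\nu\in(\mathcal{N}_{L_X}(F))}\Bigl(g_M^L(Y,X+\nu)-e^G(X)\,g_{M^\ast}^{L^\ast}(Y^\ast,X^\ast+\nu^\ast)\Bigr)J_L^G(X+\nu,f)=0.\]

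On raisonne par récurrence sur $\dim G$. Si $L\in\L^G(M)$ et $L\neq G$, alors $\dim L<\dim G$ et le corollaire appliqué à $L$ (muni du torseur intérieur $\eta|_{\mathfrak{l}}$ et du Levi $M\arr M^\ast$) donne $g_M^L(Y,X+\nu)=e^L(X)g_{M^\ast}^{L^\ast}(Y^\ast,X^\ast+\nu^\ast)=e^G(X)g_{M^\ast}^{L^\ast}(Y^\ast,X^\ast+\nu^\ast)$, donc ces termes s'annulent ; il ne reste que le terme $L=G$ :
\[\sum_{\nu\in(\mathcal{N}_{G_X}(F))}\Bigl(g_M^G(Y,X+\nu)-e^G(X)\,g_{M^\ast}^{G^\ast}(Y^\ast,X^\ast+\nu^\ast)\Bigr)J_G^G(X+\nu,f)=0.\]
Les éléments $X+\nu$, $\nu\in(\mathcal{N}_{G_X}(F))$, ont même partie semi-simple $X$ et des parties nilpotentes deux à deux non $G_X(F)$-conjuguées, donc définissent des classes de $G(F)$-conjugaison distinctes dans $\g(F)$ ; par la descente de Harish-Chandra en $X$, les distributions $f\mapsto J_G^G(X+\nu,f)$ se ramènent, sur les fonctions à support proche de $X$, aux intégrales orbitales nilpotentes de $\g_X(F)$, linéairement indépendantes d'après la proposition~\ref{prop:IOP}, point~7. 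Joint à l'homogénéité des germes (proposition~\ref{prop:Shalika}, point~4), ceci force chaque coefficient à s'annuler comme germe en $0$, soit $g_M^G(Y,X+\nu)=e^G(X)g_{M^\ast}^{G^\ast}(Y^\ast,X^\ast+\nu^\ast)$. (Le cas de base de la récurrence, où $\L^G(M)=\{G\}$, est précisément ce dernier argument, sans les termes $L\neq G$.)

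L'obstacle principal est cette dernière étape, le passage de l'unique identité sommée aux égalités individuelles de germes : il faut d'une part l'énoncé précis selon lequel $X^\ast+\nu'$ se transfère si et seulement si $\nu'$ est dans l'image de l'application de transfert nilpotente, pour appliquer le théorème~\ref{pro:corrloc} terme à terme, et d'autre part manipuler soigneusement l'indépendance linéaire et les développements en germes — en traitant la dépendance en $f$ des voisinages et les termes en puissances de $\log|t|$ comme dans \cite[III]{Walds95} — afin d'isoler successivement la contribution $L=G$ puis les orbites nilpotentes individuelles ; le suivi du signe ($e^G(X)=e^L(X)$ partout, cf. lemme~\ref{lem:quotientofgermsShalika}) devant rester cohérent tout du long.
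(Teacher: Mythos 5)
Votre preuve est correcte et suit, pour l'essentiel, la même démarche que celle du texte : identifier les deux membres par le théorème~\ref{pro:corrloc}, développer chacun en germes de Shalika, annuler les termes $L\neq G$ par récurrence (vous récurez sur $\dim G$, le texte sur $\dim G-\dim M$, ce qui revient au même), et conclure par l'indépendance linéaire des intégrales orbitales nilpotentes. Le texte est nettement plus elliptique que vous : il n'écrit que le terme $L=G$ du développement, sous-entend la disparition des orbites nilpotentes $\nu'$ ne se transférant pas (que vous traitez explicitement via $J_{L^\ast}^{G^\ast}(X^\ast+\nu',f^\ast)=0$), et cite directement le point~7 de la proposition~\ref{prop:IOP} sans mentionner la descente de Harish-Chandra en $X$ que vous évoquez à juste titre pour ramener $J_G^G(X+\nu,-)$ à des intégrales orbitales nilpotentes sur $\g_X(F)$. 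Le point qui vous inquiète en fin de preuve n'est donc pas un obstacle : c'est exactement cette indépendance linéaire, combinée au lemme de descente, qui est invoquée par le texte. Une seule mise en garde : vous écrivez $J_{L^\ast}^{G^\ast}(X^\ast+\nu^\ast,f^\ast)=e^G(X)\,J_L^G(X+\nu,f)$ en affichant le signe $e^G(X)$, alors que l'énoncé littéral du théorème~\ref{pro:corrloc} donne cette égalité sans signe ; mais la preuve de~\ref{pro:corrloc} elle-même aboutit à $e^M(X)J_M^Q(X,f)$, et la dernière ligne du calcul du texte pour ce corollaire fait apparaître $e^G(X)$ exactement comme vous : votre lecture est donc bien celle qui rend la chaîne d'arguments cohérente (le signe disparaissant au niveau semi-local dans la proposition~\ref{prop:semi-localIOPcomp} grâce à la formule du produit).
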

\begin{remark}
Cela est un renforcement de \cite[proposition 7.1]{AC}.
\end{remark}
\begin{proof}
Soit $n\geq 0$ un entier, on suppose que l'assertion est établie pour $\dim G -\dim M<n$. Alors pour $\dim G -\dim M=n$, on a, selon les hypothèses de la récurrence, la proposition \ref{pro:corrloc} et le fait que $e^M(X)=e^G(X)$,
\begin{align*}
    0&=J_{M}^{G}(X+Y,f)-J_{M^\ast}^{G^\ast}(X^\ast+Y^\ast,f^\ast)\\
    &=\sum_{\nu\in (\mathcal{N}_{M_{X}}(F))}g_{M}^{G}(Y,X+\nu)J_{G}^{G}(X+\nu,f)-\sum_{\nu'\in (\mathcal{N}_{M_{X^\ast}^\ast}(F))}g_{M^\ast}^{G^\ast}(Y^\ast,X^\ast+\nu')J_{G^\ast}^{G^\ast}(X^\ast+\nu',f^\ast)\\
    &=\sum_{\nu\in (\mathcal{N}_{M_{X}}(F))}(g_{M}^{G}(Y,X+\nu)-e^G(X)g_{M^\ast}^{G^\ast}(Y^\ast,X^\ast+\nu^\ast))J_{G}^{G}(X+\nu,f),
\end{align*}
pour tous $X$, $X^\ast$ etc comme dans l'énoncé, et toute fonction $f\in \S(\g(F))$ (et $f\arr f^\ast$). On conclut la preuve par l'indépendence des intégrales orbitales (point 7 de la proposition \ref{prop:IOP}).
\end{proof}

\subsection{Commutation du transfert avec la transformée partielle de Fourier}
Travaillons avec $G_\ad$ et $G_\ad^\ast$ dans cette sous-section. Notons que les notions de $G_\ad(F)$-conjugaison et de $G_\ad(\overline{F})$-conjugaison coïncident dans $\g_{\ad}(F)$. \`{A} titre de rappel si $H$ un sous-groupe de $G$ contenant $Z(G)$, on écrit $H_\AD$ pour le sous-groupe $H/Z(G)$ de $G_\ad$. On a $M_{0,\AD}$ un sous-groupe de Levi minimal de $G_\ad$. Au vue de la bijection évidente entre $\L^G(M_0)$ (resp. $\P^G(M_0)$ ; $\F^G(M_0)$) et $\L^{G_\ad}(M_{0,\AD})$ (resp. $\P^{G_\ad}(M_{0,\AD})$ ; $\F^{G_\ad}(M_{0,\AD})$), on peut définir dans la même veine que plus haut la notion du transfert d'une fonction dans $\S(\g_{\ad}(F))$ : on remplace les objects relativement à $G$ (resp. $G^\ast$) par leur quotient par $Z(G)$ (resp. $Z(G^\ast)$) dans la définition \ref{def:trplet}, puis on dit que $\S(\g_{\ad}(F))\ni h\arr h^\ast\in \S(\g_{\ad}^\ast(F))$ si la relation dans le théorème \ref{prop:deftransfertfon} sont vérifiées pour les sous-groupes de Levi, les sous-groupes paraboliques, et les éléments semi-simples réguliers dans les sous-algèbres de Levi de $G_\ad$ et $G_\ad^\ast$.

On signale que, selon les normalisations des mesures sur les objets en relation avec $G_\ad$ et $G_{\ad}^\ast$, si $\S(\g(F))\ni f\arr f^\ast\in \S(\g^\ast(F))$ alors $\S(\g_\ad(F))\ni f|_{\g_\ad}\arr f^\ast|_{\g_{\ad}^\ast}\in \S(\g_{\ad}^\ast(F))$.

\begin{proposition}[{{\cite[théorème 5.2.]{Ch05}}}]
Soit $\S(\g_\ad(F))\ni h\arr h^\ast\in \S(\g_{\ad}^\ast(F))$, alors
\[\gamma_\psi(\g_\ad)\widehat{h}^{\g_\ad}\arr \gamma_\psi(\g_\ad^\ast)\widehat{h^\ast}^{\g_{\ad}^\ast}.\]
\end{proposition}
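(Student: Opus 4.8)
The plan is to verify the characterisation of the adjoint transfer recalled just above: it suffices, for every geometric triplet $(M^\ast,Q^\ast,X^\ast)$ of $G_{\ad}^\ast$ with $X^\ast$ semi-simple $G_{\ad}^\ast$-regular, to check that $J_{M^\ast}^{Q^\ast}(X^\ast,\gamma_\psi(\g_{\ad}^\ast)\widehat{h^\ast})$ equals $J_{M}^{Q}(X,\gamma_\psi(\g_{\ad})\widehat h)$ when the triplet transfers ($X\arr X^\ast$ via $\eta|_{\m}$) and vanishes when $Q^\ast$ does not transfer. I would argue by induction on $\dim G$, the inductive hypothesis supplying the proposition for every proper Levi subgroup in adjoint form. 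Two remarks organise the cases: by Proposition \ref{prop:transfertdef}, "$Q^\ast$ transfers" depends only on the Levi $M_{Q^\ast}^\ast$, and $M^\ast\subseteq M_{Q^\ast}^\ast$ transferring forces $M_{Q^\ast}^\ast$ to transfer; hence if $Q^\ast=G^\ast$ only the matching (never the vanishing) is ever required, while if $Q^\ast\ne G^\ast$ then $M_{Q^\ast}^\ast\subsetneq G^\ast$.

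Suppose first $Q^\ast\ne G^\ast$. By Proposition \ref{prop:IOP}(5), $J_{M^\ast}^{Q^\ast}(X^\ast,\cdot)=J_{M^\ast}^{M_{Q^\ast}^\ast}(X^\ast,(\cdot)_{Q^\ast})$, and the partial Fourier transform intertwines constant terms: $(\widehat h)_{Q}=\widehat{h_{Q}}$ on $\mathfrak m_{Q}$ up to the normalisation of measures and a harmless swap $Q\leftrightarrow\overline Q$, because under the canonical form $\mathfrak n_{Q}$ is in perfect duality with $\mathfrak n_{\overline Q}$ and $\mathfrak n_{Q}\oplus\mathfrak n_{\overline Q}$ is split (Weil index $1$); for the same reason $\gamma_\psi(\g_{\ad})$ and $\gamma_\psi(\mathfrak m_{Q,\ad})$ differ only by a factor matched to its $G^\ast$-analogue through $\eta$. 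If the triplet transfers, the remark after Theorem \ref{prop:deftransfertfon} gives $h_{Q}\arr h^\ast_{Q^\ast}$, so the induction hypothesis applied to $M_{Q^\ast}$ yields the matching. If $Q^\ast$ does not transfer, Theorem \ref{pro:corrloc} applied to $h\arr h^\ast$ shows that every weighted orbital integral of $h^\ast_{Q^\ast}$ vanishes, i.e. $h^\ast_{Q^\ast}\arr 0$ inside the quasi-split group $M_{Q^\ast}^\ast$; applying the induction hypothesis to this last pair gives $\gamma_\psi\widehat{h^\ast_{Q^\ast}}\arr 0$, whence $J_{M^\ast}^{M_{Q^\ast}^\ast}(X^\ast,\gamma_\psi\widehat{h^\ast_{Q^\ast}})=0$, the desired vanishing.

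It remains to treat $Q^\ast=G^\ast$; when also $M^\ast=G^\ast$ everything is in place. Since $X^\ast$ is regular, $J_{G_{\ad}^\ast}^{G_{\ad}^\ast}(X^\ast,\widehat{h^\ast})=\widehat{J}_{G_{\ad}^\ast}^{G_{\ad}^\ast}(X^\ast,h^\ast)=\int_{\g_{\ad}^\ast(F)}|D^{\g^\ast}(Y^\ast)|^{-1/2}h^\ast(Y^\ast)\,\widehat j_{G_{\ad}^\ast}^{G_{\ad}^\ast}(X^\ast,Y^\ast)\,dY^\ast$ by Harish-Chandra, and the Weyl integration formula rewrites this as a sum over the maximal tori $T^\ast$ of $G_{\ad}^\ast$ of integrals, over $\mathfrak t_{\ad}^\ast(F)\cap(\g_{\ad}^\ast)_{\rss}$, of $|D^{\g^\ast}(Y^\ast)|^{1/2}\,\widehat j_{G_{\ad}^\ast}^{G_{\ad}^\ast}(X^\ast,Y^\ast)\,J_{G_{\ad}^\ast}^{G_{\ad}^\ast}(Y^\ast,h^\ast)$. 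The tori that do not transfer contribute zero by Proposition \ref{pro:localvanishing}(1) (then $Y^\ast$ does not transfer), while a torus transferring to $T\subset G_{\ad}$ contributes through $J_{G_{\ad}^\ast}^{G_{\ad}^\ast}(Y^\ast,h^\ast)=J_{G_{\ad}}^{G_{\ad}}(Y,h)$ ($Y\arr Y^\ast$, Theorem \ref{prop:deftransfertfon}) and $\gamma_\psi(\g_{\ad}^\ast)\widehat j_{G_{\ad}^\ast}^{G_{\ad}^\ast}(X^\ast,Y^\ast)=\gamma_\psi(\g_{\ad})\widehat j_{G_{\ad}}^{G_{\ad}}(X,Y)$ (the adjoint form of Waldspurger's identity \cite[11.3]{Walds97}, valid since $X\arr X^\ast$ and $Y\arr Y^\ast$), the volumes, Weyl group orders and Haar measures on the tori being matched by $\eta$ via our normalisations. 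Running the Weyl integration formula backwards on $G_{\ad}$ identifies the transferring part with $\gamma_\psi(\g_{\ad})\gamma_\psi(\g_{\ad}^\ast)^{-1}\,J_{G_{\ad}}^{G_{\ad}}(X,\widehat h)$, and multiplying by $\gamma_\psi(\g_{\ad}^\ast)$ gives the matching $J_{G^\ast}^{G^\ast}(X^\ast,\gamma_\psi(\g_{\ad}^\ast)\widehat{h^\ast})=J_{G}^{G}(X,\gamma_\psi(\g_{\ad})\widehat h)$.

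The main obstacle is the last case, $Q^\ast=G^\ast$ with $M^\ast\subsetneq G^\ast$ transferring: the weight $v_{M^\ast}^{G^\ast}$ is genuinely non-trivial and there is no parabolic left to descend along. One needs Arthur's weighted Fourier kernels $\widehat j_{M^\ast}^{L^\ast}(X^\ast,\cdot)$ for $L^\ast\in\L^{G^\ast}(M^\ast)$, the identity expanding $J_{M^\ast}^{G^\ast}(X^\ast,\widehat{h^\ast})=\widehat{J_{M^\ast}^{G^\ast}(X^\ast,\cdot)}(h^\ast)$ into a combination of integrals of $h^\ast$ against those kernels, and above all the transfer identity $\gamma_\psi(\g_{\ad}^\ast)\widehat j_{M^\ast}^{L^\ast}(X^\ast,\cdot)\leftrightarrow\gamma_\psi(\g_{\ad})\widehat j_M^L(X,\cdot)$. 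This last is the substance of \cite[théorème 5.2]{Ch05}; it is obtained from Waldspurger's identity for the top kernels $\widehat j_{L^\ast}^{L^\ast}$ (already used above) by unwinding the $(G,M)$-family formalism of \S\ref{subsubsec:(G,M)-familles}, exactly as in the invariant theory one passes from $\widehat j_{G}^{G}$ to $\widehat j_M^G$. Granting it, the Weyl-integration argument of the previous paragraph goes through verbatim — split off the non-transferring tori by Proposition \ref{pro:localvanishing}(1), match the rest by Theorem \ref{prop:deftransfertfon} and the kernel transfer — and the induction closes. The recurring technical chore throughout is to keep every Haar-measure normalisation of \S\ref{subsec:localnormalisationsdemesures} and every Weil constant aligned so that no parasitic scalar survives; here the compatibility $d_M^G(L_1,L_2)=d_{M^\ast}^{G^\ast}(L_1^\ast,L_2^\ast)$ and Proposition \ref{prop:fonctionr} are precisely the inputs required.
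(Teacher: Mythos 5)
The paper does not prove this proposition: it is cited verbatim as Chaudouard's théorème 5.2, so there is no internal proof to compare against. Your proposal correctly identifies the natural architecture of such a proof---induction on $\dim G$; parabolic descent to handle $Q^\ast\neq G^\ast$ (the partial Fourier transform intertwines constant terms, and the hyperbolic part $\mathfrak{n}_Q\oplus\mathfrak{n}_{\overline{Q}}$ carries trivial Weil constant); an unweighted base case from Waldspurger's kernel identity $\gamma_\psi(\g)\widehat{j}_G^G(X,Y)=\gamma_\psi(\g^\ast)\widehat{j}_{G^\ast}^{G^\ast}(X^\ast,Y^\ast)$ fed through the Weyl integration formula; and finally the genuinely weighted case $Q^\ast=G^\ast$, $M^\ast\subsetneq G^\ast$, after which the induction would close.

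The gap is in that last case. You correctly observe that one needs a transfer identity for the weighted Fourier kernels $\widehat{j}_M^L(X,\cdot)$, but then write that this is ``the substance of'' Chaudouard's théorème 5.2---the very result you set out to prove. At exactly the point where the real work lies the argument is circular. The assertion that the weighted kernel identity follows ``by unwinding the $(G,M)$-family formalism'' from the unweighted Waldspurger identity is not a proof: passing from $\widehat{j}_G^G$ to $\widehat{j}_M^G$ requires controlling the kernels off the regular locus and matching their singular (germ) expansions on both sides, which is the analytic heart of the matter and is not a combinatorial consequence of the $(G,M)$-family machinery; it must be established, not invoked. A small incidental slip in your base case: once you substitute $J_{G_\ad^\ast}^{G_\ad^\ast}(Y^\ast,h^\ast)=|D^{\g^\ast}(Y^\ast)|^{1/2}\int h^\ast(\Ad(g^{-1})Y^\ast)\,dg$ into the Weyl-integration expansion, the factor $|D^{\g^\ast}(Y^\ast)|^{1/2}$ cancels, so the integrand over each Cartan subalgebra should read $\widehat{j}_{G_\ad^\ast}^{G_\ad^\ast}(X^\ast,Y^\ast)\,J_{G_\ad^\ast}^{G_\ad^\ast}(Y^\ast,h^\ast)$ without the extra determinant factor you wrote.
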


Pour des raisons techniques nous travaillerons avec la transformée partielle de Fourier sur $\g(F)$ définie comme suit : rappelons que la décomposition $\g(F)=\mathfrak{z}(F)\oplus \g_{\ad}(F)$ est orthonogale par rapport à $\langle-,-\rangle$, et que les mesures sur $\g(F)$ et $\g_\ad(F)$ sont auto-duales. Posons $\pi_{\mathfrak{z}} : \g \to \mathfrak{z}$ la projection de $\g$ sur $\mathfrak{z}$ selon la décomposition ci-dessus. On définit alors la transformée partielle de Fourier d'une fonction $f \in\S(\g(F))$ par
\begin{equation}\label{YDLgeomeq:defpartialFourier}
\widetilde{f}(Y)=\int_{\g_{\ad}(F)}f(\pi_\mathfrak{z}(Y)+X)\psi(\langle X,Y\rangle)\,dX,\,\,\,\,\forall Y\in \g(F).    
\end{equation}
Il est clair, selon nos choix de mesures, que $\widetilde{\widetilde{f}}(Z+Y)=f(Z-Y)$ si $(Z,Y)\in\mathfrak{z}(F)\oplus \g_{\ad}(F)$. 

\begin{proposition}\label{prop:transFourier}
Soit $\S(\g(F))\ni f\arr f^\ast\in \S(\g^\ast(F))$, alors
\[\gamma_\psi(\g_\ad)\widetilde{f}\arr \gamma_\psi(\g_\ad^\ast)\widetilde{f^\ast}.\]
\end{proposition}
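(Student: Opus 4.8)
The plan is to reduce the statement to the analogous one for the adjoint group --- the proposition just above, \cite[théorème 5.2.]{Ch05} --- by ``slicing'' along the orthogonal decomposition $\g(F)=\mathfrak z(F)\oplus\g_\ad(F)$, since the partial Fourier transform \eqref{YDLgeomeq:defpartialFourier} is just the ordinary Fourier transform in the $\g_\ad$-variable, fibred over $\mathfrak z$. Because $G$ and $G^\ast$ are of type GL, Theorem \ref{pro:corrloc} (through Theorem \ref{prop:deftransfertfon}) lets me check the transfer relation on geometric triples only: it will suffice to show that for every $M'\in\L^{G^\ast}$, every $Q'\in\F^{G^\ast}(M')$ and every $X'\in\m'_{G^\ast-\rss}(F)$, the quantity $\gamma_\psi(\g_\ad^\ast)J_{M'}^{Q'}(X',\widetilde{f^\ast})$ equals $\gamma_\psi(\g_\ad)J_M^Q(X,\widetilde f)$ when $(M',Q',X')=(M^\ast,Q^\ast,X^\ast)$ and vanishes when $Q'$ does not transfer to $G$. (That $\widetilde f$ and $\widetilde{f^\ast}$ lie in the relevant Schwartz--Bruhat spaces is immediate from \eqref{YDLgeomeq:defpartialFourier}.)

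The first step will be a slicing identity for weighted orbital integrals. Write $X'=Z'+Y'$ with $Z'\in\mathfrak z^\ast(F)$ and $Y'\in(\m'\cap\g_\ad^\ast)(F)$, identify $\m'\cap\g_\ad^\ast$ with $\Lie M'_\AD$ where $M'_\AD=M'/Z(G^\ast)$ is the corresponding Levi of $G_\ad^\ast$, write $\overline{Y'}$ for $Y'$ regarded in $\m'_\AD(F)$, and set $g_{Z'}(W)=g(Z'+W)$ for $g\in\S(\g^\ast(F))$ and $W\in\g_\ad^\ast(F)$. Since $X'$ is $G^\ast$-regular semisimple and lies in $\m'(F)$, one has $G_{X'}^\ast=M'_{X'}$, a maximal torus of $G^\ast$ contained in $M'$, so $J_{M'}^{Q'}(X',\cdot)$ is given by formula \eqref{YDLgeomeq:IOPdefcaseequising}. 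As $Z'$ is central, $G_{X'}^\ast=G_{Y'}^\ast$, $\Ad(h^{-1})X'=Z'+\Ad(h^{-1})Y'$ and $|D^{\g^\ast}(X')|=|D^{\g_\ad^\ast}(\overline{Y'})|$; moreover $Z(G^\ast)$ is an induced torus, so Hilbert 90 makes $G^\ast(F)\to G_\ad^\ast(F)$ and $G_{X'}^\ast(F)\to(G_\ad^\ast)_{\overline{Y'}}(F)$ surjective, the normalisations of Subsection \ref{subsec:localnormalisationsdemesures} make the quotient measures on $G_{X'}^\ast(F)\backslash G^\ast(F)$ and $(G_\ad^\ast)_{\overline{Y'}}(F)\backslash G_\ad^\ast(F)$ correspond, and the weight $v_{M'}^{Q'}(h)$ depends only on the image $\overline h$ of $h$, equalling $v_{M'_\AD}^{Q'_\AD}(\overline h)$ through $a_{M'}^{M'_{Q'}}\simeq a_{M'_\AD}^{M'_{Q',\AD}}$. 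A change of variables then yields
\[J_{M'}^{Q'}(X',g)=J_{M'_\AD}^{Q'_\AD}(\overline{Y'},g_{Z'})\qquad\text{for all }g\in\S(\g^\ast(F)),\]
and likewise over $G$.

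Granting this, I will fix a geometric triple $(M',Q',X')$ of $G^\ast$, write $X'=Z'+Y'$ and $Z=(\eta|_{\mathfrak z})^{-1}(Z')$. From \eqref{YDLgeomeq:defpartialFourier} and the orthogonality of $\mathfrak z^\ast(F)\oplus\g_\ad^\ast(F)$ one gets $(\widetilde{f^\ast})_{Z'}=\widehat{(f^\ast)_{Z'}}^{\g_\ad^\ast}$, and similarly $(\widetilde f)_{Z}=\widehat{(f)_{Z}}^{\g_\ad}$; combined with the slicing identity on $G^\ast$ and on $G$ this turns the two quantities in the first paragraph into $\gamma_\psi(\g_\ad^\ast)J_{M'_\AD}^{Q'_\AD}(\overline{Y'},\widehat{(f^\ast)_{Z'}}^{\g_\ad^\ast})$ and $\gamma_\psi(\g_\ad)J_{M_\AD}^{Q_\AD}(\overline Y,\widehat{(f)_{Z}}^{\g_\ad})$. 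I then claim $(f)_{Z}\arr(f^\ast)_{Z'}$ as functions on $\g_\ad$: every geometric triple of $G_\ad^\ast$ is of the form $(M'_\AD,Q'_\AD,\overline{Y'})$ with $X':=Z'+Y'$ geometric for $G^\ast$, and applying the slicing identity on both sides converts the relations $f\arr f^\ast$ at $(M',Q',X')$ into the defining relations of the transfer $(f)_{Z}\arr(f^\ast)_{Z'}$ at that triple --- here I use that $\eta$ preserves $\mathfrak z\oplus\g_\ad$ and restricts to an $F$-isomorphism on the centres, so that $(M',Q')$ transfers iff $(M'_\AD,Q'_\AD)$ transfers, and, when $M'=M^\ast$, that $X'$ transfers via $\eta|_{\m'}$ iff $\overline{Y'}$ transfers via $\eta|_{\m'_\AD}$. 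Applying \cite[théorème 5.2.]{Ch05} to $(f)_{Z}\arr(f^\ast)_{Z'}$ yields $\gamma_\psi(\g_\ad)\widehat{(f)_{Z}}^{\g_\ad}\arr\gamma_\psi(\g_\ad^\ast)\widehat{(f^\ast)_{Z'}}^{\g_\ad^\ast}$ over $\g_\ad$, i.e. precisely the desired equality after translating back; since this holds for every geometric triple of $G^\ast$, the proof will be complete.

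The main obstacle I anticipate is the slicing identity: one must check that the quotient measures on $G_{X'}^\ast(F)\backslash G^\ast(F)$ and on $(G_\ad^\ast)_{\overline{Y'}}(F)\backslash G_\ad^\ast(F)$ correspond and that the weight descends faithfully to $G_\ad^\ast$. This is a bookkeeping matter about the measure normalisations of Subsection \ref{subsec:localnormalisationsdemesures} (together with the surjectivity of $G(F)\to G_\ad(F)$ for groups of type GL, which follows from Hilbert 90 since $Z(G)$ is an induced torus with vanishing first Galois cohomology) rather than a genuine analytic difficulty; the rest of the argument is formal.
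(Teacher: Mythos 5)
Your proposal is correct and follows essentially the same route as the paper's proof: decompose $\g=\mathfrak z\oplus\g_\ad$ orthogonally, observe that $\widetilde f$ at a point $Z+A$ with $Z\in\mathfrak z(F)$, $A\in\g_\ad(F)$ is the $\g_\ad$-Fourier transform of the slice $f_Z|_{\g_\ad}$, push the weighted orbital integral down to $G_\ad$, and invoke Chaudouard's \cite[théorème 5.2]{Ch05}. The paper states the slicing identity $J_{M'}^{Q'}(Z'+A',\cdot)=J_{M'_\AD}^{Q'_\AD}(A',(\cdot)_{Z'}|_{\g_\ad^\ast})$ and the transfer of the sliced functions with little comment (the case $Z=0$ is flagged earlier in the section); you spell these out — the surjectivity of $G(F)\to G_\ad(F)$ via Hilbert 90 for the induced torus $Z(G)$, the compatibility of quotient measures, the descent of the weight through $a_{M'}^{M'_{Q'}}\simeq a_{M'_\AD}^{M'_{Q',\AD}}$, and the equivalence of regularity for $X'$ and $\overline{Y'}$ — which is precisely the bookkeeping the paper leaves implicit. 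Nothing is missing or wrong; your version is just a more verbose rendering of the same argument.
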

\begin{proof} On note $\pi_{\g_{\ad}}:\g\rightarrow \g_\ad$ la projections orthogonale selon $\g=\mathfrak{z}\oplus\g_\ad$. On note $h_{X}$ la fonction $h_{X}(Y)=h(X+Y)$ pour $h\in\S(\g(F))$ et $X\in\g(F)$, et similairement pour les fonctions de $\S(\g^\ast(F))$. On remarque que l'action adjointe de $G(F)$ sur $\g(F)$ se décompose en une somme de  l'action triviale sur $\mathfrak{z}(F)$ et l'action adjointe de $G(F)$ sur $\g_{\ad}(F)$.

Soit $Y\in \g(F)$, posons $\phi_Y$ la fonction $f_{\pi_{\mathfrak{z}}(Y)}|_{\g_\ad}$. Il vient, de la définition,
\[\widetilde{f}(Y)=\widehat{\phi_Y}^{\g_\ad}(\pi_{\g_{\ad}}(Y)),\,\,\,\,\forall Y\in\g(F).\]
La même égalité a lieu pour $G^\ast$. Il s'ensuit
\[J_{M'}^{Q'}(Z'+A',\widetilde{f^\ast})=J_{M'}^{Q'}(A',\widetilde{f_{Z'}^\ast})=J_{M'_{\AD}}^{Q'_{\AD}}(A',\widehat{\phi_{Z'}^\ast}^{\g_\ad^\ast})\]
pour tout triplet $(M',Q',Z'+A')$ avec $Z'\in\mathfrak{z}^\ast(F)$ et $A'\in\g_{\ad}^\ast(F)$, et $\phi_{Z'}^\ast$ est la fonction $f_{Z'}^\ast|_{\g_\ad^\ast}$. Le transfert cherché est actuellement clair au motif de la proposition précédente.
\end{proof}

\section{Développement fin de la formule des traces pour les algèbres de Lie}\label{sec:formuledestracesI}

Soit désormais $F$ un corps de nombres.

\subsection{Préliminaires globaux}
\subsubsection{Objets relativement à \texorpdfstring{$F$}{F}}
On note $\O_F$ l'anneau des entiers de $F$.  On pose $\A_F$ l'anneau des adèles, dans lequel s'injecte de façon diagonale le corps $F$. On note $|\cdot|_{\A_F}=\prod_{v\in \V_F} |\cdot|_{F_v}$ la norme adélique usuelle. Si $S$ est un sous-ensemble fini de $\V_F$ on pose $F_S\eqdef\prod_{v\in S} F_v$ et le considère comme un sous-anneau de $\A_F$. On écrit $F_\infty\eqdef F\otimes_\Q\R$. Pour tout $v \in\V_F$, on fixe une place $\overline{v}$ de $\overline{F}$ divisant $v$. Pour toute extension finie $E$ de $F$ incluse
dans $\overline{F}$, on note $E_v$
le complété pour la place de $E$ obtenue par restriction de $\overline{F}$ à $E$. On note
$\overline{F}_v$ la réunion de ces complétés, c’est une clôture algébrique de $F_v$. On se donne un caractère non-trivial $\psi$ de $F\backslash \A_F$. 

\subsubsection{Objets relativement à \texorpdfstring{$G$}{G}}\label{subsubsec:objetpourGcontexteglobal}

Soit $G$ un groupe du type GL sur $F$. Pour toute $F$-algèbre $R$ on note $G_R$ le changement de base $G\times_F R$. Quand $R=F_v$ (resp. $R=F_S$ pour $S$ un sous-ensemble fini de $\V_F$ ; resp. $R=F_\infty$), on abrège $G_{F_v}$ (resp. $G_{F_S}$ ; resp. $G_{F_\infty}$) en $G_v$ (resp. $G_S$ ; resp. $G_\infty$).

Le groupe $G(F_S)$ s'identifie canoniquement à un sous-groupe de $G(\A_F)$. Aussi $G(F)$ s'identifie à un sous-groupe de $G(\A_F)$ via l'injection diagonale.  

On fixe $M_0$ un sous-groupe de Levi minimal de $G$. Fixons ensuite $M_{v,0}$ un sous-groupe de Lévi minimal de $G_v$, contenu dans $M_{0,v} \eqdef M_0 \times_F F_v$ pour tout $v\in \V_F$. Nous fixons $K$ un sous-groupe compact maximal de $G(\A_F)$ en prenant 
\[K=\prod_{v\in\V_F} K_v\subseteq G(\A_F)\]
avec $K_v$ un sous-groupe compact maximal de $G(F_v)$ en position par rapport à $M_{v,0}$, et tel que pour toute immersion définie sur $F$ de $G$ dans $\GL_n$, on a $K_v=\GL_n(\O_v)\cap G(F_v)$ pour presque toutes les places $v$. On dira que $K$ est en bonne position par rapport à $M_0$. 

Prenons un modèle (schéma en groupes réductifs) de $G$ sur $\O_F\left[\frac{1}{s}\right]$ avec $s\in F^\times$. Fixons $\underline{S}_0$ un sous-ensemble fini de places, contenant les places archimédiennes, tel que toute place de $F$ divisant $s$ appartient à $\underline{S}_0$. En regardant l'algèbre de Lie du modèle de $G$ on se gratifie d'un modèle (schéma en algèbres de Lie) de $\g$ sur $\O_F\left[\frac{1}{s}\right]$.

Fixons également $\mathfrak{k}_v$ un réseau de $\g(F_v)$ pour tout $v\in\V_\fin$ comme suit : pour $v\not\in \underline{S}_0$ prenons $\mathfrak{k}_v=\g(\O_v)$, et pour $v\in \V_{\fin}\cap \underline{S}_0$ prenons $\mathfrak{k}$ un réseau quelconque de $\g(F_v)$. Par définition pour toute immersion définie sur $F$ de $G$ dans $\GL_n$, on a $\mathfrak{k}_v=\mathfrak{gl}_n(\O_v)\cap \g(F_v)$ pour presque tous les $v$.

Soit $M\in \L^G(M_0)$. On note $A_{M,\Q}$ le sous-tore central $\Q$-déployé maximal dans $\Res_{F/\Q}M$ puis $A_{M,\infty}\eqdef A_{M,\Q}(\R)^\circ$ la composante neutre de $A_{M,\Q}(\R)\subseteq A_{M,\Q}(F_\infty)$, ici $\R$ s'injecte de façon diagonale dans $F_\infty$. Pour $P\in \P^G(M)$ on pose en plus $A_{P,\infty}\eqdef A_{M,\infty}$.

\subsubsection{Application \texorpdfstring{$H_P$}{HP}}~{}

Soit $M\in \L^G(M_0)$, on définit alors un morphisme de groupes $H_M : M(\A_F)\rightarrow a_
{M}$ par $e^{\langle H_M(m),\chi\rangle}=|\chi(m)|_{\A_F}$ pour tout $\chi\in X^\ast(M)_F$. Posons $M(\A_F)^1$ le noyau de l'application $H_M$. On a $M(\A_F)=M(\A_F)^1\times A_{M,\infty}$.

Soit $P\in\F^G(M_0)$, on définit l'application $H_P : G(\A_F)\rightarrow a_{P}$ par $H_P(mnk)\eqdef H_{M_P}(m)$ pour tous $m\in M_P(\A_F), n\in N_P(\A_F)$ et $k \in K$, suivant la décomposition d'Iwasawa $G(\A_F)=M_P(\A_F)N_P(\A_F)K$. L'application $H_P$ induit un isomorphisme de $A_{P,\infty}$ sur $a_P$.

Soit $S$ un sous-ensemble fini de $\V_F$, alors l'application $H_P$ se restreint à $H_P:G(F_S)\rightarrow a_P$.

\subsection{Normalisations des mesures de la section \ref{sec:formuledestracesI}}\label{subsec:normalisationsdesmesuresdeFTI}
On fixe à présent les normalisations des mesures. 

Pour $H$ un groupe séparé localement compact muni d'une mesure de Haar et $X\subseteq H$ une partie mesurable, on note $\vol(X;H)$ le volume de $X$ dans $H$ et $\vol(H)$ le volume total de $H$.

Dans la suite, nous nous conformons à la règle suivante : soit $H$ un groupe algébrique sur $F$. Soit $S$ un sous-ensemble fini de places contenant les places archimédiennes, si la mesure sur $H(F_v)$ est fixée pour tout $v\in S$, on prend sur $H(F_S)$ la mesure produit. Si $H$ admet un modèle (schéma en groupes lisses) sur $\O_F\left[\frac{1}{s}\right]$ avec $s\in F^\times$ tel que toute place divisant $s$ est dans $S$, alors pour tout $v\not\in S$ ce modèle nous donne un modèle entier $H_v'$ de $H$ en place $v$, autrement dit $H_v'$ est un schéma en groupes sur $\O_v$ avec $H_v'\otimes_{\O_v}F_v=H_v$. Si la mesure sur $H(F_v)$ est fixée pour tout $v\in \V_F$ et est de sorte que $\vol(H_v'(\O_v);H(F_v))=1$ pour tout $v\not\in S$, on prend sur $H(\A_F)$ la mesure produit. La mesure obtenue dépend, a priori, de $S$ et de la famille $(H_v')_{v\not\in S}$.

Dans la suite le symbole $v$ désigne un élément général de $\V_F$. 

La mesure sur un espace quotient est la mesure quotient. 

Fixons $M_{v,0}$ un sous-groupe de Levi minimal de $G_v$, inclus dans $M_{0,v}$. On choisit un produit scalaire $W^G_{M_0}$-invariant (resp. $W^{G_v}_{M_{v,0}}$-invariant) sur $a_{M_0}$ (resp. $a_{M_v,0}$). On prend sur tout sous-espace de $a_{M_0}$ (resp. $a_{M_v,0}$) la mesure engendrée par ce produit scalaire. On aimerait préciser qu'a priori, la mesure sur $a_M$ n'a pas de lien avec celle sur $a_{M_v}$ pour $M\in \L^G$. Constatons que pour tout $M\in \L^G$, la restriction de $H_M$ sur $A_{M,\infty}$ donne un isomorphisme de groupes topologiques $A_{M,\infty}\xrightarrow{\sim}a_M$. On se procure ainsi une mesure sur $A_{M,\infty}$ par cet isomorphisme.

Pour $P\in \L^G(M_0)$ ou $P\in \L^{G_v}(M_{v,0})$, on écrit $dh$ pour la mesure sur $a_P$ et $d\lambda$ pour la mesure duale sur $ia_P^\ast$.

Tous les sous-groupes ouverts compacts du groupe des $F_v$-point d'un groupe du type GL sont conjugués. Soit $X\in \g_{\ss}(F_v)$. Si $v\in \V_F\backslash\underline{S}_0$, on prend sur $G_{v,X}(F_v)$ la mesure donne le volume 1 à un (donc tout) sous-groupe ouvert compact maximal de $G_{v,X}(F_v)$ (resp. $G_{v,X^\ast}^\ast$) ; si $v\in \underline{S}_0$, on prend sur $G_{v,X}(F_v)$ la mesure de Haar fixée dans le numéro \ref{subsec:localnormalisationsdemesures}, i.e. l'équation \eqref{YDLgeomeq:Haarmeasureongrp}. Finalement, on munit $G_{v,X}(F_v)$ pour tous $v\in \V_F$ et $X\in \g(F_v)$ de la mesure expliquée dans l'équation \eqref{eq:defmeasureonanyorb}.

Pour tous $P\in\P^G(M_0)$, $\sigma\in\m_{P}(F_v)$ semi-simple, et $H\eqdef N_{P_\sigma}$, on munit $\mathfrak{h}(F_v)$ d'une mesure de Haar $dN$. On munit ensuite $H(F_v)$ de la mesure $dn$ qui est compatible à $dN$ via l'isomorphisme de variétés $\text{Id}+\cdot :N\in\mathfrak{h}(F_v)\mapsto \text{Id}+N\in H(F_v)$. On exige enfin que les mesures soient normalisées de façon que :
\begin{enumerate}
    \item pour tous $H$ comme ci-dessus,
      \begin{equation}\label{eq:integralconditiononmesureofLie(N)}
      \vol(\mathfrak{k}_v\cap \mathfrak{h}(F_v);\mathfrak{h}(F_v))=1\,\,\,\,\text{pour presque tous les $v$.}   
      \end{equation} 
    Ainsi le produit des mesures sur $\mathfrak{h}(F_v)$ donne une mesure sur $\mathfrak{h}(\A_F)$ ;  
    \item munissons $\mathfrak{h}(\A_F)$ de la mesure produit, alors 
    \[\vol(\mathfrak{h}(F)\backslash \mathfrak{h}(\A_F))=1\] 
    avec $\mathfrak{h}(F)$ muni de la mesure de comptage. 
\end{enumerate}
Observons que l'équation \eqref{eq:integralconditiononmesureofLie(N)} entraîne que 
\begin{equation}\label{eq:integralconditiononmesureofN}
      \vol(K_v\cap H(F_v);H(F_v))=1\,\,\,\,\text{pour presque tous les $v$.}   
      \end{equation} 
Ainsi le produit des mesures sur $H(F_v)$ donne une mesure sur $H(\A_F)$ ; puis le point 2 entraîne que 
\[\vol(H(F)\backslash H(\A_F))=1\] 
avec $H(F)$ muni de la mesure de comptage et $H(\A_F)$ muni de la mesure produit.


Soit $P\in\F^G(M_0)$. On définit la fonction module pour tout $p \in P(\A_F)$ par
\[\delta_P (p)=e^{2\rho_P(H_{M_0}(p))} = |\det(\Ad(p);\mathfrak{n}_P (\A_F))|_{\A_F}.\]
Bien sûr $\delta_P (p)=\prod_{v\in\V_F}\delta_P (p_v)$.

On munit, pour tout $L\in \L^{G_v}(M_{v,0})$, les groupes $K_v\cap L(F_v)$ de mesures de Haar. On exige, comme il est loisible, que $\vol(K_v\cap L(F_v))=1$ pour tout $L\in \L^{G_v}(M_{v,0})$ et pour presque tous les $v$. Pour tout $L\in \L^{G_v}(M_{v,0})$ et $P\in \F^{L}(M_{v,0})$, on écrit $\gamma_{v}^{L}(P)=\gamma_{v}^L(P,K_v\cap L(F_v))>0$ la constante donnée par l'équation \eqref{eq:locmeasureiwasawa}. Selon les normalisations ci-dessus et l'équation \eqref{eq:integralconditiononmesureofN},
\begin{equation}
\gamma_{v}^{L}(P)=1\,\,\,\,\text{ pour tous $L\in \L^{G_v}(M_{v,0}), P\in \F^{L}(M_{v,0})$ et presque tous les $v$}.   
\end{equation}
Lorsque $L=G$, on abrège $\gamma_{v}^{G}(P)$ en $\gamma_{v}(P)$.

Pour tout $P\in\F^G(M_0)$ définissons la mesure sur $M_P(\A_F)^1$ en identifiant $M_P(\A_F)^1$ au quotient $A_{P,\infty}\backslash M_P(\A_F)$, elle est ainsi l'unique mesure telle que pour tout $f\in L^1(G(\A_F))$ on ait
\begin{align*}
\int_{G(\A_F)} f(x)\,dx 
&=\gamma_{\A_F}(P)\int_{A_{P,\infty}}\int_{M_P(\A_F)^1}\int_{N_P(\A_F)}\int_{K} f(amnk)\,dk\,dn\,dm\,da\\
&=\gamma_{\A_F}(P)\int_{N_P(\A_F)}\int_{A_{P,\infty}}\int_{M_P(\A_F)^1}\int_{K} f(namk)e^{-2\rho_P(H_{M_0}(a))}\,dk\,dm\,da\,dn,
\end{align*}
où $\gamma_{\A_F}(P)=\prod_{v\in \V_F}\gamma_v(P_v)$.

\subsection{Prélude}\label{subsec:preludeTFdef}
Notre approche de la correspondance de Jacquet-Langlands repose essentiellement sur la comparaison des côtés géométriques de la formule des traces (non-invariante) pour les algèbres de Lie, et on obtient de la sorte la comparaison des côtés géométriques de la formule des traces (non-invariante) pour les groupes comme un simple sous-produit.

Au cours des trois prochaines sections, nous aborderons les diverses formules des traces qui s'avèrent pertinentes pour la suite de notre propos. Elles feront intervenir la partition de $\g(F)$ par les classes de conjugaison et non la partition par les classes de conjugaison semi-simples. On note $\O^\g$ l’ensemble des classes de $G(F)$-conjugaison de $\g(F)$. Un élément général de l'ensemble sera souvent noté $\o\in\O^\g$. On écrit $\o_\ss$ pour la classe de $G(F)$-conjugaison $\{X_\ss\mid X\in \o\}$.

Soient $P_1\subseteq P_2$ deux sous-groupes paraboliques de $G$, posons $\tau_{P_1}^{P_2}$ et $\widehat{\tau}_{P_1}^{P_2}$ les fonctions caractéristiques respectives des chambres de Weyl ouvertes respectivement aigüe et obtuse dans $a_{P_1}^{P_2}$. Posons aussi $\sigma_{P_1}^{P_2}\eqdef \sum_{P: P_2\subseteq P} (-1)^{\dim a_{P_2}^{P}}\tau_{P_1}^{P}\widehat{\tau}_{P}$. On voit souvent ces fonctions comme des fonctions sur $a_0=a_{M_0}$ par projection naturelle $a_0=a_{P_1}^{P_2}\oplus (a_{M_0}^{P_1}\oplus a_{P_2})$. Dans la suite si on a des sous-groupes $H_i$ indexés par des entiers $i$, on omettra souvent $H$ dans l'écriture lorsque le contexte permet d'enlever toute ambiguïté, par exemple $\tau_{1}^{2}$ désignera $\tau_{P_1}^{P_2}$ avec $P_1\subseteq P_2$ deux sous-groupes paraboliques et $W_0^G$ désignera le groupe de Weyl relatif de $(G,M_0)$. On note enfin $F^P(-,T)$ pour $P\in\F^G$ et $T\in a_0$ assez régulier la fonction de \cite[p.941]{Art78} (cette fonction dépend en réalité du choix d'un compact de $N_0(\A_F)M_0(\A_F)^1$ et d'un vecteur assez négatif dans $a_0$, mais ces choix sont sans importance dans la suite), il s'agit de la fonction caractéristique d'un compact de $A_{P,\infty}M(F)N_P(\A_F)\backslash G(\A_F)$.

Soit $V$ un espace vectoriel sur $F$. Rappelons que $\S(V(\A_F))$ est l'espace de Schwartz-Bruhat de $V(\A_F)$ (numéro \ref{subsec:EspaceS-B}). On a $\S(V(\A_F))= \varinjlim_{S\subseteq \V_F\text{ fini}} \S(V(F_S))$, avec l'application de transition $f\in \S(V(S_{1}))\mapsto f\otimes \bigotimes_{v\in S_2\setminus S_1}1_{V(\O_v)}\in \S(V(S_2))$ pour $S_1\subseteq S_2$, ici on prend un modèle (schéma en modules libres) de $V$ sur $\O_F$, toujours noté $V$. La limite inductive ne dépend pas du choix du modèle.

Pour $\o \in \O^\g$ et $f \in\S(\g(\A_F))$, on définit
\[K_{P,\o}^\g(x,f)=\sum_{X\in\m_P(F):\Ind_{M_P}^G(X)=\o}\int_{\mathfrak{n}_P(\A_F)}f((\Ad x^{-1})(X+U))\,dU,\]
et
\[K_\o^{\g,T}(x,f)=\sum_{P:P_0\subseteq P}(-1)^{\dim a_P^G}\sum_{\delta\in P(F)\backslash G(F)}\widehat{\tau}_P^G(H_0(\delta x)-T)K_{P,\o}^\g(\delta x,f)\]
pour $T\in a_0$. Finalement,
\[J_\o^{\g,T}(f)\eqdef \int_{G(F)\backslash G(\A_F)^1} K_\o^{\g,T}(x,f)\,dx.\]
Le symbole $\g$ sera souvent négligé dans l'exposant de $J_\o^{\g,T}(f)$, $K_\o^{\g,T}(x,f)$, et $K_{P,\o}^\g(x,f)$ etc si le contexte nous le permet.

\begin{theorem}[{{\cite[théorème 3.2.1, corollaire 3.2.2, proposition 3.2.3]{Ch18}}}]\label{thm:classconjdisJ} ~{}
\begin{enumerate}
    \item Posons $d(T)\eqdef \min_{\alpha\in\Delta_{P_0}^G} \{\alpha(T)\}$. Dans la suite on laisse $T$ varier dans l'ensemble des vecteurs assez réguliers tels que $d(T)\geq \epsilon_0 \|T\|$ pour $\epsilon_0$ un nombre positif. Soit $\mathcal{K}$ une partie compacte de $\S(\g(\A_F))$, il existe $c_{\mathcal{K}}>0$ tel que pour tous $f\in \mathcal{K}$ et $T$ assez régulier (dépendant de $\mathcal{K}$),
    \[\sum_{\o\in \O^\g}\int_{G(F)\backslash G(\A_F)^1}\left|F^G(x,T)\sum_{X\in \o(F)}f\left((\Ad  x^{-1})X\right)-K_{\o}^{T}(x,f)\right|\,dx< c_{\mathcal{K}}e^{-d(T)}\]
    \item Pour tous $f\in \S(\g(\A_F))$ et $T\in a_0$ on a 
    \[\sum_{\o\in\O^\g}\int_{G(F)\backslash G(\A_F)^1}|K_\o^{T}(x,f)|\,dx<\infty.\]
    En particulier $J_\o^T(f)$ est bien défini.
    \item Pour $f$ fixé,  $T\mapsto J_\o^{T}(f)$ et $T\mapsto J^T(f)\eqdef\sum_{\o\in\O^\g}  J_\o^{T}(f)$ sont des polynômes en $T$ de degré au plus $\dim a_0^G$.
\end{enumerate}  
\end{theorem}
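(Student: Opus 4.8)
Ces énoncés reprennent la construction du côté géométrique « grossier » de la formule des traces due à Arthur \cite{Art78,Art81}, transposée aux algèbres de Lie par Chaudouard \cite{Ch18} ; la seule nouveauté est que l'on découpe $\g(F)$ selon les classes de $G(F)$-conjugaison usuelles et non selon les seules classes semi-simples, ce qui est rendu possible par les propriétés de l'induite de Lusztig–Spaltenstein généralisée (proposition \ref{prop:indprop}). Le plan est le suivant. On part du noyau naïf $K(x,f)=\sum_{X\in\g(F)}f((\Ad x^{-1})X)=\sum_{\o\in\O^\g}\sum_{X\in\o(F)}f((\Ad x^{-1})X)$, qui n'est pas intégrable sur $G(F)\backslash G(\A_F)^1$. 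L'idée-force est que $K_{P,\o}(x,f)$ fournit l'« approximation parabolique » de la contribution de $\o$ : lorsque $x$ part à l'infini dans la direction du parabolique $P$, les $X\in\g(F)$ qui contribuent effectivement sont, à un reste exponentiellement petit près, ceux qui se conjuguent dans $\p_P$, donc induits depuis $\m_P$ ; l'application $\Ind_{M_P}^G\colon\O^{\m_P}\to\O^\g$ étant à fibres finies, la classe $\o$ se lit sur le terme constant $X\in\m_P(F)$, ce qui justifie la sommation $\Ind_{M_P}^G(X)=\o$ dans la définition de $K_{P,\o}$. L'opérateur de troncature d'Arthur — la somme alternée sur les $P$ pondérée par $\widehat{\tau}_P(H_0(\cdot)-T)$ — est alors appliqué à chaque $K_{P,\o}$ pour former $K_\o^T$.

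Pour le point (1), je commencerais par recouvrir $G(F)\backslash G(\A_F)^1$ par un domaine de Siegel (théorie de la réduction), puis j'appliquerais l'identité combinatoire d'Arthur exprimant, pour $x$ dans ce domaine, la différence $F^G(x,T)\sum_{X\in\o(F)}f((\Ad x^{-1})X)-K_\o^T(x,f)$ comme une somme, sur les couples $P_1\subsetneq P_2$ de sous-groupes paraboliques standard et sur $\delta\in P_1(F)\backslash G(F)$, de termes faisant intervenir $F^{P_1}(\delta x,T)$ et $\sigma_{P_1}^{P_2}(H_0(\delta x)-T)$. Sur le support de $x\mapsto\sigma_{P_1}^{P_2}(H_0(\delta x)-T)$, la coordonnée de $H_0(\delta x)$ selon $a_{P_1}^{P_2}$ est très grande, de norme minorée par $c\,d(T)$ pour une constante $c>0$ ; joint à la décroissance rapide de $f$, cela confine les $X$ pertinents dans un ensemble borné dépendant de $x$ et les force, modulo un écart exponentiellement petit en $d(T)$, à provenir de l'induite $\Ind_{M_{P_1}}^G$ appliquée à leur terme constant — et c'est ici qu'intervient de façon cruciale la compatibilité de cette descente géométrique avec la partition par les classes de $G(F)$-conjugaison (et non seulement semi-simples), garantie par la proposition \ref{prop:indprop}. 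On en déduirait, pour $f$ parcourant un compact $\mathcal K$ de $\S(\g(\A_F))$, une majoration de la contribution de chaque couple $(P_1,P_2)$ par $c_{\mathcal K}\,e^{-d(T)}$, la sommabilité sur $\o\in\O^\g$ résultant d'un comptage de type Minkowski du nombre de classes rencontrant un borné, pondéré par leur « profondeur », exactement comme dans \cite{Art78}.

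Le point (2) en découlerait : pour $\o$ fixé, $F^G(\cdot,T)$ étant la fonction caractéristique d'un compact de $G(F)\backslash G(\A_F)^1$ et la somme $\sum_{X\in\g(F)}$ étant finie pour chaque $x$ et localement bornée, on a $\int_{G(F)\backslash G(\A_F)^1}F^G(x,T)\sum_{X\in\o(F)}|f((\Ad x^{-1})X)|\,dx<\infty$, la somme sur $\o$ y étant elle-même finie ; combinée à l'estimée (1), l'inégalité triangulaire fournit alors l'intégrabilité absolue de $K_\o^T$ et la sommabilité sur $\o$, donc la bonne définition de $J_\o^T(f)$. Pour le point (3), je suivrais l'argument de polynomialité d'Arthur \cite[section 2]{Art81} : on compare $K_\o^{T_1}$ et $K_\o^{T_2}$ à l'aide des fonctions $\Gamma_Q$ d'Arthur, qui permettent de réécrire $\widehat{\tau}_P(H_0(\cdot)-T_1)$ en termes des $\widehat{\tau}_P^Q(H_0(\cdot)-T_2)$ et dont la dépendance en $T_1-T_2$ est polynomiale de degré borné ; une récurrence sur $\dim a_0^G$ ramène alors le défaut de polynomialité à des contributions de rang strictement inférieur, déjà traitées, d'où le caractère polynomial de $T\mapsto J_\o^T(f)$ puis de $T\mapsto J^T(f)=\sum_{\o\in\O^\g}J_\o^T(f)$, de degré au plus $\dim a_0^G$.

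L'obstacle principal sera l'estimée (1), et plus précisément la vérification que l'approximation parabolique et la descente géométrique restent exactes au niveau fin des classes de $G(F)$-conjugaison usuelles : c'est le point où l'on s'écarte du cadre d'Arthur, rendu accessible par les résultats de Chaudouard sur les algèbres de Lie \cite{Ch18} (ou ceux de Finis–Lapid côté groupe \cite{FiLa16}) via les propriétés de l'induite généralisée rappelées à la proposition \ref{prop:indprop}.
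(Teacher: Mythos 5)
Your sketch is correct and follows the same Arthur-style roadmap that Chaudouard implements in \cite{Ch18}; the paper simply cites Chaudouard for points (1)--(2) and, for the degree bound in (3), notes that it follows from the same calculations and the general theory (\cite[proposition 9.3]{Art05}). You have effectively reconstructed the cited argument, so there is no divergence in method.
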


\begin{proof}
Le point 3 ici est plus fort que ce que Chaudouard énonce, il résulte néanmoins des mêmes calculs et la théorie générale (cf. \cite[proposition 9.3]{Art05}).    
\end{proof}

\begin{corollary}\label{coro:TFisadelictempereddist}
Pour tout $T\in a_0$, les fonctionnelles $f\mapsto J_{\o}^{T}(f)$ et $f\mapsto J^{T}(f)$ sont des distributions tempérées sur $\g(\A_F)$.   

En particulier, pour tout $S$ sous-ensemble fini de $\V_F$ contenant les places archimédiennes et tout $f^S\in \S(\g(\A_F^S))$, les fonctionnelles $f_S\mapsto J_{\o}^{T}(f^Sf_S)$ et $f_S\mapsto J^{T}(f^Sf_S)$ sont des distributions tempérées sur $\g(F_S)$.   
\end{corollary}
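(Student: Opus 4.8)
Le plan est de déduire du théorème \ref{thm:classconjdisJ} la continuité des fonctionnelles linéaires $J_\o^T$ et $J^T=\sum_{\o\in\O^\g}J_\o^T$ sur $\S(\g(\A_F))$, puis de redescendre aux $\S(\g(F_S))$ par composition avec une application tranche. La linéarité étant évidente, tout se ramène à la continuité sur $\S(\g(\A_F))$ ; or cet espace est de Montel nucléaire (proposition \ref{prop:topopropertiesS-Bspace}, $\g(\A_F)$ étant localement compact abélien) et vaut $\varinjlim_S\S(\g(F_S))$, de sorte qu'il suffit de voir que la restriction de $J_\o^T$ (resp. $J^T$) à chaque étage $\S(\g(F_S))$, limite inductive dénombrable d'espaces de Fréchet, envoie les bornés sur des bornés. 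Fixons donc $S\supseteq\underline{S}_0$ fini et $B\subseteq\S(\g(F_S))$ borné : son image $\{\,g\otimes 1_{\g(\O^S)}:g\in B\,\}$ par l'application de transition (continue) est bornée dans $\S(\g(\A_F))$, donc d'adhérence $\mathcal{K}$ compacte.

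Je traiterais d'abord les $T$ grands. Appliquant le théorème \ref{thm:classconjdisJ}(1) à $\mathcal{K}$, on obtient $c_\mathcal{K}>0$ et un seuil (dépendant de $\mathcal{K}$, donc de $B$) au-delà duquel, pour tout $T$ assez régulier avec $d(T)\ge\epsilon_0\|T\|$ et tout $f\in\mathcal{K}$,
\[
\sum_{\o\in\O^\g}\int_{G(F)\backslash G(\A_F)^1}\Bigl|F^G(x,T)\sum_{X\in\o(F)}f\bigl((\Ad x^{-1})X\bigr)-K_\o^T(x,f)\Bigr|\,dx<c_\mathcal{K}e^{-d(T)}.
\]
Comme $F^G(\cdot,T)$ est la fonction caractéristique d'un compact $\Omega_T$ de $G(F)\backslash G(\A_F)^1$ et que, pour $f=g\otimes 1_{\g(\O^S)}$, la somme $\sum_{X\in\g(F)}|f((\Ad x^{-1})X)|$ est majorée, pour $x\in\Omega_T$, par $C_{\Omega_T}\,\rho(g)$ où $\rho$ est une semi-norme continue fixée sur $\S(\g(F_S))$ (estimation de routine : la condition contraint $X$ à parcourir un réseau de $\g(F)$, sur lequel une fonction à décroissance rapide se somme en au plus une semi-norme), l'inégalité triangulaire et le théorème \ref{thm:classconjdisJ}(2) donnent, pour de tels $T$,
\[
\sum_{\o\in\O^\g}\bigl|J_\o^T(f)\bigr|\le\sum_{\o\in\O^\g}\int_{G(F)\backslash G(\A_F)^1}\bigl|K_\o^T(x,f)\bigr|\,dx<c_\mathcal{K}e^{-d(T)}+C_{\Omega_T}\,\rho(g).
\]
Ainsi $\sup_{f\in\mathcal{K}}\sum_{\o\in\O^\g}|J_\o^T(f)|<\infty$ ; en particulier chaque $J_\o^T$, et la somme absolument convergente $J^T$, sont bornés sur $B$, pour tout tel $T$ grand.

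Pour atteindre un $T$ arbitraire, j'invoquerais la polynomialité. D'après le théorème \ref{thm:classconjdisJ}(3), à $f$ fixé les applications $T\mapsto J_\o^T(f)$ et $T\mapsto J^T(f)$ sont polynomiales de degré au plus $\dim a_0^G$ ; en choisissant, dans le cône où l'estimation précédente s'applique (qui contient un cône ouvert non vide s'étendant arbitrairement loin), un nombre fini de points $T_1,\dots,T_N$ en position générale, on dispose de scalaires $\lambda_1(T),\dots,\lambda_N(T)$, ne dépendant que de $T$, tels que $J_\o^T(f)=\sum_{i=1}^N\lambda_i(T)J_\o^{T_i}(f)$ pour tout $f$, et de même pour $J^T$. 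Donc $\sup_{f\in B}|J_\o^T(f)|\le\sum_{i=1}^N|\lambda_i(T)|\sup_{f\in B}|J_\o^{T_i}(f)|<\infty$ pour tout $T\in a_0$, et pareillement pour $J^T$. Ceci prouve que $J_\o^T$ et $J^T$ sont continus sur chaque $\S(\g(F_S))$, donc définissent des distributions tempérées sur $\g(\A_F)$.

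Enfin, pour le « en particulier » : étant donné $S$ fini contenant $\V_\infty$ et $f^S\in\S(\g(\A_F^S))$, l'application tranche $f_S\mapsto f^Sf_S$ est linéaire continue de $\S(\g(F_S))$ dans $\S(\g(\A_F))$, donc $f_S\mapsto J_\o^T(f^Sf_S)$ et $f_S\mapsto J^T(f^Sf_S)$ sont des composées d'applications continues, d'où des distributions tempérées sur $\g(F_S)$. Le seul point véritablement délicat de l'argument est la conciliation entre l'uniformité « $T$ grand » fournie par le théorème \ref{thm:classconjdisJ}(1) et le contrôle voulu pour tout $T$ ; c'est précisément la polynomialité (3) qui le permet, et je n'anticipe pas d'autre obstacle.
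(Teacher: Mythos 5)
Votre démonstration est correcte et suit essentiellement la même stratégie que celle du texte : se ramener au cas $T$ assez régulier grâce à la polynomialité (point 3 du théorème \ref{thm:classconjdisJ}), puis utiliser la propriété de Montel de $\S(\g(\A_F))$ pour ramener la continuité à la bornitude sur les compacts et conclure par le point 1 du même théorème. Vous êtes simplement un peu plus explicite que le texte — notamment sur le contrôle du terme tronqué via une semi-norme et sur la formulation de l'interpolation polynomiale de Lagrange — mais c'est bien le même argument.
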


\begin{proof}
Il suffit de prouver le lemme pour $T$ assez régulier. En effet soit $T'\in a_0$ quelconque, comme les distributions en question sont des polynôme de $T$, il existe un entier positif $n=n(G,M_0,K)$ tel que $J^{T'}$ (resp. $J_{\o}^{T'}$) soit une combinaison linéaire de $T^iJ^{T+kE_j}$ (resp.  $T^iJ_\o^{T+kE_j}$) avec $0\leq i\leq n, -n\leq k\leq n$, et $0\leq j\leq \dim a_0$, où $(E_1,\dots,E_{\dim a_0})$ est une $\R$-base de $a_0$. On conclut la preuve par linéarité. 

Soit $T$ assez régulier. L'espace de Schwartz-Bruhat $\S(\g(\A_F))$ étant un espace de Montel (proposition \ref{prop:topopropertiesS-Bspace}), toute partie bornée est par définition relativement compacte. Ainsi le théorème \ref{thm:classconjdisJ} nous garantit que les opérateurs $J_{\o}^{T}$ et $J^{T}$ de $\S(\A_F)$ vers $\C$ sont bornés, donc continus puisque les espaces en question sont localement convexes.
\end{proof}

Expliquons le lien avec les distributions définies à partir de la partition de $\g(F)$ par les classes de conjugaison semi-simples, l'approche que préconise Arthur : deux éléments $X$ et $Y$ de $\g(F)$ sont dites équivalents si et seulement si leurs parties semi-simples sont conjuguées par $G(F)$, notons ${}_\ss\O^\g$ l'ensemble des classes d'équivalence. Pour $\o' \in {}_\ss\O^\g$ et $f \in\S(\g(\A_F))$, on définit
\[{}_\ss K_{P,\o'}^\g(x,f)=\sum_{X\in\m_P(F)\cap \o'}\int_{\mathfrak{n}_P(\A_F)}f((\Ad x^{-1})(X+U))\,dU,\]
puis ${}_\ss K_{\o'}^{\g,T}(x,f)$ et ${}_\ss J_{\o'}^{\g,T}(f)$ de façon identique. Alors, en notant $\o_\ss'\eqdef \{X_\ss\mid X\in\o'\}\in \O^{\g}$, nous avons ${}_\ss K_{P,\o'}^\g(x,f)=\sum_{\o\in\o^{\g}:\o_\ss=\o_\ss'} K_{P,\o}^\g(x,f)$. En effet cela se déduit du point 2 de la proposition \ref{prop:indprop}. Cette égalité est une somme finie grâce à la théorie des diviseurs élémentaires (cf. appendice \ref{sec:AppendixA}). On a ainsi une partition plus fine que celle d'Arthur.

Rappelons que pour $x\in G(\A_F)$ et $f\in \S(\g(\A_F))$ on note $(\Ad x)f\in \S(\g(\A_F))$ la fonction $(\Ad x)f(Y) = f((\Ad x^{-1})Y )$, et pour $P\in\P^G$  on note $f_{P,x}\in\S(\mathfrak{m}_P(\A_F))$ la fonction définie par $f_{P,x}(Z)=\gamma_{\A_F}(P)\int_{K}\int_{\mathfrak{n}_P(\A_F)}f\left((\Ad  k^{-1})(Z+U)\right)v_P'(kx)\,dU\,dk$. Pour tout $M\in \L^G$ on a l'induction $\Ind_M^G:\O^\m\rightarrow\O^\g$. Elle est à fibres finies.

\begin{proposition}\label{prop:varianceglobale}
Pour tous $\o\in \O^\g$, $x\in G(\A_F)$ et $f\in\S(\g(\A_F))$ on a 
\[J_\o^{\g,T}((\Ad x^{-1})f)=\sum_{P:P_0\subseteq P}\sum_{\o_{\m_P}\in(\Ind_{M_P}^{G})^{-1}(\o)}J_{\o_{\m_P}}^{\m_P,T}(f_{P,x}),\]
\end{proposition}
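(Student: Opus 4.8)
Le plan consiste à transposer au cadre présent --- algèbre de Lie adélique, partition de $\g(F)$ par les classes de $G(F)$-conjugaison ordinaires --- le calcul d'Arthur décrivant la variance du noyau géométrique tronqué sous l'effet d'une conjugaison ; c'est l'avatar global de la formule de descente locale du point 3 de la proposition \ref{prop:IOP}. Comme les deux membres de l'égalité cherchée sont des fonctions polynomiales de $T$ (le membre de gauche par le point 3 du théorème \ref{thm:classconjdisJ} ; le membre de droite comme somme finie de telles fonctions relatives aux $M_P$), il suffira de traiter le cas où $T\in a_0$ est assez régulier, ce que l'on suppose désormais ; un tel $T$ est en particulier assez régulier relativement à chaque $M_P$.

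On commencera par développer le membre de gauche. Des définitions on tire directement $K_{P,\o}^{\g}((\Ad x^{-1})f)(g)=K_{P,\o}^{\g}(f)(gx^{-1})$, puisque $(\Ad x)(\Ad g^{-1})=\Ad((gx^{-1})^{-1})$ ; d'où
\[J_\o^{\g,T}((\Ad x^{-1})f)=\int_{G(F)\backslash G(\A_F)^1}\sum_{P:P_0\subseteq P}(-1)^{\dim a_P^G}\sum_{\delta\in P(F)\backslash G(F)}\widehat{\tau}_P^G(H_0(\delta g)-T)\,K_{P,\o}^\g(f)(\delta g x^{-1})\,dg.\]
Par invariance à gauche sous $G(F)$ on replie $\int_{G(F)\backslash G(\A_F)^1}\sum_{\delta\in P(F)\backslash G(F)}$ en $\int_{P(F)\backslash G(\A_F)^1}$, manipulation licite car les intégrales concernées convergent absolument (point 2 du théorème \ref{thm:classconjdisJ} appliqué à $(\Ad x^{-1})f$, son analogue pour les $M_P$, et la finitude des fibres de l'induction, proposition \ref{prop:indprop}). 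On effectuera ensuite une décomposition d'Iwasawa de $G(\A_F)^1$ relativement à $P=M_PN_P$, soit $g=nam^1k$ modulo $P(F)$ à gauche, avec $n\in N_P(F)\backslash N_P(\A_F)$, $a\in A_{P,\infty}\cap G(\A_F)^1$, $m^1\in M_P(F)\backslash M_P(\A_F)^1$ et $k\in K$ ; le facteur $\gamma_{\A_F}(P)$ apparaît alors, conformément à la définition de $f_{P,x}$. On fera enfin passer la translation à droite par $x^{-1}$ : en décomposant $kx^{-1}$ suivant $P$ on absorbe les contributions nouvelles le long de $N_P$ et de $M_P$ dans la sommation en $X$ et dans l'intégrale définissant $K_{P,\o}^\g(f)$ (via la $P(\A_F)$-covariance à gauche de ce noyau, le jacobien $\delta_P$ valant $1$ sur $M_P(F)$ par la formule du produit), ce qui laisse une dépendance résiduelle en la composante $K$ de $kx^{-1}$ --- c'est elle qui alimentera le poids $v_P'$.

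Le point crucial sera l'étape combinatoire : réconcilier d'une part la troncature $\widehat{\tau}_P^G(H_0(g)-T)$, le signe $(-1)^{\dim a_P^G}$ et l'intégration sur $A_{P,\infty}\cap G(\A_F)^1$, d'autre part l'opérateur de troncature relatif à $M_P$ (ses signes $(-1)^{\dim a_R^{M_P}}$, sa somme sur les paraboliques $R$ de $M_P$), le poids $v_P'(kx)$ qui figure dans $f_{P,x}$, et l'intégration de $m^1$ sur $M_P(F)\backslash M_P(\A_F)^1$. C'est précisément le lemme d'Arthur reliant une $(G,M_P)$-famille --- ici $\lambda\mapsto e^{-\lambda(H_P(kx))}$, $P\in\P^G(M_P)$ --- à la troncature : par définition les $v_P'$ sont les coefficients de l'accouplement $(cd)_{M_0}=\sum_{Q\in\F^G(M_0)}c_{M_0}^Q d_Q'$ du numéro \ref{subsubsec:(G,M)-familles}, et l'identité de partition d'Arthur échange la troncature $G$-adique et la troncature $M_P$-adique avec émergence du poids. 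Cet apport ne met en jeu que la donnée radicielle de $G$ et ses paraboliques, il se transpose donc mot pour mot du cadre des groupes au cadre adélique pour l'algèbre de Lie considéré ici. L'identification des ensembles d'indices viendra de la transitivité de l'induction de Lusztig--Spaltenstein généralisée $\Ind_{M_R}^{G}=\Ind_{M_P}^{G}\circ\Ind_{M_R}^{M_P}$ (point 7 de la proposition \ref{prop:indprop}), qui donne, pour chaque parabolique $R$ de $M_P$,
\[\bigsqcup_{\o_{\m_P}\in(\Ind_{M_P}^{G})^{-1}(\o)}\{Y\in\m_R(F):\Ind_{M_R}^{M_P}(Y)=\o_{\m_P}\}=\{Y\in\m_R(F):\Ind_{M_R}^{G}(Y)=\o\},\]
de sorte qu'en sommant $J_{\o_{\m_P}}^{\m_P,T}(f_{P,x})$ sur $\o_{\m_P}\in(\Ind_{M_P}^G)^{-1}(\o)$ on reconstitue exactement le terme relatif à $P$ du membre de gauche ; il reste alors à sommer sur $P\supseteq P_0$. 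L'obstacle principal --- et le seul point véritablement délicat --- sera cette réconciliation combinatoire (suivi des signes, troncature $G$-adique contre $M_P$-adique, apparition de $v_P'(kx)$) ; tout le reste relève de la comptabilité des décompositions d'Iwasawa, de la convergence absolue et des propriétés formelles de l'induction déjà établies dans le texte.
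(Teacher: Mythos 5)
Your proposal is correct and follows essentially the same route as the paper, which simply refers the reader to the computations of Chaudouard and the general theory of Arthur (\cite[proposition 3.2.3]{Ch18}, \cite[pp.52-53]{Art05}). Your outline — reduction to $T$ sufficiently regular via polynomiality, unfolding the truncated kernel by $G(F)$-invariance and the identity $K_{P,\o}^{\g}(g,(\Ad x^{-1})f)=K_{P,\o}^{\g}(gx^{-1},f)$, Iwasawa decomposition with emergence of the weight $v_P'$ through Arthur's combinatorial lemma for $(G,M)$-families, and identification of the index sets by transitivity of generalized Lusztig--Spaltenstein induction — is precisely the argument those references carry out.
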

\begin{proof}
Cela résulte également des calculs de Chaudouard et la théorie générale (cf. la preuve de \cite[proposition 3.2.3]{Ch18} et \cite[pp.52-53]{Art05}).    
\end{proof}

Conformément aux sections 1 et 2 de \cite{Art81}, on peut s'affranchir de la dépendance de $J_{\o}^T$ en $P_0$ si l'on choisit un paramètre de troncature spécial $T_0\in a_0$. Modulo $a_G$, le vecteur $T_0$ est uniquement déterminé par les relations
\begin{equation}\label{eq:deftroncatureT0}
H_{0}(w_s^{-1})=T_0-s^{-1}T_0    
\end{equation}
avec $s\in W_0^G$, et $w_s\in G(F)$ un représentant fixé de $s$. Notons que pour tout $M\in\L^G(M_0)$ la projection orthogonale de $T_0$ (pour $G$) sur $a_0^M$ est le vecteur $T_0$ pour $M$. Nous noterons $J_{\o}^\g$ la distribution $J_{\o}^{\g,T_0}$ et $J^\g$ la distribution $J^{\g,T_0}$. 

\begin{remark}
Pour tout groupe $G$ du type GL, et tout sous-groupe de Levi $M_0$ minimal, il existe $K$ un sous-groupe compact de $G(\A_F)$ en bonne position par rapport à $M_0$, et un sous-groupe de $G(F)\cap K$ isomorphe à $W_0^G$. Si l'on choisit les représentants $W_0^G$ comme éléments de ce sous-groupe, alors $T_0=0$ (mod $a_G$).
\end{remark}

\subsection{Développement fin de la formule des traces pour les algèbres de Lie}
Arthur a obtenu le développement fin de la formule des traces pour les groupes comme suit :
\begin{enumerate}
    \item Obtenir le développement fin du terme $J_{\text{unip}}(f)$.
    \item Traiter le terme général $J_{\o'}(f)$ via la descente au centralisateur semi-simple.
\end{enumerate}
Nous suivrons sa méthode dans la suite. Il est utile de mentionner que le passage du groupe à l'algèbre de Lie introduit des facteurs Jacobiens, certaines de nos formules seront ainsi légèrement distinctes de celles d'Arthur. Aussi, les quelques articles qui nous serviront de références sont rédigés dans le contexte où le corps de base est $F=\Q$. Il n'est néanmoins pas difficile d'étendre les résultats à un corps de nombres quelconque via une restriction des scalaires. Nous travaillerons ainsi continuellement sur un corps de nombres général.

\subsubsection{Développement fin de \texorpdfstring{$J_{\o}(f)$}{Jo(f)} pour \texorpdfstring{$\o$}{o} nilpotent}
Soit $\mathcal{N}_G$ la sous-variété des éléments nilpotents dans $\g$, elle est définie sur $F$. 
On va noter $(\mathcal{N}_G(F))$ l'ensemble des classes de $G$-conjugaison nilpotentes contenant un $F$-point. 


Pour $V\in (\mathcal{N}_G(F))$ on peut lui associer une classe de $G(F_S)$-conjugaison
\[V(F_S)=(\Ad G(F_S))V.\]
Alors deux éléments $U,U'\in \mathcal{N}_G(F)$ engendrent la même classe de $G(F_S)$-conjugaison si et seulement s'ils sont déjà $G(F)$-conjugués, ceci est une conséquence du point 8 de la proposition \ref{pro:bontype}.

Venons-en maintenant au développement fin de $J_{\o}(f)$ pour $\o\in (\mathcal{N}_G(F))$. Soit $S$ un ensemble fini de places qui contient les places archimédiennes. Nous allons immerger $\S(\g(F_S))$ dans $\S(\g(\A_F))$ en multipliant une fonction dans $\S(\g(F_S))$ avec $1_{\mathfrak{k}^S}\eqdef\prod_{v\in\V_F,v\not\in S}1_{\mathfrak{k}_v}$. Pour toute fonction $f$ de $\S(\g(\A_F))$, il existe $S$ fini tel que $f$ est l'image d'une fonction de $\S(\g(F_S))$.

\begin{theorem}\label{thm:devlopfinnilp}
Pour tout $G$ groupe du type GL, tout $M_0$ sous-groupe de Levi minimal, tout $S$ ensemble fini de places qui contient les places archimédiennes, et tout $U\in(\mathcal{N}_G(F))$, il existe un nombre complexe
\[a^{G}(S,U)=a^{G,M_0,K}(S,U),\]
déterminé uniquement, à condition que l'on suive les normalisations des mesures énoncées dans la sous-section \ref{subsec:normalisationsdesmesuresdeFTI}, par la mesure sur $a_0$, tel que pour tous $f\in\S(\g(F_S))$ et $\o\in (\mathcal{N}_G(F))$ on ait,
\begin{equation}\label{eq:devlopfinnilp}
J_\o^\g(f)=\sum_{M\in\L^G(M_0)}|W_0^M||W_0^G|^{-1}\sum_{U\in(\mathcal{N}_M(F)), \Ind_M^G(U)=\o}a^{M,M_0,M\cap K}(S,U)J_M^G(U,f).    
\end{equation}
\end{theorem}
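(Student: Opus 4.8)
Je suivrais la méthode d'Arthur pour le développement fin de la contribution unipotente, transposée aux algèbres de Lie, en raisonnant par récurrence sur $\dim G$ ; conformément à la remarque faite plus haut, le passage du groupe à l'algèbre de Lie introduit seulement des facteurs jacobiens, déjà incorporés aux normalisations de la sous-section \ref{subsec:normalisationsdesmesuresdeFTI} et aux énoncés d'homogénéité de la proposition \ref{prop:IOP}. On commence par réduire le problème : d'après le point 3 du théorème \ref{thm:classconjdisJ} et les relations \eqref{eq:deftroncatureT0}, il suffit de travailler avec $T=T_0$ ; et comme toutes les fonctionnelles concernées sont des distributions tempérées (corollaire \ref{coro:TFisadelictempereddist}) et que $\S(\g(F_S))=\widehat{\bigotimes}_{v\in S}\S(\g(F_v))$, il suffit de prouver l'égalité \eqref{eq:devlopfinnilp} pour $f$ un tenseur pur. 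Le cas de base est celui où $G$ est anisotrope modulo son centre : alors $M_0=G$, $\P^G(M_0)=\{G\}$, l'unique classe nilpotente est $\{0\}$ (les composantes de $\g$ sont des algèbres à division, sans élément nilpotent non nul), et la définition du noyau tronqué donne directement $J_{\{0\}}^\g(f)=\vol(G(F)\backslash G(\A_F)^1)\,f(0)=\vol(G(F)\backslash G(\A_F)^1)\,J_G^G(0,f)$, ce qui fournit $a^G(S,0)$ et montre qu'il ne dépend que de la mesure sur $a_0=a_G$.

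Pour l'étape de récurrence, je suppose l'énoncé acquis pour tout groupe du type GL de dimension strictement inférieure à $\dim G$, donc en particulier pour tout $M\in\L^G(M_0)$ avec $M\neq G$, muni de $M_0$ et de $M\cap K$. Fixons $\o\in(\mathcal{N}_G(F))$ et posons
\[b^G_\o(f)\eqdef J_\o^\g(f)-\sum_{M\in\L^G(M_0),\,M\neq G}|W_0^M||W_0^G|^{-1}\sum_{U\in(\mathcal{N}_M(F)),\,\Ind_M^G(U)=\o}a^M(S,U)\,J_M^G(U,f),\]
bien défini grâce à l'hypothèse de récurrence. Il s'agit de montrer que $b^G_\o=a^G(S,\o)\,J_G^G(\o,\cdot)$ pour une constante, nécessairement unique. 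La première étape est d'établir que $b^G_\o$ est une distribution tempérée sur $\g(F_S)$ invariante sous $\Ad G(F_S)$. Pour cela on calcule $b^G_\o((\Ad x^{-1})f)$ pour $x\in G(F_S)$ : la formule de variance globale (proposition \ref{prop:varianceglobale}) exprime $J_\o^\g((\Ad x^{-1})f)$ comme une somme sur les $P\supseteq P_0$ de termes $J_{\o_{\m_P}}^{\m_P}(f_{P,x})$ — auxquels on applique, pour $P\neq G$, la formule de récurrence \eqref{eq:devlopfinnilp} — cependant que les intégrales orbitales pondérées vérifient $J_M^G(U,(\Ad x^{-1})f)=\sum_{R\in\F^G(M)}J_M^{M_R}(U,f_{R,x})$ (point 3 de la proposition \ref{prop:IOP}, $Q=G$). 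On regroupe alors les deux membres selon les triplets $(M,R,U)$ avec $M\in\L^G(M_0)$, $R\in\F^G(M)$ et $U$ nilpotente dans $\m$ : la transitivité de l'induite (point 7 de la proposition \ref{prop:indprop}) dicte quelles orbites apparaissent, l'identité combinatoire reliant $|W_0^M||W_0^{M_R}|^{-1}$ à $|W_0^M||W_0^G|^{-1}$ ajuste les coefficients, les termes avec $R\neq G$ se compensent, et le terme $R=G$ restitue $b^G_\o(f)$, le moyennage sur $K$ contenu dans $f_{G,x}$ étant sans effet d'après le point 2 de la proposition \ref{prop:IOP}. \emph{Cette annulation combinatoire — la même que celle qui rend la formule des traces invariante — est le point le plus délicat}, avec la vérification, via les normalisations de la sous-section \ref{subsec:normalisationsdesmesuresdeFTI}, que $f_{P,x}$ reste l'image d'une fonction sur $\m_P(F_S)$.

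Comme $\o$ est nilpotente, $J_\o^\g$, ainsi que chaque $J_M^G(U,\cdot)$ avec $U$ nilpotente (dont le support est $\overline{\Ind_M^G(U)}$), est portée par la réunion des orbites nilpotentes $V$ telles que $\overline V\subseteq\overline\o$ ; il en va de même de $b^G_\o$. Or une distribution tempérée invariante sur $\g(F_S)=\prod_{v\in S}\g(F_v)$ portée par le cône nilpotent est une combinaison linéaire finie d'intégrales orbitales nilpotentes (théorie de Harish-Chandra et théorème de finitude de Howe, appliqués en chaque place de $S$, en utilisant le point 8 de la proposition \ref{pro:bontype} pour identifier orbites sur $F$ et sur $\overline F$) : on écrit $b^G_\o=\sum_{V}c_V\,J_G^G(V,\cdot)$, la somme portant sur les $V$ avec $\overline V\subseteq\overline\o$. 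Pour isoler le terme $V=\o$, on confronte le comportement des deux membres de \eqref{eq:devlopfinnilp} sous la dilatation $f\mapsto f^t$ : d'un côté $J_G^G(V,f^t)$ est homogène de degré $-\tfrac{1}{2}\dim(\Ad G)V$ sans terme logarithmique (point 6 de la proposition \ref{prop:IOP}), de l'autre $J_M^G(U,f^t)$ et $J_\o^\g(f^t)$ font intervenir des polynômes en $\log|t|$ (homogénéité standard liée aux $(G,M)$-familles) ; comme $\overline V\subseteq\overline\o$ entraîne $\dim(\Ad G)V\leq\dim(\Ad G)\o$, avec égalité si et seulement si $V=\o$, l'identification des termes dominants — d'abord selon le degré homogène, puis selon le degré en $\log|t|$ — force $c_V=0$ pour $V\neq\o$ et détermine $c_\o$, que l'on pose égal à $a^G(S,\o)$. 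L'unicité de $a^G(S,\o)$ (l'hypothèse de récurrence étant appliquée aux $a^M$, $M\neq G$) découle de ce que $J_G^G(\o,\cdot)$ n'est pas la distribution nulle sur $\S(\g(F_S))$ — c'est une mesure absolument continue sur une orbite non vide (point 1 de la proposition \ref{prop:IOP}) — ou directement de l'indépendance linéaire du point 7 de la même proposition.

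Il reste à voir que $a^{G,M_0,K}(S,U)$ ne dépend que de la mesure sur $a_0$ : deux couples $(M_0,K)$ et $(M_0',K')$ admissibles étant conjugués sous $G(F)$, l'égalité \eqref{eq:devlopfinnilp} relative à l'un se transporte par conjugaison en l'égalité relative à l'autre, en vertu du point 2 de la proposition \ref{prop:IOP} (invariance des $J_M^G(U,\cdot)$ par conjugaison et par $K$, et de $J_\o^\g$ par conjugaison), et l'unicité déjà acquise impose alors que les coefficients coïncident. Les estimations de convergence sollicitées en chemin sont fournies par le théorème \ref{thm:classconjdisJ} (et plus généralement par \cite{Ch18}). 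Outre l'annulation combinatoire signalée ci-dessus, les points qui demandent le plus de vigilance sont le suivi des facteurs jacobiens — présents dès la formule de variance et dans le calcul explicite des $a^M(S,U)$ sous-jacents — et la séparation des orbites $V\neq\o$ au moyen de l'homogénéité.
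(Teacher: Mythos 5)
Votre schéma inductif, le passage aux fonctions à support compact via la tempérance, la définition de $b^G_\o$ et l'établissement de son $G(F_S)$-invariance par les propositions \ref{prop:varianceglobale} et \ref{prop:IOP} reprennent la même stratégie que l'article. En revanche, le point final de votre démonstration — identifier $b^G_\o$ comme un multiple de $J_G^G(\o,\cdot)$ — diverge de celle du papier, et c'est là que se trouve un défaut réel.

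Vous vous limitez à une propriété de support faible : $b^G_\o$ est portée par la réunion des orbites nilpotentes $V$ avec $\overline V\subseteq\overline\o$, puis vous invoquez Harish-Chandra et le théorème de finitude de Howe pour écrire $b^G_\o=\sum_V c_V\,J_G^G(V,\cdot)$, et enfin vous voulez conclure $c_V=0$ pour $V\neq\o$ par homogénéité. Mais cet argument, tel que vous l'énoncez, est circulaire : pour que l'identification des degrés en $|t|$ et en $\log|t|$ force $c_V=0$, il faudrait savoir \emph{a priori} que $b^G_\o(f^t)$ est exactement homogène de degré $-\tfrac12\dim(\Ad G)\o$ — or c'est précisément la conséquence du développement fin cherché, pas une de ses prémisses. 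Vous reconnaissez vous-même que $J_\o^\g(f^t)$ et $J_M^G(U,f^t)$ produisent des termes en $\log|t|$ ; l'annulation de ces logarithmes dans $b^G_\o$ est équivalente à la formule \eqref{eq:devlopfinnilp} et ne saurait être supposée acquise.

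Le papier évite complètement l'appel à l'homogénéité en établissant une propriété de support \emph{plus forte} : $T_\o^G$ annule toute fonction qui s'annule sur $\o(F_S)$ lui-même — et non seulement sur son adhérence. Cela résulte directement du point 1 du théorème \ref{thm:classconjdisJ} (le noyau tronqué $K_\o^T$ est asymptotiquement contrôlé par $\sum_{X\in\o(F)}f((\Ad x^{-1})X)$, qui ne voit que les valeurs de $f$ sur $\o(F_S)$) et du point 1 de la proposition \ref{prop:IOP} ($J_M^G(U,\cdot)$ est une mesure absolument continue par rapport à la mesure invariante sur l'orbite unique $\Ind_M^G(U)(F_S)=\o(F_S)$, pas sur sa clôture). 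Une fois cette concentration établie, $T_\o^G$ descend en une distribution sur $C_c^\infty(\o(F_S))$, $G(F_S)$-invariante sur cet espace homogène, donc proportionnelle à la mesure orbitale par le résultat classique d'unicité des mesures invariantes (Bourbaki, chapitre VII). Pour corriger votre preuve, il vous suffirait de remplacer l'argument d'homogénéité par cette observation de support renforcée ; l'étape des germes et de la finitude de Howe deviendrait alors superflue. Vos deux premières réductions (spécialisation à $T=T_0$, base $G$ anisotrope modulo centre) sont également inutiles : $J_\o^\g$ est déjà défini en $T_0$, et la récurrence de l'article traite uniformément le cas $\L^G(M_0)=\{G\}$.
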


\begin{proof}
Les deux côtés de l'équation \eqref{eq:devlopfinnilp} sont des distributions tempérées (corollaire \ref{coro:TFisadelictempereddist} et point 1 de proposition \ref{prop:IOP}), il suffit de ce fait de prouver le théorème pour les fonctions tests dans $C_c^\infty(\g(F_S))$. Fixons $S$ et $\o$, et supposons inductivement que l'assertion est prouvée lorsque $G$ est remplacé par un sous-groupe de Levi semi-standard propre, en particulier les nombres $a^M(S,U)$ sont définis pour $M\not= G$. Soit
\begin{align*}
T_{\o}^G(f)\eqdef J_{\o}^\g(f)-\sum_{M\in\L^G(M_0),M\not=G}|W_0^M||W_0^G|^{-1}\sum_{U\in(\mathcal{N}_M(F)), \Ind_M^G(U)=\o}a^M(S,U)J_M^G(U,f)   
\end{align*}
pour toute fonction $f\in C_c^\infty(\g(F_S))$.  Il est clair, par l'hypothèse de la récurrence et la propriété de variance de $J_{\o}^{\g}(-)$ et de $J_M^G(U,-)$ envers la conjugaison, c'est-à-dire les propositions \ref{prop:varianceglobale} et \ref{prop:IOP}, que la distribution $T_{\o}^G(-)$ est $G(F_S)$-invariante. On voit aussi que la distribution $T_{\o}^G(-)$ annule toute fonction s'annulant sur $\o(F_S)$, grâce au point 1 du théorème \ref{thm:classconjdisJ} et au point 1 de la proposition \ref{prop:IOP}.

Soit $D$ la fonctionnelle sur $C_c^\infty(\o(F_S))$ définie par $D(h)=T_{\o}^G(h')$ pour toute fonction $h\in C_c^\infty(\o(F_S))$, avec $h'\in C_c^\infty(\g(F_S))$ une fonction qui prolonge $h$. Cette fonctionnelle est bien définie parce que toute fonction $h\in C_c^\infty(\o(F_S))$ se prolonge par 0 sur $\g(F_S)\setminus(\overline{\o}(F_S)\setminus \o(F_S))$ puis sur $\g(F_S)$, et la valeur $T_{\o}^G(h')$ ne dépend pas de prolongement $h'$ car $T_{\o}^G(-)$ annule toute fonction s'annulant sur $\o(F_S)$. On voit ensuite que $D$ est une distribution, encore une fois par un argument de la partition de l'unité.

La distribution $D$ est $G(F_S)$-invariante sur l'espace $G(F_S)$-homogène $\o(F_S)$, elle est donc un multiple de l'intégrale orbitale sur $\o(F_S)$ (cf. \cite[chapitre VII, section 6, théorème 3.b)]{Bou04}). Il existe alors un unique nombre complexe $a^G(S,\o)$ tel que
\[T_{\o}^G(f)=D(f|_{\o(F_S)})=a^G(S,\o)J_G^G(\o,f),\]
pour toute fonction $f\in C_c^\infty(\g(F_S))$. Ce qu'il fallait.

En parallèle il est clair que $a^{G}(S,\o)$ est uniquement déterminé par la mesure sur $a_0$. La preuve se termine. \qedhere

\end{proof}

Le nombre $a^G(S,U)$ ne dépend que de la classe de $G(F)$-conjugaison de $U$. On peut donc aussi écrire $a^G(S,\o)$ à la place de $a^G(S,U)$, où $\o=(\Ad G(F))U$.

\begin{corollary}\label{coro:valeuraG(S,0)}Nous avons $a^G(S,0)=\vol(G(F)\backslash G(\A_F)^1)$.    
\end{corollary}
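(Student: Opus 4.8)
The plan is to specialize the fine expansion of Theorem~\ref{thm:devlopfinnilp} to the zero orbit $0\in(\mathcal{N}_G(F))$ and to evaluate both sides of the resulting identity directly.

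First I would determine which terms survive on the right-hand side of \eqref{eq:devlopfinnilp} when $\o=0$. If $M\in\L^G(M_0)$ and $U\in(\mathcal{N}_M(F))$ satisfy $\Ind_M^G(U)=0$, then the codimension identity of Proposition~\ref{prop:indprop}~(3) gives $\text{codim}_{\m}\bigl((\Ad M)U\bigr)=\text{codim}_\g(0)=\dim\g$; since $\text{codim}_{\m}(-)\le\dim\m\le\dim\g$, this forces $M=G$, and then $U=0$. Hence for every $f\in\S(\g(F_S))$ the expansion collapses to $J_0^\g(f)=a^G(S,0)\,J_G^G(0,f)$. Next I would compute $J_G^G(0,f)$ from formula~\eqref{YDLgeomeq:IOPdefcaseequising}, which applies since $G_0=G$ (so $M_X=G_X$ for $M=G$, $X=0$): here $D^\g(0)=1$ because $\g_0=\g$, the quotient $G_0(F_S)\backslash G(F_S)$ is reduced to a point, and the weight $v_G^G(g)$ attached to the trivial $(G,G)$-famille vaut $1$; therefore $J_G^G(0,f)=f(0)$, and so $J_0^\g(f)=a^G(S,0)\,f(0)$.

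Then I would evaluate the left-hand side directly from the definitions of la sous-section~\ref{subsec:preludeTFdef}. The same vanishing $\Ind_{M_P}^G(X)=0\Rightarrow(P=G,\ X=0)$ shows that $K_{P,0}^\g(x,f)=0$ for every proper $P$, while $K_{G,0}^\g(x,f)=f(0)$ since $\n_G=0$. As only $P=G$ contributes to $K_0^{\g,T}$, and for $P=G$ one has $\widehat{\tau}_G^G\equiv1$, $\dim a_G^G=0$ and $G(F)\backslash G(F)$ reduced to a point, it follows that $K_0^{\g,T}(x,f)=f(0)$ for all $x$ and all $T$; in particular
\[
J_0^\g(f)=J_0^{\g,T_0}(f)=\int_{G(F)\backslash G(\A_F)^1}f(0)\,dx=f(0)\,\vol\bigl(G(F)\backslash G(\A_F)^1\bigr).
\]
Comparing the two evaluations of $J_0^\g(f)$ and choosing $f$ with $f(0)\neq0$ gives $a^G(S,0)=\vol(G(F)\backslash G(\A_F)^1)$. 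No real obstacle arises here; the only point deserving care is the elementary equivalence $\Ind_M^G(0)=0\iff M=G$, used on both sides, which is immediate from the codimension statement in Proposition~\ref{prop:indprop}.
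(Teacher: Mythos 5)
Your proposal is correct and takes essentially the same route as the paper's one-line proof, which simply states $\vol(G(F)\backslash G(\A_F)^1)f(0)=J_0^{\g}(f)=a^G(S,0)f(0)$; you have merely spelled out the two evaluations (the collapse of the fine expansion to $a^G(S,0)f(0)$ via the codimension equality in Proposition~\ref{prop:indprop}~(3), and the direct computation of $J_0^{\g,T}$ giving the volume factor) in full detail.
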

\begin{proof}
On a $\vol(G(F)\backslash G(\A_F)^1)f(0)=J_{(\Ad G(F))0}^{\g}(f)=a^G(S,0)f(0)$ pour toute fonction $f\in \S(\g(F_S))$. D'où l'égalité voulue. 
\end{proof}

Le théorème met fin à l'étude de $J_{\o}(f)$ pour $\o$ nilpotent. 

\subsubsection{Développement fin de \texorpdfstring{$J_{\o}(f)$}{Jo(f)} : descente semi-simple}

Le dessein du paragraphe est de prouver la proposition qui suit. Autrement dit, de trouver une expression de $J_{\o}(f)$ en termes de distributions nilpotentes sur les sous-algèbres de Levi de $\g_\sigma$. 

Soit $\o\in\O^\g$. On peut et on va fixer $\sigma\in \g_{\ss}(F)$ un élément de $\o_\ss\eqdef \{X_\ss\mid X\in\o\}$ qui est $F$-elliptique dans l'algèbre de Lie d'un sous-groupe de Levi semi-standard $M_1$. Alors $M_{1\sigma}$ est un sous-groupe de Levi minimal de $G_\sigma$. On fixe $K_\sigma=\prod_{v\in\V_F}K_{\sigma v}$ un sous-groupe ouvert compact de $G_\sigma(\A_F)$ en bonne position par rapport à $M_{1\sigma}$. Posons $\o_{\nilp}=(\Ad G_{\sigma}(F))W\subseteq \g_\sigma(F)$ pour $W\in \g_\sigma(F)$ avec $\sigma+W\in \o$, cette classe de $G_\sigma(F)$-conjugaison ne dépend pas du choix de $W$. Posons $T_1=T_0-T_{0\sigma}$ avec $T_{0\sigma}$ le vecteur défini par l'équation \eqref{eq:deftroncatureT0} pour $(G_\sigma,M_{1\sigma},K_\sigma)$. 

Pour tous $R\in \F^{G_{\sigma}}(M_{1\sigma})$, $y\in G_{\sigma}(\A_F)\backslash G(\A_F)$, on pose $\Phi_{R,y,T_1}\in \S(\m_R(\A_F))$ la fonction
\[\Phi_{R,y,T_1}(C)=\gamma_{\A_F}^{G_\sigma}(R)\int_{K_\sigma}\int_{\mathfrak{n}_R(\A_F)}f\left((\Ad (ky)^{-1})(\sigma+C+U)\right)v_R'(ky,T_1)\,dU\,dk,\,\,\,\,\forall C\in\m_R(\A_F),\]
qui dépend de manière lisse de $y$, avec $v_R'(ky,T_1)$ le nombre complexe défini selon ce qui suit : définissons dans un premier temps la $(G,M_1)$-famille $(v_Q(ky,T_1))_{Q\in\F^G(M_1)}$ par
\[v_Q(\lambda,ky,T_1)\eqdef e^{-\lambda(H_Q(ky)-T_1)},\,\,\,\,\lambda\in ia_Q^\ast.\]
On obtient alors $v_Q'(ky,T_1)$. Prenons $\F_R^0(M_1)\eqdef\{P\in \F^G(M_1)\mid P_\sigma =R, a_P=a_R\}$. On pose finalement
\[v_R'(ky,T_1)\eqdef \sum_{Q\in\F_R^0(M_1)} v_Q'(ky,T_1).\]

Alors on a cette formule de descente semi-simple de $J_\o^\g(f)$ :

\begin{proposition}
Avec les notations précédentes,
\[J_\o^\g(f)=\int_{G_\sigma(\A_F)\backslash G(\A_F)}\left(\sum_{R\in\F^{G_\sigma}(M_{1\sigma})}|W_{M_{1\sigma}}^{M_R}||W_{M_{1\sigma}}^{G_\sigma}|^{-1}J_{\o_\nilp}^{\m_R}(\Phi_{R,y,T_1})\right)\,dy.\]
\end{proposition}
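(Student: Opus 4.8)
The plan is to transpose Arthur's semi-simple descent of the (refined) geometric side to the Lie algebra setting, in the spirit of \cite{Art81} and \cite{Ch18}. Starting from $J_\o^\g(f)=\int_{G(F)\backslash G(\A_F)^1}K_\o^{\g,T_0}(x,f)\,dx$, all rearrangements of sums and integrals below are to be justified by the absolute convergence and the uniform estimates of Theorem \ref{thm:classconjdisJ}.

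The first step is to decompose the orbit $\o$ through its semi-simple part $\sigma$. Because the generalised Lusztig--Spaltenstein induction commutes with the Jordan decomposition (point 2 of Proposition \ref{prop:indprop}), every $X\in\m_P(F)$ with $\Ind_{M_P}^G(X)=\o$ has $X_\ss$ lying in $\o_\ss=(\Ad G(F))\sigma$; grouping the inner sum defining $K_{P,\o}^\g$ according to the $G(F)$-orbit of $X_\ss$, I would replace it by a sum over $G_\sigma(F)\backslash G(F)$ together with a sum over $W\in\o_\nilp$ (so that $\sigma+W$ runs through $\o$ in $\g_\sigma(F)$), the finiteness of the relevant index sets being guaranteed by the elementary divisor theory of Appendix \ref{sec:AppendixA}. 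After conjugation, the membership condition $X_\ss\in\m_P(F)$ selects exactly those parabolic subgroups $P$ of $G$ for which $P\cap G_\sigma$ is a prescribed parabolic subgroup $R$ of $G_\sigma$.

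The second, and most delicate, step is the combinatorial one. I would interchange the alternating sum over $P\supseteq P_0$ with the sum over $G_\sigma(F)\backslash G(F)$ and invoke the standard combinatorial lemma relating the truncation function $\widehat{\tau}_P^G(\cdot-T_0)$ on $a_P$ to the functions $\widehat{\tau}_R^{G_\sigma}(\cdot-T_1)$ on $a_R$ for parabolics $R$ of $G_\sigma$; the passage from $G$ to $G_\sigma$ is precisely what forces the truncation parameter to change from $T_0$ to $T_1=T_0-T_{0\sigma}$, in accordance with \eqref{eq:deftroncatureT0} applied in turn to $G$ and to $G_\sigma$. In this process the sum $\sum_{P\in\F_R^0(M_1)}$ over the parabolics $P$ of $G$ with $a_P=a_R$ and $P_\sigma=R$, together with the $(G,M_1)$-family $(v_Q(ky,T_1))_Q$ of \ref{subsubsec:(G,M)-familles}, reassembles --- via the $(G,M)$-family formalism for products and descent --- into the weight $v_R'(ky,T_1)$ and the Weyl-group coefficients $|W_{M_{1\sigma}}^{M_R}||W_{M_{1\sigma}}^{G_\sigma}|^{-1}$, while the leftover $K_\sigma$- and $\n_R(\A_F)$-integrations collapse into the function $\Phi_{R,y,T_1}\in\S(\m_R(\A_F))$. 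Recognising what remains of the outer integral as the defining integral of the trace-formula contribution $J_{\o_\nilp}^{\m_R}(\Phi_{R,y,T_1})$ --- here point 3 of Theorem \ref{thm:classconjdisJ} is used to reconcile the truncation parameter $T_1$ carried by $\Phi_{R,y,T_1}$ with the canonical truncation implicit in $J_{\o_\nilp}^{\m_R}$ --- and collecting the $y$-dependence into an integral over $G_\sigma(\A_F)\backslash G(\A_F)$ then yields the claimed formula. Compatibility of these manipulations with Propositions \ref{prop:IOP} and \ref{prop:varianceglobale} serves as a sanity check at each stage.

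The hard part will be this combinatorial step: controlling the alternating sum over the parabolics of $G$, matching it correctly against the $G_\sigma$-side truncation, and checking that the residual data reassemble exactly into $v_R'(ky,T_1)$ and $\Phi_{R,y,T_1}$ --- compounded by the bookkeeping of the various Haar measures (on the spaces $a_M$, on the centralizer $G_\sigma$ and its Levi subgroups, and on the unipotent radicals $\n_R$) under all the intervening changes of variables. A secondary but necessary point is to justify, via the uniform bounds of Theorem \ref{thm:classconjdisJ}, that every intermediate sum and integral converges absolutely, so that all the interchanges above are legitimate.
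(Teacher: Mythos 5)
Your proposal follows the same overall route as the paper's proof: Arthur's semi-simple descent (the paper works à la \cite{Art86} rather than \cite{Art81}, but the mechanism is the same), re-organising the alternating sum over parabolics of $G$ into sums over parabolics of $G_\sigma$, absorbing the residual truncation into the weight $v_R'(ky,T_1)$ and the parabolic descent into $\Phi_{R,y,T_1}$, and finally averaging by the relative Weyl group to produce the factors $|W_{M_{1\sigma}}^{M_R}||W_{M_{1\sigma}}^{G_\sigma}|^{-1}$. That much is correctly identified. However, two genuine gaps should be flagged.

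First, and most seriously, you start the unfolding directly from $K_{P,\o}^\g(x,f)$ and claim to group the inner sum by the $G(F)$-orbit of $X_\ss$. This cannot work as stated: in $K_{P,\o}^\g$ the integration is over all of $\mathfrak{n}_P(\A_F)$, which has no intrinsic $G_\sigma$-structure, so no integration over $\mathfrak{n}_R(\A_F)$ for a parabolic $R$ of $G_\sigma$ can appear at this stage. The indispensable preliminary step — the very first thing the paper does — is to replace $K_{P,\o}^\g$ by the modified kernel $k_{P,\o}^\g(x,f)$ in which the integral over $\mathfrak{n}_P(\A_F)$ is exchanged for a sum over $N_P(F,X_\ss)\backslash N_P(F)$ followed by integration over the centralizer $\mathfrak{n}_P(\A_F,X_\ss)$, and to prove (via a Poisson-type identity from \cite{Ch02a}) that $J_\o^{\g,T}(f)$ is unchanged. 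It is this modification that lets the subsequent change of variables convert $\mathfrak{n}_P(\A_F,X_\ss)$ into $\mathfrak{n}_R(\A_F)$ and the quotient $P(F)\cap w_sG_\sigma(F)w_s^{-1}\backslash G(F)$ into $R(F)\backslash G(F)$. Without naming this step, the second paragraph of your argument is not justified.

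Second, your appeal to point 3 of Theorem \ref{thm:classconjdisJ} to ``reconcile the truncation parameter $T_1$ carried by $\Phi_{R,y,T_1}$ with the canonical truncation implicit in $J_{\o_\nilp}^{\m_R}$'' misdescribes what actually happens. The combinatorial exchange is carried out via Arthur's functions $\Gamma_P^G(\cdot,Y_P)$ and $\Gamma_R^G(\cdot,\mathcal{Y}_R)$ with the orthogonal family $\mathcal{Y}_B^T(\delta x,y)$ built from $H_P(K_{P_\sigma}(\delta x)y)+s_P^{-1}(T-T_0)-T_\sigma+T_0$, and this yields the identity $J_\o^{\g,T}(f)=\int_y\sum_B\int_k\int_a J_{\o_\nilp}^{M_B,T_\sigma}(\Phi_{B,a,k,y}^T)$ for $T$ sufficiently regular. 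One then specialises $T=T_0$ — here polynomiality (point 3 of Theorem \ref{thm:classconjdisJ}) is indeed used — which makes the inner trace-formula contributions sit at their own canonical parameter $T_{0\sigma}$; the $T_1=T_0-T_{0\sigma}$ translation shows up inside the weight $v_R'(ky,T_1)$ \emph{after} analysing the functions $\Gamma_B^G$ and absorbing the $a$- and $k$-integrations into $\Phi_{R,y,T_1}$, not as a truncation parameter of $J_{\o_\nilp}^{\m_R}$ that needs reconciling. You should spell out the role of the $\Gamma$ functions; they are the concrete incarnation of the ``standard combinatorial lemma'' you invoke, and the proof does not close without them.
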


On procède à sa démonstration en se guidant d'après \cite{Art86}.  

Pour $X\in \mathfrak h(F)$ avec $H$ un groupe algébrique sur $F$, nous noterons $H(F,X)$ et $H(\A_F,X)$ le centralisateur de $X$ dans $H(F)$ et $H(\A_F)$. On voit le groupe de Weyl relatif $W_0^G$ comme un groupe des isomorphismes linéaires de $a_0$. Pour $\mathfrak{a}$ et $\mathfrak{b}$ deux sous-espaces vectoriels de $a_0$ on note $W^G(\mathfrak{a},\mathfrak{b})$ l'ensemble (possiblement vide) des isomorphismes linéaires de $\mathfrak{a}$ vers $\mathfrak{b}$ obtenus par restriction des éléments de $W_0^G$ sur $\mathfrak{a}$.

La première étape consiste à modifier le noyau tronqué, afin de mettre en évidence la dépendance de $J_\o(f)$ en la partie semi-simple de $\o$. Pour $\o \in \O^\g$ et $f \in\S(\g(\A_F))$, on définit 
\[k_{P,\o}^\g(x,f)=\sum_{X\in\m_P(F):\Ind_{M_P}^G(X)=\o}\sum_{\eta\in N_{P}(F,X_\ss)\backslash N_{P}(F)}\int_{\n_{P}(\A_F,X_\ss)}f((\Ad (\eta x)^{-1})(X+U))\,dU,\]
et
\[k_\o^{\g,T}(x,f)=\sum_{P:P_0\subseteq P}(-1)^{\dim a_P^G}\sum_{\delta\in P(F)\backslash G(F)}\widehat{\tau}_P^G(H_0(\delta x)-T)k_{P,\o}^\g(\delta x,f)\]
pour $T\in a_0$. 

Pour contourner des difficultés techniques on va supposer désormais que $f\in C_c^\infty(\g(\A_F))$, jusqu'à indication contraire. La justification de toute interversion entre intégrales dorénavant est la même qu'en \cite{Art86}.

\begin{lemma}Pour $T$ assez régulier,
\[J_\o^{\g,T}(f)= \int_{G(F)\backslash G(\A_F)^1} k_\o^{\g,T}(x,f)\,dx.\]

\end{lemma}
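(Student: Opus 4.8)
Par définition $J_\o^{\g,T}(f)=\int_{G(F)\backslash G(\A_F)^1}K_\o^{\g,T}(x,f)\,dx$, de sorte qu'il suffit de vérifier que $K_\o^{\g,T}$ et $k_\o^{\g,T}$ ont même intégrale sur $G(F)\backslash G(\A_F)^1$. Suivant \cite{Art86}, le plan est de comparer les contributions de chaque sous-groupe parabolique standard $P$ : les deux noyaux s'obtiennent de $K_{P,\o}^\g(\cdot,f)$ et $k_{P,\o}^\g(\cdot,f)$ par la même combinaison $\sum_{P:P_0\subseteq P}(-1)^{\dim a_P^G}\sum_{\delta\in P(F)\backslash G(F)}\widehat{\tau}_P^G(H_0(\delta x)-T)\,(\,\cdot\,)(\delta x)$, ces deux fonctions étant invariantes à gauche par $P(F)$. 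Pour $T$ assez régulier, l'absolue convergence du théorème \ref{thm:classconjdisJ} autorise le dépliage usuel de la troncature : via la décomposition d'Iwasawa et l'invariance de $\widehat{\tau}_P^G(H_0(\cdot)-T)$ à gauche par $N_P(\A_F)$ (puisque $P_0\subseteq P$), on ramène l'intégrale de chaque $P$-contribution, le quotient $N_P(F)\backslash N_P(\A_F)$ étant de volume $1$ (numéro \ref{subsec:normalisationsdesmesuresdeFTI}), à l'intégrale contre $\widehat{\tau}_P^G(H_0(x)-T)$ sur $M_P(F)N_P(\A_F)\backslash G(\A_F)^1$ de $\int_{N_P(F)\backslash N_P(\A_F)}K_{P,\o}^\g(nx,f)\,dn$, resp.\ de $\int_{N_P(F)\backslash N_P(\A_F)}k_{P,\o}^\g(nx,f)\,dn$. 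Il suffit donc d'établir l'égalité de ces deux dernières quantités.

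Le membre en $K$ vaut $K_{P,\o}^\g(x,f)$, car $K_{P,\o}^\g(\cdot,f)$ est déjà invariante à gauche par $N_P(\A_F)$ tout entier : le changement de variables $U\mapsto \Ad(n^{-1})U+(\Ad(n^{-1})X-X)$ de $\n_P(\A_F)$ est de jacobien $1$ ($n$ étant unipotent) et la somme sur $X$ n'est pas affectée. Pour le membre en $k$, la somme en $\eta$ se replie contre l'intégrale — $\sum_{\eta\in N_P(F,X_\ss)\backslash N_P(F)}\int_{N_P(F)\backslash N_P(\A_F)}$ devient $\int_{N_P(F,X_\ss)\backslash N_P(\A_F)}$ — d'où
\[\int_{N_P(F)\backslash N_P(\A_F)}k_{P,\o}^\g(nx,f)\,dn=\sum_{X\in\m_P(F):\Ind_{M_P}^G(X)=\o}\ \int_{N_P(F,X_\ss)\backslash N_P(\A_F)}\int_{\n_P(\A_F,X_\ss)}f((\Ad(\bar n x)^{-1})(X+U))\,dU\,d\bar n,\]
et l'on est ramené à montrer, pour chaque $X$ intervenant (on a alors $X_\ss\in\Ad(G)\sigma$), l'égalité
\[\int_{N_P(F,X_\ss)\backslash N_P(\A_F)}\int_{\n_P(\A_F,X_\ss)}\phi(\Ad(\bar n^{-1})(X+U))\,dU\,d\bar n=\int_{\n_P(\A_F)}\phi(X+W)\,dW\qquad(\phi\in C_c^\infty(\p(\A_F))).\]

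Je prouverais cette dernière égalité en analysant l'application $\Phi:(\bar n,U)\mapsto\Ad(\bar n^{-1})(X+U)-X$, de $(N_P(F,X_\ss)\backslash N_P(\A_F))\times\n_P(\A_F,X_\ss)$ vers $\n_P(\A_F)$. Comme $\ad(X_\ss)$ est semi-simple et stabilise $\n_P$, on a $\n_P=[X_\ss,\n_P]\oplus(\n_P\cap\g_{X_\ss})$, source et but de $\Phi$ ont même dimension, et sur le lieu $G$-régulier ouvert dense de $\o_{\m_P}+\n_P$ tout $X+W$ vérifie $G_{X+W}\subseteq P$ et $(X+W)_\ss\in\Ad(G)X_\ss$ (points 1, 2, 4 et 6 de la proposition \ref{prop:indprop}). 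On en déduit : (i) la fibre de $\Phi$ au-dessus d'un $W$ générique est une orbite sous $N_P(\A_F)\cap G_{X_\ss}(\A_F)$ modulo $N_P(F,X_\ss)$, donc isomorphe au quotient compact $(N_P\cap G_{X_\ss})(F)\backslash(N_P\cap G_{X_\ss})(\A_F)$, de volume $1$ par les normalisations du numéro \ref{subsec:normalisationsdesmesuresdeFTI} ; (ii) le jacobien de $\Phi$ transverse aux fibres est, en chaque place, $|\det(\ad(X_\ss);[X_\ss,\n_P])|$ à un facteur de jacobien $1$ près, or $\det(\ad(X_\ss);[X_\ss,\n_P])\in F^\times$ puisque $X_\ss\in\g(F)$, donc ce jacobien global vaut $1$ par la formule du produit. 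En intégrant le long des fibres on obtient l'égalité voulue ; la sommation sur $X$ puis sur $P$ achève la preuve. Le principal obstacle est précisément cette analyse de $\Phi$ — l'identification de sa fibre générique, qui repose de façon essentielle sur la structure des orbites induites de la proposition \ref{prop:indprop}, et le contrôle de son jacobien —, ainsi que le soin à apporter à la justification des interversions de sommes et d'intégrales ; c'est par le biais des estimées du point 1 du théorème \ref{thm:classconjdisJ} qu'intervient l'hypothèse « $T$ assez régulier ».
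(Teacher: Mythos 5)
Votre plan est essentiellement celui du papier : il s'agit d'identifier, pour chaque $P$ standard, les moyennes $\int_{N_P(F)\backslash N_P(\A_F)}K_{P,\o}^\g(nx,f)\,dn$ et $\int_{N_P(F)\backslash N_P(\A_F)}k_{P,\o}^\g(nx,f)\,dn$ ; la première vaut $K_{P,\o}^\g(x,f)$ par invariance, et la seconde, après repliement de la somme en $\eta$, se ramène exactement à l'identité
\[\int_{N_{P}(F,X_\ss)\backslash N_{P}(\A_F)}\int_{\n_{P}(\A_F,X_\ss)}\phi(\Ad(\bar n^{-1})(X+U))\,dU\,d\bar n=\int_{\n_P(\A_F)}\phi(X+W)\,dW.\]
C'est précisément \cite[corollaire 2.5]{Ch02a}, que le papier cite sans démonstration (il renvoie de même à \cite{Art86} pour la justification des interversions). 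Votre différence réelle est de tenter de reprouver cette identité par une analyse de l'application $\Phi$ : l'idée est correcte — et on peut d'ailleurs établir l'identification de la fibre plus simplement que vous ne le faites : pour $(\bar n,U)$ dans le domaine on a $(X+\Phi(\bar n,U))_\ss=\Ad(\bar n^{-1})X_\ss$, d'où directement $\bar n_2\bar n_1^{-1}\in G_{X_\ss}(\A_F)$ si les deux points sont dans la même fibre, sans recours à la proposition \ref{prop:indprop}. La voie la plus propre est ensuite de factoriser $N_P(F,X_\ss)\backslash N_P(\A_F)\to N_P(\A_F,X_\ss)\backslash N_P(\A_F)$ (fibres compactes de volume $1$ par les normalisations du numéro \ref{subsec:normalisationsdesmesuresdeFTI}), puis d'opérer le changement de variables $(\bar n',U)\mapsto\Ad(\bar n'^{-1})(X+U)-X$ sur le quotient, dont le jacobien adélique est $|\det(\ad(X_\ss);[X_\ss,\n_P])|_{\A_F}=1$ par la formule du produit — c'est le cœur de ce que vous suggérez, mais la désintégration de la mesure produit le long des fibres et la compatibilité des mesures sur $N_P(\A_F,X_\ss)\backslash N_P(\A_F)$ et sur $[X_\ss,\n_P](\A_F)$ demandent une rédaction.

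Deux lacunes plus sérieuses : d'une part, vous invoquez « l'absolue convergence du théorème \ref{thm:classconjdisJ} » pour déplier la troncature $P$ par $P$, mais ce théorème ne fournit la convergence absolue que pour le noyau $K_\o^T$ et non pour $k_\o^T$, et de toute façon elle ne se transmet pas terme à terme en $P$ ; le papier (suivant Arthur) évite cet écueil en écrivant d'emblée l'intégrale sous la forme d'une somme sur les couples $(P_1,P_2)$ avec $F^1\sigma_1^2$, qui assure la convergence absolue de chaque terme avant toute interversion. D'autre part, le jacobien « à un facteur de jacobien $1$ près » et la partie (ii) restent une esquisse : sans le passage intermédiaire par le quotient compact, l'argument de co-aire tel que vous l'énoncez n'est pas immédiat. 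En résumé, l'approche est la bonne mais remplace par un croquis ce que le papier délègue à Chaudouard et à Arthur.
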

\begin{proof}
L'intégrale sur $G(F)\backslash G(\A_F)^1$ du nouveau noyau vaut
\[\sum_{P_1,P_2 : P_0\subseteq P_1\subseteq P_2}\int_{P_1(F)\backslash G(\A_F)^1}F^1(x,T)\sigma_1^2(H_0(x)-T)\sum_{P:P_1\subseteq P\subseteq P_2}(-1)^{\dim a_P^G}k_{P,\o}^\g(x,f)\,dx. \]
On regarde le terme en $(P_1,P_2)$ dans la somme, il est égal à
\begingroup
\allowdisplaybreaks
\begin{align*}
&\int_{P_1(F)\backslash G(\A_F)^1}F^1(x,T)\sigma_1^2(H_0(x)-T)\sum_{P:P_1\subseteq P\subseteq P_2}(-1)^{\dim a_P^G}k_{P,\o}^\g(x,f)\,dx
\\
&=\int_{M_1(F)N_1(\A_F)\backslash G(\A_F)^1}F^1(x,T)\sigma_1^2(H_0(x)-T)\sum_{P:P_1\subseteq P\subseteq P_2}(-1)^{\dim a_P^G}\sum_{X\in\m_P(F):\Ind_{M_P}^G(X)=\o}\\
&\hspace{2cm}\left(\int_{N_1(F)\backslash N_1(\A_F)}\sum_{\eta\in N_{P}(F,X_\ss)\backslash N_{P}(F)}\int_{\n_{P}(\A_F,X_\ss)}f((\Ad (\eta n_1x)^{-1})(X+U))\,dU\,dn_1\right)\,dx
\\
&=\int_{M_1(F)N_1(\A_F)\backslash G(\A_F)^1}F^1(x,T)\sigma_1^2(H_0(x)-T)\sum_{P:P_1\subseteq P\subseteq P_2}(-1)^{\dim a_P^G}\sum_{X\in\m_P(F):\Ind_{M_P}^G(X)=\o}\\
&\hspace{2cm}\left(\int_{N_1(F)\backslash N_1(\A_F)}\int_{N_{P}(F,X_\ss)\backslash N_{P}(\A_F)}\int_{\n_{P}(\A_F,X_\ss)}f((\Ad (\eta n_1x)^{-1})(X+U))\,dU\,d\eta\,dn_1\right)\,dx
\\
&=\int_{M_1(F)N_1(\A_F)\backslash G(\A_F)^1}F^1(x,T)\sigma_1^2(H_0(x)-T)\sum_{P:P_1\subseteq P\subseteq P_2}(-1)^{\dim a_P^G}\\
&\hspace{2cm}\left(\sum_{X\in\m_P(F):\Ind_{M_P}^G(X)=\o}\int_{N_1(F)\backslash N_1(\A_F)}\int_{\n_{P}(\A_F)}f((\Ad (n_1x)^{-1})(X+U))\,dU\,dn_1\right)\,dx,
\end{align*}
\endgroup
la deuxième égalité vient du fait  $\text{vol}(N_P(F)\backslash N_P(\A_F))=1$, puis la dernière est \cite[corollaire 2.5]{Ch02a}. En appliquant les mêmes démarches à 
\[\int_{P_1(F)\backslash G(\A_F)^1}F^1(x,T)\sigma_1^2(H_0(x)-T)\sum_{P:P_1\subseteq P\subseteq P_2}(-1)^{\dim a_P^G}K_{P,\o}^\g(x,f)\,dx\]
on voit aisément que cela coïncide avec la dernière expression ci-dessus. En résumé 
\[J_\o^{\g,T}(f)= \int_{G(F)\backslash G(\A_F)^1} k_\o^{\g,T}(x,f)\,dx.\qedhere\]

\end{proof}

Fixons par la suite $\o\in \mathcal{O}^\g$ une classe de conjugaison. L'ensemble $\o_\ss\eqdef \{X_\ss\mid X\in\o\}$ est bien une classe de $G(F)$-conjugaison. On peut et on va fixer $\sigma\in \o_\ss$ un élément qui est $F$-elliptique dans l'algèbre de Lie d'un sous-groupe de Levi semi-standard $M_1$. Alors $M_{1\sigma}$ est un sous-groupe de Levi minimal de $G_\sigma$. Prenons $P_1\in\F^G(M_1)$. Le groupe $P_{1\sigma}$ est un sous-groupe parabolique minimal de $G_\sigma$, ayant $M_{1\sigma}$ comme une composante de Levi. On fixe à présent $K_\sigma=\prod_{v\in\V_F}K_{\sigma v}$ un sous-groupe ouvert compact de $G_\sigma(\A_F)$ en bonne position par rapport à $M_{1\sigma}$.

On fixe dans la suite, pour tout $v\in\V_F$, $M_{v,1\sigma}$ un sous-groupe de Levi minimal de $G_\sigma\times_FF_v$ inclut dans $M_{1\sigma}\times_FF_v$. Rappelons que pour tout $v\in\V_F$ on a fixé les mesures sur les groupes des $F_v$-points des éléments de $\L^{G_{\sigma,v}}(M_{v,1\sigma})$ dans la sous-section \ref{subsec:normalisationsdesmesuresdeFTI}, et sur les groupes des $F_v$-points des radicales unipotents des éléments de $\F^{G_{\sigma,v}}(M_{v,1\sigma})$ ainsi que leur algèbre de Lie. On va supposer, comme il est loisible, qu'il existe une mesure sur $K_{\sigma v}$ telle que les mêmes exigences sur les mesures que l'on imposes aux objets relativement à $(G,M_0,K,(M_{v,0})_v,(\mathfrak{k}_v)_v)$ dans la sous-section \ref{subsec:normalisationsdesmesuresdeFTI} ont lieu sur les mesures que l'on imposes aux objets relativement à $(G_\sigma,M_{1\sigma},K_\sigma,(M_{v,1\sigma})_v,(\mathfrak{k}_{\sigma v})_v)$ avec $(\mathfrak{k}_{\sigma v})_v$ une nouvelle famille de réseaux pour $\g_\sigma$ comme dans le numéro \ref{subsubsec:objetpourGcontexteglobal}, et $\vol(K_{\sigma v})=1$ pour presque tous les $v$.

On dispose d'une application surjective
\begin{align*}
  \F^G(M_1) &\longrightarrow  \F^{G_\sigma}(M_{1\sigma}) \\
   P  &\longmapsto P_\sigma  .
\end{align*}
Plus généralement, si $M$ est un sous-groupe de Levi de $G$ tel que $a_M=a_{M_\sigma}$ alors
\begin{align*}
  \{L\in\L^G(M) \mid a_L=a_{L_\sigma}\} &\longrightarrow  \L^{G_\sigma}(M_{\sigma}) \\
   L  &\longmapsto L_\sigma  .
\end{align*}
est une bijection. Nous noterons $\F^\sigma\eqdef \F^{G_\sigma}(M_{1\sigma})$, et un sous-groupe parabolique de $G_\sigma$ est dit standard s'il contient $P_{1\sigma}$. Soit $R\in \F^\sigma$. Posons
\begin{align*}
\F_R(M_1)&\eqdef    \{P\in \F^G(M_1)\mid P_\sigma =R\} \\
\F_R^0(M_1)&\eqdef\{P\in \F^G(M_1)\mid P_\sigma =R, a_P=a_R\} \\
\overline{\F}_R(M_1)&\eqdef\{P\in \F^G(M_1)\mid P_\sigma \supseteq R\}.
\end{align*}
On a $\F_R^0(M_1)\subseteq \F_R(M_1)\subseteq \overline{\F}_R(M_1)\subseteq \F^G(M_1)$.

On fixe $Q$ un sous-groupe parabolique standard de $G$, et examine la contribution de $k_{Q,\o}^\g(x,f)$ à $J_\o^{\g,T}(f)$. Soit $X$ un élément de $\{X\in \m_Q(F):\Ind_{M_Q}^G(X)=\o\}$. La partie semi-simple $X_\ss$ commute avec un sous-tore de $G$ qui est $G(F)$-conjugué à $A_1\eqdef A_{M_1}$. Ce sous-tore est $M_Q(F)$-conjugué encore à $A_{M_{Q_1}}$ pour un $Q_1\subseteq Q$ sous-groupe parabolique standard de $G$ qui est associé à $P_1$. On peut alors écrire 
\[X=\mu^{-1}w_s(\sigma+U)w_s^{-1}\mu,\]
où 
\begin{align*}
s\in W^G(a_{P_1},a_{Q_1}),&\,\,\,\,\mu\in M_Q(F),\,\,\,\,U\in E(\o,\sigma,Q,s),\\
& E(\o,\sigma,Q,s)\eqdef \{U\in w_s^{-1}\m_Q(F)w_s\cap \mathcal{N}_{G_\sigma}(F):\Ind_{w_s^{-1}M_Qw_s}^G(\sigma+U)=\o\}.    
\end{align*}
On sait que la classe de $s$ est uniquement déterminée dans $W_0^{M_P}\backslash W^G(a_{P_1},a_{Q_1})/W_{M_{1\sigma}}^{G_\sigma}$. Une fois $s$ fixé, $\mu$ est uniquement déterminé modulo $M_Q(F)\cap w_sG(F,\sigma) w_s^{-1}$ à gauche. Une fois $s$ et $\mu$ fixés, la partie nilpotente $U$ est clairement uniquement déterminée. Soit $W^G(a_1;Q,G_\sigma)$ le sous-ensemble de $W_0^G$ défini par 
\[W^G(a_1;Q,G_\sigma)\eqdef\bigcup_{Q_1:P_0\subseteq Q_1\subseteq Q}\left\{s\in W^G(a_{P_1},a_{Q_1})\,|\, s^{-1}\alpha>0\,\,\forall \alpha\in \Delta_{Q_1}^Q,\,\, s\beta >0\,\,\forall \beta\in \Delta_{P_{1\sigma}}^{G_\sigma}\right\}.\]
Compte tenu de ce qu'on vient de dire
\begin{align*}
k_{Q,\o}^\g(x,f)=\sum_{s}\sum_{\mu}\sum_{U} \sum_{\eta}\int_{V'}f\left[\left(\Ad (\eta x)^{-1}\right)\left\{(\Ad \mu^{-1}w_s)(\sigma+U)+V'\right\}\right]\,dV', 
\end{align*}
où les sommes ou l'intégrale sont sur $s\in W^G(a_1;Q,G_\sigma)$, $\mu\in M_Q(F)\cap w_sG(F,\sigma) w_s^{-1}\backslash M_Q(F)$, $U\in E(\o,\sigma,Q,s)$, $\eta\in N_{Q}(F,(\Ad \mu^{-1}w_s)\sigma)\backslash N_Q(F)$,  enfin $V'\in \n_{Q}(\A_F,(\Ad \mu^{-1}w_s)\sigma)$. Avec des  changements de variables $V=w_s^{-1}\mu V'\mu^{-1} w_s$ puis
\begin{align*}
\left(M_Q(F)\cap w_sG(F,\sigma) w_s^{-1}\backslash M_Q(F)\right)&\times\left(\mu^{-1}\left(N_Q(F)\cap w_sG(F,\sigma) w_s^{-1}\right)\mu\backslash N_Q(F)\right) \\
&\longrightarrow Q(F)\cap w_sG_\sigma(F)w_s^{-1}\backslash Q(F) \\
(\mu,\eta) &\longmapsto q=\mu \eta
\end{align*}
on trouve
\begin{align*}
k_{Q,\o}^\g(x,f)=\sum_{s}\sum_{q}\sum_{U} \int_{V}f\left[\left(\Ad  x^{-1}q^{-1}w_s\right)\left(\sigma+U+V\right)\right]\,dV, 
\end{align*}
avec $V\in \n_{(\Ad w_s^{-1})Q}(\A_F,\sigma)$. Plongeons cette dernière expression dans 
\[\sum_{\delta\in Q(F)\backslash G(F)}\widehat{\tau}_Q^G(H_0(\delta x)-T)k_{Q,\o}^\g(\delta x,f),\]
faisons entrer la somme portée sur $\delta$ dans la somme portée sur $s$, puis effectuons un changement de variable $\xi'=q\delta$ on trouve que $J_\o^{\g,T}(f)$ vaut l'intégrale sur $x\in G(F)\backslash G(\A_F)^1$ de
\begin{align*}
\sum_{Q:P_0\subseteq Q}\sum_{s}\sum_{\xi'}\sum_{U} \int_{V}f\left[\left(\Ad  x^{-1}\xi'^{-1}w_s\right)\left(\sigma+U+V\right)\right](-1)^{\dim a_Q^G}\widehat{\tau}_Q^G(H_{Q}(\xi' x)-T)\,dV, 
\end{align*}
avec $\xi'\in Q(F)\cap w_sG_\sigma(F)w_s^{-1}\backslash G(F)$. Avec le changement de variables $\xi=w_s^{-1}\xi'$, on va changer la somme de sur $\xi'$ par la somme sur $\xi\in R(F)\backslash G(F)$, où 
\[R=w_s^{-1}Qw_s\cap G_\sigma\]
est un sous-groupe parabolique standard de $G_\sigma$ dont $R=M_RN_R=(w_s^{-1}M_Qw_s\cap G_\sigma)(w_s^{
-1}N_Qw_s\cap G_\sigma)$ est une décomposition de Levi. On en conclut donc que $J_\o^{\g,T}(f)$ vaut l'intégrale sur $x\in G(F)\backslash G(\A_F)^1$ de
\begin{align*}
\sum_{Q:P_0\subseteq Q}\sum_{s}&\sum_{\xi \in R(F)\backslash G(F)}\sum_{U\in \mathcal{N}_{M_R}(F):\Ind_{w_s^{-1}M_Qw_s}^G(\sigma+U)=\o} \\
&\int_{\n_R(\A_F)}f\left[\left(\Ad  x^{-1}\xi^{-1}\right)\left(\sigma+U+V\right)\right](-1)^{\dim a_Q^G}\widehat{\tau}_Q^G(H_{Q}(w_s\xi x)-T)\,dV,   
\end{align*}
soit l'intégrale sur $x\in G(F)\backslash G(\A_F)^1$ de
\begin{equation}\label{eq:devlopfinssdescenttempoQ}
\begin{split}
\sum_{R\in \F^\sigma:P_{1\sigma}\subseteq R}\sum_{\xi \in R(F)\backslash G(F)}&\sum_{U\in \mathcal{N}_{M_R}(F):\Ind_{M_R}^{G_\sigma}(U)=\o_{\nilp}} \int_{\n_R(\A_F)}f\left[\left(\Ad  x^{-1}\xi^{-1}\right)\left(\sigma+U+V\right)\right]\\
&\sum_{\substack{Q\in \F^G: P_0\subseteq Q\\ s\in  W^G(a_1;Q,G_\sigma)\\w_s^{-1}Qw_s\cap G_\sigma =R}}(-1)^{\dim a_Q^G}\widehat{\tau}_Q^G(H_{Q}(w_s\xi x)-T)\,dV,       
\end{split}
\end{equation}
ici on note $\o_{\nilp}=(\Ad G_{\sigma}(F))W\subseteq \g_\sigma(F)$ pour $W\in \g_\sigma(F)$ avec $\sigma+W\in \o$, cette classe ne dépend pas de $W$. Elle dépend de $\sigma$ choisi dans $\o_\ss$, on ne le fait toutefois pas figurer dans la notation. On a implicitement fait appel à
\begin{lemma}
Soit $\o\in \O^{\g}$. Soient $\sigma\in \g_\ss(F)$ et $W\in \g_{\sigma}(F)$ tels que $\sigma+W\in \o$. Soit $M$ un sous-groupe de Levi de $G$ tel que $\sigma\in \m(F)$. Alors pour tout $Y\in \mathcal{N}_{M_\sigma}(F)$, $\Ind_M^G(\sigma+Y)=\o$ si et seulement si $\Ind_{M_\sigma}^{G_\sigma}(Y)=(\Ad G_\sigma)W$.
\end{lemma}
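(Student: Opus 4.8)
The plan is to derive the equivalence from the compatibility of generalized Lusztig--Spaltenstein induction with the Jordan decomposition (point 2 of Proposition \ref{prop:indprop}), together with the elementary fact that $\sigma$ lies in the centre of $\g_\sigma$, hence in $Z(G_\sigma)$.

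First I would fix the Jordan decompositions. Since $\sigma\in\m(F)$ is semisimple, $Y\in\mathcal{N}_{M_\sigma}(F)\subseteq\m_\sigma(F)$ is nilpotent, and $\sigma$ and $Y$ commute, uniqueness of the Jordan decomposition gives $(\sigma+Y)_\ss=\sigma$ and $(\sigma+Y)_\nilp=Y$; in particular $G_{(\sigma+Y)_\ss}=G_\sigma$ and $M_{(\sigma+Y)_\ss}=M_\sigma$. Applying point 2 of Proposition \ref{prop:indprop} to the Levi $M$ of $G$ and the element $\sigma+Y$ then yields
\[\Ind_M^G(\sigma+Y)=\Ad(G)\bigl(\sigma+\Ind_{M_\sigma}^{G_\sigma}(Y)\bigr),\]
where $\Ind_{M_\sigma}^{G_\sigma}(Y)$ is again a nilpotent $G_\sigma$-orbit (an induced orbit meets densely the corresponding translate of a nilradical inside a parabolic, and such a set consists of nilpotent elements when the Levi part is nilpotent). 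Recall also that $\o=\Ad(G)(\sigma+W)$, and that — as in the definition of $\o_\nilp$ preceding the statement — we may and do take $W\in\mathcal{N}_{G_\sigma}(F)$, so that likewise $(\sigma+W)_\ss=\sigma$.

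Next I would isolate the key reduction: for nilpotent elements $N,N'\in\g_\sigma(F)$, the elements $\sigma+N$ and $\sigma+N'$ are $G(F)$-conjugate if and only if $N$ and $N'$ are $G_\sigma(F)$-conjugate. One implication is immediate, since $\sigma\in Z(\g_\sigma)$ forces $Z(G_\sigma)$ to act trivially on $\sigma$, whence $\Ad(g)(\sigma+N)=\sigma+\Ad(g)N$ for every $g\in G_\sigma$. Conversely, if $\Ad(g)(\sigma+N)=\sigma+N'$ with $g\in G(F)$, comparing semisimple parts — both equal to $\sigma$ — gives $\Ad(g)\sigma=\sigma$, so $g$ centralises $\sigma$; as $G_\sigma$ is connected for a group of type GL (point 7 of Proposition \ref{pro:bontype}), $g\in G_\sigma(F)$ and therefore $\Ad(g)N=N'$. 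Throughout one uses freely that $G(F)$-conjugacy and $G(\overline{F})$-conjugacy coincide on $\g(F)$, and likewise on $\g_\sigma(F)$, since $G$ and $G_\sigma$ are both of type GL (points 6 and 8 of Proposition \ref{pro:bontype}).

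Putting these together, and choosing a rational representative $N\in\Ind_{M_\sigma}^{G_\sigma}(Y)$ (which exists because $Y\in\m_\sigma(F)$ and induction of a rational orbit is rational, cf. the remark after Proposition \ref{prop:indprop}), one obtains
\[\Ind_M^G(\sigma+Y)=\o\iff\sigma+N\sim_{G(F)}\sigma+W\iff N\sim_{G_\sigma(F)}W\iff\Ind_{M_\sigma}^{G_\sigma}(Y)=(\Ad G_\sigma)W,\]
which is exactly the assertion. The only genuinely substantial input is Proposition \ref{prop:indprop}(2); the remaining work is bookkeeping, and the one point that requires a little care is the double role of $G_\sigma$ — as the ambient group in which $Y$ is induced and as the centraliser of $\sigma$ — which is precisely handled by the identifications $G_{(\sigma+Y)_\ss}=G_\sigma=G_{(\sigma+W)_\ss}$ recorded in the first step, so that no real obstacle arises.
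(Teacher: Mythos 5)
Your proposal is correct and takes essentially the same route as the paper: both rest on point 2 of Proposition \ref{prop:indprop} to write $\Ind_M^G(\sigma+Y)=\Ad(G)\bigl(\sigma+\Ind_{M_\sigma}^{G_\sigma}(Y)\bigr)$, then invoke uniqueness of the Jordan decomposition to pass between $G(F)$-conjugacy of $\sigma+N,\ \sigma+W$ and $G_\sigma(F)$-conjugacy of $N,\ W$. You merely make explicit the connectedness of $G_\sigma$ and the rational/geometric conjugacy coincidence that the paper leaves implicit.
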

\begin{proof}
Pour tout $Y\in \mathcal{N}_{M_\sigma}(F)$, on sait que $\Ind_M^G(\sigma+Y)=(\Ad G)(\sigma+\Ind_{M_\sigma}^{G_\sigma}(Y))$ (proposition \ref{prop:indprop}). L'implication réciproque est alors claire. Pour l'implication directe on utilise l'unicité de la décomposition de Jordan : il existe $g\in G_\sigma(F)$ et $Z\in \Ind_{M_\sigma}^{G_\sigma}(Y)$ tels que $\sigma+W = (\Ad g)(\sigma+Z)$, d'où $W=(\Ad g)Z$, enfin $\Ind_{M_\sigma}^{G_\sigma}(Y)=(\Ad G_\sigma)Z=(\Ad G_\sigma)W$.      
\end{proof}
On cherche maintenant à remplacer les expressions concernant $Q$ dans l'équation \eqref{eq:devlopfinssdescenttempoQ} par des expressions concernant $R$. Dans la dernière somme, on remarque qu'il y a une bijection 
\begin{align*}
\{(Q,s)\,|\,Q\in \F^G, P_0\subseteq Q, s\in  W^G(a_1;Q,G_\sigma), w_s^{-1}Qw_s\cap G_\sigma =R\}&\longrightarrow \F_R(M_1)\\
(Q,s)&\longmapsto P=w_s^{-1}Qw_s
\end{align*}
et que, avec $s_P$ la projection sur la deuxième coordonnée de la réciproque de la bijection, 
\[(-1)^{\dim a_Q^G}\widehat{\tau}_Q^G(H_{Q}(w_s\xi x)-T)=(-1)^{\dim a_P^G}\widehat{\tau}_P^G(H_{P}(\xi x)-s_P^{-1}(T-T_0)-T_0)\]
(\cite[pp.188-189]{Art86}), on peut faire un petit récapitulatif :
\begin{lemma}\label{Art86:lemma3.1}
Pour $T$ assez régulier, $J_\o^{\g,T}(f)$ est égal à l'intégrale sur $x\in G(F)\backslash G(\A_F)^1$ de la somme sur $R\in \{R\in \F^\sigma\,|\, P_{1\sigma}\subseteq R\}$ et la somme sur $\xi\in R(F)\backslash G(F)$ de
\begin{align*}
\sum_{U\in \mathcal{N}_{M_R}(F):\Ind_{M_R}^{G_\sigma}(U)=\o_{\nilp}} \int_{\n_R(\A_F)}f&\left[\left(\Ad  x^{-1}\xi^{-1}\right)\left(\sigma+U+V\right)\right]\,dV\\
&\times \sum_{P\in \F_R(M_1)}(-1)^{\dim a_P^G}\widehat{\tau}_P^G(H_{P}(\xi x)-s_P^{-1}(T-T_0)-T_0).   
\end{align*}
\end{lemma}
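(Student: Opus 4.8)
The plan is to derive the lemma as a purely formal rewriting of equation \eqref{eq:devlopfinssdescenttempoQ}, which the preceding manipulations have already shown equals $J_\o^{\g,T}(f)$ after integration over $x\in G(F)\backslash G(\A_F)^1$ and for $T$ sufficiently regular. In \eqref{eq:devlopfinssdescenttempoQ}, once the sums over $R\in\F^\sigma$ with $P_{1\sigma}\subseteq R$ and over $\xi\in R(F)\backslash G(F)$ are placed outermost, the remaining summand is a product of an orbital part
\[\sum_{U\in \mathcal{N}_{M_R}(F):\Ind_{M_R}^{G_\sigma}(U)=\o_{\nilp}} \int_{\n_R(\A_F)}f\left[\left(\Ad  x^{-1}\xi^{-1}\right)\left(\sigma+U+V\right)\right]\,dV\]
and a combinatorial part $\sum_{(Q,s)}(-1)^{\dim a_Q^G}\widehat{\tau}_Q^G(H_{Q}(w_s\xi x)-T)$, the sum running over $(Q,s)$ with $Q\in\F^G$, $P_0\subseteq Q$, $s\in W^G(a_1;Q,G_\sigma)$, $w_s^{-1}Qw_s\cap G_\sigma=R$. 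Since this combinatorial sum is finite and does not involve $U$ or $V$, and since all interchanges of integrals are justified as in \cite{Art86}, I may factor it out of the $V$-integral and the $U$-sum and handle it on its own.

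I would then insert the bijection $(Q,s)\mapsto P=w_s^{-1}Qw_s$ onto $\F_R(M_1)$, let $P\mapsto s_P$ be the second coordinate of its inverse, and substitute the truncation identity of \cite[pp.188-189]{Art86},
\[(-1)^{\dim a_Q^G}\widehat{\tau}_Q^G(H_{Q}(w_s\xi x)-T)=(-1)^{\dim a_P^G}\widehat{\tau}_P^G(H_{P}(\xi x)-s_P^{-1}(T-T_0)-T_0),\]
valid because $s$ is the restriction of an element of $W_0^G$ and $w_s\in G(F)$ represents $s$ (recall $T_1=T_0-T_{0\sigma}$ and the normalization \eqref{eq:deftroncatureT0}). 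This converts the combinatorial part into $\sum_{P\in\F_R(M_1)}(-1)^{\dim a_P^G}\widehat{\tau}_P^G(H_{P}(\xi x)-s_P^{-1}(T-T_0)-T_0)$, so that, restoring the outer sums over $R$ and $\xi$ and the integral over $x$, one obtains exactly the asserted expression, and the lemma follows.

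The only point needing genuine attention — and the closest thing here to an obstacle — is to check that the bijection and the truncation identity, established by Arthur for a reductive group, carry over verbatim to the present Lie-algebra setting. This causes no real trouble: both statements concern only the combinatorial data $(a_0, W_0^G, \F^G, \F^\sigma, W^G(a_1;Q,G_\sigma),\dots)$ attached to $(G,M_0,K,\sigma,M_1,K_\sigma)$, which is unchanged on passing from $G$ to $\g$, and the measure conventions together with the special truncation vectors $T_0$, $T_{0\sigma}$, $T_1$ have been set up precisely so that the relevant projections and coroot lattices agree. Since the substantive analytic work — the identity $J_\o^{\g,T}(f)=\int_{G(F)\backslash G(\A_F)^1} k_\o^{\g,T}(x,f)\,dx$ and the explicit chain of changes of variables leading to \eqref{eq:devlopfinssdescenttempoQ} — has already been carried out, what is left is bookkeeping.
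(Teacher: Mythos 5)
Your proposal is correct and follows essentially the same route as the paper: both start from the already-established identity reducing $J_\o^{\g,T}(f)$ to equation \eqref{eq:devlopfinssdescenttempoQ}, then apply the bijection $(Q,s)\mapsto P=w_s^{-1}Qw_s$ onto $\F_R(M_1)$ together with Arthur's truncation identity from \cite[pp.188--189]{Art86} to convert the $(Q,s)$-sum into the $P$-sum appearing in the lemma. Your added remark that the combinatorial data is unchanged when passing from group to Lie algebra is a correct observation, implicit but not spelled out in the paper.
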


Introduisons ici des fonctions combinatoires d'Arthur. 
Pour $Y\in a_0$ un vecteur quelconque et $P\in\F^G$ on note $Y_P$ la projection orthogonale de $Y$ sur $a_P$. Il existe une fonction $\Gamma_P^G(-,Y_P)$ sur $a_0$ telle que
\[\Gamma_P^G(X,Y_P)=\sum_{Q\in\F^G:P\subseteq Q} (-1)^{\dim a_Q^G}\tau_P^Q(X)\widehat{\tau}_Q^G(X-Y_Q),\]
et
\[\widehat{\tau}_P^G(X-Y_P)=\sum_{Q\in\F^G:P\subseteq Q} (-1)^{\dim a_Q^G}\widehat{\tau}_P^Q(X)\Gamma_Q^G(X,Y_Q).\]
La fonction $\Gamma_P^G(X,Y_P)$ en $X$ ne dépend que de la projection de $X$ sur $a_P^G$, et est une fonction de support compact sur $a_P^G$.

Soit $R\in \F^\sigma$. Soit $\mathcal{Y}=(Y_P)_{P\in \F_R^0(M_1)}$ une famille de vecteurs de $a_0$ orthogonale au sens d'Arthur, autrement dit elle vérifie la condition suivante : si $P,P'$ sont adjacents dans $\F^G(M_1)$, le vecteur $Y_P-Y_{P'}$ appartient à la droite engendrée par la coracine associée à l'unique élément de $\Sigma(\mathfrak{p};A_{M_1}) \cap (-\Sigma(\mathfrak{q};A_{M_1}))$. Tout élément $Q$ de $\overline{\F}_R(M_1)$ contient un élément $P$ de $\F_R^0(M_1)$, on note $Y_Q$ la projection de $Y_P$ sur $a_Q$, ce vecteur ne dépend pas du choix de $P$ d'après la condition de compatilibité. Pour tout $B\in \F^\sigma$ avec $R\subseteq B$ on note $\mathcal{Y}_B=(Y_Q)_{Q\in \F_B(M_1)}$. On peut définir pour tout tel $B$ une fonction $\Gamma_R^G(-,\mathcal{Y}_R)$ sur $a_1$ tel que
\[\Gamma_R^G(X,\mathcal{Y}_R)=\sum_{B\in\F^\sigma : R\subseteq B}\tau_R^B(X)\left(\sum_{Q\in\F_B(M_1)} (-1)^{\dim a_Q^G}\widehat{\tau}_Q^G(X-Y_Q)\right),\]
et
\begin{equation}\label{eq:(3.3.3.1)}
\sum_{P\in\F_B(M_1)} (-1)^{\dim a_P^G}\widehat{\tau}_P^G(X-Y_P)=\sum_{B\in\F^\sigma : R\subseteq B}(-1)^{\dim a_R^B}\widehat{\tau}_R^B(X) \Gamma_B^G(X,\mathcal{Y}_B).    
\end{equation}
La fonction $\Gamma_S^G(X,\mathcal{Y}_S)$ en $X$ ne dépend que de la projection de $X$ sur $a_B$, et est une fonction de support compact sur $a_B^G$, son support dépend de façon continue de $\mathcal{Y}_R$. On a aussi 
\[\Gamma_R^G(X,\mathcal{Y}_R)=\sum_{P\in\F_R(M_1)}\Gamma_P^G(X,Y_P)\epsilon_P(X),\]
avec $\epsilon_P$ la fonction caractéristique de $a_P$. 

Revenons sur la déduction de la formule de descente semi-simple de $J_{\o}(f)$. Rappelons que l'on a fixé $K_\sigma=\prod_{v\in\V_F}K_{\sigma v}$ un sous-groupe ouvert compact de $G_\sigma(\A_F)$ en bonne position par rapport à $M_{1\sigma}$. Pour tout $x\in G_\sigma(\A_F)$, posons $K_R(x)$ un élément de $K_\sigma$ tel que $xK_R(x)^{-1}\in R(\A_F)$, il est uniquement déterminé modulo $M_R(\A_F)\cap K_\sigma$ à gauche. Tous les objets construits pour le triplet $(G,M_0,K)$ admet bien sûr des analogues pour le triplet $(G_\sigma,M_{1\sigma},K_\sigma)$, à titre d'exemple pour tout $R\in \F^\sigma$ on a la fonction $H_R: G_\sigma(\A_F)\rightarrow a_R$. Il y a aussi un vecteur $T_{0\sigma}\in a_1$ vecteur défini par l'équation \eqref{eq:deftroncatureT0} pour $(G_\sigma,M_{1\sigma},K_\sigma)$. Pour $T\in a_0$ un paramètre de troncature on note $T_\sigma$ sa projection sur $a_{M_1}$, alors $T_{0\sigma}$ et la valeur de $T_\sigma$ en $T=T_0$. On suppose que $P_1$ est choisi de sorte que si $T$ est assez régulier par rapport à $P_0$ alors $T_\sigma$ est assez régulier par rapport à $P_{1\sigma}$.

On a mis $J_\o^{\g,T}(f)$ sous forme du lemme \ref{Art86:lemma3.1}. Faisons le changement de variables
\begin{align*}
\left(R(F)\backslash G(F)\right)&\times\left(G(F)\backslash G(\A_F)^1\right) \\
&\longrightarrow \left(R(F)\backslash G_\sigma(F)\right)\times\left(G_\sigma(F)\backslash G_\sigma\cap G(\A_F)^1\right)\times\left(G_\sigma(\A_F)\backslash G(\A_F)\right) \\
(\xi,x) &\longmapsto (\delta,x,y)
\end{align*}
puis posons $\mathcal{Y}_{R}^T(\delta x,y)=(Y_P^T(\delta x,y))_{P\in \F^R(M_1)}$, $Y_P^T(\delta x,y)=H_P(K_{P_\sigma}(\delta x)y)+s_P^{-1}(T-T_0)-T_\sigma+T_0$, cette famille satisfait à la condition de compatibilité, en manipulant finalement l'équation \eqref{eq:(3.3.3.1)} on obtient
\begin{align*}
\sum_{P\in \F_R(M_1)}(-1)^{\dim a_P^G}&\widehat{\tau}_P^G(H_{P}(\xi x)-s_P^{-1}(T-T_0)-T_0)
\\&= \sum_{B\in \F^\sigma:R\subseteq B}  (-1)^{\dim a_R^B}\widehat{\tau}_R^B(H_R(\delta x)-T_\sigma) \Gamma_B^G(H_R(\delta x)-T_\sigma,\mathcal{Y}_{B}^T(\delta x,y)).
\end{align*}
La formule pour $J_\o^{\g,T}(f)$ devient alors 
\begin{equation}\label{Art86:eq6.2+6.3}
\begin{split}   
&\int_{y\in G_\sigma(\A_F)\backslash G(\A_F)}\sum_{B\in \F^\sigma:P_{1\sigma}\subseteq B}\int_{x\in G_\sigma(F)\backslash G_\sigma\cap G(\A_F)^1}\sum_{R\in \F^\sigma : P_{1\sigma}\subseteq R\subseteq B}\\
&\hspace{1cm}\sum_{\delta\in R(F)\backslash G_\sigma(F)}\sum_{U\in \mathcal{N}_{M_R}(F):\Ind_{M_R}^{G_\sigma}(U)=\o_{\nilp}} \int_{V'\in \n_R(\A_F)}f\left[\left(\Ad  y^{-1}x^{-1}\delta^{-1}\right)\left(\sigma+U+V'\right)\right]\\
&\hspace{2cm}\times  (-1)^{\dim a_R^B}\widehat{\tau}_R^B(H_R(\delta x)-T_\sigma) \Gamma_S^G(H_R(\delta x)-T_\sigma,\mathcal{Y}_{B}^T(\delta x,y))\,dV'\,dx\,dy. 
\end{split}
\end{equation}

Fisons ensuite le changement de variables
\begin{align*}
R(F)\backslash G_\sigma(F)&\longrightarrow \left(R(F)\cap M_B(F)\backslash M_B(F)\right)\times\left(B(F)\backslash G_\sigma(F)\right)\\
\delta &\longmapsto (\mu,\xi)
\end{align*}
Faisons sortir la somme sur $\xi$ en dehors de celle sur $R$, puis combinons-la avec l'intégrale sur $x$, on obtiendra une intégrale sur $B(F)\backslash G_\sigma\cap G(\A_F)^1$, décomposons-la en une intégrale quadruple sur
\[(v,a,m,k)\in (N_B(F)\backslash N_B(\A_F))\times (A_{B,\infty}\cap G(\A_F)^1)\times (M_B(F)\backslash M_B(\A_F)^1)\times K_\sigma,\]
nous aurons dans l'intégrande le terme $f\left[\left(\Ad  y^{-1}k^{-1}m^{-1}a^{-1}v^{-1}\right)\left(\sigma+U+V'\right)\right]$, la variable $a$ va disparaître dans l'argument car elle est dans $A_{B,\infty}$, puis avec un changement de variable $v^{-1}(U+V')v=U+V$, $V\in \n_R(\A_F)$ on peut également faire disparaître $v$ dans l'argument. Un facteur jacobien est introduit, mais il se simplifie avec celui introduit dans le dernier changement de variables. 
La variable $v$ disparaît tout aussi dans la dernière ligne de l'expression (\ref{Art86:eq6.2+6.3}), l'intégrale sur $v\in N_B(F)\backslash N_B(\A_F)$ disparaît en conséquence comme elle est absorbée par l'intégrale sur $V\in \n_R(\A_F)$. On fait encore un changement de variables
\begin{align*}
\n_R(\A_F)&\longrightarrow (\n_R(\A_F)\cap \m_B(\A_F))\times \n_B(\A_F)\\
V &\longmapsto (V_1,V_2)
\end{align*}
Avec ceci $J_\o^{\g,T}(f)$ vaut 
\begin{equation}
\int_y\sum_B\int_k\int_a\int_m\sum_R\sum_{\mu}\sum_{U}(-1)^{\dim a_R^B}\int_{V_1} \Phi_{B,a,k,y}^T((\Ad m^{-1}\mu^{-1})(U+V_1))\widehat{\tau}_R^B(H_R(\mu m)-T_\sigma),       
\end{equation}
avec $\Phi_{B,a,k,y}^T\in C_c^\infty(\m_B(\A_F))$ (rappelons que $f\in C_c^\infty(\g(F_S))\hookrightarrow C_c^\infty(\g(\A_F))$) la fonction dépendant de façon lisse en $y,k,a$ définie par
\begin{align*}
\Phi_{B,a,k,y}^T(C)=\gamma_{\A_F}^{G_\sigma}(B)\int_{\n_B(\A_F)}f\left((\Ad (ky)^{-1})(\sigma+C+V_2)\right)\Gamma_B^G(H_B(a)-T_\sigma,\mathcal{Y}_B^T(k,y))\,dV_2,
\end{align*}
pour tout $C\in\m_B(\A_F)$. En voyant l'intégrande de cette expression de $J_\o^{\g,T}(f)$ comme une fonction en $m$, on remarque directement que $\int_m\sum_R\sum_\mu\sum_U\int_{V_1}$ de l'intégrande vaut $J_{\o_\nilp}^{M_B,T_\sigma}(\Phi_{B,a,k,y}^T)$, ainsi 
\begin{align*}
J_\o^{\g,T}(f)=\int_{y\in G_\sigma(\A_F)\backslash G(\A_F)} \left(\sum_{B}\int_k\int_a  J_{\o_\nilp}^{M_S,T_\sigma}(\Phi_{B,a,k,y}^T)\,da\,dk\right)\,dy.   
\end{align*}
Spécialisons $T$ en $T_0$, nous voyons
\begin{align*}
J_\o^{\g}(f)=\int_{y\in G_\sigma(\A_F)\backslash G(\A_F)} \left(\sum_{B}\int_k\int_a  J_{\o_\nilp}^{M_S}(\Phi_{B,a,k,y}^{T_0})\,da\,dk\right)\,dy.   
\end{align*}
En analysant les fonctions $\Gamma_B^G$, on constate que (\cite[pp.199-200]{Art86})
\begin{align*}
J_\o^{\g}(f)=\int_{ G_\sigma(\A_F)\backslash G(\A_F)} \left(\sum_{B\in \F^\sigma : P_{1\sigma}\subseteq B} J_{\o_{\nilp}}^{M_B}(\Phi_{B,y,T_1})\right)\,dy,   
\end{align*}
où $T_1=T_0-T_{0\sigma}$, et pour tout $R\in \F^\sigma$, $\Phi_{R,y,T_1}\in C_c^\infty(\m_R(\A_F))$ est la fonction
\[\Phi_{R,y,T_1}(C)=\gamma_{\A_F}^{G_\sigma}(R)\int_{K_\sigma}\int_{\mathfrak{n}_R(\A_F)}f\left((\Ad (ky)^{-1})(\sigma+C+U)\right)v_R'(ky,T_1)\,dU\,dk,\,\,\,\,\forall C\in\m_R(\A_F),\]
qui dépend de manière lisse de $y$, avec $v_R'(ky,T_1)$ le nombre complexe défini selon ce qui suit : définissons dans un premier temps la $(G,M_1)$-famille $(v_Q(ky,T_1))_{Q\in\F^G(M_1)}$ par
\[v_Q(\lambda,ky,T_1)\eqdef e^{-\lambda(H_Q(ky)-T_1)},\,\,\,\,\lambda\in ia_Q^\ast.\]
On obtient alors $v_Q'(ky,T_1)$. On pose finalement
\[v_R'(ky,T_1)\eqdef \sum_{Q\in\F_R^0(M_1)} v_Q'(ky,T_1).\]
Or pour tous $B\in \F^\sigma$ avec $P_{1\sigma}\subseteq B$ et $w\in W_{M_{1\sigma}}^{G_\sigma}$ la définition de $T_0$ et $T_{0\sigma}$ (cf. l'équation \eqref{eq:deftroncatureT0}) nous permet d'affirmer que $\Phi_{B,y,T_1}(C_B)=\Phi_{(\Ad w)^{-1}B,y,T_1}((\Ad \tilde{w}^{-1})C_B)$ pour tout $C_B\in\m_B(\A_F)$, ici $\tilde{w}$ est un représentant de $w$ dans $K_\sigma M_{1\sigma}(\A_F)$. En l'occurrence, nous en déduisons que $J_{\o_\nilp}^{M_B}(\Phi_{B,y,T_1})=J_{(\Ad w)^{-1}\o_\nilp}^{(\Ad w)^{-1}M_B}(\Phi_{(\Ad w)^{-1}B,y,T_1})$. Cela entraîne la formule recherchée de descente semi-simple de $J_\o^\g(f)$ :
\[J_\o^\g(f)=\int_{G_\sigma(\A_F)\backslash G(\A_F)}\left(\sum_{R\in\F^{G_\sigma}(M_{1\sigma})}|W_{M_{1\sigma}}^{M_R}||W_{M_{1\sigma}}^{G_\sigma}|^{-1}J_{\o_\nilp}^{\m_R}(\Phi_{R,y,T_1})\right)\,dy.\]

\subsubsection{Développement fin de \texorpdfstring{$J_{\o}(f)$}{Jo(f)} : descente semi-simple des intégrales orbitales pondérées}

Soit toujours $S$ un sous-ensemble fini non-vide de $\V_F$.

\begin{lemma}[{{\cite{YDL23a}}}] Soit $M$ un sous-groupe de Levi de $G$. Soient $X\in \m(F)\subseteq \m(F_S)$, et $\sigma\eqdef X_\ss$. Si $a_{M}=a_{M_\sigma}$, alors pour toute fonction $f\in C_c^\infty(\g(F_S))$, $J_M^G(X,f)$ égale
\[|D^\g(X)|_S^{1/2}\int_{G_\sigma(F_S)\backslash G(F_S)}\left(\sum_{R\in\F^{G_\sigma}(M_\sigma)}J_{M_\sigma}^{M_R}(X_\nilp,\Phi_{(S),R,y,T_1})\right)\,dy,\]
avec $\Phi_{(S),R,y,T_1}\in C_c^\infty(\m_R(F_S))$ est la fonction
\[\Phi_{(S),R,y,T_1}(C)=\gamma_{S}^{G_\sigma}(R)\int_{K_{\sigma S}}\int_{\mathfrak{n}_R(F_S)}f\left((\Ad (ky)^{-1})(\sigma+C+U)\right)v_R'(ky,T_1)\,dU\,dk,\,\,\,\,\forall C\in\m_R(F_S).\]
\end{lemma}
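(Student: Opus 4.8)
This is the local, Lie-algebra incarnation of Arthur's semi-simple descent of weighted orbital integrals, and it is the exact local counterpart of the descent of $J_\o^\g(f)$ carried out in the previous subsection. The plan is to unfold the definition of $J_M^G(X,f)$, factor the domain of integration through $G_\sigma$, and run the same manipulations as in the global argument, with the truncation combinatorics of the global computation (the functions $\widehat{\tau}_P^G$ and $\Gamma_B^G$, the twisted shifts $s_P^{-1}(T-T_0)$) replaced throughout by the combinatorics of $(G,M)$-families. Observe first that the hypothesis $a_M=a_{M_\sigma}$ says precisely that $\sigma$ is $F$-elliptique dans $\m$, so that $M$ here plays the role that $M_1$ played in the global descent. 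A preliminary reduction, using the limit formula \eqref{YDLgeomeq:IOPdef} and an induction on $\dim G_X$, brings us to the case $M_X=G_X$, in which $J_M^G(X,f)$ is the absolutely convergent integral \eqref{YDLgeomeq:IOPdefcaseequising}; the compatibility of the functions $r_M^L$ with this reduction is the local shadow of the fact that the weights $v_R'(ky,T_1)$ are assembled from the same elementary exponentials $v_Q(\lambda,ky,T_1)=e^{-\lambda(H_Q(ky)-T_1)}$ that occur in the global computation.

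In the case $M_X=G_X$ one uses $X_\nilp\in\g_\sigma(F_S)$ and $G_X=(G_\sigma)_{X_\nilp}$ to factor $G_X(F_S)\backslash G(F_S)\simeq\big((G_\sigma)_{X_\nilp}(F_S)\backslash G_\sigma(F_S)\big)\times\big(G_\sigma(F_S)\backslash G(F_S)\big)$, writing $g=\gamma y$, under which $\Ad(g^{-1})X=\Ad(y^{-1})(\sigma+\Ad(\gamma^{-1})X_\nilp)$ and the Jacobian splits as $|D^\g(X)|_S=|D^\g(\sigma)|_S\,|D^{\g_\sigma}(X_\nilp)|_S$. The heart of the matter is the analysis of the weight $v_M^G(\gamma y)$. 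Using the surjection $\F^G(M)\twoheadrightarrow\F^{G_\sigma}(M_\sigma)$, $P\mapsto P_\sigma$, together with an Iwasawa decomposition of $\gamma$ inside $G_\sigma$, one shows that, as a function of $\gamma$, the $(G,M)$-family $(v_P(\gamma y))_{P\in\P^G(M)}$ is the product of the $(G_\sigma,M_\sigma)$-family $(v_R(\gamma))_{R\in\P^{G_\sigma}(M_\sigma)}$ with a $(G,M)$-family depending only on $y$ (and, after the Iwasawa integration, on the $K_\sigma$- and $\n_R$-variables); the shift required to make this factorisation exact is the one recorded by $T_1=T_0-T_{0\sigma}$ inside $v_R'(ky,T_1)$. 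Feeding this into the descent formula for $(G,M)$-families rewrites $v_M^G(\gamma y)$ as $\sum_{R\in\F^{G_\sigma}(M_\sigma)}v_{M_\sigma}^{M_R}(\gamma)\cdot(\text{a $y$-dependent weight})$, and performing the $\gamma$-integral then identifies, for each $R$, the quantity $|D^{\g_\sigma}(X_\nilp)|_S^{1/2}\int_{(G_\sigma)_{X_\nilp}(F_S)\backslash G_\sigma(F_S)}\Phi_{(S),R,y,T_1}(\Ad(\gamma^{-1})X_\nilp)\,v_{M_\sigma}^{M_R}(\gamma)\,d\gamma=J_{M_\sigma}^{M_R}(X_\nilp,\Phi_{(S),R,y,T_1})$, once the definition of $\Phi_{(S),R,y,T_1}$ — the $\n_R(F_S)$- and $K_{\sigma S}$-integrations, the constant $\gamma_S^{G_\sigma}(R)$ and the factor $v_R'(ky,T_1)$ — is matched with the $y$-dependent weight produced above. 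Integrating over $G_\sigma(F_S)\backslash G(F_S)$ yields the asserted identity.

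The main obstacle is exactly this weight analysis: establishing precisely how $v_M^G(\gamma y)$ descends through $P\mapsto P_\sigma$, i.e. that $H_P(\gamma y)$ differs from $H_{P_\sigma}(\gamma)+H_P(K_{P_\sigma}(\gamma)y)$ by precisely the correction built into $v_R'(ky,T_1)$. This is the local avatar of the appearance of $T_1$ and of the functions $\Gamma_B^G$ in the global semi-simple descent, and it rests on the splitting and descent formulas for $(G,M)$-families together with careful bookkeeping of the nested subsets $\F_R^0(M)\subseteq\F_R(M)\subseteq\overline{\F}_R(M)$ of $\F^G(M)$ and of the maps $L\mapsto L_\sigma$. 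Everything else — the change of variables, the matching of Jacobian factors and of the constants $\gamma_S^{G_\sigma}(R)$, and the identification of the unipotent and maximal-compact integrations with the definition of $\Phi_{(S),R,y,T_1}$ — is routine, exactly as in the global computation just carried out.
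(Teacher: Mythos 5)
The paper does not prove this lemma: it is stated with a citation to \cite{YDL23a} and no proof is given in the present text, so there is no internal argument against which to check yours. With that caveat, your outline reproduces the expected Arthur-type semi-simple descent of weighted orbital integrals — reduction to the $M_X=G_X$ case via \eqref{YDLgeomeq:IOPdef}, factoring $G_X(F_S)\backslash G(F_S)$ through $G_\sigma(F_S)$ using $G_X=(G_\sigma)_{X_\nilp}$, splitting $|D^\g(X)|_S=|D^\g(\sigma)|_S\,|D^{\g_\sigma}(X_\nilp)|_S$, and then analysing the weight $v_M^G(\gamma y)$ through the surjection $P\mapsto P_\sigma$ — and correctly identifies the weight analysis as the heart of the matter, which matches the role of the corresponding global computation in the preceding paragraph (the lemma is visibly designed to share the same ingredients $v_R'(ky,T_1)$, $\gamma_S^{G_\sigma}(R)$, the sets $\F_R^0(M)\subseteq\F_R(M)\subseteq\overline{\F}_R(M)$).

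Two points to be careful about. First, your phrasing of the key identity — that ``$H_P(\gamma y)$ differs from $H_{P_\sigma}(\gamma)+H_P(K_{P_\sigma}(\gamma)y)$ by precisely the correction built into $v_R'(ky,T_1)$'' — is imprecise: for $P\in\F_R^0(M)$ one expects the exact Iwasawa identity $H_P(\gamma y)=H_R(\gamma)+H_P(K_R(\gamma)y)$ (suitably projected) with \emph{no} correction; the shift by $T_1=T_0-T_{0\sigma}$ does not arise from a failure of this identity but from the normalisation of the two truncation parameters $T_0$ (for $G,M_0,K$) and $T_{0\sigma}$ (for $G_\sigma,M_{1\sigma},K_\sigma$), i.e.\ it is the price one pays for using $K_\sigma$ rather than $K\cap G_\sigma(F_S)$ and for specialising the polynomial in $T$ at two different base points. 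Second, and more substantively, your sketch leaves the actual weight factorisation — the passage from the $(G,M)$-family $(v_P(\gamma y))_P$ to the sum over $R\in\F^{G_\sigma}(M_\sigma)$ of products $v_{M_\sigma}^{M_R}(\gamma)\cdot v_R'(ky,T_1)$ via the product/descent formulae for $(G,M)$-families — entirely at the level of ``this is the crux.'' Since this is the only nonroutine step, a complete proof would need to actually carry it out (following Arthur's computation in the local setting of weighted orbital integrals, with the functions $\Gamma$ from numéro \ref{subsubsec:(G,M)-familles} rather than the global $\Gamma_B^G$). As a plan, your proposal is sound; as a proof, the central computation is missing.
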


\subsubsection{Lemmes de compacité}\label{subsubsec:lemmescompacite}
Avant d'entamer la discussion sur la forme ultime du développement fin, nous exposons deux lemmes supplémentaires. Tous les deux sont dérivés des généralisations d'un lemme de compacité de Harish-Chandra.

Dans cette sous-sous-section $F_v$ désigne un corps $p$-adique. Soit $G_v$ un groupe du type GL quasi-déployé sur $F_v$. Fixons $T$ un sous-tore maximal de $G_v$ non-ramifié, i.e. déployé sur une extension non ramifiée. Notons $F_v^{\text{nr}}$ le complété de la plus grande extension non ramifiée de $F_v$. \'{E}tant non-ramifié, $T$ admet un modèle canonique sur $\O_v$, on le note toujours par $T$. On dit qu'un élément $\sigma_v$ de $\mathfrak{t}(F_v^{\text{nr}})$ est entier si $\alpha(\sigma_v)\in \O_{F_v^{\text{nr}}}$ pour toute racine $\alpha\in \Sigma(\g_{v,F_v^{\text{nr}}};T_{F_v^{\text{nr}}})$. L’injection naturelle
\[\text{Norm}_{G_v(F_v^{\text{nr}})}(T)/T(F_v^{\text{nr}})\to \text{Norm}_{G_v(\overline{F_v})}(T)/T(\overline{F_v})\]
est bijective. Si $\sigma_v\in\mathfrak{t}(F_v^{\text{nr}})$ est entier, on dit que $\sigma_v$ est de réduction régulière si pour tout $w\in (\text{Norm}_{G(\overline{F_v})}(T)/T(\overline{F_v}))\setminus\{1\}$, il existe $\alpha\in \Sigma(\g_{v,F_v^{\text{nr}}};T_{F_v^{\text{nr}}})$ tel que $\alpha(w(\sigma_v)-\sigma_v)$ soit une unité.

\begin{lemma}[{{\cite[7.2. lemme]{Walds97}}}]\label{lem:Kottcomp1}
Soit $\sigma_v\in T(F_v)$ un élément entier de réduction régulière.
\begin{enumerate}
    \item Le groupe $G_{v,\sigma_v}$ est non-ramifié sur $F_v$, et $K_v\cap G_{v,\sigma_v}$ est en bonne position par rapport à un sous-groupe de Levi minimal de $G_{v,\sigma_v}$.
    \item Soit $\sigma'\in \mathfrak{k}_v$. Alors $\sigma_v$ et $\sigma'$ sont $G_v(F_v)$-conjugué si et seulement s'ils sont $K_v$-conjugués.
\end{enumerate}    
\end{lemma}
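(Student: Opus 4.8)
Le plan est de tout ramener à un modèle entier et au corps résiduel. Plaçons-nous dans le cas essentiel $v\not\in\underline{S}_0$, où $G_v$ admet un modèle réductif $\underline{G}$ sur $\O_v$ avec $K_v=\underline{G}(\O_v)$ et $\mathfrak{k}_v=\underline{\g}(\O_v)$, et où $T$ s'étend en un tore maximal $\underline{T}$ de $\underline{G}$ ; le cas général s'y ramène. On traitera d'abord le point 1, qui fournira l'ingrédient géométrique nécessaire au point 2 ; pour ce dernier on privilégiera une réduction modulo $\varpi$ suivie du lemme de Hensel, plus directe pour les groupes du type GL que le recours au lemme de compacité de Harish-Chandra (qui donne une autre voie possible).

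\emph{Point 1.} On montrera que le centralisateur schématique $\underline{G}_{\sigma_v}=\Cent(\sigma_v,\underline{G})$ est lisse sur $\O_v$, à fibres réductives connexes de même système de racines relativement à $\underline{T}$. C'est ici qu'intervient l'hypothèse de réduction régulière : jointe à l'intégralité de $\sigma_v$, elle force l'ensemble des racines s'annulant sur $\sigma_v$ à coïncider avec l'ensemble des racines s'annulant sur la réduction $\overline{\sigma}_v$ ; la fibre spéciale de $\underline{G}_{\sigma_v}$ est donc le centralisateur réductif $\underline{G}_{\overline{\sigma}_v}$, de même dimension que la fibre générique $G_{v,\sigma_v}$. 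Ainsi $G_{v,\sigma_v}$ est la fibre générique d'un schéma en groupes réductifs sur $\O_v$, donc non-ramifié, $K_v\cap G_{v,\sigma_v}(F_v)=\underline{G}_{\sigma_v}(\O_v)$ est un compact maximal hyperspécial, et il est en bonne position par rapport au sous-groupe de Levi minimal de $G_{v,\sigma_v}$ centralisateur d'un tore déployé maximal de $\underline{G}_{\sigma_v}$ défini sur $\O_v$.

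\emph{Point 2.} L'implication « si » est immédiate. Réciproquement, soit $g\in G_v(F_v)$ avec $\sigma'=\Ad(g^{-1})\sigma_v$. Les images de $\sigma_v$ et $\sigma'$ dans le quotient invariant $\g_v/\!/G_v$ (pour un groupe du type GL, les polynômes caractéristiques réduits sur chaque facteur) coïncident sur $F_v$ et sont entières puisque $\sigma_v,\sigma'\in\mathfrak{k}_v$ ; elles ont donc même réduction sur le corps résiduel. Comme $\overline{\sigma}_v$ est semi-simple régulier (réduction régulière) et que sur un corps fini un groupe du type GL est, par le théorème de Wedderburn, un produit de $\Res_{E/k_v}\GL_n$, deux tels éléments de même image dans le quotient invariant sont conjugués par les points rationnels du groupe (forme canonique rationnelle). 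Il existe donc $\overline{k}\in\underline{G}(k_v)$ avec $\Ad(\overline{k}^{-1})\overline{\sigma}_v=\overline{\sigma}'$. Le transporteur $Z=\{h\in\underline{G}\mid\Ad(h^{-1})\sigma_v=\sigma'\}$ est un torseur à gauche sous $\underline{G}_{\sigma_v}$, non vide sur $F_v$ (il contient $g$) et sur $k_v$ (il contient $\overline{k}$) ; le point 1 (lissité de $\underline{G}_{\sigma_v}$ et égalité des dimensions des fibres) entraîne que $Z$ est lisse sur $\O_v$, et le lemme de Hensel donne $k\in Z(\O_v)\subseteq K_v$, c'est-à-dire $\Ad(k^{-1})\sigma_v=\sigma'$ avec $k\in K_v$.

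\emph{Obstacle principal.} Le point central est le point 1 : la bonne traduction de la réduction régulière en la lissité du centralisateur schématique, autrement dit la constance du « type » du centralisateur par réduction. Ce point acquis, le point 2 devient formel (lemme de Hensel plus description des classes de conjugaison sur un corps fini, propre au type GL). Sur la voie de Harish-Chandra, l'obstacle se déplacerait vers la forme forte du lemme de compacité, à savoir l'identité $\{h\in G_v(F_v)\mid\Ad(h^{-1})\sigma_v\in\mathfrak{k}_v\}=G_{v,\sigma_v}(F_v)\cdot K_v$, qui réclame une analyse fine de la décomposition de Cartan et de la réduction.
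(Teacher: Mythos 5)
Le lemme est cité de Waldspurger ; l'article ne fournit pas de preuve à comparer. Votre reconstruction, fondée sur le modèle entier lisse du centralisateur et le lemme de Hensel appliqué au schéma transporteur, est correcte et cohérente avec le cadre Chevalley/GIT utilisé pour le lemme \ref{lem:Kottcomp2} qui suit ; la voie « compacité de Harish-Chandra » que vous évoquez en alternative est sans doute plus proche de l'argument original de Waldspurger, qui ne se restreint pas aux groupes du type GL, mais pour ce cas particulier votre chemin algébrique est effectivement plus direct.

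Trois précisions à noter. (a) L'hypothèse $\sigma_v\in T(F_v)$ de l'énoncé doit se lire $\sigma_v\in\mathfrak{t}(F_v)$ : c'est la seule lecture compatible avec la définition donnée juste avant et avec le point 2 où $\sigma'\in\mathfrak{k}_v$. (b) La définition de « réduction régulière » prise à la lettre (pour \emph{tout} $w\neq 1$, une racine détecte une unité) échoue nécessairement lorsqu'un $w\neq 1$ fixe $\sigma_v$ ; la condition pertinente ne porte que sur les $w$ avec $w\sigma_v\neq\sigma_v$, ce que votre traduction en « mêmes racines s'annulant sur $\sigma_v$ et $\overline{\sigma}_v$ » capture fidèlement — il serait bon de le signaler, car c'est ce qui permet d'appliquer le lemme à des $\sigma$ non réguliers, cas effectivement utilisé dans l'article. (c) Au point 2, vous supposez implicitement $\sigma_v\in\mathfrak{k}_v$ pour que $Z$ soit un $\O_v$-schéma ; cela mérite une phrase : comme $\sigma'\in\mathfrak{k}_v$, son polynôme caractéristique est à coefficients entiers, donc les valeurs propres de $\sigma_v$ (qui a le même polynôme caractéristique) sont entières dans $\O_{F_v^{\mathrm{nr}}}$, et jointes à $\sigma_v\in\mathfrak{t}(F_v)$ avec $T$ muni de son modèle canonique sur $\O_v$, on obtient $\sigma_v\in\mathfrak{t}(\O_v)\subseteq\mathfrak{k}_v$.
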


On a $\mathfrak{t}\subseteq \g_{v,\sigma_v}\subseteq \g_v$. Pour $H_v=G_{v,\sigma_v}$ ou $G_v$ on écrit $\mathfrak{c}_{H_v} : \mathfrak{h}_v\to \mathfrak{h}_v^{\GIT}\eqdef\text{Spec}(F_v[\mathfrak{h}_v]^{H_v})$ le morphisme de Chevalley, avec $F_v[\mathfrak{h}_v]$ la $F_v$-algèbre des fonctions régulières du $F_v$-schéma $\mathfrak{h}_v$, et $F_v[\mathfrak{h}_v]^{H_v}$ la sous-$F_v$-algèbre de $F_v[\mathfrak{h}_v]$ consistant en les éléments invariants par $H_v$-conjugaison.

Soit $\g_{v,\sigma_v}'$ la sous-variété ouverte $\{X\in \g_{v,\sigma_v}\mid\det(\ad(X);\g_v/\g_{v,\sigma_v})\not=0\}$ de $\g_{v,\sigma_v}$. On a $\sigma_v\in \g_{v,\sigma_v}'$.

\begin{lemma}[{{\cite[lemme 14.1]{Kottbook}}}]\label{lem:Kottcomp2}
Soient $G_v\underset{G_{v,\sigma_v}}{\times}\g_{v,\sigma_v}'$ le quotient de $G_v\times \g_{v,\sigma_v}'$ par l'action de $G_{v,\sigma_v}$ par $h_v\cdot (g_v,X_v)=(h_vg_v,(\Ad h_v)X_v)$, et $\g_v\times_{\mathfrak{g}_v^{\GIT}}\mathfrak{g}_{v,\sigma_v}^{\GIT}$ le produit fibré évident. Alors le morphisme 
\begin{align*}
 G_v\underset{G_{v,\sigma_v}}{\times}\g_{v,\sigma_v}'&\to \g_v\times_{\mathfrak{g}_v^{\GIT}}\mathfrak{g}_{v,\sigma_v}^{\GIT}\\
 (g_v,X_v)&\mapsto ((\Ad g_v^{-1})X_v,\mathfrak{c}_{G_{v,\sigma_v}}(X_v))
\end{align*}
est un isomorphisme.
\end{lemma}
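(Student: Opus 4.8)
The plan is to show that the morphism in the statement, which I denote
$\Psi\colon G_v\underset{G_{v,\sigma_v}}{\times}\g_{v,\sigma_v}'\to\g_v\times_{\mathfrak g_v^{\GIT}}\mathfrak g_{v,\sigma_v}^{\GIT}$,
is an isomorphism onto the open subscheme $\g_v\times_{\mathfrak g_v^{\GIT}}U$, where $U\subseteq\mathfrak g_{v,\sigma_v}^{\GIT}$ is the maximal open locus over which the map $\overline{\mathfrak c}\colon\mathfrak g_{v,\sigma_v}^{\GIT}\to\mathfrak g_v^{\GIT}$ induced by the inclusion $\g_{v,\sigma_v}\hookrightarrow\g_v$ is étale; this is the precise content of the lemma, since $X\mapsto\det(\ad(X);\g_v/\g_{v,\sigma_v})$ is a $G_{v,\sigma_v}$-invariant function on $\g_{v,\sigma_v}$ whose non-vanishing locus is exactly $\mathfrak c_{G_{v,\sigma_v}}^{-1}(U)=\g_{v,\sigma_v}'$, so $\Psi$ indeed factors through $\g_v\times_{\mathfrak g_v^{\GIT}}U$. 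First I would check that $\Psi$ is well defined: $(g,X)\mapsto(\Ad g^{-1})X$ is invariant and $(g,X)\mapsto\mathfrak c_{G_{v,\sigma_v}}(X)$ is constant under the free action $h\cdot(g,X)=(hg,(\Ad h)X)$, while $\mathfrak c_{G_v}((\Ad g^{-1})X)=\mathfrak c_{G_v}(X)=\overline{\mathfrak c}(\mathfrak c_{G_{v,\sigma_v}}(X))$, so the image lands in the fibre product. Since being an isomorphism is insensitive to the faithfully flat base change $-\otimes_{F_v}\overline{F_v}$, I would then argue over $\overline{F_v}$, where $\g_v\simeq\mathfrak{gl}_n$, $\g_{v,\sigma_v}\simeq\bigoplus_i\mathfrak{gl}_{n_i}$ (the centraliser of a semisimple element), $\mathfrak g_v^{\GIT}\simeq\mathbb A^n$, and $\mathfrak c_{G_v}$ records the characteristic polynomial.

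The strategy is then: show $\Psi$ is étale and bijective on $\overline{F_v}$-points, which forces it to be an isomorphism. For étaleness, note first that the first projection $\g_v\times_{\mathfrak g_v^{\GIT}}U\to\g_v$ is étale, being the base change of the finite étale map $\overline{\mathfrak c}|_U$; hence $\Psi$ is étale if and only if the composite $G_v\underset{G_{v,\sigma_v}}{\times}\g_{v,\sigma_v}'\to\g_v$, $[g,X]\mapsto(\Ad g^{-1})X$, is. By $G_v$-equivariance it suffices to treat a point $[1,X]$ with $X\in\g_{v,\sigma_v}'$: the tangent space of the source there is $(\g_v\oplus\g_{v,\sigma_v})/\{(Z,[Z,X])\mid Z\in\g_{v,\sigma_v}\}$, of dimension $\dim\g_v$, and the differential of the composite is the class of $(W,V)$ going to $V-[W,X]$. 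Its kernel is trivial exactly because $\ad(X)$ is invertible on $\g_v/\g_{v,\sigma_v}$ — which is the defining condition $X\in\g_{v,\sigma_v}'$ — and a dimension count then gives that the differential is an isomorphism, hence étaleness.

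For bijectivity on $\overline{F_v}$-points I would use the splitting of $\overline{F_v}^{\,n}$ along the factorisation of the characteristic polynomial. A point of $\g_v\times_{\mathfrak g_v^{\GIT}}U$ over $\bar Y\in U$ is an $X\in\g_v$ whose characteristic polynomial is $\prod_i\chi_i$, the $\chi_i$ being the characteristic polynomials of the blocks of $\bar Y$, pairwise coprime because $\bar Y\in U$. The decomposition $\overline{F_v}^{\,n}=\bigoplus_i\ker(\chi_i(X))$ conjugates $X$ to an element $X'$ of $\g_{v,\sigma_v}$ whose $i$-th block has characteristic polynomial $\chi_i$, so $\mathfrak c_{G_{v,\sigma_v}}(X')=\bar Y$ and $X'\in\g_{v,\sigma_v}'$; this yields surjectivity. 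For injectivity, if $g\in G_v$ conjugates one such representative $X_1'$ to another $X_2'$, then $g$ carries $\ker(\chi_i(X_1'))$ onto $\ker(\chi_i(X_2'))$; as both $X_j'$ lie in $\g_{v,\sigma_v}$ with $i$-th block of characteristic polynomial $\chi_i$, both subspaces are the $i$-th coordinate block, so $g$ is block-diagonal, i.e. $g\in G_{v,\sigma_v}$, which is exactly the relation identifying $[g_1,X_1']$ and $[g_2,X_2']$ in the contracted product. Here I use that $G_{v,X}\subseteq G_{v,\sigma_v}$ for $X\in\g_{v,\sigma_v}'$ and that centralisers in $G_v$ are connected, both of which hold because $G_v$ is of type GL.

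To conclude: an étale, universally injective and surjective morphism of finite type is an isomorphism, and over $\overline{F_v}$ injectivity on closed points together with étaleness supplies universal injectivity; thus $\Psi_{\overline{F_v}}$ is an isomorphism onto $\g_v\times_{\mathfrak g_v^{\GIT}}U$, and an isomorphism descends along $\overline{F_v}/F_v$. I expect the genuinely delicate point to be not any single step but the bookkeeping around the target: with the \emph{full} fibre product $\Psi$ is not an isomorphism, since the source is smooth whereas $\g_v\times_{\mathfrak g_v^{\GIT}}\mathfrak g_{v,\sigma_v}^{\GIT}$ is singular along the preimage of the discriminant of $\overline{\mathfrak c}$, so one must really work over the étale locus $U$ (equivalently, over $\mathfrak c_{G_{v,\sigma_v}}(\g_{v,\sigma_v}')$). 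An alternative that avoids the differential computation is to observe that $\g_v\to\mathfrak g_v^{\GIT}$ is flat and that $\overline{\mathfrak c}|_U$ is finite étale, so $\g_v\times_{\mathfrak g_v^{\GIT}}U\to\g_v$ is finite étale and the target is in particular normal; then "quasi-finite and birational onto a normal variety, surjective" together with Zariski's main theorem yields the isomorphism.
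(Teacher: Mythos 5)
The paper gives no proof of this lemma, only a citation to Kottwitz's Clay lecture notes; your argument is a correct and careful filling-in of that reference. You are also right to flag the one delicate point about the target: taken literally, $\g_v\times_{\mathfrak g_v^{\GIT}}\mathfrak g_{v,\sigma_v}^{\GIT}$ is singular along the discriminant of $\overline{\mathfrak c}\colon\mathfrak g_{v,\sigma_v}^{\GIT}\to\mathfrak g_v^{\GIT}$ whereas the source is smooth, so the map can only be an isomorphism onto the open subscheme $\g_v\times_{\mathfrak g_v^{\GIT}}\mathfrak c_{G_{v,\sigma_v}}(\g_{v,\sigma_v}')$. Kottwitz's own statement of his lemme 14.1 is phrased exactly that way, with an open subscheme of $\mathfrak g_{v,\sigma_v}^{\GIT}$ as the second factor, and the restriction is harmless for the paper's application since there one only ever looks at the fibre over the single point $\mathfrak c_{G_{v,\sigma_v}}(\sigma_v)$, and $\sigma_v\in\g_{v,\sigma_v}'$. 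The remaining steps of your argument are sound: faithfully flat descent from $\overline{F_v}$; étale cancellation together with $G_v$-equivariance, reducing étaleness to the tangent-space computation at $[1,X]$ where invertibility of $\ad(X)$ on $\g_v/\g_{v,\sigma_v}$ (the defining condition for $X\in\g_{v,\sigma_v}'$) is precisely what kills the kernel; and bijectivity on geometric points via the decomposition into $\ker(\chi_i(X))$ afforded by the pairwise coprimality of the block characteristic polynomials $\chi_i$, with $G_{v,X}\subseteq G_{v,\sigma_v}$ for $X\in\g_{v,\sigma_v}'$ closing the injectivity argument.
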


\subsubsection{Développement fin de \texorpdfstring{$J_{\o}(f)$}{Jo(f)} : formule ultime}

Comme précédemment $\o\in \mathcal{O}^\g$ est une classe de conjugaison, et $\sigma\in \g_{\ss}(F)$ un élément semi-simple définissant la classe $\o_\ss$, $F$-elliptique dans une sous-algèbre de Levi semi-standard. Comme précédemment on va d'abord énoncer le théorème attendu.

Soient $S$ un sous-ensemble fini de $\V_F$ contenant les places archimédiennes, $M$ un sous-groupe de Levi de $G$ et $X\in\m(F)$ ayant $\sigma$ comme partie semi-simple. Définissons pour $S$ assez grand
\[a^M(S,X)\eqdef \epsilon^M(S,\sigma)a^{M_\sigma}(S,(\Ad M_\sigma(F))X_\nilp),\]
avec $\epsilon^M(S,\sigma)$ vaut 1 si $\sigma$ est $F$-elliptique dans $\m$ et sa classe de $M(\A_F^S)$-conjugaison rencontre $\mathfrak{k}^S\cap \m(\A_F^S)$, et 0 sinon. Le nombre complexe $a^M(S,X)$ ne dépend que de la classe de $M(F)$-conjugaison de $X$. On peut donc aussi écrire $a^M(S,\o')$ à la place de $a^M(S,X)$, où $\o'=(\Ad M(F))X$. 

\begin{theorem}[Développement fin]\label{thm:devfin}
Il existe $S_\o$ un sous-ensemble fini de $\V_F$, contenant les places archimédiennes, tel que pour tous $S\supseteq S_\o$ et $f\in\S(\g(F_S))$,
\[J_\o^\g(f)=\sum_{M\in\L^G(M_0)}|W_0^M||W_0^G|^{-1}\sum_{X\in \O^{\m} : \Ind_M^G(X)=\o}a^M(S,X)J_M^G(X,f).\]
\end{theorem}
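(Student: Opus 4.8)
The plan is to deduce the statement from three ingredients already established: the fine expansion for nilpotent classes (Théorème \ref{thm:devlopfinnilp}), the semisimple descent formula for $J_\o^\g$, and the semisimple descent formula for the weighted orbital integrals $J_M^G(X,f)$. Throughout, $\sigma\in\o_\ss$ is the fixed representative, $F$-elliptique dans le sous-groupe de Levi semi-standard $M_1$; enlarging the auxiliary finite set, one fixes $S_\o$ so that for $v\notin S_\o$ the element $\sigma$ is entier de réduction régulière (at the sense of the compactness lemmas) and lies in $\mathfrak{k}_v\cap\m_1(F_v)$, while $K_{\sigma v}$ is hyperspécial. Since both sides of the asserted identity are des distributions tempérées en $f$ (Corollaire \ref{coro:TFisadelictempereddist} and Proposition \ref{prop:IOP}(1)), it suffices to treat $f\in C_c^\infty(\g(F_S))$, which is the regularity required for the descent formulas, and then to extend by continuité.

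First I would start from the semisimple descent formula
\[J_\o^\g(f)=\int_{G_\sigma(\A_F)\backslash G(\A_F)}\sum_{R\in\F^{G_\sigma}(M_{1\sigma})}|W_{M_{1\sigma}}^{M_R}|\,|W_{M_{1\sigma}}^{G_\sigma}|^{-1}J_{\o_\nilp}^{\m_R}(\Phi_{R,y,T_1})\,dy\]
and apply Théorème \ref{thm:devlopfinnilp} inside the group $M_R$ to each $J_{\o_\nilp}^{\m_R}(\Phi_{R,y,T_1})$ — after checking, using the compactness Lemmes \ref{lem:Kottcomp1} and \ref{lem:Kottcomp2}, that for $S\supseteq S_\o$ large enough the function $\Phi_{R,y,T_1}$ est l'image d'une fonction de $\S(\m_R(F_S))$ (its components at $v\notin S$ reduce to $1_{\mathfrak{k}_v\cap\m_R(F_v)}$, the local weight $v_{R,v}'(\cdot,T_1)$ being then trivial and $\sigma$ ayant bonne réduction). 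Substituting, interchanging the finite sums with $\int_y$, and reorganizing the double sum over $(R,L)$ so that the Levi $L\in\L^{G_\sigma}(M_{1\sigma})$ comes first (transitivité de l'induction, Proposition \ref{prop:indprop}(7)), the Weyl factors telescope, $|W_{M_{1\sigma}}^{M_R}||W_{M_{1\sigma}}^{G_\sigma}|^{-1}\cdot|W_{M_{1\sigma}}^{L}||W_{M_{1\sigma}}^{M_R}|^{-1}=|W_{M_{1\sigma}}^{L}||W_{M_{1\sigma}}^{G_\sigma}|^{-1}$, and one obtains
\[J_\o^\g(f)=\sum_{L\in\L^{G_\sigma}(M_{1\sigma})}|W_{M_{1\sigma}}^{L}|\,|W_{M_{1\sigma}}^{G_\sigma}|^{-1}\sum_{\substack{U\in(\mathcal{N}_L(F))\\ \Ind_L^{G_\sigma}(U)=\o_\nilp}}a^L(S,U)\int_{G_\sigma(\A_F)\backslash G(\A_F)}\Big(\sum_{R\in\F^{G_\sigma}(L)}J_L^{M_R}(U,\Phi_{R,y,T_1})\Big)\,dy.\]

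Next I would recognise the inner expression as a weighted orbital integral on $\g$. Because $\sigma$ a bonne réduction outside $S$ and $f^S=1_{\mathfrak{k}^S}$, the integral over $G_\sigma(\A_F^S)\backslash G(\A_F^S)$ trivializes — here Lemme \ref{lem:Kottcomp2}, identifying the relevant fibration over the unramified places, is exactly what guarantees that the local factors collapse avec la bonne normalisation — so that $\int_{G_\sigma(\A_F)\backslash G(\A_F)}\sum_R J_L^{M_R}(U,\Phi_{R,y,T_1})\,dy=\int_{G_\sigma(F_S)\backslash G(F_S)}\sum_R J_L^{M_R}(U,\Phi_{(S),R,y,T_1})\,dy$. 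Let $M=M(L)$ be the Levi of $G$ attached to $L$ by the bijection $\{M'\in\L^G(M_1):a_{M'}=a_{M'_\sigma}\}\xrightarrow{\sim}\L^{G_\sigma}(M_{1\sigma})$, $M'\mapsto M'_\sigma$, and set $X=\sigma+U\in\m(F)$; then $X_\ss=\sigma$, $X_\nilp=U$, $M_{X_\ss}=M_\sigma=L$ and $a_M=a_{M_\sigma}$, whence the descent formula for weighted orbital integrals identifies the inner expression with $J_M^G(X,f)$ once the discriminant factor $|D^\g(X)|_S^{1/2}=|D^\g(\sigma)|_S^{1/2}$ and the volume $\vol(G_\sigma(F)\backslash G_\sigma(\A_F)^1)$ (cf. Corollaire \ref{coro:valeuraG(S,0)}) are correctly distributed between the measures normalisées as in \eqref{eq:defmeasureonanyorb}, \eqref{YDLgeomeq:Haarmeasureongrp} and the coefficients — this compatibility is precisely what makes the two descent formulas match. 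One then checks $\epsilon^M(S,\sigma)=1$ for these $M$: the $F$-ellipticité de $\sigma$ dans $\m$ follows from $a_M=a_{M_\sigma}$ (so $A_M\subseteq A_{M_\sigma}$ with equal dimension, hence $A_M=A_{M_\sigma}$), and the condition that the $M(\A_F^S)$-class of $\sigma$ rencontre $\mathfrak{k}^S\cap\m(\A_F^S)$ follows from $\sigma\in\mathfrak{k}_v\cap\m(F_v)$ pour $v\notin S_\o$; therefore $a^L(S,U)=a^{M_\sigma}(S,(\Ad M_\sigma(F))X_\nilp)=a^M(S,X)$. Finally, reindexing the sum over $(L,U)$ as a sum over $(M,X)$ avec $\Ind_M^G(X)=\o$ and converting $|W_{M_{1\sigma}}^{L}||W_{M_{1\sigma}}^{G_\sigma}|^{-1}$ into $|W_0^M||W_0^G|^{-1}$ — a counting argument à la Arthur \cite{Art86} comparing the $G(F)$-conjugates of $\sigma$ contenus dans $\m$ with double cosets of relative Weyl groups — yields the claimed fine expansion.

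The main obstacle, beyond this combinatorial bookkeeping, is the passage from the adelic descent of $J_\o^\g$ to the semi-local descent of the weighted orbital integrals: one must show that outside a finite set $S_\o$ depending only on $\o$ the local integrals genuinely trivialise with the exact normalisation, which is what Lemmes \ref{lem:Kottcomp1}, \ref{lem:Kottcomp2} and the choice of measures are designed to ensure, and one must verify that the discriminant factor and the volume $\vol(G_\sigma(F)\backslash G_\sigma(\A_F)^1)$ are correctly accounted for, so that no spurious factor survives in the final identity.
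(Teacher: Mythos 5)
Your proposal takes essentially the same route as the paper: semisimple descent of $J_\o^\g$, nilpotent fine expansion inside each $\m_R$, reorganization of the double sum with the Weyl factors telescoping, identification of the inner $y$-integral with $J_M^G(\sigma+U,f)$ via the semi-local descent formula for weighted orbital integrals, and reindexing à la Arthur, with the Kottwitz compactness lemmas reducing the adelic integral to the $S$-local one and the product formula killing $|D^\g(\sigma)|_S^{1/2}$. One small inaccuracy: no factor $\vol(G_\sigma(F)\backslash G_\sigma(\A_F)^1)$ needs to be distributed between the two descent formulas, since the descent of $J_\o^\g$ already integrates over the full $G_\sigma(\A_F)\backslash G(\A_F)$; such a volume appears only as the coefficient $a^{G_\sigma}(S,0)$ (corollaire~\ref{coro:valeuraG(S,0)}) inside the nilpotent expansion, where it is absorbed automatically.
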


\begin{remark}~{}
\begin{enumerate}
    \item Dans la définition de $a^M(S,X)$, l'analogue pour les groupes de la condition « sa classe de $M(\A_F^S)$-conjugaison rencontre $\mathfrak{k}^S\cap \m(\A_F^S)$ » - à savoir « sa classe de $M(\A_F^S)$-conjugaison rencontre $K^S\cap M(\A_F^S)$ » - n'est pas explicitement mentionnée par Arthur dans la discussion après l'équation (8.1) de son article \cite{Art86}, elle est toutefois incluse dans la preuve du théorème 8.1 \textit{Ibid}.  
    \item On se réfère au théorème \ref{thm:devlopfinnilp} pour la dépendance de $a^M(S,X)$ en les mesures.
\end{enumerate}
\end{remark}

On procède à sa démonstration.

Afin de relier $J_\o^g$, qui est a priori un objet global, à des intégrales orbitales pondérées, qui sont a priori des objets semi-locaux, nous devons bien choisir $S$ un sous-ensemble fini de places et nous restreindre à l'espace $\S(\g(F_S))$. Fixons $S_\o$ un sous-ensemble fini de $\V_F$ qui contient $\V_\infty$ et vérifie que si $v\not\in S_\o$ alors :
\begin{enumerate}[label=(\roman*)]
    \item $|D^\g(\sigma)|_v=1$ ;
    \item $\sigma\in\mathfrak{k}_v$ ;
    \item $K_v\cap G_{\sigma}(F_v)=K_{\sigma v}$ ;
    \item $(\Ad K_v)(\mathfrak{k}_v)=\mathfrak{k}_v$ ;
    \item $\vol(K_{\sigma v})=1$ ;
    \item $\gamma_{v}^{G_\sigma}(R)=1$ pour tout $R\in \F^{G_{\sigma}}(M_{1\sigma})$ ;
    \item $\vol(\mathfrak{k}_v\cap \mathfrak{n}_R(F_v);\mathfrak{n}_R(F_v))=1$ pour tous $R\in\F^{\sigma}=\F^{G_\sigma}(M_{1\sigma})$ ;
    \item si $y_v\in G(F_v)$ est tel que $(\Ad y_v^{-1})(\sigma+\mathcal{N}_{G_{\sigma}}(F_v))$ rencontre $\mathfrak{k}_v$ alors $y_v\in G_\sigma(F_v)K_v$.
\end{enumerate}
Les conditions hormis la (iii) et les deux dernières sont claires. La condition (iii) vient du point 1 du lemme \ref{lem:Kottcomp1}. L'avant-dernière condition est licite pour des raisons suivantes : on a une famille de  réseau $(\mathfrak{k}_{\sigma v})_v$ avec $\mathfrak{k}_{\sigma v}$ un réseau de l'espace vectoriel $\g_\sigma(F_v)$, qui est telle que $\mathfrak{k}_{\sigma v}=\gl_n(\O_v)\cap \g_\sigma(F_v)$ pour presque toutes les places $v$. Par la condition (ii) on a, pour presque toutes $v$, que $\mathfrak{k}_{\sigma v}\cap \mathfrak{n}_R(F_v)=\mathfrak{k}_{v}\cap \mathfrak{n}_R(F_v)$ pour tous $R$, 
la condition réclamée est ainsi loisible.  Pour finir, la dernière est une conséquence des lemmes dans la sous-sous-section précédente. En effet, prenons $T$ un tore maximal de $G$ dont l'algèbre de Lie contient $\sigma$. Bien sûr $T_v$ est non-ramifié pour presque tous les $v$. On voit que si $y_v\in G_{\sigma}(F_v)\backslash G(F_v)=(G_{\sigma}\backslash G)(F_v)$ est tel que $(\Ad y_v^{-1})(\sigma+\mathcal{N}_{G_{\sigma}}(F_v))$ rencontre $\mathfrak{k}_v$, alors pour presque tous les $v$, on a que $y_v$ appartient à la projection sur la composante $G_\sigma\backslash G$ de l'image réciproque de $\mathfrak{k}_v\times_{\text{Spec}(F_v[\mathfrak{g}_v])^{G_v}}\{\mathfrak{c}_{G_{v,\sigma_v}}(\sigma_v)\}$ par l'isomorphisme du lemme \ref{lem:Kottcomp2}. Ainsi, par le lemme \ref{lem:Kottcomp1}, $y_v$ appartient, pour presque tous les $v$, à $G_{\sigma}(F_v)\backslash G(F_v)K_v$.

Soit à présent $S$ un sous-ensemble fini de $\V_F$ contenant $S_\o$, et soit $f\in C_c^\infty(F_S)\subseteq C_c^\infty(\g(\A_F))$. La fonction
\[y\in G(\A_F)\longmapsto\sum_{R\in\F^{\sigma}}|W_{M_{1\sigma}}^{M_R}||W_{M_{1\sigma}}^{G_\sigma}|^{-1}J_{\o_\nilp}^{\m_R}(\Phi_{R,y,T_1})\]
s'annule sauf si $y=y_Sy^S$ avec $y_S\in G_\sigma(F_S)\backslash G(F_S)$ et $y^S$ appartient à
\[\prod_{v\not\in S} G_\sigma(F_v)\backslash G_\sigma(F_v)K_v\simeq\prod_{v\not\in S} (K_v\cap G_{\sigma}(F_v))\backslash K_v=\prod_{v\not\in S}K_{\sigma v}\backslash K_v\]
selon les conditions (iii), (iv) et (viii). Pour de tels $y$ il n'y a pas de doute que 
\[v_R'(ky,T_1)=v_R'(k_Sy_S,T_1)\]
pour tous $R\in\F^{\sigma}$ et $k=k_Sk^S\in K_\sigma$. Il s'ensuit que la fonction $\Phi_{R,y,T_1}\in C_c^\infty(\g(\A_F))$ est l'image de $\Phi_{(S),R,y_S,T_1}\in C_c^\infty(\g(F_S))$ sous l'inclusion $C_c^\infty(\g(F_S))\subseteq C_c^\infty(\g(\A_F))$, en effet l'égalité
\[\Phi_{R,y,T_1}(C)=\Phi_{(S),R,y_S,T_1}(C_S)\prod_{v\not\in S}1_{\mathfrak{k}_v}(C_v),\,\,\,\,\forall C\in \m_R(\A_F)\]
est équivalente à
\[\prod_{v\not\in S}\gamma_{v}^{G_\sigma}(R)\int_{K_{\sigma v}}\int_{\mathfrak{n}_R(F_v)}1_{\mathfrak{k}_v}\left((\Ad (k_vy_v)^{-1})(\sigma_v+C_v+U_v)\right)\,dU_v\,dk_v=\prod_{v\not\in S}1_{\mathfrak{k}_v}(C_v),\]
soit, en utilisant la condition sur $y_v$ et les hypothèses (ii), (iii), (iv) et (vi) ci-dessus,
\[\prod_{v\not\in S}\vol(K_{\sigma v})\int_{\mathfrak{n}_R(F_v)}1_{\mathfrak{k}_v}(C_v+U_v)\,dU_v=\prod_{v\not\in S}1_{\mathfrak{k}_v}(C_v),\]
qui est vrai d'après les hypothèses (v) et (vii). Il vient pour cette raison
\[J_\o^\g(f)=\int_{G_\sigma(F_S)\backslash G(F_S)}\left(\sum_{R\in\F^{\sigma}}|W_{M_{1\sigma}}^{M_R}||W_{M_{1\sigma}}^{G_\sigma}|^{-1}J_{\o_\nilp}^{\m_R}(\Phi_{(S),R,y_S,T_1})\right)\,dy.\]
pour toute fonction $f\in C_c^\infty(\g(F_S))$. 

On peut maintenant comparer la contribution $J_\o^{\g}(f)$ de la classe $\o$ (globale) et les intégrales orbitales pondérées (semi-locales) grâce aux formules de descente semi-simple de chacun. Posons $\L^\sigma=\L^{G_\sigma}(M_{1\sigma})$, $\F^\sigma(L)=\{P\in \F^\sigma\mid L\subseteq P\}$ pour $L\in\L^\sigma$, et $\L_\sigma^0=\{M\in \L^G(M_1)\mid a_M=a_{M_\sigma}\}$. L'application $\L_\sigma^0\rightarrow \L^\sigma$ qui envoie $M$ sur $M_\sigma$ est une bijection. Nous avons
{\allowdisplaybreaks\begin{equation}\label{Art86:lemma7.1}
\begin{split}
J_\o^\g(f)&=\int_{G_\sigma(F_S)\backslash G(F_S)}\left(\sum_{R\in\F^{\sigma}}|W_{M_{1\sigma}}^{M_R}||W_{M_{1\sigma}}^{G_\sigma}|^{-1}J_{\o_\nilp}^{\m_R}(\Phi_{(S),R,y_S,T_1})\right)\,dy\\
&=\int_{y}\sum_{L\in \F^\sigma}\sum_{R\in\F^{\sigma}(L)}|W_{M_{1\sigma}}^{L}||W_{M_{1\sigma}}^{G_\sigma}|^{-1}\sum_{U\in(\mathcal{N}_L(F))}a^L(S,U)J_L^{M_R}(U,\Phi_{(S),R,y,T_1})\,dy \\
&=\sum_{M\in \L_\sigma^0}|W_{M_{1\sigma}}^{M_\sigma}||W_{M_{1\sigma}}^{G_\sigma}|^{-1}\sum_{U\in(\mathcal{N}_M(F))}a^{M_\sigma}(S,U)\left(|D^\g(\sigma)|_S^{1/2}\int_y\sum_R J_{M_\sigma}^{M_R}(U,\Phi_{(S),R,y,T_1})\,dy\right)\\
&=\sum_{M\in \L_\sigma^0}|W_{M_{1\sigma}}^{M_\sigma}||W_{M_{1\sigma}}^{G_\sigma}|^{-1}\sum_{U\in(\mathcal{N}_M(F))}a^{M_\sigma}(S,U)J_M^G(\sigma+U,f),    
\end{split}
\end{equation}}
la troisième égalité vient de la condition (i) pour $S_\o$ et la formule du produit : $|D^{\g}(\sigma)|_S=1$.

Nous souhaitons enlever la dépendance en ce choix $\sigma$ dans sa classe de conjugaison dans la dernière expression. Soient $M$ un sous-groupe de Levi de $G$ et $X\in\m(F)$ ayant $\sigma$ comme partie semi-simple. Définissons les nombres $a^M(S,X)$ comme dans l'énoncé. Alors en partant de la dernière égalité de l'équation (\ref{Art86:lemma7.1}), on aboutit, par des mêmes raisonnements formels qu'en \cite[théorème 8.1]{Art86} (omis ici), au théorème \ref{thm:devfin}, au moins pour les fonctions à support compact. Mais l'égalité en question entre deux distributions tempérées étant prouvée pour les fonctions à support compact, elle reste valable pour les fonctions tests de classe Schwartz-Bruhat. La preuve du théorème se conclut.

\section{Formule des traces pour les algèbres de Lie à composante centrale fixée}\label{sec:formuledestracesII}

\subsection{Transformée de Fourier sur les algèbres de Lie}

Notons $\tau_\g$ la trace réduite sur $F$ de l'algèbre séparable sous-jacente de $\g$. On fixe $\langle X,Y\rangle =\tau_\g(XY)$ comme forme bilinéaire sur $\g(F)$ non-dégénérée et invariante par adjonction. Nous appellerons $\langle-,-\rangle$ la forme bilinéaire canonique de $\g$ dans cet article. La forme bilinéaire canonique s'étend sur $\g(\A_F)$, et il vaut le produit des formes bilinéaires canoniques locales, i.e. $\langle-,-\rangle=\prod_{v\in\V_F}\langle-,-\rangle_v$.

On se donne un caractère non-trivial $\psi$ de $F\backslash \A_F$. La transformée de Fourier d'une fonction $f \in\S(\g(\A_F))$ est alors 
\[\widehat{f}^{\g}(Y)=\int_{\g(\A_F)}f(X)\psi(\langle X,Y\rangle)\,dX,\,\,\,\,\forall Y\in \g(\A_F)\]
avec $dX$ la mesure auto-duale sur $\g(\A_F)$. 

Nous avons la formule du produit
\begin{equation}\label{eq:formuleproduitWeil}
\prod_{v\in\V_F} \gamma_{\psi_v}(\g_v)=1    
\end{equation}
(cf. \cite[proposition 5]{Weil64}).

On va abréger $\widehat{f}^{\g}$ en $\widehat{f}$ si le contexte nous le permet. On remarque que la décomposition $\g=\mathfrak{z}\oplus\g_{\ad}$ est orthogonale par rapport à la forme bilinéaire $
\langle-,-\rangle$. 

\subsection{Normalisations des mesures de la section \ref{sec:formuledestracesII}}

Dans la suite, pour $V=\g$ ou $\g_{\ad}$, on prend sur $V(\A_F)$ la mesure de Haar auto-duale relativement à sa transformée de Fourier. La mesure sur aucun autre groupe ne revêt d'importance dans cette section.
 
\subsection{Formule des traces pour les algèbres de Lie à composante centrale fixée} 

Rappelons au prime abord la formule des traces pour les algèbres de Lie.

\begin{theorem}[Formule des traces pour les algèbres de Lie {{\cite[théorème 4.2 et théorème 4.5]{Ch02a}}}] \label{thm:FTLiealg}
Pour tous $f\in\S(\g(\A_F))$ et $T\in a_0$,
\begin{equation}\label{eq:FTLiealg}
\sum_{\o\in\O^\g} J_\o^T(f)=\sum_{\o\in\O^\g} J_\o^T(\widehat{f}),    
\end{equation}
ces deux sommes convergent absolument.
\end{theorem}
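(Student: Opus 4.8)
The plan is to derive the identity from Poisson summation on $\g(F)\hookrightarrow\g(\A_F)$, combined with the geometric estimates of Theorem~\ref{thm:classconjdisJ}. First I would record the compatibility of the Fourier transform with the adjoint action: since the canonical pairing $\langle-,-\rangle$ is $\Ad$-invariant it forces $\det(\Ad x)^2=1$ for $x\in G(\overline F)$, hence $\det(\Ad x)=1$ by connectedness of $G$; in particular $|\det(\Ad x)|_{\A_F}=1$ for every $x\in G(\A_F)$, so the substitution $X\mapsto(\Ad x)X$ preserves the self-dual measure on $\g(\A_F)$ and gives $\widehat{(\Ad x)f}=(\Ad x)\widehat f$ for all $f\in\S(\g(\A_F))$. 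The lattice $\g(F)\subseteq\g(\A_F)$ is its own dual with respect to $\psi(\langle-,-\rangle)$ and has covolume $1$ for the self-dual Haar measure, so Poisson summation yields, for each $x\in G(\A_F)$,
\[\sum_{X\in\g(F)}f\bigl((\Ad x^{-1})X\bigr)=\sum_{X\in\g(F)}\bigl((\Ad x)f\bigr)(X)=\sum_{Y\in\g(F)}\bigl((\Ad x)\widehat f\bigr)(Y)=\sum_{Y\in\g(F)}\widehat f\bigl((\Ad x^{-1})Y\bigr),\]
both sides converging absolutely and locally uniformly in $x$ (theta series).

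Next I would multiply this identity by $F^G(x,T)$, the characteristic function of a compact subset of $G(F)\backslash G(\A_F)^1$, and integrate over $G(F)\backslash G(\A_F)^1$. The integrand is a compactly supported function times a continuous one, hence integrable, and the decomposition $\g(F)=\bigsqcup_{\o\in\O^\g}\o(F)$ together with the absolute convergence of the theta series justifies interchanging $\sum_{\o}$ with the integral. Applying Theorem~\ref{thm:classconjdisJ}(1) to a compact $\mathcal{K}\subseteq\S(\g(\A_F))$ containing both $f$ and $\widehat f$ (legitimate, since the Fourier transform is continuous on $\S(\g(\A_F))$, so $\widehat f\in\S(\g(\A_F))$), each side of the integrated Poisson identity differs from $\sum_{\o\in\O^\g}J_\o^T(f)$, respectively $\sum_{\o\in\O^\g}J_\o^T(\widehat f)$, by at most $c_{\mathcal{K}}e^{-d(T)}$ for $T$ sufficiently regular with $d(T)\geq\epsilon_0\|T\|$. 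Hence
\[\Bigl|\,\sum_{\o\in\O^\g}J_\o^T(f)-\sum_{\o\in\O^\g}J_\o^T(\widehat f)\,\Bigr|\leq 2c_{\mathcal{K}}e^{-d(T)}\]
for all such $T$, while Theorem~\ref{thm:classconjdisJ}(2) yields the absolute convergence of both sums.

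To conclude, I would invoke Theorem~\ref{thm:classconjdisJ}(3): the maps $T\mapsto\sum_{\o}J_\o^T(f)$ and $T\mapsto\sum_{\o}J_\o^T(\widehat f)$ are polynomials in $T$, so their difference is a polynomial bounded by $2c_{\mathcal{K}}e^{-d(T)}$ on the cone $\{d(T)\geq\epsilon_0\|T\|\}$, hence identically $0$. This gives $\sum_{\o}J_\o^T(f)=\sum_{\o}J_\o^T(\widehat f)$ for every $T\in a_0$, which is the assertion. If one prefers to start from the version of the trace formula indexed by the coarser partition by semisimple classes, the present finer version follows from the finite grouping $\sum_{\o:\o_\ss=\o_\ss'}J_\o^T$ recorded in subsection~\ref{subsec:preludeTFdef}.

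The hard part is of course entirely contained in Theorem~\ref{thm:classconjdisJ}(1): the claim that the Arthur-type truncated kernel $K_\o^T(x,f)$ approximates the naive truncation $F^G(x,T)\sum_{X\in\o(F)}f((\Ad x^{-1})X)$ with exponentially small total error. Proved from scratch, this amounts to redoing Arthur's convergence argument in the Lie-algebra setting — reduction theory, the combinatorics of $\tau_P^Q$, $\widehat\tau_P^Q$, $F^P$, and the estimates on the unipotent periods $\int_{\n_P(\A_F)}$ — which is precisely the content of \cite{Ch02a} and \cite{Ch18}. Granting that input, the steps above are formal; the only points needing care are the adelic Fourier/adjoint compatibility, the covolume normalization that makes Poisson summation clean, and the polynomial-vanishing argument.
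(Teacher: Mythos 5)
Your argument is correct and is essentially the standard derivation of the trace formula identity from the truncation estimates; the paper itself gives no proof here and simply cites Chaudouard \cite{Ch02a}, but your reconstruction is exactly the route those references take. The only substantive inputs you use are (a) $\det(\Ad x)=1$ on the connected reductive group, so the Fourier transform intertwines with conjugation, (b) the self-duality of the lattice $\g(F)\subseteq\g(\A_F)$ with covolume $1$, which the paper records in the remark after the theorem, (c) Poisson summation, and (d) the three parts of Theorem~\ref{thm:classconjdisJ}, and each is used correctly. One small point worth keeping in mind: the literal reference \cite{Ch02a} proves the identity for the coarser partition by semisimple classes ${}_\ss\O^\g$, while the statement and your proof concern the refined partition $\O^\g$; your argument correctly avoids passing through the coarse version by appealing directly to Theorem~\ref{thm:classconjdisJ}, whose refined form comes from \cite{Ch18}, and you also note the alternative deduction by finite regrouping $\sum_{\o:\,\o_\ss=\o_\ss'}J_\o^T={}_\ss J_{\o'}^T$ — either route is fine.
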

\begin{remark}
La formule sommatoire de Poisson nous dit que $\vol(\g(F)\backslash \g(\A_F))=1$ avec $\g(F)$ se munit de la mesure de comptage et $\g(F)\backslash \g(\A_F)$ se munit de la mesure quotient. On utilise ainsi la même normalisation que l'article \cite{Ch02a} sur $\g(\A_F)$. 
\end{remark}

Introduisons une variante de la formule des traces pour les algèbres de Lie qui nous sera ultérieurement indispensable.

\'{E}crivons $\g=\mathfrak{z}\oplus \g_{\ad}$ avec $\mathfrak{z}=\Lie(Z(G))$ et $\g_{\ad}=[\g,\g]$. Posons $\pi_\mathfrak{z}: \g\rightarrow\mathfrak{z}$ la projection de $\g$ sur $\mathfrak{z}$ selon la décomposition. La conjugaison par $G$ sur $\mathfrak{z}$ est triviale. Toute classe de $G(F)$-conjugaison dans $\g(F)$ se décompose en une somme d'un élément dans $\mathfrak{z}(F)$ et une classe de $G(F)$-conjugaison dans $\g_{\ad}(F)$. 

Rappelons que la décomposition $\g(\A_F)=\mathfrak{z}(\A_F)\oplus \g_{\ad}(\A_F)$ est orthonogale par rapport à $\langle-,-\rangle$, et que les mesures sur $\g(\A_F)$ et $\g_\ad(\A_F)$ sont auto-duales. On définit alors la transformée partielle de Fourier d'une fonction $f \in\S(\g(\A_F))$ par
\[\widetilde{f}(Y)=\int_{\g_{\ad}(\A_F)}f(\pi_\mathfrak{z}(Y)+X)\psi(\langle X,Y\rangle)\,dX,\,\,\,\,\forall Y\in \g(\A_F).\]
Il est clair, selon nos choix de mesures, que $\widetilde{\widetilde{f}}(Z+Y)=f(Z-Y)$ si $(Z,Y)\in\mathfrak{z}(\A_F)\oplus \g_{\ad}(\A_F)$. 

Soit $Z\in \mathfrak{z}(F)$. Prenons $g_{Z}'\in\S(\mathfrak{z}(\A_F))$ tel que si $W\in \mathfrak{z}(F)$ alors $g_{Z}'(W)\not=0$ seulement si $W=Z$, et auquel cas $g_{Z}'(Z)=1$. L'existence de telle fonction résulte du fait que $F$ est discret dans $\A_F$ avec $F\backslash\A_F$ compact. Prenons ensuite $g_{Z}\in\S(\mathfrak{g}(\A_F))$ la fonction $g_{Z}(X)=g_{Z}'(\pi_\mathfrak{z}(X))$ pour tout $X\in\g(\A_F)$. On voit, pour toute fonction $f\in\S(\g(\A_F))$, que
\begin{align*}
\widetilde{fg_{Z}}(Y)&=\int_{\g_{\ad}(\A_F)}f(\pi_\mathfrak{z}(Y)+X)g_{Z}(\pi_\mathfrak{z}(Y)+X)\psi(\langle X,Y\rangle)\,dX   \\
&=\int_{\g_{\ad}(\A_F)}f(\pi_\mathfrak{z}(Y)+X)\psi(\langle X,Y\rangle)\,dX\cdot g_{Z}(Y) \\
&=\widetilde{f}(Y)g_{Z}(Y),\,\,\,\,\forall Y\in \g(\A_F),
\end{align*}
en d'autres termes $\widetilde{fg_{Z}}=\widetilde{f} g_{Z}$.

En plongeant la fonction $fg_{Z}$ dans la formule des traces pour les algèbres de Lie on en déduit que
\begin{lemma} Pour tous $Z\in \mathfrak{z}(F)$ et $T\in a_0$ assez régulier, la distribution
\[J_{[Z]}^T\eqdef \sum_{\o\in\O^\g,\pi_\mathfrak{z}(\o)=Z} J_\o^T\]
est en polynôme en $T$, de degré au plus $\dim a_0^G$.
\end{lemma}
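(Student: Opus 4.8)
The plan is to reduce the statement to the already-established polynomiality of $J^T$ (point 3 of Theorem~\ref{thm:classconjdisJ}) by exhibiting $J_{[Z]}^T$ as a value of the full trace-formula polynomial on a suitably twisted test function. First I would fix $Z\in\mathfrak z(F)$ and a test function $f\in\S(\g(\A_F))$, and recall the auxiliary function $g_Z$ constructed just above: it is a Schwartz--Bruhat function on $\g(\A_F)$, factoring through $\pi_{\mathfrak z}$, which on $\mathfrak z(F)$ is the indicator of $\{Z\}$. The key combinatorial observation is that for a class $\o\in\O^\g$ we have $fg_Z$ supported, as far as the lattice points $\m_P(F)$ entering $K_{P,\o}^{\g,T}$ are concerned, only on those $\o$ with $\pi_{\mathfrak z}(\o)=Z$; indeed $\pi_{\mathfrak z}$ is constant on each $\o$ (conjugation is trivial on $\mathfrak z$), and $\pi_{\mathfrak z}(X+U)=\pi_{\mathfrak z}(X)$ for $U\in\mathfrak n_P$ since $\mathfrak n_P\subseteq\g_{\ad}$. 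Hence $K_{P,\o}^\g(x,fg_Z)=0$ unless $\pi_{\mathfrak z}(\o)=Z$, and for such $\o$ one has $K_{P,\o}^\g(x,fg_Z)=K_{P,\o}^\g(x,f)$ because $g_Z$ evaluates to $1$ on the relevant arguments. Propagating this through the definitions of $K_\o^{\g,T}$ and $J_\o^{\g,T}$ gives $J_\o^T(fg_Z)=J_\o^T(f)$ when $\pi_{\mathfrak z}(\o)=Z$ and $J_\o^T(fg_Z)=0$ otherwise.

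Summing over $\o\in\O^\g$ and invoking the absolute convergence from point~2 of Theorem~\ref{thm:classconjdisJ} then yields
\[
J^T(fg_Z)=\sum_{\o\in\O^\g}J_\o^T(fg_Z)=\sum_{\substack{\o\in\O^\g\\ \pi_{\mathfrak z}(\o)=Z}}J_\o^T(f)=J_{[Z]}^T(f).
\]
Next I would appeal directly to point~3 of Theorem~\ref{thm:classconjdisJ}: for the fixed test function $fg_Z$, the map $T\mapsto J^T(fg_Z)$ is a polynomial in $T$ of degree at most $\dim a_0^G$. Combining this with the identity above shows $T\mapsto J_{[Z]}^T(f)$ is a polynomial of degree at most $\dim a_0^G$ for each fixed $f$, which is exactly the assertion of the lemma. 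One should note the hypothesis ``$T$ assez régulier'' in the statement is only there to make the manipulations of the truncated kernel in Theorem~\ref{thm:classconjdisJ} literally valid; the polynomiality conclusion then extends to all $T\in a_0$ by the usual argument (as in the proof of Corollary~\ref{coro:TFisadelictempereddist}).

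The only genuinely delicate point is verifying the support/evaluation claim $K_{P,\o}^\g(x,fg_Z)=\delta_{\pi_{\mathfrak z}(\o),Z}K_{P,\o}^\g(x,f)$ rigorously, i.e.\ checking that $g_Z$ is identically $1$ on all elements $(\Ad x^{-1})(X+U)$ occurring in the sum defining $K_{P,\o}^\g$ whenever $\pi_{\mathfrak z}(\o)=Z$. This follows since $\pi_{\mathfrak z}$ is $G(\A_F)$-invariant and additive with $\mathfrak n_P(\A_F)\subseteq\g_{\ad}(\A_F)$, so $\pi_{\mathfrak z}\big((\Ad x^{-1})(X+U)\big)=\pi_{\mathfrak z}(X)$, and $\pi_{\mathfrak z}(X)$ lies in $\mathfrak z(F)$ and equals $\pi_{\mathfrak z}(\o)=Z$ because $\Ind_{M_P}^G(X)=\o$ forces $X_{\ss}$ and hence $\pi_{\mathfrak z}(X)$ to have the prescribed central part (Proposition~\ref{prop:indprop}, point~2). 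Once this bookkeeping is in place, everything else is a formal consequence of results already proved, so I expect no serious obstacle beyond this verification.
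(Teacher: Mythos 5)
Your proof is correct and is exactly the argument the paper sketches: just before the lemma the paper constructs $g_Z$ and says the claim follows ``en plongeant la fonction $fg_Z$ dans la formule des traces,'' and your write-up fleshes out precisely what that means, namely that $K_{P,\o}^\g(x,fg_Z)=\delta_{\pi_{\mathfrak z}(\o),Z}K_{P,\o}^\g(x,f)$ (since $\pi_{\mathfrak z}$ is $G$-invariant and kills $\mathfrak n_P\subseteq\g_{\ad}$), hence $J_{[Z]}^T(f)=J^T(fg_Z)$, which is polynomial of degree $\le\dim a_0^G$ by point~3 of Theorem~\ref{thm:classconjdisJ}. You correctly observe that only Theorem~\ref{thm:classconjdisJ} (points~2 and~3) is genuinely needed here; the paper's additional citation of Theorem~\ref{thm:FTLiealg} accompanies the construction but is not logically required for the polynomiality itself.
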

\begin{proof}
Cela se déduit des théorèmes \ref{thm:classconjdisJ} et \ref{thm:FTLiealg}.
\end{proof}

\begin{lemma} Soit $\mathcal{K}$ une partie compacte de $\S(\g(\A_F))$, il existe $c_{\mathcal{K}}>0$ tel que pour tous $f\in \mathcal{K}$ on ait
\[\left| J_{[Z]}^T(f)-\int_{G(F)\backslash G(\A_F)^1}F^G(x,T)\sum_{X \in Z\oplus \g_{\ad}(F)}f\left((\Ad  x^{-1})X\right)\,dx\right|< c_{\mathcal{K}}e^{-d(T)}.\]
\end{lemma}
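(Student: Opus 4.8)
The plan is to deduce the inequality from the first point of Theorem~\ref{thm:classconjdisJ} by comparing the two sides one conjugacy class at a time. The key combinatorial remark is that conjugation by $G(F)$ fixes $\mathfrak{z}(F)$ pointwise, so every $\o\in\O^\g$ has a well-defined central component $\pi_\mathfrak{z}(\o)\in\mathfrak{z}(F)$, and one has the disjoint decomposition
\[Z\oplus\g_{\ad}(F)=\bigsqcup_{\o\in\O^\g:\,\pi_\mathfrak{z}(\o)=Z}\o(F).\]
Hence for each $x$ the sum $\sum_{X\in Z\oplus\g_{\ad}(F)}f((\Ad x^{-1})X)$ breaks up as $\sum_{\o:\pi_\mathfrak{z}(\o)=Z}\sum_{X\in\o(F)}f((\Ad x^{-1})X)$. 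First I would justify interchanging this sum with the integral against $F^G(\cdot,T)$ over $G(F)\backslash G(\A_F)^1$: this is a Tonelli argument, using that $F^G(\cdot,T)$ has compact support in $G(F)\backslash G(\A_F)^1$ while $x\mapsto\sum_{X\in\g(F)}|f((\Ad x^{-1})X)|$ is continuous (because $\g(F)$ is a lattice in $\g(\A_F)$ and $f$ is Schwartz–Bruhat, so $g\mapsto\sum_{X\in\g(F)}|g(X)|$ is continuous on $\S(\g(\A_F))$ and $x\mapsto(\Ad x)f$ is continuous into $\S(\g(\A_F))$). One then gets
\[\int_{G(F)\backslash G(\A_F)^1}F^G(x,T)\sum_{X\in Z\oplus\g_{\ad}(F)}f((\Ad x^{-1})X)\,dx=\sum_{\o:\pi_\mathfrak{z}(\o)=Z}\int_{G(F)\backslash G(\A_F)^1}F^G(x,T)\sum_{X\in\o(F)}f((\Ad x^{-1})X)\,dx,\]
an absolutely convergent sum.

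Next I would unwind $J_{[Z]}^T(f)$: by definition — and by the absolute convergence recorded in the previous lemma, itself a consequence of Theorem~\ref{thm:classconjdisJ} and Theorem~\ref{thm:FTLiealg} — one has $J_{[Z]}^T(f)=\sum_{\o:\pi_\mathfrak{z}(\o)=Z}J_\o^T(f)$, while $J_\o^T(f)=\int_{G(F)\backslash G(\A_F)^1}K_\o^{T}(x,f)\,dx$ by construction. Subtracting the two displayed quantities and regrouping the two absolutely convergent sums indexed by $\{\o:\pi_\mathfrak{z}(\o)=Z\}$ yields
\[J_{[Z]}^T(f)-\int_{G(F)\backslash G(\A_F)^1}F^G(x,T)\sum_{X\in Z\oplus\g_{\ad}(F)}f((\Ad x^{-1})X)\,dx=-\sum_{\o:\pi_\mathfrak{z}(\o)=Z}\int_{G(F)\backslash G(\A_F)^1}\left(F^G(x,T)\sum_{X\in\o(F)}f((\Ad x^{-1})X)-K_\o^{T}(x,f)\right)\,dx.\]

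Finally I would apply the triangle inequality, enlarge the index set $\{\o:\pi_\mathfrak{z}(\o)=Z\}$ to all of $\O^\g$, and invoke the first point of Theorem~\ref{thm:classconjdisJ}:
\[\left|J_{[Z]}^T(f)-\int_{G(F)\backslash G(\A_F)^1}F^G(x,T)\sum_{X\in Z\oplus\g_{\ad}(F)}f((\Ad x^{-1})X)\,dx\right|\le\sum_{\o\in\O^\g}\int_{G(F)\backslash G(\A_F)^1}\left|F^G(x,T)\sum_{X\in\o(F)}f((\Ad x^{-1})X)-K_\o^{T}(x,f)\right|\,dx<c_{\mathcal{K}}e^{-d(T)}\]
for every $f\in\mathcal{K}$ and every $T$ sufficiently regular with $d(T)\ge\epsilon_0\|T\|$, the constant $c_{\mathcal{K}}$ being exactly the one supplied by Theorem~\ref{thm:classconjdisJ}; note in particular that the bound is uniform in $Z\in\mathfrak{z}(F)$. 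The only step that is not purely formal is the rearrangement of the infinite sums over $\{\o:\pi_\mathfrak{z}(\o)=Z\}$ (both the splitting of the naive geometric sum and the regrouping against the $K_\o^T$'s); this is legitimate precisely because of the absolute convergence assertions in Theorem~\ref{thm:classconjdisJ}, and I expect it to be the only point requiring genuine care, everything else being bookkeeping.
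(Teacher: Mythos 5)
Your proposal is correct and matches the paper's intent: the paper's own proof is the single sentence ``Cela se déduit du théorème \ref{thm:classconjdisJ}'', and you have simply filled in the expected unwinding, namely the disjoint decomposition $Z\oplus\g_{\ad}(F)=\bigsqcup_{\pi_\mathfrak{z}(\o)=Z}\o(F)$, the interchange of the sum over $\o$ with the truncated integral, and the triangle inequality followed by enlarging the index set to all of $\O^\g$ so that point~1 of the theorem applies directly. The only minor remark is that one can dispense with the separate Tonelli argument by deducing the absolute convergence of $\sum_\o\int|F^G\sum_{X\in\o}f|$ from points~1 and~2 of the theorem together (since it is dominated by $\sum_\o\int|K_\o^T|+\sum_\o\int|F^G\sum_{X\in\o}f-K_\o^T|$), but your route is equally valid.
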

\begin{proof}
Cela se déduit du théorème \ref{thm:classconjdisJ}.
\end{proof}

\begin{proposition}\label{prop:TFtracefixee}
Pour tous $f\in\S(\g(\A_F)),Z\in \mathfrak{z}(F)$ et $T\in a_0$ assez régulier,
\[\sum_{\o\in\O^\g,\pi_\mathfrak{z}(\o)=Z} J_\o^T(f)=\sum_{\o\in\O^\g,\pi_\mathfrak{z}(\o)=Z} J_\o^T(\widetilde{f}).\]
Ces deux sommes convergent absolument.
\end{proposition}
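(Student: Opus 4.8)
La convergence absolue des deux membres est immédiate : pour $h=f$ comme pour $h=\widetilde f$, le théorème \ref{thm:classconjdisJ}(2) fournit $\sum_{\o\in\O^\g}\int_{G(F)\backslash G(\A_F)^1}|K_\o^{T}(x,h)|\,dx<\infty$, donc en particulier $\sum_{\o\in\O^\g,\,\pi_{\mathfrak{z}}(\o)=Z}|J_\o^{T}(h)|<\infty$. Il reste à établir l'égalité $J_{[Z]}^{T}(f)=J_{[Z]}^{T}(\widetilde f)$, et le plan est de comparer les approximations géométriques de ces deux distributions données par le dernier lemme précédant l'énoncé, puis d'invoquer la polynomialité en $T$.

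\textbf{Étape 1 : formule sommatoire de Poisson dans la direction $\g_{\ad}$.} Je fixerais $Z\in\mathfrak{z}(F)$ et $x\in G(\A_F)$. Comme $\Ad x^{-1}$ agit trivialement sur $\mathfrak{z}$, tout $X\in Z\oplus\g_{\ad}(F)$ s'écrit $X=Z+A$ avec $A\in\g_{\ad}(F)$ et $(\Ad x^{-1})X=Z+(\Ad x^{-1})A$, de sorte que
\[\sum_{X\in Z\oplus\g_{\ad}(F)}f\big((\Ad x^{-1})X\big)=\sum_{A\in\g_{\ad}(F)}\phi_{Z,x}(A),\qquad\text{où }\ \phi_{Z,x}(A)\eqdef f\big(Z+(\Ad x^{-1})A\big)\in\S(\g_{\ad}(\A_F)).\]
Je calculerais la transformée de Fourier de $\phi_{Z,x}$ relativement à $\langle-,-\rangle|_{\g_{\ad}}$ (muni de la mesure auto-duale) : le changement de variables $A=(\Ad x)A'$, licite car $\Ad x$ préserve $\langle-,-\rangle$ donc la mesure auto-duale de $\g_{\ad}(\A_F)$, joint à l'invariance $\langle(\Ad x)A',B\rangle=\langle A',(\Ad x^{-1})B\rangle$ et à la définition de la transformée partielle de Fourier donnée ci-dessus, fournit
\[\widehat{\phi_{Z,x}}(B)=\int_{\g_{\ad}(\A_F)}f\big(Z+A'\big)\psi\big(\langle A',(\Ad x^{-1})B\rangle\big)\,dA'=\widetilde f\big(Z+(\Ad x^{-1})B\big).\]
Puisque $\vol(\g_{\ad}(F)\backslash\g_{\ad}(\A_F))=1$ pour la mesure auto-duale (même argument que la remarque suivant le théorème \ref{thm:FTLiealg}), la formule sommatoire de Poisson appliquée à $\phi_{Z,x}$ donne, pour tout $x$,
\[\sum_{X\in Z\oplus\g_{\ad}(F)}f\big((\Ad x^{-1})X\big)=\sum_{X\in Z\oplus\g_{\ad}(F)}\widetilde f\big((\Ad x^{-1})X\big).\]
En multipliant par $F^G(x,T)$ et en intégrant sur $G(F)\backslash G(\A_F)^1$, j'obtiendrais l'égalité des deux intégrales géométriques attachées respectivement à $f$ et à $\widetilde f$ dans le lemme d'approximation.

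\textbf{Étape 2 : polynomialité et conclusion.} J'appliquerais ce lemme d'approximation à la partie compacte $\mathcal{K}=\{f,\widetilde f\}$ de $\S(\g(\A_F))$ : il existe $c>0$ tel que, pour $T$ assez régulier vérifiant $d(T)\geq\epsilon_0\|T\|$, chacune des quantités $J_{[Z]}^{T}(f)$ et $J_{[Z]}^{T}(\widetilde f)$ soit à distance strictement inférieure à $ce^{-d(T)}$ de l'intégrale géométrique correspondante. Ces deux intégrales coïncidant d'après l'étape 1, il vient $|J_{[Z]}^{T}(f)-J_{[Z]}^{T}(\widetilde f)|<2ce^{-d(T)}$ sur le cône $\{T:d(T)\geq\epsilon_0\|T\|\}$. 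Or $T\mapsto J_{[Z]}^{T}(f)$ et $T\mapsto J_{[Z]}^{T}(\widetilde f)$ sont polynomiales en $T$ (lemme de polynomialité précédant l'énoncé), donc leur différence est un polynôme sur $a_0$ majoré en module par $2ce^{-\epsilon_0\|T\|}$ sur ce cône ; comme celui-ci contient un ouvert non vide et qu'un polynôme non nul ne peut rester borné par une fonction tendant vers $0$ le long d'un rayon contenu dans un tel ouvert, la différence est identiquement nulle. Cela donnerait $J_{[Z]}^{T}(f)=J_{[Z]}^{T}(\widetilde f)$ pour tout $T$, d'où la proposition.

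\textbf{Principale difficulté.} L'argument est court ; le soin portera surtout sur le maniement des transformées de Fourier partielles et la compatibilité des mesures : vérifier que les mesures sur $\g(\A_F)$, $\mathfrak{z}(\A_F)$ et $\g_{\ad}(\A_F)$ sont toutes auto-duales et compatibles à la décomposition orthogonale $\g=\mathfrak{z}\oplus\g_{\ad}$, et que $\Ad x$ préserve simultanément $\langle-,-\rangle$ et la mesure de $\g_{\ad}(\A_F)$, afin que le changement de variables sous l'intégrale de Fourier et la formule sommatoire de Poisson soient légitimes. Une variante serait de plonger $fg_{Z}$ dans la formule des traces (théorème \ref{thm:FTLiealg}), de remarquer que $\widehat{fg_{Z}}$ est la transformée de Fourier de $\widetilde f\,g_{Z}$ dans la seule direction centrale, puis d'invoquer la formule sommatoire de Poisson dans cette direction ; mais la comparaison géométrique directe ci-dessus me paraît la plus économique.
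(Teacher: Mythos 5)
Your proof is correct and follows the same route as the paper's: the paper's own argument (which is just three lines) consists precisely of the Poisson summation identity in the $\g_{\ad}$-direction plus an invocation of the two preceding lemmas (the approximation lemma and the polynomiality lemma), which is exactly your Étape 1 and Étape 2. You have merely spelled out the Fourier-transform computation, the measure compatibilities, and the polynomial-bounded-by-a-decaying-exponential argument that the paper leaves implicit.
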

\begin{proof}
Puisque $\g(\A_F)=\mathfrak{z}(\A_F)\oplus \g_{\ad}(\A_F)$ est orthonogale, la formule sommatoire de Poisson donne
\[\sum_{X \in Z\oplus \g_{\ad}(F)}f\left((\Ad  x^{-1})X\right)=\sum_{X \in Z\oplus \g_{\ad}(F)}\widetilde{f}\left((\Ad  x^{-1})X\right),\,\,\,\,\forall x\in G(\A_F).\]
Ceci et les lemmes précédents achèvent la preuve.
\end{proof}

\section{Comparaison d'un groupe du type GL avec son algèbre de Lie}\label{sec:formuledestracesIII}

\subsection{Normalisations des mesures de la section \ref{sec:formuledestracesIII}}
Pour tout $P\in\P^G(M_0)$, on choisit sur $N_P(\A_F)$ (resp. $\n_P(\A_F)$) l'unique mesure de telle sorte que $\vol(N_P(F)\backslash N_P(\A_F))=1$ (resp. $\vol(\n_P(F)\backslash \n_P(\A_F))=1$) avec $N_P(F)$ (resp. $\n_P(F)$) muni de la mesure de comptage. La mesure sur aucun autre groupe ne revêt d'importance dans cette section.

\subsection{Comparaison d'un groupe du type GL avec son algèbre de Lie}\label{subsec:comparaisonTFgroupLiealg}

Comme par ailleurs nous ferons usage de la formule des traces non-invariante pour les groupes, il est primordial d'établir le passage entre la formule des traces pour un groupe du type GL et celle pour son algèbre de Lie. Pour cela, rappelons avant tout la formule des traces pour les groupes. On note $\O^G$ l’ensemble des classes de $G(F)$-conjugaison de $G(F)$. Pour $\o \in \O^G$, $f$ une fonction test sur $G(\A_F)^1$ et $P$ un sous-groupe parabolique standard, on définit
\[K_{P,\o}^G(x,f)=\sum_{\gamma\in M_P(F):\Ind_{M_P}^G(\gamma)=\o}\int_{N_P(\A_F)}f((\Ad x^{-1})(\gamma n))\,dn,\]
et
\[K_\o^{G,T}(x,f)=\sum_{P:P_0\subseteq P}(-1)^{\dim a_P^G}\sum_{\delta\in P(F)\backslash G(F)}\widehat{\tau}_P^G(H_0(\delta x)-T)K_{P,\o}^G(\delta x,f)\]
pour $T\in a_0$ assez régulier. Finalement,
\[J_\o^{G,T}(f)=\int_{G(F)\backslash G(\A_F)^1} K_\o^{G,T}(x,f)\,dx\]
et 
\begin{equation}\label{eqdef:cotegeomTFGrpconto}
J_\o^{G}(f)=J_\o^{G,T_0}(f). 
\end{equation}
Ces distributions sont les ingrédients du coté géométrique.

La flèche $G(F)\hookrightarrow\g(F)$ induit une inclusion $\O^G\subseteq\O^\g$ parce qu'elle respecte les adjonctions par $G(F)$.

Soit $M\in \L^G$. On a défini l'induction $\Ind_M^G:\O^{\m}\to \O^{\g}$ dans le numéro \ref{subsubsec:orbind}. On voit, par le théorème \ref{thm:inddes}, qu'elle donne une application $\Ind_M^G:\O^{M}\to \O^{G}$ par restriction. Or pour tout $\gamma\in M(F)$ et $P\in \P^G(M)$, $(
\Ad M)\gamma+\mathfrak{n}_P=((\Ad M)\gamma )N_P$,  l'induite $\Ind_M^G(\gamma)$ est donc l’unique orbite dans $G$ pour l’action de conjugaison de $G$ telle que pour tout $P\in \P^G(M)$ l’intersection
\begin{equation*}
\Ind_M^G(\gamma)\cap \left((\Ad M)\gamma \right) N_{P}    
\end{equation*}
soit un ouvert Zariski dense dans $((\Ad M)\gamma )N_{P}$.

\begin{proposition}\label{prop:Lie=Grp} Soit $\o\in \O^G$, alors 
\[J_{\o}^{\g}(f)=J_{\o}^{G}(f)\]
si l'un des deux converge absolument.
\end{proposition}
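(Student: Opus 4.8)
The plan is to compare the two truncated kernels term by term and show that they are literally equal, after which the statement follows by integrating and specializing the truncation parameter. Fix a standard parabolic $P$ with Levi factor $M_P$. In the Lie-algebra kernel $K_{P,\o}^\g(x,f)$ the sum runs over $X\in\m_P(F)$ with $\Ind_{M_P}^G(X)=\o$, while in the group kernel $K_{P,\o}^G(x,f)$ it runs over $\gamma\in M_P(F)$ with $\Ind_{M_P}^G(\gamma)=\o$. The first thing I would check is that these two index sets coincide. The inclusion $M_P(F)\subseteq\m_P(F)$ together with the fact recalled in \S\ref{subsec:comparaisonTFgroupLiealg} (via Theorem~\ref{thm:inddes}) that the group induction $\Ind_{M_P}^G\colon\O^{M_P}\to\O^{G}$ is the restriction of the Lie-algebra induction gives one inclusion. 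For the reverse, suppose $X\in\m_P(F)$ satisfies $\Ind_{M_P}^G(X)=\o$. Since $\o\in\O^G$, every element of $\o$ is invertible, and by Proposition~\ref{prop:indprop}(2) a representative of $\Ind_{M_P}^G(X)$ has semisimple part $G(\overline F)$-conjugate to $X_\ss$; hence $X_\ss$ is invertible. Writing $X=X_\ss(\text{Id}+X_\ss^{-1}X_\nilp)$, where $X_\ss^{-1}X_\nilp$ is nilpotent (it commutes with $X_\ss$), the second factor is unipotent, so $X$ is invertible, i.e. $X\in M_P(F)$. Thus $\{X\in\m_P(F):\Ind_{M_P}^G(X)=\o\}=\{\gamma\in M_P(F):\Ind_{M_P}^G(\gamma)=\o\}$.

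Next I would match the inner integrals, using that the $\Ad$-action of $G(\A_F)$ on $G(\A_F)$ and on $\g(\A_F)$ agree under the canonical equivariant inclusion (Proposition~\ref{pro:bontype}(5)), and that the measure on $N_P(\A_F)$ is transported from $\n_P(\A_F)$ via $N\mapsto\text{Id}+N$. For $\gamma\in M_P(F)$ one has $\gamma(\text{Id}+N)=\gamma+\gamma N$ inside $\g$, so
\[
\int_{N_P(\A_F)}f\big((\Ad x^{-1})(\gamma n)\big)\,dn=\int_{\n_P(\A_F)}f\big((\Ad x^{-1})(\gamma+\gamma N)\big)\,dN .
\]
The change of variable $U=\gamma N$ is a linear automorphism of $\n_P(\A_F)$ (as $\gamma\in M_P$ normalizes $\n_P$ and is invertible) whose Jacobian is the adelic absolute value of $\det(L_\gamma;\n_P)$, $L_\gamma$ denoting left multiplication by $\gamma$. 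Since $\gamma\in M_P(F)$, this determinant lies in $F^\times$, so the Jacobian equals $1$ by the product formula. Hence the integral equals $\int_{\n_P(\A_F)}f((\Ad x^{-1})(\gamma+U))\,dU$, and combining with the previous step, $K_{P,\o}^G(x,f)=K_{P,\o}^\g(x,f)$ for all $x$.

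Finally, since $K_\o^{G,T}$ and $K_\o^{\g,T}$ are obtained from $K_{P,\o}^G$ and $K_{P,\o}^\g$ by the same alternating sum involving the same functions $\widehat\tau_P^G$, they are equal; integrating over $G(F)\backslash G(\A_F)^1$ gives $J_\o^{G,T}(f)=J_\o^{\g,T}(f)$ whenever one of the two integrals converges absolutely (both are then the absolutely convergent integral of one and the same function), and specializing $T=T_0$ yields $J_\o^{\g}(f)=J_\o^{G}(f)$. By Theorem~\ref{thm:classconjdisJ}(2) the Lie-algebra side in fact converges absolutely for $f\in\S(\g(\A_F))$, so in that case the hypothesis is automatic. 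I do not expect a serious obstacle: the only points requiring care are the reduction of the Lie-algebra index set to invertible elements — which is precisely where the hypothesis $\o\in\O^G$ (rather than merely $\o\in\O^\g$) enters — and the verification, via the product formula, that passing from the multiplicative to the additive parametrization of $N_P$ introduces no Jacobian.
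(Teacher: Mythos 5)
Your proof is correct and follows essentially the same route as the paper's: both reduce to the pointwise kernel identity $K_{P,\o}^\g(x,f)=K_{P,\o}^G(x,f)$ by matching the index sets over $\m_P(F)$ and $M_P(F)$ and by showing that the adelic Jacobian of $U\mapsto\gamma U$ on $\n_P(\A_F)$ is trivial via the product formula. The only differences are cosmetic: you compute the Jacobian directly as $|\det(L_\gamma;\n_P)|_{\A_F}$ whereas the paper writes it out through an explicit block d\'evissage of $N_P$ in terms of characters $\chi_i$, and you spell out the invertibility argument (via $X=X_\ss(\mathrm{Id}+X_\ss^{-1}X_\nilp)$) for the reverse index-set inclusion, which the paper merely asserts as clear.
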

\begin{proof}
Nous prouverons en fait 
\[K_{P,\o}^{\g}(x,f)=K_{P,\o}^{G}(x,f)\]
pour tous $x$ et $P$. Il est clair que $\{X\in\m_P(F):\Ind_{M_P}^G(X)=\o\}=\{\gamma\in M_P(F):\Ind_{M_P}^G(\gamma)=\o\}$. Quitte à conjuguer $f$ on est conduit à démontrer 
\begin{equation}\label{eq:unipotentdevissage}
\int_{\mathfrak{n}_P(\A_F)}f(\gamma+U)\,dU=\int_{N_P(\A_F)}f(\gamma n)\,dn,\,\,\,\,\forall \gamma\in M_P(F).    
\end{equation}
Pour ce faire, dévissons $N_P$ : on peut supposer que $G=\Res_{E/F}\GL_{m,D}$ et après une conjugaison $P$ est le sous-groupe parabolique associé à la partition $(m_1,\dots,m_k)$ de $m$, formé par des matrices triangulaires supérieures par blocs. Posons (la matrice est nulle partout  en dehors des coefficients indiqués)
\[N_i=\begin{bmatrix}
\text{Id}_{m_1}  &   &         &         &  &\\
  & \ddots   &    &        &     &\\
       &  &  \text{Id}_{m_i} &  \Res_{E/F}\Mat_{m_i\times m_{i+1}}   & \cdots &\Res_{E/F}\Mat_{m_i\times m_{k}}\\
            &         &  & \text{Id}_{m_{i+1}} &   & \\
            &         &  &  & \ddots  & \\
     &         &         &  & & \text{Id}_{m_k}
\end{bmatrix}\]
pour $i=1,\dots,k-1$. Alors $N_i$ s'identifie naturellement avec son algèbre de Lie $\n_i$. On a, selon nos choix de mesures,
\[dn=dn_{k-1}\cdots dn_1=\frac{\vol(N_P(F)\backslash N_P(\A_F))}{\vol(\n_P(F)\backslash \n_P(\A_F))}dU_{k-1}\cdots dU_{1}=dU\]
où $dn_i$ (resp. $dU_i$) est une mesure sur $\frac{N_{k-1}(\A_F)\cdots N_{i}(\A_F)}{N_{k-1}(\A_F)\cdots N_{i+1}(\A_F)}\simeq N_i(\A_F)$ (resp. $\frac{\n_{k-1}(\A_F)\oplus\cdots \oplus \n_{i}(\A_F)}{\n_{k-1}(\A_F)\oplus\cdots \oplus \n_{i+1}(\A_F)}\simeq\mathfrak{n}_i(\A_F)$), $dn$ est la mesure sur $N_P(\A_F)$ et $dU$ est une mesure sur $\mathfrak{n}_P(\A_F)$. Posons les caractères pour $i\in\{1,\dots,k\}$
\[\chi_i:(M_j)_{j\in\{1,\dots,k\}}\in M_P=\prod_{j\in\{1,\dots,k\}} \Res_{E/F}\GL_{n_j,D}\longmapsto N_{E/F}\circ\nu_{\Mat_{m_i,D}}(M_i)\]
avec $N_{E/F}$ la norme relative à l'extension $E/F$ et $\nu$ la norme réduite d'une algèbre simple centrale. 

Revenons sur la démonstration. Le morphisme de variétés  $U\in \mathfrak{n}_P\mapsto \text{Id}+U\in N_P$ étant un isomorphisme de Jacobien 1, elle nous fournit
\[\int_{N_P(\A_F)}f(\gamma n)\,dn=\int_{\mathfrak{n}_P(\A_F)}f(\gamma (1+U'))\,dU'=\text{Jac}\cdot \int_{\mathfrak{n}_P(\A_F)}f(\gamma+U)\,dU\]
avec $\text{Jac}=\prod_{i=1}^{k-1}|\chi_i(\gamma)|_{\A_F}^{(m_{i+1}+\cdots +m_{k})\sqrt{\dim_ED}}=1$, ceci prouve donc l'équation \ref{eq:unipotentdevissage} et la preuve est achevée.
\end{proof}

En sommant sur les classes $\o\in\O^G$ on obtient le côté géométrique de la formule des traces non-invariante
\begin{equation}\label{eqdef:cotegeomFTGrp}
J_{\text{géom}}^G(f)=\sum_{\o\in\O^G} J_{\o}^G(f).    
\end{equation}

\begin{corollary}
Le côté géométrique de la formule des traces non-invariante pour le groupe $G$ est une fonctionnelle continue sur $\S(\g(\A_F))$ (restreint sur $G(\A_F)^1$).     
\end{corollary}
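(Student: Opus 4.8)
The plan is to realize $J_{\text{géom}}^G$ as a subsum of the (absolutely convergent) full geometric sum attached to $\g$, and then to reproduce, for that subsum, the boundedness argument of Corollary \ref{coro:TFisadelictempereddist}. First I would record that, by Theorem \ref{thm:classconjdisJ}(2), for every $f\in\S(\g(\A_F))$ the quantity $\sum_{\o\in\O^\g}\int_{G(F)\backslash G(\A_F)^1}|K_\o^{\g,T_0}(x,f)|\,dx$ is finite; in particular, for each $\o\in\O^G\subseteq\O^\g$ the distribution $J_\o^\g(f)=J_\o^{\g,T_0}(f)$ is given by an absolutely convergent integral, so Proposition \ref{prop:Lie=Grp} applies and gives $J_\o^G(f)=J_\o^\g(f)$. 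Summing over $\o\in\O^G$ --- legitimate because the larger sum over $\O^\g$ converges absolutely --- yields
\[J_{\text{géom}}^G(f)=\sum_{\o\in\O^G}J_\o^G(f)=\sum_{\o\in\O^G}J_\o^\g(f),\qquad |J_{\text{géom}}^G(f)|\le\sum_{\o\in\O^\g}|J_\o^\g(f)|.\]
It therefore suffices to show that $f\mapsto\sum_{\o\in\O^\g}|J_\o^\g(f)|$ is bounded on bounded subsets of $\S(\g(\A_F))$, since then $J_{\text{géom}}^G$ is a linear functional bounded on bounded sets, hence continuous exactly as in Corollary \ref{coro:TFisadelictempereddist} (using that $\S(\g(\A_F))$ is un espace de Montel).

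For the boundedness, fix $\mathcal{K}$ a bounded --- equivalently, by Proposition \ref{prop:topopropertiesS-Bspace}, relatively compact --- subset of $\S(\g(\A_F))$. For $T$ assez régulier depending on $\mathcal{K}$, Theorem \ref{thm:classconjdisJ}(1) gives, for all $f\in\mathcal{K}$,
\[\sum_{\o\in\O^\g}\int_{G(F)\backslash G(\A_F)^1}\left|F^G(x,T)\sum_{X\in\o(F)}f((\Ad x^{-1})X)-K_\o^{\g,T}(x,f)\right|\,dx\le c_{\mathcal{K}}e^{-d(T)},\]
while $\sum_{\o\in\O^\g}\int F^G(x,T)\sum_{X\in\o(F)}|f((\Ad x^{-1})X)|\,dx=\int_{G(F)\backslash G(\A_F)^1}F^G(x,T)\sum_{X\in\g(F)}|f((\Ad x^{-1})X)|\,dx$, which is bounded uniformly over $f\in\mathcal{K}$ because $F^G(\cdot,T)$ is the characteristic function of a compact subset of $G(F)\backslash G(\A_F)^1$, the maps $f\mapsto(\Ad x)f$ form an equicontinuous family of operators on $\S(\g(\A_F))$ as $x$ ranges over that compact set, and $g\mapsto\sum_{X\in\g(F)}|g(X)|$ is bounded on bounded subsets of $\S(\g(\A_F))$. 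Hence $\sup_{f\in\mathcal{K}}\sum_{\o\in\O^\g}\int|K_\o^{\g,T}(x,f)|\,dx<\infty$, so a fortiori $\sup_{f\in\mathcal{K}}\sum_{\o\in\O^\g}|J_\o^{\g,T}(f)|<\infty$, for $T$ assez régulier. To pass to $T_0$ I would use, verbatim as in the proof of Corollary \ref{coro:TFisadelictempereddist}, that $J_\o^{\g,T_0}$ is a fixed $\C$-linear combination --- with coefficients independent of $\o$ --- of the distributions $J_\o^{\g,T+kE_j}$ with $T$ assez régulier, which transports the uniform bound to $\sum_{\o\in\O^\g}|J_\o^{\g,T_0}(f)|=\sum_{\o\in\O^\g}|J_\o^\g(f)|$.

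Putting the two steps together gives $\sup_{f\in\mathcal{K}}|J_{\text{géom}}^G(f)|<\infty$ for every bounded $\mathcal{K}$, whence the continuity of $J_{\text{géom}}^G$ on $\S(\g(\A_F))$ (restricted to $G(\A_F)^1$). The point I expect to carry the real content --- and to be the only genuine obstacle --- is that continuity here is not formal: a pointwise-convergent subsum of a continuous distribution need not be continuous, so one truly needs a uniform-in-$\o$ estimate, which is exactly what Theorem \ref{thm:classconjdisJ} supplies; the auxiliary lattice-sum bound $\sup_{f\in\mathcal{K}}\int F^G(x,T)\sum_{X\in\g(F)}|f((\Ad x^{-1})X)|\,dx<\infty$ is standard and I would only sketch it.
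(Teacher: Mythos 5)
Your proof is correct and follows the approach the paper implicitly relies on: identify $J_{\text{géom}}^G=\sum_{\o\in\O^G}J_\o^\g$ via Proposition \ref{prop:Lie=Grp}, then transfer the uniform-on-bounded-sets estimate from Theorem \ref{thm:classconjdisJ}(1)--(2) (the same input as in the proof of Corollary \ref{coro:TFisadelictempereddist}) to this subsum, concluding by the Montel property. You are also right to flag explicitly that a subsum of a continuous distribution is not automatically continuous and that the uniform control over $\o\in\O^\g$ is the real content; that observation is exactly what makes the corollary nontrivial.
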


\begin{remark}
Dans \cite{FiLa16}, la décomposition du côté géométrique de la formule des traces non-invariante selon une certaine relation d'équivalence est obtenue pour un autre espace de fonctions tests, leur résultat s'applique à tous les groupes réductifs. Dans le cas d'un groupe du type GL, leur relation d'équivalence correspond à la $G(F)$-conjugaison considérée ici.
\end{remark}

\section{Compatibilité locale-globale}\label{sec:compatibilitélocal-global}

Le développement fin du côté géométrique de la formule des traces est achevé, il nous incombe de démontrer et d'exiger encore des compatibilités locales-globales pour réduire les analyses globales aux analyses locales. Tel est le dessein de la présente section.

\subsection{Compatibilité des objets à l'égard du transfert}
\subsubsection{Sous-groupes semi-standard}
Soient $G$ un groupe du type GL sur $F$, $G^\ast$ sa forme intérieure quasi-déployée,  et $\eta \in H^1(F,G_\ad^\ast)$ la classe définissant ce torseur intérieur. Pour tout $v\in\V_F$, on choisit une injection $\overline{F}\hookrightarrow\overline{F_v}$, il y a un torseur intérieur $\eta_v\in H^1(F_v,G_{v,\ad}^\ast)$ défini par extension des scalaires de $\eta$. Le diagramme
\[
    \begin{tikzcd}
    \g \arrow{r}{\eta}\arrow{d}{}& \g^\ast \arrow{d}{}\\
    \g_v \arrow{r}{\eta_v} & \g_v^\ast
    \end{tikzcd}
    \]
commute et est compatible avec des conjugaisons.

Prenons $M_0$, resp. $M_{0^\ast}$, le produit des sous-groupes diagonaux comme le sous-groupe de Lévi minimal de $G$, resp. de $G^\ast$. Ils sont bien définis sur $F$. Quitte à conjuguer dans $G^*(F)$ nous allons supposer que $\eta(M_0)$ contient $M_{0^\ast}$, et que la restriction de $\eta$ sur $A_{M_0}$ et définie sur $F$. Pour tout $v\in\V$, fixons ensuite $M_{v,0}$ un sous-groupe de Lévi minimal de $G_v$, contenu dans $M_{0,v}\eqdef M_0\times_F F_v$, et défini sur $F_v$ tel que $\eta(M_{v,0})$ contient $M_{v,0^\ast}=M_{0^\ast,v}$ le produit des sous-groupes diagonaux qui est le sous-groupe de Lévi de $G_v^\ast$, et que la restriction de $\eta$ sur $A_{M_{v,0}}$ et définie sur $F_v$. On sait de même que $\eta(
\mathfrak{z})=\mathfrak{z}^\ast$, $\eta(\mathfrak{z}(F))=\mathfrak{z}^\ast(F)$ et $\eta(\mathfrak{z}(F_v))=\mathfrak{z}^\ast(F_v)$ car la restriction de $\eta$ à $\mathfrak{z}_{\overline{F}}$ est definie sur $F$.

\subsubsection{Formule de scindage}
Soit $S$ un sous-ensemble non-vide de $\V_F$. Le symbole $v$ désigne ici un élément général de $S$. On choisit un produit scalaire $W^G_{M_0}$-invariant (resp. $W^{G_v}_{M_{v,0}}$-invariant) sur $a_{M_0}$ (resp. $a_{M_v,0}$). On prend sur tout sous-espace de $a_{M_0}$ (resp. $a_{M_v,0}$) la mesure engrendré par ce produit scalaire. On aimerait préciser qu'a priori, la mesure sur $a_M$ n'a pas de lien avec celle sur $a_{M_v}$ pour $M\in \L^G$.

Soit $M\in \L^G$. Notons $M_S=\prod_{v\in S}M_v$ et le voit comme sous-groupe de Levi de $G_S=\prod_{v\in S}G_v$ défini sur $F_S$. On note de plus $\L^{G_S}(M_S)$ (resp. $\P^{G_S}(M_S)$ ; $\F^{G_S}(M_S)$) l'ensemble des produits $\prod_{v\in S}L_v$ (resp. $\prod_{v\in S}P_v$ ; $\prod_{v\in S}Q_v$) où $L_v$ est un sous-groupe de Levi défini sur $F_v$ contenant $M_v$ (resp. $P_v$ est un sous-groupe parabolique défini sur $F_v$ et de facteur de Levi $M_v$ ; $Q_v$ est un sous-groupe parabolique défini sur $F_v$ contenant $M_v$), par une récurrence on montre l'existence des applications, en moyennant certains choix (des détails sont dans \cite[section 9]{Art88I}) 
\begin{align*}
    d_M^G :\L^{G_S}(M_S)&\rightarrow [0,+\infty[ \\
    s:\L^{G_S}(M_S)&\rightarrow \F^{G_S}(M_S)
\end{align*}
de sorte que, pour tout $(L_v)_{v\in S}\in \L^{G_S}(M_S)$, on ait
\begin{enumerate}
    \item si $s\left((L_v)_{v\in S}\right)=\left((Q_v)_{v\in S}\right)$ alors $(Q_v)_{v\in S}\in \P^{G_S}(L_S)$ ;
    \item $d_M^G\left((L_v)_{v\in S}\right)\not =0$  si et seulement si l'une des flèches naturelles
    \[\bigoplus_{v\in S}a_{M_v}^{L_v}\longrightarrow a_M^G\]
    et
    \[\bigoplus_{v\in S}a_{L_v}^{G_v}\longrightarrow a_M^G\]
    est un isomorphisme, auquel cas les deux sont isomorphismes et $d_M^G\left((L_v)_{v\in S}\right)$ est le volume dans $a_M^G$ du parallélotope formé par les bases orthonormées des $a_{M_v}^{L_v}$ ;
    \item (Formule de scindage) supposons que nous disposons, pour tout $v\in S$, de $(c_{P_v})_{P_v\in\P^{G_v}(M_v)}$ une $(G_v,M_v)$-famille sur $F_v$. Définissons $c_P=\prod_{v\in S}c_{P_v}$. Alors $(c_P)_{P\in \P^G(M)}$ est une $(G,M)$-famille, et
    \begin{equation}\label{YDLgeomeq:formuledescindage}
    c_{M}=\sum_{(L_v)_{v\in S}\in \L^{G_S}(M_S)}d_M^G\left((L_v)_{v\in S}\right)\prod_{v\in S} c_{M_v}^{Q_v}.    
    \end{equation}
\end{enumerate}

On a des bijections naturelles entre $a_M$ et $a_{M^\ast}$, et pour tout $v\in S$ entre $a_{M_v}$ et $a_{M_v^\ast}$, de même entre $a_M^\ast$ et $a_{M^\ast}^\ast$ etc. Ces espaces vectoriels sont des espaces euclidiens et on supposera — quitte à modifier les produits scalaires — que les bijections sont isométriques. On a l'égalité suivante pour tout $(L_v)_{v\in S}\in \L^{G_S}(M_S)$ :
\[d_M^G((L_v)_{v\in S})=d_{M^\ast}^{G^\ast}((L_v^\ast)_{v\in S}).\]
Comme il est loisible, on choisit les sections $L_v\mapsto Q_{L_v}$ et $L_v'\mapsto Q_{L_v'}$ de sorte que
\[(Q_{L_v})_{v\in S}^\ast=(Q_{L_v^\ast})_{v\in S}.\]

\subsection{Recollement local-global}

\begin{proposition}\label{prop:local-globalprincipalLevi}Soient $M'$ un sous-groupe de Levi de $G^\ast$ et $P'$ un sous-groupe parabolique contenant $M'$. Alors $(M',P')$ se transfère à $G$ si et seulement si $(M_v',P_v')$ se transfère à $G_v$ pour tout $v\in \V$. 
\end{proposition}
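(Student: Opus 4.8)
The plan is to reformulate the statement in terms of central simple algebras, where it becomes an instance of the Brauer--Hasse--Noether theorem. By Proposition \ref{prop:transfertdef}, applied over $F$ and over each completion $F_v$ (the proposition is stated for $F$ global or local), $(M',P')$ se transfère à $G$ if and only if $\eta\in H^1(F,G_{\ad}^\ast)$ lies in the image of $H^1(F,M_{\AD}')\to H^1(F,G_{\ad}^\ast)$, and $(M_v',P_v')$ se transfère à $G_v$ if and only if $\eta_v\in H^1(F_v,G_{v,\ad}^\ast)$ lies in the image of $H^1(F_v,M_{v,\AD}')\to H^1(F_v,G_{v,\ad}^\ast)$. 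The ``only if'' direction is then immediate: localization $H^1(F,-)\to H^1(F_v,-)$ is functorial and carries $\eta$ to $\eta_v$, so a global lift of $\eta$ to $M_{\AD}'$ restricts to local lifts everywhere. The whole content is the converse.

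Next I would reduce to a single factor. Writing $G=\prod_j\Res_{E_j/F}A_j^\times$ with $E_j/F$ finite and $A_j$ central simple over $E_j$ of degree $n_j$, one has $G^\ast=\prod_j\Res_{E_j/F}\GL_{n_j,E_j}$; a Levi $M'\subseteq G^\ast$ together with $P'\supseteq M'$ decomposes as a product of such data on the factors, and both notions of transfer in the statement are factorwise, so we may assume $G=\Res_{E/F}A^\times$ with $A$ central simple over $E$ of degree $n$. Up to $G^\ast(F)$-conjugacy — which does not change the image in $H^1(F,G_{\ad}^\ast)$ — we have $M'=\Res_{E/F}(L_{\mathbf a})$ for the standard Levi $L_{\mathbf a}\subseteq\GL_{n,E}$ attached to a partition $\mathbf a=(a_1,\dots,a_k)$ of $n$. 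Identifying $H^1(F,G_{\ad}^\ast)=H^1(E,\mathrm{PGL}_n)\hookrightarrow\mathrm{Br}(E)$ and $H^1(F,M_{\AD}')=H^1\!\big(E,(\prod_i\GL_{a_i})/\mathbb{G}_m\big)$ via Shapiro, a short diagram chase with Hilbert 90 and the connecting maps to $\mathrm{Br}(E)$ identifies the image of the Levi map with the set of Brauer classes $\beta$ such that $\operatorname{ind}_E(\beta)\mid a_i$ for every $i$ — concretely, $\beta$ is realised by the tuple $(\mathrm{Mat}_{a_i/\operatorname{ind}(\beta)}(D))_i$ with $[D]=\beta$. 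Since $\eta$ corresponds to $[A]\in\mathrm{Br}(E)$, this gives
\[
(M',P')\text{ se transfère à }G\iff\operatorname{ind}_E(A)\mid d_{\mathbf a},\qquad d_{\mathbf a}:=\gcd_i a_i .
\]
Base-changing to $F_v$ one has $G_v=\prod_{w\mid v}\Res_{E_w/F_v}(A\otimes_E E_w)^\times$ (product over the places $w$ of $E$ above $v$), with $M_v'$ again the partition-$\mathbf a$ Levi on each factor, so the same computation over each $E_w$ yields that $(M_v',P_v')$ se transfère à $G_v$ for all $v$ if and only if $\operatorname{ind}_{E_w}(A\otimes_E E_w)\mid d_{\mathbf a}$ for every place $w$ of $E$.

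It then remains to check that $\operatorname{ind}_E(A)\mid d_{\mathbf a}$ if and only if $\operatorname{ind}_{E_w}(A_w)\mid d_{\mathbf a}$ for every place $w$ of $E$, where $A_w:=A\otimes_E E_w$. One implication holds because the index can only drop under scalar extension, so $\operatorname{ind}_{E_w}(A_w)\mid\operatorname{ind}_E(A)$. For the other, the Brauer--Hasse--Noether theorem over the global field $E$ gives $\operatorname{ind}_E(A)=\operatorname{lcm}_w\operatorname{ind}_{E_w}(A_w)$ (equivalently: a Brauer class over a global field is trivial iff it is everywhere locally trivial, together with period $=$ index locally and globally); hence if every $\operatorname{ind}_{E_w}(A_w)$ divides $d_{\mathbf a}$ then so does their least common multiple $\operatorname{ind}_E(A)$. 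This proves the proposition.

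I do not expect a genuine obstacle here: the ``if'' direction is precisely the classical local--global principle for the Schur index, and the ``only if'' direction reflects the stability of transfer under base change. One could instead carry out the same argument entirely inside Borovoi's abelianized Galois cohomology — using that $\operatorname{ab}^1$ is bijective at the finite places and the functoriality of the Poitou--Tate exact sequences — but this requires extra care at the archimedean places and ultimately rests on the same Brauer-theoretic input, so the computation above is the more economical route.
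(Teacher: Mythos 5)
Your proof is correct, and it takes a genuinely different, more concrete route than the paper's. The paper proves the converse direction by a diagram chase in Galois cohomology: it invokes Kottwitz's description of the image of the localization map $H^1(F,H)\to\bigoplus_v H^1(F_v,H)$ for a connected reductive group $H$ as the kernel of the further map to $\pi_0(Z(\widehat{H})^\Gamma)^D$, the injectivity of the dual restriction $\pi_0(Z(\widehat{G_\ad^\ast})^\Gamma)^D\to\pi_0(Z(\widehat{M_{\AD}'})^\Gamma)^D$ (which follows from the surjectivity in Arthur's lemma 1.1, cited there), and the Hasse principle for the adjoint semisimple group $G_\ad^\ast$. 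You instead unwind Proposition \ref{prop:transfertdef} and Shapiro's lemma, describe the image of $H^1(E,(\prod_i\GL_{a_i})/\mathbb{G}_m)$ in $H^1(E,\mathrm{PGL}_n)$ explicitly in terms of divisibility of Schur indices, and reduce everything to the classical lcm formula $\operatorname{ind}_E(A)=\operatorname{lcm}_w\operatorname{ind}_{E_w}(A\otimes_E E_w)$ over the global field $E$, i.e.\ to the Brauer--Hasse--Noether theorem together with period-equals-index. Both arguments rest on the same class-field-theoretic substance, but yours is elementary and self-contained for groups of $\GL$-type, whereas the paper's abstract formulation transports almost verbatim to Proposition \ref{prop:local-globalprincipalelement}, with $M_{\AD}'$ replaced by the Levi factor of the centralizer $G_{X',\AD}^\ast$ (where the needed surjectivity of the $\pi_0$-map then has to be verified by a hands-on computation). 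It is worth noting that the Brauer-theoretic form of your argument does appear in the paper for the analogous statement about conjugacy classes, as Proposition \ref{prop:appendixprincipeloc-glotrans} in Appendix A, whose proof uses precisely the $\operatorname{lcm}$ formula for local indices.
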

\begin{proof}

On dispose d'un diagramme commutatif suivant (cf. \cite[section 2]{Kott86}) :
\[\begin{tikzcd}
	{H^1(F,M_{\AD}')} & { \bigoplus_{v\in\V_F}H^1(F_v,M_{\AD,v}')} & {\bigoplus_{v\in\V_F}\pi_0(Z(\widehat{M_\AD')}^{\Gamma_v})^D} & {\pi_0(Z(\widehat{M_\AD')}^{\Gamma})^D} \\
	\\
	{H^1(F,G_\ad^\ast)} & {\bigoplus_{v\in\V_F}H^1(F_v,G_{\ad,v}^\ast)} & {\bigoplus_{v\in\V_F}\pi_0(Z(\widehat{G_\ad^\ast)}^{\Gamma_v})^D} & {\pi_0(Z(\widehat{G_\ad^\ast)}^{\Gamma})^D}
	\arrow[from=1-1, to=3-1]
	\arrow[from=1-2, to=3-2]
	\arrow[from=1-3, to=3-3]
	\arrow[from=1-1, to=1-2]
	\arrow[from=3-1, to=3-2]
	\arrow[from=3-2, to=3-3]
	\arrow[from=1-2, to=1-3]
	\arrow["\text{somme}", from=1-3, to=1-4]
	\arrow["\text{somme}", from=3-3, to=3-4]
	\arrow[from=1-4, to=3-4]
\end{tikzcd}\]
Dans les lignes horizontales, l’image du premier terme dans le deuxième est égal au noyau de la composée des deux dernières flèches. La dernière flèche verticale est injective car
\[\pi_0(Z(\widehat{G_\ad^\ast)}^{\Gamma})\rightarrow\pi_0(Z(\widehat{M_\AD')}^{\Gamma})\]
est surjectif (\cite[lemme 1.1]{Art99}). L'injectivité souhaitée découle ainsi de la dualité.


Selon l'hypothèse pour tout $v\in\V_F$ il existe $\eta_{M'}^v\in H^1(F_v,M_{\AD,v}')$ qui s'envoie sur $\eta_v\in H^1(F_v,G_{\ad,v}^\ast)$. L'image de $(\eta_{M'}^v)_{v\in\V_F}$ dans $\pi_0(Z(\widehat{G_\ad)}^{\Gamma})^D$ est alors l'élément trivial. Après, l'injectivité de la dernière flèche verticale entraîne que  l'image de $(\eta_{M'}^v)_{v\in\V_F}$ dans $\pi_0(Z(\widehat{M_\AD')}^{\Gamma})^D$ est déjà triviale, on en déduit que $(\eta_{M'}^v)_{v\in\V_F}$ est l'image d'un élément $\eta_{M'}\in H^1(F,M_{\AD}')$. On conclut donc la preuve en évoquant le principe de Hasse :
\[H^1(F,G_\ad^\ast)\longrightarrow \bigoplus_{v\in\V_F}H^1(F_v,G_{\ad,v}^\ast)\]
est injectif comme $G_\ad^\ast$ est semi-simple du type adjoint.
\end{proof}

On prouve dans la même veine le principe local-global pour le transfert des classes de conjugaison. Une autre preuve est fournie dans l'appendice A (proposition \ref{prop:appendixprincipeloc-glotrans}).

\begin{proposition}\label{prop:local-globalprincipalelement} Un élément $X'\in \g^\ast(F)$ se transfère si et seulement si $X_v'\in \g^\ast(F_v)$ est se transfère pour tout $v\in \V_F$. 
\end{proposition}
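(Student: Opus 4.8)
The plan is to mimic the proof of Proposition \ref{prop:local-globalprincipalLevi}, replacing the group $M_{\AD}'$ by the centralizer of (the semisimple part of) $X'$ viewed inside $G^\ast$. The key observation is that the transfer of a conjugacy class is governed by an abelianized cohomology obstruction living in $H^1(F,G_{\ad}^\ast)$, and that this obstruction is trivial precisely when the class $\eta$ lies in the image of $H^1(F,H_\AD')\to H^1(F,G_{\ad}^\ast)$ for a suitable reductive subgroup $H'$. Concretely, I would first reduce to the semisimple case: by the proposition on compatibility of transfer with the Jordan decomposition, $X'$ transfers if and only if $X_\ss'$ transfers (via $\eta$) and $X_\nilp'$ transfers (via $\eta_{X_\ss'}$), and an analogous statement holds locally; since $X_\nilp'$ lies in the quasi-split $\mathfrak{g}_{X_\ss'}^\ast$, its transferability is equivalent to the transferability of the corresponding conjugacy class in the inner form $\mathfrak{g}_{X_\ss}$, which reduces — by Proposition \ref{prop:GXtorseurint} and an induction on dimension — to handling $X'$ semisimple (indeed, by the induced-orbit descent, one may even further reduce to $X'$ semisimple and elliptic in a Levi of $G^\ast$).

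Assume then $X'=X_\ss'\in \g_\ss^\ast(F)$. One direction is immediate: if $X'$ transfers globally, then by restriction of scalars the local transfer at each $v$ is a formal consequence of the definition. For the converse, I would use the fact (Proposition \ref{prop:transfertdef}, suitably adapted to conjugacy classes, together with Chaudouard's criterion \cite[théorème 7.2]{Ch07} already invoked in the proof of Theorem \ref{prop:deftransfertfon}) that $X'$ transfers if and only if the class $\eta\in H^1(F,G_{\ad}^\ast)$ lies in the image of $H^1(F,(G_{X'}^\ast)_{\AD})\to H^1(F,G_{\ad}^\ast)$, where $(G_{X'}^\ast)_{\AD}=G_{X'}^\ast/Z(G^\ast)$, and similarly locally at each $v$ with $G_{X_v'}^\ast$ in place of $G_{X'}^\ast$. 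Since $G_{X'}^\ast$ is a group of type GL (Proposition \ref{pro:bontype}(6)), $(G_{X'}^\ast)_{\AD}$ is a semisimple adjoint group, so the Hasse principle $H^1(F,G_{\ad}^\ast)\hookrightarrow\bigoplus_v H^1(F_v,G_{\ad,v}^\ast)$ and the analogous injectivity input from \cite{Art99}, \cite{Kott86} apply verbatim.

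The core of the argument is then the commutative diagram with exact rows
\[
\begin{tikzcd}[column sep=small]
{H^1(F,(G_{X'}^\ast)_\AD)} & {\bigoplus_v H^1(F_v,(G_{X_v'}^\ast)_\AD)} & {\bigoplus_v\pi_0(Z(\widehat{(G_{X'}^\ast)_\AD})^{\Gamma_v})^D} & {\pi_0(Z(\widehat{(G_{X'}^\ast)_\AD})^{\Gamma})^D} \\
{H^1(F,G_\ad^\ast)} & {\bigoplus_v H^1(F_v,G_{\ad,v}^\ast)} & {\bigoplus_v\pi_0(Z(\widehat{G_\ad^\ast})^{\Gamma_v})^D} & {\pi_0(Z(\widehat{G_\ad^\ast})^{\Gamma})^D}
\arrow[from=1-1,to=2-1] \arrow[from=1-2,to=2-2] \arrow[from=1-3,to=2-3] \arrow[from=1-4,to=2-4]
\arrow[from=1-1,to=1-2] \arrow[from=2-1,to=2-2] \arrow[from=1-2,to=1-3] \arrow[from=2-2,to=2-3]
\arrow["\mathrm{somme}",from=1-3,to=1-4] \arrow["\mathrm{somme}",from=2-3,to=2-4]
\end{tikzcd}
\]
exactly as in the proof of Proposition \ref{prop:local-globalprincipalLevi}: in each row the image of the first term in the second is the kernel of the composite of the last two arrows, and the rightmost vertical map is injective because $\pi_0(Z(\widehat{G_\ad^\ast})^\Gamma)\to\pi_0(Z(\widehat{(G_{X'}^\ast)_\AD})^\Gamma)$ is surjective (\cite[lemme 1.1]{Art99}). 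Given local transfer, for each $v$ one has a class $\eta_{X'}^v\in H^1(F_v,(G_{X_v'}^\ast)_\AD)$ lifting $\eta_v$; chasing the diagram, the image of $(\eta_{X'}^v)_v$ in $\pi_0(Z(\widehat{G_\ad^\ast})^\Gamma)^D$ is trivial, hence by injectivity its image in $\pi_0(Z(\widehat{(G_{X'}^\ast)_\AD})^\Gamma)^D$ is trivial, so $(\eta_{X'}^v)_v$ comes from a global class $\eta_{X'}\in H^1(F,(G_{X'}^\ast)_\AD)$, which then maps to $\eta$ by the Hasse principle applied to $G_\ad^\ast$. The main obstacle is purely bookkeeping: one must check that the various local $\widehat{(G_{X_v'}^\ast)_\AD}$ and their $\Gamma_v$-actions are compatibly identified with the base change of $\widehat{(G_{X'}^\ast)_\AD}$ — which holds because $G_{X'}^\ast$ descends from $F$ — and that the cohomological criterion ``$X'$ transfers iff $\eta$ lifts to $H^1(F,(G_{X'}^\ast)_\AD)$'' is legitimately inherited from the group-level statement of \cite{Ch07} in exactly the way the proof of Theorem \ref{prop:deftransfertfon} already uses it (passing through the bijection $H^1\to H^1_{\mathrm{ab}}$, which is available since $F$ and the $F_v$ are number fields and local fields respectively).
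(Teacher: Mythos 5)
Your overall strategy — reduce to a cohomological lifting criterion in $H^1(F,(G_{X'}^\ast)_\AD)\to H^1(F,G_{\ad}^\ast)$ and run the same local-global diagram chase as for Levi subgroups, using the Hasse principle for $G_\ad^\ast$ and injectivity of the rightmost vertical map — is exactly the paper's strategy. However, there is one genuine gap, and it sits precisely at the crux.

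You justify the injectivity of $\pi_0(Z(\widehat{(G_{X'}^\ast)_\AD})^{\Gamma})^D\to\pi_0(Z(\widehat{G_\ad^\ast})^{\Gamma})^D$ by asserting that $\pi_0(Z(\widehat{G_\ad^\ast})^\Gamma)\to\pi_0(Z(\widehat{(G_{X'}^\ast)_\AD})^\Gamma)$ is surjective ``by \cite[lemme 1.1]{Art99}''. But that lemma is a statement about \emph{Levi subgroups} of $G_\ad^\ast$, and $G_{X'}^\ast$ (or its reductive Levi factor $L_{G_{X'}^\ast}$) is in general not a Levi subgroup of $G^\ast$: for $X'$ semisimple and elliptic in $\gl_n(F)$ it is a Weil restriction $\Res_{E/F}\GL_k$ along a genuine field extension $E/F$, which does not embed as a Levi of $\GL_{n,F}$. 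So Arthur's lemma simply does not apply here, and the surjectivity must be proved by hand. This is exactly what the paper does: it reduces to $G^\ast=\GL_{n,F}$, computes $L_{G_{X'}^\ast,\AD}$ explicitly from the elementary-divisor datum $\lambda'$ of $X'$ (as a product $\prod_p\prod_i\Res_{F_p/F}\GL_{n_{p,i},F_p}$), identifies the map of component groups concretely as a surjection $\Z/\bigl(\sum_{p,i}(\deg p)m_{p,i}n_{p,i}\bigr)\Z\twoheadrightarrow\Z/\operatorname{pgcd}_{p,i}\bigl((\deg p)m_{p,i}n_{p,i}\bigr)\Z$, and concludes. Your proof has no substitute for this computation, and nothing quotable produces it for free; that is the missing content.

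A secondary issue worth flagging: your preliminary reduction to $X'$ semisimple by induction on dimension is dispensable but also slightly shaky as stated — when $X_\ss'$ is central you have $G_{X_\ss'}^\ast=G^\ast$ and the dimension does not drop, so the induction has no base in that branch. The paper sidesteps this entirely by taking a Levi decomposition $G_{X'}^\ast=R_u(G_{X'}^\ast)\rtimes L_{G_{X'}^\ast}$ (which exists for arbitrary $X'$, semisimple or not, since $G_{X'}^\ast$ is connected smooth of GL type) and replacing $M'$ by $L_{G_{X'}^\ast}$ directly; the unipotent radical contributes trivially to $H^1$, so the cohomological criterion already lives on the reductive part, with no case distinction and no induction.
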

\begin{proof}
\'{E}crivons $\overline{\o'}=(\Ad G^\ast(\overline{F}))X'$. D'abord $X'\in \g^\ast(F)$ se transfère si et seulement si $\sigma(\eta^{-1}(\overline{\o'}))=\eta^{-1}(\overline{\o'})$ pour tout $\sigma\in\Gamma$, des calculs simples montrent que cela équivaut à \[\eta\in \text{Im}(H^1(F,G_{X',\AD}^\ast)\rightarrow H^1(F,G_{\ad}^\ast)).\]
Soit $G_{X'}^\ast=R_u(G_{X'}^\ast)\rtimes L_{G_{X'}^\ast}$ une décomposition de Levi de $G_{X'}^\ast$ avec $R_u(G_{X'}^\ast)$ son radical unipotent et $L_{G_{X'}^\ast}$ un facteur de Levi. On peut donc raisonner comme dans la preuve précédente en remplaçant $M'$ par $L_{G_{X'}^\ast}$. Le seul point restant est de prouver la surjectivité de 
\[\pi_0(Z(\widehat{G_\ad^\ast)}^{\Gamma})\rightarrow\pi_0(Z(\widehat{L_{G_{X'}^\ast,\AD})}^{\Gamma}).\]
Pour cela, il faut examiner de près le centralisateur $G_{X'}^\ast$. On peut supposer, par le lemme de Shapiro, que $G^\ast=\GL_{n,F}$. Supposons que $X'$ correspond à  $\lambda':\Irr_F\rightarrow\{\text{partitions des entiers}\}$ par la théorie des diviseurs élémentaires (théorème \ref{thm:Yu9}), on écrit $\lambda'(p)$ comme
\[\lambda'(p)=(m_{p,1}\dots,m_{p,1},\dots,m_{p,s_p},\dots,m_{p,s_p})\]
avec $s_p\in\mathbb{N}_{>0}$, $m_{p,1}>\cdots>m_{p,s_p}\geq 1$ et chaque $m_{p,i}$ est répété $n_{p,i}\in\mathbb{N}_{>0}$ fois dans la partition. Alors des calculs classiques (sous-section \ref{subsec:reductiondeJordanetcentralisateur}) 
nous conduisent à
\[L_{G_{X'}^\ast}=\prod_{p : \lambda(p)\not =\emptyset}\prod_{i=1}^{s_p}\Res_{F_p/F}\GL_{n_{p,i},F_p},\,\,\,\,F_p\eqdef F[T]/(p).\]
Par conséquent 
\begin{align*}
Z(\widehat{L_{G_{X'}^\ast,\AD})}^{\Gamma}&=Z\left(\left(\prod_{p : \lambda(p)\not =\emptyset}\prod_{i=1}^{s_p}\prod_{[F_p:F]}\GL_{n_{p,i}}(\C)\right)^1\right)^\Gamma \\
&= \left(\prod_{p : \lambda(p)\not =\emptyset}\prod_{i=1}^{s_p}\GL_{1}(\C)\right)^1\\
&=\{(z_{p,i})\mid z_{p,i}\in \C^\times, \prod_{p,i}z_{p,i}^{(\deg p)m_{p,i}n_{p,i}}=1\},
\end{align*}
ici $(-)^1$ désigne l'intersection du groupe entre parenthèses avec $\widehat{G_{\ad}^\ast}=\text{SL}_n(\C)$, tous les deux vus comme sous-groupes de $\widehat{G^\ast}=\GL_n(\C)$. En l'occurrence, la flèche en question 
\[\pi_0(Z(\widehat{G_\ad^\ast)}^{\Gamma})\rightarrow\pi_0(Z(\widehat{L_{G_{X'}^\ast,\AD})}^{\Gamma})\]
devient 
\[\pi_0\left(\{z\mid z\in \C^\times, \prod_{p,i}z^{(\deg p)m_{p,i}n_{p,i}}=1\}\right)\rightarrow \pi_0\left(\{(z_{p,i})\mid z_{p,i}\in \C^\times, \prod_{p,i}z_{p,i}^{(\deg p)m_{p,i}n_{p,i}}=1\}\right).\]
L'application est induite par l'immersion diagonale $z\mapsto (z)_{p,i}$. Cela est alors l'application de réduction
\begin{align*}
\Z/\left(\sum_{p,i}(\deg p)m_{p,i}n_{p,i}\right)\Z &\to \Z/\text{pgcd}_{p,i}\left((\deg p)m_{p,i}n_{p,i}\right)\Z 
\end{align*}
qui est surjective. 
La démonstration se conclut.
\end{proof}

Donnons à présent une formulation plus élémentaire de la proposition \ref{prop:local-globalprincipalLevi} : on sait qu'il existe $I$ un ensemble fini, et pour tout $i\in I$ il existe $E_i/F$ une extension finie, $m_i$ un entier strictement positif, $D_i$ une algèbre à division centrale sur $E_i$ de dimension $d_i^2$, tels que 
\[G=\prod_{i\in I}\Res_{E_i/F}\GL_{m_i,D_i}.\]
Nous avons $G^\ast=\prod_{i\in I}\Res_{E_i/F}\GL_{m_id_i,E_i}$. Puis pour tout $i\in I, v\in\V$ et $w_i$ une place de $E_i$ au-dessus de $v$, il existe $D_{i,w_i}$ une algèbre à division centrale définie sur $E_{w_i}$ de dimension $d_{i,w_i}^2$, telle que 
\[G_v=\prod_{i\in I}\prod_{w_i|v}\Res_{E_{w_i}/F_v}\GL_{m_id_i/d_{i,w_i},D_{i,w_i}}.\]
Il est connu, selon la théorie des corps de classes globaux, que $d_i$ est le plus petit commun multiple des $d_{i,w_i}$. Nous avons 
\begin{align*}
\g^\ast=\prod_{i\in I}\Res_{E_i/F}\text{Mat}_{m_id_i,E_i}&,\,\,\,\,\,\,\,\, \g_v^\ast=\prod_{i\in I}\prod_{w_i|v}\Res_{E_{w_i}/F_v}\text{Mat}_{m_id_i,E_{w_i}}   \\
\g=\prod_{i\in I}\Res_{E_i/F}\text{Mat}_{m_i,D_i}&,\,\,\,\,\,\,\,\, \g_v=\prod_{i
\in I}\prod_{w_i|v}\Res_{E_{w_i}/F_v}\text{Mat}_{m_id_i/d_{i,w_i},D_{i,w_i}}.
\end{align*} 
Il est aussi utile de rappeler que $d_{i,w_i}$ (resp. $d_i$) est l'ordre de $D_{i,w_i}$ (resp. $D_i$) dans le groupe de Brauer de $E_{w_i}$ (resp. $E_i$).

On sait que tout sous-groupe de Levi standard d'une restriction des scalaires d'un groupe général linéaire correspond à une partition du rang du groupe général linéaire.
\begin{proposition}~{}
\begin{enumerate}
    \item Un sous-groupe de Levi standard $M'$ de $G^\ast$ se transfère à $G$ si et seulement si $d_i$ divise tout entier dans sa partition correspondante pour tout $i\in I$.
    \item Soit $v\in\V_F$. Un sous-groupe de Levi standard $M_v'$ de $G_v^\ast$ se transfère à $G_v$ si et seulement si $d_{i,w_i}$ divise tout entier dans sa partition correspondante pour tout $i\in I$.
\end{enumerate}
\end{proposition}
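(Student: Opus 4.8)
Les deux énoncés ont la même structure : un sous-groupe de Levi standard $M'$ (resp. $M_v'$) de la restriction des scalaires $\Res_{E_i/F}\GL_{m_id_i,E_i}$ (resp. $\Res_{E_{w_i}/F_v}\GL_{m_id_i,E_{w_i}}$) est donné, pour chaque $i$, par une partition $(a_{i,1},\dots,a_{i,r_i})$ de $m_id_i$, et la question est de caractériser combinatoirement quand $M'$ se transfère. L'idée est de se ramener à un facteur simple à la fois, c'est-à-dire de fixer $i\in I$ et de travailler avec $H=\Res_{E_i/F}\GL_{m_i,D_i}$ de forme intérieure quasi-déployée $H^\ast=\Res_{E_i/F}\GL_{m_id_i,E_i}$, puis d'appliquer le lemme de Shapiro pour descendre de $E_i$ à $F$ et se placer sur $\GL_{m_id_i}$ avec son algèbre à division $D_i$ d'invariant d'ordre $d_i$. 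La proposition \ref{prop:transfertdef} caractérise le transfert de $(M',P')$ par le fait que la classe $\eta\in H^1(F,G_\ad^\ast)$ provient de $H^1(F,M'_{\AD})$ ; puisque $G^\ast=\prod_i \Res_{E_i/F}\GL_{m_id_i,E_i}$ est un produit, cette condition se teste facteur par facteur, donc il suffit de traiter chaque $i$ séparément.

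\textbf{Étape locale (point 2).} Fixant la place $v$ et l'indice $i$, on travaille sur $\GL_{N}$ avec $N=m_id_i$ sur le corps local $E_{w_i}$, dont la forme intérieure est $\GL_{m_id_i/d_{i,w_i},D_{i,w_i}}$. Le sous-groupe de Levi standard $M_{w_i}'$ associé à une partition $(a_1,\dots,a_r)$ de $N$ est $\prod_j \GL_{a_j}$. D'après la proposition \ref{prop:transfertdef}, $M_{w_i}'$ se transfère si et seulement si $\eta_{w_i}\in \mathrm{Im}(H^1(E_{w_i},M'_{w_i,\AD})\to H^1(E_{w_i},\mathrm{PGL}_N))$, ce qui, via l'abélianisation et la dualité de Kottwitz (qui identifie $H^1(E_{w_i},\mathrm{PGL}_N)$ à $\frac{1}{N}\Z/\Z\cong \Z/N\Z$, la classe de $D_{i,w_i}$ correspondant à l'ordre $d_{i,w_i}$), revient à demander que l'élément d'ordre $d_{i,w_i}$ de $\Z/N\Z$ soit dans l'image de l'application somme $\bigoplus_j \Z/a_j\Z\to \Z/N\Z$ induite par les inclusions $\frac{1}{a_j}\Z/\Z\hookrightarrow \frac{1}{N}\Z/\Z$. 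Un calcul de théorie des groupes abéliens montre que cette image est engendrée par $\mathrm{pgcd}_j(N/a_j)\cdot(1/N)$, autrement dit que la condition est $d_{i,w_i}\mid a_j$ pour tout $j$ (le point-clé étant que $N/a_j$ divisé par $\mathrm{pgcd}$ donne exactement le dénominateur, et que la classe de $D_{i,w_i}$ — d'ordre $d_{i,w_i}$ — est dans le sous-groupe d'indice $\mathrm{pgcd}_j(N/a_j)$ ssi $d_{i,w_i}\mid a_j$). C'est essentiellement la même manipulation arithmétique que celle effectuée dans la preuve de la proposition \ref{prop:local-globalprincipalelement} avec les groupes $\pi_0(Z(\widehat{\cdot})^\Gamma)$.

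\textbf{Étape globale (point 1) et principal obstacle.} Pour le point global on combine l'étape locale avec la proposition \ref{prop:local-globalprincipalLevi} : $M'$ se transfère à $G$ ssi $M_v'$ se transfère à $G_v$ pour toute place $v$. Par l'étape locale, cela équivaut à : pour tout $i$, pour toute place $v$ de $F$ et toute place $w_i\mid v$ de $E_i$, $d_{i,w_i}$ divise tous les entiers de la partition de la composante $i$ de $M'$. Or, comme rappelé juste avant l'énoncé, la théorie des corps de classes globaux dit que $d_i=\mathrm{ppcm}_{w_i}(d_{i,w_i})$ ; donc « $d_{i,w_i}\mid a_{i,j}$ pour tout $w_i$ » équivaut à « $d_i\mid a_{i,j}$ ». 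Cela donne exactement l'énoncé. La seule subtilité à vérifier soigneusement est que la partition de la composante $i$ de $M_v'$ est bien, pour presque toutes les places, obtenue à partir de celle de $M'$ sur $E_i$ en regroupant selon les places $w_i\mid v$ — mais comme $M'$ est standard (donc diagonal par blocs) et que $\Res_{E_i/F}$ se scinde localement en $\prod_{w_i\mid v}\Res_{E_{w_i}/F_v}$, la partition de chaque composante locale $\Res_{E_{w_i}/F_v}\GL_{N}$ est la même partition $(a_{i,1},\dots,a_{i,r_i})$ de $N=m_id_i$. Le principal point délicat est donc le calcul d'image d'application somme dans $\Z/N\Z$ de l'étape locale, c'est-à-dire l'identification précise, via la dualité de Kottwitz et le lemme 1.1 de \cite{Art99}, entre la condition cohomologique abstraite et la condition de divisibilité ; tout le reste est formel.
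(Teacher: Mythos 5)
La stratégie globale (réduction au cas local via la proposition \ref{prop:local-globalprincipalLevi}, puis usage de $d_i=\mathrm{ppcm}_{w_i}(d_{i,w_i})$) est bien choisie, et la conclusion est correcte. Mais il y a une erreur concrète dans l'étape locale. Avec $M'$ associé à la partition $(a_1,\dots,a_r)$ de $N$, le groupe $M'_{\AD}=(\prod_j\GL_{a_j})/\GL_1$ a pour dual $\widehat{M'_{\AD}}=\{(g_j)\in\prod_j\GL_{a_j}(\C):\prod_j\det g_j=1\}$, donc $Z(\widehat{M'_{\AD}})=\{(z_j):\prod_j z_j^{a_j}=1\}$ et $\pi_0\bigl(Z(\widehat{M'_{\AD}})^\Gamma\bigr)^D\cong\Z/\mathrm{pgcd}_j(a_j)\Z$, et non $\bigoplus_j\Z/a_j\Z$. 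L'image de $H^1(E_{w_i},M'_{\AD})\to H^1(E_{w_i},\mathrm{PGL}_N)\cong\Z/N\Z$ est donc le sous-groupe cyclique $\bigl(N/\mathrm{pgcd}_j(a_j)\bigr)\Z/N\Z$, et non « engendrée par $\mathrm{pgcd}_j(N/a_j)$ » — expression qui de toute façon n'a pas de sens en général puisque les parts $a_j$ ne divisent pas $N$ (et l'inclusion $\tfrac{1}{a_j}\Z/\Z\hookrightarrow\tfrac{1}{N}\Z/\Z$ que vous invoquez n'existe que si $a_j\mid N$). Sur un exemple : $N=12$, partition $(6,4,2)$ ; votre formule donne $\mathrm{pgcd}(2,3,6)=1$, donc image $=\Z/12\Z$ tout entier, ce qui forcerait \emph{toute} algèbre à division à transférer — faux pour $d=3$ ($3\nmid 2$). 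La formule correcte donne l'image $\{0,6\}\subset\Z/12\Z$, et un élément d'invariant d'ordre $d$ y appartient si et seulement si $d\mid\mathrm{pgcd}_j(a_j)$, c'est-à-dire $d\mid a_j$ pour tout $j$, comme souhaité.

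Notez qu'un argument bien plus direct, qui évite toute cohomologie galoisienne, est disponible et est vraisemblablement celui que l'article a en tête en parlant de « formulation plus élémentaire » : les sous-groupes de Levi de $\GL_{N/d,D}$ sont les $\prod_j\GL_{b_j,D}$ avec $\sum_j b_j=N/d$, et $\eta$ envoie un tel Levi (à $G^\ast(F)$-conjugaison près) sur le Levi standard de $\GL_{N,E}$ de partition $(b_1d,\dots,b_rd)$ ; un Levi standard $M'$ de partition $(a_j)$ se transfère donc si et seulement s'il est de cette forme, i.e. $d\mid a_j$ pour tout $j$. Votre étape globale ainsi que la vérification que les partitions locales coïncident avec la partition globale sont correctes.
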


\begin{lemma}\label{lem:globalscindagetranfert}
Soit $S$ un sous-ensemble de $\V_F$ tel que $\eta_v: G_v\to G_v^\ast$ est un $F_v$-isomorphisme pour tout $v\not\in S$. Soient $M'\in \L^{G^\ast}$ et $(L_v^\ast)_{v\in S}\in (\L^{G_v^\ast}(M_v'))_{v\in S}$ tels que la flèche naturelle
\[\bigoplus_{v\in S}a_{M_v'}^{L_v^\ast}\rightarrow a_{M'}^{G^\ast}\]
soit un isomorphisme. Alors $M'$ se transfère à $G$.    
\end{lemma}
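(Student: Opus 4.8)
The plan is to reduce the assertion to a local one and then to an explicit divisibility computation with Brauer invariants. By Proposition~\ref{prop:local-globalprincipalLevi} together with the $P'$-independence in Proposition~\ref{prop:transfertdef}, $M'$ transfers to $G$ if and only if $M_w'$ transfers to $G_w$ for every place $w$; for $w\notin S$ the torsor $\eta_w$ is trivial (being an $F_w$-isomorphism), so every Levi of $G_w^\ast$ transfers, and the content is concentrated at the places of $S$, which the hypothesis on the spaces $a_\bullet$ couples together. Writing $G=\prod_i\Res_{E_i/F}\GL_{m_i,D_i}$, using that restriction of scalars commutes with the passage to Levi subgroups and with the formation of the spaces $a_M$, Shapiro's lemma, and the fact that the isomorphism hypothesis respects this factorisation, I would reduce (factor by factor, place by place of the $E_i$) to the case $G^\ast=\GL_{n,F}$, $G=\GL_{r,D}$ with $n=rd$ and $D$ a central division $F$-algebra of index $d$. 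Here $\eta$ is the Brauer class $[D]$, of order $d$; conjugating $M'$ to a standard Levi with associated partition $\mathbf m=(m_1,\dots,m_k)$ of $n$, the divisibility criterion for transfer stated just before the Lemma reduces us to proving $d\mid m_j$ for all $j$.

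Next I would unwind the hypothesis. Since $\eta_v$ is trivial outside $S$, the class $[D]$ is unramified outside $S$. For $v\in S$, the Levi $L_v^\ast\supseteq M_v'$ transfers to $G_v$, so by the local criterion it corresponds to a coarsening $\mathbf b^v$ of $\mathbf m$ all of whose parts are divisible by $d_v:=\operatorname{ord}(\operatorname{inv}_v[D])$. On the other hand, setting $V_v:=\ker\!\big(a_{M_v'}\to a_{L_v^\ast}\big)$, the space $a_{M_v'}^{L_v^\ast}=V_v$ sits inside $a_{M'}^{G^\ast}$ with orthogonal complement $a_{L_v^\ast}^{G^\ast}$, the span of the block-indicator vectors attached to $\mathbf b^v$; so the isomorphism hypothesis says precisely that $(V_v)_{v\in S}$ realises $a_{M'}^{G^\ast}$ as an internal direct sum $\bigoplus_{v\in S}V_v$.

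Now fix a prime $p$ and set $\gamma:=v_p(d)$; it suffices to prove $p^\gamma\mid m_j$ for all $j$. Since $d=\operatorname{ord}[D]=\operatorname{lcm}_v\operatorname{ord}(\operatorname{inv}_v[D])$, some place attains $v_p(d_v)=\gamma$; and if $\gamma\ge 1$ the Hasse reciprocity law $\sum_v\operatorname{inv}_v[D]=0$, applied to the $p$-primary component of $[D]$, forces \emph{at least two} places $v\in S$ with $v_p(d_v)=\gamma$ (were there only one, its local invariant would be the unique summand of maximal order $p^\gamma$, so the sum could not vanish). For such a $v$, every block-sum of $\mathbf b^v$ is divisible by $p^\gamma$; since $p^\gamma\mid n$, this means exactly that $\mathbf b^v$ is a coarsening of the partition $\pi_\gamma$ of $\{1,\dots,k\}$ whose cut points are precisely the indices $j$ with $p^\gamma\mid(m_1+\dots+m_j)$, the finest $p^\gamma$-admissible coarsening of $\mathbf m$. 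Hence $V_v^\perp=a_{L_v^\ast}^{G^\ast}\subseteq a_{L_{\pi_\gamma}}^{G^\ast}=:U_\gamma$, i.e. $U_\gamma^\perp=a_{M'}^{L_{\pi_\gamma}}\subseteq V_v$, for each of the (at least two) such places. If $\pi_\gamma$ were not the discrete partition, $U_\gamma^\perp$ would be non-zero and contained in $V_{v_1}\cap V_{v_2}$ for two distinct such places, contradicting $a_{M'}^{G^\ast}=\bigoplus_{v\in S}V_v$. Therefore $\pi_\gamma$ is discrete, which says exactly that $p^\gamma$ divides every $m_j$; letting $p$ vary gives $d\mid m_j$ for all $j$, hence $M'$ transfers to $G$.

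The steps I expect to be routine are the reduction to $\GL$ and the verification that $\pi_\gamma$ is well defined (both rely on $p^\gamma\mid n$ and on restriction of scalars). The main point — and the only genuinely delicate one — is the translation in the second paragraph: that the isomorphism of $a$-spaces is equivalent to the internal direct-sum statement, and that this, combined with the elementary but essential input from global reciprocity that the maximal $p$-power of the index cannot sit at a single place, forces the divisibility. Alternatively one could mimic the abelianised-cohomology argument of Proposition~\ref{prop:local-globalprincipalLevi}, which leads to the same local divisibility facts; the direct route above seems shorter.
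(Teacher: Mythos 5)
Your proof is correct. The overall architecture coincides with the paper's: reduce to $G^\ast=\GL_{n,F}$ via Shapiro's lemma, translate transfer of Levi subgroups into divisibility of block sizes by the local Brauer indices, and invoke Hasse reciprocity to locate at least two places attaining the maximal $p$-power of the index. Where you genuinely diverge is the concluding combinatorial step. The paper isolates a standalone lemma --- if $H_1',H_2'\in\L^{G^\ast}(H')$ satisfy $a_{H'}^{H_1'}\cap a_{H'}^{H_2'}=\{0\}$ and all blocks of both $H_i'$ are divisible by $k$, then so are the blocks of $H'$ --- and proves it by showing that the $\{0,1\}$-matrix built from $\overline{(\Delta_{H'}^{H_1'})^\vee}$ and $\overline{(\Delta_{H'}^{H_2'})^\vee}$ admits a unimodular square minor, via a double-counting induction on a matrix whose columns each sum to two. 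You instead introduce the explicit partition $\pi_\gamma$ (the finest coarsening of the block pattern of $M'$ with all partial block sums divisible by $p^\gamma$), observe that the Levi $L_v^\ast$ attached to any place $v$ with $p^\gamma\mid d_v$ must coarsen $\pi_\gamma$, so that $a_{M'}^{L_{\pi_\gamma}}\subseteq a_{M'}^{L_v^\ast}$, and then let the internal direct-sum decomposition of $a_{M'}^{G^\ast}$ annihilate this common subspace once two such places are in hand. Both routes establish the same divisibility $p^\gamma\mid M'$. Yours is shorter and more transparent in the $\GL_n$ setting, making the inner-product geometry of $a_{M'}^{G^\ast}$ concrete through $\pi_\gamma$; the paper's abstract lemma is somewhat more portable, being stated for an arbitrary pair of intermediate Levi subgroups with disjoint $a$-spaces and without reference to a chosen prime.
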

\begin{proof}
Le lemme a été prouvé par Arthur (cf. \cite[lemme 14.1]{Art81} et la preuve de \cite[proposition 8.1]{Art88II}). Nous exposons dans ces lignes une alternative à sa méthode. Quitte à se restreindre sur un facteur de $G$ et changer le corps de base, on peut et on va supposer que $G^\ast=\GL_{n,F}$ (implicitement le lemme de Shapiro et le fait que $a_{(\Res_{E/F}H)_v}=\oplus_{w\mid v} a_{\Res_{E_w/F_v}H_w}$ pour tout groupe réductif $H$ sur $E$ sont utilisés pour justifier la simplification). Remarquons que dans ce cas pour tout $v\in\V_F$, l'injection $\L^{G^\ast}\rightarrow\L^{G_v^\ast}$ est en fait bijective, et que pour tout $L'\in\L^{G^\ast}(M_{0^\ast})$, l'inclusion $a_{L'}\subseteq a_{L_v'}$ est une égalité. On est donc conduit à prouver l'assertion suivante : soit $(L^{v'})_{v\in S}\in(\L^{G^\ast}(M'))_{v\in S}$ avec $L^{v'}_{v}$ se transfère à $G_v$, tel que la flèche naturelle
\[\bigoplus_{v\in S}a_{M'}^{L^{v'}}\rightarrow a_{M'}^{G^\ast}\]
soit un isomorphisme. Alors $M'$ se transfère à $G$. Ceci est une conséquence des arguments combinatoires subséquents. Dans le dessein de raccourcir l'écriture, on note $k\mid H$ lorsque $k$ est un entier strictement positif et $H$ est un sous-groupe de Levi de $G^\ast$ avec $H\simeq\prod_i \GL_{n_i,F}$ et $k\mid n_i$ pour tout $i$.

\begin{lemma}Soient $H'\in\L^{G^\ast}$ et $H_1',H_2'\in\L^{G^\ast}(H')$, tels que $a_{H'}^{H_1'}\cap a_{H'}^{
H_2'}=\{0\}$. Si $k\mid H_1'$ et $k\mid H_2'$ alors $k\mid H'$.
\end{lemma}
\begin{proof}
Pour un vecteur $v=(x_1,\dots,x_l)\in \R^l$ on pose $\overline{v}\in\Z^l$ le vecteur qui vaut $0$ en $i$-ème coordonnée si $x_i=0$, et vaut $1$ en $i$-ème coordonnée si $x_i\not =0$. Pour un ensemble $E=\{v_1,\dots, v_r\}$ de vecteurs dans $\R^l$ on pose $\overline{E}=\{\overline{v_1},\dots,\overline{v_l}\}$.
 
Quitte à conjuguer tous les sous-groupes de Levi en question on peut et on va supposer que $H'$ est standard. Par la géométrie de données radicielles on sait que 
$a_{H'}^{H_1'}\cap a_{H'}^{H_2'}=\{0\}$ équivaut à ce que le rang de la matrice
\[
\left[ 
\begin{array}{@{}ccc@{}}
 \\&{\overline{(\Delta_{
H'}^{H_1'})^\vee}} & \\\\
\hline\\
 &{\overline{(\Delta_{H'}^{H_2'})^\vee}} & \\\\
\end{array}\right]\in\Mat_{(\dim a_{H_1'}+\dim a_{H_2'})\times\dim a_{H'} }(\Z).
\]  
soit $\dim a_{H'}$, ici on considère les éléments de $\overline{(\Delta_{H'}^{H_i'})^\vee}$ comme des équations (vecteurs lignes) définissant le sous-espace $a_{H'}^{H_i'}$ de $a_{H'}$. Cette matrice est à coefficient dans $\{0,1\}$, dont chaque ligne possède au mois 1 fois 1 et chaque colonne possède exactement 2 fois 1. La somme des colonnes de $\overline{(\Delta_{H'}^{H_1'})^\vee}$ égale à celle des colonnes de $\overline{(\Delta_{H'}^{H_1'})^\vee}$, à savoir $(1,\dots,1)$. Le rang de la matrice est donc au plus $\dim a_{H_1'}+\dim a_{H_2'}-1$. En appliquant la technique de double sommation sur la somme des coefficients de la matrice en question on voit qu'il existe une ligne qui possède exactement 1 fois 1, puis en enlevant la ligne et la colonne sur lesquelles se trouve ce coefficient 1 on retombe sur une matrice ayant la même nature : cette nouvelle matrice est de rang $\dim a_{H'}-1$, et à coefficient dans $\{0,1\}$, dont chaque ligne possède au mois 1 fois 1 et chaque colonne possède exactement 2 fois 1. Par une récurrence simple on montre alors que la matrice initiale admet une matrice mineure de taille $\dim a_{H'}\times \dim a_{H'}$, disons $X$, de détermiant $\pm 1$.

Selon l'hypothèse que $k\mid H_1'$ et $k\mid H_2'$ on constate que
\[
\left[X\right]\left[ 
\begin{array}{c}
 h_1' \\
\vdots\\
 h_{\dim a_{H'}}'
\end{array}\right]
\in\Mat_{\dim a_{H'}\times 1}(k\Z),
\]
où $h_i'$ est la taille de chaque bloc de $H'$, il vient donc 
\[\left[ 
\begin{array}{c}
 h_1' \\
\vdots\\
 h_{\dim a_{H'}}'
\end{array}\right]
\in\Mat_{\dim a_{H'}\times 1}(k\Z),\]
ce qu'il fallait.
\end{proof}

Revenons sur la démonstration du lemme de départ. Fixe $v\in\V_F$. On veut montrer que $d_v\mid M'$ avec $d_v$ l'invariant local de $\g_v$. Si $v\not \in S$, la relation de division est triviale par l'hypothèse dans l'énoncé sur $S$. Soit maintenant $v\not\in S$. Soit $p$ un nombre premier et $k$ un entier positif tel que $p^k\mid d_v$ et $p^{k+1}\nmid d_v$. Selon la théorie des corps de classes globaux la somme des invariants locaux de $\g$ vaut $0\in\Q/\Z$, il existe ainsi $v\not=w\in\V_F$ tel que $p^k\mid d_w$. Il vient, d'après le lemme précédent puisque $a_{M'}^{L^{v'}}\cap a_{M'}^{L^{w'}}=\{0\}$, que $p^k\mid M'$. On en déduit encore une fois $d_v\mid M'$. Ceci étant valable pour tout $v\in\V_F$, on en conclut que $M'$ se transfère à $G$.
\end{proof}

\section{Analyse globale}\label{sec:analyseglobale}

Nous arrivons à la comparaison des formules des traces. 

\subsection{Hypothèse}
Face à l'absence de résultat concernant l'existence du transfert non-invariant d'une fonction test à une place archimédienne, nous faisons une hypothèse technique afin d'éviter d'anticiper des résultats d'analyse harmonique archimédienne : jusqu'à la fin de l'article, $G_v$ est toujours supposé quasi-déployé pour $v\in\V_\infty$. Autrement dit $\eta_\infty:G_\infty\to G_\infty^\ast$ est un torseur trivial.

\subsection{Choix des objets relativement aux \texorpdfstring{$G$}{G} et \texorpdfstring{$G^\ast$}{G*}}

Fixons $\underline{S}_0$ un sous-ensemble fini de places, contenant les places archimédiennes, tel que pour tout $v\in (\V_F\setminus \underline{S}_0)\cup\V_{\infty}$ on ait que $\eta_v:G_v\to G_v^\ast$ est le torseur trivial. Remarquons que cette hypothèse implique que pour tout $\g_\ss(F_v)\ni X\arr X^\ast\in \g_\ss^\ast(F_v)$, $\eta_{v,X}=\eta_v|_{G_{v,X}}:G_{v,X}\to G_{v,X^\ast}^\ast$ est le torseur trivial.

On fixe, pour tout $v\in (\V_F\setminus \underline{S}_0)\cup \V_{\infty}$, un élément $x_v\in G^\ast(\overline{F_v})$, tel que $(\Ad x_v)\circ\eta_v|_{G_v}$ soit un isomorphisme défini sur $F_v$ de $G_v$ sur $G_v^\ast$. On exige que les objets relativement aux $G$ et $G^\ast$ dans le numéro \ref{subsubsec:objetpourGcontexteglobal} sont choisis de sorte que : si $v\in \V_F\setminus \underline{S}_0$ alors $(\Ad x_v)\circ\eta_v|_{\mathfrak{k}_v}$ est un isomorphisme de $\mathfrak{k}_v$ sur $\mathfrak{k}_v^\ast$, et si $v\in (\V_F\setminus \underline{S}_0)\cup \V_\infty$ alors $(\Ad x_v)\circ\eta_v|_{K_v}$ est un isomorphisme de $K_v$ sur  $K_v^\ast$.

\subsection{Normalisations des mesures de la section \ref{sec:analyseglobale}}\label{subsec:normalisationmesuresfinales}
On explique à présent les normalisations des mesures dans cette section.

Pour les objets en lien avec $G$, nous respectons les normalisations de la sous-section \ref{subsec:normalisationsdesmesuresdeFTI}, en utilisant le même ensemble $\underline{S}_0$ que là-bas. Quant aux normalisations des mesures sur les objets en lien avec $G^\ast$, on suit le même procédé en utilisant le même caractère $\psi$ et la forme bilinéaire canonique de $\g^\ast$. 

On demande en dernier lieu les compatibilités suivantes des normalisations des mesures à l'égard du transfert : 
\begin{enumerate}[label=(\roman*)]
    \item pour tous $M\in\L^G(M_0)$, et $\m_\ss(F)\ni X\arr X^\ast\in \m_\ss^\ast(F)$, le torseur intérieur $\eta$ induit un isomorphisme d'espaces  vectoriels réels $a_{M_X}\xrightarrow{\sim} a_{M_{X^\ast}^\ast}$, on veut que les mesures sur $a_{M_X}$ et $a_{M_{X^\ast}^\ast}$ se correspondent par cet isomorphisme ; 
    \item pour tous $M\in\L^{G_v}(M_{0,v})$, et $\m_\ss(F_v)\ni X\arr X^\ast\in \m_\ss^\ast(F_v)$, le torseur intérieur $\eta_v$ induit un isomorphisme d'espaces  vectoriels réels $a_{M_X}\xrightarrow{\sim} a_{M_{X^\ast}^\ast}$, on veut que les mesures sur $a_{M_X}$ et $a_{M_{X^\ast}^\ast}$ se correspondent par cet isomorphisme ;
    \item soient $v\in \V_\infty$, $\g_\ss(F_v)\ni X\arr X^\ast\in \g_\ss^\ast(F_v)$, et $y\in G^\ast(\overline{F_v})$, tels que $(\Ad y)\circ\eta|_{G_{v,X}}$ soit un isomorphisme défini sur $F_v$ de $G_{v,X}$ sur $G_{v,X^\ast}^\ast$, alors les mesures sur $G_{v,X}(F_v)$ et $G_{v,X^\ast}^\ast(F_v)$ se correspondent par cet isomorphisme.
\end{enumerate}

En outre, pour tout $M\in \L^G(M_0)$, et $X\in \m_\ss(F)$ le groupe $A_{M_X,\Q}$ a été défini comme le sous-tore central $\Q$-déployé maximal dans $\Res_{F/\Q}M_X$, mais on voit qu'il est aussi le sous-tore central $\Q$-déployé maximal dans $\Res_{F/\Q}A_{M_X}$ par un simple argument de descente galoisienne. De ce fait  $\eta$ induit un isomorphisme de groupes topologiques $A_{M_X,\infty}\xrightarrow{\sim} A_{M_{X^\ast}^\ast,\infty}$. Alors les mesures sur $A_{M_X,\infty}$ et $A_{M_{X^\ast}^\ast,\infty}$ se correspondent par cet isomorphisme car on a un diagramme commutatif de mesures
    \[\begin{tikzcd}
	{A_{M_X,\infty}} && {A_{M_{X^\ast}^\ast,\infty}} \\
	\\
	{a_{M_X}} && {a_{M_{X^\ast}^\ast}}
	\arrow["\sim", from=3-1, to=3-3]
	\arrow["\sim", from=1-1, to=3-1]
	\arrow["\sim", from=1-3, to=3-3]
	\arrow["\sim", from=1-1, to=1-3]
	\arrow["\eta"', from=1-1, to=1-3]
	\arrow["\eta"', from=3-1, to=3-3]
	\arrow["{H_{M_{X^\ast}^\ast}}"', from=1-3, to=3-3]
	\arrow["{H_{M_X}}"', from=1-1, to=3-1]
    \end{tikzcd}\]

\subsection{Transfert géométrique d'une fonction}

On définit ensuite la notion d'un transfert semi-local ou global (non-invariant) d'une fonction. Fixons dès lors $\underline{S}$ un sous-ensemble fini de places, contenant $\underline{S}_0$, de cardinal au moins $|\V_{\infty}|+1$, et de sorte que l'on ait $\widetilde{1_{\mathfrak{k}_v}}=1_{\mathfrak{k}_v}$ pour tout $v\in \V_F\setminus \underline{S}$. Pour rappel $\widetilde{-}$ est la transformée partielle de Fourier, i.e. équation \eqref{YDLgeomeq:defpartialFourier}.

Constatons que, selon nos normalisations de mesures, que $\S(\g(F_v))\ni h_v\arr h_v\circ((\Ad x_v)\circ\eta_v|_{G_v})^{-1}\in \S(\g^\ast(F_v))$ pour toute place $v\in (\V_F\setminus \overline{S})\cup \V_{\infty}$ et toute fonction $h_v$ de classe Schwartz-Bruhat.

Pour tout sous-ensemble $S$ fini de $\V_F$, un élément $f\in \S(\g(F_S))$ est dit un tenseur pur s'il admet une écriture $f=\bigotimes_{v\in S} f_v$ avec $f_v\in \S(\g(F_v))$. De même, un élément $f\in \S(\g(\A_F))$ est dit un tenseur pur s'il admet une écriture $f=\bigotimes_{v\in \V_F} f_v$ avec $f_v\in \S(\g(F_v))$ et $f_v=1_{\mathfrak{k}_v}$ pour presque tous les $v$. 

\begin{definition}[Transfert géométrique d'une fonction semi-locale ou globale]\label{YDLgeomdef:transfertd'unefonctionglobale}~{}
\begin{enumerate}
    \item Soit $S\supseteq \underline{S}$ un sous-ensemble fini de $\V_F$. Soit $f\in\S(\g(F_S))$ un tenseur pur. Nous dirons qu'un tenseur pur $f^\ast\in\S(\g^\ast(F_S))$ se transfère de façon géométrique en $f$ et nous noterons $f\underset{\geom}{\arr}f^\ast$, ou plus simplement $f\arr f^\ast$, s'il existe des décompositions  $f=\bigotimes_{v\in S} f_v$ et $f^\ast=\bigotimes_{v\in S} f_v^\ast$ telles que les conditions suivantes sont remplies.
    \begin{enumerate}
    \item Pour tout $v\in \underline{S}\setminus\V_{\infty}$, on a $f_v\arr f_v^\ast$.
    \item Pour tout $v\in\V_{\infty}$, on a $f_v\circ ((\Ad x_v)\circ\eta_v)^{-1}=f_v^\ast$.
    \item Pour tout $v\in S\setminus \underline{S}$, on  a $f_v=1_{\mathfrak{k}_v}$ et $f_v^\ast=1_{\mathfrak{k}_v^\ast}$.
    \end{enumerate}
    
    \item Soit $f\in \S(\g(\A_F))$ un tenseur pur. Nous dirons qu'un tenseur pur $f^\ast\in\S(\g^\ast(\A_F))$ se transfère de façon géométrique en $f$ et nous noterons $f\underset{\geom}{\arr}f^\ast$, ou plus simplement $f\arr f^\ast$, s'il existe des décompositions  $f=\bigotimes_{v\in \V_F} f_v$ et $f^\ast=\bigotimes_{v\in \V_F} f_v^\ast$ telles que les conditions suivantes sont remplies.
    \begin{enumerate}
    \item Pour tout $v\in \underline{S}\setminus\V_{\infty}$, on a $f_v\arr f_v^\ast$.
    \item Pour tout $v\in\V_{\infty}$, on a $f_v\circ ((\Ad x_v)\circ\eta_v)^{-1}=f_v^\ast$.
    \item Pour tout $v\in \V_F\setminus \underline{S}$, on  a $f_v=1_{\mathfrak{k}_v}$ et $f_v^\ast=1_{\mathfrak{k}_v^\ast}$.
    \end{enumerate}
    \end{enumerate}
\end{definition}

On pourrait prendre une « limite inductive » pour se débarrasser de la dépendance de la définition du transfert global en $\underline{S}$, mais nous n'adoptons pas ce point de vue ici. 

Soit $f\in\S(\g(\A_F))$. Rappelons que $\eta|_{\mathfrak{z}}:\mathfrak{z}\to\mathfrak{z}^\ast$ est un torseur trivial. On note $f_{X}$ la fonction $f_{X}(Y)=f(X+Y)$ pour $X\in\g(\A_F)$. On note $f^{t}$ la fonction $f^{t}(Y)=f(tY)$ pour $t\in \A_F^\times$. Les mêmes notations valent pour $G^\ast$.

\begin{lemma}Soient $\S(\g(\A_F))\ni f\arr f^\ast\in \S(\g^\ast(\A_F))$ deux tenseurs purs. 
\begin{enumerate}
    \item Alors $f_{Z}\arr f_{Z^\ast}^\ast$ pour tout $\mathfrak{z}(\A_F)\ni Z\arr Z^\ast \in\mathfrak{z}^\ast(\A_F)$ avec $Z_v\in \mathfrak{k}_v^\ast$ pour tout $v\in \V_F\setminus \underline{S}$.
    \item Alors $f^{t}\arr f^{\ast,t}$ pour tout $t\in \A_F^\times$ avec $t_v \mathfrak{k}_v^\ast=\mathfrak{k}_v^\ast$ pour tout $v\in \V_F\setminus \underline{S}$.
\end{enumerate}
\end{lemma}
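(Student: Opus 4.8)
The plan is to verify both assertions factorwise, using the tensor-product structure that is built into the definition of the global transfert and the compatibility of the local transfert with the elementary operations $f_v\mapsto (f_v)_{Z_v}$ and $f_v\mapsto f_v^{t_v}$. Fix decompositions $f=\bigotimes_{v\in\V_F}f_v$ and $f^\ast=\bigotimes_{v\in\V_F}f_v^\ast$ witnessing $f\arr f^\ast$ as in definition \ref{YDLgeomdef:transfertd'unefonctionglobale}, and decompose $Z=\sum_v Z_v$ (resp.\ $t=(t_v)_v$) with $Z_v\in\mathfrak{z}(F_v)$ (resp.\ $t_v\in F_v^\times$). Since $\eta|_\mathfrak{z}:\mathfrak{z}\to\mathfrak{z}^\ast$ is a torseur trivial, each $Z_v$ has a well-defined transfert $Z_v^\ast\in\mathfrak{z}^\ast(F_v)$, and one checks directly that $(f_{Z})_v=(f_v)_{Z_v}$ and $(f^{t})_v=(f_v)^{t_v}$, so it suffices to show that each local factor $(f_v)_{Z_v}$ (resp.\ $(f_v)^{t_v}$) transfers to $(f_v^\ast)_{Z_v^\ast}$ (resp.\ $(f_v^\ast)^{t_v^\ast}$), with the right normalization at the unramified places.

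For the places $v\in\underline{S}\setminus\V_\infty$, where $f_v\arr f_v^\ast$ in the sense of theorem \ref{prop:deftransfertfon}, I would appeal to the variance properties of the weighted orbital integrals. Translating a test function by an element $Z_v$ of the centre $\mathfrak{z}(F_v)$ only shifts every orbit by $Z_v$, and by the point of the relevant proposition recording that the measure on $G_{X+Z}(F_v)$ equals the measure on $G_X(F_v)$ (equation \eqref{eq:defmeasureonanyorb}, last compatibility), together with $\eta|_\mathfrak{z}$ being trivial, one gets $J_{M'}^{Q'}(X'+Z_v^\ast,(f_v^\ast)_{Z_v^\ast})=J_{M'}^{Q'}(X'+Z_v^\ast,(f_v)_{Z_v}^\ast)$ matching $J_M^Q(X+Z_v,(f_v)_{Z_v})$ exactly in the pattern of theorem \ref{prop:deftransfertfon}; hence $(f_v)_{Z_v}\arr (f_v^\ast)_{Z_v^\ast}$. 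For dilation, I would use the nilpotent homogeneity (point 6 of proposition \ref{prop:IOP}) and, more basically, the fact that $X\mapsto tX$ is $G$-equivariant and carries $\m_{G-\rss}$ to itself, so $J_{M'}^{Q'}(X',(f_v^\ast)^{t_v^\ast})$ is, up to the explicit power of $|t_v|$, $J_{M'}^{Q'}(t_v^{-1}X',f_v^\ast)$, and the same relation holds on the $G$-side; since $t_v$ transfers to $t_v^\ast$ trivially and $|t_v|_{F_v}=|t_v^\ast|_{F_v}$, the two sides match, giving $(f_v)^{t_v}\arr(f_v^\ast)^{t_v^\ast}$. At the archimedean places, conditions (b) of the definition say $f_v^\ast=f_v\circ((\Ad x_v)\circ\eta_v)^{-1}$, and since $(\Ad x_v)\circ\eta_v$ is a genuine $F_v$-isomorphism it commutes with central translation and dilation, so the required identities are immediate.

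The only place that needs a little care is the set of unramified places $v\in\V_F\setminus\underline{S}$, where the definition demands $f_v=1_{\mathfrak{k}_v}$ and $f_v^\ast=1_{\mathfrak{k}_v^\ast}$ on the nose. Here the hypotheses on $Z$ and $t$ in the statement are exactly what is needed: $(1_{\mathfrak{k}_v})_{Z_v}=1_{\mathfrak{k}_v}$ because $Z_v\in\mathfrak{k}_v$ and $\mathfrak{k}_v$ is an $\O_v$-lattice (so $\mathfrak{k}_v+Z_v=\mathfrak{k}_v$), and likewise $(1_{\mathfrak{k}_v})^{t_v}=1_{\mathfrak{k}_v}$ because $t_v\mathfrak{k}_v=\mathfrak{k}_v$; the same holds for $\mathfrak{k}_v^\ast$ using $Z_v^\ast\in\mathfrak{k}_v^\ast$ (which follows from the compatibility $(\Ad x_v)\circ\eta_v|_{\mathfrak{k}_v}:\mathfrak{k}_v\xrightarrow{\sim}\mathfrak{k}_v^\ast$ and $Z_v\in\mathfrak{k}_v$) and $t_v\mathfrak{k}_v^\ast=\mathfrak{k}_v^\ast$. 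Thus the modified tensor decompositions $f_{Z}=\bigotimes_v (f_v)_{Z_v}$ and $f_{Z}^\ast=\bigotimes_v(f_v^\ast)_{Z_v^\ast}$ (resp.\ the dilated ones) still satisfy all three conditions (a), (b), (c) of definition \ref{YDLgeomdef:transfertd'unefonctionglobale}, whence $f_{Z}\arr f_{Z^\ast}^\ast$ and $f^{t}\arr f^{\ast,t}$. I do not anticipate a real obstacle; the main thing to get right is bookkeeping, namely that the global transfert is defined through a fixed tensor decomposition and that translation/dilation act factorwise and preserve the three defining conditions, in particular the integral-lattice conditions at the places outside $\underline{S}$.
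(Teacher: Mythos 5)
The proposal is correct and takes the same route the author implicitly intends by declaring the proof trivial: one checks, factor by factor in the fixed tensor decomposition, that central translation and dilation preserve each of the three conditions of definition \ref{YDLgeomdef:transfertd'unefonctionglobale} (the non-archimedean transfer relation via $J_M^Q(X,(f_v)_{Z_v})=J_M^Q(X+Z_v,f_v)$ and the analogous dilation identity, the commutation with $(\Ad x_v)\circ\eta_v$ at archimedean places, and the invariance of $1_{\mathfrak{k}_v}$ under the hypotheses at places outside $\underline{S}$). Your write-up is a correct expansion of what the paper leaves as an exercise; the detour through nilpotent homogeneity for the dilation case is not needed — the linear change of variables you also mention suffices — but it does no harm.
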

\begin{proof}
La preuve est triviale. 
\end{proof}

\begin{proposition}[Transfert commute à la transformée partielle de Fourier]\label{prop:Fouierpcommutetrans}
Pour rappel $\widetilde{-}$ est la transformée partielle de Fourier, cf. équation \eqref{YDLgeomeq:defpartialFourier}. Soient $\S(\g(\A_F))\ni f\arr f^\ast\in \S(\g^\ast(\A_F))$ deux tenseurs purs, alors $\widetilde{f}\arr \widetilde{f^\ast}$.
\end{proposition}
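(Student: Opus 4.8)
The plan is to reduce the statement to a place-by-place assertion. Since the canonical bilinear form factorizes as $\langle-,-\rangle=\prod_v\langle-,-\rangle_v$, since the orthogonal decomposition $\g=\mathfrak{z}\oplus\g_{\ad}$ already holds locally, and since the self-dual measures on $\g(\A_F)$ and on $\g_{\ad}(\A_F)$ are product measures, the partial Fourier transform $\widetilde{\,\cdot\,}$ on $\g(\A_F)$ is the (completed) tensor product of the local partial Fourier transforms $\widetilde{\,\cdot\,}$ on the $\g(F_v)$ given by \eqref{YDLgeomeq:defpartialFourier}. So for a pure tensor $f=\bigotimes_v f_v$ one gets $\widetilde{f}=\bigotimes_v\widetilde{f_v}$, and since $\widetilde{1_{\mathfrak{k}_v}}=1_{\mathfrak{k}_v}$ for every $v\in\V_F\setminus\underline{S}$ (by the choice of $\underline{S}$), $\widetilde{f}$ is again a pure tensor, and likewise for $\widetilde{f^\ast}$. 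It then remains, starting from decompositions $f=\bigotimes_v f_v$ and $f^\ast=\bigotimes_v f_v^\ast$ satisfying conditions (a), (b), (c) of Definition \ref{YDLgeomdef:transfertd'unefonctionglobale}, to produce decompositions of $\widetilde{f}$ and $\widetilde{f^\ast}$ satisfying the same conditions.

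First I would collect the local inputs. At a place $v\in\underline{S}\setminus\V_{\infty}$ we have $f_v\arr f_v^\ast$, and Proposition \ref{prop:transFourier} gives $\gamma_{\psi_v}(\g_{\ad,v})\widetilde{f_v}\arr\gamma_{\psi_v}(\g_{\ad,v}^\ast)\widetilde{f_v^\ast}$; dividing by the nonzero root of unity $\gamma_{\psi_v}(\g_{\ad,v})$ yields $\widetilde{f_v}\arr c_v\,\widetilde{f_v^\ast}$, where $c_v\eqdef\gamma_{\psi_v}(\g_{\ad,v}^\ast)/\gamma_{\psi_v}(\g_{\ad,v})$. At $v\in\V_{\infty}$ and at $v\in\V_F\setminus\underline{S}$, the map $\phi_v\eqdef(\Ad x_v)\circ\eta_v:\g_v\to\g_v^\ast$ is an $F_v$-algebra isomorphism, hence an isometry for the canonical bilinear forms (that of $\g^\ast$ being deduced from that of $\g$) respecting the decompositions $\mathfrak{z}_v\oplus\g_{\ad,v}$ and $\mathfrak{z}_v^\ast\oplus\g_{\ad,v}^\ast$ and carrying self-dual measure to self-dual measure; an immediate change of variables then shows $\widetilde{h\circ\phi_v^{-1}}=\widetilde{h}\circ\phi_v^{-1}$ for all $h$. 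Consequently $\widetilde{f_v^\ast}=\widetilde{f_v}\circ\phi_v^{-1}$ for $v\in\V_\infty$; and for $v\in\V_F\setminus\underline{S}$ one has $\widetilde{f_v}=\widetilde{1_{\mathfrak{k}_v}}=1_{\mathfrak{k}_v}$ and $\widetilde{f_v^\ast}=\widetilde{1_{\mathfrak{k}_v}\circ\phi_v^{-1}}=1_{\mathfrak{k}_v}\circ\phi_v^{-1}=1_{\mathfrak{k}_v^\ast}$, using $\phi_v(\mathfrak{k}_v)=\mathfrak{k}_v^\ast$ for $v\notin\underline{S}_0$ and $\underline{S}\supseteq\underline{S}_0$.

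The hard part will be the bookkeeping of the constants $c_v$. One has $c_v=1$ as soon as $\eta_v$ is the trivial torsor, in particular for $v\notin\underline{S}_0$ and for $v\in\V_\infty$, so the $c_v$ differ from $1$ only for finitely many places, all lying in $\underline{S}\setminus\V_\infty$. Moreover the restriction of $\eta$ to $\mathfrak{z}$ is an $F$-isometry $\mathfrak{z}\to\mathfrak{z}^\ast$ (the conjugation action on the center being trivial), so $\gamma_{\psi_v}(\mathfrak{z}_v)=\gamma_{\psi_v}(\mathfrak{z}_v^\ast)$ for every $v$; combining this with the multiplicativity of the Weil constant along $\g=\mathfrak{z}\oplus\g_{\ad}$ (used already in the paper) and the product formula \eqref{eq:formuleproduitWeil} applied to $\g$ and to $\g^\ast$, one obtains
\[\prod_{v\in\V_F}c_v=\frac{\prod_v\gamma_{\psi_v}(\g_v^\ast)}{\prod_v\gamma_{\psi_v}(\g_v)}\cdot\frac{\prod_v\gamma_{\psi_v}(\mathfrak{z}_v)}{\prod_v\gamma_{\psi_v}(\mathfrak{z}_v^\ast)}=1.\]
I would then set $g_v^\ast\eqdef c_v\,\widetilde{f_v^\ast}$ for $v\in\underline{S}\setminus\V_\infty$ and $g_v^\ast\eqdef\widetilde{f_v^\ast}$ otherwise; since $\prod_v c_v=1$, $\bigotimes_v g_v^\ast=\widetilde{f^\ast}$. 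The decompositions $\widetilde{f}=\bigotimes_v\widetilde{f_v}$ and $\widetilde{f^\ast}=\bigotimes_v g_v^\ast$ satisfy (a) (namely $\widetilde{f_v}\arr c_v\widetilde{f_v^\ast}=g_v^\ast$ at the $p$-adic places of $\underline{S}$), (b) ($g_v^\ast=\widetilde{f_v^\ast}=\widetilde{f_v}\circ\phi_v^{-1}$ at the archimedean places), and (c) ($\widetilde{f_v}=1_{\mathfrak{k}_v}$ and $g_v^\ast=1_{\mathfrak{k}_v^\ast}$ for $v\notin\underline{S}$) of Definition \ref{YDLgeomdef:transfertd'unefonctionglobale}, whence $\widetilde{f}\arr\widetilde{f^\ast}$. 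Everything except the constant computation is formal; the one subtle point is precisely that the Weil constants introduced by the local partial-Fourier transfer can be absorbed globally thanks to the product formula.
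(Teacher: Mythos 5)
Your argument is correct and uses exactly the three ingredients the paper's (one-line) proof invokes: the hypothesis $\widetilde{1_{\mathfrak{k}_v}}=1_{\mathfrak{k}_v}$ for $v\notin\underline{S}$, Proposition~\ref{prop:transFourier} at the $p$-adic places of $\underline{S}$, and the product formula~\eqref{eq:formuleproduitWeil} for the Weil constant to absorb the local ratios $c_v=\gamma_{\psi_v}(\g_{\ad,v}^\ast)/\gamma_{\psi_v}(\g_{\ad,v})$ into a global factor equal to $1$. Your elaboration (in particular the redistribution of the $c_v$'s into modified local components $g_v^\ast$ so that the resulting decompositions of $\widetilde{f}$ and $\widetilde{f^\ast}$ satisfy conditions (a), (b), (c) of Definition~\ref{YDLgeomdef:transfertd'unefonctionglobale}) is the detail the paper leaves implicit; it is the same route, just spelled out.
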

\begin{proof}
Cela ressort des hypothèses sur $\underline{S
}$, de la proposition \ref{prop:transFourier}, et de la formule du produit pour la constante de Weil, i.e. l'équation \eqref{eq:formuleproduitWeil}.
\end{proof}

\subsection{Comparaisons}

On peut à présent comparer les formules des traces. Commençons par la comparaison des intégrales orbitales pondérées semi-locales.

\begin{proposition}\label{prop:semi-localIOPcomp} Soit $S\supseteq \underline{S}$ un sous-ensemble fini de $\V_F$. Soient $\S(\g(F_S))\ni f\arr f^\ast\in \S(\g^\ast(F_S))$ deux tenseurs purs.  
\begin{enumerate}
    \item On a 
    \[J_{M^\ast}^{G^\ast}(X^\ast,f^\ast)=J_M^G(X,f)\]
    si $M\arr M^\ast$ et $\m(F)\ni X\arr X^\ast\in \m^\ast(F)$ via la restriction $\eta|_\m :\m\rightarrow \m^\ast$.
    \item Soient $M'\in \L^{G^\ast}$ et $X'\in\m'(F)$. On a 
    \[J_{M'}^{G^\ast}(X',f^\ast)=0\]
    sauf si $M'=M^\ast$ et $X'$ se transfèrent à $M$.
\end{enumerate}
\end{proposition}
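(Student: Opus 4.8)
The plan is to reduce the two semi-local weighted orbital integrals to products of local ones via Arthur's splitting formula, and then to match or kill the terms place by place over the finitely many $p$-adic places of $\underline S$. Fix compatible decompositions $f=\bigotimes_{v\in S}f_v$ and $f^\ast=\bigotimes_{v\in S}f_v^\ast$ as in Definition \ref{YDLgeomdef:transfertd'unefonctionglobale}. The semi-local splitting formula for weighted orbital integrals on $\g$ — obtained from \eqref{YDLgeomeq:formuledescindage} applied to the $(G,M)$-family $(v_P(g))_P$ defining $J_M^G$, exactly as in \cite{Art88I} for groups, and off the equisingular locus from \eqref{YDLgeomeq:IOPdef} together with Proposition \ref{prop:fonctionr} — reads
\[J_M^G(X,f)=\sum_{(L_v)_{v\in S}\in\L^{G_S}(M_S)}d_M^G((L_v)_{v\in S})\prod_{v\in S}J_{M_v}^{Q_{L_v}}(X_v,f_v),\]
and likewise $J_{M'}^{G^\ast}(X',f^\ast)=\sum_{(L_v')_{v\in S}\in\L^{G_S^\ast}(M_S')}d_{M'}^{G^\ast}((L_v')_{v\in S})\prod_{v\in S}J_{M_v'}^{Q_{L_v'}'}(X_v',f_v^\ast)$. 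I would then treat the local factors place by place. For $v\in\V_\infty$ and for $v\in S\setminus\underline S$ the torsor $\eta_v$ is trivial, $(\Ad x_v)\circ\eta_v$ is an $F_v$-isomorphism $G_v\to G_v^\ast$ carrying $\mathfrak k_v$, $K_v$ and the chosen sections to their starred counterparts and transporting $f_v$ to $f_v^\ast$; since the measures are normalized compatibly (subsection~\ref{subsec:normalisationmesuresfinales}) and weighted orbital integrals are invariant under such isomorphisms, the corresponding local factors on $G_v$ and $G_v^\ast$ coincide. For $v\in\underline S\setminus\V_\infty$ I would invoke Theorem \ref{pro:corrloc}: $J_{M_v'}^{Q_v'}(X_v',f_v^\ast)$ equals $J_{M_v}^{Q_v}(X_v,f_v)$ when $M_v'=M_v^\ast$, $Q_v'=Q_v^\ast$, $X_v'=X_v^\ast$, and it vanishes if $Q_v'$ does not transfer to $G_v$, or if $M_v'$ transfers but $X_v'$ does not.

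For assertion (1) I would take $M'=M^\ast$ and $X'=X^\ast$ the transfer of $X$ via $\eta|_\m$ (so $X_v^\ast$ is the transfer of $X_v$ at every place). Using $(Q_{L_v})^\ast=Q_{L_v^\ast}$ (compatible sections) and $d_M^G((L_v)_{v\in S})=d_{M^\ast}^{G^\ast}((L_v^\ast)_{v\in S})$ (compatible measures), the place-by-place identities above show that the term of the $G$-splitting indexed by $(L_v)_{v\in S}\in\L^{G_S}(M_S)$ equals the term of the $G^\ast$-splitting indexed by its transfer $(L_v^\ast)_{v\in S}$. It then suffices to see that the remaining $G^\ast$-terms vanish: a tuple $(L_v')_{v\in S}$ not of the form $(L_v^\ast)_{v\in S}$ must have some $L_v'$ not transferring to $G_v$, necessarily at a place $v\in\underline S\setminus\V_\infty$ (at every other place all Levis transfer), hence $Q_{L_v'}'$ does not transfer either and the factor at $v$ vanishes by Theorem \ref{pro:corrloc}. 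Summing over $(L_v)_{v\in S}$ yields $J_{M^\ast}^{G^\ast}(X^\ast,f^\ast)=J_M^G(X,f)$.

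For assertion (2) I would show that every term of the $G^\ast$-splitting is zero. A term with $d_{M'}^{G^\ast}((L_v')_{v\in S})=0$ is trivially zero, so assume $d_{M'}^{G^\ast}((L_v')_{v\in S})\neq 0$; by the characterization of nonvanishing of the $d$-coefficients the natural map $\bigoplus_{v\in S}a_{M_v'}^{L_v'}\to a_{M'}^{G^\ast}$ is an isomorphism. If $M'$ does not transfer to $G$ and the product of local factors of this term were nonzero, then by Theorem \ref{pro:corrloc} every $L_v'$ would transfer to $G_v$ (at the places of $\underline S\setminus\V_\infty$ because the factor there is nonzero, elsewhere trivially), and Lemma \ref{lem:globalscindagetranfert} would force $M'$ to transfer to $G$ — a contradiction; so the product is zero. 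If instead $M'=M^\ast$ transfers but $X'$ does not transfer to $M$ via $\eta|_\m$, then applying Proposition \ref{prop:local-globalprincipalelement} to the inner twist $\eta|_\m:\m\to\m^\ast$ produces a place $v_0\in\underline S\setminus\V_\infty$ at which $X_{v_0}'$ does not transfer to $\m_{v_0}$; since $M_{v_0}^\ast$ transfers to $G_{v_0}$, Theorem \ref{pro:corrloc} makes the factor $J_{M_{v_0}^\ast}^{Q_{L_{v_0}'}'}(X_{v_0}',f_{v_0}^\ast)$ vanish for every choice of $L_{v_0}'$, so the product vanishes as well. In both cases all terms vanish and $J_{M'}^{G^\ast}(X',f^\ast)=0$.

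The main obstacle is exactly the interplay used in the last two paragraphs: reconciling the combinatorial data (the coefficients $d_M^G$ and the $a$-space isomorphism criterion) with the local transfer vanishing of Theorem \ref{pro:corrloc}, so as to pin down precisely which terms of the $G^\ast$-side splitting can survive. The technical inputs that make this go through are the section- and measure-compatibilities fixed in Section~\ref{sec:analyseglobale}, the local-global principles for transfer of Levi subgroups and of elements (Propositions \ref{prop:local-globalprincipalLevi} and \ref{prop:local-globalprincipalelement}), and the combinatorial Lemma \ref{lem:globalscindagetranfert}; granting these, the remainder is careful bookkeeping across the finitely many places of $\underline S\setminus\V_\infty$.
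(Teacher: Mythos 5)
Your overall approach matches the paper's: reduce to local factors via the splitting formula \eqref{YDLgeomeq:formuledescindage}, compare place by place (Theorem \ref{pro:corrloc} at the $p$-adic places of $\underline{S}$, triviality of $\eta_v$ elsewhere), and use the local-global inputs (Lemma \ref{lem:globalscindagetranfert}, Proposition \ref{prop:local-globalprincipalelement}) to control which terms survive. Two points of comparison deserve attention.

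The paper's proof of part (1) invokes, in addition to Theorem \ref{pro:corrloc}, the product formula for Weil constants \eqref{eq:formuleproduitWeil}, and you skipped this. The reason it appears is that, as one sees from its own proof, Theorem \ref{pro:corrloc} in fact produces the relation $J_{M^\ast_v}^{Q^\ast_v}(X^\ast_v,f^\ast_v)=e^{M_v}(X_v)\,J_{M_v}^{Q_v}(X_v,f_v)$ where $e^{M_v}(X_v)=\gamma_{\psi_v}(\g^\ast_{v,X^\ast_{\ss,v}})/\gamma_{\psi_v}(\g_{v,X_{\ss,v}})$ is the Kottwitz-type sign of \eqref{eq:signe}, which need not be $1$ when $X_{\ss,v}$ is not regular. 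The paper kills the product of these signs over $v\in S$ by combining \eqref{eq:formuleproduitWeil} with the observation that $e^{G_v}(X_v)=1$ both at $v\notin S$ (there $G_v$ is quasi-split) and at archimedean places (trivial torsor). Your argument cites the statement of Theorem \ref{pro:corrloc} at face value, so as written the term-by-term identification in (1) only gives equality up to a product of local signs; you need to add the Weil-constant product formula step.

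Second, a small conceptual slip in your part (1). The worry about ``remaining $G^\ast$-terms'' is empty: once $M\arr M^\ast$, the transfer $\L^{G_v}(M_v)\to\L^{G_v^\ast}(M_v^\ast)$ is a bijection at every place (as recalled in Section \ref{sec:preliminairesgeneraux}), so every tuple $(L_v')_{v\in S}\in\L^{G_S^\ast}(M_S^\ast)$ entering the $G^\ast$-side splitting is of the form $(L_v^\ast)_{v\in S}$ and the two sums match term by term with nothing left over. Your paragraph there asserts (vacuously, so harmlessly) that some $L_v'\in\L^{G_v^\ast}(M_v^\ast)$ could fail to transfer, which cannot happen; that vanishing argument is needed only in part (2), where $M'$ itself need not transfer, and there your treatment is correct and coincides with the paper's.
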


\begin{proof}~{}
\begin{enumerate}
    \item L'énoncé est semi-local. On va d'abord prouver qu'il y a $J_{M^\ast}^{G^\ast}(X^\ast,f^\ast)=J_M^G(X,f)$ si l'on respecte les normalisations du numéro \ref{subsec:localnormalisationsdemesures} partout en $v\in S$. La formule de scindage pour une $(G,M)$-famille, autrement dit l'équation \eqref{YDLgeomeq:formuledescindage}, et l'égalité $d_{M^\ast}^{G^\ast}((L_v^\ast)_{v\in S})=d_{M}^{G}((L_v)_{v\in S})$ selon nos normalisations de mesures nous assurent que, afin de prouver l'égalité des intégrales orbitales pondérées semi-locales, il suffit de prouver les égalités correspondantes des intégrales orbitales pondérées locales. En une place archimédienne les égalités voulues sont triviales d'après l'hypothèse que $\eta_\infty: G_\infty\to G_\infty^\ast$ est un torseur trivial et nos choix de transfert de fonctions (définition \ref{YDLgeomdef:transfertd'unefonctionglobale}) et nos normalisations des mesures. Puis en une place non-archimédienne on évoque la proposition \ref{pro:corrloc} et la formule du produit pour la constante de Weil, c'est-à-dire l'équation \eqref{eq:formuleproduitWeil}. On remarque que $e^{G_v}(X_v)=1$ pour $v\not\in S$ comme $G_v$ est quasi-déployé. 
    
    Enfin on reprend les normalisations des mesures de cette section. Il est facile de vérifier que le changement des mesures n'entraîne pas l'apparition de facteurs supplémentaires, car seules les mesures en $v\in \underline{S}_0\subseteq S$ changent, tout en maintenant la même variance pour les objets en question par rapport à $G$ que ceux par rapport  à $G^\ast$.  
    
    \item Supposons que $M'$ ne se transfère pas à $G$. La formule de scindage, i.e. l'équation \eqref{YDLgeomeq:formuledescindage}, nous dit
    \[J_{M'}^{G^\ast}(X',f^\ast)=\sum_{(L_v')_{v\in S}\in \L^{G_S^\ast}(M_S')}d_{M'}^G\left((L_v')_{v\in S}\right)\prod_{v\in S}J_{M_v'}^{Q_v'}(X_v',f_v^\ast).\]
    Le théorème \ref{pro:corrloc} nous assure l'annulation $\prod_{v\in S}J_{M_v'}^{Q_v'}(X_v',f_v^\ast)=0$ lorsque $Q_v'$ ne se transfère pas à $G_v$ pour au moins un $v\in S$ , puis le lemme \ref{lem:globalscindagetranfert} entraîne l'annulation $d_{M'}^G\left((L_v')_{v\in S}\right)=0$ lorsque $Q_v'$ se transfère à $G_v$ pour tout $v\in S$. On a ainsi $J_{M'}^{G^\ast}(X',f^\ast)=0$ quand $M'$ ne se transfère pas à $G$. Puis l'annulation $J_{M^\ast}^{G^\ast}(X',f^\ast)$ lorsque $X'$ ne se transfère pas à $G$ vient de la formule de scindage, la proposition \ref{pro:corrloc}, et le principe local-global pour le transfert, autrement dit la proposition \ref{prop:local-globalprincipalelement}. \qedhere
\end{enumerate}
\end{proof}

On rappelle que, pour $X\in \g(F)$ et $S$ un sous-ensemble fini de $\V_F$ assez grand (dépendant de $X_\ss$),
\[a^G(S,X)\eqdef \epsilon^G(X_\ss)a^{G_{X_\ss}}(S,X_\nilp),\]
avec $\epsilon^G(X_\ss)$ vaut 1 si $X_\ss$ est $F$-elliptique dans $\g$ et sa classe de $G(\A^S)$-conjugaison rencontre $\mathfrak{k}^S$, et 0 sinon. Le nombre complexe $a^G(S,X)$ ne dépend que de la classe de $G(F)$-conjugaison de $X$.

\begin{theorem}\label{thm:identificcationcoeffaG(S,X)transfert}Soient $G\arr G^\ast$ et $\g(F)\ni X\arr X^\ast\in \g^\ast(F)$. Alors pour $S$ assez grand on a
\begin{equation}\label{eq:identificcationcoeffaG(S,X)transfert}
a^G(S,X)=a^{G^\ast}(S,X^\ast).    
\end{equation}
\end{theorem}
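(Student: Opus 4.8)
The plan is to reduce the global identity \eqref{eq:identificcationcoeffaG(S,X)transfert} to the nilpotent case via the semisimple descent that defines the coefficients, and then to prove the nilpotent case by an induction on $\dim G$ using the fine expansion (théorème \ref{thm:devfin}), the comparison of weighted orbital integrals (proposition \ref{prop:semi-localIOPcomp}), and the comparison of the full geometric distributions $J_\o^\g$. First I would unwind the definitions: by the formula $a^G(S,X)=\epsilon^G(X_\ss)a^{G_{X_\ss}}(S,X_\nilp)$ and the analogous one for $G^\ast$, and using proposition \ref{prop:GXtorseurint} which says $G_{X^\ast}^\ast$ is the quasi-split inner form of $G_{X_\ss}$ with $X_\nilp\arr X_\nilp^\ast$ via $\eta_X$, the claim reduces to two things: (i) $\epsilon^G(X_\ss)=\epsilon^{G^\ast}(X_\ss^\ast)$ for $S$ large, and (ii) $a^H(S,U)=a^{H^\ast}(S,U^\ast)$ whenever $H\arr H^\ast$ is a transfer of groups of type GL, $U\arr U^\ast$ is a transfer of nilpotent orbits, and $S$ is large enough. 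For (i), $X_\ss$ is $F$-elliptic in $\g$ iff $A_G=A_{G_{X_\ss}}$; since $\eta$ induces $F$-isomorphisms $A_G\simeq A_{G^\ast}$ (lemme \ref{lem:AMisFiso}) and $A_{G_{X_\ss}}\simeq A_{G_{X_\ss^\ast}^\ast}$ (via $\eta_X$, cf. proposition \ref{prop:GXtorseurint}), $F$-ellipticity is preserved; and for $S$ outside $\underline{S}_0$ the condition that the $G(\A_F^S)$-conjugacy class of $X_\ss$ meets $\mathfrak{k}^S$ matches the analogous condition for $X_\ss^\ast$ under $(\Ad x_v)\circ\eta_v$, because we arranged $(\Ad x_v)\circ\eta_v(\mathfrak{k}_v)=\mathfrak{k}_v^\ast$ and $\eta_v$ is the trivial torsor there (using proposition \ref{pro:bontype}(8) so that $G(F_v)$- and $K_v$-conjugacy coincide for integral reduction-regular elements, as in lemme \ref{lem:Kottcomp1}).

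\textbf{The nilpotent case (main step).} For (ii) I would argue by induction on $\dim H$. Pick $f_S\arr f_S^\ast$ a pure tensor in $\S(\h(F_S))$, with $S$ large enough for the fine expansions of both $J_{\Ind_M^H(U)}^\h$ and $J_{\Ind_{M^\ast}^{H^\ast}(U^\ast)}^{\h^\ast}$ to hold (take $S\supseteq S_{\Ind_M^H(U)}\cup S_{\Ind_{M^\ast}^{H^\ast}(U^\ast)}\cup\underline{S}$). Write $\o=\Ind_{H}^{H}(U)$... more precisely set $\nu=(\Ad H(F))U\in(\mathcal N_H(F))$ and $\o=\nu$ itself. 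The fine expansion of théorème \ref{thm:devlopfinnilp} gives
\begin{align*}
J_\nu^\h(f_S)&=\sum_{M\in\L^H(M_0)}|W_0^M||W_0^H|^{-1}\sum_{\substack{V\in(\mathcal N_M(F))\\ \Ind_M^H(V)=\nu}}a^M(S,V)J_M^H(V,f_S),
\end{align*}
and similarly for $H^\ast$ with $\nu^\ast$, $f_S^\ast$. Now proposition \ref{prop:semi-localIOPcomp}(1) gives $J_{M^\ast}^{H^\ast}(V^\ast,f_S^\ast)=J_M^H(V,f_S)$ for $M\arr M^\ast$, $V\arr V^\ast$; part (2) gives vanishing $J_{M'}^{H^\ast}(V',f_S^\ast)=0$ for all other $(M',V')$; and proposition \ref{prop:indprop} together with the appendix-A description shows $\Ind$ commutes with transfer (so the index sets match bijectively under $\arr$). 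For the left-hand sides, one needs $J_{\nu^\ast}^{\h^\ast}(f_S^\ast)=J_\nu^\h(f_S)$: this should follow from part (1) of the final théorème on $J_{\o'}^{\g^\ast}$ referenced at the end of the introduction (théorème \ref{thm:finalres'}), or more directly from the invariance of $J_\o^\h$ and the chain of identities here — but to avoid circularity I would instead prove the identity of $J$'s \emph{and} the identity of $a$'s simultaneously by induction, exactly as in théorème \ref{thm:devlopfinnilp}: the distribution $T_\nu^H(f_S)\eqdef J_\nu^\h(f_S)-\sum_{M\neq H}(\cdots)$ equals $a^H(S,\nu)J_H^H(\nu,f_S)$, and applying the partial-Fourier transform (which commutes with transfer, proposition \ref{prop:Fouierpcommutetrans}, up to the Weil constants whose product is $1$ by \eqref{eq:formuleproduitWeil}, cf. \eqref{eq:signe} and the remark identifying $e^G(X)$ with Kottwitz's sign — here $X$ semisimple central so $e^H=1$) one transports $J_\nu^\h$ to $J_\nu^{\h^\ast}$; comparing the two fine expansions and using proposition \ref{prop:semi-localIOPcomp} plus the inductive hypothesis $a^M(S,V)=a^{M^\ast}(S,V^\ast)$ for $M\subsetneq H$ forces $a^H(S,\nu)=a^{H^\ast}(S,\nu^\ast)$, by nonvanishing of $J_H^H(\nu,-)$ and linear independence of nilpotent orbital integrals (proposition \ref{prop:IOP}(7)). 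The base case $\nu=0$ is corollaire \ref{coro:valeuraG(S,0)}: $a^H(S,0)=\vol(H(F)\backslash H(\A_F)^1)$, which equals $\vol(H^\ast(F)\backslash H^\ast(\A_F)^1)$ by the measure-normalization compatibilities of sous-section \ref{subsec:normalisationmesuresfinales} (Tamagawa-type invariance under inner twisting, as in the appendix-B computations).

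\textbf{Assembling.} Having (i) and (ii), I combine them: for $X\arr X^\ast$, the transfer $\eta_X:\g_{X_\ss}\to\g_{X_\ss^\ast}^\ast$ is a quasi-split inner twist with $X_\nilp\arr X_\nilp^\ast$, so $a^{G_{X_\ss}}(S,X_\nilp)=a^{G_{X_\ss^\ast}^\ast}(S,X_\nilp^\ast)$ by (ii), and $\epsilon^G(X_\ss)=\epsilon^{G^\ast}(X_\ss^\ast)$ by (i), whence $a^G(S,X)=\epsilon^G(X_\ss)a^{G_{X_\ss}}(S,X_\nilp)=\epsilon^{G^\ast}(X_\ss^\ast)a^{G_{X_\ss^\ast}^\ast}(S,X_\nilp^\ast)=a^{G^\ast}(S,X^\ast)$. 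The value of $S_{\o}'$ in the statement is then $S_{\o}'=S_{\o}\cup S_{\o^\ast}\cup\underline{S}$ enlarged if necessary so that the reduction-regularity conditions at unramified places are in force.

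\textbf{Main obstacle.} The delicate point is ensuring the argument is not circular and that the measure normalizations on $\g_{X_\ss}$ versus $\g_{X_\ss^\ast}^\ast$ (self-dual Haar measures for the canonical bilinear forms related through $\eta$, restricted to centralizers) are genuinely matched under $\eta_X$ — this is where one invokes that the canonical form of $\g^\ast$ is ``deduced'' from that of $\g$ in the sense of \cite[section VIII.6]{Walds95} and the compatibility (iii) of sous-section \ref{subsec:normalisationmesuresfinales}, together with the fact that for $v\notin\underline{S}_0$ the local transfer of lattices $\mathfrak{k}_v\to\mathfrak{k}_v^\ast$ restricts correctly to centralizers (which is the content of the conditions (ii)--(viii) guaranteeing $S_\o$ in the proof of théorème \ref{thm:devfin}, now applied on both sides). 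I expect the bulk of the work to be bookkeeping: verifying that the ``$S$ large enough'' can be chosen uniformly for $G$ and $G^\ast$ and all the Levi subgroups appearing, and that the partial-Fourier/Weil-constant step introduces no stray signs beyond those absorbed by \eqref{eq:formuleproduitWeil}.
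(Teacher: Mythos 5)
Your overall scaffold matches the paper's: induct on $\dim G$, reduce via the definition $a^G(S,X)=\epsilon^G(X_\ss)a^{G_{X_\ss}}(S,X_\nilp)$ to (i) the comparison of $\epsilon$-factors and (ii) the nilpotent case, handle the non-central $X_\ss$ by the inductive hypothesis on $G_{X_\ss}$, and resolve the nilpotent case (equivalently $X_\ss\in\mathfrak z(F)$) using the fine expansion, proposition \ref{prop:semi-localIOPcomp}, and a Fourier-type identity. Part (i) is fine and is exactly what the paper dismisses as ``flagrant.''

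The genuine gap is in the nilpotent step. Your chain gives, by the fine expansions of $J_\nu^\h(f)$ and $J_{\nu^\ast}^{\h^\ast}(f^\ast)$ together with the inductive hypothesis and proposition \ref{prop:semi-localIOPcomp}, the equality
\[
J_\nu^\h(f)-J_{\nu^\ast}^{\h^\ast}(f^\ast)=\bigl(a^H(S,\nu)-a^{H^\ast}(S,\nu^\ast)\bigr)J_H^H(\nu,f),
\]
but there is no reason yet why the left-hand side should vanish; that is precisely the thing to prove, and simply applying the partial Fourier transform (proposition \ref{prop:Fouierpcommutetrans}) to $f\arr f^\ast$ gives another identity of the same shape with $\widetilde f,\widetilde{f^\ast}$ — it does not ``transport $J_\nu^\h$ to $J_{\nu^\ast}^{\h^\ast}$.'' The two ingredients that make the argument close, both missing from your sketch, are: (a) proposition \ref{prop:TFtracefixee}, which says that the sum $J_{[0]}=\sum_{\o:\pi_{\mathfrak z}(\o)=0}J_\o$ is invariant under the partial Fourier transform (a Poisson-summation identity), so after subtracting the $G^\ast$-side and using induction on non-nilpotent orbits one obtains that $\sum_Y(a^G(S,Y)-a^{G^\ast}(S,Y^\ast))J_G^G(Y,f)$ equals the same sum with $f$ replaced by $\widetilde f$; and (b) the homogeneity scaling in $t\in F_{v_0}^\times$: replacing $f$ by $f^t$ and invoking proposition \ref{prop:IOP}(6) shows the two sides of this identity are finite $|t|$-power expansions with exponents $-\tfrac12\dim\g/\g_Y$ and $-\dim\g_\ad+\tfrac12\dim\g/\g_{Y'}$ which never coincide when $G$ is non-commutative, forcing every coefficient $a^G(S,Y)-a^{G^\ast}(S,Y^\ast)$ to vanish (via non-vanishing of nilpotent orbital integrals). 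Equivalently, as the paper's remark notes, one is using that a tempered invariant distribution supported in the nilpotent cone whose Fourier transform is also nilpotently supported must vanish. Without (a) and (b), your argument stops at a tautology.
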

\begin{proof}
La démonstration se fait par récurrence sur $\dim G$. Supposons que $G$ est commutatif (cela comprend le cas où $\dim G=1$) alors $\eta:G\to G^\ast$ est un torseur trivial. Soit $\g(F)\ni X\arr X^\ast\in \g^\ast(F)$. Bien sûr $X$ (resp. $X^\ast$) est elliptique dans $\g$ (resp. $\g^\ast$). Or, pour toute place $v$, la mesure sur $G(F_v)$ coïncide avec la mesure sur $G^\ast(F_v)$. Parce que la mesure donnée par l'équation \eqref{YDLgeomeq:Haarmeasureongrp} est inchangée par un torseur trivial. Ainsi, la mesure sur $G(\A_F)$ coïncide avec la mesure sur $G^\ast(\A_F)$.  Mais on demande également que la mesure sur $A_{G,\infty}$ correspond à la mesure sur $A_{G^\ast,\infty}$, et $G(F)$ (resp. $G^\ast(F)$) est muni de la mesure de comptage. L'égalité \eqref{eq:identificcationcoeffaG(S,X)transfert} découle maintenant de la définition du coefficient $a^G(S,X)$ et le corollaire \ref{coro:valeuraG(S,0)}.


Soit $n>1$ un entier, supposons que l'assertion est établie pour tous $G$ et $X\arr X^\ast$ tel que $\dim G< n$. 

Supposons maintenant que $\dim G=n$, et $G$ n'est pas commutatif. Soit $\g(F)\ni X\arr X^\ast\in \g^\ast(F)$. On voit, par la proposition \ref{prop:GXtorseurint} et le lemme \ref{lem:AMisFiso}, que $X_\ss$ est elliptique dans $\g$ si et seulement si $X_\ss^\ast$ est elliptique dans $\g^\ast$. Par définition 
\[a^G(S,X)= \epsilon^G(S,X_\ss)a^{G_{X_\ss}}(S,X_\nilp),\]
la preuve se conclut donc à la suite de la comparaison flagrante des coefficients $\epsilon$ et l'hypothèse de récurrence si $X_\ss\not\in \mathfrak{z}(F)$. On tient à signaler que les mesures sur les groupes en lien avec $G_X$ et $G_{X^\ast}^\ast$ vérifient les compatibilités nécessaires pour la comparaison. On traite maintenant le cas où $X_\ss\in \mathfrak{z}(F)$. 

Supposons $X_\ss=0$. Soient $\S(\g(\A_F))\ni f\arr f^\ast\in \S(\g^\ast(\A_F))$ deux tenseurs purs. Bien sûr pour tout $S\supseteq \underline{S}$ sous-ensemble fini de $\V_F$, la fonction $f$ (resp. $f^\ast$) est dans l'image de l'injection $\S(\g(F_S))\hookrightarrow \S(\g(\A_F))$ (resp. $\S(\g^\ast(F_S))\hookrightarrow \S(\g^\ast(\A_F))$). Soit maintenant $\o'\in \O^{\g^\ast}$ qui n'est pas une classe de conjugaison nilpotente. D'une part, si $\o'$ ne se transfère pas, alors le développement fin, i.e. le théorème \ref{thm:devfin}, et le point 2 de la proposition \ref{prop:semi-localIOPcomp} nous garantissent l'annulation $J_{\o'}(f^\ast)=0$. D'autre part, si $\o'=\o^\ast$ se transfère, alors le développement fin, le point 1 de la proposition \ref{prop:semi-localIOPcomp} et l'hypothèse de récurrence sur l'égalité \eqref{eq:identificcationcoeffaG(S,X)transfert} nous garantissent que $J_{\o^\ast}(f^\ast)=J_{\o}(f)$.

On rappelle que $\pi_{\mathfrak{z}}:\g\to \mathfrak{z}$ est la projection (orthogonale) suivant $\g=\mathfrak{z}\oplus \g_{\ad}$. Fixons dans la suite $v_0\in \underline{S}\setminus\V_{\infty}$ une place. Soit $D_f$ la fonction sur $F_{v_0}^\times$ définie par
\begin{equation}\label{eq:deffonctionD(t)}
D_f(t)=\sum_{\o\in\O^{\g},\pi_{\mathfrak{z}}(\o)=0}J_{\o}(f^{t})-\sum_{\o'\in\O^{\g^\ast},\pi_{\mathfrak{z}^\ast}(\o)=0}J_{\o'}(f^{\ast,t}).    
\end{equation}
On déduit, de la discussion du paragraphe précédent, que
\[D_f(t)=\sum_{\o\in (\mathcal{N}_G(F))}J_{\o}(f^{t})-\sum_{\o'\in (\mathcal{N}_{G^{\ast}}(F))}J_{\o'}(f^{\ast,t}).\]
Grâce au développement fin on obtient, pour $S\supseteq \underline{S}$ assez grand,
\begin{align*}
D_f(t)=&\sum_{M\in\L^G(M_0)}|W_0^M||W_0^G|^{-1}\sum_{Y\in ( \mathcal{N}_M(F))}a^M(S,Y)J_M^G(Y,f^{t})\\
&-\sum_{M'\in\L^{G^\ast}(M_{0^\ast})}|W_{0^\ast}^{M'}||W_{0^\ast}^{G^\ast}|^{-1}\sum_{Y'\in (\mathcal{N}_{M'}(F))}a^{M'}(S,Y')J_{M'}^{G^\ast}(Y',f^{\ast,t}),
\end{align*}
puis l'hypothèse de récurrence et les propositions \ref{lem:Weylcomp} et  \ref{prop:semi-localIOPcomp} nous emmènent à
\[
D_f(t)=\sum_{Y\in (\mathcal{N}_G(F))}\left(a^G(S,Y)-a^{G^\ast}(S,Y^\ast)\right)J_G^G(Y,f^{t}).\]
Ainsi la propriété d'homogénéité des intégrales orbitales nilpotentes (proposition \ref{prop:IOP}) nous dit que $D_f(t)$ est de la forme
\[D_f(t)=\sum_{Y\in (\mathcal{N}_G(F))}D_{f,Y}|t|_{v_0}^{-(1/2)\dim \g/\g_Y},\,\,\,\,\forall t\in F_{v_0}^\times\]
pour $D_{f,Y}=\left(a^G(S,Y)-a^{G^\ast}(S,Y^\ast)\right)J_G^G(Y,f)$.

En parallèle, \`{a} l'aide de la proposition \ref{prop:TFtracefixee} on voit que
\[D_f(t)=\sum_{\o\in\O^{\g},\pi_{\mathfrak{z}}(\o)=0}J_{\o}(\widetilde{f^{t}})-\sum_{\o'\in\O^{\g^\ast},\pi_{\mathfrak{z}^\ast}(\o)=0}J_{\o'}(\widetilde{f^{\ast,t}}).\]
Comme le transfert commute à la transformée partielle de Fourier (proposition \ref{prop:Fouierpcommutetrans}), la même démarche nous conduit à
\[
D_f(t)=\sum_{Y\in (\mathcal{N}_G(F))}\left(a^G(S,Y)-a^{G^\ast}(S,Y^\ast)\right)J_G^G(Y,\widetilde{f^{t}}).\]
D'où
\begin{align*}
D_f(t)&=\sum_{Y\in (\mathcal{N}_G(F))}\left(a^G(S,Y)-a^{G^\ast}(S,Y^\ast)\right)J_{G_\ad}^{G_\ad}(Y,\widehat{f^{t}|_{\g_\ad}}^{\g_\ad})\\
&=\sum_{Y\in (\mathcal{N}_G(F))}\left(a^G(S,Y)-a^{G^\ast}(S,Y^\ast)\right)J_{G_\ad}^{G_\ad}\left(Y,|t|_{v_0}^{-\dim \g_{\ad}}(\widehat{f|_{\g_\ad}}^{\g_\ad})^{ t^{-1}}\right).
\end{align*}
Cette fois-ci la propriété d'homogénéité nous dit que $D(t)$ est de la forme
\[D(t)=\sum_{Y\in (\mathcal{N}_G(F))}D_{f,Y}'|t|_{v_0}^{-\dim \g_\ad+(1/2)\dim \g/\g_Y}\]
pour  $D_{f,Y}'=\left(a^G(S,Y)-a^{G^\ast}(S,Y^\ast)\right)J_G^G(Y,\widetilde{f})$.

Pourtant, pour tous $Y_1,Y_2\in\mathcal{N}_G(F)$ on a
\[-\frac{1}{2}\dim \g/\g_{Y_1}>-\dim \g_\ad+\frac{1}{2}\dim \g/\g_{Y_2}.\]
En effet cela équivaut à
\[\frac{1}{2}\dim\g_{Y_1}+\frac{1}{2}\dim\g_{Y_2}>\dim \mathfrak{z}.\]
Le membre de gauche est au moins le rang réductif de $G$, qui est strictement supérieur au membre de droite si $G$ n'est pas commutatif. En raison de l'incompatibilité d'homogénéité il vient $D_{f,Y}=0$ pour tous $Y,f$, autrement dit
\[\left(a^G(S,Y)-a^{G^\ast}(S,Y^\ast)\right)J_G^G(Y,f)=0\]
pour tous $Y,f$. L'égalité souhaitée vient évidemment de la non-nullité des intégrales orbitales invariantes des éléments nilpotents (proposition \ref{prop:IOP}). Le cas général $X_\ss\in\mathfrak{z}(F)$ se déduit de la même manière en remplaçant $f$ par $f_{X_\ss}$.
\end{proof}

\begin{remark}
Une autre façon de raisonner, est de regarder les distributions tempérées invariantes
\[D(f_{v_0})=\sum_{Y\in (\mathcal{N}_G(F))}\left[\left(a^G(S,Y)-a^{G^\ast}(S,Y^\ast)\right)\prod_{v\not=v_0}J_{G_{\ad,v}}^{G_{\ad,v}}(Y_v,f_v)\right]J_{G_{\ad,v_0}}^{G_{\ad,v_0}}(Y_{v_0},f_{v_0})\]
et
\[D'(f_{v_0})=\sum_{Y\in (\mathcal{N}_G(F))}\left[\left(a^G(S,Y)-a^{G^\ast}(S,Y^\ast)\right)\prod_{v\not=v_0}J_{G_{\ad,v}}^{G_{\ad,v}}(Y_v,\widehat{f_v}^{\g_{\ad,v}})\right]J_{G_{\ad,v_0}}^{G_{\ad,v_0}}(Y_{v_0},f_{v_0})\]
sur $\g_{\ad}(F_{v_0})$, toutes les deux supportées dans le cône nilpotent. On a $D=\widehat{D'}^{\g_{\ad,v_0}}$. Or une distribution à support nilpotent ainsi que sa transformée de Fourier est nulle. En effet le théorème de régularité de Harish-Chandra proclame que $D$ est représentée par une fonction, analytique sur $\g_{\ad,\rss}(F_{v_0})$ (si $v_0$ est archimédienne, la distribution $D'$ étant supportée dans le cône nilpotent, sa transformée de Fourier est $\mathcal{U}(\mathfrak{g}_{\ad}(F_{v_0})_\C)$-finie et on applique \cite[I.5]{Vara77}, ici $\mathcal{U}(\mathfrak{g}_{\ad}(F_{v_0})_\C)$ est bien sûr l'algèbre enveloppante de $\mathfrak{g}_{\ad}(F_{v_0})_\C$ ; si $v_0$ est non-archimédienne il existe $\Omega$ une partie compacte de $\g_{\ad}(F_{v_0})$ telle que $D'$ est supportée dans la clôture de $(\Ad G_{\ad}(F_{v_0}))\Omega$ et on applique \cite[théorème 4.4]{HCbook}). Cette fonction s'annule sur le lieu semi-simple régulier puisqu'elle est supportée dans le cône nilpotent. On conclut encore une fois par la non-nullité des intégrales orbitales invariantes nilpotentes.
\end{remark}

\begin{theorem}\label{thm:finalres'}On a
\begin{align*}
    J_{\o'}^{\g^\ast}(f^\ast)=\begin{dcases*}
    J_\o^\g(f) &, \text{si $\o'=\o^\ast$ ;} \\
    0 &, \text{si $\o'$ ne se transfère pas.}
    \end{dcases*}
    \end{align*}
\end{theorem}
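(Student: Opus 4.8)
Le point de départ est le développement fin de la formule des traces pour les algèbres de Lie (théorème \ref{thm:devfin}), appliqué simultanément à $G$ et à $G^\ast$. Fixons $\S(\g(\A_F))\ni f\arr f^\ast\in \S(\g^\ast(\A_F))$ deux tenseurs purs. Comme $f$ (resp. $f^\ast$) provient d'une fonction de $\S(\g(F_S))$ (resp. $\S(\g^\ast(F_S))$) pour $S$ fini assez grand, on peut appliquer le théorème \ref{thm:devfin} à $\o$ et à $\o'=\o^\ast$ : cela écrit $J_\o^\g(f)$ et $J_{\o^\ast}^{\g^\ast}(f^\ast)$ comme des sommes pondérées sur les sous-groupes de Levi $M$ (resp. $M'$) de termes $a^M(S,X)J_M^G(X,f)$ (resp. $a^{M'}(S,X')J_{M'}^{G^\ast}(X',f^\ast)$). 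Il s'agira alors de comparer ces deux développements terme à terme, en utilisant la bijection $M\mapsto M^\ast$ sur $\L^G(M_0)$, la compatibilité des groupes de Weyl relatifs (lemme \ref{lem:Weylcomp}), la compatibilité de l'induite de Lusztig–Spaltenstein généralisée avec le transfert (invoquée dans l'excerpt via la proposition \ref{prop:induitecommuteautransfert}), l'identification des coefficients $a^M(S,X)=a^{M^\ast}(S,X^\ast)$ (théorème \ref{thm:identificcationcoeffaG(S,X)transfert}) et la comparaison des intégrales orbitales pondérées semi-locales (proposition \ref{prop:semi-localIOPcomp}).

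\textbf{Premier cas : $\o'=\o^\ast$.} On choisit $S\supseteq \underline{S}$ assez grand pour que le développement fin soit valable à la fois pour $\o$ et pour $\o^\ast$, et pour que s'appliquent les théorèmes \ref{thm:identificcationcoeffaG(S,X)transfert} et la proposition \ref{prop:semi-localIOPcomp}. Dans le développement de $J_{\o^\ast}^{\g^\ast}(f^\ast)$, on somme sur $M'\in \L^{G^\ast}(M_{0^\ast})$ et sur les classes $X'\in \O^{\m'}$ avec $\Ind_{M'}^{G^\ast}(X')=\o^\ast$. Le point 2 de la proposition \ref{prop:semi-localIOPcomp} annule tous les termes sauf ceux où $M'=M^\ast$ se transfère et où $X'=X^\ast$ se transfère à $M$ ; mais si $M'$ ne se transfère pas ou si $X'$ ne se transfère pas, le terme est nul, donc seuls subsistent les $M'=M^\ast$ avec $M\in\L^G(M_0)$ et les $X'=X^\ast$ transfert d'un $X\in\O^\m$. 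Pour un tel couple, la condition $\Ind_{M^\ast}^{G^\ast}(X^\ast)=\o^\ast$ équivaut à $\Ind_M^G(X)=\o$ puisque l'induite commute au transfert, d'où une bijection entre les indices de sommation des deux développements. Enfin $|W_0^{M^\ast}|=|W_0^M|$ et $|W_0^{G^\ast}|=|W_0^G|$ par le lemme \ref{lem:Weylcomp}, $a^{M^\ast}(S,X^\ast)=a^M(S,X)$ par le théorème \ref{thm:identificcationcoeffaG(S,X)transfert} (valable pour $S$ assez grand dépendant de $X_\ss$, donc dépendant de $\o_\ss$), et $J_{M^\ast}^{G^\ast}(X^\ast,f^\ast)=J_M^G(X,f)$ par le point 1 de la proposition \ref{prop:semi-localIOPcomp}. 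On en conclut $J_{\o^\ast}^{\g^\ast}(f^\ast)=J_\o^\g(f)$.

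\textbf{Second cas : $\o'$ ne se transfère pas.} On applique le théorème \ref{thm:devfin} à $\o'$ : $J_{\o'}^{\g^\ast}(f^\ast)=\sum_{M'}|W_{0^\ast}^{M'}||W_{0^\ast}^{G^\ast}|^{-1}\sum_{X'\,:\,\Ind_{M'}^{G^\ast}(X')=\o'}a^{M'}(S,X')J_{M'}^{G^\ast}(X',f^\ast)$. Il suffit de montrer que chaque intégrale orbitale pondérée semi-locale $J_{M'}^{G^\ast}(X',f^\ast)$ apparaissant s'annule, ce qui par le point 2 de la proposition \ref{prop:semi-localIOPcomp} est le cas dès que $M'$ ne se transfère pas à $G$, ou que $M'=M^\ast$ se transfère mais $X'$ ne se transfère pas à $M$. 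Or si l'on avait $M'=M^\ast$ et $X'=X^\ast$ transfert d'un $X\in\O^\m$, alors $\o'=\Ind_{M^\ast}^{G^\ast}(X^\ast)=(\Ind_M^G(X))^\ast$ se transférerait (l'induite commutant au transfert), contredisant l'hypothèse. Donc tous les termes sont nuls et $J_{\o'}^{\g^\ast}(f^\ast)=0$.

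\textbf{Point délicat.} La principale subtilité est de s'assurer que le même $S$ (assez grand) convient pour toutes les invocations : le développement fin pour $\o$ et pour $\o'$ (qui fournissent chacun un $S_\o$ et un $S_{\o'}$), l'identification des coefficients $a^G(S,\o)=a^{G^\ast}(S,\o^\ast)$ qui requiert $S$ assez grand dépendant de $\o_\ss$, et les hypothèses $S\supseteq\underline{S}$ de la proposition \ref{prop:semi-localIOPcomp} ; on prend simplement le maximum de ces ensembles finis. Une fois ce choix fait, l'argument est purement formel : il consiste à transporter les indices de sommation d'un côté à l'autre via les bijections fournies par $\eta$ et à substituer les égalités terme à terme. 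Aucune difficulté analytique nouvelle n'intervient, tout le travail ayant été accompli dans les théorèmes \ref{thm:devfin}, \ref{thm:identificcationcoeffaG(S,X)transfert} et la proposition \ref{prop:semi-localIOPcomp}.
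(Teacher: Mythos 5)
Your proposal is correct and follows exactly the route the paper intends: the paper's proof of Theorem~\ref{thm:finalres'} is a one-sentence sketch (``en manipulant le théorème précédent, le développement fin, les propositions~\ref{lem:Weylcomp} et~\ref{prop:semi-localIOPcomp}''), and what you have written is precisely the honest expansion of that sketch, down to invoking the commutation of induction with transfer to match the indexing sets of the two fine expansions and the uniform choice of $S$ large enough for all the finitely many ingredients. One small remark: in the first case you say the vanishing in Proposition~\ref{prop:semi-localIOPcomp}~(2) eliminates the terms of the $G^\ast$-side where $M'$ or $X'$ fails to transfer, but by Proposition~\ref{prop:indtransssibothtrans} (appendix A) the condition $\Ind_{M'}^{G^\ast}(X')=\o^\ast$ already forces $M'$ and $X'$ to transfer, so there are in fact no such terms to eliminate in that case; the vanishing is only truly needed in the second case, where you invoke it correctly. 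Either formulation gives the same conclusion.
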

\begin{proof}
En manipulant le théorème précédent, le développement fin, les propositions \ref{lem:Weylcomp} et  \ref{prop:semi-localIOPcomp}, on démontre ce théorème. 
\end{proof}

On rappelle que $J_\geom^G$ (resp. $J_\geom^{G^\ast}$) est le côté géométrique de la formule des traces de $G$ (resp. $G^\ast$), donné par les équations \eqref{eqdef:cotegeomTFGrpconto} et \eqref{eqdef:cotegeomFTGrp}. 

\begin{theorem}\label{thm:finalres}On a 
\[J_{\text{géom}}^G(f)=J_{\text{géom}}^{G^\ast}(f^\ast).\]
\end{theorem}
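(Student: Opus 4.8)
Le plan est de déduire l'énoncé des deux résultats principaux déjà établis — le théorème \ref{thm:finalres'}, qui compare fibre à fibre les contributions géométriques $J_\o^\g$ et $J_{\o'}^{\g^\ast}$, et la proposition \ref{prop:Lie=Grp}, qui identifie la contribution du groupe $J_\o^G$ à celle de l'algèbre de Lie $J_\o^\g$ pour $\o\in\O^G$ — conjugués à la combinatoire du transfert des classes de conjugaison.

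D'abord je noterais que toutes les sommes en jeu convergent absolument : d'après la formule des traces pour les algèbres de Lie (théorème \ref{thm:FTLiealg}) et le théorème \ref{thm:classconjdisJ}, la somme $\sum_{\o\in\O^\g}J_\o^\g(f)$ converge absolument, donc a fortiori sa sous-somme sur la partie $\O^G\subseteq\O^\g$ ; la proposition \ref{prop:Lie=Grp} donne alors $J_\o^G(f)=J_\o^\g(f)$ pour tout $\o\in\O^G$, d'où $J_\geom^G(f)=\sum_{\o\in\O^G}J_\o^\g(f)$, et symétriquement $J_\geom^{G^\ast}(f^\ast)=\sum_{\o'\in\O^{G^\ast}}J_{\o'}^{\g^\ast}(f^\ast)$.

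Ensuite j'établirais une bijection entre $\O^G$ et l'ensemble des classes de $\O^{G^\ast}$ qui se transfèrent à $G$. Pour $\o\in\O^G$, vue comme partie de $\O^\g$, l'ensemble $\eta((\Ad G(\overline F))\o)$ est une unique $G^\ast(\overline F)$-orbite contenue dans $G^\ast(\overline F)$ — car $\eta$ se restreint en un isomorphisme $G_{\overline F}\to G_{\overline F}^\ast$ — et elle est définie sur $F$ ; le point 9 de la proposition \ref{pro:bontype} montre qu'elle contient un $F$-point, donc ses $F$-points forment une classe $\o^\ast\in\O^{G^\ast}$ avec $\o\arr\o^\ast$. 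L'injectivité de $\o\mapsto\o^\ast$ résulte de celle de $\eta$, et la surjectivité sur les classes qui se transfèrent vient du fait que l'image réciproque par $\eta$ d'une $G^\ast(\overline F)$-orbite dans $G^\ast(\overline F)$ est contenue dans $G(\overline F)$. Il restera à scinder $\sum_{\o'\in\O^{G^\ast}}J_{\o'}^{\g^\ast}(f^\ast)$ en la partie se transférant et la partie ne se transférant pas ; cette dernière s'annule terme à terme par le deuxième cas du théorème \ref{thm:finalres'}, tandis que la première vaut $\sum_{\o\in\O^G}J_{\o^\ast}^{\g^\ast}(f^\ast)=\sum_{\o\in\O^G}J_\o^\g(f)$ par le premier cas. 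On obtiendra $J_\geom^{G^\ast}(f^\ast)=\sum_{\o\in\O^G}J_\o^\g(f)=J_\geom^G(f)$, valable pour les normalisations de mesures de la remarque \ref{rem:finalremarkonmeasures}.

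L'obstacle principal n'est pas de nature analytique — tout le travail difficile ayant été accompli dans le théorème \ref{thm:finalres'} — mais plutôt la vérification soigneuse du paragraphe précédent : il faut s'assurer que le passage par $\eta$ envoie les classes de conjugaison d'éléments du groupe sur des classes de conjugaison d'éléments du groupe (afin que la proposition \ref{prop:Lie=Grp} s'applique des deux côtés), que l'application d'indexation $\O^G\to\O^{G^\ast}$ est exactement une bijection sur les classes qui se transfèrent (aucune classe se transférant de $G^\ast$ n'est oubliée et aucune n'est atteinte deux fois), et que les normalisations de mesures fixées en \ref{subsec:normalisationmesuresfinales} rendent $J_\o^\g$ et $J_{\o^\ast}^{\g^\ast}$ littéralement égales et non pas égales à une constante près. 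La convergence absolue des sommes réarrangées, nécessaire pour justifier les regroupements, fait aussi partie de cette vérification et est fournie par le théorème \ref{thm:FTLiealg}.
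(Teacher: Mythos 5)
Votre démonstration est correcte et suit exactement la même voie que celle du texte, qui se contente d'affirmer que le théorème « est une conséquence du théorème précédent et de la proposition \ref{prop:Lie=Grp} ». Vous explicitez simplement les détails laissés implicites — la bijection entre $\O^G$ et les classes de $\O^{G^\ast}$ qui se transfèrent (via le point 9 de la proposition \ref{pro:bontype}), le scindage de la somme selon le transfert, et la convergence absolue garantissant les réarrangements.
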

\begin{proof}
C'est une conséquence du théorème précédent et la proposition \ref{prop:Lie=Grp}.
\end{proof}

\begin{remark}\label{rem:finalremarkonmeasures}
Certaines normalisations des mesures fixées au début de la section s'avèrent désormais superflues pour les résultats capitaux de cet article, à savoir les théorèmes \ref{thm:finalres'} et \ref{thm:finalres}. Conscients du caractère potentiellement gênant de ces redondances, nous les éliminons en vue des besoins futurs. 

Les théorèmes \ref{thm:finalres'} et \ref{thm:finalres} sont valables sous les normalisations suivantes :

\begin{enumerate}[label=(\roman*)]
    \item pour tout $v\in (\V_F\setminus \underline{S})\cup \V_{\infty}$, les mesures sur $G_v(F_v)$ et $G_v^\ast(F_v)$ se correspondent par l'isomorphisme $(\Ad x_v)\circ\eta_v|_{G_v(F_v)}$ ;
    \item pour tout $P\in \F^G$ ou $\F^{G^\ast}$, $\vol(N_P(F)\backslash N_P(\A_F))=1$ ;
    \item pour tous $v\in \underline{S}$,  $M\in\L^G(M_{0,v})$, le torseur intérieur $\eta_v$ induit un isomorphisme d'espaces  vectoriels réels $a_{M}\xrightarrow{\sim} a_{M^\ast}$, on veut que les mesures sur $a_{M}$ et $a_{M^\ast}$ se correspondent par cet isomorphisme ;
    \item le torseur intérieur $\eta$ induit un isomorphisme d'espaces  vectoriels réels $a_{G}\xrightarrow{\sim} a_{G^\ast}$, on veut que les mesures sur $a_{G}$ et $a_{G^\ast}$ se correspondent par cet isomorphisme, ainsi la mesure sur $A_{G,\infty}$ correspondent à celle sur $A_{G^\ast,\infty}$ par $\eta$ ;
    \item pour tous $v\in \underline{S}$, si $T$ est un tore maximal de $G_v$ et $T^\ast$ un tore maximal de $G_v^\ast$, et $y\in G_v^\ast(\overline{F_v})$ est tel que $(\Ad y)\circ\eta|_T$ soit un isomorphisme défini sur $F_v$ de $T$ sur $T^\ast$, alors les mesures sur $T(F_v)$ et $T^\ast(F_v)$ se correspondent par cet isomorphisme.
\end{enumerate}
\end{remark}

\appendix

\section{Classes de conjugaison d'une algèbre séparable sur un corps parfait}\label{sec:AppendixA}
Dans cet annexe on décrit l'ensemble des classes de conjugaison d'une algèbre séparable définie sur un corps parfait via la théorie des diviseurs élémentaires, et on dépeint son lien avec l'induite de Lusztig-Spaltenstein généralisée. Enfin on explique le tranfert des classes de conjugaison d'une algèbre séparable aux celles de sa forme intérieure quasi-déployée.

Il est manifeste que la problématique devrait s'ancrer dans la théorie élémentaire de l'algèbre linéaire. Malgré cette orientation, certains outils nécessaires à notre propos semblent manquer d'une documentation adéquate dans la littérature, c'est ainsi que nous choisissons d'adjoindre cette partie du contenu.

\subsection{Diviseurs élémentaires}
Notre référence principale de ce numéro est \cite{Yu13}. Dans cette sous-section $F$ désigne un corps parfait. Dans un premier temps on ne considère que des groupes abstraits (resp. anneaux abstraits, modules abstraits...) mais non groupes algébriques (resp. anneaux algébriques, modules algébriques...).

\begin{definition}Soit $A$ une algèbre simple centrale sur $F$. On écrit $A=\Mat_m(D)$ avec $n$ un entier positif et $D$ une algèbre à division centrale sur $F$. L'entier $n$ est uniquement déterminée par $A$ ; l'algèbre $D$ est uniquement déterminée à isomorphisme d'algèbres associatives près.
\begin{enumerate}
    \item On définit respectivement le degré de $A$, la capacité de $A$, et l'indice de $A$ par
    \[\deg_F(A)\eqdef \sqrt{\dim_FA},\,\,\,\,\,\,\,\,c_F(A)\eqdef m,\,\,\,\,\,\,\,\, \ind_F(A)\eqdef \sqrt{\dim_FD}. \]
    \item Soit $X\in A=\Mat_n(D)$. Le polynôme caractéristique de $X$ est défini comme le polynôme caractéristique de son image par l'injection
    \[A\longrightarrow A\otimes_F\overline{F}\simeq \Mat_{md}(\overline{F})\]
    avec $\overline{F}$ une clôture algébrique de $F$ et $d$ le degré de $D$. Le polynôme obtenu ne dépend pas du choix de l'isomorphisme $A\otimes_F\overline{F}\simeq \Mat_{md}(\overline{F})$ et est défini sur $F$.
\end{enumerate}
\end{definition}

Désormais $A=\text{End}_{D}(V)$ est une algèbre simple centrale sur $F$, avec $D$ une $F$-algèbre à division et $V$ un espace vectoriel à droite sur $D$ de dimension $m$. Posons $d\eqdef \deg_F(D)$.

Soit $X\in A$. Notons $F[X]\subseteq A$ la sous-algèbre engendrée par $X$, c'est un anneau artinien. Décomposons $m_X(T)$ le polynôme minimal de $X$ en un produit de polynômes irréductibles $m_X(T)=\prod_{p\in \Irr_F}p^{e_p}$ avec $e_p$ des entiers positifs et $\Irr_F$ l’ensemble des polynômes irréductibles non-constants de coefficient dominant 1 de $F[T]$. \'{E}crivons $F_p=F[T]/(p)$ et $\widetilde{F_p}=F[T]/(p^{e_p})$. On a la décomposition de $F[X]$ en un produit d'anneaux locaux artiniens
\begin{equation}\label{eq:2.1}
F[X]=F[T]/(m_X(T))=\prod_{p\in \Irr_F} \widetilde{F_p}.
\end{equation}
Pour $p$ tels que $e_p\geq 1$ l'anneau $\widetilde{F_p}$ est en plus un anneau local noetherien complet de corps résiduel $F_p$, il existe donc un monomorphisme d'anneaux $s_p : F_p\rightarrow \widetilde{F_p}$ tel que $\pi_p \circ s_p = \Id_{F_p}$, où $\pi_p :\widetilde{F_p}\rightarrow F_p$ est la projection naturelle. Prenons $\varepsilon_p\in \widetilde{F_p}$ un générateur de l'idéal maximal de $\widetilde{F_p}$, on a $\widetilde{F_p}=s_p(F_p)[\varepsilon_p]/(\varepsilon_p^{e_p})$.

On sait que l'espace vectoriel $V$ est un $(F[X],D)$-bimodule, ou, de manière équivalente un $D\otimes_F F[X]$-module à droite. La décomposition \eqref{eq:2.1} fournit une décomposition de $V$ en une somme directe de sous-$D$-modules $V_p$ sur lequel la $F$-algèbre $\widetilde{F_p}$ agit de façon fidèle
\[V=\bigoplus_{p\in \Irr_F}V_p,\,\,\,\,\,\,\,\,m_p\eqdef\dim_D V_p\geq 0,\,\,\,\,\,\,\,\,\sum_{p\in \Irr_F} m_p=m.\] 
On peut et on va choisir l'application $s_p$ telle que $F\subseteq s_p(F_p)$ par un théorème de Cohen (cf. \cite[p.354]{Yu13}). Supposons que $D\otimes_F s_p(F_p)=\Mat_{c_p}(D_p)$ avec $c_p$ la capacité de $D\otimes_F s_p(F_p)$ et $D_p$ une $s_p(F_p)$-algèbre à division. Chaque $V_p$ est un $D\otimes_F\widetilde{F_p}$-module à droite. On a 
\[D\otimes_F\widetilde{F_p}= (D\otimes_F s_p(F_p))\otimes_{s_p(F_p)}\widetilde{F_p}=\Mat_{c_p}(\widetilde{D_p})\]
avec
\[\widetilde{D_p}\eqdef D_p\otimes_{s_p(F_p)}\widetilde{F_p}=D_p[\varepsilon_p]/(\varepsilon_p^{e_p}).\]
Par l'équivalence de Morita on en déduit $V_p=W_p^{\oplus c_p}$ pour $W_p$ un $\widetilde{D_p}$-module. Puisque $\widetilde{D_p}$ est un anneau principal (un anneau est dit principal si tout idéal à gauche ou à droite est principal, donc ici anneau principal correspond à « principal ideal ring » en anglais, et non à « principal ideal domain ») 
on obtient un isomorphisme de $\widetilde{D_p}$-modules 
\begin{equation}\label{eq:3.2}
    W_p=\bigoplus_{j=1}^{t_p}W_{p,j},\,\,\,\,W_{p,j}\eqdef D_p[\varepsilon_p]/(\varepsilon_p^{v_j})
\end{equation} 
pour $t_p\in \mathbb{N}_{>0}$ et $0\leq v_1\leq\cdots\leq v_{t_p}\leq e_p$ des entiers positifs. Comme $\widetilde{F_p}$ agit fidèlement sur $W_p$, on a $v_{t_p} = e_p$. La décomposition \eqref{eq:3.2} donne la décomposition de $V$ en composantes invariantes.

\begin{theorem}[{{\cite[théorème 4.]{Yu13}}}]\label{thm:chara}
Soit $F$ un corps parfait. Soit $f(T)\in F[T]$ un polynôme de coefficient dominant 1 et de degré $md$, avec $f(T)=\prod_{p\in \Irr_F} p^{a_p}$. \'{E}crivons $F_p=F[T]/(p)$. Alors $f(T)$ est le polynôme caractéristique d'un élément de $A$ si et seulement si pour tout $p\in \Irr_F$ on a
\begin{enumerate}
    \item $a_p\deg p=m_pd$ pour $m_p$ un entier positif ;
    \item $[F_p:F]\mid m_p c(D\otimes_F F_p)$.
\end{enumerate}
\end{theorem}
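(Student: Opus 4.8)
Le résultat est une caractérisation des polynômes caractéristiques d'éléments de $A=\End_D(V)$. L'approche est d'analyser, pour chaque polynôme irréductible $p$ apparaissant dans la décomposition $f=\prod_p p^{a_p}$, l'obstruction à réaliser le « bloc » correspondant comme polynôme caractéristique d'un opérateur agissant sur un sous-$D$-module de $V$, en exploitant la structure de bimodule rappelée au-dessus du théorème.

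\emph{Première étape (nécessité).} Soit $X\in A$ de polynôme caractéristique $f$. On reprend la décomposition $V=\bigoplus_p V_p$ associée à la factorisation du polynôme minimal $m_X$, puis $V_p = W_p^{\oplus c_p}$ via l'équivalence de Morita, et enfin la décomposition $W_p=\bigoplus_{j=1}^{t_p} D_p[\varepsilon_p]/(\varepsilon_p^{v_j})$. On calcule le polynôme caractéristique composante par composante après extension des scalaires à $\overline{F}$. Le point clé est que, sur $V_p$, l'action de $F[X]$ se factorise par $\widetilde{F_p}=F[T]/(p^{e_p})$, dont le seul facteur irréductible sur $\overline{F}$ provient de $p$ ; le polynôme caractéristique de $X|_{V_p}$ est donc une puissance de $p$, disons $p^{a_p}$, avec $a_p\deg p = d\cdot\dim_D V_p = d\, m_p$, ce qui est la condition (1). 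Pour la condition (2), on observe que $V_p$ est naturellement un $F_p$-espace vectoriel (via $s_p$, en réduisant $\varepsilon_p$ à $0$, ou plus directement en considérant le gradué associé à la filtration par les puissances de $\varepsilon_p$) qui est aussi un $D\otimes_F F_p$-module ; comme $D\otimes_F F_p\simeq \Mat_{c(D\otimes_F F_p)}(D_p)$, la dimension d'un tel module sur $F_p$ est un multiple de $[F_p:F]\cdot c(D\otimes_F F_p)$. En confrontant avec $\dim_F V_p = m_p d\cdot[\text{quelque chose}]$, ou plus proprement en comptant $\dim_{F_p}(W_p\otimes_{s_p(F_p)} F_p)$, on obtient la divisibilité $[F_p:F]\mid m_p\, c(D\otimes_F F_p)$.

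\emph{Deuxième étape (suffisance).} Réciproquement, étant donné $f=\prod_p p^{a_p}$ vérifiant (1) et (2), on construit $X$ en le définissant bloc par bloc. Pour chaque $p$, la condition (2) garantit qu'il existe un $D\otimes_F F_p$-module (à droite) $M_p$ de $F_p$-dimension convenable, donc un $D$-module sous-jacent dont on peut ajuster la $D$-dimension pour qu'elle vaille $m_p$ ; on munit $M_p$ de l'endomorphisme $D$-linéaire donné par la multiplication par la classe de $T$, qui a pour polynôme minimal $p$ et pour polynôme caractéristique une puissance de $p$. En prenant une somme directe convenable de tels blocs (éventuellement avec multiplicités et en passant à $\widetilde{F_p}$ pour ajuster les exposants si $e_p>1$), on obtient un $D$-module de dimension $\sum_p m_p = m$, donc isomorphe à $V$, muni d'un endomorphisme de polynôme caractéristique $f$. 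La condition (1), $a_p\deg p = m_p d$, assure que les degrés se recollent correctement.

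\emph{Principal obstacle.} La difficulté n'est pas conceptuelle mais tient au bon bookkeeping des dimensions à travers les trois couches (extension $s_p : F_p\to\widetilde{F_p}$, équivalence de Morita $D\otimes_F s_p(F_p)=\Mat_{c_p}(D_p)$, et décomposition en $\widetilde{D_p}$-modules), en veillant à ce que $c(D\otimes_F F_p)$ et $\deg_{F_p}(D_p)$ se multiplient pour redonner $d/[?]$ ; il faut aussi s'assurer que la condition (2) est exactement ce qui rend possible l'existence d'un $\widetilde{D_p}$-module fidèle de la taille voulue, sans quoi la suffisance échoue. Une fois ces comptes posés — essentiellement les identités $\dim_F(D\otimes_F F_p)=d^2[F_p:F] = c(D\otimes_F F_p)^2\deg_{F_p}(D_p)^2[F_p:F]$ combinées au fait que tout module sur $\Mat_c(D_p)$ est libre sur $D_p^{\oplus c}$ — le reste est routinier. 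Je renvoie à \cite[théorème 4.]{Yu13} pour les détails, le présent argument n'en étant qu'une reformulation adaptée à notre contexte.
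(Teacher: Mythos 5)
Le papier ne démontre pas ce théorème : il est cité de \cite{Yu13} sans preuve. Votre esquisse suit la même approche que l'article source — décomposition du $(F[X],D)$-bimodule $V$ selon les facteurs primaires du polynôme minimal, équivalence de Morita pour $D\otimes_F\widetilde{F_p}=\Mat_{c_p}(\widetilde{D_p})$, puis classification des $\widetilde{D_p}$-modules —, donc la stratégie est correcte. Mais telle qu'écrite ce n'est pas une démonstration : vous laissez un « $[\text{quelque chose}]$ » explicite à l'étape de comptage dimensionnel qui produit la condition (2), et vous renvoyez vous-même à Yu pour les détails. Or c'est précisément ce comptage qui constitue le contenu non trivial de l'énoncé ; il ne peut pas rester implicite.

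Voici le trou à combler. On a $\dim_F V_p=m_p d^2$, et $V_p=W_p^{\oplus c_p}$ avec $\dim_{D_p}W_p=\sum_j v_j$. La comparaison des $F$-dimensions de $D\otimes_F F_p\simeq\Mat_{c_p}(D_p)$ donne $d=c_p\deg_{F_p}(D_p)$, donc $\dim_F D_p=(d/c_p)^2[F_p:F]$. Il s'ensuit
\[
\sum_j v_j=\frac{\dim_F V_p}{c_p\,\dim_F D_p}=\frac{m_p d^2}{c_p\,(d/c_p)^2\,[F_p:F]}=\frac{m_p c_p}{[F_p:F]}\in\Z,
\]
ce qui est exactement la condition (2). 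Pour la suffisance, votre construction bloc par bloc fonctionne mais est inutilement alourdie : il n'y a pas à « passer à $\widetilde{F_p}$ pour ajuster les exposants si $e_p>1$ » ; il suffit de poser $s=m_pc_p/[F_p:F]\in\mathbb{N}_{>0}$ et de prendre pour $V_p$ la somme de $s$ copies du $D\otimes_F F_p$-module simple, sur laquelle la classe de $T$ agit avec polynôme minimal $p$. Le même calcul de dimensions donne alors $\dim_D V_p=m_p$, et le polynôme caractéristique du bloc est $p^{a_p}$ grâce à (1).
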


\begin{remark}
Lorsque $F$ est un corps local, la seconde condition peut être négligée : soit $f(T)=\prod_{p\in\Irr_F} p^{a_p}$ un polynôme comme dans l'énoncé vérifiant $a_p\deg p=m_pd$ pour tout $p$. Par la théorie des corps de classes locaux on a $\text{inv}(D\otimes_F F_p)=[F_p:F]\text{inv}(D)$, où $\text{inv}$ est l'invariant local. Ainsi
\[c(D\otimes_F F_p)=\text{pgcd}(d,[F_p:F]).\]
La seconde condition est de ce fait équivalente à
\[[F_p:F]\mid\frac{a_p\deg p}{d}\text{pgcd}(d,[F_p:F]),\]
soit
\[d\mid a_p\text{pgcd}(d,[F_p:F]),\]
ce qui est une conséquence de la première condition.
\end{remark}

Pour $p\in \Irr_F$ on fixe $D_p$ une $F_p$-algèbre à division telle que $D\otimes_F F_p\simeq \Mat_{c(D\otimes_F F_p)}(D_p)$. Une partition $\lambda=(\lambda_1,\dots,\lambda_k)$ est une suite finie décroissante d'entiers positifs, et on pose $|\lambda|\eqdef \lambda_1+\cdots+\lambda_k$. Par convention $|\lambda| = 0$ si $\lambda = \emptyset$.

\begin{theorem}[{{\cite[théorème 9.]{Yu13}}}]\label{thm:Yu9}
L'application qui à $X\in A$ associe son polynôme caractéristique et les partitions des $\dim_{D_p}W_p$ par l'équation \eqref{eq:3.2} induit une bijection entre l'ensemble des classes de $A^\times$-conjugaison dans $A$ et l'ensemble des fonctions $\lambda:\Irr_F\rightarrow\{\text{partitions des entiers}\}$ telle que
\[\sum_{p\in \Irr_F
}\deg(p)|\lambda(p)|\deg_{F_p}(D_p)=\deg_{F}(A).\]
\end{theorem}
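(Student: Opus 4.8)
The plan is to recast the statement as a classification of modules and then apply the structure theory of modules over Artinian chain rings; the argument below reproduces that of \cite[théorème 9]{Yu13}. The first step is to translate conjugacy into module isomorphism: giving an element $X\in A=\operatorname{End}_D(V)$ amounts to endowing $V$ with the structure of a finitely generated torsion module over $D\otimes_F F[T]$ with $T$ acting as $X$ (this uses only that $X$ is $D$-linear), and two such $X$ are $A^\times$-conjugate exactly when the corresponding modules are isomorphic. Since $V$ is determined up to $D$-isomorphism by $\dim_D V=m$, the set of $A^\times$-conjugacy classes in $A$ is in bijection with the set of isomorphism classes of finitely generated torsion $D\otimes_F F[T]$-modules $M$ with $\dim_D M=m$; each such $M$ genuinely arises from some $X$ because $M\vert_D\cong V$.

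Next I would carry out primary decomposition over the central subring $F[T]$. For $M$ as above one has $\operatorname{Ann}_{F[T]}(M)=(m_X)$, and writing $m_X=\prod_p p^{e_p}$ the Chinese Remainder Theorem yields $F[X]=\prod_p\widetilde F_p$ and a decomposition $M=\bigoplus_p M_p$ with $M_p$ killed by $p^{e_p}$ (this is $V=\bigoplus_p V_p$, $m_p=\dim_D V_p$). Each $M_p$ is a module over $D\otimes_F\widetilde F_p$. Because $F$ is perfect, the residue field $F_p$ of $\widetilde F_p$ is separable over $F$, so Cohen's structure theorem provides a coefficient field and an isomorphism $\widetilde F_p\cong F_p[\varepsilon]/(\varepsilon^{e_p})$; separability also ensures that $D\otimes_F F_p$ stays central simple over $F_p$, hence $\cong\operatorname{Mat}_{c_p}(D_p)$ for a division algebra $D_p/F_p$ with $c_p\deg_{F_p}(D_p)=d$. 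Consequently $D\otimes_F\widetilde F_p\cong\operatorname{Mat}_{c_p}(\widetilde D_p)$ with $\widetilde D_p=D_p[\varepsilon]/(\varepsilon^{e_p})$.

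Then I would use Morita equivalence to replace $M_p$ by the associated $\widetilde D_p$-module $W_p$ and classify the latter. The ring $\widetilde D_p$ is a local Artinian chain ring with principal maximal ideal $(\varepsilon)$ and residue division ring $D_p$, so $W_p$ decomposes as $\bigoplus_j D_p[\varepsilon]/(\varepsilon^{v_j})$ with $1\le v_1\le\cdots\le v_{t_p}\le e_p$ (equation \eqref{eq:3.2}), and by the Krull--Schmidt theorem---the summands have local endomorphism rings---the multiset $\{v_j\}$, i.e. a partition $\lambda(p)$ of $\dim_{D_p}W_p$, is a complete isomorphism invariant of $W_p$; faithfulness of $\widetilde F_p$ on $M_p$ forces $v_{t_p}=e_p$. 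This shows $X\mapsto(\lambda(p))_p$ is well defined independently of the auxiliary choices (coefficient field, uniformiser $\varepsilon$) and is a complete invariant of the conjugacy class, and the characteristic polynomial is recovered as $\prod_p p^{\lvert\lambda(p)\rvert\deg_{F_p}(D_p)}$ from the dimension count $m_p d=\lvert\lambda(p)\rvert\deg_{F_p}(D_p)\deg(p)$, itself a consequence of Morita equivalence and $c_p\deg_{F_p}(D_p)=d$. Summing over $p$ the identity $\sum_p m_p=m$ then gives the constraint $\sum_p\deg(p)\lvert\lambda(p)\rvert\deg_{F_p}(D_p)=md=\deg_F(A)$, and conversely any $\lambda$ satisfying it is realised by assembling $M_p=\bigl(\bigoplus_j D_p[\varepsilon]/(\varepsilon^{\lambda(p)_j})\bigr)^{\oplus c_p}$ as a $D\otimes_F\widetilde F_p$-module, taking $M=\bigoplus_p M_p$, checking $\dim_D M=m$, and transporting the $T$-action to $\operatorname{End}_D(V)$.

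I expect the genuine work to be concentrated in the second and third steps: the identification $\widetilde F_p\cong F_p[\varepsilon]/(\varepsilon^{e_p})$ via Cohen's theorem together with the splitting of $D\otimes_F F_p$, and the uniqueness (Krull--Schmidt) of the decomposition of $W_p$ with its independence of the choices; everything else is bookkeeping with Morita equivalence and dimensions. An alternative avoiding Cohen's theorem would be to prove directly that $D\otimes_F F[T]$ is a two-sided principal ideal domain (Euclidean, since $D$ is a division ring) and to run the elementary-divisor theory there; but one then has to handle the familiar fact that invariant factors over a non-commutative principal ideal domain are determined only up to similarity, which is precisely the ambiguity that the refinement by the $D_p$ removes.
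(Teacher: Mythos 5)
Your proof is mathematically sound, but it takes a genuinely different route from what the paper does. The paper's own proof of this theorem is a two-line reduction: it observes that Yu's article establishes the bijection for $A^\times$-conjugacy classes \emph{inside the group $A^\times$} (with $\lambda$ defined on $\Irr_F\setminus\{T\}$), and then deduces the Lie-algebra statement by noting that any $X\in A$ becomes invertible after translating by a central scalar $Z\in F$, which matches up the two parameterizing sets. You, by contrast, redo the full module-theoretic classification from scratch, exactly as in Yu's article but directly on $A$ rather than $A^\times$: translation of conjugacy into module isomorphism over $D\otimes_F F[T]$, primary decomposition, Cohen's theorem plus separability to split $D\otimes_F\widetilde F_p$, Morita equivalence to reduce to modules over the chain ring $\widetilde D_p$, and Krull--Schmidt to get uniqueness of the partition. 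Your dimension count $m_p d=\deg(p)\lvert\lambda(p)\rvert\deg_{F_p}(D_p)$ and the resulting constraint are correct. A side benefit of your approach is that the distinction between $\Irr_F$ and $\Irr_F\setminus\{T\}$ never arises, so no translation trick is needed; the cost is several paragraphs of structure theory where the paper spends two sentences. Both are valid; which is preferable is a matter of whether one is willing to black-box the cited result.
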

\begin{proof}
Dans l'article cité la bijection est établie entre l'ensemble des classes de $A^\times$-conjugaison dans $A^\times$ et l'ensemble des fonctions $\lambda:\Irr_F\setminus\{T\}\rightarrow\{\text{partitions des entiers}\}$ vérifiant l'égalité dans l'énoncé. Mais pour tout $X\in A$ il existe $Z\in F$ tel que $X+Z\in A^\times$, d'où le résultat cherché.
\end{proof}

Soit $\lambda=(\lambda_1,\dots, \lambda_k)$ une partition d'un entier positif. Pour tout entier strictement positif $e$, nous notons par $e\times \lambda$ la partition $(\lambda_1,\dots,\lambda_1,\dots,\lambda_k,\dots,\lambda_k)$ de $e|\lambda|$, où chaque $\lambda_i$ est répété $e$ fois. On note $e\mid_\times\lambda$ s'il existe $\lambda'$ une partition d'un entier positif telle que $\lambda=e\times\lambda'$.

\begin{corollary}\label{coro:classconjratpt}Soient $A=\Mat_m(D)$ et $A_{\overline{F}}=\Mat_{m}(D\otimes_F\overline{F})$. Alors une classe de $A_{\overline{F}}^\times$-conjugaison dans $A_{\overline{F}}$ définie sur $F$ (i.e. stable par $\Gal(\overline{F}/F)$) contient un élément de $A$ si et seulement si la fonction $\lambda_{\overline{F}}$ correspondante vérifie $\deg_{F_p}(D_p)\mid_{\times}\lambda_{\overline{F}}(p')$ pour tout $p'\in \Irr_{\overline{F}}$, ici $p\in \Irr_{F}$ est le multiple irréductible de $p'$.

Spécialement si $D=F$ alors toute classe de $A_{\overline{F}}^\times$-conjugaison dans $A_{\overline{F}}$ définie sur $F$ contient un élément de $A$.
\end{corollary}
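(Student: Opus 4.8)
The plan is to deduce the statement from Theorem \ref{thm:Yu9} by computing how the classifying datum of a conjugacy class behaves under extension of scalars from $F$ to $\overline{F}$. First I would record the shape of Theorem \ref{thm:Yu9} over $\overline{F}$: since $D\otimes_F\overline{F}\simeq\Mat_d(\overline{F})$ we have $A_{\overline{F}}\simeq\Mat_{md}(\overline{F})$, the set $\Irr_{\overline{F}}$ consists of the linear polynomials $T-a$ ($a\in\overline{F}$), for each of which the residue field $\overline{F}[T]/(T-a)$ and the associated division algebra are $\overline{F}$ itself; hence the classifying datum of a class in $A_{\overline{F}}$ is nothing but a Jordan datum, a function $\lambda_{\overline{F}}\colon\overline{F}\to\{\text{partitions}\}$ with $\sum_a|\lambda_{\overline{F}}(T-a)|=md$, where $\lambda_{\overline{F}}(T-a)$ records the Jordan block sizes of the chosen matrix at the eigenvalue $a$. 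The Galois group permutes eigenvalues, so it acts on Jordan data by $\sigma\cdot\lambda_{\overline{F}}=\lambda_{\overline{F}}\circ\sigma^{-1}$; a class is defined over $F$ iff $\lambda_{\overline{F}}$ is $\Gal(\overline{F}/F)$-invariant, iff (as $F$ is perfect, the roots of an irreducible $p\in\Irr_F$ form a single Galois orbit) $\lambda_{\overline{F}}$ factors through $a\mapsto(\text{minimal polynomial of }a)$, giving a well-defined $\mu\colon\Irr_F\to\{\text{partitions}\}$ with $\mu(p)=\lambda_{\overline{F}}(T-a)$ whenever $p(a)=0$, i.e.\ $\mu(p)=\lambda_{\overline{F}}(p')$ for $p'$ the relevant linear factor of $p$.

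The heart of the argument is the base-change formula: if $X\in A$ has datum $\lambda$ in the sense of Theorem \ref{thm:Yu9}, then $X$, viewed in $A_{\overline{F}}$, has Jordan datum
\[\lambda_{\overline{F}}(T-a)=\deg_{F_p}(D_p)\times\lambda(p),\qquad p(a)=0.\]
To prove this I would extend scalars in the structural decompositions of the excerpt. The idempotent splitting $F[X]=\prod_p\widetilde{F_p}$ is compatible with $-\otimes_F\overline{F}$, and $\widetilde{F_p}\otimes_F\overline{F}=\prod_{a:p(a)=0}\overline{F}[\varepsilon_p]/(\varepsilon_p^{e_p})$ since $p$ is separable, which decomposes $V_p\otimes_F\overline{F}$ according to the generalized eigenspaces at the roots of $p$, with $\varepsilon_p$ becoming the nilpotent part of $X-a$ on the $a$-eigenspace. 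Next, $D\otimes_F F_p=\Mat_{c_p}(D_p)$ with $\delta_p:=\deg_{F_p}(D_p)=\ind_{F_p}(D\otimes_F F_p)$ and $d=c_p\delta_p$, and for each embedding $F_p\hookrightarrow\overline{F}$ (equivalently each root $a$ of $p$) one has $D_p\otimes_{F_p}\overline{F}\simeq\Mat_{\delta_p}(\overline{F})$. Feeding this into $V_p=W_p^{\oplus c_p}$ and $W_p=\bigoplus_j D_p[\varepsilon_p]/(\varepsilon_p^{v_j})$ and applying Morita equivalence (to pass from $\Mat_{c_p}$ and $\Mat_{\delta_p}$ to the underlying multiplicity spaces, and to pass from $V\otimes_F\overline{F}$ back to the $\Mat_{md}(\overline{F})$-module through which $X$ acts), one finds that each summand $D_p[\varepsilon_p]/(\varepsilon_p^{v_j})$ contributes exactly $\delta_p$ Jordan blocks of size $v_j$ at the eigenvalue $a$. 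A dimension count ($\dim_F W_p=|\lambda(p)|\deg(p)\,\delta_p^2$, whence $|\mu(p)|=\delta_p|\lambda(p)|$) cross-checks this, and matches the integrality condition $[F_p:F]\mid m_p\,c(D\otimes_F F_p)$ of Theorem \ref{thm:chara}.

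With the formula in hand the corollary is immediate. A conjugacy class of $A_{\overline{F}}^\times$ in $A_{\overline{F}}$ defined over $F$, encoded by $\mu$ as above, meets $A$ iff it is the extension of scalars of some class of $A^\times$ in $A$, iff there is $\lambda\colon\Irr_F\to\{\text{partitions}\}$ with $\mu(p)=\delta_p\times\lambda(p)$ for every $p$, which is precisely the condition $\deg_{F_p}(D_p)\mid_{\times}\lambda_{\overline{F}}(p')$ for all $p'\in\Irr_{\overline{F}}$ (with $\lambda(p):=\mu(p)/\delta_p$). No extra constraint is lost: the normalization $\sum_a|\lambda_{\overline{F}}(T-a)|=md$ becomes $\sum_p\deg(p)\,|\mu(p)|=md$, hence $\sum_p\deg(p)\,\delta_p\,|\lambda(p)|=md=\deg_F(A)$, which is exactly the constraint of Theorem \ref{thm:Yu9}, so such a $\lambda$ genuinely defines a class of $A^\times$ in $A$. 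For the final assertion, if $D=F$ then $D\otimes_F F_p=F_p$ for every $p$, so $D_p=F_p$ and $\delta_p=1$; the divisibility $1\mid_{\times}\lambda_{\overline{F}}(p')$ is automatic, whence every Galois-stable conjugacy class of $A_{\overline{F}}^\times$ in $A_{\overline{F}}$ contains an element of $A$.

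The main obstacle is the base-change formula $\lambda_{\overline{F}}(T-a)=\deg_{F_p}(D_p)\times\lambda(p)$, i.e.\ the second paragraph: although it is elementary linear algebra, it requires carefully unwinding the two-step extension $F\to F_p\to\overline{F}$ together with the Morita reductions and the Cohen section $s_p$ of the excerpt, and verifying that the nilpotent operator $\varepsilon_p$ used to build $W_p$ really becomes the nilpotent part of $X$ minus the corresponding eigenvalue, so that the parts of $\lambda(p)$ are exactly the Jordan block sizes (each picked up with multiplicity $\delta_p$). Everything else is bookkeeping with degrees, indices and partitions.
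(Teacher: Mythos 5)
Your proposal is correct and takes essentially the same route as the paper: both derive the base-change rule $\lambda_{\overline{F}}(p')=\deg_{F_p}(D_p)\times\lambda_F(p)$ by extending scalars through $F[X]=\prod_p\widetilde{F_p}$, $W_p=\bigoplus_j D_p[\varepsilon_p]/(\varepsilon_p^{v_j})$ and Morita equivalence, then read off the $\mid_\times$ divisibility criterion. Your handling of the converse (defining $\lambda(p)$ as the $\times$-quotient $\mu(p)/\delta_p$) is in fact stated more carefully than the paper's, which writes $\lambda_F(p)=\lambda_{\overline{F}}(p')$ where it clearly intends the $\times$-quotient by $\deg_{F_p}(D_p)$.
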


\begin{proof}Démontrons le sens direct.
Soit $\o$ une classe de $A_{F}^{\times}$-conjugaison dans $A_{F}$ définie sur $F$ correspondante à $\lambda_{F}:\Irr_{F}\rightarrow\{\text{partitions des entiers}\}$. Soit $X$ un élément de $\o$. Les notations $V$, $V_p$, $D_p$ etc portent les sens identiques que précédemment. 

Décomposons le polynôme minimal de $X$ en
\[m_{X}(T)=\prod_{p\in \Irr_{F}}p^{e_p}=\prod_{p\in \Irr_{F}}\prod_{p'\in\Irr_{\overline{F}},p'\mid p}{p'}^{e_p}.\]
On note alors
\[\overline{F}_{p'}\eqdef \overline{F}[T]/(p'),\,\,\,\,\text{et}\,\,\,\,\widetilde{\overline{F}_{p'}}\eqdef\overline{F}[T]/(p'^{e_p}).\]
Nous avons que 
\[D\otimes_F \widetilde{F_p}\otimes_F \overline{F}=\Mat_{d}(\widetilde{F_p}\otimes_F\overline{F})=\prod_{p'\mid p}\Mat_{d}(\widetilde{\overline{F}_{p'}})\]
agit à droite sur 
\begingroup
\allowdisplaybreaks
\begin{align*}
V_p\otimes_F \overline{F}&=W_p^{\oplus c_p}\otimes_F \overline{F}    \\
&=\left[\bigoplus_{j=1}^{t_p}(D_p\otimes_{F_p}F_p\otimes_F\overline{F})[\varepsilon_p]/(\varepsilon_p^{v_j})\right]^{\oplus c_p}\\
&=\left[\bigoplus_{j=1}^{t_p}\prod_{p'\mid p}(D_p\otimes_{F_p}\overline{F}_{p'})[\varepsilon_p]/(\varepsilon_p^{v_j})\right]^{\oplus c_p}\\
&=\prod_{p'\mid p}\left[\bigoplus_{j=1}^{t_p}\Mat_{\deg_{F_p}D_p}(\overline{F}_{p'})[\varepsilon_p]/(\varepsilon_p^{v_j})\right]^{\oplus c_p}\\
&=\prod_{p'\mid p}\left[\left[\bigoplus_{j=1}^{t_p}\overline{F}_{p'}[\varepsilon_p]/(\varepsilon_p^{v_j})\right]^{\oplus \deg_{F_p}D_p}\right]^{\oplus (\deg_{F_p}D_p)\cdot c_p}
\end{align*}
\endgroup
En suivant l'action galoisienne à chaque étape on voit que la classe de $A_{\overline{F}}^{\times}$-conjugaison dans $A_{\overline{F}}$ de $X$ correspond à la partition 
\[\lambda_{\overline{F}} : p'\in\Irr_{\overline{F}}\longmapsto (\deg_{F_p}D_p)\times \lambda_F(p),\,\,\,\,p\in \Irr_F \text{ est tel que }p'\mid p\]
Ce qu'il fallait. Réciproquement la fonction \[\lambda_F:p\in\Irr_{F}\longmapsto \lambda_{\overline{F}}(p'),\,\,\,\,p'\in \Irr_{\overline{F}} \text{ est tel que }p'\mid p\]
(bien définie car la classe de $A_{\overline{F}}^\times$-conjugaison étant définie sur $F$ on sait que pour tout $\sigma\in\Gal(\overline{F}/F)$, $\lambda_{\overline{F}}(p')=\lambda_{\overline{F}}(\sigma(p'))$) détermine une classe de $A_F^\times$-conjugaison, son extension sur $\overline{F}$ correspond à la même fonction $\lambda_{\overline{F}}$ eu égard à ce qui précéde, cette classe de $A_{\overline{F}}^\times$-conjugaison en question contient donc un élément de $A_F$.
\end{proof}

\begin{remark}
\`{A} propos de la seconde partie de l'assertion, on pourrait également appliquer \cite[théorème 4.2]{Kott82}, qui stipule que dans $H$ un groupe réductif quasi-déployé de sous-groupe dérivé simplement connexe, toute classe de conjugaison de $H({\overline{F}})$ définie sur $F$ contient un $F$-point. Il suffit de noter que pour tout $X\in A$ il existe $Z\in F=Z(A)$ tel que $X+Z\in A^\times$. 
\end{remark}

\subsection{Réduction de Jordan et centralisateur}\label{subsec:reductiondeJordanetcentralisateur}
Partant du théorème \ref{thm:Yu9} on souhaite obtenir la réduction de Jordan. Du théorème, il n'est pas difficile de voir que si $p\in \Irr_F$ et $a\in\mathbb{N}_{\geq 1}$, alors $p^a$ est un polynôme caractéristique d'un élément de $\GL_m(D)$ pour un certain $m$ si et seulement s'il existe $k$ un entier strictement positif tel que
\[m=\underline{m}_pk \,\,\,\,\text{ et }\,\,\,\,a=\underline{a}_pk\]
avec
\[\underline{m}_p\eqdef\frac{\deg p \deg_{F_p}D_p}{\deg_F D}\,\,\,\,\text{ et }\,\,\,\,\underline{a}_p\eqdef\deg_{F_p}D_p.\]

Pour tout $p\in \Irr_F$ on fixe un $\textbf{X}_p$ élément de $\Mat_{\underline{m}_p}(D)$ de polynôme caractéristique $p^{\underline{a}_p}$, il est bien sûr semi-simple. Un bloc de Jordan de paramètre $p\in\Irr_F$ et d'échelon $k\in\mathbb{N}$ est la matrice par blocs
\[\text{J}(p,k)= \begin{pmatrix} 
    \textbf{X}_p & \textbf{1} &  &  &  &  \\
     & \textbf{X}_p & \textbf{1} &  &  &  \\
     &  & \ddots & \ddots &  &  \\
     &  &  & \ddots & \ddots &  \\
     &  &  &  & \textbf{X}_p & \textbf{1} \\
     &  &  &  &  & \textbf{X}_p \\
\end{pmatrix}\in \Mat_{\underline{m}_pk}(D)\]
avec $\textbf{1}$ la matrice identité de $\Mat_{\underline{m}_P}(D)$. Alors la classe de conjugaison correspondant à la fonction $\lambda:\Irr_F\rightarrow\{\text{partitions des entiers}\}$ dans le théorème précédent est bien la classe de conjugaison de la matrice diagonale par blocs
\[\text{diag}(\text{J}(p,\lambda(p)))_{p\in\Irr_F}\]
avec $\text{J}(p,\lambda(p))$ notant la concaténation diagonale des matrice $\text{J}(p,\lambda(p)_i)_{i=1,\dots,k_p}$ si on écrit $\lambda(p)=(\lambda(p)_1,\dots,\lambda(p)_{k_p})$. 

On calcule au passage le centralisateur d'un élément $X$ de $A=\Mat_m(D)$. Reprenons les notations de la dernière section. Un élément de $A$ commute à $X$ doit fixer les « espaces propres généralisées » $V_p$ de $X$ pour $p\in\Irr_F$, et il est complètement déterminé par la façon dont il agit sur un générateur fixé de chaque $\widetilde{D_p}$-module cyclique $W_{p,j}$ dans l'équation \eqref{eq:3.2}, ce générateur peut être envoyé à n'import quel élément de $W_{p,1}\oplus\cdots\oplus W_{p,j}$, puis il peut aussi être envoyé dans $W_{p,i}$ avec $i>j$, dans ce cas l'image de ce générateur doit appartenir à l'unique copie isomorphique de $W_{p,j}$ dans $W_{p,i}$, sans aucune restriction de plus.

Si $G$ le groupe algébrique $\GL_{m,D}$, $\g=\Mat_{m,D}$ son algèbre de Lie, et $X\in \g(F)$ dont la classe de conjugaison correspond à $\lambda:\Irr_F\rightarrow\{\text{partitions des entiers}\}$, et on écrit $\lambda(p)\not=\emptyset$ comme
\[\lambda(p)=(v_{p,1}\dots,v_{p,1},\dots,v_{p,k_p},\dots,v_{p,k_p})\]
avec $k_p\in \N_{>0}$, $1\leq v_{p,1}<\cdots<v_{p,k_p}$ et chaque $v_{p,i}$ est répété $n_{p,i}\in \N_{>0}$ fois dans la partition. Alors le facteur de Levi de la décomposition du centralisateur $\g_X$ est isomorphe à
\[\prod_{p : \lambda(p)\not =\emptyset}\prod_{i=1}^{k_p}\Res_{\underline{D}_p^c/F}\Mat_{n_{p,i},\underline{D}_p^c},\]
avec $\underline{D}_p^c$ est le centralisateur de $\textbf{X}_p$ dans $\Mat_{\underline{m}_p}(D)$, c'est une algèbre à division de centre $F_p$ et d'indice $\underline{a}_p$. En effet $\underline{D}_p^c$ est une algèbre simple, s'il n'est pas une algèbre à division alors il contiendra d'éléments nilpotents non-nuls. En translatant $\textbf{X}_p$ par un tel nilpotent non-nul on obtiendra un autre élément de  $\Mat_{\underline{m}_p}(D)$ de même polynôme caractéristique $p^{\underline{a}_p}$, cela contredira le théorème \ref{thm:Yu9}. L'algèbre $\underline{D}_p^c$ contient le corps $F_p$ puisqu'elle contient la sous-$F$-algèbre engendrée par $\textbf{X}_p$ dans $\Mat_{\underline{m}_p}(D)$, le corps $F_p$ est de surcroît le centre de $\underline{D}_p^c$ : l'algèbre $\underline{D}_p^c\otimes_FF_p$ est le centralisateur de $\textbf{X}_p$ dans $\Mat_{\underline{m}_p}(D)\otimes_FF_p=\Mat_{\underline{m}_pc_p}(D_p)=\Mat_{\deg p}(D_p)$, elle est ainsi une forme intérieure du centralisateur d'un élément de polynôme caractéristique $p^{\underline{a}_p}$ dans $\Mat_{\underline{a}_p\deg p}(F_p)$, ce dernier vaut $(\Mat_{\underline{a}_p}(F_p))^{\deg p}$. Par conséquent le centre de $\underline{D}_p^c\otimes_FF_p$ vaut $(\Mat_{1}(F_p))^{\deg p}$, une comparaison de dimension nous donne que le centre de  $\underline{D}_p^c$ vaut $F_p$. Enfin le théorème du double centralisateur nous dit que $\dim_{F_p}\underline{D}_p^c=\underline{a}_p^2$.

\subsection{Orbites induites}

La notion d'une orbite induite nilpotente est initialement proposée par Richardson et ensuite généralisée par Lusztig et Spaltenstein, la théorie (sur un corps algébriquement clos de caractéristique 0) est résumée dans le monographe \cite[chapitre 7]{CM93}. Pour la théorie des orbites induites générales on peut se référer à \cite{YDL23a}. Nous entamons la partie par des rappels préliminaires. Soient $F$ un corps parfait, $D$ une algèbre à division définie sur $F$, $G$ le groupe algébrique $\GL_{n,D}$, et $\g=\Mat_{m,D}$ son algèbre de Lie, enfin $d\eqdef\deg_F(D)$. On note $Z(G)$ le centre de $G$ et $Z(\g)$ son algèbre de Lie.

Supposons pour le moment que $F$ est algébriquement clos, implicitement $D=F$. Soient $M\subseteq L$ deux sous-groupes de Levi de $G$, et $P\in\P^G(M)$ un sous-groupe parabolique de $G$. On sait que $P\cap L$ est un sous-groupe parabolique de $L$ ayant $M$ comme facteur de Levi. On note $\mathfrak{n}_{L\cap P}^L$ l'algèbre de Lie du rqdicql unipotent de $P\cap L$. Soit $\o$ une orbite dans $\mathfrak{m}$ pour l’action adjointe de $M$. Il existe une unique $L$-orbite $\Ind_M^L(\o)$ telle que 
\[\Ind_M^L(\o)\cap (\o+\mathfrak{n}_{L\cap P}^L)\]
soit un ouvert de Zariski dense dans $\o+\mathfrak{n}_{L\cap P}^L$. Cette intersection est également le lieu régulier de $\o+\mathfrak{n}_{L\cap P}^L$ pour la $G$-conjugaison, i.e.
\[\Ind_M^L(\o)\cap (\o+\mathfrak{n}_{L\cap P}^L)=\{X\in \o+\mathfrak{n}_{L\cap P}^L :\dim (\Ad G)X \geq \dim (\Ad G)Y\,\,\forall Y\in \o+\mathfrak{n}_{L\cap P}^L\}.\]
L'orbite $\Ind_M^L(\o)$ ne dépend pas de $P$. 

L’induction est transitive : soient $M\subseteq L\subseteq I$ trois sous-groupes de Levi de $G$ et $\o$ une orbite dans $\mathfrak{m}$ pour l’action adjointe de $M$ alors
\[\Ind_L^I(\Ind_M^L(\o))=\Ind_M^I(\o).\]

Lorsque $\o$ contient un $F$-point, l'orbite induite $\Ind_M^L(\o)$ l'est aussi car $\mathfrak{n}_{L\cap P}^L(F)$ est de Zariski dense dans $\mathfrak{n}_{L\cap P}^L$, et si $X\in \Ind_M^L(\o)(F)$ alors
\[\Ind_M^L(\o)(F)=\text{ la classe de }L(F)\text{-conjugaison de }X.\]
On ne s'intéresse dorénavant qu'aux orbites possédant de $F$-points et aux $F$-points dans ces orbites. 

Nous définissons une relation d'ordre sur l'ensemble des classes de conjugaison en posant $\o_1\leq \o_2$ lorsque la clôture de Zariski de $\o_1$ est incluse dans celle de $\o_2$. Nous définissons en outre une relation d'ordre sur l'ensemble des partitions des entiers en posant $\lambda_1\leq \lambda_2$ si $\lambda_{i}=(\lambda_{i,1},\dots,\lambda_{i,k})$ avec $\sum_{j=1}^s\lambda_{1,j}\leq \sum_{j=1}^s\lambda_{2,j}$ pour tous $s$. Fixons également d'autres notations supplémentaires : soit $\lambda=(\lambda_1,\dots,\lambda_k)$ une partition d'un entier. Alors on note par $\lambda^t$ la partition de $|\lambda|$ associée à la transposée du tableau de Young de $\lambda$. Si $e$ est un entier positif, on note par $e\cdot \lambda$ la partition $(e\lambda_1,\dots,e\lambda_k)$ de $e|\lambda|$. Enfin on écrit $e\mid_\cdot\lambda$ s'il existe $\lambda'$ une partition d'un entier positif telle que $\lambda=e\cdot\lambda'$.

\begin{lemma}
Soient $\lambda$ une partition d'un entier positif et $e$ un entier strictement positif, on a
\[(e\cdot \lambda)^t=e\times \lambda,\text{ et }(e\times\lambda)^t=e\cdot\lambda.\]
\end{lemma}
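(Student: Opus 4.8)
The plan is to prove the lemma by unwinding, at the level of Young diagrams, the three operations in play --- $\lambda\mapsto e\cdot\lambda$, $\lambda\mapsto e\times\lambda$ and $\mu\mapsto\mu^t$; nothing deeper is needed. Since transposition is an involution, the two displayed equalities are equivalent (each is obtained from the other by applying $(-)^t$ to both sides, using $(\mu^t)^t=\mu$), so it is enough to establish one of them, say the one for $(e\cdot\lambda)^t$.

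Write $\lambda=(\lambda_1\ge\cdots\ge\lambda_k)$. The Young diagram of $e\cdot\lambda$ is obtained from that of $\lambda$ by dilating each box into a horizontal run of $e$ boxes, so that the $i$-th row of length $\lambda_i$ becomes a row of length $e\lambda_i$; equivalently, cutting each row of $e\cdot\lambda$ into consecutive blocks of length $e$ exhibits its diagram as the diagram of $\lambda$ with each box replaced by a $1\times e$ strip. Transposition interchanges rows and columns and distributes over this block structure, so the transposed diagram is the diagram of $\lambda^t$ with each box replaced by an $e\times 1$ strip --- that is, the diagram obtained by stacking $e$ identical copies of each row of $\lambda^t$. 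A partition whose diagram arises by repeating every part $e$ times is, by definition, $e\times(-)$ applied to that partition; hence $(e\cdot\lambda)^t=e\times\lambda^t$, and transposing once more gives $(e\times\lambda)^t=e\cdot\lambda^t$. These are the two asserted equalities, the right-hand sides carrying the transpose of $\lambda$ --- as one already sees from the case $e=1$, where both sides of each identity reduce to $\lambda^t$. The single step I have left implicit, the box-for-box matching of the two diagrams via the block decomposition of the column index, is a purely mechanical verification.

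I do not expect any genuine obstacle here: the statement is elementary and combinatorial, and the argument is just a careful reading of the definitions. If a point deserves attention it is only the bookkeeping --- keeping apart the two roles of $e$, namely stretching a single part (on the $e\cdot$ side) versus duplicating a part (on the $e\times$ side) --- together with the trivial but necessary remark that $e\cdot\lambda$ and $e\times\lambda$ are again partitions, which holds because $\lambda$ is weakly decreasing.
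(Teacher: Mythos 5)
Your dilation argument is correct, and the paper offers no proof of this lemma to compare with---it is stated as an elementary fact. The one point you should make explicit rather than leave as an aside is that what you actually prove, namely $(e\cdot\lambda)^t=e\times\lambda^t$ and $(e\times\lambda)^t=e\cdot\lambda^t$, is not literally the displayed statement, which omits the transpose on the right-hand sides; as printed the lemma fails for every non-self-conjugate $\lambda$, e.g.\ $\lambda=(2)$, $e=2$ gives $(e\cdot\lambda)^t=(4)^t=(1,1,1,1)$ whereas $e\times\lambda=(2,2)$. Your reading is the intended one: it is the transposed form that the paper uses later (in the proof of Proposition \ref{prop:suppellitique} one needs exactly $(\deg_{F_p}(D_p)\times\lambda(p))^t=\deg_{F_p}(D_p)\cdot\lambda(p)^t$), and your own $e=1$ observation already shows the printed version cannot be right, since it would force $\lambda^t=\lambda$. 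So do not say your identities ``are the two asserted equalities''; say plainly that the statement should carry $\lambda^t$ on the right and prove that corrected form, which you do.

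On the proof itself: the reduction to a single identity via $(\mu^t)^t=\mu$ is fine, and the ``mechanical'' box-matching you leave implicit is the one-line computation
\[
\bigl((e\cdot\lambda)^t\bigr)_j=\#\{i\mid e\lambda_i\ge j\}=\#\{i\mid \lambda_i\ge\lceil j/e\rceil\}=(\lambda^t)_{\lceil j/e\rceil},
\]
which says precisely that each part of $\lambda^t$ is repeated $e$ times, i.e.\ $(e\cdot\lambda)^t=e\times\lambda^t$. With that line written out, and the trivial remark (which you make) that $e\cdot\lambda$ and $e\times\lambda$ are again partitions, the argument is complete.
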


On dispose des lemmes suivants, généralisations évidentes du lemme 6.2.2, du théorème 6.2.5, et du théorème 7.2.3 de \cite{CM93}.

\begin{lemma}\label{lem622}Soient $\o_1$ et $\o_2$ deux classes de conjugaison dans $\g(F)$ correspondant aux fonctions $\lambda_1$ et $\lambda_2$ par la théorie des diviseurs élémentaires. Soit $X_i\in \o_i$ pour $i=1,2$. Alors $\lambda_1(p)\leq \lambda_2(p)$ pour tout $p\in \Irr_F$ si et seulement si $\text{rk}((X_1-a)^k)\leq \text{rk}((X_2-a)^k)$ pour tous $a\in F$ et $k\in\mathbb{\N}$, ici $\text{rk}$ note le rang d'une matrice.
\end{lemma}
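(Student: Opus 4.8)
The statement to prove is Lemma~\ref{lem622}, relating the dominance order on the partitions $\lambda_i(p)$ to a rank inequality for powers of $X_i - a$. The plan is to reduce everything to the classical statement over an algebraically closed field, already recorded in \cite[lemme 6.2.2]{CM93}. First I would fix an irreducible $p \in \Irr_F$ and an element $a \in F$; the quantity $\operatorname{rk}((X_i - a)^k)$ only ``sees'' the generalized eigenspace attached to the linear factor $T - a$ when we work over $\overline F$, so the real content is local at each $p$. Concretely, extend scalars along $F \hookrightarrow \overline F$: the class $\o_i$ becomes a class in $\g_{\overline F}(\overline F) = \Mat_{md}(\overline F)$, whose associated partition function $\lambda_{i,\overline F} : \Irr_{\overline F} \to \{\text{partitions}\}$ is computed from $\lambda_i$ by Corollary~\ref{coro:classconjratpt}, namely $\lambda_{i,\overline F}(p') = \deg_{F_p}(D_p) \times \lambda_i(p)$ for every $\overline F$-irreducible $p' \mid p$ (here $p$ is the $F$-irreducible multiple of $p'$). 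Since $e \times \lambda \le e \times \mu$ if and only if $\lambda \le \mu$ (an elementary fact about the dominance order, which follows by grouping the parts of $e\times\lambda$ in blocks of $e$), the inequality $\lambda_1(p) \le \lambda_2(p)$ for all $p \in \Irr_F$ is equivalent to $\lambda_{1,\overline F}(p') \le \lambda_{2,\overline F}(p')$ for all $p' \in \Irr_{\overline F}$.

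\textbf{Second step.} Now invoke the algebraically closed case. Over $\overline F$, the class of $X_{i,\overline F}$ in $\Mat_{md}(\overline F)$ is determined by its Jordan type, i.e. by the collection $\big(\lambda_{i,\overline F}(T - a)\big)_{a \in \overline F}$ (the partitions at the other $p' \in \Irr_{\overline F}$ are empty unless $p' = T-a$ up to scalar, since over $\overline F$ every irreducible polynomial is linear). The classical lemma \cite[lemme 6.2.2]{CM93} states precisely that $\lambda_{1,\overline F}(T-a) \le \lambda_{2,\overline F}(T-a)$ in dominance order if and only if $\operatorname{rk}((X_{1,\overline F} - a)^k) \le \operatorname{rk}((X_{2,\overline F} - a)^k)$ for all $k \in \mathbb{N}$; one direction is the standard computation $\operatorname{rk}((N_\lambda)^k) = |\lambda| - \sum_j \min(\lambda_j, k) = \sum_j \max(\lambda_j - k, 0)$ for a nilpotent of type $\lambda$, combined with the observation that the partial sums $\sum_{j \le s}\lambda_j$ can be recovered from the function $k \mapsto \sum_j \max(\lambda_j - k, 0)$ by taking successive finite differences, and the other direction is the same computation read backwards. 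Ranging over all $a \in \overline F$ and using that only finitely many $a$ contribute (those among the roots of the characteristic polynomials), we obtain the equivalence at the level of $\overline F$.

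\textbf{Third step: descent of ranks.} It remains to check that $\operatorname{rk}((X_i - a)^k)$, computed inside $\Mat_{md}(\overline F)$ after extension of scalars, matches the number $\operatorname{rk}((X_i - a)^k)$ appearing in the statement, computed inside $A = \Mat_m(D)$. This is where a small care is needed: the word ``rang'' in the lemma must be interpreted so that the two agree. The cleanest reading is that $\operatorname{rk}$ denotes the $\overline F$-dimension of the image after base change to $\overline F$ (equivalently, $\dim_D \operatorname{im}$ times $\deg_F D$, i.e. the reduced rank times $d$); with that convention $\operatorname{rk}_{\overline F}((X_i - a)^k \otimes_F \overline F) = \operatorname{rk}((X_i - a)^k)$ by flatness of $- \otimes_F \overline F$ and the fact that rank is preserved under faithfully flat base change. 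If instead $\operatorname{rk}$ means $\dim_D$ of the image inside the right $D$-module $D^m$, then the two sides differ by the overall factor $d = \deg_F D$, which cancels on both sides of the inequality and in the block-size computation, so the equivalence is unaffected. Either way, combining with the previous two steps yields the lemma. The main obstacle is not conceptual but bookkeeping: one must track how the single $F$-partition $\lambda_i(p)$ gets inflated to the $\overline F$-partition $\deg_{F_p}(D_p) \times \lambda_i(p)$ across the scalar extension, and confirm that the inflation factor $\deg_{F_p}(D_p)$ is uniform across $\o_1$ and $\o_2$ (it is, depending only on $p$ and $D$, not on the class), so that it can be stripped off the dominance comparison. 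I would therefore organize the write-up as: (i) reduce to a single $p$; (ii) base-change and apply Corollary~\ref{coro:classconjratpt}; (iii) cite \cite[lemme 6.2.2]{CM93} for the $\overline F$-statement; (iv) conclude using the monotonicity of $e \times (-)$ on the dominance order and the flat base change for ranks.
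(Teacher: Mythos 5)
Your proposal misreads the scope of the lemma. Re-read the paragraph introducing this block of lemmas: ``Supposons pour le moment que $F$ est algébriquement clos, implicitement $D=F$.'' Lemma~\ref{lem622} is asserted only under that running hypothesis, and the paper explicitly flags that, unlike the neighbouring lemmas, it does \emph{not} survive the passage to a general perfect field by Galois descent (``toutes les assertions des discussions précédentes, sauf le lemme~\ref{lem622}, restent valables par un argument de descente galoisienne''). The proof the paper has in mind is therefore the short one it cites: once $F=\overline F$ and $D=F$, apply \cite[lemme~6.2.2]{CM93} eigenvalue by eigenvalue. There is no base change, no $D_p$, and no rank-convention ambiguity to resolve.

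Your argument over a general perfect $F$ has a gap that cannot be repaired, because the statement is simply false in that generality. In your second step you invoke the inequality $\mathrm{rk}((X_1-a)^k)\le\mathrm{rk}((X_2-a)^k)$ ``for all $a\in\overline F$'', but the lemma's hypothesis supplies it only for $a\in F$. Galois-equivariance of rank lets you move $a$ along its $\Gal(\overline F/F)$-orbit, but says nothing about orbits disjoint from $F$, and those carry essential information. Concretely, take $F=\Q$, $D=F$, $m=2$, and let $X_1,X_2\in\Mat_2(\Q)$ be the companion matrices of $T^2-2$ and $T^2-3$ respectively: neither has a rational eigenvalue, so $X_i-a$ is invertible for every $a\in\Q$ and $\mathrm{rk}((X_1-a)^k)=2=\mathrm{rk}((X_2-a)^k)$ for all $a\in\Q$ and $k\in\mathbb{N}$; yet $\lambda_1(T^2-2)=(1)\not\le\emptyset=\lambda_2(T^2-2)$. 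This is exactly the failure the paper signals by quarantining the lemma to the algebraically closed case, and your steps (ii)--(iv) silently step over it.
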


\begin{lemma}[Gerstenhaber, Hesselink]\label{thm625} Soient $\o_1$ et $\o_2$ deux classes de conjugaison dans $\g(F)$ correspondant aux fonctions $\lambda_1$ et $\lambda_2$ par la théorie des diviseurs élémentaires. Alors $\o_1\leq \o_2$ si et seulement si $\lambda_1(p)\leq \lambda_2(p)$ pour tout $p\in \Irr_F$.
\end{lemma}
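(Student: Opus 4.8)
The statement to prove is Lemma \ref{thm625} (Gerstenhaber--Hesselink): for two conjugacy classes $\o_1,\o_2$ in $\g(F)$ with associated functions $\lambda_1,\lambda_2$, one has $\o_1\leq\o_2$ if and only if $\lambda_1(p)\leq\lambda_2(p)$ for every $p\in\Irr_F$.

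\medskip

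The plan is to reduce the statement to the combinatorial characterization of rank functions given in Lemma \ref{lem622}, which already translates the dominance order on partitions into the inequalities $\operatorname{rk}((X_1-a)^k)\leq\operatorname{rk}((X_2-a)^k)$ for all $a\in F$ and all $k\in\mathbb{N}$. Thus it suffices to show that $\o_1\leq\o_2$ (closure order) is equivalent to these rank inequalities. First I would treat the easy direction: for fixed $a\in F$ and $k\in\mathbb{N}$, the function $X\mapsto\operatorname{rk}((X-a)^k)$ is lower semicontinuous on $\g$ for the Zariski topology (the locus where a matrix has rank $\geq r$ is open, being defined by non-vanishing of some $r\times r$ minor, and $X\mapsto(X-a)^k$ is a morphism). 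Hence this function is constant on $\o_2$ and, being lower semicontinuous, its value on the dense subset $\o_2$ is $\geq$ its value at every point of the closure $\overline{\o_2}$; if $\o_1\subseteq\overline{\o_2}$ this yields $\operatorname{rk}((X_1-a)^k)\leq\operatorname{rk}((X_2-a)^k)$. Combined with Lemma \ref{lem622} this gives $\lambda_1(p)\leq\lambda_2(p)$ for all $p$.

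\medskip

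For the converse I would argue that if $\lambda_1(p)\leq\lambda_2(p)$ for all $p$, then $\o_1\subseteq\overline{\o_2}$. The key observation is that the hypothesis is a condition place-by-place on the $\Irr_F$-indexed data: by the Jordan reduction of \ref{subsec:reductiondeJordanetcentralisateur}, a representative of $\o_i$ is block-diagonal with one block $\operatorname{J}(p,\lambda_i(p))$ for each $p$ with $\lambda_i(p)\neq\emptyset$, and the $V_p$-decomposition is compatible with closures (the locus of $X$ whose characteristic polynomial has a prescribed factorization type with the generalized eigenspace dimensions being those of $\o_2$ is a locally closed subvariety, inside which everything decomposes as a product over $p$). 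So one reduces to the case where $\g$ acts on a single primary component, i.e. to the classical statement that for nilpotent-type data over the division algebra $\underline{D}_p^c$, the dominance order on partitions computes the closure order of orbits. For this I would either invoke the passage to $\overline{F}$ — an orbit closure is detected after base change to $\overline{F}$ since $\overline{\o}\times_F\overline{F}=\overline{\o\times_F\overline{F}}$, and over $\overline{F}$ the case $D=F$ is exactly Theorem 6.2.5 of \cite{CM93} — taking care via Corollary \ref{coro:classconjratpt} and the lemma relating $(e\cdot\lambda)^t$ and $e\times\lambda$ that the dominance comparison is preserved under the scaling by $\deg_{F_p}D_p$ that occurs when extending scalars; or argue directly by exhibiting, for $\lambda_1\leq\lambda_2$, an explicit one-parameter family (a curve in $\g$) through a point of $\o_1$ with generic point in $\o_2$, using the standard "minimal degeneration" reduction that it is enough to handle the case where $\lambda_2$ is obtained from $\lambda_1$ by moving a single box.

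\medskip

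The main obstacle I expect is the bookkeeping in the reduction to a single primary component over a division algebra: one must check that the numerical invariant $\operatorname{rk}((X-a)^k)$ really does see each $p$ separately and that base change to $\overline{F}$ multiplies the relevant partitions uniformly by $\deg_{F_p}D_p$, so that the dominance inequalities are neither created nor destroyed. Once that compatibility is in place, both directions follow formally from Lemma \ref{lem622} together with the Zariski semicontinuity of rank and the known algebraically-closed case. I would therefore organize the write-up as: (1) semicontinuity of $X\mapsto\operatorname{rk}((X-a)^k)$ and the resulting "closure implies rank inequalities"; (2) Lemma \ref{lem622} to convert rank inequalities into $\lambda_1\leq\lambda_2$; (3) for the converse, reduce to one primary block, base-change to $\overline{F}$, and quote \cite[théorème 6.2.5]{CM93}.
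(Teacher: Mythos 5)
The paper gives no explicit proof: it declares the three lemmas "généralisations évidentes" of the corresponding statements in Collingwood--McGovern over an algebraically closed field, and then, at the end of the subsection, says that all of these assertions \emph{except Lemma \ref{lem622}} remain valid over a general perfect field by Galois descent, the key point being the coincidence of rational and geometric conjugation (point 8 of Proposition \ref{pro:bontype}). So the intended proof is exactly: base change to $\overline{F}$, apply the classical Gerstenhaber--Hesselink result, and translate both the closure order and the dominance order back. Your converse direction follows this route faithfully, and you correctly identify the two translation tools — Corollary \ref{coro:classconjratpt} and the lemma $(e\cdot\lambda)^t=e\times\lambda$ — needed to check that the dominance comparison is neither created nor destroyed under $\lambda\mapsto\deg_{F_p}(D_p)\times\lambda$.

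However, your proof of the easy direction has a genuine gap. You apply Lemma \ref{lem622} directly over the general perfect field $F$: from the rank inequalities $\operatorname{rk}((X_1-a)^k)\leq\operatorname{rk}((X_2-a)^k)$ for $a\in F$ you conclude $\lambda_1(p)\leq\lambda_2(p)$ for all $p\in\Irr_F$. But Lemma \ref{lem622} is stated inside the portion of the subsection where $F$ is assumed algebraically closed, and the paper explicitly singles it out as the one statement that does \emph{not} survive Galois descent. The reason is that the rank data $\operatorname{rk}((X-a)^k)$ for $a\in F$ cannot see the partitions $\lambda(p)$ for $p$ of degree greater than $1$: over $\R$, for $X$ with characteristic polynomial $(T^2+1)^2$, these ranks are full for every $a\in\R$ and every $k$, so they cannot distinguish $\lambda(T^2+1)=(2)$ from $\lambda(T^2+1)=(1,1)$. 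The fix is simple: run the easy direction after base change to $\overline{F}$ as well — either apply Lemma \ref{lem622} over $\overline{F}$ together with the semicontinuity of rank, or simply cite the full biconditional of \cite[théorème 6.2.5]{CM93} there — and then translate back exactly as you do for the converse. With that correction your argument is sound; the reduction to primary components is also not needed once one base changes directly, since the translation $\lambda_{\overline{F}}(p')=\deg_{F_p}(D_p)\times\lambda(p)$ takes care of all primary components uniformly.
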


\begin{lemma}[Kraft, Ozeki, Wakimoto]\label{thm723} Soit $L=\prod_{i=1}^k \GL_{m_i,D}$ avec $(m_1,\dots,m_k)$ une partition de $m$, $X=\prod_{i=1}^k X_i\in Z(\mathfrak{l})(F)$ avec $X_i\in Z(\Mat_{m_i}(D))$, et $\o$ la classe de $L(F)$-conjugaison de $X$. Supposons que la classe de $\GL_{m_i}(D)$-conjugaison de $X_i$ correspond à $\lambda_i:\Irr_F\rightarrow\{\text{partitions des entiers}\}$ et qu'il existe un unique $p\in\Irr_F$ tel que $\lambda_i(q)\not=\emptyset$ si et seulement si $q=p$ pour tout $i$, alors $\Ind_L^G(\o)$ est la classe de $G(F)$-conjugaison correspondant à $\lambda:\Irr_F\rightarrow\{\text{partitions des entiers}\}$ avec
\[\lambda(q)=\begin{cases*}
(m_1,\dots,m_k)^t & si $q=p$\\
\emptyset& sinon\end{cases*}.\]
\end{lemma}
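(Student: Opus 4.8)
The final statement to prove is Lemma \ref{thm723} (Kraft, Ozeki, Wakimoto): the induced orbit $\Ind_L^G(\o)$ where $L=\prod_{i=1}^k\GL_{m_i,D}$ and $\o$ is the class of a central element $X=\prod X_i$ with all $X_i$ having characteristic polynomial a power of a single irreducible $p$, is the class corresponding to $\lambda(p)=(m_1,\dots,m_k)^t$ and $\lambda(q)=\emptyset$ for $q\neq p$.

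\medskip

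The plan is to reduce to the classical nilpotent case over an algebraically closed field. First I would use Proposition \ref{prop:indprop}(2), which says induction commutes with the Jordan decomposition: since each $X_i$ is central (hence semisimple) in $\mathfrak{gl}_{m_i,D}$, and all $X_i$ correspond to a power of the same $p$, they glue to a semisimple element that is central in a larger block $\mathfrak{gl}_{m',D_p}$-type factor after base change to $F_p=F[T]/(p)$. More precisely, I would pass to $F_p$ and use the structure from subsection \ref{subsec:reductiondeJordanetcentralisateur}: over $F_p$, each $\Mat_{m_i}(D)$ becomes $\Mat_{\underline m_p m_i/?}(D_p)$-like, and the condition that the only relevant $p$ is a single irreducible means the problem becomes the induction of the zero orbit, i.e. $\Ind_L^G(0)$ in the appropriate split reductive group. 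Concretely, after subtracting the central semisimple part (which is the same scalar in each factor up to the $F_p$-structure), $\Ind_L^G(\o)$ is determined by $\Ind_{L'}^{G'}(0_{\mathrm{nilp}})$ where $L'=\prod_i\GL_{m_i}$ sits in $G'=\GL_m$ over $\overline F$.

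\medskip

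Second, for that nilpotent Richardson orbit, the classical result (Theorem 7.2.3 of \cite{CM93}, or the direct Richardson-orbit computation) says $\Ind_{L'}^{G'}(0)$ is the nilpotent orbit with partition $(m_1,\dots,m_k)^t$: the Richardson orbit attached to the parabolic with Levi block sizes $(m_1,\dots,m_k)$ has Jordan type equal to the transpose of $(m_1,\dots,m_k)$. I would then translate this back: the Jordan-block structure over $\overline F$ of the induced orbit, expressed via the elementary divisor dictionary of Theorem \ref{thm:Yu9} and Corollary \ref{coro:classconjratpt}, corresponds to $\lambda(p)=(m_1,\dots,m_k)^t$ once one accounts for the $D_p$-structure (the $\deg_{F_p}D_p$ and $\underline m_p$ factors cancel correctly because the \emph{same} division algebra $D$ appears in every factor of $L$, so the partition is unchanged under the Morita/transpose bookkeeping — here the lemma $(e\cdot\lambda)^t=e\times\lambda$ from the preceding unnumbered lemma is exactly what controls the bookkeeping). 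Using Proposition \ref{prop:indprop}(3) (codimension is preserved) as a consistency check pins down that no other $q$ can enter: $\Ind_L^G(\o)$ has the same codimension in $\g$ as $\o$ in $\mathfrak l$, and an orbit supported on more than one irreducible factor or with a different semisimple part would have the wrong codimension or the wrong semisimple part.

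\medskip

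The main obstacle I anticipate is the descent-of-scalars bookkeeping: making precise how the elementary-divisor data of subsection \ref{subsec:reductiondeJordanetcentralisateur} behaves under induction when $D\neq F$, i.e. verifying that the induced orbit's partition is literally $(m_1,\dots,m_k)^t$ and not some rescaling. The clean way around this is to observe that everything in sight is "uniform in $D$": the maps $\Ind$, the lieu régulier, and the elementary-divisor classification all commute with $-\otimes_F F_p$ and with the Morita equivalence $\Mat_r(D_p)\text{-mod}\simeq D_p\text{-mod}$, and these equivalences multiply all the partition parts by the \emph{same} integer $\deg_{F_p}D_p$, which therefore factors out of the transpose operation via $(e\cdot\lambda)^t=e\times\lambda$ — but since the input orbits $\o_i$ on the Levi already carry that factor, it cancels. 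So after this reduction the statement is purely the split $\GL_n$ Richardson computation over $\overline F$, which is standard. I would also need to invoke the transitivity of induction (Proposition \ref{prop:indprop}(7)) to reduce an arbitrary $L$ to a maximal one, or simply handle the general $L$ directly since the Levi of a parabolic in $\GL$ is already a product of general linear groups.
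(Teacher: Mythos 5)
The paper gives no proof of this lemma: it is listed (together with Lemmas \ref{lem622} and \ref{thm625}) as a ``généralisation évidente'' of Theorem 7.2.3 of Collingwood--McGovern, so there is nothing to compare against except the intended route — base change to $\overline F$, apply the classical Richardson--orbit computation, translate back via the elementary-divisor dictionary. Your proposal follows exactly that route and the core argument is sound, so it fills in the gap the paper leaves tacit.

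Two small corrections and one simplification worth making. First, a slip in the reduction step: over $\overline F$ the Levi is $\prod_i \GL_{m_i d,\overline F}\subset \GL_{md,\overline F}$, not $\prod_i\GL_{m_i}\subset\GL_m$; the blocks are scaled by $d=\deg_F D$. Your bookkeeping paragraph then correctly accounts for this dilation, so the computation goes through: the Richardson orbit for block sizes $(m_1d,\dots,m_kd)$ has Jordan type $(d\cdot(m_1,\dots,m_k))^t=d\times(m_1,\dots,m_k)^t$, and by Corollary \ref{coro:classconjratpt} (namely $\lambda_{\overline F}(p')=\deg_{F_p}(D_p)\times\lambda_F(p)$) the unique $D$-orbit with this $\overline F$-Jordan type has $D$-partition $(m_1,\dots,m_k)^t$. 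Second, the hypotheses force a drastic simplification that your write-up overlooks and that renders the $\otimes_F F_p$ / Morita machinery unnecessary: each $X_i\in Z(\Mat_{m_i}(D))=F$ is a scalar, so $p=T-X_i$ is linear, and the requirement that all $\lambda_i$ be supported at the same $p$ forces all $X_i$ equal, hence $X=c\,\mathrm{Id}_m$ is central in $\g$, $F_p=F$, $D_p=D$, $\underline m_p=d$. You may simply translate by $-c$, reduce to the Richardson nilpotent orbit via Proposition \ref{prop:indprop}(2) (there is no need to invoke codimension from \ref{prop:indprop}(3); the semisimple part of the induced orbit is $X_\ss=c\,\mathrm{Id}_m$ directly from the Jordan-decomposition compatibility, which already kills all $q\ne p$), and the only bookkeeping left is the single dilation by $d$ under $-\otimes_F\overline F$. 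Finally, the identity you quote from the unnumbered lemma is misprinted in the source: it should read $(e\cdot\lambda)^t=e\times\lambda^t$ and $(e\times\lambda)^t=e\cdot\lambda^t$, which is what your argument actually uses.
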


On vise à complètement expliciter le processus de l'induction. Pour cela, un dernier lemme est indisponsable.

\begin{lemma}\label{lem:diffeigen}
Soit $L=\prod_{i=1}^k \GL_{m_i,D}$ avec $(m_1,\dots,m_k)$ une partition de $m$, $X=\prod_{i=1}^k X_i\in \mathfrak{l}(F)$ avec $X_i\in\Mat_{m_i}(D)$, et $\o$ la classe de $L(F)$-conjugaison de $X$. Supposons que la classe de $\GL_{m_i}(D)$-conjugaison de $X_i$ correspond à $\lambda_i:\Irr_F\rightarrow\{\text{partitions des entiers}\}$ et que pour tout $i$ existe un unique $p_i\in\Irr_F$ tel que $\lambda_i(q)\not=\emptyset$ si et seulement si $q=p_i$, et que $p_i\not =p_j$ si $i\not = j$, alors $\Ind_L^G(\o)=\Ad(G(F))\o$.
\end{lemma}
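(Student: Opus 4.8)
The plan is to reduce the statement to a single dimension count and then to an elementary module-theoretic fact about centralizers. Since $G$ is du type GL, $G(F)$-conjugacy and $G(\overline F)$-conjugacy agree on $\g(F)$ (proposition \ref{pro:bontype}, point 8), and likewise on $\l(F)$, so it suffices to prove the equality of the corresponding geometric orbits; concretely, I would fix $P\in\P^G(L)$ and show that $X$ itself lies in $\Ind_L^G(\o)$. By point 4 of proposition \ref{prop:indprop}, the intersection $\Ind_L^G(\o)\cap(\o+\n_P)$ is precisely the $G$-regular locus of $\o+\n_P$; since $X$ visibly lies in $\o+\n_P$ (take the $\n_P$-component zero), membership of $X$ in the induced orbit is equivalent to the equality $\dim(\Ad G)X=\dim\Ind_L^G(\o)$. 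Once that is known, taking $F$-points and invoking proposition \ref{pro:bontype}(8) once more gives $\Ad(G(F))\o=\Ind_L^G(\o)$ as classes of $F$-points, which is the assertion.

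Next I would compute both dimensions. By point 3 of proposition \ref{prop:indprop}, $\mathrm{codim}_{\g}(\Ind_L^G(\o))=\mathrm{codim}_{\l}(\o)$, that is $\dim\Ind_L^G(\o)=\dim\g-\dim\l_X$ (centralizers being smooth in characteristic $0$, their dimension equals that of the Lie-algebra centralizer, and $\mathrm{codim}_{\l}(\o)=\dim\l-\dim\o=\dim\l_X$). On the other hand $\dim(\Ad G)X=\dim\g-\dim\g_X$, and always $\l_X\subseteq\g_X$; hence the whole lemma comes down to the claim $\g_X=\l_X$, i.e. the centralizer in $\g=\Mat_{m,D}$ of the block-diagonal element $X=\mathrm{diag}(X_1,\dots,X_k)$ is already block-diagonal, hence lies in $\l=\prod_i\Mat_{m_i,D}$.

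For that final point I would argue module-theoretically. Write $\g=\mathrm{End}_D(V)$ with $V=\bigoplus_{i=1}^k V_i$, $V_i$ a right $D$-space of dimension $m_i$ on which $X$ acts through $X_i$; an element $Y$ commuting with $X$ decomposes into $D$-linear blocks $Y_{ij}:V_j\to V_i$ intertwining $X_j$ and $X_i$. The hypothesis that the characteristic polynomial of $X_i$ is a power of the single irreducible $p_i$ (via theorem \ref{thm:Yu9}, the condition $\lambda_i(q)\neq\emptyset\iff q=p_i$) forces the minimal polynomial of $X_i$ to be $p_i^{e_i}$ for some $e_i\geq 1$, so $V_i$ is annihilated by $p_i^{e_i}$; for $i\neq j$ the image of $Y_{ij}$ is both a quotient of $V_j$ and a submodule of $V_i$, hence annihilated by the coprime polynomials $p_j^{e_j}$ and $p_i^{e_i}$, hence zero. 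Thus $Y\in\l$, so $\g_X=\l_X$. I expect this last step — keeping the left/right $D$-module bookkeeping straight and making precise that the $p_i$-primary components of $X$ on $V$ are exactly the $V_i$ (so that the block decomposition of $\g_X$ really reads off the centralizers of the $X_i$) — to be the only genuinely substantive part; everything preceding it is formal manipulation of the properties of $\Ind$ recorded in proposition \ref{prop:indprop}.
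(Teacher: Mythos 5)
Your proposal is correct, and it takes a genuinely different route from the paper's. The paper argues directly via the combinatorial characterization of orbits and their closures: writing $X+N-a$ for $N\in\n_P(F)$, $a\in F$, as a block upper-triangular matrix, it asserts that the ranks $\mathrm{rk}\bigl((X+N-a)^k\bigr)$ are independent of $N$, and then invokes the rank criterion (lemme \ref{lem622}) together with the Gerstenhaber--Hesselink closure theorem (lemme \ref{thm625}) to conclude that all of $\o+\n_P$ lies in a single geometric orbit, which must then be the induced one. You instead use point 4 of proposition \ref{prop:indprop} to translate membership of $X$ in $\Ind_L^G(\o)$ into the maximality of $\dim(\Ad G)X$ over $\o+\n_P$, use point 3 to identify that maximum with $\dim\g-\dim\l_X$, and thereby reduce the whole lemma to the centralizer identity $\g_X=\l_X$, established by the elementary observation that a morphism between $F[T]$-modules annihilated by coprime polynomials is zero. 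The two arguments are, at bottom, the same phenomenon seen from two sides -- off-diagonal perturbation does not move the orbit when the diagonal blocks have disjoint spectra -- but yours has the advantage of making completely explicit where the hypothesis $p_i\neq p_j$ enters (via the Bézout relation in $F[T]$), whereas the paper's claim that the ranks are constant in $N$ is stated as ``on voit aisément'' and silently uses the same coprimality; without it the rank of $(X+N-a)^k$ would indeed jump. Your dimension-count formulation also sits more naturally alongside the codimension formula of proposition \ref{prop:indprop} that the paper already has on hand, while the paper's route fits better with the surrounding discussion of the closure order used for théorème \ref{thm:inddes}.
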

\begin{proof}
Soient $P=LN_P\in\P^G(L)$, $N\in \mathfrak{n}_P(F)$ et $a\in F$. En écrivant $X+N-a$ en une matrice triangulaire supérieure on voit aisément que pour tout $k$, l'entier $\text{rk}((X+N-a)^k)$ est une constante lorsque $N$ varie. La relation $\Ind_L^G(\o)=\Ad(G(F))\o$ s'ensuit alors en vertu des lemmes \ref{lem622} et \ref{thm625}.
\end{proof}

\begin{theorem}\label{thm:inddes}
Soit $L=\prod_{i=1}^k \GL_{m_i,D}$ avec $(m_1,\dots,m_k)$ une partition de $m$, $X=\prod_{i=1}^k X_i\in \mathfrak{l}(F)$ avec $X_i\in\Mat_{n_i}(D)$, et $\o$ la classe de $L(F)$-conjugaison de $X$. Supposons que la classe de $\GL_{m_i}(D)$-conjugaison de $X_i$ correspond à $\lambda_i:\Irr_F\rightarrow\{\text{partitions des entiers}\}$, alors $\Ind_L^G(\o)$ est la classe de $G(F)$-conjugaison correspondant à $\lambda:\Irr_F\rightarrow\{\text{partitions des entiers}\}$ avec
\[\lambda(p)=(\lambda_1(p)^t,\dots,\lambda_k(p)^t)^t.\]
\end{theorem}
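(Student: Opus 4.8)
The plan is to reduce the general statement to the three building-block lemmas already available, namely Lemma~\ref{thm723} (induction from a central element with a single prime), Lemma~\ref{lem:diffeigen} (induction when the blocks have pairwise distinct primes), together with the transitivity of generalized Lusztig--Spaltenstein induction (Proposition~\ref{prop:indprop}(7)) and the combinatorial identities $(e\cdot\lambda)^t=e\times\lambda$ and $(e\times\lambda)^t=e\cdot\lambda$. The strategy is to factor the single induction $\Ind_L^G(\o)$ as a composite of inductions, first separating the distinct irreducible polynomials $p\in\Irr_F$, and then, for a fixed $p$, reducing to the case of a central element so that Lemma~\ref{thm723} applies.

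\textbf{Step 1: reduce to a single prime.} For each $i$ and each $p\in\Irr_F$ with $\lambda_i(p)\neq\emptyset$, let $X_{i,p}$ be the component of $X_i$ acting on the generalized eigenspace $V_{i,p}$ (in the notation of the elementary divisor theory), a block whose conjugacy class corresponds to the function supported at $p$ with value $\lambda_i(p)$. Writing $m_{i,p}=\dim_D V_{i,p}$, the Levi $L$ refines to $L'=\prod_{i}\prod_{p}\GL_{m_{i,p},D}$ with $L'\subseteq L$, and $X$ lies in $\mathfrak{l}'(F)$ with the obvious block decomposition. By transitivity, $\Ind_L^G(\o)=\Ind_{L'}^G((\Ad L')X)$. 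Now group the factors of $L'$ by the prime $p$: let $L_p=\prod_i \GL_{m_{i,p},D}\subseteq G_p\coloneqq\GL_{\sum_i m_{i,p},D}$, so that $L'=\prod_p L_p$ inside $G'\coloneqq\prod_p G_p\subseteq G$. Applying transitivity again, $\Ind_{L'}^G=\Ind_{G'}^G\circ\bigl(\prod_p\Ind_{L_p}^{G_p}\bigr)$, where the outer induction $\Ind_{G'}^G$ is precisely the situation of Lemma~\ref{lem:diffeigen} (the factors $G_p$ carry blocks with pairwise distinct primes), hence $\Ind_{G'}^G(\prod_p \o_p)=\Ad(G(F))(\prod_p\o_p)$. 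This shows the answer is the $G(F)$-class whose associated function sends $p$ to the partition of $\Ind_{L_p}^{G_p}(\o_p)$ at $p$, so it remains to compute each $\Ind_{L_p}^{G_p}(\o_p)$.

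\textbf{Step 2: the single-prime computation.} Fix $p$ and drop it from the notation. We must show $\Ind_{L_p}^{G_p}(\o_p)$ corresponds (at $p$) to $(\lambda_1(p)^t,\dots,\lambda_k(p)^t)^t$. Here each factor $\GL_{m_{i,p},D}$ carries the class with partition $\lambda_i(p)$. The idea is to further refine each factor: inside $\GL_{m_{i,p},D}$ the class $X_{i,p}$ is the generalized-Jordan element $\mathrm{diag}(\mathrm{J}(p,\lambda_i(p)_j))_j$, and by transitivity of induction one may first pass through the Levi whose factors are $\GL_{\underline m_p\lambda_i(p)_j,D}$; on that Levi $X_{i,p}$ is the product over $j$ of the classes $\Ind^{\GL_{\underline m_p\lambda_i(p)_j,D}}$ of a \emph{central} element $\mathbf X_p\otimes 1$ induced from $\GL_{\underline m_p,D}$, which by Lemma~\ref{thm723} equals the class with partition $(1,\dots,1)^t=(\lambda_i(p)_j)$ as a single Jordan block --- consistently recovering the generalized-Jordan description. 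Then $\Ind_{L_p}^{G_p}(\o_p)$ is computed, via transitivity, as the induction from the Levi $\prod_{i}\prod_j \GL_{\underline m_p\lambda_i(p)_j,D}$, on which $X$ is central; Lemma~\ref{thm723} directly gives the induced class as the one with partition $\bigl(\text{the collection of all }\underline m_p\lambda_i(p)_j, \text{ suitably normalized by }\underline m_p\bigr)^t$. The combinatorial identities $(e\times\lambda)^t=e\cdot\lambda$ and $(e\cdot\lambda)^t=e\times\lambda$, applied with $e=\deg_{F_p}D_p=\underline a_p$ to strip and restore the $D$-structure, convert the list $\{\lambda_i(p)_j\}_{i,j}$ (read as the concatenation of the $\lambda_i(p)$) and its transpose into exactly $(\lambda_1(p)^t,\dots,\lambda_k(p)^t)^t$.

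\textbf{Main obstacle.} The conceptual steps are all transitivity plus the three quoted lemmas; the real work --- and the step most likely to hide an error --- is Step~2's bookkeeping of the division-algebra factor $D$. Lemma~\ref{thm723} is stated for a central element and yields a \emph{transpose} of the list of block sizes, but the block sizes there are the $D$-dimensions $m_i$, whereas the partitions $\lambda_i(p)$ in the elementary-divisor dictionary already incorporate the factor $\underline a_p=\deg_{F_p}D_p$ implicitly (via $\underline m_p$). One must therefore be careful that the two transposes --- the one produced by the Richardson/Kraft--Ozeki--Wakimoto phenomenon and the one appearing in the formula $\lambda(p)=(\lambda_1(p)^t,\dots,\lambda_k(p)^t)^t$ --- are the \emph{same} transpose, and that the auxiliary $e\times$ / $e\cdot$ operations cancel correctly when one passes from $G_p$-conjugacy classes to their elementary-divisor functions. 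I expect this to require a short explicit check on the level of Young diagrams: that concatenating the $\lambda_i(p)$, transposing, and transposing back the individual pieces is the identity operation on the relevant data, which is the content of $(e\times\lambda)^t=e\cdot\lambda$ iterated over the parts. Once this dictionary is pinned down the theorem follows formally.
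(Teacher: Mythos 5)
Your Step~1 is sound and matches the paper's overall route --- transitivity of induction together with Lemmas~\ref{lem:diffeigen} and~\ref{thm723} --- with the prime-by-prime split being merely a reorganization of the paper's one-step refinement. The gap is in Step~2, exactly where you flag the \emph{main obstacle}: you pick the wrong refined Levi. You pass to $\tilde L_p\eqdef\prod_i\prod_j\GL_{\underline{m}_p\lambda_i(p)_j,D}$ and then apply Lemma~\ref{thm723} to the central element $X_\ss$. For the transitivity $\Ind_{L_p}^{G_p}(\o_p)=\Ind_{\tilde L_p}^{G_p}(X_\ss)$ to be legitimate you need $\Ind_{\tilde L_p}^{L_p}(X_\ss)=\o_p$, and Lemma~\ref{thm723} applied factor by factor shows that $\Ind_{\prod_j\GL_{\underline{m}_p\lambda_i(p)_j,D}}^{\GL_{m_{i,p},D}}(X_{i,\ss})$ has partition $\lambda_i(p)^t$ at $p$, not $\lambda_i(p)$. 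The Richardson orbit of a Levi with block sizes $(a_1,\dots,a_r)$ has Jordan type $(a_1,\dots,a_r)^t$; to recover the class of partition $\lambda_i(p)$ one must therefore use block sizes $\lambda_i(p)^t$, so the refined Levi should be $\prod_i\prod_j\GL_{\underline{m}_p(\lambda_i(p)^t)_j,D}$ --- which is precisely the Levi $M=\prod_iM_i$ that the paper's proof introduces, built from the \emph{transposed} partitions $\lambda_i(p)^t$.

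The discrepancy is not a mere normalization. With your $\tilde L_p$ the final partition would be the transpose of the concatenation of the $\lambda_i(p)$, i.e.\ the componentwise sum $\lambda_1(p)^t+\dots+\lambda_k(p)^t$, whereas the stated answer $(\lambda_1(p)^t,\dots,\lambda_k(p)^t)^t$ is the componentwise sum $\lambda_1(p)+\dots+\lambda_k(p)$; these differ whenever some $\lambda_i(p)$ is not self-conjugate. Concretely, in $\GL_4(F)$ with $p=T$, $L=\GL_3\times\GL_1$ and $X$ a nilpotent element of Jordan types $(3)$ and $(1)$, the induced orbit is the regular nilpotent of type $(4)=(3)+(1)$, while your scheme produces the Richardson orbit $(3,1)^t=(2,1,1)$. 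Replacing each $\lambda_i(p)_j$ by $(\lambda_i(p)^t)_j$ in the construction of the refined Levi repairs the proof, after which it coincides with the paper's.
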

\begin{proof}
On peut écrire, avec le concours des deux lemmes précédents, 
\[\o=\Ind_M^L(X_\ss),\]
où $M=\prod_{i=1}^k M_i$ et $M_i$ est un sous-groupe de Levi de $\GL_{n_i,D}$ associé à la partition $(\lambda_i(p)^t)_{p\in \Irr_F}$ de $n_i$. Puis la transitivité et toujours les deux lemmes précédents nous garantissent que  $\Ind_L^G(\o)=\Ind_L^G(\Ind_M^L(X_\ss))$ est la classe de $G(F)$-conjugaison correspondant à $\lambda:\Irr_F\rightarrow\{\text{partitions des entiers}\}$ avec $\lambda(p)=(\lambda_1(p)^t,\dots,\lambda_k(p)^t)^t.$
\end{proof}

Enlevons à présent l'hypothèse que $F$ est algébriquement clos, toutes les assertions des discussions précédentes, sauf le lemme \ref{lem622}, restent valables par un argument de descente galoisienne, ici on remarque que la coïncidence entre la conjugaison rationelle et celle géométrique joue un rôle essentiel. 


Pour un groupe algébrique $H$ on note $A_H$ le sous-tore central $F$-déployé maximal dans $H$. Un élément $X$ de $\g(F)$ est dit $F$-elliptique s'il est semi-simple et $A_G=A_{G_X}$. Une classe de $G(F)$-conjugaison dans $\g(F)$ est dit $F$-elliptique si un (donc tout) élément dedans est $F$-elliptique. Une classe de $G(F)$-conjugaison dans $\g(F)$ est $F$-elliptique si et seulement si la fonction $\lambda:\Irr_F\rightarrow\{\text{partitions des entiers}\}$ correspondante vérifie la condition suivante :
\[\text{il existe un unique $p\in \Irr_F$ tel que }\lambda(p)\not=\emptyset.\]

On prouve une dernière propriété de rationnalité.

\begin{proposition}\label{prop:suppellitique}
Pour tout $X\in\g(F)$, il existe $L$ un sous-groupe de Levi de $G$ dont l'algèbre de Lie contient $X_\ss$ en tant qu'élément $F$-elliptique tel que
\[\Ad(G)X=\Ind_L^G(X_\ss).\]
Si $(L_1,\o_1)$ et $(L_2,\o_2)$ sont deux paires avec $L_i$ un sous-groupe de Levi et $\o_i$ une $L_i(F)$-orbite $F$-elliptique dans $\mathfrak{l}_i$, telles que $\Ind_{L_1}^G(\o_1)=\Ind_{L_2}^G(\o_2)$, alors il existe $g\in G(F)$ tel que $L_1=(\Ad g)L_2$ et $\o_1=(\Ad g)\o_2$.
\end{proposition}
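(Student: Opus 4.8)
The plan is to reduce both assertions to the combinatorics of elementary divisors, using the explicit form of generalized Lusztig--Spaltenstein induction in Theorem~\ref{thm:inddes} and the classification of Theorem~\ref{thm:Yu9}. Write $G=\GL_{m,D}$, $d=\deg_F(D)$; for $X\in\g(F)$ let $\lambda_X\colon\Irr_F\to\{\text{partitions}\}$ be the function attached to $\Ad(G)X$ by Theorem~\ref{thm:Yu9}; and recall from Section~\ref{subsec:reductiondeJordanetcentralisateur} the integer $\underline{m}_p=\deg(p)\deg_{F_p}(D_p)/d$, together with the fact that $p^a$ is the characteristic polynomial of an element of some $\GL_{m'}(D)$ precisely when $m'=\underline{m}_pk$ and $a=\underline{a}_pk$ for an integer $k$.

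For existence, fix $X$, set $\sigma=X_\ss$, and for each $p$ with $\lambda_X(p)\neq\emptyset$ write the transposed partition $\lambda_X(p)^t=(a_{p,1}\ge\cdots\ge a_{p,r_p})$, putting $m_{p,j}=\underline{m}_p a_{p,j}$. Let $L=\prod_{p,j}\GL_{m_{p,j},D}$, a standard Levi of $G$; this is legitimate since $\sum_{p,j}m_{p,j}=\sum_p\underline{m}_p|\lambda_X(p)^t|=\sum_p\underline{m}_p|\lambda_X(p)|=m$ by the dimension constraint of Theorem~\ref{thm:Yu9}. On the factor $(p,j)$ choose an $F$-elliptic semisimple $\sigma_{p,j}\in\Mat_{m_{p,j},D}(F)$ with irreducible minimal polynomial $p$ (possible because $m_{p,j}=\underline{m}_p a_{p,j}$), and set $\sigma'=(\sigma_{p,j})_{p,j}\in\mathfrak{l}(F)$; since every $\sigma_{p,j}$ generates $F_p$, one has $A_L=A_{L_{\sigma'}}$, so $\sigma'$ is $F$-elliptic in $\mathfrak{l}$. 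The class of the $(p,j)$-component of $\sigma'$ is $q\mapsto(1,\dots,1)$ ($a_{p,j}$ ones) for $q=p$ and $\emptyset$ otherwise; its transpose at $p$ is the one-part partition $(a_{p,j})$, so Theorem~\ref{thm:inddes} shows that $\Ind_L^G(\Ad(L)\sigma')$ corresponds at each $p$ to the transpose of the partition with parts $(a_{p,j})_j$, i.e. to $\lambda_X(p)$ itself; thus $\Ind_L^G(\Ad(L)\sigma')=\Ad(G)X$. Finally $\sigma=X_\ss$ and $\sigma'$ are both semisimple with the same class function sending $p$ to $(1,\dots,1)$ with $|\lambda_X(p)|$ ones, hence $G(F)$-conjugate by Theorem~\ref{thm:Yu9}; replacing $L$ by the conjugate $\Ad(g_0)L$ with $\Ad(g_0)\sigma'=X_\ss$ yields a Levi whose Lie algebra contains $X_\ss$ as an $F$-elliptic element with $\Ind_L^G(X_\ss)=\Ad(G)X$. (Alternatively and more conceptually: $X_\nilp$ is a Richardson element of the type-GL group $G_{X_\ss}$, hence induced from $0$ in a Levi $M$ of $G_{X_\ss}$; the centralizer computations of Section~\ref{subsec:reductiondeJordanetcentralisateur} exhibit $M$ as $L_{X_\ss}$ for an explicit Levi $L$ of $G$, a rank count gives $A_L=A_{L_{X_\ss}}$ automatically, and one concludes with Proposition~\ref{prop:indprop}(2).)

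For uniqueness, let $(L_1,\o_1)$, $(L_2,\o_2)$ be as in the statement with $\Ind_{L_1}^G(\o_1)=\Ind_{L_2}^G(\o_2)$. Conjugating by elements of $G(F)$ — which preserves the hypothesis, induction commuting with $G(F)$-conjugation, and also the desired conclusion — we may take $L_1,L_2$ standard, $L_i=\prod_j\GL_{m_{i,j},D}$. Pick $\sigma_i\in\o_i$ and write $\sigma_i=(\sigma_{i,j})_j$; ellipticity of $\o_i$ in $\mathfrak{l}_i$ forces each $\sigma_{i,j}$ to be $F$-elliptic in $\Mat_{m_{i,j},D}(F)$, hence to have irreducible minimal polynomial $p_{i,j}$ and $m_{i,j}=\underline{m}_{p_{i,j}}e_{i,j}$, where $\sigma_{i,j}$ has partition $(1,\dots,1)$ ($e_{i,j}$ ones) at $p_{i,j}$. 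By Theorem~\ref{thm:inddes}, $\Ind_{L_i}^G(\o_i)$ corresponds at each $p$ to the transpose of the partition obtained by sorting the multiset $\{e_{i,j}:p_{i,j}=p\}$; since transposition is an involution on partitions, the equality of the two induced orbits forces $\{e_{1,j}:p_{1,j}=p\}=\{e_{2,j}:p_{2,j}=p\}$ for all $p$, whence an equality of multisets $\{(m_{1,j},p_{1,j})\}_j=\{(m_{2,j},p_{2,j})\}_j$. Permuting the blocks of $L_2$ by a block-permutation element of $G(F)$, we may assume $L_1=L_2=:L$ and that $\sigma_{1,j},\sigma_{2,j}$ share both minimal polynomial and size for every $j$, hence lie in a common $\GL_{m_j}(D)$-conjugacy class by Theorem~\ref{thm:Yu9}. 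Choosing $h_j\in\GL_{m_j}(D)$ with $\Ad(h_j)\sigma_{2,j}=\sigma_{1,j}$ and $h=(h_j)_j\in L(F)$ gives $\Ad(h)\o_2=\o_1$ and $\Ad(h)L=L$; composing $h$ with the earlier conjugations produces the required $g\in G(F)$. The difficulty is essentially bookkeeping: one must handle carefully the transpose conventions in Theorem~\ref{thm:inddes} and the relation $m_{p,j}=\underline{m}_p a_{p,j}$ between block sizes and numbers of Jordan strings so that all dimension counts close, noting that the $F$-rationality of every conjugation used (block permutations, the elements $h_j$, and $g_0$) is automatic for $G=\GL_{m,D}$, where rational and geometric conjugacy coincide.
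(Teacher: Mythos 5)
Your proof is correct and takes essentially the same route as the paper: both rest on the elementary-divisor classification (Theorem \ref{thm:Yu9}) together with the explicit description of induction (Theorem \ref{thm:inddes}), produce the same Levi $L$ (associated to the composition $(\underline{m}_p\cdot\lambda_X(p)^t)_p$), and read off uniqueness from the multiset of elliptic block data. The only cosmetic difference is that you construct the elliptic semisimple element and check everything directly over $F$, whereas the paper first passes to $\overline{F}$ and descends — both are legitimate given the descent remark preceding the proposition.
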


\begin{proof}
\'{E}bauchons par la première partie de l'énoncé. On suppose que la classe de $G(F)$-conjugaison de $X$ correspond à $\lambda:\Irr_F\rightarrow\{\text{partitions des entiers}\}$ avec $\sum_{p\in \Irr_F}\deg(p)|\lambda(p)|\deg_{F_p}(D_p)=nd.$
On sait que la classe de $G(\overline{F})$-conjugaison de $X$ correspond à $\lambda_{\overline{F}}:\Irr_{\overline{F}}\rightarrow\{\text{partitions des entiers}\}$ avec $\lambda_{\overline{F}}(p')=\deg_{F_p}(D_p)\times \lambda_F(p)$ pour tout $p'\in\Irr_{\overline{F}}$, ici $p\in\Irr_F$ est le multiple irréductible de $p'$.

Avec l'aide du théorème \ref{thm:inddes}, on voit que
\[(\Ad(G)X)_{\overline{F}}=\Ind_{L'}^{G_{\overline{F}}}(X_\ss)_{\overline{F}},\]
avec $L'$ le sous-groupe de Levi standard de $G_{\overline{F}}$ asssocié à la partition 
\[(\lambda_{\overline{F}}(p')^t)_{p'\in \Irr_{\overline{F}}}.\]
Or $(\lambda_{\overline{F}}(p')^t)_{p'\in \Irr_{\overline{F}}}=(\deg_{F_p}(D_p)\cdot(\lambda(p)^t)_{p'\mid p})_{p'\in \Irr_{\overline{F}}}$, on sait en outre que $d\mid \deg(p)\deg_{F_p}(D_p)$ pour tout $p\in \Irr_F$ (cf. \cite[proposition. 13.4(v)]{Pi82}), $L'$ se descend ainsi en un sous-groupe de Levi de $G$, à savoir celui standard associé à la partition
\[\left(\left(\frac{\deg(p)\deg_{F_p}(D_p)}{d}\right)\cdot \lambda(p)^t\right)_{p\in\Irr_F}.\]
Son algèbre de Lie contient $X_\ss$ en tant qu'élément $F$-elliptique, ce qu'il fallait.

Avançons sur la deuxième partie de l'énoncé. Soit $L=\prod_{i=1}^k \GL_{m_i,D}$ avec $(m_1,\dots,m_k)$ une partition de $m$, $\o=\prod_{i=1}^k \o_i\subseteq \mathfrak{l}(F)$, avec $\o_i$ une $\GL_{m_i}(D)$-orbite $F$-elliptique dans $\Mat_{m_i,D}$. Supposons que l'orbite $\o_i$ correspond à $\lambda_i:\Irr_F\rightarrow\{\text{partitions des entiers}\}$. L'ellipticité de $\o_i$ est équivalent à ce que $\lambda_i(p)$ est la partition $|\lambda_i(p)|\times (1)$ pour tout $p\in \Irr_F$ et il existe un unique $p$ tel que $|\lambda_i(p)|\not=0$. De ce fait $\Ind_L^G(\o)$ est la classe de $G(F)$-conjugaison correspondant à $\lambda:\Irr_F\rightarrow\{\text{partitions des entiers}\}$ avec
\[\lambda(p)=(|\lambda_1(p)|,\dots,|\lambda_k(p)|)^t\]
et on voit aussi que $\Ind_L^G(\o)$ détermine le couple $(L,\o)$ à une conjugaison près.\qedhere
\end{proof}
Le sous-groupe de Levi $L$ dans la proposition dépend bien sûr du corps de base $F$ car l'ellipticité est une notion arithmétique tandis que l'induction est une notion géométrique. Cela se voit de la démonstration.

\subsection{Lien avec le transfert}\label{subsec:appendiceAlienavecletransfert}
Posons $G^\ast=\GL_{md,F}$ la forme intérieure (quasi-)déployée de $G$, $\g^\ast=\Mat_{md,F}$ son algèbre de Lie, et $\eta: \g\rightarrow \g^\ast$ un torseur intérieur. On reprend dans la suite les notations de l'introduction de l'article, avec $F$ un corps parfait, non nécessairement un corps global ou local. \`{A} $X$ une classe de $G(F)$-conjugaison dans $\g(F)$ (ou par abus de notation $X$ peut être un élément dans la classe) on peut associer, via $\eta$, une classe de $G^\ast(F)$-conjugaison dans $\g^\ast$ notée $X^\ast$ (idem $X^\ast$ peut être un élément dans la classe). D'après les calculs antérieurs, si $X$ correspond à $\lambda:\Irr_F\rightarrow\{\text{partitions des entiers}\}$ alors $X^\ast$ correspond à $\lambda^\ast:\Irr_F\rightarrow\{\text{partitions des entiers}\}$ avec $\lambda^\ast(p)=\deg_{F_p}(D_p)\times \lambda(p)$ pour tout $p\in \Irr_F$.

\begin{proposition}\label{prop:indtransssibothtrans}
Soit $F$ un corps parfait. Soient $L'$ un sous-groupe de Levi de $G^\ast$, et $\o'$ une classe de $L'(F)$-conjugaison dans $\mathfrak{l}'(F)$. Alors $\Ind_{L'}^{G^\ast}(\o')$ se transfère à $G$ si et seulement si
\begin{enumerate}
    \item $L'=L^\ast$ se transfère à $G$ ;
    \item la classe de $L^\ast(F)$-conjugaison $\o'$ dans $\mathfrak{l}^\ast(F)$ se transfère à $L$.
\end{enumerate} 
\end{proposition}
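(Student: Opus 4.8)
The plan is to reduce the statement to the explicit combinatorial description of transfer given by the elementary divisor theory developed in this appendix, and then check the two implications separately, using the transitivity of induction to move freely between $G^\ast$ and a Levi subgroup. First I would fix notation: write $G^\ast=\GL_{md,F}$ (after reducing to a simple factor and changing the base field via Shapiro's lemma, as done repeatedly in the paper), so that classes in $\g^\ast(F)$ are functions $\mu:\Irr_F\to\{\text{partitions}\}$, and transfer to $G$ is, by the discussion in \S\ref{subsec:appendiceAlienavecletransfert} together with Corollary \ref{coro:classconjratpt}, exactly the condition $\deg_{F_p}(D_p)\mid_\times \mu(p)$ for all $p\in\Irr_F$. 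Similarly, for a standard Levi $L'\simeq\prod_i\GL_{m_i,F}$ the criterion for $L'=L^\ast$ to transfer (Proposition \ref{prop:transfertdef}, reformulated in the local-global section) is a divisibility condition on the parts of the partition $(m_1,\dots,m_k)$ by the relevant local indices; and transfer of a class $\o'$ on $\mathfrak{l}^\ast(F)$ to $\mathfrak{l}(F)$ is the same $\mid_\times$ condition applied componentwise on each $\GL_{m_i,F}$-factor.

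The key computational input is Theorem \ref{thm:inddes}, which tells us that if $\o'$ on $\mathfrak{l}'(F)=\prod_i\mathfrak{gl}_{m_i,F}(F)$ corresponds to $(\lambda_i')_i$, then $\Ind_{L'}^{G^\ast}(\o')$ corresponds to $\lambda'$ with $\lambda'(p)=(\lambda_1'(p)^t,\dots,\lambda_k'(p)^t)^t$ for every $p\in\Irr_F$. So the whole statement becomes: the class $\lambda'$ satisfies $\deg_{F_p}(D_p)\mid_\times\lambda'(p)$ for all $p$ if and only if (1) each $m_i$ satisfies the appropriate local-index divisibility and (2) each $\lambda_i'(p)$ satisfies $\deg_{F_p}(D_p)\mid_\times\lambda_i'(p)$. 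For the direction (1)$\wedge$(2)$\Rightarrow$ transfer of the induced class, I would argue directly: if $e:=\deg_{F_p}(D_p)$ divides every part of each $\lambda_i'(p)^t$-related partition — here one uses the Lemma that $(e\cdot\lambda)^t=e\times\lambda$ and $(e\times\lambda)^t=e\cdot\lambda$ to pass between the $\mid_\times$ condition on $\lambda_i'(p)$ and a $\mid_\cdot$ condition on $\lambda_i'(p)^t$ — then the concatenation $(\lambda_1'(p)^t,\dots,\lambda_k'(p)^t)$ is still divisible by $e$ in the $\mid_\cdot$ sense (here one also needs that $e\mid m_i$, which is what hypothesis (1) supplies, to know that $|\lambda_i'(p)^t|$-bookkeeping is consistent), and transposing back gives $e\mid_\times\lambda'(p)$. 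For the converse, transfer of $\Ind_{L'}^{G^\ast}(\o')$ to $G$ combined with the transitivity $\Ind_{L'}^{G^\ast}=\Ind_{L'}^{G^\ast}$ and Lemma \ref{lem:elltrans}-type ellipticity arguments (reduce $\o'$ to $\Ind$ of its semisimple elliptic part inside $L'$ via Proposition \ref{prop:suppellitique}) forces first that the Levi $L'$ itself transfers: one reads off from $\lambda'(p)=(\lambda_1'(p)^t,\dots)^t$ that if $e\mid_\times\lambda'(p)$ then, looking at the largest parts, $e$ must divide each $m_i$ — equivalently each block size — which is condition (1); and then, knowing $L'=L^\ast$, one uses the induced-Levi torsor $\eta|_{\mathfrak{l}}$ and repeats the elementary-divisor analysis on each factor to get condition (2).

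The main obstacle I anticipate is the converse direction's first step, namely extracting the transfer of $L'$ from the transfer of the single induced class — a priori many different Levi subgroups $L'$ can induce up to classes that all transfer, so one must be careful that the specific combinatorial shape $\lambda'(p)=(\lambda_1'(p)^t,\dots,\lambda_k'(p)^t)^t$ genuinely records enough of the block structure of $L'$ to recover the divisibility $e\mid m_i$. The clean way around this is to first reduce, using Proposition \ref{prop:suppellitique}, to the case where $\o'$ is itself elliptic in $\mathfrak{l}'$: then each $\lambda_i'(p)$ is $|\lambda_i'(p)|\times(1)$ concentrated at a single $p$, so $\lambda_i'(p)^t=(|\lambda_i'(p)|)$ is a single part equal to $m_i$ (up to the local index), and $\lambda'(p)=(m_1,\dots,m_k)^t$ literally encodes the partition defining $L'$; the divisibility $e\mid_\times(m_1,\dots,m_k)^t$ then unwinds via $(e\cdot\lambda)^t=e\times\lambda$ to exactly $e\mid m_i$ for all $i$, which is (1), and (2) is then immediate in the elliptic case and follows in general by descent through $\eta|_{\mathfrak{l}}$ and transitivity of induction. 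The remaining verifications are routine manipulations of partitions and transposes, and of the two dual divisibility relations $\mid_\times$ and $\mid_\cdot$, together with the global consistency fact (class field theory, already invoked in \S\ref{sec:compatibilitélocal-global}) that the global index $d$ — here $\deg_F(D)$ restricted to each simple factor — is the lcm of the local ones, which is only needed if one does not pre-reduce to a simple factor over a fixed base field.
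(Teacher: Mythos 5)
Your core mechanism is right and matches the paper's: reduce to a single $\GL_{md}$-factor, parametrize classes by functions $\lambda:\Irr_F\to\{\text{partitions}\}$, use Theorem~\ref{thm:inddes} to write $\lambda'(p)=(\lambda_1'(p)^t,\dots,\lambda_k'(p)^t)^t$, identify transfer with the $\deg_{F_p}(D_p)\mid_\times\lambda'(p)$ condition, and pass between $\mid_\times$ and $\mid_\cdot$ via the transpose lemma. Where you diverge is in the ``only if'' direction. The paper does not pass through an elliptic reduction at all: from $\deg_{F_p}(D_p)\mid_\times\lambda'(p)$ one transposes to $\deg_{F_p}(D_p)\mid_\cdot\lambda_i'(p)^t$ for every $i$, hence $\deg_{F_p}(D_p)\mid|\lambda_i'(p)|$, then invokes the general central-simple-algebra fact $d\mid\deg(p)\deg_{F_p}(D_p)$ (Pierce, cited as \cite[prop.~13.4(v)]{Pi82} --- it is not class field theory, which is only used later for the local computation) to get $d\mid\deg(p)|\lambda_i'(p)|$, and finally sums over $p$ using $l_i'=\sum_p\deg(p)|\lambda_i'(p)|$ to conclude $d\mid l_i'$, i.e.\ condition~(1); condition~(2) then falls out of the same $\mid_\cdot\leftrightarrow\mid_\times$ conversion. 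The worry you raise --- that several Levi subgroups could induce up to the same class, so that transfer of the induced class might not pin down divisibility of the block sizes --- never actually arises, because the divisibility $d\mid l_i'$ is forced outright by this summation.

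Your detour via Proposition~\ref{prop:suppellitique} can be made to work but costs more than it saves, and your intermediate claim is not quite right: after the elliptic reduction, $\lambda_i'$ is concentrated at a single irreducible $p_i$ with $|\lambda_i'(p_i)|=m_i/\deg(p_i)$, and the $p_i$ may differ across $i$; so $\lambda'(p)$ is not $(m_1,\dots,m_k)^t$ in general but the transpose of the concatenation of $(m_i/\deg(p_i))$ over those $i$ with $p_i=p$. Recovering $d\mid m_i$ from this still requires precisely the degree bookkeeping $d\mid\deg(p)\deg_{F_p}(D_p)$ plus a summation over $p$ --- i.e.\ the same work as the paper's direct argument --- so the elliptic reduction buys nothing. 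You also need an extra argument that transfer of $L_1'\subseteq L'$ implies transfer of $L'$ (true since the blocks of $L'$ are unions of blocks of $L_1'$), and then you must deduce condition~(2) for $\o'$ on $\mathfrak{l}'$ from the elliptic orbit on $\mathfrak{l}_1'$ via the easy ``if'' direction inside $L'$ --- all additional steps absent from the paper's proof. Finally, the parenthetical concern in your ``if'' direction that one needs $e\mid m_i$ for the concatenation bookkeeping is unfounded: if $e$ divides every part of each $\lambda_i'(p)^t$ it divides every part of the sorted concatenation, full stop. These are all repairable, so the proposal is correct in substance, but the paper's direct summation is both shorter and more precise.
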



\begin{proof}
Soient $L'=\prod_{i=1}^k \GL_{l_i',F}$ avec $(l_1',\dots,l_k')$ une partition de $nd$, $X'=\prod_{i=1}^k X_i'\in \mathfrak{l}'(F)$ avec $X_i'\in\Mat_{l_i'}(F)$, et $\o'$ la classe de $L'(F)$-conjugaison de $X'$. Supposons que la classe de $\GL_{l_i'}(F)$-conjugaison de $X_i'$ correspond à $\lambda_i':\Irr_F\rightarrow\{\text{partitions des entiers}\}$. On sait que l'induction  $\Ind_{L'}^{G^\ast}(\o')$ est la classe de $G^\ast(F)$-conjugaison correspondant à $\lambda':\Irr_F\rightarrow\{\text{partitions des entiers}\}$ avec
\[\lambda'(p)=(\lambda_1'(p)^t,\dots,\lambda_k'(p)^t)^t.\]
Selon l'hypothèse $\deg_{F_p}(D_p)\mid_{\times} \lambda'(p)$ pour tout $p\in \Irr_F$. Il vient alors $\deg_{F_p}(D_p)\mid_{\cdot}\lambda_i'(p)^t$ pour tout $i$. D'où $d\mid \deg (p)\deg_{F_p}(D_p)\mid \deg (p)|\lambda_i'(p)|$. Or $l_i'=\sum_{p\in\Irr_F}\deg (p)|\lambda_i'(p)|$, on obtient ainsi $d\mid l_i'$ pour tout $i$. De ce fait $L'$ est conjugué à $L^\ast$ un sous-groupe de Levi qui se transfère. Pour ne pas introduire encore des notations on suppose directement que $L'=L^\ast$. En tenant compte de ce qui précède $\deg_{F_p}(D_p)\mid_{\cdot}\lambda_i'(p)^t$, donc $\deg_{F_p}(D_p)\mid_{\times}\lambda_i'(p)$, autrement dit $\o'$ se transfère (à $L$).
\end{proof}

\begin{proposition}[Induite commute au transfert]\label{prop:induitecommuteautransfert} Soit $F$ un corps parfait. Soient $\o$ une classe de $G(F)$-conjugaison dans $\g(F)$, et $\o^\ast$ la classe de $G^\ast(F)$-conjugaison dans $\g^\ast(F)$ associée par $\eta$. Si $L$ est un sous-groupe de Levi de $G$ et $\o_1$ une classe de $L(F)$-conjugaison dans $\mathfrak{l}(F)$, tels que 
\[\o=\Ind_L^G(\o_1),\]
alors pour tous $L\arr L^\ast$ et $\o_1\arr\o_1^\ast$, 
\[\o^\ast=\Ind_{L^\ast}^{G^\ast}(\o_1^\ast).\]
Réciproquement, si $L\arr L^\ast$ et $\o_1\arr \o_1^\ast$ alors
\[\Ind_L^G(\o_1)\arr \Ind_{L^\ast}^{G^\ast}(\o_1^\ast).\]
\end{proposition}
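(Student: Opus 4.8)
The statement to prove is Proposition~\ref{prop:induitecommuteautransfert}: induction of orbits commutes with the transfer map $\eta$, over a perfect field $F$, in the setting $G = \GL_{m,D}$, $G^\ast = \GL_{md,F}$.

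\textbf{Overall strategy.} The plan is to reduce everything to an explicit computation on the combinatorial side, using the description of conjugacy classes by functions $\lambda : \Irr_F \to \{\text{partitions}\}$ (Theorem~\ref{thm:Yu9}), the explicit formula for induction (Theorem~\ref{thm:inddes}), and the explicit description of the transfer of a class: if $\o$ corresponds to $\lambda$ then $\o^\ast$ corresponds to $\lambda^\ast(p) = \deg_{F_p}(D_p) \times \lambda(p)$. The key combinatorial identity relating the two ``multiplication'' operations on partitions is the lemma $(e\cdot\lambda)^t = e\times\lambda$ and $(e\times\lambda)^t = e\cdot\lambda$, which is exactly what makes transposition (appearing in the induction formula) interact cleanly with the transfer. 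So the whole proof is: translate both sides into partition-valued functions, apply the two explicit formulas, and check the resulting identity of partitions coordinate by coordinate (i.e. prime by prime $p \in \Irr_F$).

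\textbf{Step-by-step plan.} First I would handle the converse statement, since it is the cleaner one: given $L \arr L^\ast$ and $\o_1 \arr \o_1^\ast$, I want $\Ind_L^G(\o_1) \arr \Ind_{L^\ast}^{G^\ast}(\o_1^\ast)$. Write $L = \prod_i \GL_{m_i,D}$, $\o_1 = \prod_i \o_{1,i}$ with $\o_{1,i}$ corresponding to $\lambda_i$; then $L^\ast = \prod_i \GL_{m_i d, F}$ and $\o_1^\ast = \prod_i \o_{1,i}^\ast$ with $\o_{1,i}^\ast$ corresponding to $\lambda_i^\ast(p) = \deg_{F_p}(D_p)\times\lambda_i(p)$. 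By Theorem~\ref{thm:inddes}, $\Ind_L^G(\o_1)$ corresponds to $\lambda(p) = (\lambda_1(p)^t, \dots, \lambda_k(p)^t)^t$, and $\Ind_{L^\ast}^{G^\ast}(\o_1^\ast)$ corresponds to $\mu(p) = ((\lambda_1^\ast(p))^t, \dots, (\lambda_k^\ast(p))^t)^t$. I must check that $\mu(p) = \deg_{F_p}(D_p)\times\lambda(p)$ for each $p$, i.e. that transposing the concatenation and then multiplying by $e := \deg_{F_p}(D_p)$ in the ``$\times$'' sense equals transposing the concatenation of the $e\times\lambda_i(p)$'s. This follows from: $(e\times\lambda_i)^t = e\cdot(\lambda_i^t)$ by the transpose lemma; concatenation of partitions corresponds under transposition to coordinatewise addition of the transposed Young diagrams extended by zeros (standard combinatorics of Young diagrams); $e\cdot(-)$ commutes with that addition; and finally $e\cdot(\nu^t) = (e\times\nu)^t$ again by the transpose lemma with $\nu = (\lambda_1(p)^t,\dots,\lambda_k(p)^t)^t$. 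Assembling these gives the identity. For the direct statement, I observe that given $\o = \Ind_L^G(\o_1)$ and $L\arr L^\ast$, $\o_1 \arr \o_1^\ast$, the converse part already shows $\Ind_{L^\ast}^{G^\ast}(\o_1^\ast)$ is a class that transfers to $\Ind_L^G(\o_1) = \o$; since the transfer of a class is uniquely determined (the class of $\eta(\o)$ is defined over $F$ and, by Proposition~\ref{pro:bontype}(9) / Corollary~\ref{coro:classconjratpt}, contains an $F$-point, so $\o^\ast$ is well-defined), we get $\o^\ast = \Ind_{L^\ast}^{G^\ast}(\o_1^\ast)$.

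\textbf{Expected main obstacle.} The computations are all routine once the right combinatorial dictionary is in place; the only genuinely delicate point is bookkeeping the interaction of concatenation of partitions with transposition and with the two scaling operations $e\cdot(-)$ and $e\times(-)$ simultaneously. Concretely, one must be careful that $\lambda_i^\ast(p) = e\times\lambda_i(p)$ with the \emph{same} $e = \deg_{F_p}(D_p)$ for all $i$ (this is true because $D_p$ depends only on $p$, not on the Levi block, which is why one works prime-by-prime), and that the induction formula of Theorem~\ref{thm:inddes} is being applied with matching indexing of $\Irr_F$ on both sides. I would also need the elementary fact, perhaps stated as a small lemma, that if $\nu = (\alpha_1,\dots,\alpha_k)$ is the partition obtained by sorting the concatenation of partitions $\alpha_i$, then $\nu^t = \sum_i \alpha_i^t$ (componentwise sum of the transposed sequences, padded with zeros); with that in hand the verification is a two-line chain of equalities using the transpose lemma. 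No step requires anything beyond the results already in the excerpt.
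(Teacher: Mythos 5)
Your proof is correct but takes a genuinely different route from the paper's. The paper disposes of this proposition in one line, observing that since orbit induction is a geometric, base-change-compatible operation and $\eta$ is by definition an isomorphism of algebras over $\overline{F}$, the compatibility $\eta\bigl(\Ind_{L}^{G}(\o_1)_{\overline{F}}\bigr) = \Ind_{L^\ast}^{G^\ast}(\o_1^\ast)_{\overline{F}}$ holds tautologically over $\overline{F}$, and this is exactly what the definition of transfer of conjugacy classes requires. You instead pass through the partition dictionary of Theorems \ref{thm:Yu9} and \ref{thm:inddes} together with the identity $\lambda^\ast(p) = \deg_{F_p}(D_p)\times\lambda(p)$ describing the transfer of a class, reducing the claim to a prime-by-prime partition identity $\bigl((e\times\lambda_1(p))^t,\dots,(e\times\lambda_k(p))^t\bigr)^t = e\times\bigl(\lambda_1(p)^t,\dots,\lambda_k(p)^t\bigr)^t$ proved via the transpose lemma and the fact that $(\alpha_1,\dots,\alpha_k)^t = \sum_i\alpha_i^t$. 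Your approach buys explicitness, at the cost of combinatorial bookkeeping that the base-change argument avoids entirely and that also extends without modification beyond the case $G = \GL_{m,D}$. Two remarks are in order. First, the form of the transpose lemma you actually apply, $(e\times\lambda)^t = e\cdot(\lambda^t)$, is the correct one; as printed in the appendix the lemma reads $(e\times\lambda)^t = e\cdot\lambda$, which already fails for $\lambda = (2)$ and $e = 3$, so you have silently repaired a typo. Second, your deduction of the direct assertion from the converse via uniqueness of the transfer is sound, but it implicitly uses that rational and geometric $G^\ast(F)$-conjugacy coincide in $\g^\ast(F)$ (point 8 of Proposition \ref{pro:bontype}), which deserves an explicit citation at the place where you assert the transfer is uniquely determined.
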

\begin{proof}
Trivial puisque l'on peut vérifier l'énoncé sur $\overline{F}$.
\end{proof}

\begin{proposition}
Soit $F$ un corps parfait. Un élément $X'\in \g^\ast(F )$ se transfère si et seulement si $X_\ss'=X_\ss^\ast\in\g^\ast(F )$ se transfère et $X_\nilp'\in\g_{X_\ss^\ast}^\ast(F )$ se transfère (via la restriction de $\eta$ dont l'image est $\g_{X_\ss^\ast}^\ast$).    
\end{proposition}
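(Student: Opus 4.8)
The plan is to prove the two implications separately; the direct one will come for free from results already in place.

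First, for ``$X'$ se transfère $\Rightarrow$ the two conditions'': I would pick $X\in\g(F)$ un transfert de $X'$ and invoke the proposition already established above asserting that the transfert est compatible à la décomposition de Jordan. That proposition gives at once that $X_\ss$ est un transfert de $X'_\ss$ via $\eta$ and that $X_\nilp$ est un transfert de $X'_\nilp$ via $\eta_X$; since, $X_\ss$ being a transfer of $X'_\ss=X_\ss^\ast$, the map $\eta_X$ is exactly the restriction of (a conjugate of) $\eta$ with image $\g_{X_\ss^\ast}^\ast$, this is the stated conclusion.

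For the converse, write $\sigma\eqdef X'_\ss$ and $\nu\eqdef X'_\nilp$, so that $\sigma\in\g_\ss^\ast(F)$, $\nu\in\g_\sigma^\ast(F)$ is nilpotent, $\sigma$ and $\nu$ commute, and $X'=\sigma+\nu$. The first step is to record that, by the definition of the transfert of an element together with Proposition \ref{pro:bontype}(8) (coïncidence des conjugaisons rationnelle et géométrique dans l'algèbre de Lie d'un groupe du type GL), applied both to $G$ and to $G^\ast$, the assertion ``$X\in\g(F)$ est un transfert de $X'$'' is equivalent to ``$\eta(X)$ est $G^\ast(\overline{F})$-conjugué à $X'$''; thus it is enough to produce one $X\in\g(F)$ with $\eta(X)\sim_{G^\ast(\overline{F})}X'$. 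To build it: by hypothesis $\sigma$ se transfère, so I fix $Y\in\g_\ss(F)$ un transfert de $\sigma$ via $\eta$; by Proposition \ref{prop:GXtorseurint}, $G_\sigma^\ast$ est la forme intérieure quasi-déployée de $G_Y$, and choosing $x'\in G^\ast(\overline{F})$ with $(\Ad x')\eta(Y)=\sigma$ gives, by restriction, a torseur intérieur $\eta_Y\eqdef(\Ad x')\eta|_{\g_Y}\colon\g_Y\xrightarrow{\ \sim\ }\g_\sigma^\ast$. By hypothesis $\nu\in\g_\sigma^\ast(F)$ se transfère via $\eta_Y$, so I fix $N\in\g_Y(F)$ un transfert de $\nu$ via $\eta_Y$; since $\nu$ is nilpotent and $\eta_Y$ is an $\overline{F}$-algebra isomorphism, $N$ is nilpotent, and as $N$ commutes with $Y$ the element $X\eqdef Y+N$ lies in $\g(F)$ with $X_\ss=Y$ and $X_\nilp=N$.

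It then remains only to check $\eta(X)\sim_{G^\ast(\overline{F})}X'$, which I would do by a one-line computation over $\overline{F}$: one has $(\Ad x')\eta(X)=\sigma+\eta_Y(N)$, and picking $g\in G_\sigma^\ast(\overline{F})$ with $(\Ad g)\eta_Y(N)=\nu$ one has $(\Ad g)\sigma=\sigma$, whence $(\Ad gx')\eta(X)=\sigma+\nu=X'$. By the reduction of the previous paragraph, $X$ is a transfert de $X'$, so $X'$ se transfère. No step here is a real obstacle: the only care needed is the bookkeeping of the two preceding paragraphs — reducing ``se transfère'' to a purely geometric statement, and observing that the torseur intérieur $\eta_Y$ on the centralizer is canonical only up to $G_\sigma^\ast(\overline{F})$-conjugacy of its target, which does not affect the transfert relation between $F$-points of $\g_Y$ and of $\g_\sigma^\ast$. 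One could alternatively read the statement off the explicit description via diviseurs élémentaires (Théorèmes \ref{thm:Yu9} and \ref{thm:inddes}), but this conceptual argument is shorter.
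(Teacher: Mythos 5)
Your proof is correct, and it is an explicit elaboration of the paper's one-line argument (``Trivial puisque l'on peut vérifier l'énoncé sur $\overline{F}$'') : your reduction of the transfer condition to $G^\ast(\overline{F})$-conjugacy of $\eta(X)$ with $X'$, your construction $X=Y+N$ from transfers of the semisimple and nilpotent parts, and the final computation $(\Ad gx')\eta(X)=\sigma+\nu$ are exactly the content hidden behind that one line. The only remark worth making is that you invoke the earlier Jordan-compatibility proposition for the direct implication, which is formally stated in a section where $F$ is local or global of characteristic $0$; its proof is purely algebraic and extends without change to any perfect field, but it would be cleaner to note this explicitly since the present statement is over an arbitrary perfect field.
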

\begin{proof}
Trivial puisque l'on peut vérifier l'énoncé sur $\overline{F}$.\qedhere
\end{proof}

\begin{proposition}\label{prop:elltransproof}
Soit $F$ un corps local de caractérisque 0. Toute classe de $G^\ast(F)$-conjugaison semi-simple elliptique dans $\g^\ast(F)
$ se transfère à $G$.    
\end{proposition}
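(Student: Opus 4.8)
Le plan est de se ramener à un facteur simple, puis de lire le critère de transfert sur le dictionnaire des diviseurs élémentaires de l'annexe, le seul point restant étant un court calcul arithmétique avec des invariants locaux. D'abord j'écrirais $G=\prod_{i}\Res_{E_i/F}\GL_{m_i,D_i}$ et $G^\ast=\prod_i\Res_{E_i/F}\GL_{m_id_i,E_i}$ avec $d_i\eqdef\deg_{E_i}D_i$. Une classe semi-simple elliptique dans $\g^\ast(F)$ se décompose en un produit de classes semi-simples dans les facteurs $\Mat_{m_id_i}(E_i)$, chacune à son tour elliptique --- car $A_{G^\ast}$ et le centralisateur d'un élément semi-simple se scindent de façon compatible avec la décomposition en produit --- et $X^\ast$ se transfère à $G$ si et seulement si chacune de ses composantes se transfère au facteur correspondant $\Res_{E_i/F}\GL_{m_i,D_i}$ ; par le lemme de Shapiro cette dernière question descend au corps local $E_i$. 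On se ramène ainsi à l'énoncé suivant : si $E$ est un corps local de caractéristique $0$, $D$ une $E$-algèbre à division centrale de degré $d$, et $X^\ast\in\Mat_{md}(E)$ un élément semi-simple et $E$-elliptique de l'algèbre de Lie de $\GL_{m,D}\otimes_E\cdots$, alors $X^\ast$ est un transfert d'un élément de $\Mat_m(D)$.

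Ensuite je traduirais « semi-simple et $E$-elliptique » dans le langage du théorème~\ref{thm:Yu9}. Un tel $X^\ast$ est exactement un élément pour lequel $E[X^\ast]$ est un corps, i.e.\ dont le polynôme minimal est un irréductible $p\in\Irr_E$ ; en posant $e\eqdef\deg p$ et $r\eqdef md/e$ (de sorte que $e\mid md$, le polynôme caractéristique étant $p^r$ de degré $md$), la classe de $X^\ast$ correspond à la fonction $\Irr_E\to\{\text{partitions}\}$ supportée en $p$ de valeur la partition $(1^r)$. D'après le dictionnaire de la sous-section~\ref{subsec:appendiceAlienavecletransfert} (ou, de façon équivalente, le corollaire~\ref{coro:classconjratpt}), $X^\ast$ est un transfert d'un élément de $\Mat_m(D)$ précisément lorsque $(1^r)=\deg_{E_p}(D_p)\times\lambda$ pour une partition $\lambda$, où $E_p\eqdef E[T]/(p)$ et $D\otimes_EE_p\simeq\Mat_c(D_p)$ ; de manière équivalente, lorsque $\deg_{E_p}(D_p)\mid r$, auquel cas la préimage est la classe semi-simple elliptique de $\Mat_m(D)$ attachée à la partition $(1^{r/\deg_{E_p}(D_p)})$ en $p$.

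Enfin je vérifierais cette divisibilité au moyen de la théorie du corps de classes local. Sur un corps local on a $\text{inv}(D\otimes_EE_p)=e\cdot\text{inv}(D)$, donc (comme dans la remarque après le théorème~\ref{thm:chara}) $\deg_{E_p}(D_p)=d/\gcd(d,e)$. En posant $g\eqdef\gcd(d,e)$, $e=ge'$ et $d=gd'$ avec $e'$ et $d'$ premiers entre eux, la relation $e\mid md$ entraîne $e'\mid md'$, d'où $e'\mid m$ ; par conséquent $r/\deg_{E_p}(D_p)=mg/e=m/e'$ est un entier, ce qui est exactement la condition requise $\deg_{E_p}(D_p)\mid r$. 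Ceci achève la preuve. Le seul point véritablement non formel est ce dernier : c'est précisément la contrainte $\deg p\mid md$ imposée par le polynôme caractéristique, combinée au calcul local de $\deg_{E_p}(D_p)$, qui fait que toute classe semi-simple elliptique satisfait automatiquement le critère de divisibilité $\mid_\times$ ; le reste relève de manipulations formelles avec les restrictions des scalaires, le lemme de Shapiro et la correspondance de l'annexe~A.
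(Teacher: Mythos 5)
Your proof is correct and follows essentially the same route as the paper: both reduce to the key divisibility $\deg_{F_p}(D_p)\mid md/\deg p$ via the elementary-divisor dictionary of the appendix, compute $\deg_{F_p}(D_p)=d/\gcd(d,\deg p)$ by local class field theory, and conclude from $\deg p\mid md$. The only difference is that you spell out the preliminary reduction to a single simple factor $\GL_{m,D}$ over $E_i$ via Shapiro, which the paper leaves implicit since the entire appendix is already set up in that case.
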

\begin{proof}
Soit $\o'$ une classe de $G^\ast(F)$-conjugaison semi-simple elliptique dans $\g^\ast(F)
$. Supposons que $\o'$ correspond à $\lambda_i':\Irr_F\rightarrow\{\text{partitions des entiers}\}$. Selon l'hypothèse il existe un unique polynôme $p\in \Irr_F$ tel que $\deg(p)\mid md$ et  \[\lambda'(q)=\begin{cases*}
\frac{md}{\deg(p)}\times (1) & , si $q=p$\\  
\emptyset & , sinon.
\end{cases*}\]
On est alors conduit à prouver $\deg_{F_p}(D_p)\mid\frac{md}{\deg(p)}$. Comme $F$ est un corps local, la théorie des corps de classes locaux nous dit $\deg_{F_p}(D_p)=\frac{d}{\text{pgcd}(d,\deg(p))}$, on se ramène alors à prouver $\frac{\deg(p)}{\text{pgcd}(d,\deg(p))}\mid m$, qui s'ensuit bien de l'hypothèse $\deg(p)\mid md$, et la preuve s'achève.   
\end{proof}

\begin{corollary}
Soit $F$ un corps local de caractérisque 0. Soient $L'$ un sous-groupe de Levi de $G^\ast$, $\o'$ une classe de $L'(F)$-conjugaison, semi-simple et elliptique, dans $\mathfrak{l}'(F)$. Supposons que la classe de conjugaison $(\Ad G^\ast(F))\o'$ est $G^\ast$-régulier, alors elle se transfère à $G$ si et seulement si $L'$ se transfère à $G$.
\end{corollary}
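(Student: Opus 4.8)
The plan is to reduce the corollary to Proposition~\ref{prop:indtransssibothtrans} and Proposition~\ref{prop:elltransproof}, after first identifying the semisimple conjugacy class $(\Ad G^\ast(F))\o'$ with an induced orbit.

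Fix $\sigma\in\o'$. By hypothesis $\sigma$ is semisimple, $F$-elliptique dans $\l'$, and $(\Ad G^\ast(F))\sigma$ est $G^\ast$-régulier, so $G^\ast_\sigma=T$ is a maximal torus of $G^\ast$. First I would check that $T\subseteq L'$: since $\sigma\in\l'(F)$ one has $\l'_\sigma\subseteq\g^\ast_\sigma=\mathfrak t$, while $\l'_\sigma$ contains a maximal torus of $L'$, whence $\dim\l'_\sigma\geq\rk L'=\rk G^\ast=\dim\mathfrak t$; this forces $\l'_\sigma=\mathfrak t$, so $T\subseteq L'$ and $L'_\sigma=T=G^\ast_\sigma$ (in particular $\o'$ is the regular semisimple, elliptic $L'(F)$-conjugacy class of $\sigma$). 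By compatibility of the induced orbit with the Jordan decomposition (Proposition~\ref{prop:indprop}(2)), $\Ind_{L'}^{G^\ast}(\o')=(\Ad G^\ast)\bigl(\sigma+\Ind_{L'_\sigma}^{G^\ast_\sigma}(0)\bigr)=(\Ad G^\ast)\sigma$, since $\Ind_T^T(0)=0$; taking $F$-points, $(\Ad G^\ast(F))\o'=\Ind_{L'}^{G^\ast}(\o')$.

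Granting this identification, the ``only if'' direction is immediate: if $(\Ad G^\ast(F))\o'=\Ind_{L'}^{G^\ast}(\o')$ se transfère à $G$, then Proposition~\ref{prop:indtransssibothtrans} forces $L'=L^\ast$ to transfer. For the ``if'' direction, suppose $L'=L^\ast$ se transfère. Being a Levi subgroup of $G^\ast$, the group $L^\ast$ is a product of restrictions of scalars of general linear groups, hence quasi-déployé of type GL, and $\eta|_{\l}\colon\l\to\l^\ast$ is a torseur intérieur exhibiting $L^\ast$ as the quasi-split inner form of $L$. Therefore Proposition~\ref{prop:elltransproof} (applied with $(L,L^\ast)$ in place of $(G,G^\ast)$; equivalently Lemme~\ref{lem:elltrans}) shows that the semisimple elliptic $L^\ast(F)$-conjugacy class $\o'$ in $\l^\ast(F)$ se transfère à $L$. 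Proposition~\ref{prop:indtransssibothtrans} then yields that $\Ind_{L^\ast}^{G^\ast}(\o')=(\Ad G^\ast(F))\o'$ se transfère à $G$, which completes the equivalence.

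The one genuinely delicate point is the first step: both hypotheses must be used simultaneously — $G^\ast$-régularité of $(\Ad G^\ast(F))\o'$ and $F$-ellipticité of $\o'$ dans $\l'$ — in order to obtain $L'_\sigma=G^\ast_\sigma$ and thereby collapse the induced orbit $\Ind_{L'}^{G^\ast}(\o')$ onto the semisimple class $(\Ad G^\ast(F))\o'$; without the regularity hypothesis these two classes genuinely differ and the statement would fail. After that, everything is a formal consequence of Propositions~\ref{prop:indtransssibothtrans} and~\ref{prop:elltransproof}.
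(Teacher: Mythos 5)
Your proof is correct and follows the paper's own argument: identify $(\Ad G^\ast(F))\o'$ with $\Ind_{L'}^{G^\ast}(\o')$ using regularity, then combine Proposition~\ref{prop:indtransssibothtrans} with Proposition~\ref{prop:elltransproof} (applied to the Levi) to reduce everything to the transfer of $L'$. Your closing remark slightly overstates the role of ellipticity in the first step: the identification $(\Ad G^\ast(F))\o'=\Ind_{L'}^{G^\ast}(\o')$ needs only $G^\ast$-regularity, as your own chain of inequalities shows; the $F$-ellipticity is what makes Proposition~\ref{prop:elltransproof} applicable in the second step, guaranteeing that $\o'$ transfers to $L$ once $L'$ does.
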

\begin{proof}
L'hypothèse que $(\Ad G^\ast(F))\o'$ est $G^\ast$-régulier implique que $(\Ad G^\ast(F))\o'=\Ind_{L'}^{G^\ast}(\o')$, on sait donc que $(\Ad G^\ast(F))\o'$ se transfère à $G$ si et seulement si $L'=L^
\ast$ se transfère à $G$ et $\o'$ se transfère à $L$. On conclut par la proposition précédente.
\end{proof}

\begin{proposition}[Principe local-global pour le transfert]\label{prop:appendixprincipeloc-glotrans} Soit $F$ un corps de nombres. Un élément $X'\in \g^\ast(F)$ se transfère si et seulement si $X_v'\in \g^\ast(F_v)$ se transfère pour tout $v\in \V_F$. 
\end{proposition}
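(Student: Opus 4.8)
The plan is to bypass the cohomological argument of Proposition~\ref{prop:local-globalprincipalelement} and instead read off both implications from the elementary-divisor description of transfer established in this appendix. The forward implication is trivial: if $X\in\g(F)$ transfers $X'$, i.e.\ $\eta\bigl((\Ad G(\overline F))X\bigr)=(\Ad G^\ast(\overline F))X'$, then extending scalars to $F_v$ shows $X_v$ transfers $X_v'$ for every $v\in\V_F$, so the whole content lies in the converse. First I would reduce to the split case $G^\ast=\GL_{n,F}$: writing $G=\prod_i\Res_{E_i/F}\GL_{m_i,D_i}$, transfer is a condition on each factor separately, and for a single factor $\Res_{E/F}\GL_{m,D}$ one has $(\Res_{E/F}H)(F_v)=\prod_{w\mid v}H(E_w)$, so the local--global principle over $F$ for that factor coincides with the local--global principle over the number field $E$ for $\GL_{m,D}$; hence it suffices to treat $\g=\Mat_m(D)$, $\g^\ast=\Mat_{n,F}$ with $n=md$ and $d=\deg_F(D)$.

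The key step is to convert ``$X'$ transfers'' into a divisibility statement on partitions. Since $\eta$ is an inner twist, the geometric orbit $\eta^{-1}\bigl((\Ad G^\ast(\overline F))X'\bigr)$ in $\g_{\overline F}$ is $\Gamma$-stable (conjugation fixes geometric conjugacy classes), so Corollary~\ref{coro:classconjratpt} applies to it and yields: $X'$ transfers to $G$ if and only if $\deg_{F_p}(D_p)\mid_\times\lambda'(p)$ for every $p\in\Irr_F$, where $\lambda'\colon\Irr_F\to\{\text{partitions}\}$ is the elementary-divisor function of $X'$ (computed in $\Mat_{n,F}$ via Theorem~\ref{thm:Yu9}) and $D_p$ is the division algebra with $D\otimes_F F_p\simeq\Mat_c(D_p)$. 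The same criterion over $F_v$ reads: $X_v'$ transfers to $G_v$ iff $\deg_{(F_v)_q}\bigl((D_v)_q\bigr)\mid_\times\lambda_v'(q)$ for all $q\in\Irr_{F_v}$, with $D_v$ the division part of $D\otimes_F F_v$. Now I would invoke base-change compatibility: the geometric orbit of $X_v'$ over $\overline{F_v}$ coincides with that of $X'$ over $\overline F$, so $\lambda_v'(q)=\lambda'(p)$ whenever $q\mid p$ in $\Irr_{F_v}$; and if $w$ is the place of $F_p$ above $v$ attached to the factor $q$ of $p$, then $(F_v)_q=(F_p)_w$ and (by Artin--Wedderburn uniqueness applied to $D\otimes_F(F_v)_q$) $(D_v)_q\simeq(D_p)_w$, so $\deg_{(F_v)_q}((D_v)_q)$ is exactly the local index of $D_p$ at $w$. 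Therefore the family of all local conditions is equivalent to: for every $p\in\Irr_F$ and every place $w$ of $F_p$, $\deg_{(F_p)_w}((D_p)_w)\mid_\times\lambda'(p)$.

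To conclude I would use two facts. Arithmetically, for the division algebra $D_p$ over the number field $F_p$ the global index equals the least common multiple of its local indices, via the fundamental exact sequence $0\to\mathrm{Br}(F_p)\to\bigoplus_w\mathrm{Br}((F_p)_w)\to\Q/\Z\to0$ of global class field theory. Combinatorially, for a finite family of positive integers $(e_w)_w$ and a partition $\lambda$ one has $\operatorname{lcm}_w(e_w)\mid_\times\lambda$ if and only if $e_w\mid_\times\lambda$ for every $w$ — both assert that each distinct part of $\lambda$ occurs with multiplicity divisible by the integers in question. Taking $e_w=\deg_{(F_p)_w}((D_p)_w)$ shows the local conditions hold for all $w$ precisely when $\deg_{F_p}(D_p)\mid_\times\lambda'(p)$, and ranging over $p\in\Irr_F$ this is exactly the criterion for $X'$ to transfer to $G$. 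The main obstacle here is bookkeeping rather than conceptual: one must carefully track the Brauer-theoretic data under the two base changes $F\rightsquigarrow F_v$ and $F\rightsquigarrow F_p$ to identify $(D_v)_q$ with $(D_p)_w$ with equal index, and correctly quote that the index of a division algebra over a number field is the l.c.m.\ of its local invariants; the partition combinatorics is then routine.
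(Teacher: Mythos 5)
Your proposal is correct and follows essentially the same route as the paper's proof: the forward direction by scalar extension, then rewriting the global and local transfer conditions as $\mid_\times$-divisibilities of the elementary-divisor partitions, identifying the factors of $F_p\otimes_F F_v$ with the completions $F_{p,w}$ by uniqueness of the Artinian decomposition, and concluding via the class-field-theoretic identity $\deg_{F_p}(D_p)=\operatorname{lcm}_w\deg_{(F_p)_w}((D_p)_w)$ together with the elementary observation that $\operatorname{lcm}(e_w)\mid_\times\lambda$ iff $e_w\mid_\times\lambda$ for every $w$. Your version merely spells out a few steps the paper leaves implicit (the Shapiro reduction to $G^\ast=\GL_{n,F}$, the explicit invocation of Corollary~\ref{coro:classconjratpt}, and the base-change identity $\lambda_v'(q)=\lambda'(p)$), so there is no genuine divergence.
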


\begin{proof} Le sens direct étant trivial, on procède au sens inverse. Supposons que $X'$ correspond à $\lambda':\Irr_F\rightarrow\{\text{partitions des entiers}\}$. Soit $p\in\Irr_F$. On veut montrer que $\deg_{F_p}(D_p)\mid_{\times}\lambda'(p)$.
Soit $v$ une place de $F$. \'{E}crivons $p=p_{v,1}\cdots p_{v,s_v}$ la décomposition de $p$ en produit de polynômes irréductibles dans $F_v[T]$. Posons $F_p=F[T]/(p)$, $F_{v,i}=F_v[T]/(p_{v,i})$, et $D_{v,i}$ une $F_{v,i}$-algèbre à division telle que $D\otimes_FF_{v,i}\simeq \Mat_{c(D\otimes_FF_{v,i})}(D_{v,i})$. Puisque 
\[\prod_{i=1}^s F_{v,i}=F_v[T]/(p)=F_p\otimes_FF_v=\prod_{w\mid v}F_{p,w},\]
par l'unicité de décomposition d'un anneau artinien en un produit d'anneaux artiniens locaux on sait que $\{F_{v,i}\mid i=1,\dots,s_v\}=\{F_{p,w}\mid w|v\}$. L'hypothèse que $X_v'$ est se transfère entraîne que 
\[\deg_{F_{v,i}}(D_{v,i})\mid_\times \lambda'(p),\,\,\,\,\forall i=1,\dots,v_s.\]
Par la théorie des corps de classes on a
\[\deg_{F_p}(D_p)=\text{ppcm}_{v\in \V_F}(\text{ppcm}_{w\mid v} \deg_{F_{p,w}}(D_{p,w})).\]
Il s'ensuit de ce fait
\[\deg_{F_p}(D_p)\mid_{\times}\lambda'(p).\]
La preuve est achevée.
\end{proof}

\begin{example}[Un élément qui ne se transfère pas globalement mais qui se transfère partout non-archimédien-localement]
Soit $F=\Q$. Il existe une algèbre simple centrale $A=\Mat_m(D)$ telle que 
\[\text{inv}_v(A)=\begin{cases*}
 \frac{1}{2} &, \text{ si $v=3$ ou $\R$}\\
 0 &, \text{ sinon.}
\end{cases*}\]
Prenons $G=\GL_{m,D}$. Soit $\lambda':\Irr_{\Q}\rightarrow\{\text{partitions des entiers}\}$ donné par 
\[\lambda'(p)=\begin{cases*}
 (md) &, \text{ si $p=T^2-2$}\\
 \emptyset &, \text{ sinon,}
\end{cases*}\]
et $X'$ la classe de conjugaison de $G^\ast=\GL_{md,\Q}$ correspondante. On prétend que $X_v'\in \g^\ast(\Q_v)$ se transfère pour tout $v\in\V_{\fin}$ mais ne se transfère pas pour $v=\R$. L'assertion est triviale lorsque $v\in\V_{\fin}\setminus\{3\}$. Ensuite $3$ est une place totalement ramifiée dans $\Q[\sqrt{2}]$, on note $\bar{3}$ l'unique place de $\Q[\sqrt{2}]$ divisant $3$, par la théorie des corps de classes on a $\text{inv}_{\bar{3}}(A\otimes_{\Q}\Q[\sqrt{2}])=2\text{inv}_{3}(A)=0\in \Q/\Z$, donc $X_3'\in \g^\ast(\Q_3)$ se transfère. Enfin il existe deux places, $\R_1$ et $\R_2$, de $\Q[\sqrt{2}]$ divisant la place réelle de $\Q$, des calculs nous gratifient $\text{inv}_{\R_1}(A\otimes_{\Q}\Q[\sqrt{2}])=\text{inv}_{\R_2}(A\otimes_{\Q}\Q[\sqrt{2}])=\frac{1}{2}$, par suite $\deg_{\R_1}(D_{T^2-2})=\deg_{\R_2}(D_{T^2-2})=2
\nmid_{\times} (md)$, autrement dit $X_{\R}'\in \g^\ast(\R)$ ne se transfère pas.
\end{example}



\section{Calculs explicites des mesures de Haar}\label{sec:AppendixB}

Les choix des mesures de Haar peuvent s'avérer complexes, notamment en raison des difficultés potentielles à vérifier les conditions de compatibilité associées à ces choix. Nous fournissons ici des calculs explicites des mesures pour des applications ultérieures.

Dans la suite $F$ sera un corps local de caractéristique 0. On fixe $D$ une algèbre à division centrale sur $F$, de degré $d$, i.e. la dimension de $D$ en tant que $F$-espace vectoriel vaut $d^2$. Quand $F=\R$ et $D\not= F$, on supposera que $D=\mathbb{H}$, l'algèbre des quaternions de Hamilton. En tant que $\R$-algèbre, $\mathbb{H}=\R \oplus\R i\oplus\R j\oplus\R k$, avec $i^2=j^2=k^2=-1$, $ij=-ji=k$, $jk=-kj=i$, et $ki=-ik=j$. Quand $F$ est non-archimédien, on écrit $\O_D$ l'anneau des entiers de $D$, et on fixe $\varpi_D\in \O_D$ un uniformisant. On note $q$ le cardinal du corps résiduel de $F$ et $p$ sa caractéristique. On sait que $\O_D/\varpi_D\O_D$ est un corps fini à $q^d$ éléments (\cite[(14.3) théorème]{Rei03}). Quand $F$ est archimédien, on pose, par abus de notation, $\O_D$ l'anneau suivant :
\[\O_D=\begin{cases*}
  \Z & \text{si $D=\R$ ;}\\
  \Z\oplus \Z i & \text{si $D=\C$ ;}\\
  \Z\oplus\Z i\oplus\Z j\oplus\Z k &\text{si $D=\mathbb{H}$ (donc $F=\R$).}
\end{cases*}\]

Soit $m\in \mathbb{N}_{>0}$. Soit $G=G_m$ l'unique schéma en groupes affines défini sur $\O_F$ tel que $G(A)=\GL_
{m}(A\otimes_{\O_F} \O_D)$ pour toute $\O_F$-algèbre $A$. 
Un élément général de $G$ est une matrice inversible de taille $m$. Pour un groupe algébrique on note par la même lettre en minuscule gothique son algèbre de Lie. On écrit donc $\g=\g_m$ l'algèbre de Lie de $G$. Ainsi $\g(A)=\gl_{m}(A\otimes_{\O_F} \O_D)$ pour toute $\O_F$-algèbre $A$. Un élément général $X$ de $\g$ est une matrice de taille $m$, on l'écrit comme $X=(X_{\alpha,\beta})_{1\leq \alpha,\beta\leq m}$. 

Si $F$ est archimédien, on note $X\mapsto\overline{X}$ l'involution principale sur $\g_1(F)$. Plus précisément, si $F=\R$ et $D=F$ alors $\overline{a}=a$ pour tout $a\in \R$ ; si $F=\C$ et $D=F$ alors $\overline{a+bi}=a-bi$ pour tous $a,b\in \R$ ; si $F=\R$
 et $D=\mathbb{H}$ alors $\overline{a+bi+cj+dk}=a-bi-cj-dk$ pour tous $a,b,c,d\in \R$. On note $X\mapsto \overline{X}$ l'involution principale et $X\mapsto {}^tX$ la transposée  sur $\g(F)$. Donc si $X=(X_{\alpha,\beta})_{1\leq \alpha,\beta\leq m}\in \g(F)$ alors $\overline{X}=(\overline{X_{\alpha,\beta}})_{1\leq \alpha,\beta\leq m}$ et ${}^tX=(X_{\beta,\alpha})_{1\leq \alpha,\beta\leq m}$.

On fixe $K=K_m$ le sous-groupe de $G(F)$ suivant. Primo, si $F$ est non-archimédien, alors $K\eqdef\GL_m(\O_D)$. C'est un sous-groupe ouvert compact maximal de $G(F)$. Secundo, si $F$ est archimédien, alors on pose $K\eqdef\{g\in \GL_m(D)\mid g\cdot{}^t\overline{g}=\text{Id}\}$. C'est un sous-groupe compact maximal de $G(F)$.

On note $\GL_{1,F}$ le groupe multiplicatif sur $F$ et $\gl_{1,F}$ le groupe additif sur $F$. Notons $\nu_m:\g\to \gl_{1,F}$ la norme réduite et $\tau_{m}:\g\to \gl_{1,F}$ la trace réduite sur $\g=\g_m$. Soit $\langle X,Y\rangle =\tau_{m}(XY)$, c'est une forme bilinéaire sur $\g(F)$ non-dégénérée et invariante par $G(F)$-adjonction. On appelle $\langle -,-\rangle$ la forme bilinéaire canonique sur $\g(F)$. Pour $E/F$ une extension de corps, on note $\tau_{E/F}:E\to F$ la fonction trace. On définit un caractère local additif $\psi$ de $F$ comme suit :
\begin{enumerate}
    \item si $F$ est archimédien, posons 
    \[\psi(x)=e^{-2i\pi \tau_{F/\R}(x)} ;\]
    \item si $F$ est non-archimédien, posons 
    \[\psi(x)=\psi_{\Q_p}(\tau_{F/\Q_p}(x))\] 
    avec $\psi_{\Q_p}$ le caractère local additif de $\Q_p$ défini par 
    \[\psi_{\Q_p}(x)=e^{2i\pi (x_{-r}p^{-r}+\cdots+x_{-1}p^{-1})}\] 
    où $x=x_{-r}p^{-r}+\cdots+x_{-1}p^{-1}+x'$,
    avec $x_{i}\in \{1,2,\dots,p-1\}$ pour $i\in\{-r,\dots,-1\}$ et $x'\in\Z_p$.
\end{enumerate}

Quand $F$ est non-archimédien, on note $\Delta_{m}$ le discriminant de $\g(\O_F)$ par rapport à $\Z_p$ (\cite[p.126]{Rei03}), c'est un idéal de $\Z_p$. En manipulant \cite[(10.2) théorème]{Rei03}, on voit facilement que $\Delta_m=\Delta_1^{m^2}$. 
On écrit $\mathcal{N}(\Delta_{m})\in \mathbb{N}_{>0}$ pour la norme de $\Delta_{m}$, autrement dit si $\Delta_{m}=(p)^a\Z_p$ alors  $\mathcal{N}(\Delta_{m})=p^a$. On a $\mathcal{N}(\Delta_{m})=\mathcal{N}(\Delta_{1})^{m^2}$.

On fixe désormais 
\begin{equation}\label{appBeq:LeviMdef}
M\eqdef G_{m_1}\times \cdots\times G_{m_l}    
\end{equation}
avec $m_1+\cdots+m_l=m$. C'est un sous-groupe de Levi de $G=G_m$. On a $M(F)\cap K= K_{m_1}\times\cdots\times K_{m_l}$.

Pour $H$ un groupe séparé localement compact muni d'une mesure de Haar à droite $d\mu$ et $S\subseteq H$ une partie mesurable, on note $\vol(S;H)$ le volume de $S$ dans $H$ et $\vol(H)$ le volume total de $H$. Soit $h\in H$, on note $d\mu(h^{-1}\cdot)$ la mesure de Haar à droite qui à une partie mesurable $S$ associe $d\mu(h^{-1}S)$. On note $\text{mod}_H(h)>0$ la dérivée de Radon-Nikodym de $d\mu(h^{-1}\cdot)$ par rapport à $d\mu$. Cela définit un morphisme de groupes $\text{mod}_H:H\to \R_{>0}^\times$. On dit que $H$ est unimodulaire si $\text{mod}_H(H)=\{1\}$. Les groupes suivants sont unimodulaires :
\begin{enumerate}
    \item tout groupe des $F$-points d'un groupe réductif ;
    \item tout groupe des $F$-points d'un groupe algébrique n'ayant pas de caractère non-trivial ;
    \item tout groupe des $F$-points du centralisateur d'un élément de l'algèbre de Lie dans un groupe réductif (\cite[théorème 1]{Rao72}) ;
    \item tout groupe abélien ;
    \item tout groupe compact ;
    \item tout groupe discret.
\end{enumerate}
Soit $H'$ un sous-groupe fermé unimodulaire de $H$ muni d'une mesure de Haar à droite $d\mu'$. On peut former la mesure quotient $d\mu'\backslash d
\mu$ sur l'espace quotient $H'\backslash H$ (\cite[chapitre 7, section 6, corollaire 1]{Bou04}), c'est une mesure de Radon relativement invariante à droite pour le caractère $\text{mod}_H$. 

\subsection{Mesures sur \texorpdfstring{$a_M$}{aM} et \texorpdfstring{$a_M^\ast$}{aM*}}\label{appBsubsec:mesureona0}

Pour $H$ un groupe algébrique connexe défini sur $F$, on note $A_H$ le sous-tore central $F$-déployé maximal de $H$, on note $X^\ast(H)$ le groupe des caractères de $H$ définis sur $F$ et $a_H$ (resp. $a_H^\ast$) l’espace vectoriel réel $\Hom(X^\ast(H),\R)$  (resp. $X^\ast(H)\otimes_\Z\R$). Les espaces $a_H$ et $a_H^\ast$ sont canoniquement duaux l'un de l'autre. 

Soit $M_0=G_1^m$ le sous-groupe diagonal de $G=G_m
$. Alors le groupe additif $X^\ast(M_0)$ est isomorphe à $\Z^m$ via
\[(t_1,\dots,t_m)\in \Z^m\mapsto\left(\nu_{(t_1,\dots,t_m)}:(x_{i})_{1\leq i\leq m}\in M_0\mapsto\prod_{i=1}^m\nu_1(x_i)^{t_i}\in \GL_{1,F}\right)\in X^\ast(M_0).\]
On identifie ainsi $a_{M_0}^\ast$ à $\R^m$ via l'isomorphisme naturel $\Z^m\otimes_\Z\R =\R^m$. On identifie ensuite $a_{M_0}$ à $\R^m$ de sorte que l'accouplement canonique entre $a_0$ et $a_0^\ast$ est donné par le produit scalaire canonique $\langle-,-\rangle_m$ sur $\R^m$ : $\langle (y_i)_{1\leq i\leq m},(z_i)_{1\leq i\leq m}\rangle_m=\sum_{1\leq i\leq m} y_iz_i$ pour tous $(y_i)_{1\leq i\leq m}, (z_i)_{1\leq i\leq m}\in \R^m$.

Prenons $M$ le sous-groupe de Levi \eqref{appBeq:LeviMdef}. On écrit $x=(x_{i})_{1\leq i\leq l}$ pour un élément général de $M$, avec $x_i\in G_{m_i}$ pour tout $i$. Alors le groupe additif $X^\ast(M)$ est isomorphe à $\Z^l$ via
\[(t_1,\dots,t_l)\in \Z^l\mapsto\left(\nu_{(t_1,\dots,t_l)}:(x_{i})_{1\leq i\leq l}\in M\mapsto\prod_{i=1}^l\nu_{m_i}(x_i)^{t_i}\in \GL_{1,F}\right)\in X^\ast(M).\]
On identifie ainsi $a_{M}^\ast$ à $\R^l$ via l'isomorphisme naturel $\Z^l\otimes_\Z\R =\R^l$. On identifie ensuite $a_M$ à $\R^l$, de sorte que l'accouplement canonique entre $a_M$ et $a_M^\ast$ est donné par le produit scalaire $\langle -,-\rangle_l$ canonique sur $\R^l$ : $\langle (y_i)_{1\leq i\leq l},(z_i)_{1\leq i\leq l}\rangle_l=\sum_{1\leq i\leq l} y_iz_i$ pour tous $(y_i)_{1\leq i\leq l}, (z_i)_{1\leq i\leq l}\in \R^l$. La restriction des caractères donne une injection canonique $\iota_M^\ast:a_M^\ast\hookrightarrow a_0^\ast$, cette application est
\[\iota_M^\ast:(t_1,\dots,t_l)\in \R^{l}\mapsto (t_1,\dots,t_1,\dots,t_l,\dots,t_l)\in \R^m\]
où chaque $t_i$ est répété $m_i$ fois dans $\R^m$. Dualement on a une surjection canonique $\iota_M: a_0\twoheadrightarrow a_M$, elle s'écrit
\[\iota_M:(t_1,\dots,t_m)\in \R^{m}\mapsto (t_1+\cdots+ t_{m_1},\dots,t_{m_1+\cdots+m_{l-1}+1}+\cdots+t_m)\in \R^l\]
où la $i$-ème coordonnée de l'image dans $\R^l$ vaut $t_{m_1+\cdots+m_{i-1}+1}+\cdots+t_{m_1+\cdots+m_{i}}$. Soient $t\in a_M$ et $t'\in a_{M}^\ast$, on a $\langle t,t'\rangle_k=\langle t'',\iota_M^\ast(t')\rangle_m$ pour tout $t''\in \iota_M^{-1}(t)$. 

La restriction des caractères donne un isomorphisme canonique $X^\ast(M)\otimes_\Z \R\xrightarrow{\sim} X^\ast(A_M)\otimes_\Z \R$, aussi une surjection canonique $X^\ast(A_{M_0})\otimes_\Z \R\twoheadrightarrow X^\ast(A_{M})\otimes_\Z \R$. On a $A_M\simeq G_{1}\times \cdots\times G_{1}$, où chaque $G_1$ s'injecte de façon diagonale dans $G_{m_i}$, $1\leq i\leq l$. On obtient de ce fait une surjection canonique $\phi_M^\ast:a_0^\ast \twoheadrightarrow  a_{M}^\ast$. Plus précisément, cette application s'écrit
\[\phi_M^\ast:(t_1,\dots,t_m)\in \R^{m}\mapsto \left(\frac{t_1+\cdots+ t_{m_1}}{m_1},\dots,\frac{t_{m_1+\cdots+m_{l-1}+1}+\cdots+t_m}{m_l}\right)\in \R^l\]
où la $i$-ème coordonnée de l'image dans $\R^l$ vaut $\frac{t_{m_1+\cdots+m_{i-1}+1}+\cdots+t_{m_1+\cdots+m_{i}}}{m_i}$. Dualement on a une injection canonique $\phi_M: a_M\hookrightarrow a_0$, elle s'écrit
\[\phi_M:(t_1,\dots,t_l)\in \R^{l}\mapsto \left(\frac{t_1}{m_1},\dots,\frac{t_1}{m_1},\dots,\frac{t_l}{m_l},\dots,\frac{t_l}{m_l}\right)\in \R^m\]
où chaque $\frac{t_i}{m_i}$ est répété $m_i$ fois dans $\R^m$. 

La restriction du produit scalaire canonique $\langle-,-\rangle_m$ sur le sous-espace $\phi_M(a_M)$ reste un produit scalaire. On munit $a_M=\R^l$ de la mesure de Haar associée à ce produit scalaire. On a 
\[\vol([0,1]^l;a_M)=\prod_{i=1}^l m_i^{-1/2}.\]
On munit $a_M^\ast=\R^l$ de l'unique mesure de Haar telle que
\[\vol([0,1]^l;a_M^\ast)=(2\pi)^{-l}\prod_{i=1}^l m_i^{1/2}.\]
Ces mesures sont duales l'une de l'autre dans le sens où
\[\int_{a_M^\ast}\int_{a_M}f(h)e^{-\sqrt{-1}\langle h,\lambda\rangle}\,dh\,d\lambda=f(0)\]
pour tout $f\in C_c^\infty(a_M)$. 

On définit le groupe de Weyl relatif de $(G,M)$ par
\[W_M^G=\text{Norm}_{G(F)}(M)/M(F),\]
avec $\text{Norm}_{G(F)}(M)$ le normalisateur de $M$ dans $G(F)$. Alors $W_M^G$ agit naturellement sur $a_M$ et $a_M^\ast$.  Les mesures sur $a_M$ et $a_M^\ast$ sont invariantes par l'action du groupe $W_M^G$. 

Soit $H$ un sous-groupe de $G$ stable par conjugaison par $A_M$. On note $\Sigma(\mathfrak h; A_M)\subseteq a_M^\ast$ l’ensemble des racines de $A_M$ sur $\mathfrak{h}$. On a alors
\[\mathfrak{h}=\mathfrak{h}^{A_M}\oplus\bigoplus
_{\alpha\in\Sigma(H;A_M)}\mathfrak{h}_\gamma\]
avec $\mathfrak{h}^{A_M}$ le sous-espace invariant et $\mathfrak{h}_\gamma$ le sous-espace propre relativement à $\gamma$ de $\mathfrak{h}$ sous l'action adjointe par $A_M$ :
\[\mathfrak{h}_\gamma=\{X\in\mathfrak{h}\mid \Ad(a)X=\gamma(a)X\,\,\,\,\forall a\in A_M\}.\]

Supposons que $P$ est le groupe des matrices  inversibles triangulaires supérieures par blocs de tailles $m_1,\dots,m_{l}$. Alors $\Sigma(\mathfrak p; A_M)\subseteq a_M^\ast=\R^l$ est l'ensemble
\begin{align*}
\Sigma(\mathfrak p; A_M)=\{v=&(v_1,\dots,v_l)\in \Z^l\mid \text{il existe $i<j$}\\
&\text{tels que $v_i=1$, $v_j=-1$ et $v_k=0$ pour tout autre $1\leq k\leq l$}\}.    
\end{align*}

\`{A} chaque racine $\gamma\in \Sigma(\g;A_{M_0})$ est naturellement associée une coracine $\gamma^\vee\in a_{M_0}$ d'après la théorie classique. Plus précisément $\gamma^\vee=\frac{2\gamma}{\langle \gamma,\gamma\rangle_m}$ dans $\R^m$. Ainsi, \`{a} chaque racine $\gamma\in \Sigma(\g;A_{M})$ on peut lui associer une coracine $\gamma^\vee\in a_{M}$ via $\gamma^\vee=\iota_M(\iota_M^\ast(\gamma)^\vee)$. Plus précisément 
\[-^\vee :(t_1,\dots,t_l)\in\R^l\mapsto \left(\frac{2m_1t_1}{\sum_{j=1}^km_jt_j^2},\dots, \frac{2m_lt_l}{\sum_{j=1}^lm_jt_j^2}\right)\in \R^l,\]
où la $i$-ème coordonnée de l'image dans $\R^l$ vaut $\frac{2m_it_i}{\sum_{j=1}^lm_jt_j^2}$.

\subsection{Mesures sur \texorpdfstring{$\m(F)$}{m(F)} et \texorpdfstring{$\n_P(F)$}{nP(F)}}\label{appBsubsec:mesmnPLie}

On désigne par $\S(\g(F))$ l'espace de Schwartz-Bruhat de $\g(F)$. On définit la transformée de Fourier d'une fonction $f \in\S(\g(F))$ par
\begin{equation}\label{appBeq:Fourierdef}
\widehat{f}(Y)=\int_{\g(F)}f(X)\psi(\langle X,Y\rangle)\,dX,\,\,\,\,\forall Y\in \g(F),    
\end{equation} 
où $\g(F)$ est munit d'une mesure de Haar. Il existe une unique mesure de Haar $dX$ sur $\g(F)$, dite auto-duale relativement à la transformée de Fourier, telle que $\widehat{\widehat{f}}(Y)=f(-Y)$ pour tous $f\in\S(\g(F))$ et $Y\in \g(F)$.

Soient $F$ archimédien et $V$ un $\O_F$-module libre de rang fini. On appelle la mesure de Lebesgue sur $V(F)$ l'unique mesure de Haar sur $V(F)$ telle que $\vol(V(\O_F)\backslash V(F))=1$, avec $V(\O_F)$ muni de la mesure de comptage et sur un espace quotient on prend la mesure quotient.

\begin{proposition}\label{appBprop:autodualmeasureg}
La mesure de Haar $dX$ auto-duale relativement à la transformée de Fourier sur $\g(F)=\g_m(F)$ est décrite comme suit :
\begin{enumerate}
    \item si $F=\R$, alors $dX$ est la mesure de Lebesgue ; 
    \item si $F=\C$, alors $dX$ est $2^{m^2}$ fois la mesure de Lebesgue ;
    \item si $F$ est non-archimédien, alors $dX$ est l'unique mesure de Haar telle que $\vol(\g(\O_F);\g(F))=\mathcal{N}(\Delta_1)^{-\frac{1}{2}m^2}$.
\end{enumerate}
\end{proposition}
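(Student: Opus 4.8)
The plan is to reduce everything to the rank-one case $m=1$, using that the Haar measure which is self-dual for the Fourier transform \eqref{appBeq:Fourierdef} is multiplicative under orthogonal decompositions of the canonical form $\langle-,-\rangle$, and is easy to describe on a lattice. From $\tau_m(X)=\sum_{\alpha}\tau_1(X_{\alpha\alpha})$ one gets $\langle X,Y\rangle=\tau_m(XY)=\sum_{\alpha,\beta}\tau_1(X_{\alpha\beta}Y_{\beta\alpha})$, so over $F$ the bilinear space $(\g_m(F),\langle-,-\rangle)$ is the orthogonal sum of $m$ ``diagonal'' copies of $(\g_1(F),\langle-,-\rangle)$ and $\binom{m}{2}$ ``hyperbolic'' blocks $\bigl(\g_1(F)\oplus\g_1(F),\ (x_1,x_2;y_1,y_2)\mapsto\tau_1(x_1y_2+x_2y_1)\bigr)$, indexed by the pairs $\{(\alpha,\beta),(\beta,\alpha)\}$, and the lattice $\g_m(\O_F)=\mathrm{Mat}_m(\O_D)$ decomposes compatibly into the corresponding copies of $\O_D$ and of $\O_D\oplus\O_D$. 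In an $\O_D$-basis the Gram matrix of a diagonal block is a symmetric integral matrix $G_1$, and that of a hyperbolic block is $\left(\begin{smallmatrix}0&G_1\\ G_1&0\end{smallmatrix}\right)$, of determinant $\pm(\det G_1)^2$; hence the discriminant of $\g_m(\O_F)$ is $\pm(\det G_1)^{m+2\binom m2}=\pm(\det G_1)^{m^2}$, which in the non-archimedean case is the identity $\Delta_m=\Delta_1^{m^2}$, with $\Delta_1$ the discriminant of $\g_1(\O_F)=\O_D$.

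Next I would invoke the standard description of self-dual measures of lattices. The character $\psi$ factors as $\psi_0\circ\tau_{F/k_0}$, where $k_0=\Q_p$ or $\R$ according as $F$ is $p$-adic or archimedean and $\psi_0$ is the standard self-dual additive character of $k_0$ (conductor $\Z_p$, resp. $t\mapsto e^{-2\pi i t}$); so the relevant $k_0$-valued form is $B\eqdef\tau_{F/k_0}\circ\langle-,-\rangle$, and I write $|\cdot|$ for the normalized absolute value of $k_0$. For a lattice $L$ in a non-degenerate quadratic $\Q_p$-space, the relations $\widehat{1_L}=\mathrm{vol}(L)\,1_{L^{\ast}}$ and $(L^{\ast})^{\ast}=L$ force $\mathrm{vol}(L)^2=|\mathrm{disc}(L)|$, where $\mathrm{disc}(L)=\det(B(u_i,u_j))$ for a basis $(u_i)$ of $L$; the same formula holds over $\R$ by an elementary scaling computation, and there it says that the self-dual measure is $|\mathrm{disc}(L)|^{1/2}$ times the Haar measure giving $L$ covolume one — that is, times the Lebesgue measure. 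Applying this to $L=\g_m(\O_F)$ together with the block computation of the preceding paragraph, the self-dual measure on $\g_m(F)$ equals $|\det G_1|^{m^2/2}$ times the Lebesgue-type measure, and in the $p$-adic case it assigns $\g_m(\O_F)$ the volume $\mathcal{N}(\Delta_1)^{-m^2/2}$.

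It remains to read off $G_1$ in the three cases. If $F$ is $p$-adic this is already (3). If $F=\R$ and $D=F$, then $\g_1(F)=\R$, $\tau_1(xy)=xy$ and $\O_D=\Z$, so $G_1=(1)$ and the self-dual measure is the Lebesgue measure: statement (1). If $F=\C$ — which forces $D=\C$ — then $\g_1(F)=\C=\R^2$, $\tau_1(zw)=zw$, and the $\R$-valued form is $B(z,w)=\tau_{\C/\R}(zw)=2\,\mathrm{Re}(zw)$; in the basis $1,i$ of $\O_D=\Z[i]$ its Gram matrix is $\mathrm{diag}(2,-2)$, so $|\det G_1|=4$ and the self-dual measure is $4^{m^2/2}=2^{m^2}$ times the Lebesgue measure: statement (2). (The only other archimedean possibility, $F=\R$ with $D=\mathbb{H}$, gives $G_1=\mathrm{diag}(2,-2,-2,-2)$ and $4^{m^2}$ times the Lebesgue measure; it is not covered by (1) as literally written, but it does not occur in the global setting of this paper, where $\g$ is assumed split at the archimedean places, so (1) is used only with $D=F$.)

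The only real point requiring care is normalization: one must use that $\psi$ is self-dual of conductor $\O_F$, so that the dual lattice $L^{\ast}$ is formed with respect to $\Z_p$-values and the clean identity $\mathrm{vol}(L)^2=|\mathrm{disc}(L)|$ holds with $\mathrm{disc}$ taken over the prime field; and one must make sure that the sublattices arising in the orthogonal decomposition are exactly those cut out by $\mathrm{Mat}_m(\O_D)$, and not their duals for the form — this is what turns the exponent into $\tfrac m2+\tfrac{m(m-1)}2=\tfrac{m^2}2$. Apart from that the argument is just the one-dimensional Gram-matrix computations displayed above together with the identity $\Delta_m=\Delta_1^{m^2}$ already recorded in the text.
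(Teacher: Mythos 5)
Your argument is correct, and it takes a genuinely different route from the paper. The paper proceeds case by case by exhibiting an explicit fixed point of the Fourier transform: a Gaussian $e^{-c\,\tau_m(X\,{}^t\overline{X})}$ when $F$ is archimedean, and the indicator function $1_{\g(\O_F)}$ (whose transform is a constant times the indicator of the codifferent) when $F$ is non-archimedean; self-duality is then read off from $\widehat{f}=f$, resp.\ $\widehat{\widehat{f}}=f$. You instead observe once and for all that the canonical form on $\g_m$ splits orthogonally into $m$ ``diagonal'' copies of $(\g_1,\tau_1)$ and $\binom m2$ ``hyperbolic'' blocks, compatibly with the lattice $\mathrm{Mat}_m(\O_D)$, and then plug this into the identity $\vol(L)=|\mathrm{disc}(L)|^{1/2}$ for the self-dual measure of a lattice in a nondegenerate quadratic space over a self-dual local field. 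This unifies the three cases, re-derives the identity $\Delta_m=\Delta_1^{m^2}$ rather than quoting Reiner, and reduces everything to the Gram matrix of $\tau_1$ on $\O_D$. The price is that you need the general lattice-duality statement (including the archimedean scaling argument), whereas the paper's computations are elementary and self-contained.

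One further observation, which you make in passing and which is worth flagging: item~(1) of the proposition, read literally, is false when $F=\R$ and $D=\mathbb{H}$. Your Gram matrix is $\mathrm{diag}(2,-2,-2,-2)$, so the self-dual measure is $4^{m^2}$ times Lebesgue, not Lebesgue. Indeed the paper's own verification fails in this case: with $f(X)=e^{-\frac12\pi\tau_m(X\,{}^t\overline{X})}$, i.e.\ $f(x)=e^{-\pi|x|^2}$ for $m=1$, and with $\psi(\langle x,y\rangle)=e^{-4\pi i\,\mathrm{Re}(x\bar y)}$, one finds $\widehat f(y)=e^{-4\pi|y|^2}\neq f(y)$ for the Lebesgue measure. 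The discrepancy is harmless for the body of the paper (which assumes $\g$ split at archimedean places), but it does affect the appendix as stated; the self-dual Gaussian for $D=\mathbb{H}$ is $e^{-\pi\tau_m(X\,{}^t\overline X)}$ and the self-dual measure is $4^{m^2}$ times Lebesgue, exactly as your computation gives.
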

\begin{proof}On munit $\g(F)$ de la mesure $dX$ décrite dans l'énoncé.
\begin{enumerate}
    \item Si $F=\R$ et $D=\R$ on prend $f(X)=e^{-\pi\tau_m(X{}^t\overline{X})}$. Si $F=\R$ et $D=\mathbb{H}$ on prend $f(X)=e^{-\frac{1}{2}\pi\tau_m(X{}^t\overline{X})}$.     On a $\widehat{f}=f$ donc $\widehat{\widehat{f}}=f$.
    \item Si $F=\C$, on prend $f(X)=e^{-2\pi\tau_m(X{}^t\overline{X})}$. On a $\widehat{f}=f$ donc $\widehat{\widehat{f}}=f$.
    \item Si $F$ est non-archimédien, on prend $f=1_{\g(\O_F)}$, la fonction indicatrice de $\g(\O_F)$. On note  $\mathfrak{D}^{-1}$ la codifférente de $\g(\O_F)$ par rapport à $\Z_p$ (\cite[page 60]{Ser80}), c'est un idéal fractionnaire de $\Z_p$, et sa norme vaut $\mathcal{N}(\Delta_m)^{-1}=\mathcal{N}(\Delta_1)^{-m^2}$. 
    Alors $\widehat{f}=\mathcal{N}(\Delta_1)^{-\frac{1}{2}m^2} 1_{\mathfrak{D}^{-1}}$ avec $1_{\mathfrak{D}^{-1}}$ la fonction indicatrice de $\mathfrak{D}^{-1}$, puis $\widehat{\widehat{f}}=f$.\qedhere
\end{enumerate}    
\end{proof}

On munit dans la suite $\g(F)$ de la mesure auto-duale relativement à la transformée de Fourier.

Prenons $M$ le sous-groupe de Levi \eqref{appBeq:LeviMdef}. On remarque que la restriction sur $\mathfrak{m}(F)$ de la forme bilinéaire canonique sur $\g(F)$ est exactement la somme des formes bilinéaires canoniques sur $\g_{m_i}(F)$ où $i$ parcourt $\{1,\dots,l\}$. On définit la transformée de Fourier sur $\S(\m(F))$ en remplaçant partout dans l'équation \eqref{appBeq:Fourierdef} l'espace $\g(F)$ par $\m(F)$. On munit $\m(F)$ de la mesure auto-duale relativement à la transformée de Fourier. Elle est aussi la mesure produit de celles sur $\g_{m_i}(F)$ où $i$ parcourt $\{1,\dots,l\}$.

Soit $P$ un sous-groupe parabolique de $G$ ayant $M$ comme facteur de Levi. On note $N_P$ le radical unipotent de $P$. On note $\overline{P}$ l'opposé de $P$, c'est l'unique sous-groupe parabolique de $G$ ayant $M$ comme facteur de Levi tel que $P\cap \overline{P}=M$. On note $N_{\overline{P}}$ le radical unipotent de $\overline{P}$.

On va fixer une mesure de Haar $dN$ sur $\n_P(F)$. Supposons d'abord que $F$ est archimédien. Si $F=\R$, on munit $\n_P(F)$ de la mesure de Lebesgue ; si $F=\C$, on munit $\n_P(F)$ de $2^{\frac{1}{2}(m^2-\sum_{i=1}^lm_i^2)}$ fois la mesure de Lebesgue. Supposons ensuite que $F$ est non-archimédien, on munit $\n_P(F)$ de l'unique mesure de Haar telle que $\vol(\n_P(\O_F);\n_P(F))=\mathcal{N}(\Delta_1)^{-\frac{1}{4}(m^2-\sum_{i=1}^lm_i^2)}$. 

On fixe aussi une mesure de Haar $d\overline{N}$ sur $\n_{\overline{P}}(F)$ suivant le même procédé. Alors
\begin{equation}\label{appBeq:G=NMNLie}
\int_{\g(F)}f(X)\,dX=\int_{\n_P(F)}\int_{\m(F)}\int_{\n_{\overline{P}}(F)}f(N+Y+\overline{N})\,d\overline{N}\,dY\,dN    
\end{equation}
pour tout $f\in L^1(\g(F))$.

\subsection{Mesure sur \texorpdfstring{$K$}{K}}

On prend l'unique mesure de Haar sur $K$ tel que $\vol(K)=1$.

\subsection{Mesures sur \texorpdfstring{$M(F)$}{M(F)} et \texorpdfstring{$N_P(F)$}{NP(F)}}\label{appBsubsec:mesuresonMNP}
Pour $H$ un groupe séparé localement compacte muni d'une mesure de Haar à droite $d\mu$, et $\phi$ un endomorphisme du groupe $H$, on note $\text{mod}_H(\phi)$ la quantité suivante : si $\phi$ n'est pas un automorphisme alors $\text{mod}_H(\phi)=0$ ; si $\phi$ est un automorphisme alors $\text{mod}_H(\phi)>0$ est la dérivée de Radon-Nikodym de $\phi^{-1}(d\mu)$ par rapport à $d\mu$, i.e.
\[\int_H f(\phi^{-1}(h))d\mu(h)=\text{mod}_H(\phi)\int_H f(h)d\mu(h)\]
pour tout $f\in L^1(H)$.

Soit $X\in \g(F)$. On note $X\cdot$ l'endomorphisme $Y\in \g(F)\mapsto XY\in \g(F)$. On note $|\cdot|$ la valeur absolue normalisée sur $F$, i.e. $|Z|=\text{mod}_{\gl_{1}(F)}(Z\cdot)$ pour tout $Z\in F$.

Au moyen des calculs directs on voit que $\text{mod}_{\g(F)}(x^{-1}\cdot)=|\nu_m(x)|^{-md}$ pour tout $x\in G(F)$. On prend de ce fait sur $G(F)=G_m(F)$ la mesure de Haar 
\begin{equation}\label{appBeq:mesureonG}
dx=\frac{dX}{|\nu_m(x)|^{md}},    
\end{equation}
où $dX$ est la mesure de Haar auto-duale relativement à la transformée de Fourier sur $\g(F)$. Cette mesure est également celle induite via l’exponentielle de la mesure sur un voisinage de $0$ de $\mathfrak{g}(F)$, i.e. 
\[\int_{V} f(\exp X)|\det(d\exp)_X|\,dX=\int_{\exp V}f(x)\,dx\]
pour tout $V$ voisinage de $0$ de $\mathfrak{g}(F)$ tel que $\exp :V \to \exp(V)$ soit bijectif et tout $f\in L^1(\exp V)$.

\begin{proposition}\label{appBprop:volKinG}
Lorsque $F$ est non-archimédien, on a 
\[\vol(K;G(F))=\mathcal{N}(\Delta_1)^{-\frac{1}{2}m^2}\prod_{i=1}^{m}\left(1-q^{-id}\right).\]
\end{proposition}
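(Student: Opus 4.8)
On calcule $\vol(K;G(F))$ en reliant le volume de $K=\GL_m(\O_D)$ à celui de $\g(\O_F)$ via l'application réduction modulo $\varpi_D$. Le point de départ est la normalisation \eqref{appBeq:mesureonG} : puisque $\nu_m(x)$ est une unité de $\O_F$ pour tout $x\in K$, on a $|\nu_m(x)|=1$, donc $dx$ restreinte à $K$ coïncide avec la restriction de la mesure $dX$ auto-duale sur $\g(F)$ à l'ouvert compact $K=\GL_m(\O_D)\subseteq \g(\O_F)=\Mat_m(\O_D)$. Ainsi $\vol(K;G(F))=\vol(\GL_m(\O_D);\g(F))$, et il reste à évaluer cette dernière quantité.

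Premièrement j'utiliserais la proposition \ref{appBprop:autodualmeasureg}, point 3, qui donne $\vol(\g(\O_F);\g(F))=\mathcal{N}(\Delta_1)^{-\frac{1}{2}m^2}$. Deuxièmement, comme $\g(\O_F)=\Mat_m(\O_D)$ se surjecte sur $\Mat_m(\O_D/\varpi_D\O_D)=\Mat_m(\F_{q^d})$ avec fibres toutes de même mesure (les fibres sont des translatés de $\Mat_m(\varpi_D\O_D)$, qui est un sous-groupe additif ouvert), on obtient
\[\vol(\GL_m(\O_D);\g(F))=\frac{|\GL_m(\F_{q^d})|}{|\Mat_m(\F_{q^d})|}\cdot \vol(\g(\O_F);\g(F)),\]
car $\GL_m(\O_D)$ est exactement la préimage de $\GL_m(\F_{q^d})$ sous cette réduction (un élément de $\Mat_m(\O_D)$ est inversible ssi son image modulo $\varpi_D$ l'est, ce qui se vérifie par un argument de relèvement de Hensel/complétude de $\O_D$, ou bien directement via la norme réduite qui est une unité ssi sa réduction est non nulle). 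Troisièmement, on rappelle la formule classique du comptage $|\GL_m(\F_{q^d})|=\prod_{i=0}^{m-1}(q^{dm}-q^{di})=q^{dm^2}\prod_{i=1}^{m}(1-q^{-id})$ et $|\Mat_m(\F_{q^d})|=q^{dm^2}$, d'où $\dfrac{|\GL_m(\F_{q^d})|}{|\Mat_m(\F_{q^d})|}=\prod_{i=1}^m(1-q^{-id})$. En combinant les trois points :
\[\vol(K;G(F))=\mathcal{N}(\Delta_1)^{-\frac{1}{2}m^2}\prod_{i=1}^{m}(1-q^{-id}),\]
ce qui est l'énoncé voulu.

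\textbf{Principal obstacle.} La seule subtilité est de justifier proprement que $\O_D/\varpi_D\O_D$ est un corps à $q^d$ éléments (déjà rappelé dans le texte via \cite[(14.3) théorème]{Rei03}) et que $K=\GL_m(\O_D)$ est bien la préimage exacte de $\GL_m(\F_{q^d})$ : il faut invoquer que $\O_D$ est un anneau local complet d'idéal maximal $\varpi_D\O_D$, de sorte qu'une matrice sur $\O_D$ est inversible si et seulement si son déterminant (ici la norme réduite) est une unité, si et seulement si sa réduction modulo $\varpi_D$ est inversible sur le corps résiduel. Tout le reste — comptage de $\GL_m$ sur un corps fini, invariance par translation de la mesure de Haar additive, et la valeur auto-duale du volume de $\g(\O_F)$ — est soit standard, soit déjà établi dans l'excerpt.
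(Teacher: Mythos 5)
Votre preuve est correcte, et elle emprunte un chemin légèrement différent — et en fait plus direct — que celui de l'article. L'article choisit $k$ assez grand pour que $\exp$ induise un isomorphisme $\varpi_D^k\g(\O_F)\to\text{Id}+\varpi_D^k\g(\O_F)$, pose $\vol(K;G(F))=\vol(\text{Id}+\varpi_D^k\g(\O_F);G(F))\cdot[K:\text{Id}+\varpi_D^k\g(\O_F)]$, transfère le volume à l'algèbre de Lie par la compatibilité $dx\leftrightarrow dX$ via $\exp$, et calcule le cardinal $|\GL_m(\O_D/\varpi_D^k\O_D)|$. Vous, en revanche, évitez complètement l'exponentielle et le choix de $k$ : vous observez d'emblée que $|\nu_m(x)|=1$ sur $K$, donc $dx|_K=dX|_K$, puis vous ne réduisez qu'une seule fois modulo $\varpi_D$ et comptez $\GL_m$ sur le corps résiduel $\F_{q^d}$. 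Les ingrédients essentiels sont les mêmes (volume auto-dual de $\g(\O_F)$ et comptage de $\GL_m$ sur un corps fini), mais votre version est plus économique : pas de condition « $k$ assez grand » à justifier, pas de cardinal de $\GL_m$ d'un anneau artinien non réduit. La justification de « $K$ est la préimage exacte de $\GL_m(\F_{q^d})$ » que vous donnez (anneau local complet, critère d'inversibilité modulo le radical) est standard et suffisante. Les deux preuves sont valables.
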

\begin{proof}
Soit $k\in\mathbb{N}_{>0}$ tel que $\exp:(\varpi_D^k\g(\O_F),+)\to (\text{Id}+\varpi_D^k\g(\O_F),\times)$ soit un isomorphisme de groupes topologiques (d'inverse $\log:\text{Id}+X\mapsto X-\frac{X^2}{2}+-\cdots$). 
On a
\begin{align*}
\vol(K;G(F))&=\vol(\text{Id}+\varpi_D^k\g(\O_F);G(F))[K:\text{Id}+\varpi_D^k\g(\O_F)]  \\
&=\vol(\varpi_D^k\g(\O_F);\g(F))| \GL_m(\O_D/\omega_D^k\O_D))|\\
&=q^{-dm^2k}\vol(\g(\O_F);\g(F))\prod_{i=0}^{m-1}\left(q^{dmk}-q^{(i-m+mk)d}\right)\\
&=\mathcal{N}(\Delta_1)^{-\frac{1}{2}m^2}\prod_{i=0}^{m-1}\left(1-q^{(i-m)d}\right).\\
&=\mathcal{N}(\Delta_1)^{-\frac{1}{2}m^2}\prod_{i=1}^{m}\left(1-q^{-id}\right).\qedhere
\end{align*}
\end{proof}

Prenons $M$ le sous-groupe de Levi \eqref{appBeq:LeviMdef}. On munit $M(F)$ de la mesure produit de celles sur $G_{m_i}(F)$ où $i$ parcourt $\{1,\dots,l\}$, avec chaque $G_{m_i}(F)$ muni de la mesure auto-duale relativement à la transformée de Fourier sur $\g_{m_i}(F)$. On observe que cette mesure sur $M(F)$ est aussi celle induite via l’exponentielle de la mesure sur un voisinage de $0$ de $\m(F)$.

Soit $P$ un sous-groupe parabolique de $G$ ayant $M$ comme facteur de Levi. On note $N_P$ le radical unipotent de $P$. On note $\overline{P}$ l'opposé de $P$, c'est l'unique sous-groupe parabolique de $G$ ayant $M$ comme facteur de Levi tel que $P\cap \overline{P}=M$. On note $N_{\overline{P}}$ le radical unipotent de $\overline{P}$.

On définit l'application $H_M : M(F)\rightarrow a_{M}$ par $e^{\langle H_M(y),\chi\rangle}=|\chi(y)|$ pour tout $\chi\in a_M^\ast$ et $y\in M(F)$. Puis $H_P : G(F)\rightarrow a_{M}$ par $H_P(ynk)\eqdef H_{M}(y)$, avec $y\in M(F)$, $n\in N_P(F)$ et $k\in K$.

Le groupe $A_M$ agit par conjugaison sur $P$. On note $\rho_P\eqdef \frac{1}{2}\sum_{\gamma\in\Sigma (\n_P; A_{M})}(\dim \n_{P,\gamma})\gamma$ avec $\n_{P,\gamma}$ le sous-espace propre relativement à $\gamma$. 

On munit $\n_P(F)$ et $\n_{\overline{P}}(F)$ des mesures expliquées dans la sous-section \ref{appBsubsec:mesmnPLie}. Il existe plusieurs façons naturelles d'imposer la compatibilité des mesures de Haar sur $\mathfrak{n}_P(F)$ et $N_P(F)$, que l'on note respectivement par $dN$ et $dn$ :
\begin{enumerate}
    \item $dN$ et $dn$ sont compatibles via l'isomorphisme de variétés $\exp :\mathfrak{n}_P(F)\to N_P(F)$, i.e.
    \[\int_{\mathfrak{n}_P(F)} f(\exp N)|\det(d\exp)_N|\,dN=\int_{N_P(F)}f(n)\,dn\]
    pour tout $f\in L^1(N_P(F))$ ; 
    \item $dN$ et $dn$ sont compatibles via l'isomorphisme de variétés $\text{Id}+\cdot :N\in\mathfrak{n}_P(F)\mapsto \text{Id}+N\in N_P(F)$, i.e.
    \[\int_{\mathfrak{n}_P(F)} f(\text{Id}+N)|\det(d\,\text{Id}+\cdot)_N|\,dN=\int_{N_P(F)}f(n)\,dn\]
    pour tout $f\in L^1(N(F))$ ; 
    \item soit $N_P = N_0 \supseteq N_1 \supseteq\cdots\supseteq N_r = \{1\}$ une filtration par des sous-groupes normaux telle que $N_k/N_{k+1}$ est abélien et $[N_0,N_k] \subseteq N_{k+1}$ pour tout $k$ (une telle filtration existe toujours : supposons que $N_{k+1}$ est défini, on pose $N_k$ le sous-groupe unipotent de l'algèbre de Lie l'espace associé au poids maximal de $A_M$ dans $\n_P /\n_{k+1}$, notons que cette filtration est stable par l'action par $A_M$), le groupe $N_k/N_{k+1}$ s'identifie canoniquement avec $\n_k/\n_{k+1}$. On exige que les mesures sur $N_P(F)$ et $\n_P(F)$ sont normalisées de sorte qu'il existe des mesures de Haar sur les groupes $N_{k}(F)$ (resp. $\n_k(F)$) telles que la mesure de $N_k(F)/N_{k+1}(F)$ coïncide avec celle de $\n_k(F)/\n_{k+1}(F)$. 
\end{enumerate}
Par des calculs directs on vérifie aisément que la compatibilité (3) ne dépend ni de la filtration ni des mesures intermédiaires choisies, et que les trois conditions (1), (2), et (3) sont équivalentes. On munit $N_P(F)$ (resp. $N_{\overline{P}}(F)$) de la mesure compatible à celle sur $\n_P(F)$ (resp. $\n_{\overline{P}}(F)$) au sens ci-dessus expliqué. On observe que $\text{Id}+\n_P(\O_F)=N_P(\O_F)$, donc si $F$ est archimédien on a $\vol(N_P(\O_F)\backslash N_P(F))=\vol(\n_P(\O_F)\backslash\n_P(F))$ avec $N_P(\O_F)$ et $\n_P(\O_F)$ tous munis des mesures de comptage et sur un espace quotient on prend la mesure quotient, puis si $F$ est non-archimédien on a  $\vol(N_P(\O_F);N_P(F))=\vol(\n_P(\O_F);\n_P(F))$. De l'équation \eqref{appBeq:G=NMNLie} on déduit que 
\begin{equation}\label{appBeq:G=NMNGrp}
\int_{G(F)}f(x)\,dx=\int_{N_P(F)}\int_{M(F)}\int_{N_{\overline{P}}(F)}f(ny\overline{n})e^{-2\langle H_P(y),\rho_P\rangle}\,d\overline{n}\,dy\,dn    
\end{equation}
pour tout $f\in L^1(G(F))$.

On pose 
\begin{equation}\label{appBeq:gammaPdef}
\gamma(P)=\gamma_M^G(P)\eqdef\int_{N_{\overline{P}}(F)} e^{2\langle H_P(\overline{n}),\rho_P\rangle}d\overline{n}>0.    
\end{equation}
Soit $L$ un sous-groupe de Levi de $G$ comprenant $M$, soit $Q$ un sous-groupe parabolique de $G$ ayant $L$ comme facteur de Levi, soit $R$ un sous-groupe  parabolique de $L$ ayant $M$ comme facteur de Levi. On sait que $RN_Q$ est un sous-groupe parabolique de $G$ ayant $M$ comme facteur de Levi. On définit les facteurs $\gamma_M^L(R)$ et $\gamma_L^G(Q)$ de la même façon. Grâce à nos normalisations des mesures le facteur $\gamma$ est transitif :
\begin{equation}\label{appBeq:gammatransitif}
\gamma_M^G(RN_Q)=\gamma_M^L(R)\gamma_L^G(Q).    
\end{equation}

De l'équation \eqref{appBeq:G=NMNGrp} on déduit que $\gamma(P)$ est aussi la dérivée de Radon-Nikodym de la mesure sur $G(F)$ par rapport à la mesure produit sur $M(F)N_P(F)K=G(F)$, i.e.
\begin{equation}\label{appBeq:Iwasawa}
\int_{G(F)}f(x)\,dx=\gamma(P)\int_{M(F)}\int_{N_P(F)}\int_{K}f(ynk)dk\,dn\,dy    
\end{equation}
pour tout $f\in L^1(G(F))$.

\begin{proposition}On note $\Gamma$ la fonction Gamma d'Euler.
\begin{enumerate}

    \item Si $F=\R$ et $D=\R$, alors 
    \[\gamma(P)=\pi^{\frac{1}{4}(m^2-\sum_{i=1}^lm_i^2)}\frac{\prod_{j=1}^l\prod_{i=1}^{m_j}\Gamma\left(\frac{i}{2}\right)}{\prod_{i=1}^m\Gamma\left(\frac{i}{2}\right)}.\]
    \item Si $F=\R$ et $D=\mathbb{H}$, alors 
    \[\gamma(P)=\pi^{(m^2-\sum_{i=1}^lm_i^2)}\frac{\prod_{j=1}^l\prod_{i=1}^{m_j}\Gamma\left(2i\right)}{\prod_{i=1}^m\Gamma\left(2i\right)}.\]
    \item Si $F=\C$, alors 
    \[\gamma(P)=(2\pi)^{\frac{1}{2}(m^2-\sum_{i=1}^lm_i^2)}\frac{\prod_{j=1}^l\prod_{i=1}^{m_j}\Gamma\left(i\right)}{\prod_{i=1}^m\Gamma\left(i\right)}.\]
    \item Si $F$ est non-archimédien, alors
    \[\gamma(P)=\mathcal{N}(\Delta_1)^{\frac{1}{4}(m^2-\sum_{i=1}^lm_i^2)}\frac{\prod_{j=1}^l\prod_{i=1}^{m_j}\left(1-q^{-id}\right)}{\prod_{i=1}^{m}\left(1-q^{-id}\right)}.\]
\end{enumerate}
\end{proposition}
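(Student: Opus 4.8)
La stratégie est de ramener le calcul du facteur $\gamma(P)$ au cas où le sous-groupe parabolique $P$ est maximal, puis d'évaluer explicitement l'intégrale $\int_{N_{\overline{P}}(F)} e^{2\langle H_P(\overline{n}),\rho_P\rangle}\,d\overline{n}$ dans ce cas de référence. Premièrement, j'utiliserais la transitivité du facteur $\gamma$, c'est-à-dire l'équation \eqref{appBeq:gammatransitif}, pour décomposer $\gamma_M^G(P)$ en un produit de facteurs associés à des inclusions successives de sous-groupes de Levi. Concrètement, en prenant la chaîne $G_m\supseteq G_{m_1+\cdots+m_{l-1}}\times G_{m_l}\supseteq G_{m_1+\cdots+m_{l-2}}\times G_{m_{l-1}}\times G_{m_l}\supseteq\cdots$, on obtient $\gamma_M^G(P)=\prod_{j=1}^{l-1}\gamma_{G_{m_1+\cdots+m_j}\times G_{m_{j+1}+\cdots+m_l}}^{\text{intermédiaire}}(\cdot)$, ce qui réduit le problème au cas d'un Levi à deux blocs $M=G_a\times G_b$ dans $G=G_{a+b}$ avec $a+b=n$ (en notant $n$ pour ne pas surcharger).

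Deuxièmement, dans ce cas à deux blocs, le radical unipotent $N_{\overline{P}}$ s'identifie à l'espace des matrices $b\times a$ à coefficients dans $D$ (les blocs strictement triangulaires inférieurs), soit $\Mat_{b\times a}(D)$, un espace vectoriel sur $F$ de dimension $ab\,d^2$. Pour $\overline{n}$ correspondant à une telle matrice $N$, l'application $H_P$ se calcule par une décomposition d'Iwasawa, et un calcul classique donne $e^{2\langle H_P(\overline{n}),\rho_P\rangle}=|\det(\mathrm{Id}+N{}^t\!\overline{N})|^{?}$ ou, plus précisément, s'exprime via le déterminant réduit du bloc supérieur gauche de $\mathrm{Id}+\overline{n}$ après renormalisation. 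L'intégrale $\int_{\Mat_{b\times a}(D)} |\nu(\mathrm{Id}+N\cdot{}^t\!\overline{N})|^{-(a+b)d/2}\cdots\,dN$ (avec les exposants corrects dictés par $\rho_P$) est alors une intégrale de type Gindikin--Siegel / Tate, dont la valeur fait apparaître des produits de fonctions Gamma dans le cas archimédien et des produits de $(1-q^{-id})$ dans le cas non-archimédien. Il faudra traiter séparément les quatre cas $F=\R$ avec $D=\R$, $F=\R$ avec $D=\mathbb{H}$, $F=\C$, et $F$ non-archimédien, en tenant compte des facteurs de normalisation $2^{m^2}$, $\mathcal{N}(\Delta_1)$, etc., fixés dans les propositions \ref{appBprop:autodualmeasureg} et \ref{appBprop:volKinG}.

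Une approche alternative, probablement plus propre, consiste à contourner le calcul direct de l'intégrale de $N_{\overline{P}}$. On part de l'équation \eqref{appBeq:Iwasawa}, qui dit que $\gamma(P)$ est la dérivée de Radon--Nikodym de la mesure sur $G(F)$ par rapport à la mesure produit sur $M(F)\times N_P(F)\times K$. Dans le cas non-archimédien, on peut évaluer les deux membres de \eqref{appBeq:Iwasawa} sur la fonction indicatrice $1_{G(\O_F)}=1_K$ : le membre de gauche vaut $\vol(K;G(F))$, que la proposition \ref{appBprop:volKinG} donne explicitement, tandis que le membre de droite vaut $\gamma(P)\cdot\vol(M(\O_F);M(F))\cdot\vol(N_P(\O_F);N_P(F))\cdot\vol(K)$. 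Chacun de ces volumes se calcule : $\vol(M(\O_F);M(F))=\prod_j \mathcal{N}(\Delta_1)^{-m_j^2/2}\prod_{i=1}^{m_j}(1-q^{-id})$ par la même proposition appliquée à chaque bloc, $\vol(N_P(\O_F);N_P(F))=\mathcal{N}(\Delta_1)^{-(m^2-\sum m_i^2)/4}$ par le choix de mesure de la sous-section \ref{appBsubsec:mesmnPLie}, et $\vol(K)=1$. En résolvant pour $\gamma(P)$ on obtient la formule annoncée pour $F$ non-archimédien. Pour les cas archimédiens, le même principe s'applique mais avec une fonction test gaussienne bien choisie (du type $f(x)=e^{-c\,\tau_m(x\,{}^t\!\overline{x})}|\nu_m(x)|^{-md}$ restreinte aux $x$ de déterminant réduit borné, ou plus directement une gaussienne sur $\g$ et le changement de variables \eqref{appBeq:mesureonG}), en utilisant que l'intégrale gaussienne sur $\g_m(F)$ se factorise selon la décomposition $\g=\n_{\overline P}\oplus\m\oplus\n_P$ via \eqref{appBeq:G=NMNLie}, tandis que sur le groupe elle se factorise selon Iwasawa, et le rapport des deux factorisations produit exactement les quotients de fonctions Gamma.

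\textbf{Principal obstacle.} Le point délicat sera le suivi scrupuleux de toutes les constantes de normalisation à travers les changements de variables : les facteurs $2^{m^2}$ (cas complexe), $2^{(m^2-\sum m_i^2)/2}$ (mesure sur $\n_P$ complexe), $\mathcal{N}(\Delta_1)$ et ses puissances fractionnaires, ainsi que les jacobiens $|\nu_m(x)|^{md}$ de \eqref{appBeq:mesureonG} et $e^{-2\langle H_P(y),\rho_P\rangle}$ de \eqref{appBeq:G=NMNGrp}, doivent tous se combiner correctement. L'identification de $\rho_P$ et le calcul de $\dim\n_{P,\gamma}$ (ou, de façon équivalente, la dimension $ab\,d^2$ sur $F$ du radical unipotent à deux blocs, avec le bon exposant $d$ lié au degré de $D$) sont aussi des endroits où une erreur de facteur $d$ se glisse facilement. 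La vérification croisée des quatre cas sur un exemple de petite taille (par exemple $m=2$, $l=2$, $m_1=m_2=1$) sera utile pour fixer les normalisations.
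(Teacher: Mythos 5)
Votre seconde approche (« alternative, probablement plus propre ») est précisément celle du texte : on ramène le calcul, par la transitivité \eqref{appBeq:gammatransitif}, au cas du parabolique minimal $P_0$ et du Levi diagonal $M_0$, puis on évalue les deux membres de \eqref{appBeq:Iwasawa} sur $1_K$ dans le cas non-archimédien et sur une gaussienne dans le cas archimédien. Votre première ébauche (réduction à un Levi à deux blocs puis intégrale explicite sur $N_{\overline P}$) aboutirait aussi, mais c'est un chemin plus pénible ; le texte préfère la réduction à $P_0$. Une petite coquille : la bonne fonction test archimédienne est $f(x)=e^{-c\,\tau_m(x\,{}^t\!\overline{x})}\,|\nu_m(x)|^{+md}$, et non $|\nu_m(x)|^{-md}$ — l'exposant doit annuler le facteur $1/|\nu_m(x)|^{md}$ de $dx$ (équation \eqref{appBeq:mesureonG}) pour que $f(x)\,dx$ soit une gaussienne sur $\g(F)$ contre la mesure auto-duale ; aucune troncature en déterminant n'est nécessaire.

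Un point plus important : si vous menez le calcul non-archimédien à son terme, l'équation \eqref{appBeq:Iwasawa} appliquée à $f=1_K$ donne $\vol(K;G(F))=\gamma(P_0)\,\vol(M_0(F)\cap K;M_0(F))\,\vol(N_{P_0}(\O_F);N_{P_0}(F))$, c'est-à-dire
\[
\gamma(P_0)=\frac{\vol(K;G(F))}{\vol(M_0(F)\cap K;M_0(F))\,\vol(N_{P_0}(\O_F);N_{P_0}(F))}
=\mathcal{N}(\Delta_1)^{-\frac{m(m-1)}{4}}\,\frac{\prod_{i=1}^{m}\bigl(1-q^{-id}\bigr)}{\bigl(1-q^{-d}\bigr)^{m}},
\]
ce qui est l'inverse de ce qu'affirme le passage « Autrement dit » du texte. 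Par transitivité on obtient alors
\[
\gamma(P)=\mathcal{N}(\Delta_1)^{-\frac{1}{4}\bigl(m^2-\sum_i m_i^2\bigr)}\,\frac{\prod_{i=1}^{m}\bigl(1-q^{-id}\bigr)}{\prod_{j=1}^{l}\prod_{i=1}^{m_j}\bigl(1-q^{-id}\bigr)},
\]
qui est l'inverse de l'énoncé du point 4. On vérifie ceci directement : pour $G=\GL_2$, $D=F$, $P=P_0$, on a
\[
\gamma(P_0)=\int_{F}e^{2\langle H_{P_0}(\overline n_t),\rho_{P_0}\rangle}\,dt=\int_{\O_F}1\,dt+\int_{F\setminus\O_F}|t|^{-2}\,dt=\mathcal{N}(\Delta_1)^{-1/2}\bigl(1+q^{-1}\bigr),
\]
conforme à la formule corrigée (et en écho avec la correspondance $\Gamma(s/2)\leftrightarrow(1-q^{-s})^{-1}$ qui la rend analogue aux points 1 à 3), alors que le point 4 tel qu'écrit donnerait $\mathcal{N}(\Delta_1)^{1/2}(1+q^{-1})^{-1}$. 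Vous tomberez donc sur cette divergence en menant votre calcul à bien ; l'erreur est dans le texte (une inversion à l'étape « Autrement dit »), non dans votre plan.
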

\begin{remark}
Soit $n\in \mathbb{N}$, alors 
\[\Gamma(n+1)=n!\,\,\,\,\text{et}\,\,\,\,\Gamma\left(n+\frac{1}{2}\right)=\frac{(2n)!}{2^{2n}n!}\sqrt{\pi}.\]
\end{remark}
\begin{proof}On observe que $G(F)$ est un ouvert dense dans $\g(F)$. Grâce à la transitivité \eqref{appBeq:gammatransitif}, on peut et on va supposer que $M=M_0=G_1^m$ est le sous-groupe diagonal de $G$, et que $P=P_0$ est le groupe des matrices inversibles triangulaires supérieures (les résultat des calculs seront les mêmes pour tous les $P_0$). Dans la suite l'écriture $d\cdot^{Leb}$ signifie la mesure de Lebesgue sur $\R$.
\begin{enumerate}
    \item Si $F=\R$ et $D=\R$, alors on prend $f(x)=e^{-\pi\tau_\g(x{}^t\overline{x})}|\nu_m(x)|^{m}$ dans l'équation \eqref{appBeq:Iwasawa}. On a
    \[\int_{G(\R)}f(x)\,dx=\prod_{1\leq \alpha,\beta\leq m}\left(\int_{\R}e^{-\pi x_{\alpha,\beta}^2} \,dx_{\alpha,\beta}^{Leb}\right)=1.\]
    Puis, en notant 
    \[y=\begin{pmatrix}
    y_1 && \\
    &\ddots&\\
    &&y_m
    \end{pmatrix}\]
    un élément général de $M_0(\R)$, on voit que
    \begin{align*}
     \int_{M_0(\R)}\int_{N_{P_0}(\R)}\int_K f(ynk)\,dk\,dn\,dy&=\int_{M_0(\R)}\int_{N_{P_0}(\R)}f(yn)\,dn\,dy\\
     &=\prod_{i=1}^m\int_{\R}e^{-\pi y_i^2}|y_i|^{i-1}dy_{i}^{Leb}=\prod_{i=1}^m\frac{\Gamma(\frac{i}{2})}{\pi^{\frac{i}{2}}}\\
     &=\frac{\prod_{i=1}^m\Gamma\left(\frac{i}{2}\right)}{\pi^{\frac{m(m+1)}{4}}}. 
    \end{align*}
    Autrement dit
    \[\gamma(P_0)=\frac{\pi^{\frac{m(m+1)}{4}}}{\prod_{i=1}^m\Gamma\left(\frac{i}{2}\right)}.\]

    \item Si $F=\R$ et $D=\mathbb{H}$, alors on prend $f(x)=e^{-\frac{1}{2}\pi\tau_\g(x{}^t\overline{x})}|\nu_m(x)|^{2m}$ dans l'équation \eqref{appBeq:Iwasawa}. On a
    \[\int_{G(\R)}f(x)\,dx=\prod_{1\leq \alpha,\beta\leq m}\left(\int_{\R}\int_{\R}\int_{\R}\int_{\R}e^{-\pi (a_{\alpha,\beta}^2+b_{\alpha,\beta}^2+c_{\alpha,\beta}^2+d_{\alpha,\beta}^2)} \,da_{\alpha,\beta}^{Leb}\,db_{\alpha,\beta}^{Leb}\,dc_{\alpha,\beta}^{Leb}\,dd_{\alpha,\beta}^{Leb}\right)=1.\]
     Puis, en notant 
    \[y=\begin{pmatrix}
    a_1+b_1i+c_1j+d_1k && \\
    &\ddots&\\
    &&a_m+b_mi+c_mj+d_mk
    \end{pmatrix}\]
    un élément général de $M_0(\R)$, on voit que
    \begin{align*}
     \int_{M_0(\R)}&\int_{N_{P_0}(\R)}\int_K f(ynk)\,dk\,dn\,dy\\
     &=\int_{M_0(\R)}\int_{N_{P_0}(\R)}f(yn)\,dn\,dy\\
     &=\prod_{i=1}^m\int_{\R}\int_{\R}\int_{\R}\int_{\R}e^{-\pi (a_i^2+b_i^2+c_i^2+d_i^2)}(a_i^2+b_i^2+c_i^2+d_i^2)^{2(i-1)}\,da_{i}^{Leb}\,db_{i}^{Leb}\,dc_{i}^{Leb}\,dd_{i}^{Leb} \\
     &=\prod_{i=1}^m\frac{\Gamma(2i)}{\pi^{2(i-1)}}=\frac{\prod_{i=1}^m\Gamma(2i)}{\pi^{m(m-1)}}. 
    \end{align*}
    Autrement dit
    \[\gamma(P_0)=\frac{\pi^{m(m-1)}}{\prod_{i=1}^m\Gamma\left(2i\right)}.\]

    \item  Si $F=\C$, alors on prend $f(x)=e^{-\pi\tau_\g(x{}^t\overline{x})}|\nu_m(x)|^{m}$ dans l'équation \eqref{appBeq:Iwasawa}. On a
    \[\int_{G(\R)}f(x)\,dx=2^{m^2}\prod_{1\leq \alpha,\beta\leq m}\left(\int_{\R}\int_{\R}e^{-\pi (a_{\alpha,\beta}^2+b_{\alpha,\beta}^2)}\,\,da_{\alpha,\beta}^{Leb}\,db_{\alpha,\beta}^{Leb}\right)=2^{m^2}.\]
     Puis, en notant 
    \[y=\begin{pmatrix}
    a_1+b_1i && \\
    &\ddots&\\
    &&a_m+b_mi
    \end{pmatrix}\]
    un élément général de $M_0(\R)$, on voit que
    \begin{align*}
     \int_{M_0(\R)}&\int_{N_{P_0}(\R)}\int_K f(ynk)\,dk\,dn\,dy\\
     &=\int_{M_0(\R)}\int_{N_{P_0}(\R)}
f(yn)\,dn\,dy\\
     &=2^{\frac{m(m-1)}{2}}2^m\prod_{i=1}^m\int_{\R}\int_{\R}e^{-\pi (a_i^2+b_i^2)}(a_i^2+b_i^2)^{i-1}\,da_{i}^{Leb}\,db_{i}^{Leb}\\
     &=2^{\frac{m(m+1)}{2}}\prod_{i=1}^m\frac{\Gamma(i)}{\pi^{i-1}}=2^{\frac{m(m+1)}{2}}\frac{\prod_{i=1}^m\Gamma(i)}{\pi^{\frac{m(m-1)}{2}}}. 
    \end{align*}
    Autrement dit
    \[\gamma(P_0)=\frac{(2\pi)^{\frac{m(m-1)}{2}}}{\prod_{i=1}^m\Gamma\left(i\right)}.\]
    
    \item Si $F$ est non-archimédien, alors on prend $f=1_{K}$, la fonction indicatrice de $K$, dans l'équation \eqref{appBeq:Iwasawa}. Il est clair que si $y\in M_0(F)$, $n\in N_{P_0}(F)$ et $k\in K$, alors $ynk\in K$ si et seulement si $y\in M_0(F)\cap K=K_{1}\times\cdots\times K_{1}$ et $n\in N_{P_0}(F)\cap K=N_{P_0}(\O_F)$. En conséquence $\vol(K;G(F))=\gamma(P_0)\vol(M_0(F)\cap K;M_0(F))\vol(N_{P_0}(\O_F);N_{P_0}(F))$. Autrement dit 
    \begin{align*}
    \gamma(P_0)&=\vol(K;G(F))^{-1}\vol(M_0(F)\cap K;M_0(F))\vol(N_{P_0}(\O_F);N_{P_0}(F))  \\
    &=\mathcal{N}(\Delta_1)^{\frac{m(m-1)}{
4}}\frac{\left(1-q^{-d}\right)^m}{\prod_{i=1}^{m}\left(1-q^{-id}\right)}.
    \end{align*}
    Pour la deuxième égalité on utilise la sous-section \ref{appBsubsec:mesmnPLie} et la proposition \ref{appBprop:volKinG}.\qedhere
\end{enumerate}    
\end{proof}

\subsection{Mesure sur \texorpdfstring{$G_X(F)\backslash G(F)$}{GX(F)G(F)}}

Soit $X\in \g(F)$. L'objectif de cette partie est de définir une mesure $dg_X$ sur l'espace homogène $G_X(F)\backslash G(F)$, ou de façon équivalente, sur l'orbite de $X$ pour la $G(F)$-conjugaison.

Si $X$ est semi-simple, alors $G_X$ est le groupe des unités d'une algèbre séparable sur $F$. On peut ainsi suivre les mêmes démarches qu'au numéro \ref{appBsubsec:mesuresonMNP} : on pose d'abord sur $\g_X(F)$ la mesure de Haar auto-duale relativement à la transformée de Fourier défini par le caractère $\psi$ et la forme bilinéaire associée à la trace réduite de l'algèbre séparable sous-jacente. Puis on prend sur $G_X(F)$ la mesure de Haar induite via l'exponentielle. On munit enfin $G_X(F)\backslash G(F)$ de la mesure quotient.

Soit maintenant $X\in \g(F)$ quelconque. On écrit $X_\ss$ pour la partie semi-simple de $X$. On définit $D^{\g}(X)=\det(\text{ad}(X_\ss); \g/\g_{X_{\ss}})$. On prend d'abord $L$ un sous-groupe de Levi tel que $X_\ss \in\mathfrak{l}(F)$ soit elliptique et $X\in \Ind_L^G(X_\ss)$ (proposition \ref{prop:suppellitique}). On munit ensuite $L_{X_\ss}(F)$ d'une mesure de Haar suivant les démarches ci-dessus. Puis on prend sur $G_{X}(F)$ l'unique mesure de Haar telle que 
\begin{equation}\label{eqappB:defmeasureonanyorb} 
\begin{split}
|D^\g(X)|_F^{1/2}&\int_{G_{X}(F)\backslash G(F)}f(g^{-1}Xg)\,dg_X\\
&=\lim_{\substack{A\to 0\\A\in\mathfrak{a}_L(F)=\text{Lie}(A_L)(F):L_{A+X_\ss}=G_{A+X_\ss}}}|D^\g(A+X)|_F^{1/2}\int_{G_{A+X_\ss}(F)\backslash G(F)}f(g^{-1}(A+X_\ss)g)\,dg_{A+X_{\ss}}   \\  
\end{split}    
\end{equation}
pour tout $f\in C_c^\infty(G(F))$. On a noté $dg_X$ dans le membre de gauche la mesure quotient sur $G_{X}(F)\backslash G(F)$, et $dg_{A+X_\ss}$ dans le membre de droite la mesure quotient sur $G_{A+X_\ss}(F)\backslash G(F)=L_{X_\ss}(F)\backslash G(F)$. 

\begin{proposition}[{{\cite{YDL23a}}}]On a les compatibilités suivantes :    
\begin{enumerate}
    \item la limite du membre de droite de l'égalité \eqref{eqappB:defmeasureonanyorb} existe, et elle ne dépend pas du choix de $L$ ;
    \item si $X$ est semi-simple, alors la mesure sur $G_X(F)$ définie par l'équation \eqref{eqappB:defmeasureonanyorb} coïncide avec la mesure définie plus haute ;
    \item si $H$ est un sous-groupe de Levi de $G$ dont l'algèbre de Lie contient $X$ tel que $H_X=G_X$, alors la mesure sur $G_X(F)$ définie par l'équation \eqref{eqappB:defmeasureonanyorb} en voyant $X$ comme élément de $\g(F)$ coïncide avec la mesure sur $H_X$ définie par la même équation en voyant $X$ comme élément de $\mathfrak{h}(F)$ ;
    \item la mesure sur $G_X(F)$ définie par l'équation \eqref{eqappB:defmeasureonanyorb} en voyant $X$ comme élément de $\g(F)$ coïncide avec la mesure sur $(G_{X_\ss})_{X_\nilp}(F)$ définie par la même équation en voyant $X_\nilp$ comme élément de $\g_{X_\ss}(F)$ ;
    \item la mesure sur $G_{X+Z}(F)$ coïncide avec la mesure sur $G_X(F)$ pour tous $X\in \g(F)$ et $Z$ dans le groupe des $F$-points du centre de $G$.
\end{enumerate}
\end{proposition}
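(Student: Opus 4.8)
The plan is to prove the five compatibilities simultaneously, by induction on $\dim G$, the engine being semisimple descent of orbital integrals. First I would observe that the right‑hand side of \eqref{eqappB:defmeasureonanyorb} is genuinely a normalized orbital integral of a nearly regular semisimple element: for $A$ in the set $\a_{L,X_\ss,G-\reg}\subseteq\a_L=\Lie(A_L)$, Proposition \ref{YDLiopprop:whatisaMoG-reg}(4) gives $G_{A+X_\ss}=(G_{X_\ss})_A=L_{X_\ss}$, the group of units of a separable algebra with its self‑dual Haar measure, and $A+X$ is $N_{P''}(F)$‑conjugate to the semisimple element $A+X_\ss$ for a suitable $P''\in\P^{G_{X_\ss}}(L_{X_\ss})$ (use $\Ad(N_{P''})(A)=A+\n_{P''}$, valid since $(G_{X_\ss})_A=L_{X_\ss}$, together with $X_\nilp\in\n_{P''}$ after a harmless conjugation). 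Since self‑dual measures are conjugation‑equivariant, both $|D^\g(A+X)|$ and $\int_{G_{A+X_\ss}(F)\backslash G(F)}f(g^{-1}(A+X_\ss)g)\,dg$ coincide with the quantities attached to $A+X$ itself. So the task is: show this normalized orbital integral converges as $A\to 0$ and that the limit is $|D^\g(X)|^{1/2}$ times a nonzero $G(F)$‑invariant measure supported on $\Ad(G)X=\Ind_L^G(X_\ss)$.

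For the inductive step I would descend to $G_{X_\ss}$. Using the slice $X_\ss+\g_{X_\ss}'$ (Lemma \ref{lem:Kottcomp2}, an algebraic statement valid over any local field), the integral over $G_{A+X_\ss}(F)\backslash G(F)$ of $f(g^{-1}(A+X_\ss)g)$ equals $\int_{G_{X_\ss}(F)\backslash G(F)}\bigl(\int_{L_{X_\ss}(F)\backslash G_{X_\ss}(F)}f_x(h^{-1}Ah)\,dh\bigr)dx$, where $f_x$ is $Y\mapsto f(x^{-1}(X_\ss+Y)x)$ cut off near $0$, a Schwartz function on $\g_{X_\ss}(F)$ depending smoothly on $x$. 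Factoring $\det(\ad(A+X_\ss);\g/\mathfrak{l}_{X_\ss})$ through $0\to\g_{X_\ss}/\mathfrak{l}_{X_\ss}\to\g/\mathfrak{l}_{X_\ss}\to\g/\g_{X_\ss}\to 0$ gives $|D^\g(A+X_\ss)|^{1/2}=|D^\g(X)|^{1/2}\,|D^{\g_{X_\ss}}(A)|^{1/2}(1+o(1))$, the first factor coming from $\det(\ad(A+X_\ss);\g/\g_{X_\ss})\to D^\g(X_\ss)=D^\g(X)\neq 0$. Thus the $G$‑limit defining the measure on $G_X$ reduces to the $G_{X_\ss}$‑limit defining the measure on $(G_{X_\ss})_{X_\nilp}=G_X$ from $X_\nilp\in\g_{X_\ss}(F)$ (note $X_\nilp\in\Ind_{L_{X_\ss}}^{G_{X_\ss}}(0)$, and Proposition \ref{YDLiopprop:whatisaMoG-reg}(4) shows $A\to 0$ inside $\a_{L,X_\ss,G-\reg}$ is $A\to 0$ inside $\a_{L_{X_\ss},G_{X_\ss}-\reg}$); when $X_\ss\notin\mathfrak{z}(\g)$ we have $\dim G_{X_\ss}<\dim G$, and the induction hypothesis yields both the existence of the limit and statement (4). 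Statement (5) I would check directly — $G_{X+Z}=G_X$, $D^\g(X+Z)=D^\g(X)$ since $\ad Z=0$, and the defining limit for $X+Z$ is the one for $X$ with $f$ replaced by $f(\cdot+Z)$ — and independence of $L$ in statement (1) then follows because two admissible pairs $(L_1,X_\ss)$, $(L_2,X_\ss)$ are $G(F)$‑conjugate by Proposition \ref{prop:suppellitique} while everything in sight is $G(F)$‑invariant.

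The base case is $X_\ss\in\mathfrak{z}(\g)$: translating by $-X_\ss$ via (5), $X=\Ind_L^G(0)$ is a Richardson nilpotent. By the parabolic descent formula (Proposition \ref{prop:IOP}(5)), applied with $Q\in\P^G(L)$ — so $M_Q=L$, $L_A=G_A=L$ for $A\in\a_{L,G-\reg}$, and the weight is trivial — one gets $|D^\g(A)|^{1/2}\int_{L(F)\backslash G(F)}f(g^{-1}Ag)\,dg=f_Q(A)$ with $f_Q(Z)=\gamma(Q)\int_K\int_{\n_Q(F)}f(k^{-1}(Z+U)k)\,dU\,dk$; this being continuous in $A$, the chamber‑limit exists and equals $f_Q(0)$, a nonzero $G(F)$‑invariant measure supported on $\Ind_L^G(0)$ (the complement of that orbit in $\n_Q$ is a proper subvariety, hence $dU$‑negligible). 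The point requiring real work is that $f_Q(0)$ is independent of $Q\in\P^G(L)$, so that the limit over all of $\a_{L,G-\reg}$ exists — equivalently, that $f_Q(0)$ is the intrinsically normalized invariant measure on $\Ind_L^G(0)$; I would obtain this either from the Shalika germ characterization (Proposition \ref{prop:Shalika} and the non‑vanishing of the principal germ, Lemma \ref{lem:germenonnulell}) or, for groups of type GL, by an explicit computation using the elementary‑divisor calculus of Appendix A. Once this is settled, \eqref{eqappB:defmeasureonanyorb} pins down the Haar measure on $G_X$.

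It remains to record (2) and (3). For (2), $X$ semisimple, take $L$ with $X$ elliptic in $\mathfrak{l}$ and $L_X=G_X$ (Proposition \ref{prop:suppellitique}); for generic small $A\in\a_L$ the element $A+X$ is semisimple with $G_{A+X}=G_X$ carrying the same self‑dual measure, $D^\g(A+X)\to D^\g(X)\neq 0$, and $\int_{G_X(F)\backslash G(F)}f(g^{-1}(A+X)g)\,dg\to\int_{G_X(F)\backslash G(F)}f(g^{-1}Xg)\,dg$ by dominated convergence, so the limit is $|D^\g(X)|^{1/2}$ times the self‑dual orbital integral, as claimed. For (3), with $H$ a Levi, $X\in\mathfrak{h}(F)$ and $H_X=G_X$: comparing codimensions and using $\mathfrak{h}_X=\g_X$, a Levi $L$ witnessing $X\in\Ind_L^G(X_\ss)$ can be chosen inside $H$, so the two defining limits run over the same $\a_L$; parabolic descent of $|D|^{1/2}$‑normalized orbital integrals (the limiting form of Proposition \ref{prop:IOP}(5), $|D^\g(Y)|^{1/2}\int_{G_Y\backslash G}f(g^{-1}Yg)\,dg=|D^{\mathfrak{h}}(Y)|^{1/2}\int_{H_Y\backslash H}f_Q(h^{-1}Yh)\,dh$) identifies the two limits, forcing the $\g$‑ and $\mathfrak{h}$‑measures on $G_X=H_X$ to coincide. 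The main obstacle throughout is the $Q$‑independence in the base case; everything else amounts to a careful propagation of Jacobian constants and of self‑dual Haar measures through Lemma \ref{lem:Kottcomp2} and the Iwasawa decomposition — routine for groups of type GL given the explicit measure computations of Appendix B, but unforgiving of normalization errors.
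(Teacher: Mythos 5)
The paper gives no proof of this proposition; it simply cites \cite{YDL23a}, so there is no text in this article for your argument to be compared against. Taken on its own terms, your plan is the natural one and I believe it is correct: interpret the right-hand side of \eqref{eqappB:defmeasureonanyorb} as the normalized orbital integral of the semisimple element $A+X_\ss$ (or equivalently of its $N_{P''}$-conjugate $A+X$), descend by Harish-Chandra's slice to $G_{X_\ss}$ while tracking the Jacobian $|D^\g(A+X_\ss)|^{1/2}\sim|D^\g(X)|^{1/2}|D^{\g_{X_\ss}}(A)|^{1/2}$, induct on $\dim G$, and treat the base case $X_\ss\in\mathfrak{z}(\g)$ via a Richardson orbit in $\n_Q$. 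The deduction of statement (3) from the semisimple descent formula applied to $A+X_\ss$, and the $L$-independence via Proposition \ref{prop:suppellitique}, are both fine once one observes (as you implicitly do) that $\Ad(G)X=\Ind_H^G(\Ad(H)X)$ whenever $H_X=G_X$, by the codimension count from Proposition \ref{prop:indprop}(3)--(4).

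One remark on emphasis: you single out the $Q$-independence of $f_Q(0)$ as ``the point requiring real work'' and suggest appealing to Shalika germs or to the elementary-divisor calculus. This is unnecessary. For $Q\in\P^G(L)$ one has $M_Q=L$ and the weight $v_L^Q(g)$ is identically $1$, so $J_L^Q(A,f)$ does not depend on $Q$ at all; the parabolic descent identity $J_L^Q(A,f)=f_Q(A)$ therefore holds for every $Q\in\P^G(L)$ with the same left-hand side, giving $f_{Q_1}(A)=f_{Q_2}(A)$ for all $A\in\a_{L,G-\reg}(F)$. Since the $f_Q$ are Schwartz--Bruhat and $\a_{L,G-\reg}(F)$ is dense in $\a_L(F)$, continuity already gives $f_{Q_1}(0)=f_{Q_2}(0)$, and the limit over all of $\a_{L,G-\reg}(F)$ exists and equals this common value. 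So the base case is softer than you suggest; germ techniques buy nothing here. Apart from this over-caution, the proposal is a correct reconstruction.
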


Reprenons les notations de l'équation \eqref{eqappB:defmeasureonanyorb}. Nous disposons également de la formule suivante, sans faire référence à la limite, avec $Q$ un sous-groupe parabolique de $G$ ayant $L$ comme facteur de Levi :
\begin{align*}
|D^\g(X)|_F^{1/2}&\int_{G_{X}(F)\backslash G(F)}f(g^{-1}Xg)\,dg_X\\
&=\gamma(Q)\lim_{\substack{A\to 0\\A\in\mathfrak{a}_L(F):L_{A+X_\ss}=G_{A+X_\ss}}}|D^\mathfrak{l}(X_\ss)|_F^{1/2}\int_{L_{X_\ss}(F)\backslash L(F)}\int_{\n_Q(F)}\int_K \\
&\hspace{5cm} f\left(k^{-1}\left(l^{-1}(A+X_\ss)l+U\right)k\right)\,dk\,dU\,dl_{A+X_\ss}   \\
&=\gamma(Q)|D^\mathfrak{l}(X_\ss)|_F^{1/2}\int_{L_{X_\ss}(F)\backslash L(F)}\int_{\n_Q(F)}\int_K f\left(k^{-1}\left(l^{-1}X_\ss l+U\right)k\right)\,dk\,dU\,dl_{X_\ss}. 
\end{align*}
En effet, la première égalité vient de l'équation \eqref{eqappB:defmeasureonanyorb}, la décomposition d'Iwasawa, et le changement de variables $n^{-1}l^{-1}(A+X_\ss)ln=l^{-1}(A+X_\ss)l+U$ avec $n\in N_{Q}(F)$ et $U\in \mathfrak{n}_Q(F)$. Ce changement de variables induit un isomorphisme analytique $N_{Q}(F)\xrightarrow{\sim}\mathfrak{n}_Q(F)$ de Jacobien $|\det(\ad (A+X_\ss);\mathfrak{n}_Q(F))|_F^{-1}=|D^\g(A+X)|_F^{-1/2}|D^\l(A+X_\ss)|_F^{1/2}=|D^\g(A+X)|_F^{-1/2}|D^\l(X_\ss)|_F^{1/2}$. Puis la deuxième égalité vient du théorème de convergence dominée.

Grâce à la transitivité \eqref{appBeq:gammatransitif} du facteur $\gamma$ et de l'orbite induite (proposition \ref{prop:indprop}) on dispose d'une formule d'intégration plus générale : soient $L$ un sous-groupe de Levi, $Q$ un sous-groupe parabolique de $G$ ayant $L$ comme facteur de Levi, $Y\in \mathfrak{l}(F)$, et $X\in \Ind_L^G(Y)(F)\cap ((\Ad L(F))Y+\n_Q(F))$. Alors
\begin{align*}
|D^{\mathfrak{g}}(X)|_F^{1/2}&\int_{G_X(F)\backslash G(F)}f\left(g^{-1}Xg\right)\,dg_X \\
&=\gamma(Q)|D^{\mathfrak{l}}(Y)|_F^{1/2}\int_{L_{Y}(F)\backslash L(F)}\int_{\n_Q(F)}\int_{K}f\left(k^{-1}\left(l^{-1}Yl+U\right)k\right)\,dk\,dU\,dl_Y      
\end{align*}
pour tout $f\in C_c^\infty(G(F))$. 

Il existe une autre mesure naturelle sur une orbite pour la $G(F)$-conjugaison, dérivée de la forme bilinéaire canonique sur $\g(F)$ et la structure symplectique de l'orbite. Cependant, cette définition n'est pas adaptée à la formule des traces. Nous estimons qu'il pourrait être utile d'en expliquer les détails, ce que nous allons faire à présent. On définit $\g^\vee$ l'espace dual de $\g$. Fixons $X^\vee\in \g^\vee$. On note $\Ad^\vee$ la représentation co-adjointe de $G$ sur $\g^\vee$. On note $\O_{X^\vee}\eqdef \Ad^\vee(G)X^\vee$. La forme symplectique de Kirillov-Kostant-Souriau $\omega_{\O_{X^\vee}}$ sur $\O_{X^\vee}$ est l'unique structure symplectique telle que l'inclusion $\O_{X^\vee}\hookrightarrow \g^\vee$ soit un morphisme de Poisson, avec $\g^\vee$ muni de sa structure de Lie-Poisson (\cite[proposition 7.7]{LGPV12}). Nous rappelons la définition de $\omega_{\O_{X^\vee}}$. On note $G_{X^\vee}$ le centralisateur de $X^\vee$ dans $G$ pour l'action co-adjointe, puis $\g_{X^\vee}$ son algèbre de Lie. Pour une variété $V$ et un point $v\in V$, on note $T_vV$ l'espace tangent en $v$ de $V$. On a une application $g\in G\mapsto \Ad^\vee(g)X^\vee \in \g^\vee$. Le différentiel en $1\in G$ de cette application nous donne un isomorphisme $\g_{X^\vee}\backslash \g\xrightarrow{\sim}T_{X^\vee}\O_{X^\vee}$. On note 
\[\Phi_{X^\vee}:\g_{X^\vee}\backslash \g\to \g^\vee\]
la composée de $\g_{X^\vee}\backslash \g\xrightarrow{\sim}T_{X^\vee}\O_{X^\vee}$ avec $T_{X^\vee}\O_{X^\vee}\hookrightarrow T_{X^\vee}\g^\vee=\g^\vee$.

On note $\text{ad}^\vee$ la représentation co-adjointe de $\g$ sur $\g^\vee$. Posons $\omega_{X^\vee}$ la forme bilinéaire alternée sur $\g$ définie par
\[\omega_{X^\vee}(Y,Z)\eqdef -\langle \text{ad}^\vee(Y)X^\vee,Z\rangle. \]
Son noyau est $\g_{X^\vee}$. La forme $\omega_{X^\vee}$ peut donc être vue comme une forme bilinéaire alternée non-dégénérée sur $\g_{X^\vee}\backslash \g$, i.e. l'espace tangent en $X^\vee$ de l'orbite co-adjointe $\O_{X^\vee}$. On obtient ainsi une forme $\omega$ sur $\O_{X^\vee}$ : elle vaut $\omega_{X^{\vee}{}'}$ en $X^{\vee}{}'\in \O_{X^\vee}$, et c'est une 2-forme $G$-invariante. En particulier, la variété $\O_{X^\vee}$ est symplectique. Elle est donc de dimension paire. On obtient de ce fait $\omega_{\O_{X^\vee}}\eqdef\bigwedge^{(\dim \O_{X^\vee})/2}\omega$ une forme volume algébrique $G$-invariante sur $\O_{X^\vee}$.

Identifions à présent $\g$ et son dual $\g^\vee$ via la forme bilinéaire canonique. Soit $X\in \g(F)$. On obtient une forme volume algébrique $G$-invariante sur $G_X\backslash G$. Or pour toute variété $V$ muni d'une forme volume algébrique et toute mesure de Haar sur $F$, on a une forme volume analytique canoniquement associée sur $V(F)$ (\cite[section 10.1]{Bou07}). Prenons maintenant la mesure de Haar auto-duale relativement à la transformée de Fourier sur $F=\gl_{1,F}(F)$. Nous obtenons une forme volume analytique sur $G_X(F)\backslash G(F)$. Notons $dg_X'$ la mesure obtenue.

Cette mesure $dg_X'$ n'est pas appropriée pour définir une intégrale orbitale pondérée. En effet, la limite du membre de droite de l'équation \eqref{eqappB:defmeasureonanyorb} n'existera pas, lorsque $G_{A+{X_\ss}}(F)\backslash G(F)$ est muni de la mesure $dg_{A+X_\ss}'$. Considérons l'exemple suivant pour illustrer ce propos : $G=\GL_{2,F}$ et $X=\begin{pmatrix}
0 & 1 \\
0 & 0
\end{pmatrix}\in \g(F)$. Dans ce cas on peut prendre $L=\GL_{1,F}\times \GL_{1,F}$ le sous-groupe diagonal. On note $E_{\alpha,\beta}$ la matrice qui vaut 1 à l'intersection de la $\alpha$-ième ligne et la $\beta$-ième colonne, et qui vaut 0 ailleurs. On note $(E_{\alpha,\beta}^\vee)_{1\leq \alpha,\beta\leq 2}$ la base duale pour la forme bilinéaire canonique sur $\g(F)$. Plus précisément $E_{\alpha,\beta}^\vee=E_{\beta,\alpha}$. On note
\[A=\begin{pmatrix}
A_1 & 0 \\
0 & A_2
\end{pmatrix}\]
un élément général de $\{A\in \mathfrak{a}_L(F)
\mid L_A=G_A\}$, avec $A_1,A_2\in F$ et $A_1\not= A_2$. On a $A^\vee=A$ et $\dim \O_{A^\vee}=2$. Alors par des calculs directs on voit que $\Phi_{A^\vee}:\g_{A^\vee}\backslash \g\to \g^\vee$ est l'application 
\[\Phi_{A^\vee}:\begin{pmatrix}
Y_{1,1} & Y_{1,2} \\
Y_{2,1} & Y_{2,2}
\end{pmatrix}\mapsto (A_1-A_2)Y_{1,2}E_{2,1}^\vee+(-A_1+A_2)Y_{2,1}E_{1,2}^\vee,\,\,\,\,Y_{\alpha,\beta}\in F.\]
Puis 
\[\omega_{A^\vee}=(A_1-A_2)^{-1}dE_{1,2}^\vee\wedge dE_{2,1}^\vee.\]
D'où 
\[\omega_{A}=-(A_1-A_2)^{-1}dE_{1,2}\wedge dE_{2,1}.\]
En parallèle, notre mesure $dg_A$ sur $G_{A}(F)\backslash G(F)$ décrite par l'équation \eqref{eqappB:defmeasureonanyorb} correspond à la forme volume $dE_{1,2}\wedge dE_{2,1}$, car notre mesure sur $G(F)$ (sous-section \ref{appBsubsec:mesmnPLie}) correspond à la forme volume $dE_{1,1}\wedge dE_{1,2}\wedge dE_{2,1}\wedge dE_{2,2}$ et celle sur $G_{A}(F)$ correspond à la forme volume $dE_{1,1}\wedge dE_{2,2}$. Ainsi $dg_A'=|A_1-A_2|_F^{-1}dg_A$ et
\[\lim_{\substack{A\to 0\\A\in\mathfrak{a}_L(F):L_{A}=G_{A}}}|D^\g(A)|_F^{1/2}\int_{G_{A}(F)\backslash G(F)}f(g^{-1}Ag)\,dg_{A}'\]
n'existe pas.

\subsection{Définition de Gross}
Gross a défini une mesure de Haar « canonique », notée $|\omega_G|$, sur $G(F)$ dans \cite{Gro97}. Notons $dx$ notre mesure sur $G(F)$ décrite par l'équation \eqref{appBeq:mesureonG}. On a
\begin{enumerate}
    \item Si $F$ est non-archimédien, alors $dx=|\omega_G|$.
    \item Si $F=\R$, alors $dx=|\omega_G|$.
    \item Si $F=\C$, alors $dx=2^{m^2}|\omega_G|$.
\end{enumerate}

Pour $F$ non-archimédien et $D=F$ cela résulte de la définition (\cite[section 4]{Gro97} et point 3 de la proposition \ref{appBprop:autodualmeasureg}), puis pour $D$ quelconque cela résulte du fait que notre construction de $dx$ est respectée par un torseur intérieur, de même pour $|\omega_G|$. Justifions ensuite les égalités pour $F$ archimédien. Soit $V_\C$ un $\C$-espace vectoriel. Soit $V$ une $\R$-forme de $V_\C$, cela signifie que $V$ est un sous-$\R$-espace vectoriel vérifiant $V_\C=V+i_\C V$. Ici $i_{\C}$ est l'unité imaginaire dans $\C=\R\oplus i_{\C}\R$. On suppose que $V$ est muni d'une mesure de Haar $dv$. On note  $dv\otimes_\R\C$ l'unique mesure de Haar sur $V_\C$ telle que pour tout $\Z$-réseau $\mathcal{R}$ de $R$ on ait 
\[\vol(\mathcal{R}\backslash V)^2=\vol((\mathcal{R}\oplus i_\C\mathcal{R})\backslash V_\C).\]
Ici on prend sur $\mathcal{R}\backslash V$ (resp. $(\mathcal{R}\oplus i_\C\mathcal{R})\backslash V_\C$) la mesure quotient de $dv$ (resp. $dv\otimes_\R\C$) par la mesure de comptage sur $\mathcal{R}$ (resp. $\mathcal{R}\oplus i_\C\mathcal{R}$). Dans la suite on va identifier tout groupe algébrique défini sur $\C$ à son groupe des $\C$-points. On se réfère à \cite[sections 7 et 11]{Gro97} pour la construction de $|\omega_G|$.

Supposons maintenant que $F=\R$ et $D=\R$. On constate que la forme compacte de $G$ est $U(m)$, le groupe unitaire sur $\R$ associé à la forme hermitienne canonique de $\C^m$. Pour toute $\R$-algèbre $A$ on a $U(m)(A)=\{g\in \GL_m(A\otimes_\R\C)\mid g\cdot{}^t\overline{g}=\text{Id}\}$. On a noté $X\mapsto \overline{X}$ l'involution principale et $X\mapsto {}^tX$ la transposée sur $\gl_m(A\otimes_\R\C)$. Donc si $X=(X_{\alpha,\beta})_{1\leq \alpha,\beta\leq m}\in \gl_m(A\otimes_\R\C)$ alors ${}^tX=(X_{\beta,\alpha})_{1\leq \alpha,\beta\leq m}$ ; si $X=X_1+i_\C X_2\in\gl_m(A\otimes_\R\C)=\gl_{m}(A)\oplus i_\C \gl_{m}(A)$ avec $X_1,X_2\in \gl_m(A)$ alors $\overline{X}=X_1-i_\C X_2$. On va noter, pus l'abus de notation, le nombre $i_\C$  par $i_{U(m)}$. 

On a un isomorphisme de groupes $G\otimes_\R\C\xrightarrow{\sim} U(m)\otimes_\R\C$ dont le différentiel vaut
\begin{align*}
\psi: \g\otimes_\R \C&\xlongrightarrow{\sim} \mathfrak{u}(m)\otimes_\R\C \\
X &\longmapsto \left(\frac{X-{}^t\overline{X}}{2}\right)+i_{\C}\left(\frac{X+{}^t\overline{X}}{2i_{U(m)}}\right).
\end{align*}
Pour rappel $i_{\C}$ est l'unité imaginaire dans $\C=\R\oplus i_{\C}\R$ du produit tensoriel $\otimes_\R\C$, et $i_{U(m)}$ est le nombre « $i_{\C}$ » dans la définition de $U(m)$. On a identifié $\g\otimes_\R \C$ (resp. $\mathfrak{u}(m)\otimes_\R\C$) à son groupe des $\C$-points, et l'involution principale $\overline{X}$ ainsi que la transposée ${}^t\overline{X}$ sont effectuées dans le groupe $\mathfrak{u}(m)(\C)$. 

Soit $T=U(1)^m\subseteq U(m)$ le sous groupe diagonal de $U(m)$, c'est un sous-groupe de Cartan. On note $M_0=G_1^m\subseteq G$ le sous-groupe diagonal de $G$, c'est un sous-groupe de Cartan. On note $K(\m_0)$ (resp. $K(\mathfrak{t})$) le noyau de $\exp: \m_0\to M_0$ (resp. $\exp:\mathfrak{t}\to T$). Alors $\psi(K(\m_0))=i_{\C} K(\mathfrak{t})$.

L'ensemble des racines $\Sigma(\g\otimes_{\R}\C;M_0\otimes_\R\C)$ s'identifie naturellement à $\{(\alpha,\beta)\mid 1\leq \alpha,\beta\leq m, \alpha\not=\beta\}$. Soit $\gamma\in \Sigma(\g\otimes_{\R}\C;M_0\otimes_\R\C)$. On suppose qu'il correspond à $(\alpha,\beta)$. On note $X_\gamma\in \g\otimes_\R\C$ la matrice suivante (\cite[p.195 (IX)]{Bou05}) : si $\alpha<\beta$ alors $X_\gamma$ vaut 1 à l'intersection de la $\alpha$-ième ligne et la $\beta$-ième colonne, et vaut 0 ailleurs ; si $\alpha>\beta$ alors $X_\gamma$ vaut $-1$ à l'intersection de la $\alpha$-ième ligne et la $\beta$-ième colonne, et vaut 0 ailleurs. L'isomorphisme $\phi$ induit une bijection $\psi:\Sigma(\g\otimes_{\R}\C;M_0\otimes_\R\C)\xrightarrow{\sim}\Sigma(\mathfrak{u}(m)\otimes_{\R}\C;T\otimes_\R\C)$. Pour $\delta\in \Sigma(\mathfrak{u}(m)\otimes_{\R}\C;T\otimes_\R\C)$ on pose $Y_\delta:=\psi(X_{\psi^{-1}(\delta)})$. Par des calcals directs on voit que $(Y_{\delta})_{\delta\in \Sigma(\mathfrak{u}(m)\otimes_{\R}\C;T\otimes_\R\C)}$ est un système de Chevalley de $\mathfrak{u}(m)\otimes_\R\C$, tel que $Y_{\delta}$
 et $Y_{-\delta}$ soient conjugués (\cite[p.296]{Bou05}).

On note $\mathcal{R}_{1}$ le sous-groupe additif de $\g\otimes_\R\C$ engendré par :
\[\frac{i_{\C}}{2\pi} K(\m_0),\,\,\,\,X_\gamma+X_{-\gamma},\,\,\,\, i_{\C}(X_\gamma-X_{-\gamma}),\,\,\,\,\gamma\text{ parcourt } \Sigma(\g\otimes_{\R}\C;M_0\otimes_\R\C).\]
Alors par des calculs directs on voit que  $\mathcal{R}_{1}$ est un $\Z$-réseau de $\mathcal{R}_{1}\otimes_\Z\R$, et $\mathcal{R}_{1}\otimes_\Z\R$ est une $\R$-forme de $\g\otimes_\R\C$. Notons $dX$ notre mesure de Haar sur $\g(\R)$ décrite dans la proposition \ref{appBprop:autodualmeasureg}. Elle induit $dx$ via l'exponentielle sur un voisinage de $0$ de $\mathfrak{g}(\R)$. Par des calculs directs on voit que 
\[\vol(\mathcal{R}_{1}\oplus i_\C \mathcal{R}_{1}\backslash \g\otimes_\R\C)=2^{m^2-m}\]
pour la mesure quotient de $dX\otimes_\R\C$ par la mesure de comptage.

De même on note $\mathcal{R}_{2}$ le sous-groupe additif de $\mathfrak{u}(m)\otimes_\R\C$ engendré par :
\[\frac{1}{2\pi}K(\mathfrak{t}),\,\,\,\,Y_\delta+Y_{-\delta},\,\,\,\, i_{\C}(Y_\delta-Y_{-\delta}),\,\,\,\,\delta\text{ parcourt } \Sigma(\mathfrak{u}(m)\otimes_{\R}\C;T\otimes_\R\C).\]
Alors $\mathcal{R}_{2}\otimes_\Z\R=\mathfrak{u}(m)$ (\cite[p.369, 9) a)]{Bou05}), et $\mathcal{R}_{2}$ est un $\Z$-réseau de $\mathfrak{u}(m)$. Notons $|\omega_{\mathfrak{u}(m)}|$ la mesure de Haar sur $\mathfrak{u}(m)(\R)$ qui induit $|\omega_{U(m)}|$ via l'exponentielle sur un voisinage de $0$ de $\mathfrak{u}(m)(\R)$. On a, selon la définition de $|\omega_\mathfrak{u}(m)|$ de Gross (\cite[section 7]{Gro97}) et \cite[p.369, 9) c)]{Bou05}, 
\[\vol(\mathcal{R}_{2}\oplus i_\C \mathcal{R}_{2}\backslash \mathfrak{u}(m)\otimes_\R\C)=2^{m^2-m}\]
pour la mesure quotient de $|\omega_{\mathfrak{u}(m)}|\otimes_\R\C$ par la mesure de comptage. Or, selon la construction $|\omega_\g|=\psi^\ast|\omega_{\mathfrak{u}(m)}|$ (\cite[section 11]{Gro97}), on a donc $dX=|\omega_\g|$. D'où $dx=|\omega_G|$.

Pour $F=\R$ et $D=\mathbb{H}$ on a $dx=|\omega_G|$ car notre construction de $dx$ est respectée par un torseur intérieur, de même pour $|\omega_G|$. Pour $F=\C$, on constate que la forme compacte de $\Res_{\C/\R}G$ est $U(m)\times U(m)$, et le reste des calculs est similaire à ceux dans le cas de $F=\R$ et $D=\R$.

\nocite{*}
\bibliographystyle{alpha}
\bibliography{main.bib}

\end{document}